\newif\ifpersonal
\numberwithin{equation}{subsection}
\newcounter{equation-intro}[section]
\numberwithin{equation-intro}{section}
\theoremstyle{plain}
\newtheorem{thm}[equation]{Theorem}
\newtheorem{lem}[equation]{Lemma}
\newtheorem{prop}[equation]{Proposition}
\newtheorem{cor}[equation]{Corollary}
\newtheorem{thm-intro}[equation-intro]{Theorem}
\newtheorem{slogan-intro}[equation-intro]{Slogan}
\theoremstyle{definition}
\newtheorem{defin}[equation]{Definition}
\newtheorem{notation}[equation]{Notation}
\newtheorem{eg}[equation]{Example}
\newtheorem{rem}[equation]{Remark}
\newtheorem{variant}[equation]{Variant}
\newtheorem{observation}[equation]{Observation}
\newtheorem{recollection}[equation]{Recollection}
\newtheorem{warning}[equation]{Warning}
\newtheorem{construction}[equation]{Construction}
\newtheorem{defin-intro}[equation-intro]{Definition}
\newtheorem{eg-intro}[equation-intro]{Example}
\newtheorem{rem-intro}[equation-intro]{Remark}
\newtheorem{situation-intro}[equation-intro]{Situation}
\newtheorem{notation-intro}[equation-intro]{Notation}
\newcommand{\personal}[1]{\textcolor[rgb]{0,0,1}{(Personal: #1)}}
\newcommand{\discussion}[1]{\textcolor{violet}{(Discussion: #1)}}
\newcommand{\personal}[1]{\ignorespaces}
\newcommand{\discussion}[1]{\ignorespaces}
\newcommand{\C}{\mathbb C}
\newcommand{\Q}{\mathbb Q}
\newcommand{\R}{\mathbb R}
\newcommand{\rB}{\mathrm B}
\newcommand{\cA}{\mathcal A}
\newcommand{\cB}{\mathcal B}
\newcommand{\cC}{\mathcal C}
\newcommand{\cE}{\mathcal E}
\newcommand{\cF}{\mathcal F}
\newcommand{\cI}{\mathcal I}
\newcommand{\cJ}{\mathcal J}
\newcommand{\cK}{\mathcal K}
\newcommand{\cL}{\mathcal L}
\newcommand{\cM}{\mathcal M}
\newcommand{\cO}{\mathcal O}
\newcommand{\cU}{\mathcal U}
\newcommand{\cV}{\mathcal V}
\newcommand{\cX}{\mathcal X}
\newcommand{\cY}{\mathcal Y}
\DeclareFontFamily{U}{BOONDOX-calo}{\skewchar\font=45 }
\DeclareFontShape{U}{BOONDOX-calo}{m}{n}{<-> s*[1.05] BOONDOX-r-calo}{}
\DeclareFontShape{U}{BOONDOX-calo}{b}{n}{<-> s*[1.05] BOONDOX-b-calo}{}
\DeclareMathAlphabet{\mathcalboondox}{U}{BOONDOX-calo}{m}{n}
\let\save@mathaccent\mathaccent
\newcommand*\if@single[3]{%
	\setbox0\hbox{${\mathaccent"0362{#1}}^H$}%
	\setbox2\hbox{${\mathaccent"0362{\kern0pt#1}}^H$}%
	\ifdim\ht0=\ht2 #3\else #2\fi
}
\newcommand*\rel@kern[1]{\kern#1\dimexpr\macc@kerna}
\newcommand*\widebar[1]{\@ifnextchar^{{\wide@bar{#1}{0}}}{\wide@bar{#1}{1}}}
\newcommand*\wide@bar[2]{\if@single{#1}{\wide@bar@{#1}{#2}{1}}{\wide@bar@{#1}{#2}{2}}}
\newcommand*\wide@bar@[3]{%
	\begingroup
	\def\mathaccent##1##2{%
		%Enable nesting of accents:
		\let\mathaccent\save@mathaccent
		%If there's more than a single symbol, use the first character instead (see below):
		\if#32 \let\macc@nucleus\first@char \fi
		%Determine the italic correction:
		\setbox\z@\hbox{$\macc@style{\macc@nucleus}_{}$}%
		\setbox\tw@\hbox{$\macc@style{\macc@nucleus}{}_{}$}%
		\dimen@\wd\tw@
		\advance\dimen@-\wd\z@
		%Now \dimen@ is the italic correction of the symbol.
		\divide\dimen@ 3
		\@tempdima\wd\tw@
		\advance\@tempdima-\scriptspace
		%Now \@tempdima is the width of the symbol.
		\divide\@tempdima 10
		\advance\dimen@-\@tempdima
		%Now \dimen@ = (italic correction / 3) - (Breite / 10)
		\ifdim\dimen@>\z@ \dimen@0pt\fi
		%The bar will be shortened in the case \dimen@<0 !
		\rel@kern{0.6}\kern-\dimen@
		\if#31
		\overline{\rel@kern{-0.6}\kern\dimen@\macc@nucleus\rel@kern{0.4}\kern\dimen@}%
		\advance\dimen@0.4\dimexpr\macc@kerna
		%Place the combined final kern (-\dimen@) if it is >0 or if a superscript follows:
		\let\final@kern#2%
		\ifdim\dimen@<\z@ \let\final@kern1\fi
		\if\final@kern1 \kern-\dimen@\fi
		\else
		\overline{\rel@kern{-0.6}\kern\dimen@#1}%
		\fi
	}%
	\macc@depth\@ne
	\let\math@bgroup\@empty \let\math@egroup\macc@set@skewchar
	\mathsurround\z@ \frozen@everymath{\mathgroup\macc@group\relax}%
	\macc@set@skewchar\relax
	\let\mathaccentV\macc@nested@a
	%The following initialises \macc@kerna and calls \mathaccent:
	\if#31
	\macc@nested@a\relax111{#1}%
	\else
	%If the argument consists of more than one symbol, and if the first token is
	%a letter, use that letter for the computations:
	\def\gobble@till@marker##1\endmarker{}%
	\futurelet\first@char\gobble@till@marker#1\endmarker
	\ifcat\noexpand\first@char A\else
	\def\first@char{}%
	\fi
	\macc@nested@a\relax111{\first@char}%
	\fi
	\endgroup
}
\newcommand{\PSh}{\mathrm{PSh}}
\newcommand{\Sh}{\mathrm{Sh}}
\newcommand{\St}{\mathrm{St}}
\newcommand{\Top}{\mathcal T\mathrm{op}}
\newcommand{\CAlg}{\mathrm{CAlg}}
\newcommand{\dAff}{\mathrm{dAff}}
\newcommand{\Perf}{\mathrm{Perf}}
\newcommand{\bfPerf}{\mathbf{Perf}}
\newcommand{\st}{\mathrm{st}}
\newcommand{\IV}{\mathscr{I} \mathscr{V}}
\newcommand{\Exit}{\mathrm{Exit}}
\DeclareMathOperator{\AnStrat}{\mathbf{AnStrat}}
\DeclareMathOperator{\StAnStrat}{\mathbf{StAnStrat}}
\DeclareMathOperator{\FStStrat}{\mathbf{FStStrat}}
\DeclareMathOperator{\ho}{ho}
\DeclareMathOperator{\mode}{mod}
\DeclareMathOperator{\LSt}{LSt}
\DeclareMathOperator{\Gr}{Gr}
\DeclareMathOperator{\ens}{set}
\DeclareMathOperator{\cocart}{cocart} 
\DeclareMathOperator{\Fil}{Fil} 
\DeclareMathOperator{\spe}{sp}
\DeclareMathOperator{\Loc}{Loc}
\DeclareMathOperator{\GL}{GL}
\DeclareMathOperator{\PS}{PS}
\DeclareMathOperator{\Supp}{Supp}
\DeclareMathOperator{\Env}{Env}
\DeclareMathOperator{\order}{ord}
\DeclareMathOperator{\StFil}{StFil}
\newcommand{\PrLR}{\categ{Pr}^{\kern0.05em\operatorname{L}, \operatorname{R}}}
\newcommand{\Funcocart}{\Fun^{\cocart}}
\DeclareMathOperator{\at}{at}
\newcommand{\ev}{\mathrm{ev}}
\newcommand{\inv}{^{-1}}
\newcommand{\id}{\mathrm{id}}
\newcommand{\op}{^\mathrm{op}}
\tikzset{
  closed/.style = {decoration = {markings, mark = at position 0.5 with { \node[transform shape, xscale = .8, yscale=.4] {/}; } }, postaction = {decorate} },
  open/.style = {decoration = {markings, mark = at position 0.5 with { \node[transform shape, scale = .7] {$\circ$}; } }, postaction = {decorate} }
}
\DeclareMathOperator{\lb}{lb}
\DeclareMathOperator{\cofib}{cofib}
\DeclareMathOperator{\Fun}{Fun}
\DeclareMathOperator{\Hom}{Hom}
\DeclareMathOperator{\Image}{Im}
\DeclareMathOperator{\Map}{Map}
\DeclareMathOperator{\Mor}{Mor}
\DeclareMathOperator{\Spec}{Spec}
\DeclareMathOperator*{\colim}{colim}
\DeclareMathOperator{\hyp}{hyp}
\DeclareMathOperator{\idem}{idem}
\DeclareMathOperator{\Real}{Re}
\DeclareMathOperator{\Cons}{Cons}
\newcommand{\ConsP}{\Cons_{P}}
\newcommand{\ConsS}{\Cons_{S}}
\newcommand{\ConsR}{\Cons_{R}}
\newcommand{\ConsQ}{\Cons_{Q}}
\newcommand{\ConsPhyp}{\ConsP^{\hyp}}
\newcommand{\ConsRhyp}{\ConsR^{\hyp}}
\newcommand{\ConsShyp}{\ConsS^{\hyp}}
\newcommand{\ConsQhyp}{\ConsQ^{\hyp}}
\newcommand{\Shhyp}{\Sh^{\hyp}}
\newcommand{\categ}[1]{\textbf{\textup{#1}}}
\newcommand{\Spc}{\categ{Spc}}
\newcommand{\Sp}{\categ{Sp}}
\newcommand{\Stratc}{\categ{ExStrat}}
\newcommand{\PrR}{\categ{Pr}^{\kern0.05em\operatorname{R}}}
\newcommand{\PrL}{\categ{Pr}^{\kern0.05em\operatorname{L}}}
\newcommand{\PrLomega}{\categ{Pr}^{\kern0.05em\operatorname{L},\omega}}
\newcommand{\PrLkappa}{\categ{Pr}^{\kern0.05em\operatorname{L},\kappa}}
\newcommand{\PrLRomega}{\categ{Pr}^{\kern0.05em\operatorname{L}, \operatorname{R},\omega}}
\newcommand{\PrLotimes}{\categ{Pr}^{\kern0.05em\operatorname{L},\otimes}}
\newcommand{\PrLat}{\categ{Pr}^{\kern0.05em\operatorname{L}, \operatorname{at}}}
\newcommand{\Cat}{\categ{Cat}}
\newcommand{\CAT}{\textbf{\textsc{Cat}}}
\newcommand{\Catidem}{\Cat_{\infty}^{\idem}}
\newcommand{\PosFib}{\categ{PosFib}}
\newcommand{\StStrat}{\categ{StStrat}}
\newcommand{\hCoCart}{\textbf{\textsc{CoCart}}}
\newcommand{\frakFil}{\mathfrak{Fil}}
\newcommand{\frakStokes}{\mathfrak{St}}
\newcommand{\Setcat}{\categ{Set}}
\newcommand{\Poset}{\categ{Poset}}
\newcommand{\Mod}{\mathrm{Mod}}
\newcommand{\PiinftySigma}{\Pi_{\infty}}
\newcommand{\bfSt}{\mathbf{St}}
\newcommand{\bfStflat}{\mathbf{St}^{\mathrm{flat}}}
\newcommand{\Ind}{\mathrm{Ind}}
\newcommand{\Filco}{\Fil^{\mathrm{co}}}
\newcommand{\CoCart}{\categ{CoCart}}
\newcommand{\PrFibL}{\categ{PrFib}^{\mathrm{L}}}
\begin{document}

\title{The derived moduli of Stokes data}

\author{Mauro PORTA}
\address{Mauro PORTA, Institut de Recherche Mathématique Avancée, 7 Rue René Descartes, 67000 Strasbourg, France}
\email{porta@math.unistra.fr}

\author{Jean-Baptiste Teyssier}
\address{Jean-Baptiste Teyssier, Institut de Mathématiques de Jussieu, 4 place Jussieu, 75005 Paris, France}
\email{jean-baptiste.teyssier@imj-prg.fr}
%\date{January 23, 2018}

\subjclass[2020]{}
\keywords{}

\begin{abstract}
The goal of this paper is to show that Stokes data coming from  flat bundles form a locally geometric derived stack locally of finite presentation.
This generalizes existing geometricity results on Stokes structures in four different directions: our result applies in any dimension, $\infty$-categorical coefficients are allowed, derived structures on moduli spaces are considered and more general spaces than those arising from flat bundles are permitted. 
\end{abstract}

\maketitle

%\personal{PERSONAL COMMENTS ARE SHOWN!!!}

\tableofcontents

\section{Introduction}

Let $(E,\nabla)$ be a rank $n$ algebraic flat bundle on a smooth complex algebraic variety $X$.
Then,  analytic continuation of the solutions of the differential system $\nabla =0$ gives rise to a representation $\rho \colon  \pi_1(X) \to \GL_n(\mathbb{C})$ called the \textit{monodromy representation}.
If  favourable  conditions are imposed, the data of  $\rho$  and  $(E,\nabla)$ are equivalent.
In that case $(E,\nabla)$ is called \textit{regular singular} \cite{Del} and this case is characterized by the fact that the formal solutions to  $\nabla = 0$ automatically converge.
In general, the monodromy representation is not enough to capture all the analytic information contained in $(E,\nabla)$.
As already seen by Stokes on the Airy equation \cite{Stokes_2009},  formal solutions to $\nabla = 0$ may not converge anymore,  but their interplay with analytic solutions is highly structured and gives rise to what is nowadays called a \textit{Stokes structure} or a \textit{Stokes filtered local system} \cite{DeligneLettreMalgrange,BV,Stokes_Lisbon}.
To picture it, let us suppose that $X$ is the affine line and let $S^1_{\infty}$ be the circle of directions emanating from $\infty$.
Then, the flat bundle $(E,\nabla)$ has \textit{good formal structure at $\infty$}, meaning roughly that when restricted to a formal neighbourhood of $\infty$, it decomposes as a direct sum of regular flat bundles twisted by rank one bundles.
The theory of asymptotic developments \cite{SVDP} then  ensures  the existence of a finite set  $\St(E,\nabla)\subset S^1_{\infty}$ of \textit{Stokes directions}  such that for every $d \notin \St(E,\nabla)$, any formal solution $\hat{f}$ to $\nabla = 0$ at $\infty$ lifts to  an analytic solution $f$ in some small enough sector $S$ containing $d$.
We also say that $\hat{f}$  is the asymptotic development of $f$.
By Cauchy's theorem, $f$ admits an analytic continuation to any sector obtained by rotating $S$.
However, the asymptotic development is not preserved under the analytic continuation procedure and may jump when crossing a Stokes line.
This is the \textit{Stokes phenomenon}.
In practice, these jumps are measured by matrices (one for each Stokes direction) called \textit{Stokes matrices}.
Note that Stokes matrices are subjected to choices of basis.  
To get a more intrinsic presentation, let $L$ be the local system of solutions to $\nabla =0$ on  $S^1_{\infty}$.  
Deligne and Malgrange observed in \cite{DeligneLettreMalgrange} that the Stokes phenomenon  is recorded by a filtration of $L$ by constructible subsheaves on $S^1_\infty$ indexed by $\cO_{\mathbb{P}^1,\infty}(\ast \infty)/\cO_{\mathbb{P}^1,\infty}$.
To define them, first observe that any arc $U \subset S^1_\infty$ gives rise to a sector $S(U)$ around $\infty$.
Then for $a\in  \cO_{\mathbb{P}^1,\infty}(\ast \infty)/\cO_{\mathbb{P}^1,\infty}$ and any arc $U$ in $S^1_\infty$,  we put
\[
L_{\leq a}(U) =\{\text{$f \in L$ such that $e^{-a} f$ has moderate growth at $\infty$ in the sector $S(U)$} \} \ .
\]
Although this filtration is indexed by an infinite dimensional parameter space, only a finite number of elements, called \textit{irregular values of $(E,\nabla)$} contribute in a non trivial way. 

\medskip

On the other hand, representations of the fundamental group naturally form an algebraic variety, the \textit{character variety}.
It is thus a natural question to ask whether Stokes structures also form an algebraic variety.
This question was answered in \cite{Boalch_curve,boalch2015twisted,huang2023moduli} in the curve case via GIT methods.
See also \cite[\S13]{Boalch_Topology_Stokes} and \cite{bezrukavnikov2022nonabelian} for a stacky variant in the curve case.
In dimension $\geq 2$, several major obstacles arise.
The first one is that good formal structures breaks down.
Still, Sabbah conjectured \cite{Stokes} that good formal structure can be achieved at the cost of enough blow-up above the divisor at infinity.
This problem was solved independently by Kedlaya \cite{Kedlaya1,Kedlaya2} and Mochizuki \cite{Mochizuki1,Mochizuki2}.
Furthermore, given a smooth compact algebraic variety $X$ and a normal crossing divisor $D$, Mochizuki attached to every flat bundle $(E,\nabla)$ on $U\coloneqq X\setminus D$ with good formal structure along $D$ a Stokes filtered local system $(L,L_{\leq})$ on the real blow-up $\pi \colon \widetilde{X}\rightarrow X$ along the components of $D$, and showed that the data of $(E,\nabla)$ and $(L,L_{\leq})$ are equivalent. 
Once strapped in this setting, a second major obstacle in dimension $\geq 2$ pertains to the \textit{stratified} nature of good formal structure.
To explain it, suppose that $X=\mathbb{C}^2$, let $D_1, D_2$ be the coordinate axis and let $D$ be their union. 
Then, very roughly, good formal structure holds separately on the formal neighbourhoods of $0$, $D_1 \setminus \{0\}$ and $D_2 \setminus \{0\}$.
In dimension 1, the points at infinity are isolated so their contributions to the moduli of Stokes structures don't interact and can thus be analysed separately.
In higher dimension, the contributions of $D_1 \setminus \{0\}$, $D_2 \setminus \{0\}$ and $0$ are necessarily intricated.
This in particular makes it unclear how  to use the moduli of Stokes torsors from \cite{TeySke} as smooth atlases in global situations.
In this paper, we generalize all known construction of the moduli of (possibly \textit{ramified}) Stokes filtered local systems in four different ways: 

\begin{enumerate}\itemsep=0.2cm
\item our result applies in any dimension;
\item $\infty$-categorical 
coefficients are allowed;
\item derived structures on moduli spaces are considered;
\item more general spaces than those arising from flat bundles are permitted. 
\end{enumerate}
For representability results along these lines in the De Rham side see \cite{pantev:hal-02003691}.

\medskip

The strongest versions of our theorems apply to the following situation:

\begin{situation-intro}\label{situation_intro}
	Let $X$ be a complex manifold admitting a smooth compactification.
	Let $D$ be a normal crossing divisor in $X$ and put $U\coloneqq X\setminus D$.
	Let $\pi \colon \widetilde{X}\to X$  be the real-blow up along $D$ (see \cref{real_blow_up}) and $j \colon U \to  \widetilde{X}$ the inclusion.
	Let $\mathscr{I} \subset (j_{\ast}\cO_{U})/ (j_{\ast}\cO_{U})^{\lb}  $ be a good sheaf of irregular values (see \cref{good sheaf}).
	A point $x\in \widetilde{X}$ with $\pi(x)\in D$ can be thought of as a line passing through $\pi(x)$ and a section of $\pi^{\ast}\mathscr{I}$ near $x$ as a meromorphic function defined on some small multi-sector emanating from $\pi(x)$.
	For two such sections $a$ and $b$, the relation
	\[
	\text{$a\leq_x b$ if and only if $e^{a-b}$ has moderate growth at $x$}
	\]
	defines an order on the germs of $\pi^{\ast}\mathscr{I}$ at $x$.
	This collection of orders upgrades $\pi^{\ast}\mathscr{I}$ into a sheaf of posets that turns out to be constructible for a suitable choice of finite subanalytic stratification $P$ of $\widetilde{X}$.
\end{situation-intro}

The starting point of our work is the observation that Mochizuki's notion of higher dimensional Stokes filtered local systems only depends on the stratified homotopy type of $(\widetilde{X},P)$ and on the $P$-constructible sheaf of posets $\mathscr I$.
Out of this data, we convert $\mathscr I$ into a more combinatorial object in two steps.
The first one is channeled by the \textit{topological exodromy equivalence}, originally envisioned by MacPherson and for which a rich literature is nowadays available \cite{Lurie_Higher_algebra,Exodromy_coefficient,jansen2023stratified,Beyond_conicality}.
This equivalence converts a (hyper)constructible (hyper)sheaf with respect to a stratification $Q$ of a topological space $Y$ into a functor from the $\infty$-category of \textit{exit paths} $\Pi_{\infty}(Y,Q)$ attached to $(Y,Q)$.
By design, the objects of $\Pi_{\infty}(Y,Q)$ are the points of $Y$ and the morphisms between two points $x$ and $y$ can be thought of as continuous paths $\gamma \colon  [0,1]\to Y$ from $x$ to $y$ such that $\gamma((0,1])$ lies in the same stratum as $y$.
In the setting of \cref{situation_intro}, $\pi^{\ast}\mathscr{I}$ thus corresponds via the exodromy equivalence to a functor $\Pi_{\infty}(\widetilde{X},P)\to \Poset$.
The second step of our conversion consists in passing to the associated cocartesian fibration in posets $\cI \to \Pi_{\infty}(\widetilde{X},P)$ via the Grothendieck construction \cite[Theorem 3.2.0.1]{HTT}.
This procedure is essentially combinatorial in nature and, while it requires heavy technology to be made sense of $\infty$-categorically, it is quite simple to grasp it in practice, as the following toy example illustrates:

\begin{eg-intro}\label{eg:Stokes_on_interval_I}
	On $Y = (0,1)$ consider the constructible sheaf of posets $\cI$ depicted as follows:
	\begin{center}
		\begin{tikzpicture}
			\draw[thin] (-1,-0.5) -- (3,-0.5) ;
			\draw[line width=3pt,opacity=0.2] (-1,-0.5) -- (0.95,-0.5) ;
			\fill (0,-0.5) circle (1.2pt) ;
			\draw[thin,fill=white] (1,-0.5) circle (1.5pt) ;
			\draw[line width=3pt,opacity=0.2] (1.05,-0.5) -- (3,-0.5) ;
			\fill (2,-0.5) circle (1.2pt) ;
			\node[below=2pt] at (0,-0.5) {$x_-$} ;
			\node[above=2pt,anchor=north] at (0,1.5) {$\begin{tikzcd}[row sep=small] b \arrow[-]{d} \\ a \end{tikzcd}$} ;
			\node[below=2pt] at (1,-0.5) {$x_0$} ;
			\node[above=2pt,anchor=north] at (1,1.5) {$\begin{tikzcd}[column sep=tiny,row sep=small] a \\ b \end{tikzcd}$} ;
			\node[below=2pt] at (2,-0.5) {$x_+$} ;
			\node[above=2pt,anchor=north] at (2,1.5) {$\begin{tikzcd}[row sep=small] a \arrow[-]{d} \\ b \end{tikzcd}$} ;
		\end{tikzpicture}
	\end{center}
	where we drew the Hasse diagrams of the corresponding poset (over $x_-$ we have $a \leqslant b$ and over $x_+$ we have $b \leqslant a$).
	The underlying sheaf of sets is the constant sheaf associated to the set $\{a,b\}$, and the stratification on $Y$ is given by the single point $x_0$.
	The following picture represents the exit path category of this stratification (bottom line), as well as the total space of the associated cocartesian fibration (upper diagram):
	\[ \begin{tikzcd}
		b_- & b_0 \arrow{r} \arrow{l} \arrow{dr} & b_+ \arrow{d} \\
		a_- \arrow{u} & a_0 \arrow{r} \arrow{l} \arrow{ul} & a_+ \\
		x_- & x_0 \arrow{r} \arrow{l} & x_+ \ .
	\end{tikzcd} \]
\end{eg-intro}

In this language, Stokes filtered local systems are special functors $F \colon  \cI \to \Mod_\C^\heartsuit$ that we call \textit{Stokes functors}, where $\Mod_\C^\heartsuit$ is the abelian category of $\mathbb{C}$-vector spaces.
We define Stokes functors with coefficients in any presentable $\infty$-category $\cE$. 
They are characterized by the following two conditions:

\medskip

\paragraph{\textbf{Splitting condition.}}

This condition is punctual.
For $x\in \widetilde{X}$,  let $\cI_x \in \Poset$ be the fibre of $\cI \to \Pi_{\infty}(\widetilde{X},P)$ above $x$ and consider the restricted functor $F_x \colon  \cI_x \to \cE$. 
Let $i_{\cI_x} \colon  \cI_x^{\ens}\to \cI_x$ be the underlying set of $\cI_x$.
Let $i_{\cI_x, ! } \colon   \Fun(\cI_x^{\ens},\cE)\to \Fun(\cI_x,\cE)$ be the left Kan extension of  $i_{\cI_x }^* \colon   \Fun(\cI_x,\cE)\to \Fun(\cI_x^{\ens},\cE)$.
Then $F_x$ is requested to lie in the essential image of $i_{\cI_x, ! }$.
Unravelling the definition, this means that there is   $V \colon   \cI_x^{\ens}\to \cE$ such that for  every $a\in \cI_x$, we have
\[
F_x(a)\simeq \bigoplus_{b\leq a \text{ in }\cI_x} V(b)   \ .
\]

\medskip

\paragraph{\textbf{Induction condition.}}

If $\gamma \colon  x\to y$ is an exit path for $(\widetilde{X},P)$, it pertains to a prescription of $F_y$  by $F_x$ via $\gamma$ referred  as \textit{induction} in \cite{Mochizuki1}.
If $\gamma \colon  \cI_x \to \cI_y$ is the  morphism of posets 
induced by  $\gamma \colon  x\to y$ and if $
\gamma_{ ! } \colon   \Fun(\cI_x,\cE)\to \Fun(\cI_y,\cE)$ is the 
left Kan extension of the pull-back $\gamma^* \colon   
\Fun(\cI_y,\cE)\to \Fun(\cI_x,\cE)$, Mochizuki's 
condition translates purely categorically into the requirement that the natural map $\gamma_!(F_x)\to F_y$ is an equivalence.

\begin{notation-intro}
	We write $\St_{\cI,\cE}$ for the full subcategory of $\Fun(\cI,\cE)$ consisting of those functors satisfying the splitting and the induction condition.
	We also write $\St_{\cI,\cE,\omega}$ for the full subcategory of $\St_{\cI,\cE}$ consisting of those Stokes functors $F$ taking values in the full subcategory $\cE^\omega$ of compact objects in $\cE$ (when $\cE = \Mod_\C^\heartsuit$, this means finite dimensional vector spaces).
\end{notation-intro}

\begin{rem-intro}\label{trivial_I_intro}
	When the sheaf $\mathscr{I} = \ast$ is trivial, the splitting condition is trivial and the induction condition is an instance of parallel transport from $x$ to $y$.
	So in this case, Stokes functors are nothing but local systems on $\widetilde{X}$.
	See \cref{Stokes_sheaf_trivial_fibration}.
\end{rem-intro}

\begin{rem-intro}[Stokes functors vs.\ Stokes filtered local systems]
	Write $p \colon \cI \to \Pi_\infty(X,P)$ for the structural morphism, and notice that $\Pi_\infty(X,P)$ is the cocartesian fibration associated to the trivial sheaf $\ast$.
	We prove in \cref{cor:stokes_functoriality_IHES} that the functor $p_!$ given by left Kan extension along $p$ produces a Stokes functor for the trivial sheaf $\ast$, and hence a local system on $\widetilde{X}$ by \cref{trivial_I_intro}.
	If $F$ is a Stokes functor, we write $|F| \coloneqq p_!(F)$ and we refer to it as the \emph{underlying local system}.
	The local system $|F|$ should be thought as equipped with a filtration given by the functor $F$ itself.
	Building on this perspective, we work out in \S\ref{subsec:comparison} a precise comparison between the notion of Stokes functor and that of classical Stokes filtered local system in dimension 1.
\end{rem-intro}

The following is the main result of this paper :

\begin{thm-intro}[\cref{geometricit_Stokes_classical_case}]\label{Main_theorem}
	Let $k$ be a (possibly animated) commutative ring.
	In the setting of \cref{situation_intro}, the derived prestack
	\[ \bfSt_{\cI} \colon \dAff_k\op \to  \Spc \]
	defined by the rule
	\[ \bfSt_{\cI}(\Spec(A)) \coloneqq (\St_{\cI,\Mod_A,\omega})^\simeq \]
	is locally geometric locally of finite presentation.
	Moreover, for every animated commutative $k$-algebra $A$ and every morphism
	\[ x \colon \Spec(A) \to \bfSt_{\cI} \]
	classifying a Stokes functor $F \colon \cI \to \Perf_A$, there is a canonical equivalence
	\[ x^\ast \mathbb T_{\bfSt_{\cI}} \simeq \Hom_{\Fun(\cI,\Mod_A)}( F, F )[1] \ , \]
	where $\mathbb T_{\bfSt_\cI}$ denotes the tangent complex of $\bfSt_\cI$ and the right hand side denotes the $\Mod_A$-enriched $\Hom$ of $\Fun(\cI,\Mod_A)$.
\end{thm-intro}

There are at least three reasons justifying the use of derived algebraic geometry.
First, it is sensitive to the full stratified homotopy type $\Pi_{\infty}(\widetilde{X},P)$ and not only its underlying 1-category.
In turn, this yields an interpretation of the cohomology of Stokes functors as cotangent complexes for $\bfSt_{\cI}$, leading to a better control of its infinitesimal theory than in the classical context.
Finally, by analogy with character varieties \cite{Goldman_symplectic,toen:hal-01891205} and the curve case \cite{Biquard_Boalch,Boalch_curve,boalch2015twisted,shende2020calabiyau}, we expect  $\bfSt_{\cI}$  to carry a shifted symplectic structure in the sense of \cite{PTVV_2013}, which is typically invisible from the viewpoint of classical algebraic geometry in dimension  $\geq 2$.
These aspects will be the topics of future works. 

\medskip

From the point of view of derived algebraic geometry, one of the main difficulty in proving \cref{Main_theorem} is that we need a very robust theory of Stokes functors with coefficients in derived $\infty$-categories. 
This is the case even for those who are solely interested in the special open substack of higher dimensional Stokes filtered local systems (see \cref{thm-intro:Stokes_filtered_local_systems} below), as in any case the \emph{derived} functor of point of this substack evaluated on a test derived affine involves Stokes functors with coefficients in a derived $\infty$-category.
One of the core results we obtain for the general theory of Stokes functors is the following:

\begin{thm-intro}[\cref{stability_lim_colim_ISt}]\label{stability_lim_colimi_thm}
	In the setting of \cref{situation_intro}, let $\cE$ be a presentable stable $\infty$-category.
	Then, the subcategory $\St_{\cI,\cE}\subset \Fun(\cI,\cE)$ is stable under limits and colimits.
\end{thm-intro}

Let us explain why \cref{stability_lim_colimi_thm} is striking.
Let $F_{\bullet} \colon  I \to \St_{\cI,\cE}$ be a diagram of Stokes functors and let $F\coloneqq  \lim F_i$ be its limit computed in $\Fun(\cI,\cE)$.
Then, for every $i\in I$ and every $x\in  \widetilde{X}$, the splitting condition for $F_i$ at $x$  yields an equivalence $F_{i,x}\simeq i_{\cI_x,!}(V_i)$ where $V_i \colon  \cI_x^{\ens} \to \cE$ is a functor.
Note that these equivalences are non canonical, so they typically cannot be rearranged into a diagram $V_{\bullet} \colon  I\to \Fun(\cI_x^{\ens}, \cE)$ realizing the splitting of $F$ at $x$.
What \cref{stability_lim_colimi_thm} says is that for Stokes stratified spaces coming from flat bundles, such a rearrangement exists.
As immediate corollary of \cref{stability_lim_colimi_thm}, we deduce the following 

\begin{thm-intro}[\cref{thm:St_presentable_stable} and \cref{Grothendieck_abelian}]\label{presentable_stable_thm}
	In the setting of \cref{stability_lim_colimi_thm}, the following hold;
	\begin{enumerate}\itemsep=0.2cm
		\item For every presentable stable $\infty$-category $\cE$, the $\infty$-category $\St_{\cI,\cE}$ is presentable stable.

		\item For every Grothendieck abelian category $\cA$, the category $\St_{\cI,\cA}$ is Grothendieck abelian.
	\end{enumerate}
\end{thm-intro}

When $\cA$ is the category of vector spaces over a field, (2) reproduces a theorem of Sabbah \cite[Corollary 9.20]{Stokes_Lisbon}.
This is again striking since over a point, Stokes functors  neither form a presentable stable $\infty$-category nor an abelian category.

\medskip

However, the proof of \cref{Main_theorem} requires a deeper analysis.
Using Theorems~\ref{stability_lim_colimi_thm} and \ref{presentable_stable_thm}, we identify the derived prestack $\bfSt_{\cI}$ with To\"en-Vaquié moduli of objects of $\St_{\cI,\Mod_k}$.
The main result of \cite{Toen_Moduli} implies therefore \cref{Main_theorem}, \textit{provided that} we can prove that $\St_{\cI,\Mod_k}$ is an $\infty$-category of finite type (see \cref{finite_type_defin}).
In other words, we reduce the proof of \cref{Main_theorem} to the following more fundamental result:

\begin{thm-intro}[\cref{finite_typeness}]\label{smooth_nc_space}
	In the setting of \cref{situation_intro}, let $\cE$ be a $k$-linear presentable stable $\infty$-category of finite type (see \cref{finite_type_defin}).
	Then, $\St_{\cI,\cE}$ is $k$-linear of finite type.
	In particular, $\St_{\cI,\cE}$ is a smooth non-commutative space over $k$.
\end{thm-intro}

The proof of this theorem, which is undoubtedly the core result of this paper, relies on three ingredients.
The first is an adaptation and amplification of the standard dévissage technique for Stokes structures based on level structures.
We will summarize the main ideas below, even though the proof of the main categorical result needed to enact the level induction is carried out in the companion paper \cite{Abstract_Derived_Stokes}.
The second key ingredient is a finiteness result for the stratified homotopy type of compact $\R$-analytic manifolds equipped with finite subanalytic stratifications.
This finiteness result has been obtained by the authors in collaboration with P.\ Haine in \cite[Theorems 0.4.2 \& 0.4.3]{Beyond_conicality}. 
It provides a generalization to the stratified setting of theorems of Lefschetz–Whitehead, Łojasiewicz and Hironaka on the finiteness of the underlying homotopy types of compact subanalytic spaces and real algebraic varieties.
The last ingredient is a careful analysis of the geometry of \cref{situation_intro}, carried out in the paper at hand and which led us to introduce the notion of elementarity and its variants (see \cref{elementary_intro} below).
In this respect, the main results we obtain is a spreading out property for Stokes functors, proved in \cref{thm:spreading_out} and the elementarity criterion described below in \cref{thm-intro:induction_for_adm_Stokes_triple}.

\medskip

One could package the above results in the following

\begin{slogan-intro}
	For Stokes stratified spaces coming from flat bundles, the $\infty$-category  of Stokes functors inherits the properties of its coefficients.
\end{slogan-intro}

Note that the moduli functor from \cref{Main_theorem} parametrizes ``Stokes filtered perfect local systems''.
From this perspective,  classical Stokes filtered local systems correspond to objects concentrated in degree $0$.
It turns out that these can also be organized into an open substack $\bfStflat_{\cI,k} \subset \bfSt_{\cI,k}$ satisfying the following:

\begin{thm-intro}[\cref{thm:moduli_Stokes_vector_bundles}]\label{thm-intro:Stokes_filtered_local_systems}
	In the setting of \cref{Main_theorem}, the derived prestack $\bfStflat_{\cI,k} $ is a derived $1$-Artin stack locally of finite type.
\end{thm-intro}

In particular, the truncation of $\bfStflat_{\cI,k}$, namely its restriction to discrete $k$-algebra is an Artin stack locally of finite type in the classical sense.
See \cref{thm:comparison} and \cref{rem:comparison} for a comparison with wild character stacks and varieties in dimension $1$.

\medskip

The proofs of the main categorical results (Theorems \ref{stability_lim_colimi_thm} and \ref{smooth_nc_space}) heavily rely on a standard dévissage procedure in the classical theory of Stokes structures, known as level induction.
Before saying something more in this regard, let us emphasize that although stated in the context coming from flat bundles, the above theorems (\cref{Main_theorem} included) hold more generally for what we call \textit{families of Stokes analytic stratified spaces locally admitting a piecewise elementary level structure}.
To explain this, let us introduce the following:

\begin{defin-intro}\label{Stokes_stratified_intro}
	Let $M$ be a manifold. 
	Let $X\subset M$ be a locally closed subanalytic subset and let $X\to P$ be a subanalytic stratification.
	A \textit{Stokes fibration} over $(X,P)$ is a cocartesian fibration in posets $\cI \to \Pi_{\infty}(X,P)$.
	The data of $(X,P,\cI)$ is referred to as a \textit{Stokes analytic stratified space}.
\end{defin-intro}

Similarly to Stokes lines, one can define the Stokes loci:

\begin{defin-intro}\label{Stokes_locus_intro}
	Let $(X,P,\cI)$  be a Stokes analytic stratified space and let $a,b$ be cocartesian sections of $\cI \to \Pi_{\infty}(X,P)$.
	Then, the \textit{Stokes locus of $\{a,b\}$} is the set of points $x\in X$ such that $a_x$ and $b_x$ cannot be compared in $\cI_x$.
\end{defin-intro}

Elementarity is a crucial (albeit rare) property that in some sense provides the basis of the level induction procedure mentioned above.

\begin{defin-intro}\label{elementary_intro}
	We say that a Stokes analytic stratified space $(X,P,\cI)$ is \textit{elementary} if for every presentable stable $\infty$-category $\cE$, the  left Kan extension $i_{\cI ! } \colon   \Fun(\cI^{\ens},\cE)\to \Fun(\cI,\cE)$ induces an equivalence between $\St_{\cI^{\ens},\cE}$ and $\St_{\cI,\cE}$.
\end{defin-intro}

\begin{eg-intro}
	The Stokes analytic stratified space of \cref{eg:Stokes_on_interval_I} is elementary.
	See \cref{eg:1_dimensional} for a detailed explanation.
\end{eg-intro}

As for the inductive step, we introduce the following axiomatization of level structures in the classical theory of Stokes structures:

\begin{defin-intro}\label{level_intro_def}
Let $(X,P,\cI)$ be a Stokes analytic stratified space and let $p\colon \cI \to \cJ$ be a morphism of Stokes fibrations over $(X,P)$.
We say that $p\colon \cI \to \cJ$ is a \textit{level morphism} if for every $x\in X$ and every $a,b\in \cI_x$, we have 
\[ p(a) < p(b) \text{ in } \cJ_x \Rightarrow a < b \text{ in } \cI_x \ . \]
\end{defin-intro}

If we consider the fibre product $\pi \colon \cI_p \coloneqq  \cJ^{\ens} \times_{\cJ} \cI\to \cJ^{\ens}$, the classical level dévissage is traditionally used to reduce the study of $(X,P,\cI)$ to that of $(X,P,\cJ)$ and $(X,P,\cI_p)$.
This is effective since the level morphisms naturally occurring classically are so that $\cJ$ has less objects than $\cI$ while $\cI_p$ comes with extra properties.
This reduction procedure has a purely categorical explanation, which seems to be new already in the classical setting:

\begin{thm-intro}[{\cite[\cref*{Abstract_Stokes-prop:Level_induction}]{Abstract_Derived_Stokes}}]\label{level_dévissage_intro}
Let $(X,P,\cI)$ be a Stokes analytic stratified space and let $p\colon \cI \to \cJ$ be a level graduation morphism of Stokes fibrations over $(X,P)$.
Let $\cE$ be a presentable stable $\infty$-category.
Then, there is a pullback square
	\[ \begin{tikzcd}
		\St_{\cI, \cE} \arrow{r} \arrow{d}& \St_{\cJ,\cE} \arrow{d}\\
		\St_{\cI_p, \cE} \arrow{r} & \St_{\cJ^{\ens}, \cE}
	\end{tikzcd} \]
	in $ \CAT_{\infty} $.
\end{thm-intro}
The extra property of $(X,P,\cI_p)$ alluded to is what we call \textit{piecewise elementary} (see \cref{defin_piecewise_elementary}).
In a nutshell, it means that every point admits a subanalytic closed neighbourhood $Z$ such that the induced Stokes analytic stratified space $(Z,P,\cI_p|_Z)$ is elementary in the sense of \cref{elementary_intro}.
That one can find such  cover is typically possible when the differences of irregular values have the same pole order.
This follows from the following result, whose statement is inspired from \cite[Proposition 3.16]{MochStokes}:

\begin{thm-intro}[\cref{cor_induction_for_adm_Stokes_triple}]\label{thm-intro:induction_for_adm_Stokes_triple}
Let $(C,P,\cI)$ be a Stokes analytic stratified space in finite posets where $C\subset \mathbb{R}^n$ is a polyhedron and $\cI^{\ens}\to \Pi_{\infty}(C,P)$ is locally constant.
Assume that for every distinct cocartesian sections $a,b$ of $\cI\to \Pi_{\infty}(C,P)$, there exists a non zero affine form $\varphi \colon  \mathbb{R}^n \to \mathbb{R}$ such that 
\begin{enumerate}\itemsep=0.2cm
\item The Stokes locus of $\{a,b\}$  is $C\cap \{\varphi = 0\}$ (see \cref{Stokes_locus_intro}).

\item $C\setminus  \{\varphi = 0\}$  admits exactly two connected components $C_1$ and $C_2$.

\item $a_x < b_x$ in $\cI_x$ for every $x\in C_1$ and $a_x < b_x$ for every $x\in C_2$.
\end{enumerate}
Then $(C,P,\cI)$ is elementary.
\end{thm-intro}

\medskip

\paragraph*{\textbf{Linear overview}}

After reviewing the exodromy equivalence of \cite{Exodromy_coefficient,Beyond_conicality} in \S\ref{exodromy_recollection}, we introduce the notion of \emph{Stokes stratified space} and the related notion of \emph{Stokes loci} in \S\ref{sec:Stokes_stratified_spaces}.
Via the exponential construction, we introduce the constructible sheaf of Stokes data in \S\ref{sec:Stokes_sheaf}.
\S\ref{Global} is meant as a toolbox compiling the useful working properties of Stokes functors that can be extracted from \cite{Abstract_Derived_Stokes}.
In \S\ref{subsec:elementarity}, we introduce the fundamental notion of elementarity and its variants and we later prove a spreading out theorem for elementary subsets in the setting of Stokes analytic stratified spaces (see \cref{thm:spreading_out}).
Assuming the existence of a ramified piecewise linear level structure, we prove the main theorems concerning Stokes functors: that they form a presentable stable $\infty$-category (see \cref{thm:St_presentable_stable}), their non-commutative smoothness (see \cref{finite_typeness}) and the representability of the derived stack of Stokes structures (see \cref{Representability_via_toen_vaquie}).
In \S\ref{sec:polyhedral}, we develop the elementarity criterion based on the geometry of the Stokes loci (see \cref{cor_induction_for_adm_Stokes_triple}) and in \S\ref{sec:flat_bundles} we study the Stokes stratified spaces arising from flat bundles, notably establishing the existence of ramified piecewise linear level structures (see \cref{strongly_proper_piecewise_level_structure}).
Finally, in \cref{subsec:comparison}, we specialize in the setting of dimension $1$ and compare our construction with the one of \cite{Boalch_Topology_Stokes}.

\medskip

\paragraph*{\textbf{Acknowledgments}}

We are grateful to Enrico Lampetti, Guglielmo Nocera, Tony Pantev, Marco Robalo and Marco Volpe for useful conversations about this paper.
We especially thank Peter J.\ Haine for fruitful collaborations on the exodromy theorems.
We thank the Oberwolfach MFO institute that hosted the Research in Pairs ``2027r: The geometry of the Riemann-Hilbert correspondence''.
We also thank the CNRS for delegations and PEPS ``Jeunes Chercheurs Jeunes Chercheuses'' fundings, as well as the ANR CatAG from which both authors benefited during the writing of this paper.

\begin{notation}
In this paper,  $\cE$ will denote a presentable stable $\infty$-category.
\end{notation}

%\part{The constructible sheaf of Stokes data} \label{part:constructible_sheaf_Stokes}

%In this part, we introduce the main geometrical object of interest of the paper: Stokes stratified spaces, i.e.\ stratified spaces equipped with a constructible sheaf of posets.
%This sheaf of posets allows to define \emph{Stokes loci} (see \cref{Stokes_locus}), and to introduce the constructible sheaf of \emph{Stokes functors} (see \cref{def:Stokes_sheaf}).
%Much of the later parts of this paper, will be devoted to understand the properties of the global sections of this sheaf.

\section{Stratified spaces and constructible sheaves}\label{exodromy_recollection}

We begin giving a brief review of the exodromy correspondence \cite{Exodromy_coefficient,Beyond_conicality}.

\subsection{Atomic generation}
Let $\cC$ be a presentable $\infty$-category.
Recall that an object $c \in \cC$ is \textit{atomic} if the functor
\[
	\Map_{\cC}(c,-) \colon \cC\to  \Spc
\]
preserves \textit{all} colimits.
Write $\cC^{\at} \subset \cC$ for the full subcategory spanned by atomic objects.
We say that $\cC$ is \textit{atomically generated} if the unique colimit-preserving extension
\[
	 \PSh(\cC^{\at}) \hookrightarrow  \cC
\]
of $\cC^{\at} \subset \cC$ along the Yoneda embedding is an equivalence.

\subsection{Stratifications and hyperconstructible hypersheaves}

\begin{recollection}
If $P$ be a poset, we endow $P$ with the topology whose open subsets are the closed upward subsets $Q\subset P$.
     That is for every $a\in Q$ and $b\in P $ such that $b\geqslant a$, we have $b\in Q$.
\end{recollection}

\begin{defin}\label{notation_defin}
	Let $X$ be a topological space.
	Let $P$ be a poset.
	A  \textit{stratification of $X$ by $P$} is a continuous morphism $ X\to P$.
\end{defin}

\begin{rem}
	We abuse notations by denoting a stratification of $X$ by $P$ as $(X,P)$ instead of $ X\to P$ and refer to $(X,P)$ as a stratified space.
	The collection of stratified spaces organize into a category in an obvious manner.
\end{rem}

%\begin{eg}\label{family_gives_stratification}
%Let $X$ be a topological space.
%Let $\cA$ be a finite family of closed subsets of $X$.
%Then, the map $p_{\cA} \colon  X\to \Fun(\cA,\Delta^1)$  sending $x\in X$ to the function $\chi_x \colon \cA \to \Delta^1$ defined by
%\[ \chi_x(F) \coloneqq \begin{cases}
%	0 & \text{if } x \in F \\
%	1 & \text{otherwise}
%\end{cases} \]
%is a stratification of $X$.
%We note by $(X,\cA)$ the associated stratified space.
%\end{eg}

%\begin{rem}\label{divisor_gives_stratification}
%When $X$ is a manifold and  $\cA$ is  the set of irreducible components of a strict normal crossing divisor $D$, we note $(X,D)$ instead of $(X,\cA)$.
%\end{rem}

\begin{defin}
	Let $(X,P)$ be a  stratified space.
%	Let $\cE$ be a presentable $\infty$-category.
	An hypersheaf $F \colon \mathrm{Open}(X)\op\to \cE$ with value in $\cE$ is \textit{hyperconstructible} if for every $p \in P$, the hypersheaf $i_p^{\ast,\mathrm{hyp}}(F)$ is locally hyperconstant on $X_p$, where $i_p \colon X_p \to X$ denotes the canonical inclusion.
	We denote by 
	$$
	\ConsPhyp(X;\cE)\subset\Shhyp(X;\cE)
	$$ 
	the full-subcategory spanned by hyperconstructible hypersheaves on $(X,P)$.
\end{defin}

\subsection{Exodromic stratified spaces}

Following \cite{Clausen_Jansen,Beyond_conicality} we introduce the following

\begin{defin}
	A stratified space $(X,P)$ is said to be  \emph{exodromic} if it satisfies the following conditions:
	\begin{enumerate}\itemsep=0.2cm
		\item the $\infty$-category $\ConsPhyp(X)$ is atomically generated;
		\item the full subcategory $\ConsPhyp(X) \subset \Shhyp(X)$ is closed under limits and colimits;
		\item the functor $p^\ast \colon \Fun(P,\Spc) \to \ConsPhyp(X)$ commutes with limits.
	\end{enumerate}
	We denote by $\Stratc$ the category of exodromic stratified spaces with stratified morphisms between them.
\end{defin}

\begin{eg}[{\cite[Theorem 5.18]{Exodromy_coefficient}}]\label{conical_is_exodromic}
Every conically stratified space with locally weakly contractible strata is exodromic.
\end{eg}

\begin{defin}
	Let $(X,P)$ be an exodromic stratified space.
	We define the $\infty$-category of exit paths $\PiinftySigma(X,P)$ as the opposite of  the full subcategory of $\ConsPhyp(X)$ spanned by atomic objects.
\end{defin}

\begin{recollection}
Let $f \colon (X,P) \to (Y,Q)$ be a morphism between exodromic stratified spaces.
By \cite[Theorem 3.2.3]{Beyond_conicality} the functor $f^{\ast,\hyp} \colon \ConsQhyp(Y) \to \ConsPhyp(X)$ admits a left adjoint
\[ f^{\hyp}_\sharp \colon \ConsPhyp(X) \to  \ConsQhyp(Y) \]
preserving atomic objects.
It therefore induces a well defined functor
\[ \PiinftySigma(f) \colon \PiinftySigma(X,P) \to  \PiinftySigma(Y,Q) \ . \]
Using the equivalence $\PrLat \simeq \Catidem$, this can be promoted to a functor
\[ \PiinftySigma \colon \Stratc \to  \Cat_\infty \ . \]
\end{recollection}

\begin{recollection}\label{exodromy_functorialities}
For $(X,P)\in \Stratc $, there is a canonical equivalence 
\begin{equation}\label{exodromy_equivalence}
\Fun(\PiinftySigma(X,P), \Cat_\infty) \simeq \ConsPhyp(X,\Cat_\infty) 
\end{equation}
referred to as the \textit{exodromy equivalence}.
By \cite[Theorem 0.3.1]{Beyond_conicality}, the exodromy equivalence is  functorial.
Namely for every morphism $f \colon (X,P) \to (Y,Q)$  between exodromic stratified spaces, the following square 
	\[ \begin{tikzcd}
		\Fun(\PiinftySigma(Y,Q), \Cat_\infty)  \arrow{r}{\sim} \arrow{d}{\PiinftySigma(f)^*}  & \ConsQhyp(Y,\Cat_\infty) \arrow{d}{f^{\hyp,\ast}} \\
	\Fun(\PiinftySigma(X,P), \Cat_\infty) \arrow{r}{\sim} & \ConsPhyp(X,\Cat_\infty) 
	\end{tikzcd} \]
commutes.	
In particular, if $\cF \in \ConsPhyp(X,\Cat_\infty)$  corresponds to $F \colon \PiinftySigma(X,P) \to \Cat_\infty$ trough the exodromy equivalence, then are canonical equivalences
$\cF(X)\simeq \displaystyle{\lim_{\PiinftySigma(X,P) }} F$
and $\cF_x \simeq F(x)$ for every $x\in X$.
\end{recollection}

\begin{rem}\label{exodromy_PrL}
The exodromy equivalence and its functorialities also hold with coefficients in $\PrL$ (see \cite[Proposition 4.2.5]{Beyond_conicality}).
%\cite[\cref*{beyond-cor:exodromy_PrL_functoriality}]{Beyond_conicality}
\end{rem}

\begin{prop}[{\cite[Theorem 3.3.6]{Beyond_conicality}}]\label{refinement_localization}
	Let $(X,P)$ be a stratified space and let $R\to P$ be a refinement such that $(X,R)$ is exodromic.
     Then,  $(X,P)$ is exodromic and the induced functor 
   \begin{equation} \label{eq:refinement_localization}
	\Pi_{\infty}(X,R) \to  \Pi_{\infty}(X,P)
	\end{equation}
	exhibits $ \Pi_{\infty}(X,P)$ as the  localization of $\Pi_{\infty}(X,R)$ at the set of arrows sent to equivalences by $\Pi_{\infty}(X,R) \to R\to P$.
	In particular,   \eqref{eq:refinement_localization} is final and cofinal.
\end{prop}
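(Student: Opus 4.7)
My plan is to identify $\Pi_\infty(X,P)$ concretely as a localization of $\Pi_\infty(X,R)$ by working through the exodromy equivalence for $(X,R)$ and then verifying the three axioms of exodromicity for $(X,P)$ from the resulting description.

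First I would observe that a $P$-constructible hypersheaf is automatically $R$-constructible since $R$ refines $P$; hence the inclusion $\ConsPhyp(X;\cE) \hookrightarrow \ConsRhyp(X;\cE)$ is well defined. Via the exodromy equivalence for $(X,R)$ this realizes $\ConsPhyp(X;\cE)$ as a full subcategory of $\Fun(\Pi_\infty(X,R), \cE)$. I would then show that a functor $\widetilde F \colon \Pi_\infty(X,R) \to \cE$ corresponds to a $P$-constructible hypersheaf if and only if it sends to equivalences every exit path $\gamma \colon x \to y$ of $(X,R)$ whose image under $\Pi_\infty(X,R) \to R \to P$ is an identity—i.e.\ precisely the class $W$ of the statement. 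This follows by applying exodromy stratum by stratum: locally hyperconstancy of $i_p^{\ast,\hyp} F$ on a $P$-stratum $X_p$ corresponds, via exodromy for $X_p$ with its induced $R$-stratification, to inverting all exit paths staying inside $X_p$.

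By the universal property of $\infty$-categorical localization, the above identification rewrites as
\[ \ConsPhyp(X;\cE) \simeq \Fun(\Pi_\infty(X,R)[W^{-1}], \cE). \]
Setting $\widetilde\Pi \coloneqq \Pi_\infty(X,R)[W^{-1}]$, I then verify the three exodromy axioms for $(X,P)$: atomic generation comes for free since $\Fun(\widetilde\Pi\op, \Spc)$ is a presheaf $\infty$-category, atomically generated by representables; closure of $\ConsPhyp(X) \subset \Shhyp(X)$ under limits and colimits follows from the corresponding property for $\ConsRhyp(X)$ (given by exodromicity of $(X,R)$) combined with the evident fact that $W$-inverting functors are stable under all limits and colimits computed pointwise in $\Fun(\Pi_\infty(X,R), \cE)$; and the limit preservation of $p^\ast \colon \Fun(P,\Spc) \to \ConsPhyp(X)$ is induced by the composite $\widetilde\Pi \to P$, reducing the claim to the (obvious) statement that precomposition with a functor of $\infty$-categories preserves limits.

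Once $(X,P)$ is established exodromic, the identification $\Pi_\infty(X,P) \simeq \widetilde\Pi$ is tautological from the definition as the opposite of atomics in $\ConsPhyp(X)$, giving the localization assertion. For the final finality/cofinality claim, I would argue using \cref{exodromy_functorialities}: given any $F \colon \Pi_\infty(X,P) \to \cE$, both $\colim_{\Pi_\infty(X,R)}$ of its restriction and $\colim_{\Pi_\infty(X,P)} F$ compute the global sections of the corresponding hypersheaf (which is intrinsic, independent of the refinement level), so the pullback along $\Pi_\infty(X,R) \to \Pi_\infty(X,P)$ preserves colimits of such diagrams, yielding cofinality; the argument for initiality is dual using limits. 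The main obstacle will be Step~1: the precise compatibility of the exodromy equivalence with stratum inclusions, specifically the identification of the local hyperconstancy condition on $X_p$ with the $W$-inversion condition, which requires carefully invoking the functoriality of $\Pi_\infty$ together with the fact that the induced stratification on a single $P$-stratum by its refining $R$-strata is still exodromic.
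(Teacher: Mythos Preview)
The paper does not supply a proof of this proposition: it is stated as a citation of \cite[Theorem 3.3.6]{Beyond_conicality}, so there is no in-paper argument to compare against. Your outline is therefore an independent attempt at the result, and the overall strategy---identify $\ConsPhyp(X)$ inside $\ConsRhyp(X)$ as the $W$-inverting functors, then read off the three exodromy axioms from the resulting description as $\Fun(\Pi_\infty(X,R)[W^{-1}],\Spc)$---is the natural one and is essentially how the cited reference proceeds.

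That said, two points deserve attention. First, your Step~4 glosses over an idempotent-completion issue: by definition $\Pi_\infty(X,P)$ is the opposite of the atomic objects in $\ConsPhyp(X)$, hence is idempotent complete, whereas the abstract localization $\Pi_\infty(X,R)[W^{-1}]$ need not be. What you actually obtain from $\ConsPhyp(X)\simeq\Fun(\widetilde\Pi,\Spc)$ is that $\Pi_\infty(X,P)$ is the idempotent completion of $\widetilde\Pi$; you then need an extra word to conclude the localization statement as phrased (or to note that for all downstream uses---finality, computing limits and colimits---this makes no difference). Second, your finality/cofinality argument is garbled: global sections correspond to the \emph{limit} over $\Pi_\infty$, not the colimit, so the sentence ``$\colim$ computes global sections, yielding cofinality'' does not parse. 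The clean route, which the paper itself invokes elsewhere (see the proof of \cref{exellence_and_refinement}, citing \cite[7.1.10]{Cisinski_Higher_Category}), is simply that any localization functor is automatically both final and cofinal; no appeal to the sheaf-theoretic interpretation is needed.

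Finally, the obstacle you flag in Step~1---that each $P$-stratum $X_p$ with its induced $R$-stratification is itself exodromic, and that exodromy is compatible with the stratum inclusion---is real and is exactly where the work lies in the cited reference; it is not something you can simply assert.
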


%\begin{rem}[{\cite[Proposition A.3.16]{Beyond_conicality}}]\label{finiteness_localization}
%In the setting of \cref{refinement_localization}, the $\infty$-category $ \Pi_{\infty}(X,P)$ is compact (resp. finite) if $\Pi_{\infty}(X,R)$ is compact (resp. finite). 
%\end{rem}

\begin{defin}[{\cite[Definition 5.2.4]{Beyond_conicality}}]\label{conically_refineable}
Let $(X,P)$ be a stratified space.
We say that $(X,P)$ is \textit{conically refineable} if there exists a refinement $R\to P$ such that $(X,R)$ is conically stratified with locally weakly contractible strata.
\end{defin}

\begin{rem}\label{conically_refineable_implies_exodromic}
A conically refineable stratified space is exodromic in virtue of \cref{conical_is_exodromic} and \cref{refinement_localization}.
\end{rem}

\begin{defin}\label{defin_final}
Let $(X,P)$ be an exodromic stratified space.
Let $Z\subset X$ be a locally closed subset such that $(Z,P)$ is exodromic.
Let $U\subset X$ be an open neighbourhood of $Z$.
We say that \textit{$U$ is final at $Z$} if $(U,P)$ is exodromic and if the  functor 
\[
\Pi_{\infty}(Z,P) \to  \Pi_{\infty}(U,P)
\]
is final.
\end{defin}

%\begin{prop}[{\cite[Corollary 3.4.5]{Beyond_conicality}}]\label{étale_van_kampen}
%Let $(X,P)$ be an exodromic stratified space.
%Let $U_{\bullet}$ be an étale hypercover of $X$ such that $(U_n,P)$ is exodromic for every $[n] \in \mathbf \Delta$.
%Then, the canonical functor
%\[
%\colim \Pi_{\infty}(U_{\bullet},P)\to \Pi_{\infty}(X,P)
%\]
%is an equivalence.
%\end{prop}

%\personal{The next remark being unecessary, it got silenced.}
%\begin{rem}
%	In the setting of \cref{defin_final}, if $U$ is final at $Z$, then the inclusion $Z \hookrightarrow U$ is a homotopy equivalence.
%	Indeed, \cite[Proposition 4.1.1.3-(3)]{HTT} guarantees that $\Pi_\infty(Z,P) \to \Pi_\infty(U,P)$ is an equivalence after passing to the enveloping $\infty$-groupoid.
%	But \cref{refinement_localization} implies that
%	\[ \mathrm{Env}(  \Pi_\infty(Z,P) ) \simeq \Pi_\infty(Z) \qquad \text{and} \qquad \mathrm{Env}( \Pi_\infty(U,P) ) \simeq \Pi_\infty(U) \ , \]
%	so the conclusion follows.
%\end{rem}

\begin{defin}[{\cite[Definition 2.3.2]{Exodromy_coefficient}}]\label{Excellent_at_S_stratified space}
		Let $(X,P)$ be an exodromic stratified space.
Let $Z\subset X$ be a locally closed subset such that $(Z,P)$ is exodromic.
	We say that $(X,P)$ is \emph{final at $Z$} if the collection of final at $Z$ open neighbourhoods of $Z$ forms a fundamental system of neighbourhoods of $Z$.
\end{defin}

\begin{defin}\label{def:hereditary_excellent}
Let $(X,P)$ be an exodromic stratified space.
Let $Z\subset X$ be a locally closed subset such that $(Z,P)$ is exodromic.
	We say that $(X,P)$ is \emph{hereditary final at $Z$} if  for every open subset $U \subseteq X$, the stratified space $(U,P)$ is final at $U\cap Z$.
\end{defin}

\subsection{Triangulations and hereditary finality} \label{subsec:triangulations_hereditary_excellent}

The goal of this subsection is to prove some hereditary final property for stratified spaces admitting a locally finite triangulation.
Before doing this, we need intermediate notations and lemmas.

\medskip

Let $K=(V,F)$ be a simplicial complex. 
We denote by $|K|$ the \textit{geometric realization of $K$}.
By construction, a point in $|K|$ is a function  $x\colon  V\to [0,1]$ supported on a face of $K$ and such that $\sum_{v\in V} x(v)=1$.
Let us endow the set of faces $F$ of $K$ with the inclusion. 
Let $\Supp_K  \colon  |K|\to F$ be the support function.

\begin{thm}[{\cite[Theorem A.6.10]{Lurie_Higher_algebra}}]\label{Exit_simplicial_complex}
Let $K=(V,F)$ be a locally finite simplicial complex. 
The stratified space $(|K|,F)$ is conically stratified with contractible strata and the structural morphism 
\[
	\Pi_{\infty}(|K|,F)\to  F
\]
is an equivalence of $\infty$-categories. 
\end{thm}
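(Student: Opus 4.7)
The plan is to first verify that $(|K|,F)$ is conically stratified with locally weakly contractible strata---so that \cref{conical_is_exodromic} automatically yields exodromicity---and then to identify $\Pi_{\infty}(|K|,F)$ with the poset $F$ via the exodromy equivalence.

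\emph{Conicality and contractibility of strata.} Each stratum $|K|_\sigma$ is the relative interior of the affine simplex $\sigma$, hence convex and in particular contractible. For the conical structure around $x \in |K|_\sigma$, the open star of $\sigma$ in $K$ provides an open neighbourhood stratified-homeomorphic to the product $|K|_\sigma \times C(|\mathrm{Link}(\sigma,K)|)$, where $C$ denotes the open cone and $\mathrm{Link}(\sigma,K)$ is the link simplicial complex, stratified by its own faces. Local finiteness of $K$ ensures that such open stars form a neighbourhood basis at $x$. An induction on dimension identifies the link as a conically stratified space in turn, completing the verification.

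\emph{Equivalence with $F$.} By the exodromy equivalence \eqref{exodromy_equivalence}, it suffices to prove that the pullback functor
\[ p^\ast \colon \Fun(F, \Cat_\infty) \longrightarrow \Cons_F^{\hyp}(|K|, \Cat_\infty) \]
along the structural map $p \colon |K| \to F$ is an equivalence. Fully faithfulness follows from contractibility of the strata: the stalk of $p^\ast G$ at $x \in |K|_\sigma$ is canonically $G(\sigma)$, and contractibility of $|K|_\sigma$ recovers all of $G$ from the corresponding hypersheaf on global sections over an open star. For essential surjectivity, given $\cF \in \Cons_F^{\hyp}(|K|, \Cat_\infty)$, one sets $G(\sigma) \coloneqq \cF_x$ for any $x \in |K|_\sigma$ (well-defined up to canonical equivalence by contractibility), with transition maps $G(\sigma) \to G(\tau)$ for $\sigma \leq \tau$ supplied by the specialization maps of $\cF$ along the cone coordinate. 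The comparison $p^\ast G \to \cF$ can then be checked on the open stars, where the product-with-cone structure plus contractibility of the deepest stratum propagate the equivalence from $|K|_\sigma$ to the entire star.

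The main obstacle is verifying that the locally defined comparisons glue into a global equivalence and that the constructions cohere across overlapping open stars. Local finiteness of $K$ is essential here: it allows one to run the induction cell-by-cell on a finite subcomplex and then pass to the filtered colimit over the (cofinal) system of finite subcomplexes of $K$, exploiting the fact that both $\Fun(F,\Cat_\infty)$ and $\Cons_F^{\hyp}(|K|,\Cat_\infty)$ are compatible with this filtration along the restriction functors induced by inclusions of subcomplexes.
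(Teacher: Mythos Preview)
The paper does not supply its own proof of this statement: it is quoted verbatim from \cite[Theorem~A.6.10]{Lurie_Higher_algebra}, and the next numbered item is a definition. So there is nothing in the paper to compare your argument against; what remains is to assess your sketch on its own terms and against Lurie's original proof.

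Your treatment of conicality and contractibility of strata is fine and standard. The difficulty lies in the second half. In your essential surjectivity step you propose to build a functor $G \colon F \to \Cat_\infty$ by assigning $G(\sigma) \coloneqq \cF_x$ and taking transition maps from specialization. This only produces a functor to the \emph{homotopy category} $\ho(\Cat_\infty)$: the higher coherences needed for a genuine $\infty$-functor out of $F$ are exactly what is at stake, and ``induction on finite subcomplexes followed by a filtered colimit'' does not address them, since the coherence problem is already present for a single $2$-simplex. In effect your argument presupposes the very straightening you are trying to establish.

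Lurie's proof avoids this by working with his concrete simplicial model $\mathrm{Sing}^F(|K|)$ of the exit-path $\infty$-category and exhibiting an explicit weak equivalence with the nerve $N(F)$ using barycentric coordinates; the coherences are handled simplicially rather than by hand. If you want to stay within the paper's abstract framework, a workable route is to use a Van~Kampen/descent argument over the cover by open stars: each open star $\mathrm{Star}(\sigma)$ has $\sigma$ initial in its exit-path category (this is \cref{prop:locally_contractible_strata} in the conical setting), so $\Pi_\infty(\mathrm{Star}(\sigma),F) \simeq F_{\geq \sigma}$, and one then identifies the colimit of the $F_{\geq \sigma}$ with $F$. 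This packages the coherences into the colimit rather than attempting to specify them pointwise.
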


\begin{defin}\label{full_subcomplex}
Let $K$ be a simplicial complex and let $S$ be a simplicial subcomplex of $K$.
We say that $S$ is \textit{full} if for every face $\sigma$ of $K$, the subset $\sigma \cap S$ is empty or is a face of $S$.
\end{defin}

\begin{lem}\label{simplicial_complex_full_excellent}
	Let $K=(V,F)$ be a locally finite simplicial complex.
	Let $S=(V(S),F(S))$ be a full subcomplex of  $K$.
	Put
	\[
	U(S,K) \coloneqq \{x\in |K| \text{ such that } V(S)\cap \Supp_K(x) \neq \emptyset \} \ .
	\]
	Then, $U(S,K)$ is final at  $|S|$.
\end{lem}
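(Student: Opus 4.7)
The plan is to identify both exit path $\infty$-categories with subposets of the face poset $F$ and verify the finality condition combinatorially.

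First, I would observe that $U(S,K)$ is the union of the open stars $\mathrm{star}(v) = \{x \in |K| : v \in \Supp_K(x)\}$ for $v \in V(S)$. Each such star is open in $|K|$, so $U(S,K)$ is an open subset of $|K|$ containing $|S|$. Since the condition $V(S) \cap \Supp_K(x) \neq \emptyset$ depends only on $\Supp_K(x)$, $U(S,K)$ is exactly the union of the strata $|K|_\sigma$ for $\sigma$ in
\[ F_S := \{\sigma \in F : \sigma \cap V(S) \neq \emptyset\} \subseteq F \ . \]
In particular $(U(S,K),F)$ inherits from $(|K|,F)$ the structure of a conically stratified space with contractible strata, and is therefore exodromic by \cref{conical_is_exodromic}.

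Next, I would identify the two relevant exit path $\infty$-categories. Applying \cref{Exit_simplicial_complex} to the simplicial complexes $K$ and $S$ (viewed as a simplicial complex in its own right), and using that the $F$-stratification on $|S|$ factors through $F(S) \subseteq F$, one obtains $\PiinftySigma(|S|,F) \simeq F(S)$. On the other hand, since $U(S,K)$ is an open union of strata of $(|K|,F)$, its exit path category is the full subposet $F_S \subseteq F \simeq \PiinftySigma(|K|,F)$. Under these identifications, the functor $\PiinftySigma(|S|,F) \to \PiinftySigma(U(S,K),F)$ becomes the poset inclusion $\iota \colon F(S) \hookrightarrow F_S$.

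It remains to check that $\iota$ is final. For each $\sigma \in F_S$ I would set $\sigma_S := \sigma \cap V(S)$. This is nonempty by definition of $F_S$, and by fullness of $S$ (\cref{full_subcomplex}), $\sigma_S$ is a face of $S$, so $\sigma_S \in F(S)$. Moreover, any $\tau \in F(S)$ with $\tau \subseteq \sigma$ satisfies $\tau \subseteq V(S) \cap \sigma = \sigma_S$, hence $\sigma_S$ is a terminal object of the slice $F(S) \times_{F_S} (F_S)_{/\sigma}$. This slice is therefore weakly contractible, which is the finality condition for $\iota$. The only mildly technical point in the whole argument is the identification $\PiinftySigma(U(S,K),F) \simeq F_S$, since $U(S,K)$ is not itself the geometric realization of a simplicial complex; it is handled by the open-local nature of conical stratifications, which allows passing to open unions of strata.
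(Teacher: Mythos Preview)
Your proof is correct and follows essentially the same approach as the paper: both identify $\Pi_\infty(U(S,K),F)$ with the subposet of faces meeting $V(S)$, then verify finality by showing that for each such $\sigma$ the intersection $\sigma \cap V(S)$ is a face of $S$ (by fullness) and is terminal in the slice. Your treatment is slightly more detailed in justifying why $U(S,K)$ is open and exodromic, which the paper leaves implicit.
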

\begin{proof}
	By \cref{Exit_simplicial_complex}, the category $\Pi_{\infty}(U(S,K),F)$ identifies with the subposet $P(S)$ of $F$ of faces  containing at least one vertex in $S$.
	We have to show that the inclusion $F(S)\to P(S)$ is final.
	Let $\sigma\in P(S)$.
	Then,  $F(S)\times_{P(S)}P(S)_{/\sigma}$ identifies with the poset of faces of $K$ contained in $S$ and $\sigma$.
	Since $\sigma$ contains at least one vertex of $S$, the poset $F(S)\times_{P(S)}P(S)_{/\sigma}$ is not empty.
	Since $S$ is full in $K$, we deduce that $F(S)\times_{P(S)}P(S)_{/\sigma}$ admits a maximal element, and is thus weakly contractible.
	This finishes the proof of \cref{simplicial_complex_full_excellent}.
\end{proof}

\begin{defin}
	Let $(X,P)$ be a stratified space.
	A \textit{triangulation of $(X,P)$} is the data of $(K,r)$ where $K=(V,F)$ is a simplicial complex and $r \colon (|K|, F) \to (X,P)$ is a refinement. 
	We say that $(K,r)$ is locally finite if $K$ is locally finite.
\end{defin}

The existence of a locally finite triangulation propagates to open subsets:

\begin{lem}[{\cite[Theorem 1]{Fuglede}}]\label{open_is_triangulable}
	Let $(X,P)$ be a stratified space and let $U\subset X$ be an open subset.
	If $(X,P)$ admits a locally finite triangulation, so does $(U,P)$.
\end{lem}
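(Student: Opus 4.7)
The plan is to reduce the statement directly to the cited theorem of Fuglede, which triangulates open subsets of locally finite simplicial complexes. Let $(K,r)$ be the given locally finite triangulation of $(X,P)$, so that $r \colon (|K|,F) \to (X,P)$ is a refinement. Set $V \coloneqq r^{-1}(U) \subset |K|$; this is an open subset of the geometric realization of a locally finite simplicial complex, hence the setting in which Fuglede's theorem applies.

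By \cite[Theorem 1]{Fuglede}, $V$ admits a locally finite triangulation $s \colon |K'| \xrightarrow{\sim} V$ which may moreover be chosen to refine the restriction of the original triangulation of $|K|$ to $V$; that is, the composite $r \circ s \colon |K'| \to U$ sends each open face of $K'$ into a single open face of $K$. Concretely, one obtains $K'$ either by iterated barycentric subdivisions exhausting $V$ from the interior, or by invoking Fuglede's construction directly. In either case, $K'$ is locally finite because $K$ is and because $V$ is open in $|K|$.

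It remains to produce a refinement of stratifications $(|K'|, F') \to (U, P)$. For this, we take the composition
\[
(|K'|, F') \xrightarrow{s} (V, F|_V) \xrightarrow{r|_V} (U, P).
\]
Since $s$ maps each face of $K'$ into a face of $K$, it is a stratified refinement from $F'$ to $F|_V$; and $r|_V$ is a refinement from $F|_V$ to $P$ by hypothesis. The composition is therefore again a refinement, so the pair $(K', r|_V \circ s)$ provides the required locally finite triangulation of $(U,P)$.

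The only non-trivial ingredient is the existence of the triangulation $s$, which is precisely Fuglede's theorem; the rest is a formal verification that open preimages, local finiteness, and refinements all compose as expected. I do not expect any genuine obstacle beyond correctly invoking Fuglede in its refining form.
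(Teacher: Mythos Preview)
Your proposal is correct and matches the paper's approach: the lemma is stated there as a direct citation of Fuglede's theorem with no proof given, and your argument is exactly the natural unpacking of that citation (pull back $U$ to an open subset of $|K|$, apply Fuglede, and compose refinements).
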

%\begin{proof}
%	Let $K=(V,F)$ be a simplicial complex and let $r \colon  (|K|,F)\to (X,P)$ be a refinement.
%	Then, $(r^{-1}(U),F)\to (U,P)$ is a refinement.
%	From  \cite[Theorem 1]{Fuglede}, there exists a simplicial complex $L=(V(L),F(L))$ and a refinement $s \colon   (|L|,F(L))\to (r^{-1}(U),F)$.
%	Hence,  $r\circ s \colon   (|L|,F(L))\to (U,P)$ is a refinement.
	%We are thus left to show that $L$ is locally finite.
	%Since $K$ is locally finite, $|K|$ is locally compact.
	%Thus, $V$ is also locally compact.
%	That $L$ is locally finite if $K$ is locally finite follows by \cite[Theorem 8 (p. 119)]{Spanier}.
%\end{proof}

%To do this, we first show that excellence can be checked after conically stratified refinement.

\begin{lem}\label{exellence_and_refinement}
	Let $r \colon  (X,P)\to (Y,Q)$ be a refinement between exodromic stratified spaces.
       Let $Z\subset Y$ be a locally closed subset and put $T\coloneqq r^{-1}(Z)$.		
	Let $U\subset X$ be an open subset final at $T$ (\cref{Excellent_at_S_stratified space}).
	Then $r(U)$ is final at $Z$.
	In particular, if $(X,P)$ is final at $T$, then $(Y,Q)$ is final at $Z$.
\end{lem}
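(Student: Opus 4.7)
The plan is to reduce the statement to the localization theorem \cref{refinement_localization}, combined with a two-out-of-three argument for final functors. Since a refinement between stratified spaces is a homeomorphism on underlying topological spaces, we may identify $T = Z$ and $r(U) = U$ as subsets of $Y$. Consequently $r$ restricts to refinements $\rho\colon(T,P)\to(Z,Q)$ and $\sigma\colon(U,P)\to(U,Q)$.

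By hypothesis $U$ is final at $T$, so \cref{defin_final} ensures that $(T,P)$ and $(U,P)$ are both exodromic and that $\Pi_\infty(T,P) \to \Pi_\infty(U,P)$ is final. Applying \cref{refinement_localization} to $\rho$ and $\sigma$, I obtain that $(Z,Q)$ and $(U,Q)$ are exodromic and that
\[
\Pi_\infty(T,P) \to \Pi_\infty(Z,Q) \quad\text{and}\quad \Pi_\infty(U,P) \to \Pi_\infty(U,Q)
\]
are localizations, hence final.

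Next, I contemplate the commutative square of exit path $\infty$-categories
\[
\begin{tikzcd}
\Pi_\infty(T,P) \arrow{r} \arrow{d} & \Pi_\infty(U,P) \arrow{d} \\
\Pi_\infty(Z,Q) \arrow{r} & \Pi_\infty(U,Q)
\end{tikzcd}
\]
The top and right arrows are final, so their composite, which equals the left arrow followed by the bottom arrow, is final. Since the left arrow is itself final, a two-out-of-three argument for final functors (if $g$ and $g'\circ g$ are final, then $g'$ is final, immediate from the colimit characterization) shows that the bottom arrow $\Pi_\infty(Z,Q) \to \Pi_\infty(U,Q)$ is final. Together with the exodromicity of $(U,Q)$ and $(Z,Q)$ established above, this says precisely that $r(U) = U$ is final at $Z$ in $(Y,Q)$.

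Finally, the ``in particular'' clause is immediate: if $(X,P)$ is final at $T$, then the final-at-$T$ open neighbourhoods of $T$ form a fundamental system of neighbourhoods of $T$ in $X$; their images under $r$ then form a fundamental system of neighbourhoods of $Z$ in $Y$ (since $r$ is a homeomorphism), each of which is final at $Z$ by the first part. The main (and really the only) subtlety is bookkeeping the exodromicity hypotheses at each stage so that \cref{refinement_localization} is applicable; the rest is a categorical triviality.
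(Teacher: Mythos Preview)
Your proof is correct and follows essentially the same route as the paper's: the paper draws the same commutative square (with rows and columns transposed relative to yours), invokes \cref{refinement_localization} to see that the refinement-induced functors are localizations hence final, and then applies the two-out-of-three property for final functors (citing \cite[4.1.1.3]{HTT}) to conclude. Your version is slightly more explicit about the exodromicity bookkeeping and the homeomorphism identification $r(U)=U$, but the argument is the same.
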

\begin{proof}
	There is a commutative diagram of $\infty$-categories
	$$
	\begin{tikzcd}
		\Pi_{\infty}(T,P) \arrow{d} \arrow{r}  & \Pi_{\infty}(Z,Q)  \arrow{d}   \\
		\Pi_{\infty}(U,P)  \arrow{r}   &  \Pi_{\infty}(r(U),Q)
	\end{tikzcd}		
	$$
	where the left vertical functor is final.
	By \cref{refinement_localization}, the horizontal functors are localizations.
	They are thus final functors from \cite[7.1.10]{Cisinski_Higher_Category}.
	By \cite[4.1.1.3]{HTT}, we deduce that $\Pi_{\infty}(Z,Q) \to \Pi_{\infty}(r(U),Q)$ is  final.
	\cref{exellence_and_refinement} is thus proved.
\end{proof}

%\begin{cor}\label{exellence_and_refinement_bis}
%Let $r\colon  (X',P')\to  (X,P)$ be a refinement between conically stratified spaces.
%If $(X',P')$ is excellent, so is $(X,P)$. 
%\end{cor}
%\begin{proof}
%Since $X'$ is hereditary paracompact, so is $X$.
%From \cref{local_contractibility_and_refinement}, $(X,P)$ is locally contractible. 
%Then, \cref{exellence_and_refinement_bis} follows from \cref{exellence_and_refinement}.
%\end{proof}

\begin{prop}\label{triangulability_and_excellence}
	Let $(X,P)$ be an exodromic stratified space admitting a locally finite triangulation.
	Then, for every locally closed subposet $Q\subset P$, $(X,P)$ is hereditary final at $X_Q$ (\cref{def:hereditary_excellent}).
\end{prop}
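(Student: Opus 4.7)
The plan is to combine \cref{open_is_triangulable}, \cref{exellence_and_refinement}, and \cref{simplicial_complex_full_excellent} via a subdivision argument.

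First, since \cref{open_is_triangulable} shows that every open subset $U\subseteq X$ inherits a locally finite triangulation, it suffices to prove that any exodromic stratified space $(X,P)$ admitting a locally finite triangulation is final at $X_Q$ for every locally closed subposet $Q\subseteq P$: hereditary finality at $X_Q$ then follows by applying the same statement to each open $U \subseteq X$ with its induced triangulation.

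Next, let $r\colon (|K|,F) \to (X,P)$ be a locally finite triangulation and set $T \coloneqq r^{-1}(X_Q)$. Applying \cref{exellence_and_refinement} to $r$ reduces the task to showing that $(|K|,F)$ is final at $T$. Because $r$ is a refinement and $Q$ is locally closed in $P$, the subset $T$ is a union of open simplices of $K$ encoded by a locally closed subposet $A\subseteq F$.

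To construct, for every open neighbourhood $W\supseteq T$, a final-at-$T$ open subneighbourhood $V\subseteq W$, we pass to the $n$-th barycentric subdivision $\sd^n(K)$ for $n$ large. Let $F_n$ denote its face poset, so that the identity $(|K|,F_n)\to (|K|,F)$ is a refinement. Standard diameter estimates provide, for $n$ sufficiently large, a full subcomplex $S_n\subseteq \sd^n(K)$ that reflects the combinatorics of $A$ and whose open neighbourhood $V \coloneqq U(S_n,\sd^n(K))$ is contained in $W$; by \cref{simplicial_complex_full_excellent}, $V$ is final at $|S_n|$ in the $F_n$-stratification. A direct comparison of the exit path posets $\Pi_\infty(T,F_n)$, $\Pi_\infty(|S_n|,F_n)$ and $\Pi_\infty(V,F_n)$ afforded by \cref{Exit_simplicial_complex} upgrades this to finality of $V$ at $T$ in the $F_n$-stratification, and a second application of \cref{exellence_and_refinement} to $(|K|,F_n)\to (|K|,F)$ transports the property to the original $F$-stratification. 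The hard part lies in producing $S_n$ and in verifying the paper's combinatorial finality condition, namely that the slices $\{\tau\in A : \tau\leq b\}$ are weakly contractible for every $b$ in the upward closure $A^\uparrow\subseteq F_n$. Because $A$ is only locally closed in $F$, naive choices of subcomplex or of open neighbourhood fail in general, and iterated barycentric subdivision is what ultimately separates the components bridged in $F$ by higher-dimensional faces, so that the fullness hypothesis of \cref{simplicial_complex_full_excellent} applies and these slices become contractible.
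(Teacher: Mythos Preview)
Your overall strategy matches the paper's—reduce to the triangulated case and invoke \cref{simplicial_complex_full_excellent} via \cref{exellence_and_refinement}—but you miss the key reduction that makes the argument short, and the portion you flag as ``the hard part'' is not actually carried out.

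The paper does not attack the locally closed case head-on. Instead it writes $Q = C \cap O$ with $C$ closed and $O$ open in $P$; since $X_O$ is open in $X$ and contains $X_Q$, showing $(X,P)$ final at $X_Q$ is the same as showing $(X_O,O)$ final at $X_Q$, and now $Q$ is \emph{closed} in $O$. Applying \cref{open_is_triangulable} to $X_O$ one may therefore assume from the start that $Q \subset P$ is closed. A further application of \cref{open_is_triangulable}, this time to an arbitrary open neighbourhood $W \supseteq X_Q$, reduces ``every neighbourhood contains a final one'' to ``there exists one final open neighbourhood''. With $Q$ closed, the preimage $r^{-1}(Q) \subset F$ is the face poset of a genuine simplicial subcomplex $S \subset K$; a \emph{single} barycentric subdivision makes $S$ full, and \cref{simplicial_complex_full_excellent} together with \cref{exellence_and_refinement} finish the proof immediately.

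By contrast, your attempt to treat $A \subset F$ merely locally closed forces you into an iterated-subdivision argument whose details you do not supply: you neither specify what $S_n$ is, nor prove that $|S_n|$ recovers $T$, nor verify the contractibility of the relevant slices. The sentence ``iterated barycentric subdivision is what ultimately separates the components'' is an assertion, not an argument. The closed-case reduction above removes this difficulty entirely.
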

\begin{proof}
	Let $U\subset X$ be an open subset.
	We have to show that $(U,P)$ is final at $U\cap X_Q$.
	By \cref{open_is_triangulable},  $(U,P)$ admits a locally finite triangulation.
	At the cost of replacing $X$ by $U$, we are left to show that $(X,P)$ is final at $X_Q$.
	Write $Q=F\cap O$ where $F\subset P$ is closed and where $O\subset P$ is open.
	To show that $(X,P)$ is final at $X_Q$ amounts to show that $(X_O,O)$ is final at $X_Q$.
	From \cref{open_is_triangulable} again, we are left to show that $(X,P)$ is final at $X_Q$ where $Q\subset P$ is closed.
	Applying \cref{open_is_triangulable} one last time, we are left to show that there exists an open subset $U\subset X$  final at $X_Q$ where $Q\subset P$ is closed.
	Let $K=(V,F)$ be a locally finite simplicial complex and let $r \colon  (|K|,F)\to (X,P)$ be a refinement.
	Since $Q\subset P$ is closed, $r^{-1}(Q)\subset F$ is closed.
	Hence, $r^{-1}(Q)$ is the set of faces of a simplicial subcomplex $S=(V(S),F(S))$ of $K$.
	At the cost of replacing $K$ by its barycentric subdivision, we can suppose that $S$ is full (\cref{full_subcomplex}).
	By \cref{exellence_and_refinement}, it is enough to show that there exists an open subset $U\subset |K|$ containing $|S|$ such that $U$ is final at $|S|$.
	We conclude by  \cref{simplicial_complex_full_excellent}.
\end{proof}

\subsection{Subanalytic stratified space}
We now introduce the class of exodromic stratified spaces relevant for the study of Stokes structures of flat bundles.

\begin{defin}\label{subanalytic_stratified_space}
A \textit{subanalytic stratified space} is the data of $(M,X,P)$ where $M$ is a smooth real analytic space, $X\subset M$ a locally closed subanalytic subset and where $X\to P$ is a locally finite stratification by subanalytic subsets. 
\\ \indent
A morphism $f\colon (M,X,P)\to (N,Y,Q)$ of subanalytic stratified spaces is an analytic morphism $f \colon M\to N$ inducing a  stratified morphism $f\colon (X,P)\to (Y,Q)$ such that the graph of $f \colon X\to Y$ is subanalytic.
\end{defin}

\begin{notation}
We denote by $\AnStrat$ the category of subanalytic stratified spaces and  subanalytic stratified morphisms between them.
\end{notation}

\begin{rem}\label{Withney_are_conical}
	If  $(X,P)$ satisfies Whitney's conditions, a theorem of Mather \cite{Mather_topological_stability} implies that $(X,P)$ is conically stratified with locally weakly contractible strata.
	In that case we say that  $(M,X,P)$ is a \textit{Whitney stratified space}.
	Note that every subanalytic stratified space admits a Whitney refinement.
\end{rem}

\begin{rem}[{\cite[Theorem 5.3.9]{Beyond_conicality}}]\label{eg:subanalytic_implies_combinatorial}
	For every subanalytic stratified space $(X,P)$ and every open subset $U\subset X$, the stratified space $(U,P)$ is conically refineable in virtue of \cref{Withney_are_conical}.
	Hence it is exodromic by \cref{conically_refineable_implies_exodromic}.
\end{rem}

\begin{rem}
	For a subanalytic stratified space $(M,X,P)$, we will often drop the reference to $M$ and denote it by $(X,P)$.
\end{rem}

\begin{prop}[{\cite[Proposition 5.2.9]{Beyond_conicality}}]\label{prop:locally_contractible_strata}
	Let $(M,X,P)$ be a subanalytic stratified space.
	Then, every point $x\in X$ admits a fundamental system of open neighbourhoods $U$ such that $x$ is an initial object in $\Pi_\infty(U,P)$.
\end{prop}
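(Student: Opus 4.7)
The plan is to transport the statement across a subanalytic triangulation that refines $P$ and promotes $x$ to a vertex. First I would invoke the classical subanalytic triangulation theorem of Łojasiewicz--Hironaka: since the strata of $P$ form a locally finite family of subanalytic subsets of $M$, there exists a locally finite simplicial complex $K = (V, F)$ with subanalytic cells together with a refinement
\[ r \colon (|K|, F) \to (X, P) \ . \]
After iterated barycentric subdivisions I may further assume that $x$ is a vertex of $K$, and that the open stars $\Star_{K_n}(x) \subseteq |K|$ in the successive subdivisions $K_n$ form a fundamental system of open neighbourhoods of $x$ in $|K|$. Since $r$ is a homeomorphism on underlying spaces, the images $U_n \coloneqq r(\Star_{K_n}(x))$ form a fundamental system of open subanalytic neighbourhoods of $x$ in $X$, and each $(U_n, P)$ is exodromic by \cref{eg:subanalytic_implies_combinatorial}.

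Next, exactly as in the proof of \cref{simplicial_complex_full_excellent} (combined with \cref{Exit_simplicial_complex}), the $\infty$-category $\Pi_\infty(U_n, F_n)$ identifies with the poset of faces of $K_n$ containing the vertex $x$, of which $x$ itself is the minimum. In particular, $x$ is initial in $\Pi_\infty(U_n, F_n)$.

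To conclude, I would transfer initiality along the refinement. By \cref{refinement_localization}, the induced functor
\[ L \colon \Pi_\infty(U_n, F_n) \to \Pi_\infty(U_n, P) \]
is a localization and sends $x$ to $x$. It then suffices to observe that localizations of $\infty$-categories preserve initial objects: both $L$ and $L\op$ are localizations, hence are both cofinal by \cite[7.1.10]{Cisinski_Higher_Category}, so $L$ is simultaneously cofinal and coinitial. Via the adjunction $L_{!} \dashv L^{\ast}$, the corepresented functor $\Map(x,-) \simeq \mathrm{const}_{\ast}$ on $\Pi_\infty(U_n, F_n)$ (constant at a point by initiality of $x$) is sent by $L_!$ to $\Map(L(x),-)$; coinitiality of $L$ forces this image to again be the constant-at-$\ast$ functor on $\Pi_\infty(U_n, P)$, so $L(x) = x$ is initial there as well.

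The hard part is the triangulation input — producing a subanalytic triangulation refining $P$ together with the freedom to promote the given $x$ to a vertex. This is classical but requires invoking the appropriate form of Hironaka's theorem. The remainder of the argument is purely formal.
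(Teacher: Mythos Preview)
The paper does not supply its own proof of this proposition; it is quoted verbatim from \cite[Proposition 5.2.9]{Beyond_conicality} with no argument given here. So there is nothing in the present paper to compare your approach against.

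That said, your argument is correct. The triangulation-plus-open-star strategy is entirely in the spirit of the simplicial techniques the paper uses elsewhere (notably in the proofs of \cref{simplicial_complex_full_excellent} and \cref{triangulability_and_excellence}), and the subanalytic triangulation theorem does allow you to include the singleton $\{x\}$ among the subanalytic sets being refined, forcing $x$ to become a vertex. Your step~4 is also fine, though it is worth making one computation explicit: $L_!(\mathrm{const}_*)(d)$ is the classifying space of the comma category $L \downarrow d$, and coinitiality of $L$ (equivalently cofinality of $L\op$, which holds because $L\op$ is again a localization) is precisely the statement that each $L \downarrow d$ is weakly contractible. With that spelled out, the identification $L_!(\Map(x,-)) \simeq \Map(L(x),-) \simeq \mathrm{const}_*$ goes through and $L(x)$ is initial.
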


\begin{prop}[{\cite[Theorem 5.3.9]{Beyond_conicality}}]\label{categorical_compactness}
Let $(M,X,P)$ be a subanalytic stratified space.
Assume that $X$ is relatively compact in $M$.
Then, $(X,P)$ is categorically finite, that is $\Pi_{\infty}(X,P)$ is a finite $\infty$-category.
\end{prop}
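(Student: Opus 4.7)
The plan is to reduce to the case of a finite simplicial complex via a triangulation and then control the effect of the refinement on the exit path category.

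First, I would invoke a classical result on subanalytic triangulations (due to Hironaka, Hardt, or Verdier): any relatively compact subanalytic stratified space $(M,X,P)$ admits a finite subanalytic triangulation compatible with the stratification $P$. Concretely, this produces a finite simplicial complex $K = (V,F)$ together with a subanalytic homeomorphism $r \colon |K| \to X$ inducing a refinement $r \colon (|K|,F) \to (X,P)$, where $F$ is finite because the triangulation is finite. This step uses only the relative compactness of $X$ and the locally finite subanalyticity of the stratification.

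Next, I would apply \cref{Exit_simplicial_complex} to identify $\Pi_{\infty}(|K|,F) \simeq F$ as a finite poset, hence as a finite $\infty$-category in $\Cat_\infty$. Then \cref{refinement_localization} gives that the induced functor
\[ F \simeq \Pi_{\infty}(|K|,F) \longrightarrow \Pi_{\infty}(X,P) \]
exhibits $\Pi_{\infty}(X,P)$ as the localization of $F$ at the set $W$ of arrows sent to identities under the composition $F \to P$. Since $F$ itself is finite, $W$ is a finite set of arrows.

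The final step, and arguably the only nontrivial point, is to verify that the localization of a finite $\infty$-category at a finite set of arrows is again finite. This I would handle by writing the localization as a finite pushout in $\Cat_\infty$: inverting the arrows in $W$ is realized by the pushout
\[ F[W^{-1}] \simeq F \sqcup_{\coprod_W \Delta^1} \coprod_W J \ , \]
where $J$ denotes the contractible walking isomorphism (or equivalently by pushing out against $\coprod_W \Delta^1 \to \coprod_W \Delta^0$, up to weak equivalence). Since finite $\infty$-categories form the class of compact objects of $\Cat_\infty$ and are stable under finite colimits, the resulting localization is finite. This shows $\Pi_{\infty}(X,P)$ is finite, which is the required categorical compactness. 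The main obstacle is the last categorical step, but it is a standard general fact about compact objects in $\Cat_\infty$; everything else is a direct combination of the triangulation theorem with the material already assembled in \S\ref{subsec:triangulations_hereditary_excellent}.
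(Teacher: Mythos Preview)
The paper does not give its own proof of this proposition: it is stated as a citation to \cite[Theorem 5.3.9]{Beyond_conicality} and no argument appears in the text. Your proposal is a correct self-contained proof using only tools already assembled in the paper. The triangulation step is classical for relatively compact subanalytic sets, \cref{Exit_simplicial_complex} and \cref{refinement_localization} apply exactly as you say, and the final pushout description of the localization in $\Cat_\infty$ is standard and yields finiteness since $\Delta^1$, the point, and the finite poset $F$ are all compact objects of $\Cat_\infty$.
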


\begin{lem}\label{subanalytic_excellent}
	Let $(M,X,P)$ be a subanalytic stratified space.
	Then for every locally closed subset $Q\subset P$, $(X,P)$ is hereditary final at $X_Q$ (\cref{def:hereditary_excellent}).
\end{lem}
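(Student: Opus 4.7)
The plan is to reduce to the simplicial case treated in \cref{triangulability_and_excellence}. By \cref{eg:subanalytic_implies_combinatorial}, any subanalytic stratified space $(M,X,P)$ is exodromic, and similarly $(U,P)$ is exodromic for every open $U \subseteq X$. So the only thing missing in order to invoke \cref{triangulability_and_excellence} is the existence of a locally finite triangulation of $(X,P)$.

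The key input is the classical subanalytic triangulation theorem of Łojasiewicz--Hironaka: given a locally closed subanalytic subset $X \subset M$ and a locally finite family of subanalytic subsets of $X$, there exists a locally finite simplicial complex $K = (V,F)$ together with a subanalytic homeomorphism $r \colon |K| \to X$ such that each subset in the family is a union of open simplices. Applying this to the locally finite family of strata $\{X_p\}_{p \in P}$ of $(X,P)$, we obtain a locally finite triangulation of $(X,P)$ in the sense of the paper: the map $r \colon (|K|,F) \to (X,P)$ is a refinement since each stratum $X_p$ is a union of images of open simplices, so the support function factors through $P$.

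With a locally finite triangulation in hand, \cref{triangulability_and_excellence} applies directly and yields that $(X,P)$ is hereditary final at $X_Q$ for every locally closed subposet $Q \subset P$, which is the desired conclusion.

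I do not expect a genuine obstacle here: the whole point of the preceding \S\ref{subsec:triangulations_hereditary_excellent} is to produce a clean statement tailored to subanalytic geometry, and the subanalytic triangulation theorem is available off the shelf. The only care needed is to check that the output of Łojasiewicz--Hironaka is genuinely a \emph{refinement} of $(X,P)$ in the sense of \cref{notation_defin}, which follows from the compatibility clause of the triangulation theorem.
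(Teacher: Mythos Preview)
Your proposal is correct and follows essentially the same route as the paper: obtain a locally finite triangulation of $(X,P)$ compatible with the stratification, then invoke \cref{triangulability_and_excellence}. The paper's proof is a two-line argument citing Goresky's triangulation theorem for stratified objects rather than Łojasiewicz--Hironaka, but the strategy is identical.
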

\begin{proof}
	By \cite{Goresky_triang}, the stratified space $(X,P)$ admits a locally finite triangulation.
	Then \cref{subanalytic_excellent} follows from \cref{triangulability_and_excellence}.
\end{proof}

\begin{prop}\label{proper_analytic_direct_image}
Let $f \colon  (M,X,P)\to (N,Y,Q)$ be a proper morphism between subanalytic stratified spaces.
Then the following hold
\begin{enumerate}\itemsep=0.2cm
\item There is a subanalytic refinement $S\to Q$ such that for $\cF\in \ConsPhyp(X;\Cat_{\infty})$, we have  $f_\ast(\cF)\in \Cons_{S}^{\hyp}(Y;\Cat_{\infty})$.
\item For every $\cF\in \ConsPhyp(X;\Cat_{\infty})$, the formation of $f_\ast(\cF)$ commutes with base change.

\end{enumerate}
\end{prop}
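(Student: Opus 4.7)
The plan is to reduce the proposition to the proper base change theorem for $\infty$-topoi after a Thom--Mather trivialization of $f$ over a subanalytic Whitney refinement of $Q$. To this end, I would first apply the subanalytic version of Thom's first isotopy lemma---Mather's theorem on topological triviality \cite{Mather_topological_stability}, extended to proper subanalytic maps---to produce locally finite subanalytic Whitney refinements $R \to P$ of $X$ and $S \to Q$ of $Y$ such that, for every $s \in S$, the restriction $f \colon f^{-1}(Y_s) \to Y_s$ is a topologically locally trivial fibration of stratified spaces: every $y \in Y_s$ admits an open subanalytic neighbourhood $V \subset Y_s$ together with a stratum-preserving homeomorphism $f^{-1}(V) \simeq f^{-1}(y) \times V$ commuting with the projection onto $V$. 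This refinement depends only on $f$ and $P$, and the required Whitney refinements exist by \cref{Withney_are_conical}.

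For part (1), since $R$ refines $P$ we have $\ConsPhyp(X;\Cat_\infty) \subset \ConsRhyp(X;\Cat_\infty)$, so $\cF$ may be regarded as $R$-hyperconstructible. Proper base change for hypersheaves in $\infty$-topoi \cite[Theorem 7.3.1.16]{HTT} then identifies $(f_\ast \cF)|_V$ with $(f_V)_\ast(\cF|_{f^{-1}(V)})$, where $f_V \colon f^{-1}(V) \to V$ is the restriction. After transport along the trivialization, $f_V$ becomes the projection $p_2 \colon f^{-1}(y) \times V \to V$. Via the exodromy equivalence (\cref{exodromy_functorialities}), together with the product decomposition $\Pi_\infty(f^{-1}(y) \times V) \simeq \Pi_\infty(f^{-1}(y),R) \times \Pi_\infty(V)$ for the product stratification, $(p_2)_\ast$ corresponds to right Kan extension along the projection onto $\Pi_\infty(V)$, and therefore takes values in locally constant sheaves on $V$. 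Letting $y$ vary over $Y_s$ shows that $(f_\ast \cF)|_{Y_s}$ is locally constant, so $f_\ast \cF \in \ConsShyp(Y;\Cat_\infty)$.

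For part (2), proper base change for hypersheaves in $\infty$-topoi (\cite[Theorem 7.3.1.16]{HTT}) provides, for every Cartesian square
\[ \begin{tikzcd}
X' \arrow{r}{g'} \arrow{d}{f'} & X \arrow{d}{f} \\
Y' \arrow{r}{g} & Y
\end{tikzcd} \]
with $f$ proper, a natural equivalence $g^{\ast,\hyp} f_\ast \cF \simeq f'_\ast g'^{\ast,\hyp} \cF$; part (1) applied to $f$ and to $f'$ ensures this equivalence preserves hyperconstructibility.

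The step I expect to be the main obstacle is securing the subanalytic Thom--Mather trivialization with the precision needed here: $P$ and $Q$ must be refined coherently so that $f$ becomes a stratified locally trivial fibration over each stratum of $S$, and the refinement $S$ must depend only on $f$ and $P$, not on any specific $\cF$. Once this geometric input is in place, the exodromy dictionary reduces the remaining content---local constancy of the pushforward on each stratum---to the formal observation that right Kan extension along a projection factors through the base.
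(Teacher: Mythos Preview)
Your overall strategy---Whitney refinement followed by Thom--Mather local triviality---matches the paper's, but the execution has a real gap in how you invoke proper base change.

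The reference \cite[Theorem 7.3.1.16]{HTT} establishes that a proper map of locally compact Hausdorff spaces induces a proper geometric morphism $\Sh(X) \to \Sh(Y)$; it says nothing about \emph{hyper}sheaves, and a priori nothing about $\Cat_\infty$-valued sheaves. Passing to hypersheaves requires either that $\Sh(X)$ and $\Sh(Y)$ are already hypercomplete (e.g.\ via finite covering dimension, cf.\ the discussion after \cref{elementary_criterion}) or an independent argument. The paper does not take this route: instead it appeals to \cite[Proposition 6.10.7]{Exodromy_coefficient}, which gives constructibility of the pushforward and the stalkwise base-change formula directly in the exodromic setting, under the hypotheses that the fibres are conically stratified with locally weakly contractible strata (established via \cite[3.7]{Verdier1976}) and that $(X,R)$ is hereditary final at each $X_T$ (established via \cref{subanalytic_excellent}). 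These hypotheses are what make the exodromy dictionary compatible with pushforward; your proposal implicitly uses that $f_\ast$ corresponds to right Kan extension along $\Pi_\infty(f)$, but the paper's recollections (\cref{exodromy_functorialities}) only assert compatibility with \emph{pullback}, and the passage to pushforward is exactly what Proposition 6.10.7 supplies.

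A smaller point: in your argument for (1), restricting $f_\ast\cF$ to an open $V \subset Y_s$ does not require proper base change at all---pushforward always commutes with restriction to opens. The nontrivial step is the stalkwise computation, which is where the Thom--Mather trivialization and the finality/conicality hypotheses enter. For (2), the paper reduces to base change along the inclusion of a point (legitimate once (1) gives constructibility of $f_\ast\cF$) and then applies \cite[Proposition 6.10.7-(b)]{Exodromy_coefficient} rather than HTT.
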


\begin{proof}
By \cite[1.7]{Goresky_MacPherson_Stratified_Morse_theory}, there is a refinement
	\[ \begin{tikzcd}
		(M,X,R)  \arrow{r} \arrow{d} & (M,X,P)  \arrow{d} \\
		(N,Y,S)   \arrow{r} & (N,Y,Q)  
	\end{tikzcd} \]
	by a morphism of Whitney stratified spaces  submersive on each strata.
	By Thom first isotopy lemma \cite{Mather_topological_stability}, we deduce that $ (X,R) \to (Y,S)$ is a stratified bundle above each stratum of $(Y,S)$.
	By \cite[3.7]{Verdier1976}, the fibres of $f$ are Whitney stratified spaces.
	They are thus conically stratified spaces with locally weakly contractible strata by \cref{Withney_are_conical}.
	By \cref{subanalytic_excellent}, for every locally closed subset $T\subset R$,  the stratified space $(X,R)$ is hereditary final at $X_T$.
	Hence, \cite[Proposition 6.10.7-(a)]{Exodromy_coefficient} shows that $S\to Q$ satisfies (1).
	To prove (2), it is enough to prove base change along the inclusion of a point.
	Then, one further reduces to the case where $f \colon  (M,X,P)\to (N,Y,Q)$ is a morphism of Whitney stratified spaces  submersive on each strata.
	In this case, (2) follows from \cite[Proposition 6.10.7-(b)]{Exodromy_coefficient}.
\end{proof}

\section{Stokes stratified spaces} \label{sec:Stokes_stratified_spaces}

Following the companion paper \cite{PortaTeyssier_Day} we introduce the $\infty$-category $\CoCart$.
We start from the cartesian fibration
\[ \mathrm{t} \colon \Cat_\infty^{[1]} \coloneqq \Fun(\Delta^1, \Cat_\infty) \to  \Cat_\infty \  \]
sending a functor $\cA \to \cX$ to its target $\infty$-category.
We then pass to the dual cocartesian fibration, in the following sense:

\begin{defin}\label{dual_cartesian}
	Let $p \colon \cA \to \cX$ be a cartesian fibration and let $\Upsilon_\cA \colon \cX\op \to \Cat_\infty$ be its  straightening.
	The \emph{dual cocartesian fibration $p^\star \colon \cA^\star \to \cX\op$} is the cocartesian fibration classified by $\Upsilon_\cA$.
\end{defin}

\begin{recollection}\label{recollection:dual_fibration}
	In the setting of \cref{dual_cartesian}, recall from \cite{Barwick_Dualizing} that objects of $\cA^\star$ coincide with the objects of $\cA$, while $1$-morphisms $a \to b$ in $\cA^\star$ are given by spans
	\[ \begin{tikzcd}[column sep = 20pt]
		a & c \arrow{l}[swap]{u} \arrow{r}{v} & b
	\end{tikzcd} \]
	where $u$ is $p$-cocartesian and $p(v)$ is equivalent to the identity of $p(b)$.
\end{recollection}

We let
\[ \mathbb B \colon \Cat_\infty^{[1]\star} \to  \Cat_\infty\op \]
be the cocartesian fibration dual to $\mathrm{t}$.
Specializing \cref{recollection:dual_fibration} to this setting, we see that objects of $\Cat_\infty^{[1]\star}$ are functors $\cA \to \cX$, and morphisms $\mathbf f = (f,u,v)$ from $\cB \to \cY$ to $\cA \to \cX$ are commutative diagrams in $\Cat_\infty$ of the form
\begin{equation}\label{eq:morphism_in_Cocart}
	\begin{tikzcd}
		\cB \arrow{d} & \cB_\cX \arrow{d} \arrow{r}{v} \arrow{l}[swap]{u} & \cA \arrow{dl} \\
		\cY & \cX \arrow{l}[swap]{f}
	\end{tikzcd}
\end{equation}
where the square is a pullback.
With respect to this description, $\mathbb B$ sends $\cA \to \cX$ to its target (or base) $\cX$, and a diagram as above defines a $\mathbb B$-cocartesian morphism if and only if $v$ is an equivalence.

\medskip

We define $\CoCart$ to be the (non-full) subcategory of $\Cat_\infty^{[1]\star}$ whose objects are cocartesian fibrations, and whose $1$-morphisms are commutative diagrams as above where $v$ is required to preserve cocartesian edges.
In this way, $\CoCart$ becomes a cocartesian fibration over $\Cat_\infty\op$ such that $\CoCart\to \Cat_\infty^{[1]\star}$ preserves cocartesian edges.
Notice that the fiber at $\cX \in \Cat_\infty\op$ coincides with the $\infty$-category $\CoCart_{/\cX}$.
We will also need a couple of variants of this construction:

\begin{variant}
 We let $\PosFib \subset \CoCart$ be the full subcategory spanned by those cocartesian fibrations $\cA \to \cX$ whose fibers are posets.
\end{variant}

\begin{variant}
	Let $\CAT_\infty$ be the $\infty$-category of large $\infty$-categories and consider the following fiber product:
	\[ \cC \coloneqq \Fun(\Delta^1, \CAT_\infty) \times_{\CAT_\infty} \Cat_\infty \ , \]
	where we used the target morphism $\mathrm t \colon \Fun(\Delta^1, \CAT_\infty) \to \CAT_\infty$.
	In other words, objects in $\cC$ are morphisms $p \colon \cA \to \cX$ where $\cX$ is a small $\infty$-category and the fibers of $p$ are not necessarily small $\infty$-categories.
	The induced morphism $\mathrm t \colon \cC \to \Cat_\infty$ is a cartesian fibration.
	Inside the dual cocartesian fibration $\cC^\star$, we define $\hCoCart$ as the subcategory spanned by cocartesian fibrations and whose $1$-morphisms are diagrams \eqref{eq:morphism_in_Cocart} where  $v$ preserves cocartesian edges.
\end{variant}		

\begin{variant}
We let $\PrFibL \subset \hCoCart$ be the subcategory spanned by cocartesian fibrations with presentable fibres and whose $1$-morphisms are diagrams \eqref{eq:morphism_in_Cocart} that are morphisms in $\hCoCart$ such that  for every $x \in \cX$, the induced functor $v_x \colon \cB_{f(x)} \to \cA_x$ is a morphism in $\PrL$, i.e.\  is cocontinuous.
$\PrFibL$ is the $\infty$-category of \textit{presentable cocartesian fibrations} \cite[\S3.4]{PortaTeyssier_Day}.
\end{variant}

\subsection{Stokes stratified spaces}

We are now ready to introduce the main geometric object of interest of this paper:

\begin{defin}\label{def:Stokes_stratified_spaces}
	The \emph{category of Stokes stratified spaces} $\StStrat$ is the fiber product
	\[ \begin{tikzcd}
		\StStrat \arrow{r} \arrow{d} & \PosFib\op \arrow{d}{\mathbb B\op} \\
		\Stratc \arrow{r}{\PiinftySigma} & \Cat_\infty \ .
	\end{tikzcd} \]
\end{defin}

\begin{rem}\label{rem:Stokes_stratified_spaces}
	It immediately follows from \cite[Proposition 2.4.4.2]{HTT} that mapping spaces in $\StStrat$ are discrete.
	Therefore \cite[Proposition 2.3.4.18]{HTT} guarantees that $\StStrat$ is (categorically equivalent to) a $1$-category.
\end{rem}

%We have two extreme classes of examples:
%
%\begin{eg}\label{eg:Stokes_stratified_space_trivial_fibration}
%	Let $(X,P) \in \Stratc$ be an exodromic stratified space.
%	The identity of $\Pi_\infty(X,P)$ is a cocartesian fibration in poset (whose fibers are all the trivial poset).
%	This provides a canonical fully faithful functor $\Stratc \to \StStrat$, which is left adjoint to the forgetful functor $\StStrat \to \Stratc$.
%	Therefore, $\StStrat$ can be seen as an extension of the category of exodromic stratified spaces.
%	The forgetful functor $\StStrat \to \Stratc$ also has a right adjoint, that sends $(X,P)$ to $(X,P,\emptyset)$, where $\emptyset$ is the empty cocartesian fibration in posets.
%\end{eg}
%
%\begin{eg}\label{eg:posets_as_Stokes_stratified_spaces}
%	Let $\cI$ be a poset.
%	Then $\cI \to \ast$ is a cocartesian fibration, so $(\ast, \ast, \cI)$ defines a Stokes stratified space.
%	Hence, Stokes stratified spaces can also be seen as an extension of the category of posets.
%	Nevertheless, this class of examples is badly behaved, and the main theorems of this paper rarely apply to these situations.
%\end{eg}

\begin{rem}\label{rem:cocartesian_fibration_vs_constructible_sheaf}
	Objects of $\StStrat$ can be explicitly described as triples $(X,P,\cI)$ where $(X,P)$ is an exodromic stratified space and $\cI \to \PiinftySigma(X,P)$ is a cocartesian fibration in posets.
	Combining the straightening equivalence \cite[Theorem 3.2.0.1]{HTT}
	\[ \CoCart_{/\PiinftySigma(X,P)} \simeq \Fun(\PiinftySigma(X,P), \Cat_\infty) \]
	with the exodromy equivalence \eqref{exodromy_equivalence}
	\[ \Fun(\PiinftySigma(X,P), \Cat_\infty) \simeq \ConsPhyp(X,\Cat_\infty) \ , \]
	we can equivalently describe the datum $\cI \to \PiinftySigma(X,P)$ as the datum of a hypersheaf of posets $\mathscr{I}$ on $X$ on  $(X,P)$.
	With respect to this translation, the stalk of $\mathscr{I}$ at a point $x \in X$ coincides with the fiber of $\cI$ at $x$ seen as an object in $\PiinftySigma(X,P)$.
	We occasionally refer to the datum of a cocartesian fibration in posets $\cI$ over $\Pi_\infty(X,P)$ as a \emph{Stokes fibration on $(X,P)$}.
\end{rem}

\begin{rem}\label{rem:cartesian_morphism_in_StStrat}
	The forgetful map $\StStrat \to \Stratc$ is a cartesian fibration, and a morphism $f \colon (Y, Q, \cJ) \to (X, P, \cI)$ is cartesian if and only if the   square
	\[ \begin{tikzcd}
		\cJ \arrow{r} \arrow{d} & \cI \arrow{d} \\
		\Pi_\infty(Y,Q) \arrow{r} & \Pi_\infty(X,P)
	\end{tikzcd} \]
	is a pullback.
\end{rem}

We will be interested in a more restricted class of Stokes stratified spaces:

\begin{defin}\label{def:Stokes_an_stratified_spaces}
	The $\infty$-category of \emph{Stokes analytic stratified spaces} $\StAnStrat$ is the fiber product
	\[ \begin{tikzcd}
		\StAnStrat \arrow{r} \arrow{d} & \StStrat \arrow{d}  \\
		\AnStrat \arrow{r} & \Stratc 
	\end{tikzcd} \]
	where  $ \AnStrat $ is the category of subanalytic stratified spaces from \cref{subanalytic_stratified_space} and the bottom horizontal functor is supplied by \cref{eg:subanalytic_implies_combinatorial}.
\end{defin}

\subsection{Stokes loci}

An important feature of the classical theory of Stokes data is the existence of Stokes lines.
Remarkably, it is possible to define Stokes loci for any Stokes stratified space $(X,P,\cI) \in \StStrat$, as we are going to discuss now.

\begin{defin}\label{cocartesian_section}
	For $(X,P,\cI)\in \StStrat$,  we denote by $\mathscr{I}$ the hyperconstructible hypersheaf on $(X,P)$ corresponding to the cocartesian fibration $\cI \to \Pi_\infty(X,P)$ as in \cref{rem:cocartesian_fibration_vs_constructible_sheaf}.
	The objects of
	\[ \mathscr{I}(X)\simeq \Funcocart_{/\Pi_{\infty}(X,P)}(\Pi_{\infty}(X,P),\cI) \]
	are the \textit{cocartesian sections of $\cI$ over $\Pi_\infty(X,P)$}.
\end{defin}

\begin{defin}\label{Stokes_locus}
	Let $(X,P,\cI)$ be a Stokes analytic stratified space.
	Let $\sigma, \tau\in \mathscr{I}(X)$ be cocartesian sections.
	The \textit{Stokes locus $X_{\sigma, \tau}$ of $\sigma, \tau$} is the set of points $x\in X$ such that $\sigma(x), \tau(x)\in \cI_x$ cannot be compared.
\end{defin}

\begin{observation}\label{pullback_Stokes_locus}
	Let $f \colon (Y,Q,\cJ)\to (X,P,\cI)$ be a cartesian morphism between Stokes analytic stratified spaces (see \cref{rem:cartesian_morphism_in_StStrat}).
	Let $\sigma, \tau\in \mathscr{I}(X)$ be cocartesian sections.
	Then, we have
	\[ Y_{f^*\sigma,f^*\tau}=f^{-1}(X_{\sigma,\tau}) \ . \]
\end{observation}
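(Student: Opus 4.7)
The plan is a direct unwinding of the definitions, so I do not anticipate any real obstacle; the statement is almost tautological once the cartesianness of $f$ is put to use at the level of fibers. The only point requiring minor care is to identify how the pullback of a cocartesian section interacts with the canonical identification of fibers induced by a cartesian morphism in $\StStrat$.

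First, I would invoke \cref{rem:cartesian_morphism_in_StStrat}: the hypothesis that $f \colon (Y,Q,\cJ) \to (X,P,\cI)$ is cartesian means exactly that the square
\[ \begin{tikzcd}
\cJ \arrow{r} \arrow{d} & \cI \arrow{d} \\
\Pi_\infty(Y,Q) \arrow{r}{\Pi_\infty(f)} & \Pi_\infty(X,P)
\end{tikzcd} \]
is a pullback of cocartesian fibrations in posets. Taking the fiber over any $y \in Y$ viewed as an object of $\Pi_\infty(Y,Q)$ therefore yields a canonical isomorphism of posets $\varphi_y \colon \cJ_y \xrightarrow{\ \sim\ } \cI_{f(y)}$.

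Next, I would unfold the formation of $f^\ast \sigma$ for a cocartesian section $\sigma \in \mathscr{I}(X)$. By the definition of $\mathscr{I}(X)$ as the space of cocartesian sections (\cref{cocartesian_section}) and by the universal property of the pullback of cocartesian fibrations, $f^\ast \sigma$ is the unique cocartesian section of $\cJ \to \Pi_\infty(Y,Q)$ whose value at each $y \in Y$ satisfies $\varphi_y\bigl((f^\ast \sigma)(y)\bigr) = \sigma(f(y))$, and similarly for $\tau$.

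Finally, I would combine these two observations: by \cref{Stokes_locus}, a point $y$ lies in $Y_{f^\ast \sigma, f^\ast \tau}$ precisely when $(f^\ast \sigma)(y)$ and $(f^\ast \tau)(y)$ are incomparable in $\cJ_y$. Since $\varphi_y$ is an isomorphism of posets, this is equivalent to the incomparability of $\sigma(f(y))$ and $\tau(f(y))$ in $\cI_{f(y)}$, which by definition is the condition $f(y) \in X_{\sigma,\tau}$. This yields the desired equality $Y_{f^\ast \sigma, f^\ast \tau} = f^{-1}(X_{\sigma,\tau})$.
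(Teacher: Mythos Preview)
Your argument is correct and is exactly the intended justification: the paper records this as an Observation without proof, treating it as immediate from the definitions, and your unwinding via the fiberwise poset isomorphism supplied by cartesianness is precisely that justification.
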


\begin{lem}\label{Stokes_locus_is_closed}
	Let $(X,P,\cI)$ be a Stokes analtyic stratified space.
	Let $\sigma, \tau\in \mathscr{I}(X)$ be cocartesian sections.
	Then,
	\begin{enumerate}\itemsep=0.2cm
		\item $X_{\sigma, \tau}$ is closed in $X$.
		\item For every $p\in P$, the set $X_{\sigma, \tau}\cap X_p$ is open and closed in $X_p$.
		In particular, $X_{\sigma, \tau}$ is a union of connected components of strata of $(X,P)$.
		
	\end{enumerate}
\end{lem}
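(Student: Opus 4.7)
The plan is to establish (1) by showing that the complement of $X_{\sigma,\tau}$ is open, and to deduce (2) by combining (1) with the observation that cocartesian transport within a single stratum is an order-isomorphism.

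For (1), fix $x \notin X_{\sigma,\tau}$ and, without loss of generality, assume $\sigma(x) \leq \tau(x)$ in $\cI_x$. By \cref{prop:locally_contractible_strata}, there is a fundamental system of open neighborhoods $V$ of $x$ for which $x$ is initial in $\Pi_\infty(V,P)$. For each $y \in V$, choose a morphism $\gamma \colon x \to y$ in $\Pi_\infty(V,P)$. Since $\sigma$ and $\tau$ are cocartesian sections of $\cI \to \Pi_\infty(X,P)$, their evaluations on $\gamma$ compute the cocartesian transport, yielding an order-preserving map $\gamma_\ast \colon \cI_x \to \cI_y$ with $\gamma_\ast(\sigma(x)) = \sigma(y)$ and $\gamma_\ast(\tau(x)) = \tau(y)$ (recall that $\cI \to \Pi_\infty(X,P)$ lies in $\PosFib$, so its fibers and transport functors are posets and order-preserving maps). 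Hence $\sigma(y) \leq \tau(y)$, so $y \notin X_{\sigma,\tau}$, proving $V \subset X \setminus X_{\sigma,\tau}$.

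For (2), the closedness of $X_{\sigma,\tau} \cap X_p$ in $X_p$ is immediate from (1). For openness, fix $x \in X_{\sigma,\tau} \cap X_p$. Since $X_p$ is a subanalytic subset, it is locally path-connected, so $x$ admits a path-connected open neighborhood $W \subset X_p$. For any $y \in W$, a path $\gamma \colon [0,1] \to X_p$ from $x$ to $y$ lies entirely in the single stratum $X_p$; the induced morphism in $\Pi_\infty(X,P)$ factors through the fundamental $\infty$-groupoid $\Pi_\infty(X_p)$ and is therefore invertible. Consequently, the cocartesian transport along $\gamma$ is an isomorphism of posets $\cI_x \cong \cI_y$ carrying $\sigma(x) \mapsto \sigma(y)$ and $\tau(x) \mapsto \tau(y)$. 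Since isomorphisms of posets preserve incomparability, $y \in X_{\sigma,\tau}$, giving $W \subseteq X_{\sigma,\tau} \cap X_p$. The final assertion of the lemma then follows because local connectedness of subanalytic sets ensures that connected components of $X_p$ are open and closed in $X_p$, so $X_{\sigma,\tau} \cap X_p$ is a union of such components.

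I do not anticipate a serious obstacle here: the argument is a direct unwinding of the cocartesian-section formalism, and the only non-trivial ingredients are that cocartesian transport in $\PosFib$ is order-preserving (and is an order-isomorphism over an invertible morphism in the base), together with the standard local (path-)connectedness of subanalytic sets needed to convert openness into a union-of-components statement.
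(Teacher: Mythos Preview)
Your proof is correct and follows essentially the same approach as the paper: for (1) you use \cref{prop:locally_contractible_strata} to find a neighborhood where $x$ is initial and transport the inequality along exit paths, and for (2) you use that transport along a path inside a stratum is an order-isomorphism. The paper's proof differs only cosmetically, first refining to a Whitney stratification (which is unnecessary given the stated hypotheses of \cref{prop:locally_contractible_strata}) and, for (2), pulling back to the trivially stratified $X_p$ before reusing the initial-object argument rather than invoking path-connectedness directly.
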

\begin{proof}
	At the cost of refining $(X,P)$ by a Whitney stratified space, \cref{pullback_Stokes_locus} implies that we can suppose $(X,P)$ to be conically stratified with locally weakly contractible strata.
	Let $x\in X-X_{\sigma, \tau}$.
	We can suppose that $\sigma(x)\leq \tau(x)$ in $\cI_x$.
	Since the strata of $(X,P)$ are locally weakly contractible, \cref{prop:locally_contractible_strata} yields the existence of an open subset $U\subset X$ containing $x$ such that $x$ is an initial object of $\Exit(U,P)$.
	Hence, for every $y\in U$, there is an exit path $\gamma \colon x\to y$ giving rise to a morphism of posets $\cI_x \to \cI_y$ sending $\sigma(x)$ to $\sigma(y)$ and $\tau(x)$ to $\tau(y)$.
	Thus $\sigma(y)\leq \tau(y)$.
	Hence $U \subset X-X_{\sigma, \tau}$.
	This proves $(1)$.
	We now prove $(2)$.
	From \cref{pullback_Stokes_locus}, we can suppose that $X$ is trivially stratified and show that  $X_{\sigma, \tau}$  is open and closed in $X$.
	From $(1)$, it is enough to show that $X_{\sigma, \tau}$ is open in $X$.
	Let $x\in X_{\sigma, \tau}$ and let $U\subset X$ be an open subset containing $x$ such that $x$ is an initial object of $\Exit(U,P)$.
	Let $y\in U$.
	Let $\gamma \colon x \to y$ be a path.
	Since the stratification is trivial, $\gamma$ is an isomorphism.
	Thus $\gamma$ gives rise to an \textit{isomorphism} of posets $\cI_x \to \cI_y$ sending $\sigma(x)$ to $\sigma(y)$ and $\tau(x)$ to $\tau(y)$.
	Since $\sigma(x),\tau(x)\in \cI_x$ cannot be compared, nor do $\sigma(y),\tau(y)\in \cI_y$. 
	Hence, $U \subset X_{\sigma, \tau}$.
	The proof of \cref{Stokes_locus_is_closed} is thus complete.
\end{proof}

\section{The filtered and the Stokes hyperconstructible hypersheaves}\label{sec:Stokes_sheaf}

Given a Stokes stratified space $(X,P,\cI)$, we attach to it two hyperconstructible hypersheaves of $\infty$-categories on $(X,P)$.

\begin{construction}\label{construction:exponential}
	Let $p \colon \cA \to \cX$ be a cocartesian fibration.
%	 and let $\cE$ be a presentable $\infty$-category.
	Let $\Upsilon_\cA \colon \cX \to \Cat_\infty$ be its straightening and consider the functor
	\[ \Fun_!(\Upsilon_\cA(-), \cE) \colon \cX \to  \PrL \ , \]
	where $\Fun_!$ denotes the functoriality given by left Kan extensions.
	We write
	\[ \exp_\cE(\cA/\cX) \to \cX \]
	for the presentable cocartesian fibration classifying $\Fun_!(\Upsilon_\cA(-),\cE)$.
	We refer to $\exp_\cE(\cA/\cX)$ as the \emph{exponential fibration with coefficients in $\cE$ associated to $p \colon \cA \to \cX$}.
From \cite[Variant 3.20 \& Remark 3.21]{PortaTeyssier_Day} this construction can be promoted to an $\infty$-functor
\[ \exp_\cE \colon \CoCart \to  \PrFibL \ . \]
In more concrete terms, we have 
\begin{equation}\label{eq:exp_on_morphisms}
	\begin{tikzcd}
		\cB \arrow{d} & \cB_\cX \arrow{d} \arrow{r}{v} \arrow{l}[swap]{u} & \cA \arrow{dl} \\
		\cY & \cX \arrow{l}[swap]{f}
	\end{tikzcd} \stackrel{\exp_\cE}{\longmapsto} \begin{tikzcd}
		\exp_\cE(\cB / \cY) \arrow{d} & \exp_\cE(\cB_\cX / \cX) \arrow{l}[swap]{\cE^u} \arrow{r}{\cE^v_!} \arrow{d} & \exp_\cE(\cA / \cX) \arrow{dl} \\
		\cY & \cX \arrow{l}[swap]{f} 
\end{tikzcd}
\end{equation}
where $\cE^u$ makes the right square a pullback and $\cE^v_!$ preserves cocartesian edges in virtue of \cite[\cref*{Abstract_Stokes-prop:functoriality_exponential}]{Abstract_Derived_Stokes}.
\end{construction}

\subsection{The hyperconstructible hypersheaves of filtered functors}

\begin{observation}\label{observation:exodromy_straightening}
   By \cref{exodromy_PrL} and \cite[\cref*{Abstract_Stokes-PrL_straightening}]{Abstract_Derived_Stokes}, we have canonical equivalences
	\[ \ConsP(X;\PrL) \simeq \Fun(\PiinftySigma(X,P), \PrL) \simeq \PrFibL_{\PiinftySigma(X,P)} \ . \]
	These equivalences give rise to the following canonically commutative diagram:
	\[ \begin{tikzcd}
		\ConsPhyp(X;\PrL) \arrow{r}{\sim} \arrow{dr}[swap]{\Gamma_{X,\ast}} & \Fun(\PiinftySigma(X,P),\PrL) \arrow{d}{\lim} & \PrFibL_{\PiinftySigma(X,P)} \arrow{l}[swap]{\sim} \arrow{dl}{\Sigma^{\cocart}} \\
		{} & \PrL
	\end{tikzcd} \]
	where 
	\[
\Sigma^{\cocart}(\cA / \PiinftySigma(X,P)) \coloneqq \Fun_{/\PiinftySigma(X,P)}^{\cocart}(\PiinftySigma(X,P), \cA) 
\]
is the presentable $\infty$-category of cocartesian sections.	
	Similar considerations hold if we replace $\PrL$ by $\Cat_\infty$ or by $\CAT_\infty$.
\end{observation}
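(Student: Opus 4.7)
The statement to establish is twofold: first, that the displayed chain of equivalences exists, and second, that the two triangles in the diagram commute up to canonical equivalence. The equivalences themselves are not new here; they are simply the combination of two results already invoked, namely \cref{exodromy_PrL} (giving $\ConsPhyp(X;\PrL) \simeq \Fun(\PiinftySigma(X,P), \PrL)$) and the $\PrL$-valued straightening equivalence cited from \cite{Abstract_Derived_Stokes} (giving $\Fun(\PiinftySigma(X,P), \PrL) \simeq \PrFibL_{\PiinftySigma(X,P)}$). So the actual content lies in the commutativity of the two triangles; these I would treat separately.

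For the left triangle, my plan is to invoke the naturality of the exodromy equivalence as recalled in \cref{exodromy_functorialities}, applied to the terminal map $(X,P) \to (\ast, \ast)$. Under exodromy, pushforward along this map corresponds to the limit along $\PiinftySigma(X,P) \to \ast$, and unwinding definitions this pushforward is precisely $\Gamma_{X,\ast}$. The same statement for $\PrL$-coefficients is exactly \cref{exodromy_PrL} together with \cite[\cref*{Abstract_Stokes-PrL_straightening}]{Abstract_Derived_Stokes}, so this part is essentially formal.

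For the right triangle, I would use the standard description of limits of $\PrL$-valued diagrams in terms of their cocartesian straightening. Concretely, if $\Upsilon_\cA \colon \PiinftySigma(X,P) \to \PrL$ classifies the cocartesian fibration $\cA \to \PiinftySigma(X,P)$, then
\[ \lim_{\PiinftySigma(X,P)} \Upsilon_\cA \;\simeq\; \Fun_{/\PiinftySigma(X,P)}^{\cocart}\bigl(\PiinftySigma(X,P), \cA\bigr) \;=\; \Sigma^{\cocart}(\cA/\PiinftySigma(X,P)) \ , \]
which is the desired equivalence. This is the content of \cite[3.3.3.2]{HTT} at the level of $\Cat_\infty$, and since the straightening for $\PrL$-valued functors is compatible with the forgetful functor $\PrL \to \CAT_\infty$, the same identification holds in $\PrL$. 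Both the $\lim$ functor and $\Sigma^{\cocart}$ are functorial in an evident way under the straightening equivalence, so this identification is canonical and natural in the cocartesian fibration.

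Since neither triangle requires any nontrivial verification beyond the naturality of results already in hand, I do not anticipate any real obstacle; the role of this observation is bookkeeping, assembling three equivalent descriptions of the same $\PrL$-valued invariant for later use. The only point where I would take care is to check that the naturality statement from \cite{Abstract_Derived_Stokes} cited for the $\PrL$-straightening is indeed compatible with that from \cref{exodromy_functorialities} when composed through the diagram, which is a matter of identifying the two naturality squares along the terminal map $(X,P) \to (\ast,\ast)$.
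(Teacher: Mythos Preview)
Your unpacking is correct and matches the paper's intent. The paper treats this as an observation stated without proof, simply citing \cref{exodromy_PrL} and the external straightening reference; your identification of the right triangle via \cite[3.3.3.2]{HTT} is exactly the ingredient the paper itself invokes later (e.g.\ in the proof of \cref{cocart_finite_type}), and your use of naturality along $(X,P)\to(\ast,\ast)$ for the left triangle is the standard way to read off global sections under exodromy as in \cref{exodromy_functorialities}.
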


\begin{defin}\label{def:cocartesian_filtered_functor}
	Let $(X,P,\cI)$ be a Stokes stratified space.
%	 and let $\cE$ be a presentable $\infty$-category.
	The \emph{categorical hypersheaf of $\cI$-filtered functors on $(X,P)$ with coefficients in $\cE$} is the object $\mathfrak{Fil}_{\cI,\cE}$ in $\ConsPhyp(X;\PrL)$ corresponding to $\exp_\cE(\cI / \PiinftySigma(X,P))$ via the equivalences of \cref{observation:exodromy_straightening}.
	The $\infty$-category of \emph{cocartesian $\cI$-filtered functors on $(X,P)$} is the presentable $\infty$-category
	\[ \Filco_{\cI,\cE} \coloneqq \frakFil_{\cI,\cE}(X)  \]
	of global sections of $\frakFil_{\cI,\cE}$.
\end{defin}

\begin{rem}\label{rem:filtered_functors}
    Let $ (X,P,\cI)$ be a Stokes stratified space.
	We can give an explicit description of the hypersheaf $\frakFil_{\cI,\cE}$ as follows.
	For every open subset $U\subset X$, write
	\[ j_U \colon \PiinftySigma(U,P) \to  \PiinftySigma(X,P) \]
	for the canonical map.
	Let $\Upsilon_{\cI} \colon \PiinftySigma(X,P) \to \Poset$ be the straightening of $\cI$.
	Unraveling the equivalences of \cref{observation:exodromy_straightening}, we can identify $\frakFil_{\cI,\cE}$ with the presheaf $ \mathrm{Open}(X)\op \to  \PrL$ sending an open subset $U\subset X$ to
	\[ \lim_{\PiinftySigma(U,P)} \Fun_!(\Upsilon_{\cI}\circ j_U(-), \cE)  \ .\]
%	Notice that this description implies in particular that if $x \in U$ is an initial object for $\PiinftySigma(U,P|_U)$, then
%	\[ \frakFil_{\mathbb X,\cE}(U) \simeq \Fun(\cI_x,\cE) \ . \]
	It is not obvious from this description that $\frakFil_{\cI,\cE}$ satisfies hyperdescent nor that it is $P$-hyperconstructible: it is rather a consequence of the exodromy equivalence.
\end{rem}

\begin{eg}\label{description_filco_initial_object}
	    Let $ (X,P,\cI)$ be a Stokes stratified space.
	    Let $U\subset X$ be an open subset such that  $\PiinftySigma(U,P)$ admits an initial object $x$.
	    Then the description of $\frakFil_{\cI,\cE}$ given in \cref{rem:filtered_functors} yields a canonical equivalence $\frakFil_{\cI,\cE}(U)\simeq \Fun(\cI_x,\cE)$.
\end{eg}		
		
%\begin{rem}\label{cocartesian_sections_as_functors}
%	Since all the stratified spaces we care about in this paper admit a fundamental system of open neighbourhood $U$ satisfying the conditions of \cref{description_filco_initial_object}, the description from \cref{rem:filtered_functors} is adapted to understand the local behavior of $\frakFil_{\cI,\cE}$. 
%	To understand its $\infty$-category of global sections $\Filco_{\cI,\cE}$, it is best to use the perspective of \cite[Proposition 4.1]{PortaTeyssier_Day}, which provides a canonical equivalence
%	\[ \Sigma( \exp_\cE(\cI / \PiinftySigma(X,P)) ) \simeq \Fun(\cI, \cE) \ , \]
%	where 
%	\[
%\Sigma(\cA / \PiinftySigma(X,P)) \coloneqq \Fun_{/\PiinftySigma(X,P)}(\PiinftySigma(X,P), \cA) 
%\]
%is the presentable $\infty$-category of sections.
%	Therefore, 
%	\[ \Filco_{\cI,\cE} \simeq \Sigma^{\cocart}(\exp_\cE(\cI / \PiinftySigma(X,P))) \]
%	is a full subcategory of $\Fun(\cI, \cE)$.
%	We refer to $\Fun(\cI,\cE)$ as the \emph{$\infty$-category of $\cI$-filtered functors on $(X,P)$}, and we will see later in \cref{prop:cocartesian_functors_reformulation} a  characterization intrinsic to $\Fun(\cI,\cE)$ of what it means for a functor $F \colon \cI \to \cE$ to belong to the full subcategory $\Filco_{\cI,\cE}$.
%	In the economy of this paper, $\Fun(\cI,\cE)$ should be understood as a convenient computational tool that allows to access global information on the more interesting $\Filco_{\cI,\cE}$.
%\end{rem}

In the trivial stratification situation, $\mathfrak{Fil}_{ \Pi_{\infty}(X),\cE}$ gives back locally constant hypersheaves.
Before seeing this, let us introduce the following

\begin{defin}\label{def_loc_presheaf}
Let $X$ be a topological space. 
%Let $\cE$ be a presentable $\infty$-category.
We denote by 
$$
\frak{Loc}_{X,\cE} \colon \mathrm{Open}(X)\op \to \cE
$$ 
the presheaf sending an open subset $U\subset X$ to  $\Loc^{\hyp}(U,\cE)$.
\end{defin}

\begin{prop}\label{Filco_trivial_stratification}
Consider a Stokes stratified space of the form $(X,\ast, \Pi_{\infty}(X))$.
%Let $\cE$ be a presentable  $\infty$-category.
Then, $\mathfrak{Fil}_{ \Pi_{\infty}(X),\cE}$ is canonically equivalent to $\frak{Loc}_{X,\cE}$.
\end{prop}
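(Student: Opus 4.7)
The plan is to identify both $\mathfrak{Fil}_{\Pi_\infty(X),\cE}$ and $\mathfrak{Loc}_{X,\cE}$ with the locally constant hypersheaf on $X$ that corresponds, via the exodromy equivalence for the trivial stratification, to the constant functor $\Pi_\infty(X) \to \PrL$ with value $\cE$.

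First, I would observe that in the situation of the statement, the stratification is trivial, so that $\Pi_\infty(X,\ast) \simeq \Pi_\infty(X)$ is an $\infty$-groupoid, and the structural cocartesian fibration $\cI = \Pi_\infty(X) \to \Pi_\infty(X)$ is the identity. Its straightening $\Upsilon_{\cI} \colon \Pi_\infty(X) \to \Cat_\infty$ is therefore canonically equivalent to the constant functor at the terminal $\infty$-category $\ast$.

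Next, by \cref{construction:exponential}, the presentable cocartesian fibration $\exp_\cE(\cI/\Pi_\infty(X))$ classifies the functor $\Fun_!(\Upsilon_{\cI}(-), \cE) \simeq \Fun_!(\ast,\cE) \simeq \cE$, i.e.\ the constant functor at $\cE \in \PrL$. Applying the equivalences of \cref{observation:exodromy_straightening} with $\PrL$-coefficients, this shows that $\mathfrak{Fil}_{\Pi_\infty(X),\cE}$ is identified with the locally constant hypersheaf on $X$ associated to the constant functor at $\cE$.

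Finally, using the explicit description of \cref{rem:filtered_functors}, for every open $U \subset X$ one obtains
\[ \mathfrak{Fil}_{\Pi_\infty(X),\cE}(U) \simeq \lim_{\Pi_\infty(U)} \underline{\cE} \simeq \Fun(\Pi_\infty(U), \cE) \simeq \Loc^{\hyp}(U,\cE) = \mathfrak{Loc}_{X,\cE}(U), \]
where the last two identifications use the standard exodromy equivalence for the trivial stratification together with \cref{def_loc_presheaf}. Naturality in $U$ is automatic from the functoriality of all constructions involved, so these pointwise equivalences assemble into the desired equivalence of presheaves. I do not foresee any significant obstacle; the only points to verify carefully are the identification $\Fun_!(\ast,\cE) \simeq \cE$ and the fact that the constant functor in $\PrL$ at $\cE$ corresponds under exodromy to the hypersheaf $U \mapsto \Loc^{\hyp}(U,\cE)$, both of which are essentially formal consequences of the definitions.
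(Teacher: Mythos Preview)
Your proposal is correct and follows essentially the same approach as the paper's proof: both identify the exponential fibration as the constant fibration with fiber $\cE$, use \cref{rem:filtered_functors} to compute $\mathfrak{Fil}_{\Pi_\infty(X),\cE}(U) \simeq \Fun(\Pi_\infty(U),\cE)$, and then invoke the monodromy equivalence to conclude. The paper phrases the key step as ``since $\Pi_\infty(X)$ is an $\infty$-groupoid, every section of the exponential fibration is cocartesian,'' which is exactly what underlies your identification $\lim_{\Pi_\infty(U)} \underline{\cE} \simeq \Fun(\Pi_\infty(U),\cE)$.
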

\begin{proof}
In that case, $\exp_\cE(\Pi_{\infty}(X) / \Pi_{\infty}(X))$ is the constant fibration $\Pi_{\infty}(X)\times \cE   \to \Pi_{\infty}(X)$.
Since $\Pi_{\infty}(X)$ is an $\infty$-groupoïd, every  section of 
$\exp_\cE(\Pi_{\infty}(X) / \Pi_{\infty}(X))$ is cocartesian.
Thus \cref{rem:filtered_functors}  yields a canonical equivalence
\[
\mathfrak{Fil}_{ \Pi_{\infty}(X),\cE}(U)\simeq \Fun(\Pi_{\infty}(U),\cE)  
\]
for every $U\in   \mathrm{Open}(X)$. 
Since $X$ is exodromic, the conclusion follows from the monodromy equivalence.
\end{proof}

\subsection{The hyperconstructible hypersheaves of Stokes functors}

The categorical  hypersheaf $\frakFil_{\cI,\cE}$ is not yet our main object of interest.

\begin{notation}\label{notation_Iset}
	We let
	\[ (-)^{\mathrm{set}} \colon \Poset \to \Poset \]
	be the functor sending a poset $(I,\leqslant)$ to the underlying set $I$, seen as a poset with trivial order.
	This construction promotes to a global functor
	\[ (-)^{\mathrm{set}} \colon \PosFib \to  \PosFib \ , \]
	equipped with a natural transformation $i \colon (-)^{\mathrm{set}} \to \id_{\PosFib}$.
\end{notation}

Let $(X,P,\cI)$ be Stokes stratified space.
%Let $\cE$ be a presentable $\infty$-category.
The functoriality of the exponential construction induces a well defined exponential induction functor
\[ \cE^{i_{\cI}}_! \colon \exp_\cE(\cI^{\ens} / \PiinftySigma(X,P)) \to  \exp_\cE(\cI / \PiinftySigma(X,P))  
\]
in  $\PrFibL_{\PiinftySigma(X,P)}$.
Let 
$$
\exp_\cE^{\PS}( \cI / \Pi_\infty(X,P) ) \subset  \exp_\cE(\cI / \PiinftySigma(X,P))  
$$
be its essential image.
One can check  (see  \cite[\cref*{Abstract_Stokes-EssIm}]{Abstract_Derived_Stokes}) that 
$$
\exp_\cE^{\PS}( \cI / \Pi_\infty(X,P) ) \to \PiinftySigma(X,P)
$$ 
is a cocartesian fibration whose formation commutes with base change.
In particular, its fiber at $x \in X$ canonically coincide with the essential image of 
$$
i_{\cI_x ! } \colon   \Fun(\cI^{\ens}_x,\cE)\to \Fun(\cI_x,\cE) \ .
$$

%\todo{it seems that the following notation is not needed. So it got silenced}
%\begin{notation}
%	Let $ (X,P,\cI)$ be a Stokes stratified space.
%	We write $\mathbb X^{\mathrm{set}}$ for the Stokes stratified space $(X,P,\cI^{\mathrm{set}})$.
%	The transformation $i_\cI \colon \cI^{\mathrm{set}} \to \cI$ of cocartesian fibrations over $\PiinftySigma(X,P)$ gives rise to a well defined morphism
%	\[ i_{\mathbb X} \colon \mathbb X^{\mathrm{set}} \to  \mathbb X \]
%	in $\StStrat$.
%	As before, this construction promotes to a global functor
%	\[ (-)^{\mathrm{set}} \colon \StStrat \to  \StStrat \]
%	equipped with a natural transformation $i \colon (-)^{\mathrm{set}} \to \id_{\StStrat}$.
%\end{notation}

\begin{defin}\label{def:Stokes_sheaf}
Let $(X,P,\cI)$ be Stokes stratified space.
%Let $\cE$ be a presentable $\infty$-category.
	The \emph{categorical sheaf of $\cE$-valued $\cI$-Stokes functors on $(X,P)$} is the object $\frakStokes_{\cI,\cE}$ in $\ConsP(X;\CAT_\infty)$ corresponding to $\exp_\cE^{\PS}( \cI / \Pi_\infty(X,P) )$ via the equivalences of \cref{observation:exodromy_straightening}.
	The \emph{$\infty$-category of $\cE$-valued $\cI$-Stokes functors} is the (large) $\infty$-category
	\[ \St_{\cI,\cE} \coloneqq \frakStokes_{\cI,\cE}(X) \in \CAT_\infty \]
	of global sections of $\frakStokes_{\cI,\cE}$.
\end{defin}

\begin{rem}
If the fibers of $\cI$ are discrete, then $i_{\cI}\colon\cI^{\ens}\to \cI$ is an equivalence.
Thus, $\St_{\cI,\cE} \simeq \Filco_{\cI,\cE}$.
\end{rem}

%\begin{rem}\label{rem:Stokes_functors}
%	\hfill
%	\begin{enumerate}\itemsep=0.2cm
%		\item The above definition makes sense not only for Stokes stratified spaces, but also for more general cocartesian fibrations in posets $\cI \to \cX$.
%		For this reason, we extend the above terminology and set
%		\[ \frakStokes_{\cI/\cX,\cE} \coloneqq \mathrm{EssIm}(\cE^{i_\cI}_!) \qquad \textrm{and} \qquad \St_{\cI/\cX,\cE} \coloneqq \Sigma^{\cocart}(\frakStokes_{\cI/\cX,\cE}) \ . \]
%\personal{same remark as above}		
%		 Let $\mathbb X = (X,P,\cI)$ be a Stokes stratified space.
%		By \cref{cocartesian_sections_as_functors}, the global sections $\St_{\cI,\cE}$ of $\frakStokes_{\cI,\cE}$ embed fully faithful inside $\Filco_{\cI,\cE} \subset \Fun(\cI,\cE)$.
%		One can characterize $\St_{\cI,\cE}$ as the full subcategory of $\Filco_{\cI,\cE}$ spanned by punctually split cocartesian functors.
%		See \cref{defin_split}.
%	\end{enumerate}
%\end{rem}

\begin{eg}\label{eg:local_systems_as_Stokes_structures}
	Let $(X,P)$ be an exodromic stratified space.
	Review it as a Stokes stratified space $(X,P,\Pi_\infty(X,P))$, with the trivial cocartesian fibration given by the identity of $\Pi_\infty(X,P)$.
	Then there is a canonical equivalence
	\[ \frakStokes_{\Pi_{\infty}(X,P),\cE} \simeq \frak{Loc}_{X,\cE} \ , \]
	where $\frak{Loc}_{X,\cE}$ is the categorical sheaf of locally hyperconstant hypersheaves on $X$ (see \cref{def_loc_presheaf}).
	For a proof, see \cref{Stokes_sheaf_trivial_fibration}.
	In other words, Stokes functors provide an extension of the theory of locally hyperconstant hypersheaves.
\end{eg}

At the other extreme, we have:

\begin{eg}\label{eg:Stokes_structures_at_a_point}
	Let $(X,P,\cI)$ be a Stokes stratified space.
	Assume that $\PiinftySigma(X,P)$ admits an initial object $x$.
	Then, in virtue of \cref{rem:filtered_functors}, the pullback over $x$ induces an equivalence between $\St_{\cI,\cE}$ and $\St_{\cI_x,\cE}$, that is the essential image of
	\[ i_{\cI_x,!} \colon \Fun(\cI_x^{\mathrm{set}}, \cE) \to  \Fun(\cI_x,\cE) \ . \]
%	We refer to objects in the essential image of $i_{\cI_x,!}$ as \emph{split $\cI_x$-filtered functors}.
	Notice that  this   essential image is typically \emph{not} stable nor presentable.
%	Similarly, if $\cE$ is assumed to be stable, the ($\infty$-)category $\St_{\cI,\cE}$ is not necessarily stable.
\end{eg}

%\begin{rem}
%	Let $(X,P,\cI)$ be a Stokes stratified space.
%	Assume that $(X,P)$ is locally weakly contractible in the sense that for every $x \in X$ there exists a fundamental system of open neighborhoods $\{U_\alpha\}$ of $x$ inside $X$ such that $x$ becomes an initial object in $\PiinftySigma(U_\alpha, P)$ (this is automatic if $(X,P)$ is conically stratified).
%	Then the collection $\cB^{\mathrm{ctr}}(X)$ of open subsets $U$ of $X$ such that $\PiinftySigma(U,P)$ admits an initial object forms a basis for the topology of $X$.
%	The previous example provides an effective way of describing the values of the hypersheaf $\frakStokes_{\mathbb X,\cE}$ on the basis $\cB^{\mathrm{ctr}}(X)$.
%\end{rem}
%\personal{this remark is cool but may be a little bit too involved for an overview section and we cannot point to any concrete result where it was actually useful }}

The following example is a particularly simple situation in dimension $1$, but covers a large part of the ideas covered in this paper.
%For this reason, we encourage the reader to keep it in mind while reading the later parts:

\begin{eg}\label{eg:1_dimensional}
	On the circle $S^1 = \{ z \in \C \mid |z| = 1 \}$ consider the stratification over the poset $P = \{0 < 1\}$ whose closed stratum is $\{1,-1\}$.
	Write
	\[ U \coloneqq \{z \in S^1 \mid \Im(z) > 0\} \qquad \textrm{and} \qquad V \coloneqq \{z \in S^1 \mid \Im(z) < 0\} \ . \]
	Consider the $P$-constructible sheaf of posets $\mathscr I$ whose underlying sheaf of sets $\mathscr I^{\mathrm{set}}$ is the constant sheaf associated to $\{a,b\}$, and whose order is determined by the requirement that $a < b$ over $U$ and $b < a$ over $V$, while $a$ and $b$ are not comparable at $1$ and $-1$.
	The situation can be visualized as follows:
	\begin{center}
		\begin{tikzpicture}
			\node[label=45:{$1$}] (X1) at (1,0) {} ;
			\node[shape=circle,shape=circle,minimum size=6pt,inner sep=0pt, outer sep=0pt,draw] (X2) at (0,1) {} ;
			\node[label=135:{$-1$}] (X3) at (-1,0) {} ;
			\node[shape=circle,minimum size=6pt,inner sep=0pt, outer sep=0pt,draw] (X4) at (0,-1) {} ;
			\node[shape = ellipse,draw] (A1) at (2.5,0) {\tiny $\{a \colon b\}$} ;
			\node[shape = ellipse,draw] (A2) at (0,2.5) {\tiny $\{a < b\}$} ;
			\node[shape = ellipse,draw] (A3) at (-2.5,0) {\tiny $\{a \colon b\}$} ;
			\node[shape = ellipse,draw] (A4) at (0,-2.5) {\tiny $\{b < a\}$} ;
	%		\draw[very thin] (X2) circle (3pt) ;
	%		\fill[white] (X2) circle (2pt) ;
	%		\draw[very thin] (X4) circle (3pt) ;
	%		\fill[white] (X4) circle (2pt) ;
			\draw[thin] (0,0) circle (1cm) ;
			\draw[line width = 2.5pt,gray,opacity=0.5] (4:1cm) arc (4:176:1cm) node[near start,above,color=black,opacity=100] {$U$} ;
			\draw[line width = 2.5pt,gray,opacity=0.5] (184:1cm) arc (184:356:1cm) node[near start,below,color=black,opacity=100] {$V$} ;
			\fill (1,0) circle (1.5pt) ;
			\fill (-1,0) circle (1.5pt) ;
			\draw (X1.east) -- (A1.west) ;
			\draw (X2.west) -- (A2.south west) ;
			\draw (X2.east) -- (A2.south east) ;
			\draw(X3.west) -- (A3.east) ;
			\draw(X4.west) -- (A4.north west) ;
			\draw(X4.east) -- (A4.north east) ;
		\end{tikzpicture}
	\end{center}
	After applying the exodromy and the straightening equivalence, we are left with the following cocartesian fibration in posets over $\PiinftySigma(S^1,P)$:
	\begin{center}
		\begin{tikzpicture}
			\node (E) at (-5,0) {$\PiinftySigma(S^1,P)$} ;
			\node (I) at (-5,3.5) {$\cI$} ;
			\node (U0) at (1,1) {$U$} ;
			\node (U1) at (1,4) {$b$} ;
			\node (U2) at (1,5) {$a$} ;
			\node (I0) at (2,0) {$1$} ;
			\node (I1) at (2,3) {$b$} ;
			\node (I2) at (2,4) {$a$} ;
			\node (II0) at (-1,0) {$-1$} ;
			\node (II1) at (-1,3) {$b$} ;
			\node (II2) at (-1,4) {$a$} ;
			\node (V0) at (0,-1) {$V$} ;
			\node (V1) at (0,2) {$b$} ;
			\node (V2) at (0,3) {$a$} ;
			
			\draw[rounded corners = 10pt,very thin] (-0.15,-1.4) -- (2.55,-0.1) -- (1.1,1.4) -- (-1.6,0) -- cycle ;
			\draw[->] (I0) -- (V0) ;
			\draw[->] (I0) -- (U0) ;
			\draw[->] (II0) -- (U0) ;
			\draw[->] (II0) -- (V0) ;
			
			\draw[rounded corners = 10pt,very thin] (0,1.55) -- (2.3,2.65) -- (2.3,4.3) -- (1.1,5.5) -- (-1.3,4.25) -- (-1.3,2.75) -- cycle ;

			\draw[->] (I.south) -- (E.north) ;
			\draw[->] (0.5,1.7) -- (0.5,1.2) ;
			\draw[->] (II1) -- (V1) ;
			\draw[->] (II1) -- (U1) ;
			\draw[->] (I1) -- (U1) ;
			\draw[->] (I1) -- (V1) ;
%			\draw[double equal sign distance] (U1) -- (II1) -- (V1) -- (I1) -- (U1) ;
			\draw[->] (I1) -- (U2) ;
			\draw[line width=2pt,white] (II2) -- (V2) ;
			\draw[line width=2pt,white] (I2) -- (V2) ;
			\draw[->] (II2) -- (V2) ;
			\draw[->] (II2) -- (U2) ;
			\draw[->] (I2) -- (U2) ;
			\draw[->] (I2) -- (V2) ;
%			\draw[double equal sign distance] (U2) -- (II2) -- (V2) -- (I2) -- (U2) ;
			\draw[line width=2pt,white] (I2) -- (V1);
			\draw[->] (I2) -- (V1) ;
			\draw[->] (V1) -- (V2) ;
			\draw[->] (U2) -- (U1) ;
			\draw[->] (II2) -- (U1) ;
			\draw[->] (II1) -- (V2) ;
		\end{tikzpicture}
	\end{center}
	Beware that different copies of $a$ and $b$ represent different objects in $\cI$, lying over different objects of $\PiinftySigma(X,P)$.
	Arrows between identical letters correspond to cocartesian edges in $\cI$.
	Take $\cE \coloneqq \categ{Mod}_k$, where $k$ is some field.
	%(we mean the derived $\infty$-category of $k$-modules, but the reader can safely think in terms of the abelian category of -- not necessarily finite dimensional -- $k$-vector spaces).
	%\personal{let us let the reader to spontaneously think of its instance of Mod_k}
	Then both $\Filco_{\cI,\cE}$ and $\St_{\cI,\cE}$ can be realized as full subcategories of $\Fun(\cI,\categ{Mod}_k)$.
	Although practical for many purposes, this is not the best way to handle these categories.
	Let us explain in this example how to exploit the sheaf theoretic nature of both $\Filco_{\cI,\cE}$ and $\St_{\cI,\cE}$.
	Define the two opens
	\[ W_1 \coloneqq \{ z \in S^1 \mid \Re(z) > -1\} \qquad \text{and} \qquad W_{-1} \coloneqq \{ z \in S^1 \mid \Re(z) < 1 \} \ , \]
	and let $W \coloneqq W_1 \cap W_{-1}$ be their intersection.
	For $i \in \{1,-1\}$, put
	\[ \cI_{W_i} \coloneqq \PiinftySigma(W_i) \times_{\PiinftySigma(S^1,P)} \cI \ . \]
	Since $\frakFil_{\cI, \cE}$ and $\frakStokes_{\cI,\cE}$ are sheaves, we deduce that the squares
	\[ \begin{tikzcd}
		\Filco_{\cI,\cE} \arrow{r} \arrow{d} & \Filco_{\cI_{W_1},\cE} \arrow{d} \\
		\Filco_{\cI_{W_{-1}},\cE} \arrow{r} & \Filco_{\cI_{W},\cE}
	\end{tikzcd} \qquad \text{and} \qquad \begin{tikzcd}
		\St_{\cI,\cE} \arrow{r} \arrow{d} & \St_{\cI_{W_1},\cE} \arrow{d} \\
		\St_{\cI_{W_{-1}},\cE} \arrow{r} & \St_{\cI_W,\cE} 
	\end{tikzcd} \]
	are pullbacks.
	Now, observe that:
	\begin{enumerate}[(i)]\itemsep=0.2cm
		\item since $1$ is initial in $\PiinftySigma(W_1,P)$, we have $\Filco_{\cI_{W_1},\cE} \simeq \Fun(\cI_1, \cE) \simeq \cE \times \cE$ and $\St_{\cI_{W_1},\cE}\simeq \St_{\cI_{1},\cE}$;
		\item since the order on $\cI_1 = \{a \colon b\}$ is trivial, we have $\cI_1^{\mathrm{set}} = \cI_1$, and therefore $\St_{\cI_{W_1},\cE} = \Filco_{\cI_{W_1},\cE}$.
	\end{enumerate}
	A symmetrical reasoning applies with $-1$ in place of $1$.
	Full faithfulness of $\St_{\cI_W,\cE} \hookrightarrow \Filco_{\cI_W,\cE}$ ensures that the induced map
	\[ \St_{\cI_{W_1},\cE} \times_{\St_{\cI_W,\cE}} \St_{\cI_{W_{-1}},\cE} \to  \St_{\cI_{W_1},\cE} \times_{\Filco_{\cI_W,\cE}} \St_{\cI_{W_{-1}},\cE} \]
	is an equivalence.
	Hence, the canonical map
	\[ \St_{\cI,\cE} \to  \Filco_{\cI,\cE} \]
	is an equivalence.
	In particular, $\St_{\cI,\cE}$ is stable.
	% (or abelian, depending whether one considers $\categ{Mod}_k$ as a derived $\infty$-category or as a plain abelian category).
\end{eg}

\section{Stokes functors: the global viewpoint}\label{Global}

\subsection{The specialization equivalence}

Let $(X,P,\cI)$ be a Stokes stratified space.
%Let $\cE$ be a presentable $\infty$-category.
Let
$$
\Sigma(\exp_\cE(\cI/\PiinftySigma(X,P)))\coloneqq \Fun_{/\PiinftySigma(X,P)}(\PiinftySigma(X,P),\exp_\cE(\cI/\PiinftySigma(X,P)))
$$
be the $\infty$-category of sections of $\exp_\cE(\cI/\PiinftySigma(X,P))$.
Then,  there is an equivalence of $\infty$-categories, called the \textit{specialization equivalence} (See \cite[Proposition 4.1]{PortaTeyssier_Day})
\begin{equation}\label{spe_eq}
\spe_{\cI}\colon\Fun(\cI,\cE) \simeq 	\Sigma(\exp_\cE(\cI/\PiinftySigma(X,P)))  \ .
\end{equation}

The functorialities of the exponential construction admit a simple description in terms of $\Fun(\cI,\cE)$.
Let $f \colon (Y, Q, \cJ) \to (X,P,\cI)$ be a morphism in $\StStrat$. 
Recall that $f$ amounts to the datum of a morphism of exodromic stratified spaces $f \colon (Y,Q) \to (X,P)$ and a commutative diagram
	\[ \begin{tikzcd}
		\cI \arrow{d} & \cI_Y \arrow{d} \arrow{r}{v} \arrow{l}[swap]{u} & \cJ \arrow{dl} \\
		\Pi_\infty(X,P) & \Pi_\infty(Y,Q) \arrow{l}[swap]{\Pi_\infty(f)} 
	\end{tikzcd} \]
	where the square is cartesian.

\begin{prop}[{\cite[\cref*{Abstract_Stokes-prop:global_functoriality}]{Abstract_Derived_Stokes}}]\label{prop:global_functoriality_IHES}
	\hfill
	\begin{enumerate}\itemsep=0.2cm
		\item \label{prop:global_functoriality:pullback} There exists a canonically commutative square
%		\begin{equation}\label{eq:global_pullback}
\[
			\begin{tikzcd}
				\Sigma(\exp_\cE(\cI/\PiinftySigma(X,P))) \arrow{r}{\Sigma(\cE^u)} \arrow{d}{\spe_{\cI}} &\Sigma(\exp_\cE(\cI_Y/\PiinftySigma(Y,Q))) \arrow{d}{\spe_{\cI_Y}} \\
				\Fun(\cI,\cE) \arrow{r}{u^\ast} & \Fun(\cI_Y, \cE) \ ,
			\end{tikzcd}
\]
%\end{equation}

		providing a canonical identification $\Sigma(\cE^u) \simeq u^\ast$.
		
		\item \label{prop:global_functoriality:induction} There exists a canonically commutative square
%		\begin{equation}%\label{eq:global_induction}
               \[
			\begin{tikzcd}
			\Sigma(\exp_\cE(\cI_Y/\PiinftySigma(Y,Q))) \arrow{d}{\spe_{\cI_Y}}  \arrow{r}{\Sigma(\cE^v_!)} \arrow{d}{\spe_{\cI_Y}} &	\Sigma(\exp_\cE(\cJ/\PiinftySigma(Y,Q)))\arrow{d}{\spe_{\cJ}} \\
				\Fun(\cI_Y, \cE) \arrow{r}{v_!} & \Fun(\cJ,\cE) \ ,
			\end{tikzcd}
			\]
%		\end{equation}
		providing a canonical identification $\Sigma(\cE^v_!) \simeq v_!$.
	\end{enumerate}
\end{prop}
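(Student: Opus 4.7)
The plan is to deduce both commutative squares from a single naturality statement: the specialization equivalence \eqref{spe_eq} should be realized as a natural transformation between two functors defined on (a suitable subcategory of) $\CoCart$ valued in $\Cat_{\infty}$, and parts \eqref{prop:global_functoriality:pullback}--\eqref{prop:global_functoriality:induction} follow by specializing this naturality to the two generating classes of morphisms (cartesian edges, and cocartesian-preserving edges over a fixed base) whose composite recovers any morphism in $\StStrat$.

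For part \eqref{prop:global_functoriality:pullback}, the essential input is that, by construction \eqref{eq:exp_on_morphisms}, the square
\[
\begin{tikzcd}
\exp_\cE(\cI_Y/\Pi_\infty(Y,Q)) \arrow{r}{\cE^u} \arrow{d} & \exp_\cE(\cI/\Pi_\infty(X,P)) \arrow{d} \\
\Pi_\infty(Y,Q) \arrow{r}{\Pi_\infty(f)} & \Pi_\infty(X,P)
\end{tikzcd}
\]
is a pullback in $\PrFibL$. Consequently, postcomposition with $\cE^u$ on sections is exactly pullback of sections along $\Pi_\infty(f)$. On the other side, the specialization equivalence identifies a section $s$ of $\exp_\cE(\cI/\Pi_\infty(X,P))$ with a functor $F\colon\cI\to\cE$ whose value at $a\in\cI_x$ is obtained by evaluating $s(x)\in \Fun_!(\Upsilon_\cI(x),\cE)$ at $a$; pulling $s$ back along $\Pi_\infty(f)$ and evaluating at $a'\in(\cI_Y)_y$ with $u(a')=a\in\cI_{f(y)}$ plainly gives $F(u(a'))=(u^\ast F)(a')$. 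Naturality of this pointwise description provides the square.

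For part \eqref{prop:global_functoriality:induction}, the cocartesian-preserving map $\cE^v_!\colon \exp_\cE(\cI_Y/\Pi_\infty(Y,Q))\to \exp_\cE(\cJ/\Pi_\infty(Y,Q))$ classifies fiberwise the natural transformation
\[
\Fun_!(\Upsilon_{\cI_Y}(y),\cE)\;\longrightarrow\;\Fun_!(\Upsilon_\cJ(y),\cE)
\]
given by left Kan extension along $v_y\colon(\cI_Y)_y\to\cJ_y$. Because $v$ is a morphism of cocartesian fibrations over $\Pi_\infty(Y,Q)$, for each $y\in Y$ the inclusion $\cJ_y\hookrightarrow\cJ$ is right adjoint to the cocartesian-transport functor, so the pointwise formula for $v_!\colon\Fun(\cI_Y,\cE)\to\Fun(\cJ,\cE)$ at $b\in\cJ_y$ reduces to left Kan extension along $v_y$ applied to the restriction $F|_{(\cI_Y)_y}$. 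This is exactly how $\cE^v_!$ acts on sections after applying $\spe$, giving the square.

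The main obstacle, and where care is required, is to promote these pointwise identifications to coherent commutative squares at the $\infty$-categorical level. The cleanest way to handle this is to assemble $\Fun(-,\cE)\colon\CoCart\op\to\Cat_\infty$ (via pullback for cartesian edges and left Kan extension for cocartesian-preserving edges) into a single functor, do the same with $\Sigma\circ\exp_\cE$, and exhibit $\spe$ as a natural equivalence between them—this is the content of the cited \cite[Proposition 4.1]{PortaTeyssier_Day}. Once the equivalence is available as a single natural transformation, parts \eqref{prop:global_functoriality:pullback} and \eqref{prop:global_functoriality:induction} are merely the evaluations of its naturality square on morphisms $u$ and $v$ respectively; the subtler coherence of straightening and $\Fun_!$-functoriality is thereby absorbed into the construction of $\spe$ itself rather than being re-verified here.
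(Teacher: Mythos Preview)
The paper does not contain its own proof of this proposition: it is imported verbatim from the companion paper \cite{Abstract_Derived_Stokes}, as indicated by the citation in the proposition header. So there is no in-paper argument to compare against.

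Your strategy is the right one, and almost certainly the one used in the cited reference: exhibit the specialization equivalence of \cite[Proposition 4.1]{PortaTeyssier_Day} as a natural equivalence of functors on $\CoCart$, and then read off the two squares by evaluating naturality on the cartesian edge $u$ and on the base-preserving edge $v$ in the factorization \eqref{eq:morphism_in_Cocart}. Your final paragraph says exactly this, and correctly locates the genuine work in the construction of $\spe$ as a natural transformation rather than in any ad hoc verification here.

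One point deserves correction. In your treatment of part~\eqref{prop:global_functoriality:induction} you write that ``the inclusion $\cJ_y\hookrightarrow\cJ$ is right adjoint to the cocartesian-transport functor.'' This is not a well-formed statement: there is no globally defined cocartesian-transport functor $\cJ\to\cJ_y$ for a general cocartesian fibration. What you actually need is the Beck--Chevalley identification $(v_!F)|_{\cJ_y}\simeq (v_y)_!\bigl(F|_{(\cI_Y)_y}\bigr)$, and the correct justification is a cofinality statement: for $b\in\cJ_y$, the inclusion $((\cI_Y)_y)_{/b}\hookrightarrow (\cI_Y)_{/b}$ admits a left adjoint (sending $(a,\alpha\colon v(a)\to b)$ to the cocartesian pushforward of $a$ along the image of $\alpha$ in the base), hence is cofinal. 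This uses precisely that $v$ preserves cocartesian edges. With this fix, your pointwise description is correct, and as you note the coherent version is subsumed in the naturality of $\spe$.
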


\subsection{Stokes functors as functors}

From the specialization equivalence \eqref{spe_eq}, one can review
$$
 \St_{\cI,\cE} \coloneqq \Sigma^{\cocart}(\exp_\cE^{\PS}( \cI / \Pi_\infty(X,P) )) \subset \Sigma(\exp_\cE(\cI/\PiinftySigma(X,P)))
$$
and 
$$  
\Filco_{\cI,\cE}   \coloneqq   \Sigma^{\cocart}(\exp_\cE(\cI/\PiinftySigma(X,P))) \subset \Sigma(\exp_\cE(\cI/\PiinftySigma(X,P))) 
$$
as full subcategories of $\Fun(\cI,\cE)$.
From this perspective, we will write $\Funcocart(\cI,\cE)$ instead of $\Filco_{\cI,\cE}$.
% and objects in $\Funcocart(\cI,\cE)$ will be  refereed to as \textit{cocartesian functors} while objects in $\St_{\cI,\cE}$  will be  refereed to as \textit{Stokes functors}.\\ \indent
The following proposition provides a description of cocartesian functors intrinsic to
$\Fun(\cI,\cE)$.

\begin{prop}[{\cite[\cref*{Abstract_Stokes-prop:cocartesian_functors_reformulation}]{Abstract_Derived_Stokes}}]
	Let $F \colon \cI \to \cE$ be a functor.
	The following are equivalent:
	\begin{enumerate}\itemsep=0.2cm
		\item $F$ is cocartesian;
		
		\item let $\gamma \colon x \to y$ be a morphism in $\PiinftySigma(X,P)$ and let $f_\gamma \colon \cA_x \to \cA_y$ be a straightening for $p_\gamma \colon \cA_\gamma \to \Delta^1$.
		Then the  Beck-Chevalley transformation
		\[ f_{\gamma,!} j_x^\ast(F) \to  j_y^\ast(F) \]
		is an equivalence, where $j_x :\cI_x\to \cI$ and  $j_y :\cI_y\to \cI$ are the natural inclusions.
	\end{enumerate}
\end{prop}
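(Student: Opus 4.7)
The plan is to reduce the statement to the case where the base is $\Delta^{1}$ via the functoriality of the specialization equivalence, and then to unwind the construction of $\exp_\cE$ explicitly in that elementary situation.

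\textbf{Step 1: Localization to edges.} A section $s$ of a cocartesian fibration $\exp_\cE(\cI/\PiinftySigma(X,P))\to \PiinftySigma(X,P)$ is cocartesian if and only if, for every morphism $\gamma\colon x\to y$ in $\PiinftySigma(X,P)$, the section $i_\gamma^{\ast}(s)$ of the pulled-back fibration over $\Delta^{1}$ is cocartesian, where $i_\gamma\colon \Delta^{1}\to \PiinftySigma(X,P)$ classifies $\gamma$. Consequently, I may check the condition one morphism $\gamma$ at a time.

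\textbf{Step 2: Compatibility of $\exp_\cE$ with pullback.} Let $\cI_\gamma\coloneqq \cI\times_{\PiinftySigma(X,P)}\Delta^{1}\to \Delta^{1}$, with fibers $\cI_x,\cI_y$ and transition functor $f_\gamma\colon \cI_x\to \cI_y$ obtained by straightening. By the pullback compatibility \eqref{eq:exp_on_morphisms} of the exponential construction, there is a canonical pullback square
\[
\begin{tikzcd}
\exp_\cE(\cI_\gamma/\Delta^{1}) \arrow{r}\arrow{d} & \exp_\cE(\cI/\PiinftySigma(X,P)) \arrow{d} \\
\Delta^{1} \arrow{r}{i_\gamma} & \PiinftySigma(X,P) \ .
\end{tikzcd}
\]
Combined with part~\ref{prop:global_functoriality:pullback} of \cref{prop:global_functoriality_IHES}, the specialization equivalence intertwines the restriction of sections along $i_\gamma$ with the pullback of functors $(\cI \to \cE)\mapsto (\cI_\gamma \to \cE)$. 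So if $F\leftrightarrow s$ under $\spe_\cI$, then $i_\gamma^{\ast}(s)$ corresponds to $F|_{\cI_\gamma}\colon \cI_\gamma\to\cE$ under $\spe_{\cI_\gamma}$.

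\textbf{Step 3: Analysis over $\Delta^{1}$.} It now suffices to prove the equivalence when the base is $\Delta^{1}$ with a single non-degenerate edge. In this case the cocartesian fibration $\exp_\cE(\cI_\gamma/\Delta^{1})\to\Delta^{1}$ is, by definition, the one classified by the functor $\Delta^{1}\to\PrL$ sending $0\mapsto \Fun(\cI_x,\cE)$ and $1\mapsto \Fun(\cI_y,\cE)$ with transition map $f_{\gamma,!}$. Straightening, a section is a triple $(G_x,G_y,\alpha\colon f_{\gamma,!}G_x\to G_y)$, and such a section is cocartesian precisely when $\alpha$ is an equivalence.

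\textbf{Step 4: Identifying $\alpha$ as the Beck--Chevalley transformation.} The final step is to identify, under $\spe_{\cI_\gamma}$, the components $G_x$, $G_y$ with $j_x^{\ast}F$, $j_y^{\ast}F$ and the structure map $\alpha$ with the Beck--Chevalley natural map $f_{\gamma,!}j_x^{\ast}F\to j_y^{\ast}F$. The identification $G_x\simeq j_x^{\ast}F$ and $G_y\simeq j_y^{\ast}F$ follows from another application of \cref{prop:global_functoriality_IHES}\eqref{prop:global_functoriality:pullback}, now applied to the inclusions $\{x\},\{y\}\hookrightarrow \Delta^{1}$. For the identification of $\alpha$, one notes that the universal morphism $j_{x,!}j_x^{\ast}F\to F$ in $\Fun(\cI_\gamma,\cE)$ produces on pulling back to $\cI_y$ exactly the Beck--Chevalley map, because the square
\[
\begin{tikzcd}
\cI_x \arrow{r}{j_x} \arrow{d}{f_\gamma} & \cI_\gamma \arrow{d}{j_y^{\ast}\text{-target}} \\
\cI_y \arrow{r}{j_y} & \cI_\gamma
\end{tikzcd}
\]
is the comma square computing the Kan extension along $j_y^{\ast}j_{x,!}\simeq f_{\gamma,!}$ when $\cI_\gamma$ is cocartesian over $\Delta^{1}$. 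Concretely, cocartesianity of $\cI_\gamma\to\Delta^{1}$ means the inclusion $\cI_x\hookrightarrow \cI_\gamma$ admits a cofinal left adjoint at each object of $\cI_y$, given precisely by $f_\gamma$, yielding $j_y^{\ast}j_{x,!}\simeq f_{\gamma,!}$.

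\textbf{Main obstacle.} The one genuinely delicate point is Step~4: checking that the structure map $\alpha$ produced by the $\Delta^{1}$-section is literally the Beck--Chevalley map attached to the commutative square above, rather than merely an equivalent natural transformation. This requires tracing through the naturality of the specialization equivalence and the definition of $\exp_\cE$ in terms of left Kan extension, which is where the bulk of the book-keeping lies. Once this identification is in place, the equivalence of (1) and (2) is immediate from the cocartesianity criterion of Step~3.
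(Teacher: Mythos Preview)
The paper does not actually contain a proof of this proposition: it is stated with a citation to the companion paper \cite{Abstract_Derived_Stokes}, so there is no in-paper argument to compare against.

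Your approach is correct and is the natural one. Reducing to the case of a single edge $\gamma\colon\Delta^{1}\to\Pi_\infty(X,P)$, using the pullback compatibility of $\exp_\cE$ to identify the restricted fibration with the one classified by $f_{\gamma,!}\colon\Fun(\cI_x,\cE)\to\Fun(\cI_y,\cE)$, and then unwinding sections over $\Delta^{1}$ as triples $(G_x,G_y,\alpha)$ is exactly the expected strategy. Your identification in Step~4 is also correct: for a cocartesian fibration $\cI_\gamma\to\Delta^{1}$, the inclusion $j_x\colon\cI_x\hookrightarrow\cI_\gamma$ has the property that for each $a\in\cI_y$ the slice $(\cI_x)_{/a}$ has a terminal object (the source of the cocartesian lift of $\gamma$ ending at $a$), which gives $j_y^\ast j_{x,!}\simeq f_{\gamma,!}$ and identifies the counit component with the Beck--Chevalley map. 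The commutative square you drew is slightly garbled (the right vertical arrow label does not parse), but the underlying computation is right.

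One small remark: the statement in the paper has a notational slip, writing $\cA_x,\cA_y,\cA_\gamma$ where $\cI_x,\cI_y,\cI_\gamma$ are meant; you silently corrected this, which is fine.
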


\begin{prop}\label{(co)limit_and_cocart}
	Let $(X,P,\cI)$ be a Stokes stratified space.
	Then,
	\begin{enumerate}\itemsep=0.2cm
		\item the category $\Funcocart(\cI,\cE)$ is presentable stable and closed under colimits in $\Fun(\cI,\cE)$.

		\item If  $\cI$ has finite fibers, $\Funcocart(\cI,\cE)$ is closed under limits in $\Fun(\cI,\cE)$.
	\end{enumerate}
\end{prop}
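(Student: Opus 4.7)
The plan is to leverage the Beck--Chevalley characterization of cocartesian functors stated just above: $F \colon \cI \to \cE$ is cocartesian if and only if for every morphism $\gamma \colon x \to y$ in $\PiinftySigma(X,P)$ with straightening $f_\gamma \colon \cI_x \to \cI_y$, the Beck--Chevalley transformation $f_{\gamma,!} j_x^\ast F \to j_y^\ast F$ is an equivalence. Both parts will then reduce to showing that each side of this comparison, viewed as a functor $\Fun(\cI,\cE) \to \Fun(\cI_y,\cE)$, commutes with the relevant (co)limits, so that the locus of $F$ for which the comparison is an equivalence is itself closed under those (co)limits.

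For (1), I would note that the restrictions $j_x^\ast$ and $j_y^\ast$ preserve colimits because colimits in functor categories are pointwise, while $f_{\gamma,!}$ preserves colimits as a left adjoint to $f_\gamma^\ast$. Hence $\Funcocart(\cI,\cE)$ is closed under colimits of $\Fun(\cI,\cE)$. Stability will follow from the observation that $f_{\gamma,!}$ is cocontinuous between stable $\infty$-categories and thus exact, so it also preserves finite limits; the same is trivially true of $j_x^\ast$ and $j_y^\ast$. Therefore $\Funcocart(\cI,\cE)$ contains the zero functor and is closed under finite limits and colimits in the stable $\infty$-category $\Fun(\cI,\cE)$, and is in particular stable. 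For presentability, the cleanest route is via the specialization equivalence \eqref{spe_eq}, which identifies $\Funcocart(\cI,\cE)$ with $\Sigma^{\cocart}(\exp_\cE(\cI/\PiinftySigma(X,P)))$; since $\exp_\cE(\cI/\PiinftySigma(X,P))$ lies in $\PrFibL$ over the small category $\PiinftySigma(X,P)$, its $\infty$-category of cocartesian sections is presentable.

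For (2), the finiteness of the fibres enters through the pointwise formula for the left Kan extension: at any object $y' \in \cI_y$, the value $(f_{\gamma,!} G)(y')$ is a colimit indexed by the finite slice $\cI_x \times_{\cI_y} (\cI_y)_{/y'}$. In a presentable stable $\infty$-category, any finite colimit can be rewritten as a shifted finite limit (a pushout is the suspension of the associated pullback), and hence commutes with all small limits. Consequently $f_{\gamma,!}$ preserves all small limits when the fibres are finite, and combined with the tautological limit-preservation of $j_x^\ast$ and $j_y^\ast$, the Beck--Chevalley comparison commutes with arbitrary limits in $F$. The main subtlety will be precisely this commutation of finite colimits with arbitrary small limits in a stable $\infty$-category, which rests crucially on stability of $\cE$ and would fail without the finiteness assumption on the fibres.
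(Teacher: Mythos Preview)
Your argument is correct. The paper itself does not give an independent proof here: it simply cites the companion paper \cite{Abstract_Derived_Stokes} for both items, so strictly speaking there is no in-paper proof to compare against. Your route is the natural one given the surrounding setup: the Beck--Chevalley characterization of cocartesian functors is stated immediately before this proposition precisely so that one can argue as you do, and the key point for (2)---that $f_{\gamma,!}$ preserves limits because the pointwise Kan extension formula is a \emph{finite} colimit, and finite colimits commute with all limits in a stable $\infty$-category---is exactly the content the paper later packages as \cref{induction_limits} (again by citation). Your presentability argument via the specialization equivalence and $\Sigma^{\cocart}$ also matches how the paper treats cocartesian sections in \cref{observation:exodromy_straightening}. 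In short, you have reconstructed the expected argument that the paper outsources.
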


\begin{proof}
	Item (1) is \cite[Corollaries \ref*{Abstract_Stokes-cor:cocartesianization} \& \ref*{Abstract_Stokes-cor:cocart_presentable_stable}]{Abstract_Derived_Stokes}. 
	Item (2) is \cite[\cref*{Abstract_Stokes-cocart_stability_limits}]{Abstract_Derived_Stokes}.
\end{proof}

\begin{defin}\label{defin_split}
	Let $F\colon\cI \to \cE$ be a functor.
	\begin{enumerate}\itemsep=0.2cm
		\item For $x \in X$, we say that $F$ is \emph{split at $x$} if $j_x^*(F)$ lies in the essential image of
		\[ i_{\cI_x,!} \colon \Fun(\cI^{\ens}_x, \cE)\to \Fun(\cI_x, \cE) \]
		where  $j_x :\cI_x\to \cI$ is the natural inclusion.
		\item We say that $F$ is \emph{punctually split} if it is split at every object $x \in \cX$.
		
		\item We say that $F$ is \emph{split} if it lies in the essential image of the induction functor
		\[ i_{\cI,!} \colon \Fun(\cI^{\ens}, \cE)\to \Fun(\cI, \cE) \ . \]
	\end{enumerate}
\end{defin}

\begin{rem}\label{rem:split_implies_punctually_split}
	Since induction commutes with pullback \cite[\cref*{Abstract_Stokes-cor:induction_specialization_Beck_Chevalley}]{Abstract_Derived_Stokes},  split functors are punctually split.
\end{rem}

\begin{eg}\label{eg:split_functor}
	For $a \in \cI$, write $\ev_a^{\cI} \colon \{a\} \hookrightarrow \cI$ for the canonical inclusion.
	Since $\ev_a^{\cI}$ factors through $i_\cI \colon \cI^{\ens} \to \cI$, we see that for every $E \in \cE$ the functor $\ev_{a,!}^{\cI}(E) \in \Fun(\cI, \cE)$ is split, and hence punctually split by \cref{rem:split_implies_punctually_split}.
\end{eg}

The following  provides a description of Stokes functors intrinsic to
$\Fun(\cI,\cE)$.

\begin{prop}\label{prop:Stokes_characterization}
Let  $F\colon\cI \to \cE$ be a functor.
Then the following are equivalent: 
\begin{enumerate}\itemsep=0.2cm
\item $F$ is a Stokes functor.

\item $F$ is cocartesian and  punctually split.
\end{enumerate}
\end{prop}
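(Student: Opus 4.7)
The plan is to unpack Definition~\ref{def:Stokes_sheaf} and transport it through the specialization equivalence~\eqref{spe_eq}. By construction, $\St_{\cI,\cE}$ is the $\infty$-category $\Sigma^{\cocart}(\exp^{\PS}_\cE(\cI/\Pi_\infty(X,P)))$ of cocartesian sections of the subfibration
\[ \exp^{\PS}_\cE(\cI/\Pi_\infty(X,P)) \hookrightarrow \exp_\cE(\cI/\Pi_\infty(X,P)) \ , \]
whose fiber over $x$ is by definition the essential image of $i_{\cI_x,!}\colon \Fun(\cI_x^{\ens},\cE)\to \Fun(\cI_x,\cE)$. The first step is to check that a cocartesian section of this subfibration is the same datum as a cocartesian section of $\exp_\cE(\cI/\Pi_\infty(X,P))$ whose pointwise values land in the subfibration. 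This uses the fact, recalled immediately above Definition~\ref{def:Stokes_sheaf}, that $\exp^{\PS}_\cE\to\Pi_\infty(X,P)$ is itself a cocartesian fibration whose formation commutes with base change, so that the inclusion into $\exp_\cE$ preserves cocartesian edges.

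The second step is to translate the global cocartesianness of the section into cocartesianness of the associated functor $F\colon\cI\to\cE$. This is immediate from the very definition
\[ \Funcocart(\cI,\cE) \coloneqq \Sigma^{\cocart}(\exp_\cE(\cI/\Pi_\infty(X,P))) \ , \]
which was precisely designed to be the image of cocartesian sections under the specialization equivalence.

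The third step is to translate the pointwise essential-image condition into a pointwise splitting condition on $F$. For this I would apply Proposition~\ref{prop:global_functoriality_IHES}(1) to the morphism of Stokes stratified spaces induced by the inclusion $\{x\}\hookrightarrow (X,P)$: this identifies the value of $\spe_\cI(F)$ at $x$ with the restriction $j_x^\ast F \in \Fun(\cI_x,\cE)$. Hence asking that $\spe_\cI(F)(x)$ lie in the essential image of $i_{\cI_x,!}$ for every $x$ is exactly the content of Definition~\ref{defin_split}, namely that $F$ be punctually split.

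None of the three steps is genuinely hard once the machinery of \S\ref{Global} is in place; everything reduces to unpacking definitions through the specialization equivalence and its functorialities. If anything, the only subtle point is the first one, where one has to be slightly careful that the cocartesian-fibration structure on $\exp^{\PS}_\cE$ is compatible with the ambient one on $\exp_\cE$, so that the two a priori distinct notions of ``cocartesian section'' coincide.
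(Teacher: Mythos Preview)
Your proposal is correct and is essentially the same approach as the paper's own proof, which consists of the single line ``Immediate from \cref{prop:global_functoriality_IHES}.'' You have simply unpacked what that immediacy amounts to: identifying $\St_{\cI,\cE}$ with cocartesian sections of $\exp^{\PS}_\cE$, using that the inclusion $\exp^{\PS}_\cE\hookrightarrow\exp_\cE$ preserves cocartesian edges, and then invoking the specialization equivalence together with \cref{prop:global_functoriality_IHES}(1) to match the pointwise essential-image condition with punctual splitting.
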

\begin{proof}
Immediate from \cref{prop:global_functoriality_IHES}.
\end{proof}

\begin{prop}[{\cite[\cref*{Abstract_Stokes-cor:stokes_functoriality}]{Abstract_Derived_Stokes}}]\label{cor:stokes_functoriality_IHES}
	Let $F \colon \cI \to \cE$  be a functor. 
	Then,
	\begin{enumerate}\itemsep=0.2cm
		\item \label{cor:stokes_functoriality:pullback} if $F \colon \cI \to \cE$ is a Stokes functor, the same goes for $u^\ast(F) \colon \cI_Y \to \cE$;
		
		\item \label{cor:stokes_functoriality:induction} if $G \colon \cJ_X \to \cE$ is a Stokes  functor,  the same goes for $v_!(G) \colon \cI \to \cE$.
	\end{enumerate}
	Thus, the functors
	\[ u^\ast \colon \Fun(\cI, \cE) \to  \Fun(\cI_Y, \cE) \qquad \text{and} \qquad v_! \colon \Fun(\cI_Y, \cE) \to  \Fun(\cJ, \cE) \]
	restrict to well-defined functors
	\[ u^\ast \colon \St_{\cI,\cE} \to  \St_{\cI_Y,\cE}  \qquad \text{and} \qquad v_! \colon \St_{\cI_Y,\cE} \to  \St_{\cJ,\cE} \ . \]
\end{prop}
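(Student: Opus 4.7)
The approach I would take is to translate everything into the exponential-fibration formalism, where Stokes functors appear as cocartesian sections and the functorialities $u^\ast$ and $v_!$ admit a clean description. By \cref{def:Stokes_sheaf} and the specialization equivalence \eqref{spe_eq}, there is a canonical identification
\[ \St_{\cI, \cE} \simeq \Sigma^{\cocart}(\exp^{\PS}_\cE(\cI/\Pi_\infty(X,P))) \ , \]
with analogous statements for $\cI_Y$ and $\cJ$. Combined with \cref{prop:global_functoriality_IHES}, which identifies $u^\ast$ with $\Sigma(\cE^u)$ and $v_!$ with $\Sigma(\cE^v_!)$, the proposition reduces to two claims: that $\cE^u$ and $\cE^v_!$ preserve cocartesian edges, and that each one restricts to a morphism between the corresponding PS-subfibrations. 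Granted both, taking cocartesian sections produces the desired functors on Stokes categories.

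For $\cE^u$, both claims come essentially for free. The right square of \eqref{eq:exp_on_morphisms} is a pullback in $\PrFibL$, so $\cE^u$ is a base change of presentable cocartesian fibrations and preserves cocartesian edges automatically. Moreover, the formation of $\exp^{\PS}_\cE$ commutes with base change, as noted in the paragraph preceding \cref{def:Stokes_sheaf}; hence the pullback of $\exp^{\PS}_\cE(\cI/\Pi_\infty(X,P))$ along $\Pi_\infty(f)$ is canonically $\exp^{\PS}_\cE(\cI_Y/\Pi_\infty(Y,Q))$, and $\cE^u$ restricts to the desired map of PS-subfibrations. This settles item (1).

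For $\cE^v_!$, cocartesian-edge preservation is imported verbatim from \cite{Abstract_Derived_Stokes}, as recalled at the end of Construction \ref{construction:exponential}. The restriction to PS-subfibrations is the step with genuine content: at each $\bar x \in \Pi_\infty(Y,Q)$ the fiber of $\cE^v_!$ is the left Kan extension $(v_{\bar x})_! \colon \Fun((\cI_Y)_{\bar x}, \cE) \to \Fun(\cJ_{\bar x}, \cE)$ along the poset map $v_{\bar x}$. Applying $(-)_!$ to the naturality square
\[ \begin{tikzcd}
(\cI_Y)_{\bar x}^{\ens} \arrow{r}{v_{\bar x}^{\ens}} \arrow{d}{i_{(\cI_Y)_{\bar x}}} & \cJ_{\bar x}^{\ens} \arrow{d}{i_{\cJ_{\bar x}}} \\
(\cI_Y)_{\bar x} \arrow{r}{v_{\bar x}} & \cJ_{\bar x}
\end{tikzcd} \]
yields a canonical equivalence $(v_{\bar x})_! \circ i_{(\cI_Y)_{\bar x}, !} \simeq i_{\cJ_{\bar x},!} \circ (v_{\bar x}^{\ens})_!$, which shows that $(v_{\bar x})_!$ sends the essential image of $i_{(\cI_Y)_{\bar x}, !}$ into that of $i_{\cJ_{\bar x}, !}$, as required.

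The main potential obstacle is the cocartesian-edge preservation for $\cE^v_!$, which is a non-formal input, but this has already been established in the companion paper \cite{Abstract_Derived_Stokes}. Everything else amounts to a transparent fiberwise bookkeeping exercise exploiting naturality of left Kan extensions along the order-forgetting functors $(-)^{\ens}$, together with the base-change stability of $\exp^{\PS}_\cE$.
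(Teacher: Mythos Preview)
Your argument is correct and is exactly the proof one would assemble from the ingredients this paper provides; the paper itself does not give a proof but simply imports the result from the companion paper \cite{Abstract_Derived_Stokes}. The only substantive input beyond bookkeeping is the cocartesian-edge preservation of $\cE^v_!$, which you correctly flag and which is likewise imported.
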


\begin{rem}
\cref{cor:stokes_functoriality_IHES} holds similarly for cocartesian functors.
\end{rem}

\begin{rem}\label{induction_limits}
Since  $v_! \colon \Fun(\cI_Y, \cE) \to  \Fun(\cJ, \cE)$ is a left adjoint, it commutes with colimits.
If furthermore $\cI$ and $\cJ$ have finite fibers, $v_!$ commutes with limits \cite[\cref*{Abstract_Stokes-Stokes_p_commutation_limcolim}]{Abstract_Derived_Stokes}.
\end{rem}

Stokes functors behave well with respect to change of coefficients:

\begin{prop}[{\cite[\cref*{Abstract_Stokes-prop:change_of_coefficients_stokes}]{Abstract_Derived_Stokes}}]\label{prop:change_of_coefficients_stokes_IHES}
Let $f \colon \cE \to \cE'$ be a morphism in $\PrL$. 
Then, the induced functor $f \colon \Fun(\cI, \cE) \to \Fun(\cI, \cE')$
	induces a well defined functor
	\[ f \colon \St_{\cI,\cE} \to \St_{\cI,\cE'} \ . \]
\end{prop}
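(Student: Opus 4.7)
The plan is to reduce the statement to the characterization of Stokes functors provided by \cref{prop:Stokes_characterization}: a functor $F \colon \cI \to \cE$ is a Stokes functor if and only if $F$ is cocartesian and punctually split. Since post-composition with a colimit-preserving functor commutes with all formations of colimits, and in particular with left Kan extensions along any functor of small $\infty$-categories, both properties will be preserved by post-composing with $f$.

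Concretely, first I would verify that $f$ preserves the cocartesian condition. Given a morphism $\gamma \colon x \to y$ in $\PiinftySigma(X,P)$ with straightening $f_\gamma \colon \cI_x \to \cI_y$, one has a natural commutative square relating the Beck–Chevalley transformation
\[ f_{\gamma,!}\, j_x^\ast(F) \to j_y^\ast(F) \]
to its image under $f$. Since $f$ is colimit preserving, it commutes with $f_{\gamma,!}$ up to canonical equivalence, so
\[ f_{\gamma,!}\, j_x^\ast(f \circ F) \simeq f \circ f_{\gamma,!}\, j_x^\ast(F) \to f \circ j_y^\ast(F) \simeq j_y^\ast(f \circ F) \]
is an equivalence whenever the original Beck–Chevalley map is. Hence $f$ carries cocartesian functors to cocartesian functors.

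Next I would verify that $f$ preserves the splitting condition punctually. For $x \in X$, if $j_x^\ast(F) \simeq i_{\cI_x,!}(V)$ for some $V \colon \cI_x^{\ens} \to \cE$, then using again that $f$ commutes with left Kan extensions along $i_{\cI_x} \colon \cI_x^{\ens} \to \cI_x$,
\[ j_x^\ast(f \circ F) \simeq f \circ j_x^\ast(F) \simeq f \circ i_{\cI_x,!}(V) \simeq i_{\cI_x,!}(f \circ V), \]
so $f \circ F$ is also split at $x$. Applying this at every $x$ gives punctual splitness. Combining the two steps with \cref{prop:Stokes_characterization} yields that $f \circ F$ is a Stokes functor, producing the desired restriction $f \colon \St_{\cI,\cE} \to \St_{\cI,\cE'}$.

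I do not expect a serious obstacle here: the statement is essentially a formal consequence of the fact that morphisms in $\PrL$ preserve colimits, combined with the local characterization of Stokes functors. The only minor care needed is to invoke the compatibility of left Kan extensions with post-composition for left adjoints, which is standard.
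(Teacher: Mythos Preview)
Your argument is correct and is precisely the natural one: invoking \cref{prop:Stokes_characterization} and using that a morphism in $\PrL$ commutes with left Kan extensions (hence preserves both the cocartesianness via the Beck--Chevalley criterion and the punctually split condition). The paper itself does not give a proof here, deferring instead to the companion paper \cite{Abstract_Derived_Stokes}; your reasoning is essentially what that reference would contain, so there is nothing to add.
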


\begin{prop}[{\cite[\cref*{Abstract_Stokes-cocart_and_localization}]{Abstract_Derived_Stokes}}]\label{refinement_and_cocart}
Let $f\colon(Y,Q,\cJ)\to (X,P,\cI)$ be a cartesian refinement between exodromic stratified spaces.
Then the following holds:
\begin{enumerate}\itemsep=0.2cm
\item The induction  $f_!  \colon\Fun(\cJ,\cE) \to \Fun(\cI,\cE)$
preserves Stokes functors.

\item The adjunction $f_! \dashv f^*$ induces an equivalence of $\infty$-categories between 
$\St_{\cJ,\cE}$ and $\St_{\cI,\cE}$.
\end{enumerate}
\end{prop}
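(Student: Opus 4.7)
The plan is to exploit the fact, supplied by \cref{refinement_localization}, that the induced functor $g \colon \Pi_\infty(Y,Q) \to \Pi_\infty(X,P)$ is a localization, and in particular is both final and cofinal. By cartesianness of $f$ in $\StStrat$ (see \cref{rem:cartesian_morphism_in_StStrat}), we have a pullback square
\[
\begin{tikzcd}
\cJ \arrow{r}{\tilde g} \arrow{d} & \cI \arrow{d} \\
\Pi_\infty(Y,Q) \arrow{r}{g} & \Pi_\infty(X,P) \ .
\end{tikzcd}
\]
Since $\cI \to \Pi_\infty(X,P)$ is a cocartesian fibration, the map $\tilde g$ inherits the structure of a localization from $g$: the class of morphisms that $\tilde g$ inverts is generated by morphisms in $\cJ$ whose projection to $\Pi_\infty(Y,Q)$ is $g$-inverted and whose underlying morphism in $\cI$ is a cocartesian edge over an equivalence (hence itself an equivalence, as the fibers of $\cI$ are posets).

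For part (1), I would apply the characterization of Stokes functors from \cref{prop:Stokes_characterization}: a functor is Stokes iff it is both cocartesian and punctually split. Given a Stokes functor $G \colon \cJ \to \cE$, cocartesianness of $f_!(G)$ can be verified on each morphism $\gamma \colon x \to x'$ in $\Pi_\infty(X,P)$; by cofinality of $g$, lift $\gamma$ to a morphism in $\Pi_\infty(Y,Q)$, where the relevant Beck--Chevalley transformation becomes the cocartesianness of $G$. For the punctual splitting of $f_!(G)$ at $x \in X$, pick $y \in Y$ with $g(y) = x$: the cartesian square identifies $\cI_x \simeq \cJ_y$, and cofinality of $g$ reduces the restriction of $f_!(G)$ to $\cI_x$ to the restriction of $G$ to $\cJ_y$, which is split by hypothesis.

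For part (2), the localization property of $\tilde g$ ensures that $f^* \colon \Fun(\cI,\cE) \to \Fun(\cJ,\cE)$ is fully faithful, so the counit $f_!f^* \to \id$ is a natural equivalence, in particular when restricted to $\St_{\cI,\cE}$. For essential surjectivity, I must show that the unit $G \to f^* f_!(G)$ is an equivalence for every Stokes functor $G$, equivalently that $G$ inverts the class of morphisms $\tilde g$ inverts. Since such morphisms decompose into a cocartesian lift of a $g$-inverted morphism together with an equivalence in a fiber, the cocartesian condition of $G$ takes care of the first type, and equivalences are automatically preserved.

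The main obstacle is the descent claim in part (2): precisely identifying which morphisms $\tilde g$ inverts and verifying that every Stokes functor inverts them. This is the technical heart of the argument and is handled abstractly in the companion paper \cite[\cref*{Abstract_Stokes-cocart_and_localization}]{Abstract_Derived_Stokes}, from which the proposition follows directly once translated into the present geometric setting.
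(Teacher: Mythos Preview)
The paper does not give a proof of this proposition at all: it is stated as a citation from the companion paper \cite{Abstract_Derived_Stokes}, and no argument is supplied here. Your proposal ultimately does the same thing, deferring the technical heart to the companion paper, so in that sense you are aligned with the paper's treatment.

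Your sketch of the underlying mechanism is broadly correct --- the key point is indeed that $\tilde g \colon \cJ \to \cI$ is a localization (pullback of the localization $g$ along the cocartesian fibration $\cI \to \Pi_\infty(X,P)$), so that $f^\ast$ is fully faithful and one only needs to check that every Stokes functor on $\cJ$ inverts the $\tilde g$-local morphisms. One imprecision worth flagging: in your argument for part~(1) you write ``by cofinality of $g$, lift $\gamma$ to a morphism in $\Pi_\infty(Y,Q)$''. Cofinality does not provide lifts of morphisms, and localizations in general create new morphisms from zigzags; an exit path for the coarser stratification $P$ need not be an exit path for the finer $Q$. The cleaner route is the one you sketch for part~(2): show that any cocartesian $G$ on $\cJ$ inverts $\tilde W$ (cocartesian edges over $g$-inverted morphisms, where the transition maps between fibers are isomorphisms), hence $G \simeq \tilde g^\ast H$ for a unique $H$, and then argue directly that $H$ is cocartesian and punctually split using that $\tilde g$ induces equivalences on fibers. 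This avoids the lifting issue entirely and is presumably closer to what the companion paper does.
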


\subsection{Graduation}\label{graduction_section}

Let $(X,P)$ be a an exodromic stratified space.
Starting with a morphism $p \colon \cI \to \cJ$ in $\PosFib$ over $\PiinftySigma(X,P)$, consider the fiber product
	\[ \begin{tikzcd}
		\cI_p \arrow{r} \arrow{d}{\pi} & \cI \arrow{d}{p} \\
		\cJ^{\ens} \arrow{r}{i_\cJ} & \cJ \ .
	\end{tikzcd} \]
	When $X$ is  a point, we can identify $\cI_p$ with the poset $(\cI, \le_p)$, where
	\[ a \leq_p a' \stackrel{\text{def.}}{\iff} p(a) = p(a') \text{ and } a \leq a' \ . \]
	In other words, if $p(a) \ne p(a')$, then $a$ and $a'$ are incomparable for $\le_p$.\\ \indent

	The source  and the formation of the identity morphism induce morphisms of cocartesian fibration in posets $s \colon \cJ^{\Delta^1} \to \cJ $ and $\id \colon  \cJ \to  \cJ^{\Delta^1}$.
Then, objects of 
$$
\cI_{\leq}  \coloneqq \cJ^{\Delta^1} \times_{\cJ} \cI
$$	
 are triples $(x,a,b)$ where $a\in \cI_x$, $b\in \cJ_x$ and where $p(a)\leq b$ in $\cJ_x$.
	We also consider the full subcategory $i_{<} \colon \cI_< \hookrightarrow \cI_\leq$ spanned by objects $(x,a,b)$ with $p(a) < b$.
	In general $\cI_<\to \PiinftySigma(X,P)$ is not a cocartesian fibration.
	We thus introduce the following
	
\begin{defin}\label{def:locally_constant}
	We say that a functor $p \colon \cA \to \cX$ of $\infty$-categories is a  \emph{locally constant fibration} if it is a cocartesian fibration and its straightening $\Upsilon \colon \cX \to \CAT_\infty$ sends every arrows of $\cX$ to equivalences in $\CAT_\infty$.
\end{defin}	
	
\begin{defin}
We say that $p \colon \cI \to \cJ$ is a \textit{graduation morphism} if $\cJ^{\ens}\to \PiinftySigma(X,P)$ is locally constant.
\end{defin}
If $p \colon \cI \to \cJ$ is a graduation morphism, which we suppose from this point on,  one checks that $\cI_<\to \PiinftySigma(X,P)$ is  a cocartesian fibration.
Consider the following diagram with pull-back squares:
\begin{equation}\label{eq:graduation_diagram}
	\begin{tikzcd}
		& & \cI_{<} \arrow{d}{i_{<}} & \\
		\cI_p \arrow{d} \arrow{rr}{i_p} & & \cI_{\leq} \arrow{d} \arrow{r}{{\sigma}} & \cI  \arrow{d}{p} \\
		\cJ^{\ens}\arrow{r}{i_{\cJ}} & \cJ \arrow{r}{\id} & \cJ^{\Delta^1} \arrow{r}{s} & \cJ   \ .
	\end{tikzcd} 
\end{equation}
Write
\[ \varepsilon_< \colon i_{<!} i_<^\ast \to  \id_{\Fun(\cI_{\leq }, \cE)} \]
for the counit of the adjunction $i_{<!} \colon \Fun(\cI_<, \cE) \leftrightarrows \Fun(\cI_\leq, \cE) \colon i_<^\ast$.

\begin{defin}\label{defin_Gr}
	The \emph{graduation functor relative to $p \colon \cI \to \cJ$} 
	\[ \Gr_p \colon \Fun(\cI, \cE) \to  \Fun(\cI_{p}, \cE) \]
	is the cofiber
	\[ \Gr_p \coloneqq \cofib\big( i_p^\ast \varepsilon_< \sigma^\ast \colon i_p^\ast \circ i_{<!} \circ i_<^\ast \circ \sigma^\ast \to i_p^{\ast} \circ \sigma^\ast \big) \ . \]
\end{defin}

\begin{notation}

	When $p = \id$, we note $\Gr$ for $\Gr_{\id}$. 
\end{notation}

\begin{rem}
The formation of $\Gr_p$ commutes with pullback (see \cite[\cref*{Abstract_Stokes-lem:Gr_restriction}]{Abstract_Derived_Stokes}).
\end{rem}

\begin{rem}\label{cor:section_Gr_commutes_with_colimits}
Since $\Gr_p$ is defined via left adjoints, it commutes with colimits.
If furthermore the fibers of $\cI$ are finite posets, $\Gr_p$  commutes with limits (see \cite[\cref*{Abstract_Stokes-cor:section_Gr_commutes_with_limits}]{Abstract_Derived_Stokes}).
\end{rem}

\begin{eg}[{\cite[\cref*{Abstract_Stokes-ex_Gr}]{Abstract_Derived_Stokes}}]
	Let $p \colon \cI \to \cJ$ be a morphism of posets.
	Let $V \colon \cI^{\ens}\to \cE$ be a functor and put $F \coloneqq i_{\cI !}(V)$.
	For $a \in \cI_p$,  there is a canonical equivalence
	\[ (\Gr_p(F))_a \simeq \bigoplus_{\substack{a'\leq a \\ p(a')=p(a)}} V_{a'}  \ . \]
\end{eg}

\begin{prop}[{\cite[\cref*{Abstract_Stokes-Gr_of_Stokes}]{Abstract_Derived_Stokes}}]\label{Gr_of_Stokes_IHES}
	Let $(X,P,\cI)$ be a Stokes stratified space and let $p\colon \cI \to \cJ$ be a graduation morphism of Stokes fibrations over $(X,P)$.
	Then, the graduation functor relative to $p$
	\[ \Gr_p \colon \Fun(\cI, \cE) \to  \Fun(\cI_p, \cE) \]
	preserves the category of Stokes functors.
	In other words, it restricts to a functor
	\[ \Gr_p \colon\St_{\cI,\cE}  \to  \St_{\cI_p,\cE}   . \]
\end{prop}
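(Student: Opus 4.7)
By \cref{prop:Stokes_characterization}, it suffices to show that $\Gr_p(F)$ is cocartesian and punctually split whenever $F \in \St_{\cI,\cE}$. The plan is to treat these two conditions separately, exploiting the sheaf-like nature of $\Gr_p$ and the fact that it is built entirely out of pullback and induction functors together with a cofiber construction.

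\textbf{Cocartesianness.} Unwinding \cref{defin_Gr}, we have
\[ \Gr_p(F) = \cofib\bigl( i_p^\ast \circ i_{<!} \circ i_<^\ast \circ \sigma^\ast(F) \to i_p^\ast \circ \sigma^\ast(F) \bigr) \ . \]
Each of the morphisms $i_p$, $i_<$, $\sigma$ fits into a cartesian square of cocartesian fibrations of posets over $\Pi_\infty(X,P)$ (this is the content of diagram \eqref{eq:graduation_diagram} once one has checked, using the graduation hypothesis on $p$, that $\cI_{<} \to \Pi_\infty(X,P)$ is itself a cocartesian fibration). Hence, by \cref{cor:stokes_functoriality_IHES} (or rather its analogue for cocartesian functors), each of $\sigma^\ast$, $i_<^\ast$, $i_{<!}$ and $i_p^\ast$ preserves cocartesian functors. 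Therefore the counit transformation whose cofiber defines $\Gr_p(F)$ is a morphism in $\Funcocart(\cI_p,\cE)$, and the conclusion follows from \cref{(co)limit_and_cocart}(1), which ensures that $\Funcocart(\cI_p,\cE)$ is closed under colimits in $\Fun(\cI_p,\cE)$.

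\textbf{Punctual splitting.} Since the formation of $\Gr_p$ commutes with pullback, for every $x \in X$ we have a canonical equivalence
\[ j_x^\ast \Gr_p(F) \simeq \Gr_{p_x}\bigl( j_x^\ast F \bigr) \ , \]
where $p_x \colon \cI_x \to \cJ_x$ is the fiber of $p$ at $x$. Hence it is enough to show that for a morphism of posets $p \colon \cI \to \cJ$ and a split functor $F = i_{\cI!}(V) \colon \cI \to \cE$ with $V \colon \cI^{\ens} \to \cE$, the graduation $\Gr_p(F)$ is split. The formula recalled in the example preceding the statement gives, for every $a \in \cI_p$, a canonical equivalence
\[ \Gr_p(F)_a \simeq \bigoplus_{\substack{a' \leq a \\ p(a') = p(a)}} V_{a'} \ . \]
By definition of the partial order on $\cI_p$, the condition ``$a' \leq a$ and $p(a') = p(a)$'' is exactly the condition ``$a' \leq_p a$''. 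The underlying sets satisfy $\cI_p^{\ens} = \cI^{\ens}$, so $V$ can equally be regarded as a functor $W \colon \cI_p^{\ens} \to \cE$, and the displayed formula identifies $\Gr_p(F)$ with $i_{\cI_p!}(W)$. In particular $\Gr_p(F)$ is split, as required.

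The main subtlety I expect lies in the cocartesian step: one must verify that the various Beck--Chevalley transformations implicit in $\Gr_p$ really do commute with the cocartesian pushforwards defined by straightening each $\cI, \cI_p, \cI_{<}, \cI_{\leq}$. This is precisely where the hypothesis that $p$ is a \emph{graduation} morphism (so that $\cJ^{\ens}$ is locally constant and $\cI_{<}$ remains a cocartesian fibration) intervenes, as without it the intermediate objects in \eqref{eq:graduation_diagram} are not even honest cocartesian fibrations and the whole argument breaks down. The punctual splitting step is comparatively easy once one has the pullback compatibility and the explicit formula.
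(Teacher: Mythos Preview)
The paper does not give its own proof of this proposition; it is cited from the companion paper \cite{Abstract_Derived_Stokes}. So the comparison is really between your argument and the external reference, which is not reproduced here. Your overall strategy---verify cocartesianness and punctual splitting separately---is exactly the right one, and your punctual splitting argument is clean and correct: the base-change compatibility of $\Gr_p$ reduces to a single fiber, and there the explicit colimit formula identifies $\Gr_p(i_{\cI!}V)$ with $i_{\cI_p!}$ of the same $V$ viewed on $\cI_p^{\ens}$.

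The cocartesianness step, however, has a genuine gap. You invoke \cref{cor:stokes_functoriality_IHES} (and its cocartesian analogue) to conclude that $\sigma^\ast$, $i_<^\ast$, and $i_p^\ast$ preserve cocartesian functors. But that result only covers two operations: pullback $u^\ast$ along a \emph{base change} (the cartesian square in \eqref{eq:morphism_in_Cocart}), and induction $v_!$ along a morphism of cocartesian fibrations over a fixed base. The maps $\sigma$, $i_<$, $i_p$ are morphisms over the \emph{same} base $\Pi_\infty(X,P)$, so the relevant functoriality supplied by \cref{cor:stokes_functoriality_IHES} is $\sigma_!$, $i_{<,!}$, $i_{p,!}$---not the pullbacks you use. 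Preservation of cocartesian functors under $v^\ast$ for such a $v$ is \emph{not} automatic: it amounts to a fiberwise Beck--Chevalley condition, namely that for every exit path $\gamma\colon x\to y$ the natural map
\[
(f_\gamma^{\cI_\leq})_!\,\sigma_x^\ast \longrightarrow \sigma_y^\ast\,(f_\gamma^{\cI})_!
\]
is an equivalence (and similarly for $i_<$, $i_p$). You correctly flag this as ``the main subtlety'' in your closing paragraph, but you do not carry out the verification, and the result you cite does not do it for you.

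These Beck--Chevalley conditions can in fact be checked by hand via cofinality arguments on the comma posets (for $\sigma^\ast$, for instance, one shows that for $(a,b)\in(\cI_\leq)_y$ the projection $\{(a',b')\in(\cI_\leq)_x:f_\gamma(a',b')\leq(a,b)\}\to\{a'\in\cI_x:f_\gamma(a')\leq a\}$ has weakly contractible comma categories, using that $(a',p_x(a'))$ supplies an initial object). The graduation hypothesis on $p$ is needed precisely to make $\cI_<$ a cocartesian fibration and to run the analogous argument for $i_<^\ast$ and $i_p^\ast$. So your proof sketch is on the right track, but as written it defers the one nontrivial verification to a citation that does not cover it.
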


\begin{cor}\label{cocartesian_splitting}
	Let $(X,P,\cI)$ be a Stokes stratified space such that $\cI^{\ens}\to \PiinftySigma(X,P)$ is locally constant.
	Then the commutative square
	\[ \begin{tikzcd}
		\St_{\cI^{\ens},\cE}  \arrow[hook]{d} \arrow{r}{i_{\cI,!}} & \St_{\cI,\cE}  \arrow[hook]{d} \\
		\Fun(\cI^{\ens},\cE) \arrow{r}{i_{\cI,!}}  & \Fun(\cI,\cE)
	\end{tikzcd} \]
is a pullback.
\end{cor}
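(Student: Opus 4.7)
First, I would observe that $i_{\cI}\colon \cI^{\ens} \to \cI$ can be viewed as a morphism in $\StStrat$ over $(X,P)$: the local constancy hypothesis ensures that $\cI^{\ens}\to\PiinftySigma(X,P)$ is itself a cocartesian fibration in posets, hence a Stokes fibration. By \cref{cor:stokes_functoriality_IHES}\eqref{cor:stokes_functoriality:induction}, the left Kan extension $i_{\cI,!}$ therefore restricts to a functor $\St_{\cI^{\ens},\cE}\to \St_{\cI,\cE}$, so that the top horizontal arrow of the square is well-defined.

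Next, I would unfold the pullback. Since the inclusion $\St_{\cI,\cE}\hookrightarrow \Fun(\cI,\cE)$ is fully faithful, the pullback $\St_{\cI,\cE}\times_{\Fun(\cI,\cE)}\Fun(\cI^{\ens},\cE)$ identifies with the full subcategory $\cC\subset \Fun(\cI^{\ens},\cE)$ of those $V$ such that $i_{\cI,!}(V)\in \St_{\cI,\cE}$. The comparison functor $\St_{\cI^{\ens},\cE}\to \cC$ is then itself the inclusion of a full subcategory, so the remaining task is to prove that it is essentially surjective.

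The key idea is to exploit the graduation functor as a one-sided inverse to induction. Since $\cI^{\ens}\to\PiinftySigma(X,P)$ is locally constant, $\id_{\cI}$ is a graduation morphism in the sense of \cref{graduction_section}, and so the graduation functor $\Gr\coloneqq \Gr_{\id_{\cI}}\colon \Fun(\cI,\cE)\to \Fun(\cI^{\ens},\cE)$ is well-defined. Specialising the explicit formula of \cite[\cref*{Abstract_Stokes-ex_Gr}]{Abstract_Derived_Stokes} to the case $p=\id_{\cI}$, the constraint $p(a')=p(a)$ collapses to $a'=a$ and yields a canonical natural equivalence $\Gr\circ i_{\cI,!}\simeq \id_{\Fun(\cI^{\ens},\cE)}$. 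Combined with \cref{Gr_of_Stokes_IHES}, which asserts that $\Gr$ preserves Stokes functors, this settles essential surjectivity: if $V\in \cC$, then $i_{\cI,!}(V)\in \St_{\cI,\cE}$, hence $V\simeq \Gr(i_{\cI,!}(V))\in \St_{\cI^{\ens},\cE}$.

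The only substantive step is the retraction identity $\Gr\circ i_{\cI,!}\simeq \id$; I do not anticipate any obstacle beyond unwinding the explicit example computation already recorded in the companion paper.
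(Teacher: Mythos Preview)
Your proof is correct and follows essentially the same approach as the paper: both arguments reduce the pullback statement to showing that if $V\in\Fun(\cI^{\ens},\cE)$ satisfies $i_{\cI,!}(V)\in\St_{\cI,\cE}$ then $V\in\St_{\cI^{\ens},\cE}$, and both accomplish this via the retraction identity $\Gr\circ i_{\cI,!}\simeq\id$ together with \cref{Gr_of_Stokes_IHES}. Your write-up is simply more explicit about the framing (well-definedness of the top arrow, unpacking the pullback as a full subcategory), while the paper's proof records only the key computation.
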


\begin{proof}
Let $F \colon \cI \to \cE$ be a Stokes functor and let $V \colon  \cI^{\ens}\to \cE$ such that $F\simeq i_{\cI , !}(V)$.
Then,  computation gives
\[
\Gr(F)\simeq \Gr(i_{\cI , !}(V)) \simeq V \ .
\]
By \cref{Gr_of_Stokes_IHES},  the functor $V \colon  \cI^{\ens}\to \cE$ is a Stokes functor.
\end{proof}

Our main use of graduation will be through the following dévissage theorem

\begin{thm}[{\cite[\cref*{Abstract_Stokes-prop:Level_induction}]{Abstract_Derived_Stokes}}]\label{level_dévissage}
Let $(X,P)$ be an exodromic stratified space and let $p\colon \cI \to \cJ$ be a level graduation morphism of Stokes fibrations over $(X,P)$ in the sense of \cref{level_intro_def}.
Then, the square
\begin{equation}\label{eq_level_dévissage}
	 \begin{tikzcd}
		\St_{\cI, \cE} \arrow{r}{p_!} \arrow{d}{\Gr_p}& \St_{\cJ,\cE} \arrow{d}{\Gr} \\
		\St_{\cI_p, \cE} \arrow{r}{\pi_!} & \St_{\cJ^{\ens}, \cE}
	\end{tikzcd} 
	\end{equation}
is a pullback in $ \CAT_{\infty} $.
\end{thm}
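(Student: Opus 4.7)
The plan is to verify commutativity of the square on Stokes functors, reduce the pullback claim to the pointwise case via the hypersheaf structure, and settle the pointwise case using the level hypothesis directly.

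The commutativity is the subtle point. The pullback square of posets over $\PiinftySigma(X,P)$
\[ \begin{tikzcd}
\cI_p \arrow{r}{i_p} \arrow{d}{\pi} & \cI \arrow{d}{p} \\
\cJ^{\ens} \arrow{r}{i_\cJ} & \cJ
\end{tikzcd} \]
yields a Beck--Chevalley transformation $\pi_! i_p^\ast \to i_\cJ^\ast p_!$, which unwinds (via \cref{defin_Gr}) to a natural transformation $\pi_! \circ \Gr_p \to \Gr \circ p_!$. The level condition $p(a) < p(b) \Rightarrow a < b$ is precisely what forces this to be an equivalence on Stokes functors: it ensures that the strict subfibration $\cI_< \subset \cI_\leq$ pulls back correctly along $i_\cJ$, so the cofibers defining $\Gr_p$ on one side and $\Gr$ on the other line up after applying $p_!$. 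Once commutativity is established, \cref{cor:stokes_functoriality_IHES} and \cref{Gr_of_Stokes_IHES} confirm that the four functors restrict to the $\St$-subcategories.

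To promote commutativity to a pullback, I would observe that all four $\infty$-categories in the square are global sections of hyperconstructible hypersheaves on $(X,P)$ (see \cref{def:Stokes_sheaf}) and that the four functors extend to morphisms of such hypersheaves, because $p_!$, $\pi_!$, $\Gr_p$ and $\Gr$ all commute with pullback. Limits of hypersheaves of $\infty$-categories being computed stalkwise, it suffices to treat the case where $\PiinftySigma(X,P)$ is a point, in which case $p$ becomes a level morphism of finite posets. In this pointwise setting, I would construct an inverse to the canonical functor $\Phi \colon \St_{\cI,\cE} \to \St_{\cJ,\cE} \times_{\St_{\cJ^{\ens},\cE}} \St_{\cI_p,\cE}$ by gluing: given a compatible triple $(G, H, \alpha)$ with $\alpha \colon \pi_! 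H \simeq \Gr G$, the level hypothesis lets one view $\cI$ as built from $\cI_p$ by adjoining the strict-$p$ arrows classified by $\cJ$, so that $G$, $H$ and $\alpha$ furnish precisely the pieces needed to assemble a functor on $\cI$. Verifying $\Phi \circ \Psi \simeq \id$ and $\Psi \circ \Phi \simeq \id$ then reduces to a computation on associated graded pieces, made tractable by the splitting condition.

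The main obstacle is making the inverse $\Psi$ fully functorial at the $\infty$-categorical level and showing it lands in the essential image of $i_{\cI,!}$. A clean route is to express $\Psi$ as a left Kan extension along an auxiliary span whose structure is governed by the level hypothesis; alternatively, one may first prove that commutativity makes $\Phi$ fully faithful (via a mapping-space pullback coming from the exact triangle defining $\Gr_p$) and then verify essential surjectivity by unwinding the splitting condition on a candidate preimage. Either approach leans crucially on the level hypothesis, which is exactly what prevents spurious strict arrows in $\cI$ with non-strict $p$-image from obstructing the reconstruction.
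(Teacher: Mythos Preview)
The paper does not prove this statement: it is imported wholesale from the companion paper \cite{Abstract_Derived_Stokes}, with no argument given here. So there is no in-paper proof to compare your proposal against.

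On the merits of your outline: the reduction to the pointwise case via the hypersheaf structure is sound, since global sections are a limit and limits commute with pullbacks in $\CAT_\infty$. However, the two decisive steps remain incomplete. For commutativity, you assert that the level condition ``is precisely what forces'' the Beck--Chevalley map $\pi_! \Gr_p \to \Gr\, p_!$ to be an equivalence, but $\Gr_p$ and $\Gr$ are cofibers built from the auxiliary diagrams \eqref{eq:graduation_diagram} involving $\cI_<$, $\cI_\leq$, $\cJ_<$, $\cJ_\leq$, and you do not actually verify that $p_!$ intertwines these cofiber sequences. For the pointwise pullback, you yourself identify the construction of the inverse $\Psi$ as ``the main obstacle'' and offer two sketches without carrying either out; in particular, showing that a glued functor on $\cI$ is pointwise split (i.e.\ lies in the essential image of $i_{\cI,!}$) is exactly where the level hypothesis must do real work, and this is not established. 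As written, the proposal is a plausible strategy rather than a proof.
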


In some favourable situations,  the pullback in \cref{level_dévissage} occur in $\PrLR$. 
To this end, we introduce the following definitions.

\begin{defin}\label{bireflexive}
Let $(X,P)$ be an exodromic stratified space.
We say that a cocartesian fibration in posets $\cI \to \Pi_\infty(X,P) $  is \textit{bireflexive} if for every presentable stable $\infty$-category $\cE$, the full subcategory $\St_{\cI,\cE}\subset \Fun(\cI,\cE)$ is closed under limits and colimits.
\end{defin}

\begin{defin}\label{defin_PrLR}
	Define $\PrLR$ as the (non full) subcategory of $\PrL$ whose objects are presentable 
	$\infty$-categories and morphisms are functors that are both left and right adjoints.
\end{defin}

\begin{lem}\label{Peter_lemma_1}
	Let $ A $ be a small $\infty$-category and let $ \cC_{\bullet} \colon A\to \PrLR$ be a diagram of $\infty$-categories.
	Then, the limits of $ \cC_{\bullet} $ when computed in $ \PrR $, $ \PrL $, or $ \CAT_{\infty} $ all agree.
\end{lem}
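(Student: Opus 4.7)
The plan is to verify that both forgetful functors $\PrL \to \CAT_\infty$ and $\PrR \to \CAT_\infty$ preserve small limits, and then to observe that the diagram $\cC_\bullet$ induces literally the \emph{same} underlying diagram $A \to \CAT_\infty$ whether one views it through $\PrL$ or through $\PrR$. Once both statements are in hand, the conclusion is immediate: $\lim_{\PrL} \cC_\bullet$, $\lim_{\PrR} \cC_\bullet$ and $\lim_{\CAT_\infty} \cC_\bullet$ are three names for the same $\infty$-category.

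First I would unpack the hypothesis. By \cref{defin_PrLR}, a morphism $f \colon \cC \to \cD$ in $\PrLR$ is a functor between presentable $\infty$-categories which is simultaneously a left and a right adjoint. As a colimit-preserving accessible functor, $f$ is a morphism in $\PrL$; as a limit-preserving accessible functor, $f$ is a morphism in $\PrR$. Hence $\cC_\bullet \colon A \to \PrLR$ factors canonically through both inclusions $\PrLR \hookrightarrow \PrL$ and $\PrLR \hookrightarrow \PrR$. Moreover, the two resulting compositions $A \to \PrL \to \CAT_\infty$ and $A \to \PrR \to \CAT_\infty$ agree: both are the diagram sending $a \in A$ to the underlying $\infty$-category $\cC_a$ and each morphism $a \to b$ to the underlying functor $\cC_a \to \cC_b$, which does not depend on whether we remember that it is a left or a right adjoint.

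Second, I invoke the standard fact from \cite[\S 5.5.3]{HTT} that both forgetful functors preserve small limits. The statement for $\PrR$ is immediate from the description of limits of presentable $\infty$-categories as $\infty$-categories of coherent sections together with the fact that morphisms in $\PrR$ preserve arbitrary small limits; the statement for $\PrL$ is \cite[Proposition 5.5.3.13]{HTT}. Combining these with the first step, we get canonical equivalences
\[
 \lim_{\PrL} \cC_\bullet \;\simeq\; \lim_{\CAT_\infty} \cC_\bullet \;\simeq\; \lim_{\PrR} \cC_\bullet \ ,
\]
as desired.

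The main (and essentially only) obstacle is psychological rather than technical: one must resist the temptation to confuse $\lim_{\PrR}$ with $\colim_{\PrR}$ of the right-adjoint diagram (which, under the equivalence $\PrL \simeq (\PrR)\op$, would instead be identified with $\lim_{\PrL}$ of the original diagram). The hypothesis that $\cC_\bullet$ takes values in $\PrLR$ is precisely what makes the two ways of viewing $\cC_\bullet$ (as a $\PrL$- or $\PrR$-valued diagram) compatible at the level of the underlying $\CAT_\infty$-valued diagram, which is the key point.
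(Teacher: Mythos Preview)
Your proof is correct and follows the standard approach. The paper does not give a proof of this lemma, treating it as well known; however, exactly the argument you give (invoking \cite[Proposition 5.5.3.13]{HTT} for $\PrL$ and \cite[Theorem 5.5.3.18]{HTT} for $\PrR$) is spelled out later in the proof of \cref{finite_limit_prLRomega}, so your reasoning matches the paper's implicit argument.
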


\begin{lem}\label{level_pullback_in_PrLR}
Let $(X,P)$ be an exodromic stratified space and let  $p\colon \cI \to \cJ$ be a level graduation morphism of Stokes fibrations in finite posets over $(X,P)$.
	Assume that all the cocartesian fibrations occurring in \cref{level_dévissage} are bireflexive.
	 Then the square \eqref{eq_level_dévissage} is a pullback in $\PrLR$.
\end{lem}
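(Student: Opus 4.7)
The plan is to reduce the statement to an application of \cref{Peter_lemma_1}. To invoke it, I need to verify that the four $\infty$-categories appearing in the square \eqref{eq_level_dévissage} lie in $\PrLR$, and that the four functors are morphisms in $\PrLR$. Given this, since \cref{level_dévissage} already supplies the pullback in $\CAT_\infty$ and \cref{Peter_lemma_1} identifies it with the pullback in $\PrLR$, the conclusion follows immediately.

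First I would handle presentability of the four Stokes categories. This is precisely what bireflexivity delivers: for $\cI'$ any of $\cI$, $\cJ$, $\cI_p$, $\cJ^{\ens}$, the inclusion $\St_{\cI',\cE} \subset \Fun(\cI',\cE)$ preserves both limits and colimits. Preservation of colimits makes the inclusion accessible, and combined with preservation of limits this yields a left adjoint, exhibiting $\St_{\cI',\cE}$ as a reflective (and coreflective) subcategory of the presentable stable $\infty$-category $\Fun(\cI',\cE)$, hence itself presentable stable.

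Next I would check that each of the four arrows is simultaneously a left and a right adjoint. The horizontal arrows $p_!$ and $\pi_!$ are left Kan extensions, hence colimit-preserving by definition; their preservation of limits at the level of $\Fun$ is given by \cref{induction_limits}, which uses the finiteness of the fibers of $\cI$ and $\cJ$, the second hypothesis of the lemma. For the vertical arrows $\Gr$ and $\Gr_p$, preservation of colimits is built in (they are composites of left adjoints and cofibers) and preservation of limits is the content of \cref{cor:section_Gr_commutes_with_colimits}, again requiring finiteness of fibers. In each case, since bireflexivity lets me compute limits and colimits in the Stokes subcategory as in $\Fun$, these properties descend verbatim to the restrictions between Stokes subcategories.

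I do not anticipate a genuine obstacle; the proof is essentially the observation that the two hypotheses of the lemma, bireflexivity together with the finiteness of fibers, are precisely the ingredients that \cref{Peter_lemma_1} demands. The only thing to watch is the bookkeeping between limits and colimits computed in $\Fun(-,\cE)$ versus those computed in $\St_{-,\cE}$, which is handled uniformly by invoking bireflexivity at each transition.
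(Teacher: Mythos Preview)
Your proposal is correct and follows essentially the same route as the paper: verify that the four Stokes $\infty$-categories are presentable (the paper cites the Ragimov--Schlank reflection theorem directly, while you sketch the same argument from bireflexivity), then invoke \cref{induction_limits} and \cref{cor:section_Gr_commutes_with_colimits} together with the finiteness of fibers to see that $p_!$, $\pi_!$, $\Gr_p$, $\Gr$ lie in $\PrLR$, and conclude via \cref{Peter_lemma_1}.
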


\begin{proof}
	The $\infty$-categorical reflection theorem of \cite[Theorem 1.1]{Ragimov_Schlank_Reflection} implies that all the $\infty$-categories of Stokes functors appearing in \eqref{eq_level_dévissage} are presentable.
	Since the fibers of  $\cI$ and $\cJ$ are finite posets, 
we know from \cref{induction_limits} and \cref{cor:section_Gr_commutes_with_colimits} that $p_!$ and $\Gr_p$ commute with limits and colimits.
Then \cref{level_pullback_in_PrLR} follows from \cref{Peter_lemma_1}.
\end{proof}

%The following proposition is useful for reduction to points.

%\begin{prop}[{\cite[\cref{prop:induction_specialization_Beck_Chevalley}
%]{Abstract_Derived_Stokes}}]\label{prop:induction_specialization_Beck_Chevalley_IHES}
%Let $f \colon (Y,Q) \to (X,P)$ be a morphism of exodromic stratified spaces and consider a commutative diagram in $\PosFib$
%	\[ \begin{tikzcd}
%		\cJ_{Y}  \arrow{rr}{v_{Y} } \arrow{dr} & & \cI_{Y} \arrow{dl} \arrow{drr}{u_\cI} \\
%		{} &  \PiinftySigma(Y,Q) \arrow{drr} & \cJ \arrow[crossing over,leftarrow,swap]{ull}{u_\cJ} \arrow{rr}{v} \arrow{dr} & & \cI \arrow{dl} \\
%		{} & & & \PiinftySigma(X,P)
%	\end{tikzcd} \]
%	whose diagonal squares are pullback.
%	Let $\cE$ be a presentable $\infty$-category.
%	Then, the squares
%	\[ \begin{tikzcd}
%		\Fun(\cJ_Y,\cE)  & \Fun(\cI_Y,\cE)\arrow{l}[swap]{v_{Y}^*}\\
%		 \Fun(\cJ,\cE) \arrow{u}{u_\cJ^*}  & \arrow{l}{v^* }  \Fun(\cI,\cE) \arrow{u}[swap]{u_\cI^*}
%	\end{tikzcd} 	
%	\qquad \text{ and } \qquad
%		\begin{tikzcd}
%		\Fun(\cJ_Y,\cE)  & \Fun(\cJ,\cE)\arrow{l}[swap]{u_\cJ^*}\\
%		\Fun(\cI_Y,\cE) \arrow{u}{v_{Y}^* }  & \arrow{l}{ u_\cI^*}  \Fun(\cI,\cE) \arrow{u}[swap]{v^*}
%	\end{tikzcd} \]
%	are respectively horizontally left and right adjointable.
%\end{prop}

\begin{prop}[{\cite[\cref*{Abstract_Stokes-Stokes_detection}]{Abstract_Derived_Stokes}}]\label{Stokes_detection_IHES}
Let $(X,P)$ be an exodromic stratified space and let  $p\colon \cI \to \cJ$ be a level graduation morphism of Stokes fibrations in finite posets over $(X,P)$.
Let $F \colon \cI \to \cE$ be a functor.
Then the following are equivalent: 
\begin{enumerate}\itemsep=0.2cm
\item $F$ is a Stokes functor.
\item $\Gr_p(F) \colon \cI_p \to \cE$ and $p_!(F) \colon \cJ \to \cE$ are Stokes functors.
\end{enumerate}
\end{prop}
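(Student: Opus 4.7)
The direction (1) $\Rightarrow$ (2) is immediate: $p_!$ preserves Stokes functors by \cref{cor:stokes_functoriality_IHES} and $\Gr_p$ preserves them by \cref{Gr_of_Stokes_IHES}. The substantial content is (2) $\Rightarrow$ (1), and the plan is to exploit the level dévissage pullback square of \cref{level_dévissage}.

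First, I would check that for any $F \in \Fun(\cI, \cE)$, the natural Beck--Chevalley comparison $\pi_!(\Gr_p(F)) \to \Gr(p_!(F))$ associated to the graduation diagram \eqref{eq:graduation_diagram} is an equivalence. This is a formal base-change identity for left Kan extensions and requires no hypothesis on $F$. Granted this, the assumption in (2) guarantees that $(p_!(F), \Gr_p(F))$ assembles into an object of the pullback $\St_{\cJ, \cE} \times_{\St_{\cJ^\ens, \cE}} \St_{\cI_p, \cE}$. By \cref{level_dévissage}, this object lifts essentially uniquely to some $\widetilde F \in \St_{\cI, \cE}$ with $p_!(\widetilde F) \simeq p_!(F)$ and $\Gr_p(\widetilde F) \simeq \Gr_p(F)$.

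The remaining step, and the one I expect to be the main obstacle, is to identify $F$ with $\widetilde F$. Equivalently, by \cref{prop:Stokes_characterization}, one must verify directly that $F$ is cocartesian and punctually split. Both conditions are pointwise on $\Pi_\infty(X,P)$: cocartesianness can be tested at each arrow $\gamma \colon x \to y$ on the Beck--Chevalley transformation $f_{\gamma,!} j_x^*(F) \to j_y^*(F)$, and punctual splitting is tested at each $x \in X$ on the restriction $F|_{\cI_x}$. Thus one can reduce to the case where $X$ is a point, in which the statement becomes a purely combinatorial claim about posets: given a level morphism $p \colon I \to J$ of finite posets and $F \colon I \to \cE$ such that $p_!(F)$ and $\Gr_p(F)$ are split, then $F$ is split. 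The level hypothesis $p(a) < p(b) \Rightarrow a < b$ is crucial here, as it ensures that the $J$-level filtration induced on $F$ has associated graded controlled by $\Gr_p(F)$ while its induction to $J$ is controlled by $p_!(F)$. One then inducts along the poset $J$ to reconstruct a splitting of $F$, producing the desired comparison $F \simeq \widetilde F$.

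The delicate point, and what I would budget the most care for, is carrying out this fibrewise induction in a manner that respects both the splitting and the cocartesian structure simultaneously: each induction step involves combining a piece produced from $\Gr_p(F)$ with one produced from $p_!(F)$, and the level hypothesis must be invoked precisely to guarantee that the incomparability of elements in distinct $p$-fibres is preserved when assembling the splitting.
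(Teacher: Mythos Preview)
The paper does not prove this statement here; it is cited from the companion paper \cite{Abstract_Derived_Stokes}, so there is no in-paper proof to compare against. I will therefore assess your proposal on its own terms.

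Your detour through \cref{level_dévissage} is non-contributory, and you essentially recognize this yourself. The pullback square in \cref{level_dévissage} is a statement about the \emph{Stokes} categories, not about $\Fun(\cI,\cE)$. Constructing $\widetilde F \in \St_{\cI,\cE}$ from the pair $(p_!(F),\Gr_p(F))$ gives you nothing unless you can compare $\widetilde F$ with $F$; but as you note, the only available route to that comparison is to prove directly that $F$ is cocartesian and punctually split, which is the original goal. So the first two paragraphs of your argument can be deleted without loss. There is also a circularity risk: in the companion paper, the detection criterion may well be an \emph{ingredient} in the proof of \cref{level_dévissage} (essential surjectivity of $\St_{\cI} \to \St_{\cJ} \times_{\St_{\cJ^{\ens}}} \St_{\cI_p}$ is exactly the kind of statement that wants a detection criterion), so invoking \cref{level_dévissage} here is unsafe.

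Your final paragraph is the correct strategy: verify cocartesianness and punctual splitting of $F$ directly from the hypotheses on $p_!(F)$ and $\Gr_p(F)$. Two remarks on this. First, cocartesianness is tested on \emph{arrows} $\gamma\colon x\to y$, not on points, so ``reduce to a point'' is not quite right for that half; you should instead reduce to $\Pi_\infty(X,P)=\Delta^1$ and argue there. Second, for the punctual splitting over a point, the combinatorial claim you isolate (if $p\colon I\to J$ is a level morphism of finite posets and both $p_!(F)$ and $\Gr_p(F)$ are split, then $F$ is split) is indeed the heart of the matter. The level hypothesis lets you filter $F$ by the poset $J$: the subfunctor $F_{\leq b}$ on $\{a : p(a)\leq b\}$ has associated graded controlled by $\Gr_p(F)$, and one reconstructs a splitting of $F$ by induction on $J$ using splittings of $\Gr_p(F)$ together with the splitting of $p_!(F)$. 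This is the step that requires genuine work and is what the companion paper supplies.
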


\begin{prop}[{\cite[\cref*{Abstract_Stokes-image_fully_faithful_induction_Stokes}]{Abstract_Derived_Stokes}}]\label{image_fully_faithful_induction_Stokes_IHES}
	 Let $(X,P)$ be an exodromic stratified space.
	Let $i \colon \cI \hookrightarrow  \cJ$ be a fully faithful graduation morphism of Stokes fibrations in finite posets over $(X,P)$.
	Let $F\colon\cJ \to  \cE$ be a Stokes functor. 
	Then, the following are equivalent :
	\begin{enumerate}\itemsep=0.2cm
		\item $F$ lies in the essential image of $i_! \colon \St_{\cI,\cE} \to \St_{\cJ,\cE}$.
		\item  $(\Gr F)_a\simeq 0$  for every  $a\in \cJ^{\ens}$ not  in the essential image of $i^{\ens} \colon \cI^{\ens} \to \cJ^{\ens}$.
	\end{enumerate}
\end{prop}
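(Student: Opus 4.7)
The plan is to prove the two implications separately. For \textbf{(1) $\Rightarrow$ (2)}, suppose $F \simeq i_!(G)$ for a Stokes functor $G \colon \cI \to \cE$. Since the formation of $\Gr$ commutes with pullback, it suffices to argue pointwise. Fix $x \in X$; then $F_x \simeq (i_x)_!(G_x)$, where $i_x \colon \cI_x \hookrightarrow \cJ_x$ is the fiber of $i$. Splitting $G_x$ as $(i_{\cI_x})_!(W_x)$ for some $W_x \colon \cI_x^{\ens} \to \cE$ and using the commutative square $i_x \circ i_{\cI_x} = i_{\cJ_x} \circ i_x^{\ens}$ yields $F_x \simeq (i_{\cJ_x})_!(i_x^{\ens})_!(W_x)$. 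Since $i_x^{\ens}$ is injective on objects, $(i_x^{\ens})_!(W_x)$ is supported on the image of $i_x^{\ens}$; as the splitting datum of $F_x$ coincides with $(\Gr F)$ on the fiber, we conclude $(\Gr F)_a \simeq 0$ whenever $a \in \cJ^{\ens}$ lies outside the image of $i^{\ens}$.

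For the converse \textbf{(2) $\Rightarrow$ (1)}, the candidate is $G \coloneqq i^{\ast}(F) \colon \cI \to \cE$. We must show (a) that $G$ is a Stokes functor and (b) that the counit $i_!(G) = i_! i^{\ast}(F) \to F$ is an equivalence. For the punctual splitting of $G$, set $V_x \coloneqq \Gr(F_x) \colon \cJ_x^{\ens} \to \cE$; the hypothesis means $V_x$ is supported on the image of the injective map $i_x^{\ens}$, so $V_x \simeq (i_x^{\ens})_!(W_x)$ with $W_x \coloneqq (i_x^{\ens})^{\ast}(V_x)$. Then $F_x \simeq (i_{\cJ_x})_!(V_x) \simeq (i_x)_!\bigl((i_{\cI_x})_!(W_x)\bigr)$, and since $i_x$ is fully faithful, pulling back gives $G_x \simeq (i_{\cI_x})_!(W_x)$, which is split.

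For the cocartesianness of $G$, let $\gamma \colon x \to y$ be a morphism in $\Pi_\infty(X,P)$. Because $i$ preserves cocartesian edges, the straightenings fit in a commutative square $i_y \circ f_\gamma = g_\gamma \circ i_x$, so composing left adjoints yields $(i_y)_! \circ f_{\gamma,!} \simeq g_{\gamma,!} \circ (i_x)_!$. Applying this identity to $G_x$ and using the cocartesianness of $F$, namely $g_{\gamma,!}(F_x) \simeq F_y$, together with $F_x \simeq (i_x)_!(G_x)$ and $F_y \simeq (i_y)_!(G_y)$, we obtain $(i_y)_! f_{\gamma,!}(G_x) \simeq (i_y)_!(G_y)$; pulling back along the fully faithful $i_y$ gives $f_{\gamma,!}(G_x) \simeq G_y$, as required. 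Finally, for (b), the counit pointwise reads $(i_x)_! i_x^{\ast}(F_x) \to F_x$, which is an equivalence by the same computation $F_x \simeq (i_x)_!(G_x) \simeq (i_x)_! i_x^{\ast}(F_x)$ combined with the fully faithfulness of $i_x$.

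The main technical point is verifying cocartesianness of $G = i^{\ast}(F)$: it rests on the composition-of-left-adjoints identity $(i_y)_! f_{\gamma,!} \simeq g_{\gamma,!} (i_x)_!$ coming from $i$ preserving cocartesian edges, combined with the fact that pullback along the fully faithful $i_y$ inverts $(i_y)_!$. Everything else is a diagram chase once the hypothesis is translated, via fully faithfulness of $i^{\ens}$, into the statement that $V_x$ lies in the essential image of $(i_x^{\ens})_!$.
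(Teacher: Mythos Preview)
The statement is cited from the companion paper, so there is no proof in this paper to compare against directly. Your argument follows the natural line and is correct in outline; the implication (1) $\Rightarrow$ (2) is fine as written.

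One point in (2) $\Rightarrow$ (1) needs tightening. In your cocartesianness argument for $G = i^{\ast}F$, you produce an \emph{abstract} equivalence $f_{\gamma,!}(G_x) \simeq G_y$ by comparing both sides after applying $(i_y)_!$. But the cocartesian condition demands that the \emph{canonical} Beck--Chevalley transformation $f_{\gamma,!}(G_x) \to G_y$ be an equivalence, not merely that its source and target be equivalent objects. What is missing is the verification that $(i_y)_!$ applied to this specific Beck--Chevalley map for $G$ agrees, under the identification $(i_y)_! f_{\gamma,!} \simeq g_{\gamma,!}(i_x)_!$, with the Beck--Chevalley map for $i_! G \simeq F$; once you have that, conservativity of the fully faithful functor $(i_y)_!$ finishes the job. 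This compatibility is standard---it is the functoriality of Beck--Chevalley transformations with respect to pasting of squares, or equivalently the functoriality of the exponential construction recalled in \cref{construction:exponential}---but it is not vacuous and should be stated.

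A slightly cleaner organization avoids some of this bookkeeping: first establish that the counit $i_! i^{\ast} F \to F$ is an equivalence in $\Fun(\cJ,\cE)$ (your pointwise computation already does this, using that induction commutes with pullback), so that $i_! G$ is cocartesian simply because $F$ is; then invoke the compatibility above once to descend cocartesianness from $i_! G$ to $G$.
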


\subsection{Splitting criterion}
The goal of what follows is to state a technical splitting criterion used in the proof of the elementarity of polyhedral Stokes stratified spaces (see \cref{polyhedral_essentially_surjective}).

\begin{construction}  \label{constr:F_minus_I}   
     Let $(X,P,\cJ)$ be a Stokes stratified space in finite posets such that $\cJ^{\ens}\to \PiinftySigma(X,P)$ is locally constant.
     Let $i \colon \cI\hookrightarrow  \cJ$ and $k \colon \cK\hookrightarrow  \cJ$ be fully faithful functors in $\PosFib$ over $\PiinftySigma(X,P)$ such that $\cJ^{\ens}= \cI^{\ens} \sqcup \cK^{\ens}$.
      Let $F\colon \cJ\to \cE$ be a functor.
     Suppose that the canonical morphism 
     $$
      i^{\ens, \ast} i^{*}_{\cJ }(F)  \to  i^{\ens \ast}\Gr(F)
      $$ 
      admits a section
     $$
	\sigma \colon i^{\ens \ast}\Gr (F)    \to  i^{\ens, \ast} i^{*}_{\cJ }(F)   \ .
$$
      By adjunction, $\sigma$ yields a morphism
\[
\tau \colon i_{\cJ !} i_{!}^{\ens} i^{\ens \ast} \Gr(F) \to  F
\]
in $\Fun(\cJ, \cE)$.
      We put
$$
\begin{tikzcd}
F^{\backslash \cI}\coloneqq \cofib(\tau\colon i_{\cJ !} i_{!}^{\ens}  i^{\ens \ast} \Gr(F)   \to F )  \ .
\end{tikzcd}
$$
\begin{lem}[{\cite[\cref*{Abstract_Stokes-morphism_comes_from_graded}]{Abstract_Derived_Stokes}}]\label{morphism_comes_from_graded_IHES}
If $F\colon\cI \to \cE$ is a Stokes functor, so is $F^{\backslash \cI}$.
\end{lem}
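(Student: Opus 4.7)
The plan is to apply the characterization of Stokes functors in \cref{prop:Stokes_characterization}: a functor is Stokes if and only if it is cocartesian and punctually split. I will verify these two conditions separately for $F^{\backslash \cI}$.

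First I would establish that the source $G \coloneqq i_{\cJ !}\, i^{\ens}_!\, i^{\ens,\ast} \Gr(F)$ of $\tau$ is itself a Stokes functor. Since $F$ is Stokes, $\Gr(F)$ is Stokes on $(X,P,\cJ^{\ens})$ by \cref{Gr_of_Stokes_IHES}. The pullback $i^{\ens,\ast}$ preserves the Stokes condition, as do the left Kan extensions $i^{\ens}_!$ and $i_{\cJ !}$ (both $i^{\ens}$ and $i_\cJ$ being morphisms of cocartesian fibrations over $\Pi_\infty(X,P)$), all by \cref{cor:stokes_functoriality_IHES}. Hence $G$ is Stokes, and in particular both $G$ and $F$ are cocartesian. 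By the closure of cocartesian functors under colimits (\cref{(co)limit_and_cocart}), $F^{\backslash \cI} = \cofib(\tau \colon G \to F)$ is cocartesian.

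For the punctual splitting, I would fix $x \in X$ and argue fiberwise. Since the operations defining $\tau$ all commute with pullback to the fiber, $j_x^\ast F^{\backslash \cI} \simeq \cofib(\tau_x)$. Being Stokes, $F_x$ admits a splitting $F_x \simeq i_{\cJ_x, !}(V)$ for some $V \colon \cJ^{\ens}_x \to \cE$, under which $\Gr(F)_x$ is canonically identified with $V$. Because $i$ and $k$ are morphisms of cocartesian fibrations with $\cJ^{\ens} = \cI^{\ens} \sqcup \cK^{\ens}$, the fiber $\cJ^{\ens}_x$ is the disjoint union of sets $\cI^{\ens}_x \sqcup \cK^{\ens}_x$, so $V$ decomposes canonically as $V_\cI \oplus V_\cK$ in $\Fun(\cJ^{\ens}_x, \cE)$. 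The goal is then to construct a natural equivalence
\[ \cofib(\tau_x) \simeq i_{\cJ_x, !}(V_\cK) \ , \]
which exhibits the splitting of $F^{\backslash \cI}$ at $x$.

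The main obstacle is assembling this equivalence coherently as a morphism of functors on $\cJ_x$, rather than just pointwise in each fiber over $c \in \cJ_x$. The key observation is that $\Gr(\tau_x) \colon \Gr(G)_x \simeq i^{\ens}_{x,!} V_\cI \to V$ is the canonical inclusion determined by the decomposition $V \simeq V_\cI \oplus V_\cK$: on the $\cI^{\ens}_x$-component it is the identity on $V_\cI$, precisely because $\sigma_x$ is a section of the projection $i^{\ens,\ast} i^\ast_\cJ F \to i^{\ens,\ast}\Gr F$, and on the $\cK^{\ens}_x$-component it vanishes since $\cI^{\ens}_x$ and $\cK^{\ens}_x$ are disjoint. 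Using that $\Gr$ commutes with colimits (\cref{cor:section_Gr_commutes_with_colimits}), this forces $\Gr(\cofib(\tau_x)) \simeq V_\cK$, concentrated on $\cK^{\ens}_x$. To upgrade this identification of graded pieces into a functorial equivalence on $\cJ_x$, I would construct a comparison map $i_{\cJ_x, !}(V_\cK) \to \cofib(\tau_x)$ via the universal property of $i_{\cJ_x,!}$, using the off-diagonal components of each $\sigma_x(a)$ to define corrections of the form $v_{a'} \mapsto v_{a'} - \sigma_x^{a',a}(v_a)$ that extend consistently over the order of $\cJ_x$. Coherence of this data is most naturally packaged at the level of the specialization equivalence, using \cref{prop:global_functoriality_IHES} to work inside $\Sigma(\exp_\cE(\cJ/\PiinftySigma(X,P)))$; once the comparison map is in place, the computation of graded pieces above together with the fact that split functors are determined by their graded pieces forces it to be an equivalence.
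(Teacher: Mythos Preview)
The paper does not contain a proof of this lemma; it is quoted from the companion paper \cite{Abstract_Derived_Stokes}. Your strategy---verify cocartesianity via closure under colimits, then verify punctual splitness fibre by fibre---is the natural one and is correct in outline. The cocartesian part is fine (one minor point: $i^{\ens,\ast}$ preserves Stokes functors not directly by \cref{cor:stokes_functoriality_IHES}, but because $\cJ^{\ens} = \cI^{\ens} \sqcup \cK^{\ens}$ makes it a projection onto a summand of cocartesian fibrations in sets).

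The gap is in the punctually split step. You correctly compute $\Gr(\cofib(\tau_x)) \simeq V_\cK$, but your construction of a comparison map $i_{\cJ_x,!}(V_\cK) \to \cofib(\tau_x)$ is only gestured at, and the appeal to the specialization equivalence is a red herring since you are already working over a single point. The missing argument can be made explicit as follows. Extend the section $\sigma_x$ from $\cI^{\ens}_x$ to all of $\cJ^{\ens}_x$ by using, for $b \in \cK^{\ens}_x$, the canonical inclusion $V_b \hookrightarrow F_x(b)$ coming from the fixed splitting $F_x \simeq i_{\cJ_x,!}(V)$. By adjunction this yields a map $\tilde\eta \colon i_{\cJ_x,!}(V) \to F_x$ which at each $c \in \cJ_x$ is lower-triangular with identity diagonal relative to the order on $\cJ_x$, hence invertible since $\cJ_x$ is a finite poset and $\cE$ is stable. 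A direct check on adjoints shows $\tau_x = \tilde\eta \circ i_{\cJ_x,!}(\iota)$, where $\iota \colon i^{\ens}_{x,!}V_\cI \hookrightarrow V$ is the canonical inclusion; therefore $\cofib(\tau_x) \simeq i_{\cJ_x,!}(\cofib(\iota)) \simeq i_{\cJ_x,!}(V_\cK)$, which is split. Your ``corrections'' $v_{a'} \mapsto v_{a'} - \sigma_x^{a',a}(v_a)$ are exactly the entries of $\tilde\eta^{-1}$, so you had the right idea but not the right packaging.
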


Let $l \colon \cL\hookrightarrow  \cK$ and $m \colon \cM\hookrightarrow  \cK$ be fully faithful functors in $\PosFib$ over $\PiinftySigma(X,P)$ such that $\cK^{\ens}= \cL^{\ens} \sqcup \cM^{\ens}$.
      Suppose that the canonical morphism 
      $$ 
      l^{\ens \ast} i^{*}_{\cJ }(F)  \to  l^{\ens \ast}\Gr(F)
      $$ 
      admits a section
$$
	\lambda \colon l^{\ens \ast}\Gr (F)    \to  l^{\ens \ast} i^{*}_{\cJ }(F)   
$$
and define $F^{\backslash \cL}$ similarly.
\end{construction}

\begin{lem}[{\cite[\cref*{Abstract_Stokes-fiber_product_is_split}]{Abstract_Derived_Stokes}}]\label{fiber_product_is_split_IHES}
	In the setting of \cref{constr:F_minus_I}, the following are equivalent:
	\begin{enumerate}\itemsep=0.2cm
	\item the functor $F$ split;
	\item the functors $F^{\backslash \cI}$ and $F^{\backslash \cL}$ split. 
	\end{enumerate}
\end{lem}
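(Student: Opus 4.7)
For $(1) \Rightarrow (2)$: Suppose $F \simeq i_{\cJ,!}(V)$ for some $V \colon \cJ^{\ens} \to \cE$. The decomposition $\cJ^{\ens} = \cI^{\ens} \sqcup \cK^{\ens}$ splits $V$ canonically as $V \simeq i^{\ens}_! i^{\ens \ast}(V) \oplus k^{\ens}_! k^{\ens \ast}(V)$, and the computation $\Gr(i_{\cJ,!}(V)) \simeq V$ from the example preceding \cref{Gr_of_Stokes_IHES} identifies the section $\sigma$ (canonically produced by the splitting) with the inclusion of the first summand. The defining cofiber sequence then exhibits $F^{\backslash \cI} \simeq i_{\cJ,!} k^{\ens}_! k^{\ens \ast}(V)$, which is split; the argument for $F^{\backslash \cL}$ is identical, using the further decomposition $\cK^{\ens} = \cL^{\ens} \sqcup \cM^{\ens}$.

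For $(2) \Rightarrow (1)$, I would first apply $\Gr$ to the fiber sequence defining $F^{\backslash \cI}$. Since $\Gr$ preserves colimits by \cref{cor:section_Gr_commutes_with_colimits} and satisfies $\Gr \circ i_{\cJ,!} \simeq \id$, this yields a fiber sequence $i^{\ens}_! i^{\ens \ast} \Gr(F) \to \Gr(F) \to \Gr(F^{\backslash \cI})$ whose first map is the inclusion of the $\cI^{\ens}$-summand. Setting $V \coloneqq \Gr(F)$ and decomposing $V = V_\cI \oplus V_\cL \oplus V_\cM$ along $\cJ^{\ens} = \cI^{\ens} \sqcup \cL^{\ens} \sqcup \cM^{\ens}$, this forces $\Gr(F^{\backslash \cI}) \simeq V_\cL \oplus V_\cM$; when $F^{\backslash \cI}$ is split, applying $\Gr$ to the hypothesized splitting forces $F^{\backslash \cI} \simeq i_{\cJ,!}(V_\cL \oplus V_\cM)$. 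Symmetrically $F^{\backslash \cL} \simeq i_{\cJ,!}(V_\cI \oplus V_\cM)$.

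The adjoints of $\sigma$ and $\lambda$ provide $\tilde\sigma \colon i_{\cJ,!}(V_\cI) \to F$ and $\tilde\lambda \colon i_{\cJ,!}(V_\cL) \to F$, and one needs to produce a third component $\tilde\mu \colon i_{\cJ,!}(V_\cM) \to F$ to assemble a comparison map $i_{\cJ,!}(V) \to F$. The splitting of $F^{\backslash \cI}$ supplies a tautological section of its own graduation, letting one iterate the $(-)^{\backslash \bullet}$ construction to form $(F^{\backslash \cI})^{\backslash \cL}$, and a direct graduation computation identifies this iterated cofiber with $i_{\cJ,!}(V_\cM)$. Viewing the two composable cofiber sequences $i_{\cJ,!}(V_\cI) \to F \to F^{\backslash \cI}$ and $i_{\cJ,!}(V_\cL) \to F^{\backslash \cI} \to (F^{\backslash \cI})^{\backslash \cL}$ together, one assembles a Mayer--Vietoris style square in $\Fun(\cJ, \cE)$ relating $F$, $F^{\backslash \cI}$, $F^{\backslash \cL}$ and $(F^{\backslash \cI})^{\backslash \cL}$. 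When both $F^{\backslash \cI}$ and $F^{\backslash \cL}$ are split, the resulting pullback is $i_{\cJ,!}(V)$, and the induced map $i_{\cJ,!}(V) \to F$ is an equivalence as it becomes one after applying $\Gr$.

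The main obstacle is precisely the coherence of this iterative/Mayer--Vietoris argument at the $\infty$-categorical level. The splittings of $F^{\backslash \cI}$ and $F^{\backslash \cL}$ each come with a parameter space of choices, and one must check that the iterated construction $(F^{\backslash \cI})^{\backslash \cL}$ is canonically equivalent to the simultaneous cofiber of $\tilde\sigma + \tilde\lambda$, and that the two ways of reading the fourfold diagram agree. In effect, the simultaneous hypothesis that \emph{both} $F^{\backslash \cI}$ and $F^{\backslash \cL}$ are split is precisely the input needed to kill the potential $\Ext^1$-obstruction to lifting $i_{\cJ,!}(V_\cM) \to F^{\backslash \cL}$ back through $F \to F^{\backslash \cL}$, producing a coherent splitting of the total fiber sequence rather than merely a decomposition in the underlying homotopy category.
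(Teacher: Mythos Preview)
The paper does not give a proof of this lemma: it is stated as a citation from the companion paper \cite{Abstract_Derived_Stokes}, with no argument supplied here. So there is no in-paper proof to compare against; I can only comment on your sketch on its own merits.

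Your direction $(1)\Rightarrow(2)$ has a small gap: the section $\sigma$ in \cref{constr:F_minus_I} is \emph{given data}, not the section canonically produced by a chosen splitting of $F$. Since $F^{\backslash\cI}$ genuinely depends on $\sigma$, you must argue that \emph{any} admissible section yields a split cofiber. This is handled by showing that $\Gr(\tau)$ is always the counit $i^{\ens}_! i^{\ens\ast}\Gr(F)\to\Gr(F)$ regardless of $\sigma$ (so $\Gr(F^{\backslash\cI})$ is independent of $\sigma$), and then using that a morphism between split functors whose $\Gr$ is a split inclusion has split cofiber. Your computation is right once you make this point explicit.

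For $(2)\Rightarrow(1)$, your outline is the correct skeleton, and your last step---checking the comparison $i_{\cJ,!}(V)\to F$ is an equivalence by applying $\Gr$---is legitimate, since $\Gr$ is conservative on $\Fun(\cJ,\cE)$ when the fibers of $\cJ$ are finite posets (induct on the poset fiberwise: for minimal $a$ one has $\Gr(F)_a=F(a)$, and the general step uses the fiber sequence $F_{<a}\to F(a)\to\Gr(F)_a$). However, you yourself flag the real issue: your ``Mayer--Vietoris square'' is not a square you have actually constructed, and the passage from the two separate cofiber sequences to a coherent pullback presentation of $F$ requires more than what you have written. Concretely, you need a map $F\to F^{\backslash\cI}\times_{(F^{\backslash\cI})^{\backslash\cL}} F^{\backslash\cL}$ and must identify the iterated cofiber $(F^{\backslash\cI})^{\backslash\cL}$ with the symmetric construction $(F^{\backslash\cL})^{\backslash\cI}$; both identifications depend on compatibilities between the chosen sections $\sigma$ and $\lambda$ that you have not established. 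This is exactly the content that the companion paper supplies, and it is not a formality. Your final paragraph diagnoses the obstruction honestly, but as written this direction remains a plausible strategy rather than a proof.
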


%Universality descends along finite Galois morphisms:
%
%\begin{lem}
%Let $f \colon (Y, Q, \cJ) \to (X,P,\cI)$ be a cartesian morphism in $\StStrat$ such that the induced morphism Y\to X is a finite Galois cover.
%
%\end{lem}

\section{Stokes analytic stratified spaces }

We start deepening our analysis of the category $\StStrat$ of Stokes (analytic) stratified spaces and introducing the key notion of elementary morphisms.

\subsection{Functorialities of Stokes stratified spaces}

Recall from  \cref{rem:cocartesian_fibration_vs_constructible_sheaf} that a Stokes stratified space is a triple $(X, P, \cI)$ where $(X,P)$ is an exodromic stratified space and $\cI \to \Pi_\infty(X,P)$ is a cocartesian fibration in posets.

\begin{defin}
	If $\cC \subset \Mor(\Stratc)$ is a class of morphisms, we say that a morphism $(X,P,\cI)\to (Y,Q,\cJ)$ in $\StStrat$ lies in $\cC$ if the induced morphism of analytic stratified spaces $(X,P)\to (Y,Q)$ lies in $\cC$.
\end{defin}

\begin{eg}
	The most relevant classes for our purposes are those of proper morphisms, refinements and Galois covers.
\end{eg}

%Fix a presentable $\infty$-category $\cE$.
Recall from Definitions \ref{def:cocartesian_filtered_functor} and \ref{def:Stokes_sheaf} that to every Stokes stratified space $(X, P, \cI)$ we can attach two $P$-hyperconstructible hypersheaves with values in $\CAT_\infty$: 
\[ \frakFil_{\cI, \cE} , \frakStokes_{\cI, \cE} \in \ConsPhyp(X;\CAT_\infty) \ . \]

\begin{construction}\label{construction:Stokes_sheaves_functoriality}
	Let $f \colon (Y, Q, \cJ) \to (X,P,\cI)$ be a morphism in $\StStrat$.
	Recall that $f$ amounts to the datum of a morphism of stratified spaces $f \colon (Y,Q) \to (X,P)$ and a commutative diagram
	\[ \begin{tikzcd}
		\cI \arrow{d} & \cI_Y \arrow{d} \arrow{r}{v_f} \arrow{l}[swap]{u_f} & \cJ \arrow{dl} \\
		\Pi_\infty(X,P) & \Pi_\infty(Y,Q) \arrow{l}[swap]{\Pi_\infty(f)} 
	\end{tikzcd} \]
	where the square is cartesian.
	Applying the exponential construction yields the following commutative diagram
	\begin{equation}\label{eq:Stokes_sheaves_functoriality}
		\begin{tikzcd}
			\exp_\cE^{\PS}( \cI / \Pi_\infty(X,P) ) \arrow[hook]{d} & \exp_\cE^{\PS}( \cI_Y / \Pi_\infty(Y,Q) ) \arrow{r}{\cE^{v_f}_!} \arrow{l}[swap]{\cE^{u_f}} \arrow[hook]{d} & \exp_\cE^{\PS}( \cJ / \Pi_\infty(Y, Q) ) \arrow[hook]{d} \\
			\exp_\cE( \cI / \Pi_\infty(X,P) ) \arrow{d} & \exp_\cE( \cI_Y / \Pi_\infty(Y,Q) ) \arrow{d} \arrow{l}[swap]{\cE^{u_f}} \arrow{r}{\cE^{v_f}_!} & \exp_\cE( \cJ / \Pi_\infty(Y,Q) ) \arrow{dl} \\
			\Pi_\infty(X,P) & \Pi_\infty(Y,Q) \arrow{l}[swap]{\Pi_\infty(f)}
		\end{tikzcd}
	\end{equation}
	The functoriality of the exodromy equivalence with coefficients in $\PrL$ recalled in \cref{exodromy_PrL}  shows that the middle row zig-zag induces transformations
	\[ \mathfrak u_f^{\ast} \colon  \frakFil_{\cI_Y, \cE}\to   f^{\ast, \hyp}( \frakFil_{\cI, \cE} )  \qquad \text{and} \qquad \mathfrak{v}_{f,!} \colon \frakFil_{\cI_Y, \cE} \to  \frakFil_{\cJ, \cE} \]
	in $\ConsQhyp(Y;\PrL)$.
	Similarly the functoriality of the exodromy equivalence  recalled in \cref{exodromy_functorialities} shows that the top row zig-zag induces transformations
	\[ \mathfrak u_f^{\ast} \colon \frakStokes_{\cI_Y, \cE} \to    f^{\ast,\hyp}( \frakStokes_{\cI, \cE} )\qquad \text{and} \qquad \mathfrak{v}_{f,!} \colon \frakStokes_{\cI_Y, \cE} \to  \frakStokes_{\cJ, \cE} \]
	in $\ConsQhyp(Y;\CAT_\infty)$.
	Note that the commutativity of \eqref{eq:Stokes_sheaves_functoriality} shows that these natural transformations are compatible with the inclusion of $\frakStokes_{(-),\cE}$ into $\frakFil_{(-),\cE}$.
\end{construction}

\begin{prop}\label{pullback_Stokes_sheaf}
	Let $f \colon (Y,Q,\cJ)\to (X,P,\cI)$ be a morphism in $\StStrat$ (see \cref{rem:cartesian_morphism_in_StStrat}).
	Then the canonical morphisms
	\[ \mathfrak u_f^{\ast} \colon \frakFil_{\cI_Y,\cE} \to   f^{\ast,\hyp}( \frakFil_{\cI,\cE} )\qquad \text{and} \qquad \mathfrak u_f^{\ast} \colon  \frakStokes_{\cI_Y,\cE}  \to f^{\ast,\hyp}(\frakStokes_{\cI,\cE}) \]
	are equivalences.
	If in addition $f$ is cartesian, then the morphisms
	\[ \mathfrak v_{f,!} \colon \frakFil_{\cI_Y, \cE} \to  \frakFil_{\cJ,\cE} \qquad \text{and} \qquad \mathfrak v_{f,!} \colon \frakStokes_{\cI_Y, \cE} \to  \frakStokes_{\cJ, \cE} \]
	are equivalences.
\end{prop}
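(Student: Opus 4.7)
Both statements are formal consequences of the functoriality of the exponential construction of \cref{construction:exponential}, transported through the exodromy equivalence. There is no substantive technical obstacle: the delicate work has already been assembled in the construction of $\exp_\cE$ and its variant $\exp_\cE^{\PS}$, and the proposition amounts to an unwinding of definitions.

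For the pullback statement on $\frakFil$, I would unfold $\mathfrak u_f^\ast$ using \cref{construction:Stokes_sheaves_functoriality}: it is obtained by applying the exodromy equivalence to the bottom-left square of \eqref{eq:Stokes_sheaves_functoriality}. By the description of $\exp_\cE$ on morphisms recorded in \eqref{eq:exp_on_morphisms}, this square is a pullback in $\PrFibL$, i.e.
\[ \exp_\cE(\cI_Y/\Pi_\infty(Y,Q)) \simeq \Pi_\infty(Y,Q) \times_{\Pi_\infty(X,P)} \exp_\cE(\cI/\Pi_\infty(X,P)) \ . \]
Through the identification $\PrFibL_{/\Pi_\infty(-)} \simeq \ConsPhyp(-;\PrL)$ of \cref{observation:exodromy_straightening}, which is functorial in $f$ by \cref{exodromy_PrL}, base change of presentable cocartesian fibrations along $\Pi_\infty(f)$ corresponds to the hyperpullback functor $f^{\ast,\hyp}$. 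The desired equivalence $\mathfrak u_f^\ast \colon \frakFil_{\cI_Y,\cE} \xrightarrow{\sim} f^{\ast,\hyp}(\frakFil_{\cI,\cE})$ follows at once.

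For $\frakStokes$ one repeats the argument with the top-left square of \eqref{eq:Stokes_sheaves_functoriality}. The key input is the fact noted in \S\ref{sec:Stokes_sheaf} (and proved in \cite{Abstract_Derived_Stokes}) that the formation of $\exp_\cE^{\PS}$ commutes with base change, which ensures that this square is again a pullback, now in $\hCoCart$ rather than $\PrFibL$ (the essential image $\exp_\cE^{\PS}(-)$ is in general not presentable, as pointed out in \cref{eg:Stokes_structures_at_a_point}). Transporting via the exodromy equivalence with coefficients in $\CAT_\infty$ recalled in \cref{exodromy_functorialities} gives the second equivalence.

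The induction statement is more elementary. If $f$ is cartesian, then by \cref{rem:cartesian_morphism_in_StStrat} the comparison map $v_f \colon \cI_Y \to \cJ$ is an equivalence in $\CoCart_{/\Pi_\infty(Y,Q)}$. Functoriality of $\exp_\cE$ then implies that $\cE^{v_f}_!$ is an equivalence in $\PrFibL_{/\Pi_\infty(Y,Q)}$, and restricts to an equivalence on the subfibrations $\exp_\cE^{\PS}(-)$. Applying the exodromy equivalences in both $\PrL$ and $\CAT_\infty$ coefficients one last time yields the equivalences $\mathfrak v_{f,!}$ on $\frakFil$ and $\frakStokes$.
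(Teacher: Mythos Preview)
Your proof is correct and follows essentially the same approach as the paper's own argument: both rely on the fact that the left squares in \eqref{eq:Stokes_sheaves_functoriality} are pullbacks together with the functoriality of the exodromy equivalence (with coefficients in $\PrL$ and in $\CAT_\infty$), and both dispatch the cartesian case by observing that $v_f$ is then an equivalence. Your version simply spells out more of the bookkeeping that the paper compresses into a single sentence.
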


\begin{proof}
	Since the exodromy equivalence with coefficients in $\PrL$ and in $\CAT_\infty$ is functorial by \cref {exodromy_functorialities} and \cref{exodromy_PrL}, the first statement follows directly from the fact that the left squares in \eqref{eq:Stokes_sheaves_functoriality} are pullback, see \cref{construction:exponential}.
	The second statement follows from the functoriality of $\exp_\cE$, since when $f$ is cartesian $v_f \colon \cI_Y \to \cJ$ is itself an equivalence.
\end{proof}

\begin{cor}\label{stalk_pullback_Stokes_sheaf}
	Let $(X,P,\cI)\in \StStrat$.
	For every $x\in X$, the stalk of $\frakStokes_{\cI,\cE}$ at $x$ is canonically identified with $\St_{\cI_x,\cE}$, i.e.\ with the essential image of $i_{\cI_x,!} \colon \Fun(\cI_x^{\mathrm{set}}, \cE) \to \Fun(\cI_x,\cE)$.
\end{cor}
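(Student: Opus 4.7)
The plan is to deduce the corollary as a direct instance of \cref{pullback_Stokes_sheaf}, together with the punctual description of Stokes functors provided by \cref{eg:Stokes_structures_at_a_point}.

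First, I would view the point $x \in X$ as a morphism of exodromic stratified spaces $i_x \colon (\{x\},\ast) \to (X,P)$, and upgrade it to a morphism in $\StStrat$ by equipping $\{x\}$ with the cocartesian fibration in posets $\cI_x \to \Pi_\infty(\{x\}) \simeq \ast$, where $\cI_x$ denotes the fiber of $\cI$ over the object corresponding to $x$ in $\Pi_\infty(X,P)$. By the very definition of $\cI_x$ as a fiber, the resulting square
\[ \begin{tikzcd}
\cI_x \arrow{r} \arrow{d} & \cI \arrow{d} \\
\Pi_\infty(\{x\}) \arrow{r} & \Pi_\infty(X,P)
\end{tikzcd} \]
is a pullback, so the lifted morphism $i_x \colon (\{x\},\ast,\cI_x) \to (X,P,\cI)$ is cartesian in $\StStrat$ (see \cref{rem:cartesian_morphism_in_StStrat}).

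Next, I would apply \cref{pullback_Stokes_sheaf} to $i_x$. This yields canonical equivalences
\[ i_x^{\ast,\hyp}(\frakStokes_{\cI,\cE}) \simeq \frakStokes_{\cI_x,\cE} \]
of hyperconstructible hypersheaves of $\infty$-categories on the point. Since the stalk of $\frakStokes_{\cI,\cE}$ at $x$ is by definition the evaluation of $i_x^{\ast,\hyp}(\frakStokes_{\cI,\cE})$ on the point, we obtain a canonical equivalence between this stalk and the global sections $\St_{\cI_x,\cE} = \frakStokes_{\cI_x,\cE}(\{x\})$.

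Finally, to identify $\St_{\cI_x,\cE}$ with the essential image of $i_{\cI_x,!} \colon \Fun(\cI_x^{\mathrm{set}},\cE) \to \Fun(\cI_x,\cE)$, I would invoke \cref{eg:Stokes_structures_at_a_point}: since $\Pi_\infty(\{x\})$ is a point (and thus trivially admits an initial object), that example gives exactly the desired description. No obstacle is expected in this argument, as every step is a direct application of the preceding results; the only point requiring minor care is the verification that the inclusion of the point has been promoted to a \emph{cartesian} morphism in $\StStrat$, which is immediate from the construction of $\cI_x$.
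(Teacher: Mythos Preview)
Your proposal is correct and matches the paper's intended argument: the corollary is stated without proof precisely because it follows immediately from \cref{pullback_Stokes_sheaf} applied to the inclusion of the point, together with the identification of $\St_{\cI_x,\cE}$ in \cref{eg:Stokes_structures_at_a_point}. One small remark: the cartesianness of $i_x$ is not strictly needed, since the first half of \cref{pullback_Stokes_sheaf} already asserts $\frakStokes_{\cI_Y,\cE} \simeq f^{\ast,\hyp}(\frakStokes_{\cI,\cE})$ for \emph{any} morphism in $\StStrat$ (with $\cI_Y$ the pullback), but your construction does produce a cartesian morphism and the argument goes through either way.
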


\begin{cor}\label{Stokes_sheaf_trivial_fibration}
	Let $(X,P)$ be an exodromic stratified space, considered as a Stokes stratified space $(X,P,\Pi_\infty(X,P))$.
	Then, $\frakStokes_{\Pi_{\infty}(X,P),\cE}$ is canonically equivalent to $\frak{Loc}_{X,\cE}$ (see \cref{def_loc_presheaf}).
\end{cor}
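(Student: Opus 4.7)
The plan is to reduce to \cref{Filco_trivial_stratification} in two moves: first identify $\frakStokes$ with $\frakFil$ whenever the Stokes fibration is the identity, and then transport along the canonical refinement $(X,P) \to (X,*)$ of stratified spaces to bring the trivially stratified case into play.

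For the first move, I would observe that for any exodromic stratified space $(Y,Q)$, the identity fibration $\Pi_\infty(Y,Q) \to \Pi_\infty(Y,Q)$ has singleton fibres. Hence $\cI_x^{\ens} = \cI_x$, the functor $i_{\cI_x,!}$ is the identity, and its essential image is the entirety of $\Fun(\cI_x,\cE) = \cE$. Consequently the inclusion $\exp_\cE^{\PS}(\cI/\Pi_\infty(Y,Q)) \hookrightarrow \exp_\cE(\cI/\Pi_\infty(Y,Q))$ is an equivalence, which by Definitions~\ref{def:cocartesian_filtered_functor} and~\ref{def:Stokes_sheaf} translates into an equivalence $\frakStokes_{\Pi_\infty(Y,Q),\cE} \simeq \frakFil_{\Pi_\infty(Y,Q),\cE}$. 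Applied to $(X,*)$ and chained with \cref{Filco_trivial_stratification}, this yields a canonical equivalence $\frakStokes_{\Pi_\infty(X),\cE} \simeq \frak{Loc}_{X,\cE}$.

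For the second move, the refinement $P \to *$ induces a morphism of Stokes stratified spaces
\[ r \colon (X,P,\Pi_\infty(X,P)) \to (X,*,\Pi_\infty(X)) . \]
This is cartesian in the sense of \cref{rem:cartesian_morphism_in_StStrat}: the pullback of the identity of $\Pi_\infty(X)$ along $\Pi_\infty(X,P)\to\Pi_\infty(X)$ is the identity of $\Pi_\infty(X,P)$. Then \cref{pullback_Stokes_sheaf} supplies an equivalence $\mathfrak u_r^\ast \colon \frakStokes_{\Pi_\infty(X,P),\cE} \xrightarrow{\sim} r^{\ast,\hyp}(\frakStokes_{\Pi_\infty(X),\cE})$ in $\ConsPhyp(X;\CAT_\infty)$. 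Since $r$ is the identity on underlying topological spaces, $r^{\ast,\hyp}$ is just the inclusion $\Cons_{\ast}^{\hyp}(X;\CAT_\infty) \hookrightarrow \ConsPhyp(X;\CAT_\infty)$; concatenating this equivalence with the one from the first move gives the result.

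There is no substantive obstacle: the heavy lifting is already packaged in \cref{construction:Stokes_sheaves_functoriality} and \cref{pullback_Stokes_sheaf}. The only care required is to keep track of the direction of the refinement $r$ (finer to coarser, cf.~\cref{refinement_localization}) and to correctly identify $r^{\ast,\hyp}$ as the inclusion of constructible hypersheaves along the identity map of $X$.
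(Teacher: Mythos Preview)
Your proof is correct and follows essentially the same approach as the paper: use the cartesian refinement $(X,P,\Pi_\infty(X,P)) \to (X,\ast,\Pi_\infty(X))$ together with \cref{pullback_Stokes_sheaf}, observe that for the identity fibration the split condition is vacuous so $\frakStokes \simeq \frakFil$, and conclude via \cref{Filco_trivial_stratification}. The only cosmetic difference is the order in which you present the two moves.
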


\begin{proof}
	Observe that $(X,P,\Pi_{\infty}(X,P))\to (X,\ast,\Pi_{\infty}(X))$ is a cartesian refinement in $\StStrat$.
	From \cref{pullback_Stokes_sheaf}, we deduce that $\frakStokes_{ \Pi_{\infty}(X,P),\cE}$ is canonically equivalent to $\frakStokes_{ \Pi_{\infty}(X),\cE}$.
	The punctually split condition being empty in that case,   $\frakStokes_{ \Pi_{\infty}(X),\cE}$ is canonically equivalent to $\mathfrak{Fil}_{ \Pi_{\infty}(X),\cE}$.
	Then, the conclusion follows from \cref{Filco_trivial_stratification}.
\end{proof}

\begin{cor}\label{Loc_constant_Stokes_sheaf}
	Let $(X,P,\cI)$ be a Stokes stratified space such that $\cI\to \Pi_{\infty}(X,P)$ is locally constant.
	Then, $\frakStokes_{\cI,\cE}$ is locally hyperconstant on $X$.
\end{cor}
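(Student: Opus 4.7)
The plan is to reduce to the case of the trivial stratification on $X$, where $\{*\}$-hyperconstructibility coincides with local hyperconstancy by definition of hyperconstructibility.

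First, I would exploit the local constancy of $\cI \to \Pi_\infty(X,P)$: by \cref{def:locally_constant}, its straightening $\Upsilon_\cI \colon \Pi_\infty(X,P) \to \Poset$ sends every morphism to an equivalence. Applying \cref{refinement_localization} to the refinement $P \to *$, the functor $\Pi_\infty(X,P) \to \Pi_\infty(X) = \Pi_\infty(X,*)$ is a localization at the class of all morphisms, so $\Upsilon_\cI$ factors uniquely through some functor $\Pi_\infty(X) \to \Poset$. Unstraightening this factorization yields a cocartesian fibration in posets $\cI' \to \Pi_\infty(X)$ whose pullback along $\Pi_\infty(X,P) \to \Pi_\infty(X)$ is canonically $\cI$. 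Noting that $(X,*)$ is exodromic by a further application of \cref{refinement_localization}, the triple $(X,*,\cI')$ is a well-defined Stokes stratified space, and the natural map $\pi \colon (X,P,\cI) \to (X,*,\cI')$ is a cartesian morphism in $\StStrat$ by construction.

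Then I would invoke the base-change equivalence of \cref{pullback_Stokes_sheaf} along $\pi$ to obtain a canonical equivalence
\[ \frakStokes_{\cI,\cE} \simeq \pi^{*,\hyp} \frakStokes_{\cI',\cE} \ . \]
Because $\pi$ is the identity on the underlying topological space, this identifies the two hypersheaves on $X$ up to the extra constructibility datum. Since $\frakStokes_{\cI',\cE}$ is by construction $\{*\}$-hyperconstructible, it is locally hyperconstant on $X$, and hence so is $\frakStokes_{\cI,\cE}$.

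No substantive obstacle is anticipated: the argument is essentially formal once one recognizes that local constancy of $\cI$ in the sense of \cref{def:locally_constant} is precisely the hypothesis needed to factor $\cI$ through the groupoidification $\Pi_\infty(X)$, after which the functoriality package of \cref{pullback_Stokes_sheaf} closes the argument. The one minor subtlety worth verifying is that unstraightening the induced functor $\Pi_\infty(X) \to \Poset$ does produce a cocartesian fibration in posets and not merely in $\infty$-categories, but this is automatic since the straightening is valued in $\Poset \subset \Cat_\infty$.
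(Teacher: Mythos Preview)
Your proposal is correct and follows essentially the same approach as the paper: factor $\cI$ through the trivial stratification via \cref{refinement_localization}, then invoke \cref{pullback_Stokes_sheaf} to identify $\frakStokes_{\cI,\cE}$ with the Stokes sheaf over $(X,\ast,\cI')$, which is $\{\ast\}$-hyperconstructible by construction. The paper's argument is simply a more compressed version of yours, writing $\cJ$ for your $\cI'$ and omitting the explicit straightening/unstraightening step.
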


\begin{proof}
	By definition, the straightening of $\cI \to \Pi_{\infty}(X,P)$ sends every exit path to an isomorphism of posets.
	From \cref{refinement_localization}, we deduce the existence of a cartesian refinement $(X,P,\cI)\to (X,\ast ,\cJ)$.
	Hence, \cref{pullback_Stokes_sheaf} ensures that $\frakStokes_{\cI,\cE}$ is canonically equivalent to $\frakStokes_{\cJ,\cE}$.
	By construction, $\frakStokes_{\cJ,\cE}$ lies in $\Loc(X;\PrL)$ so the conclusion follows.
\end{proof}

By design,  $\frakStokes_{\cI,\cE}$ satisfies hyperdescent.
The next proposition shows that actually more is true.
Before stating it, let us introduce the following

\begin{defin}\label{def:stably_universal}
Let $(X,P)$ be an exodromic stratified space.
We say that a cocartesian fibration in posets $\cI \to \Pi_\infty(X,P) $  is \emph{universal} if it is bireflexive and the canonical comparison map (see \cite[\cref*{Abstract_Stokes-construction:Stokes_tensor_comparison}]{Abstract_Derived_Stokes})
$$
\St_{\cI,\cE} \otimes \cE' \to \St_{\cI,\cE\otimes \cE'}
$$
is an equivalence for every presentable stable $\infty$-categories $ \cE,\cE'$.
\end{defin}

Bireflexity and universality are local properties for the étale topology:

\begin{prop}\label{étale_hyperdescent}
Let $(X,P,\cI)\in \StStrat$.
% and let $\cE$ be a presentable $\infty$-category.
Then, the following holds:
\begin{enumerate}\itemsep=0.2cm
\item for every étale hypercover $U_\bullet$ of $X$ such that $(U_n,P)$ is exodromic for every $[n]\in \mathbf \Delta_s$, the canonical functor
\[
\St_{\cI,\cE}   \to \lim_{[n]\in \mathbf \Delta_s\op} \St_{\cI_{U_n},\cE} 
\]
is an equivalence.
\item If furthermore  $(U_n,P,\cI_{U_n})$ is  bireflexive for every $[n]\in \mathbf \Delta_s$,  then so is $(X,P,\cI)$ and the above limit can be computed in $ \PrLR$.
\item If furthermore $(U_n,P,\cI_{U_n})$ is  universal for every $[n]\in \mathbf \Delta_s$,  then so is $(X,P,\cI)$.
\end{enumerate}

\end{prop}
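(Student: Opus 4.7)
The plan is to reduce all three statements to étale hyperdescent of the hyperconstructible hypersheaf $\frakStokes_{\cI,\cE}$. Such descent should hold in the present setting because étale maps in the real-analytic category are local homeomorphisms: an étale hypercover can therefore be locally refined on each $U_n$ by a Zariski hypercover, to which the open hyperdescent built into $\frakStokes_{\cI,\cE}\in \ConsPhyp(X;\CAT_\infty)$ directly applies. I expect the main obstacle to lie precisely here, in making this local refinement argument compatible with the simplicial structure of $U_\bullet$; conceptually it is routine, but it requires carefully combining the Zariski descent of $\frakStokes_{\cI,\cE}$ with the hypercover's exodromic assumption.

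Once this descent is secured, (1) would follow immediately: \cref{pullback_Stokes_sheaf} identifies $\frakStokes_{\cI_{U_n},\cE}$ with the hyper-pullback $f_n^{\ast,\hyp}(\frakStokes_{\cI,\cE})$, and taking global sections along the hypercover yields the claimed equivalence.

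For (2), I would argue as follows. Given a small diagram $F_\bullet \colon K \to \St_{\cI,\cE}$, form its limit $F$ in $\Fun(\cI,\cE)$; pullback along $u_{f_n}$ commutes with limits, so $u_{f_n}^\ast F$ realizes the limit of $u_{f_n}^\ast F_\bullet$ in $\Fun(\cI_{U_n},\cE)$. Bireflexivity at level $n$ then places $u_{f_n}^\ast F$ in $\St_{\cI_{U_n},\cE}$, and the descent from (1) forces $F$ itself to be a Stokes functor. The same argument handles colimits, establishing bireflexivity of $(X,P,\cI)$. In particular every transition functor of the hypercover preserves limits and colimits, hence lies in $\PrLR$, and \cref{Peter_lemma_1} then guarantees that the limit in (1) may be computed in $\PrLR$.

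For (3), granted (2), the equivalence $\St_{\cI,\cE} \simeq \lim_n \St_{\cI_{U_n},\cE}$ lives in $\PrLR$, and so coincides with the $\PrR$-limit, which under the duality $\PrR\simeq (\PrL)\op$ corresponds to a $\PrL$-colimit of the opposite diagram. Since the tensor functor $-\otimes \cE' \colon \PrL\to \PrL$ preserves colimits, I would then chain the equivalences
\[
\St_{\cI,\cE}\otimes \cE' \simeq \lim_n \bigl(\St_{\cI_{U_n},\cE}\otimes\cE'\bigr) \simeq \lim_n \St_{\cI_{U_n},\cE\otimes\cE'} \simeq \St_{\cI,\cE\otimes\cE'},
\]
where the middle equivalence invokes universality of each $(U_n,P,\cI_{U_n})$ and the last one is (1) applied to the coefficient category $\cE\otimes \cE'$. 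The delicate step here is checking that the global comparison map coincides, under the hyperdescent equivalences, with the assembly of local comparison maps; this should follow from naturality of the comparison construction of \cite{Abstract_Derived_Stokes} in the coefficient category $\cE$ and in restriction along $f_n$.
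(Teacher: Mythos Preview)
Your overall strategy for (2) and (3) is sound and close in spirit to what the paper does (the paper defers these to results in the companion paper \cite{Abstract_Derived_Stokes}, but the arguments you sketch are essentially what those results establish). One small point: in (2), to conclude that $F$ is Stokes from the fact that each $u_{f_n}^\ast F$ is Stokes, you do not literally need the descent equivalence of (1); it suffices to observe that being Stokes is the conjunction of being cocartesian and punctually split (\cref{prop:Stokes_characterization}), both of which are pointwise conditions checkable after pullback along the surjective map $U_0 \to X$. Your phrasing via ``descent forces $F$ to be Stokes'' is correct but slightly indirect.

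The real divergence from the paper is in (1). You propose to deduce étale hyperdescent of $\St_{\cI,\cE}$ from the hypersheaf property of $\frakStokes_{\cI,\cE}$ on $X$, by locally refining the étale hypercover to a Zariski one. You correctly flag this refinement step as the delicate point, and indeed making it simplicially coherent is not entirely routine. The paper bypasses this issue completely: rather than working with sheaves on $X$, it invokes the étale Van Kampen theorem for exit-path $\infty$-categories (cited from \cite{Beyond_conicality}), which gives
\[
\colim_{[n]\in \mathbf \Delta_s\op} \Pi_\infty(U_n,P) \xrightarrow{\ \sim\ } \Pi_\infty(X,P),
\]
and then applies a Van Kampen theorem for Stokes functors from \cite{Abstract_Derived_Stokes}, which says that $\St_{(-),\cE}$ sends such colimits of bases to limits of categories. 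This is more direct: it works at the level of the indexing $\infty$-categories rather than at the level of sheaves on $X$, and in particular never needs to compare étale and Zariski topologies. Your approach is not wrong, but it trades one nontrivial input (Van Kampen for $\Pi_\infty$) for another (étale hyperdescent of hypersheaves), and the latter is less cleanly packaged in the present setup. Also note that the statement is for general $(X,P,\cI)\in\StStrat$, not just the analytic case, so your framing in terms of the ``real-analytic category'' is slightly off; étale here simply means local homeomorphism of topological spaces.
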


\begin{proof}
	By the étale version of Van Kampen \cite[Corollary 3.4.5]{Beyond_conicality}, we know that 
\[
\colim \Pi_{\infty}(U_n,P) \to \Pi_{\infty}(X,P)  
\]
	is an equivalence.	
	Then, (1) follows from the van Kampen theorem for Stokes functors \cite[\cref*{Abstract_Stokes-prop:Van_Kampen_Stokes}]{Abstract_Derived_Stokes}.
	Item (2) is an immediate consequence of \cite[\cref*{Abstract_Stokes-cor:descending_geometricity_of_Stokes_via_colimits}]{Abstract_Derived_Stokes}.
	 Item (3) follows from \cite[\cref*{Abstract_Stokes-prop:descent_of_universality}]{Abstract_Derived_Stokes}.
\end{proof}

%\begin{defin}
%Let $\mathbb I\in \StAnStrat$.
%We say that $\mathbb I$ is \textit{locally constant} if $\cI\to \PiinftySigma(X,P)$ is locally constant in the sense of \cref{def:locally_constant}.
%\end{defin}

%\begin{defin}
%Let $(X,P)$ be a combinatorial stratified space.
%A \textit{Graduation morphism over $(X,P)$} is a morphism $p \colon \cI \to \cJ$ in $\StStrat\op_{/\mathbb X}$ such that $\cJ^{\ens}\to \PiinftySigma(X,P)$ is locally constant in the sense of \cref{def:locally_constant}.
%\end{defin}

%\begin{notation}
%Let $p\colon\mathbb I \to \mathbb J$ be a graduation morphism.
%Let $\cI_p \to  \PiinftySigma(X,P)$ be the object of $\PosFib$ as defined in (\ref{eq:graduation_diagram}).
%We put $\mathbb I_p \coloneqq(X,P,\cI_p)$. 
%\end{notation}

\subsection{Hyperconstructible hypersheaves and tensor product}\label{cocar_functor_tensor_product}

	Let $(X,P)\in \Stratc$ be an exodromic stratified space.
%	Let $\cE$ be a presentable $\infty$-category.
\personal{Since $\PrL$ is not presentable itself, it's better to avoid mentioning the sheafification functor 
\[
(-) \otimes^{\hyp} \cE  \colon \Shhyp(X; \PrL) \to \Shhyp(X; \PrL)
\]
for which we have no reference.}
	Composition with the colimit-preserving functor
\[ (-) \otimes \cE \colon \PrL \to  \PrL \]
	induces a colimit preserving functor
\[ \Fun(\Pi_\infty(X,P), \PrL) \to  \Fun(\Pi_\infty(X,P), \PrL) \ . \]
	The exodromy equivalence with coefficients in $\PrL$ from \cref{exodromy_PrL} allows therefore to define a functor
\[ (-) \otimes^{\hyp} \cE  \colon \ConsPhyp(X; \PrL) \to \ConsPhyp(X; \PrL) \]
making the diagram
\begin{equation}\label{tensor_exodromy}
\begin{tikzcd}
		\ConsPhyp(X;\PrL) \arrow{r}{\sim} \arrow{d}{(-)\otimes^{\hyp} \cE} & \Fun(\Pi_{\infty}(X,P),\PrL) \arrow{d}{(-)\otimes \cE} \\
		\ConsPhyp(X;\PrL) \arrow{r}{\sim} &  \Fun(\Pi_\infty(X,P),\PrL)
	\end{tikzcd} 
\end{equation}	
commutative.

\begin{notation}
	There is a natural forgetful functor 
$$
	\ConsPhyp(X;\PrL) \to \PSh(X;\PrL)
$$
and $(-) \otimes \cE$ induces a well defined functor
	\[ (-) \otimes \cE \colon \PSh(X; \PrL) \to  \PSh(X;\PrL) \ . \]
	In other words, given $\cF \in \ConsPhyp(X;\PrL)$, $\cF \otimes \cE$ is the presheaf sending an open $U$ of $X$ to $\cF(U) \otimes \cE$.
\end{notation}

\begin{construction}
	Let $\cF \in \ConsPhyp(X;\PrL)$.
%	Let $\cE$ be a presentable $\infty$-category.
	Unraveling the definitions, we see that for every point $x \in X$, there is a natural equivalence
	\[ (\cF \otimes^{\hyp} \cE)_x \simeq \cF_x \otimes \cE \in \PrL \ . \]
	Fix an open $U$ in $X$.
	Then we have a canonical identification
	\[ (\cF \otimes^{\hyp} \cE)(U) \simeq \lim_{x \in \Pi_\infty(U,P)} \cF_x \otimes \cE \ , \]
	and in particular we find a natural comparison map
	\[ \cF(U) \otimes \cE \to  (\cF \otimes^{\hyp} \cE)(U) \ , \]
	which is a particular case of the Beck-Chevalley transformation considered in 
	\cite[\cref*{Abstract_Stokes-cocart_tensor}]{Abstract_Derived_Stokes}.
	In other words, we obtain a natural transformation
	\begin{equation}\label{eq:tensor_product_categorical_sheaves}
		\cF \otimes \cE \to  \cF \otimes^{\hyp} \cE \ .
	\end{equation}
\end{construction}

\begin{notation}\label{defin_const_PrLR}
	We denote by 
$$
\ConsPhyp(X;\PrLR) \subset \ConsPhyp(X;\PrL)
$$ 
the full-subcategory  corresponding to objects in $\Fun(\Pi_{\infty}(X,P),\PrLR)$ through the exodromy equivalence (\ref{tensor_exodromy}).
\end{notation}

Let us recall the following lemma from \cite[2.7.9]{Beyond_conicality}.

\begin{lem}\label{Peter_lemma}
	Let $ A $ be a small $\infty$-category and let $ \cC_{\bullet} \colon A\to \PrLR$ be a diagram of $\infty$-categories.
%	\begin{enumerate}\itemsep=0.2cm
%		\item The limits of $ \cC_{\bullet} $ when computed in $ \PrR $, $ \PrL $, or $ \CAT_{\infty} $ all agree.	
		Then, for any presentable $\infty$-category $ \cE $, the natural morphism
		\begin{equation*}
			\lim_{\alpha \in A} \cE \otimes \cC_{\alpha}\to  \cE \otimes\lim_{\alpha \in A} \cC_{\alpha}
		\end{equation*}
		in $ \PrL $ is an equivalence.
		(Here, both limits are computed in $ \PrL $).
%	\end{enumerate}
\end{lem}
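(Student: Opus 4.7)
The plan is to reduce the statement to a limit-commutes-with-limits computation after transferring both sides to $\CAT_\infty$ via \cref{Peter_lemma_1}.

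First, I observe that the diagram $\alpha \mapsto \cE \otimes \cC_\alpha$ again factors through $\PrLR \subset \PrL$. Indeed, if $f \colon \cC \to \cD$ is a morphism in $\PrLR$ with right adjoint $g \colon \cD \to \cC$ itself lying in $\PrL$, then by functoriality of the symmetric monoidal structure on $\PrL$ the morphism $\id_\cE \otimes f$ admits $\id_\cE \otimes g$ as a right adjoint in $\PrL$, and hence belongs to $\PrLR$. Applying \cref{Peter_lemma_1} to both $\cC_\bullet$ and $\cE \otimes \cC_\bullet$, the limits in $\PrL$ coincide with the limits computed in $\CAT_\infty$, so it suffices to establish the equivalence
\[ \lim_{\alpha \in A} (\cE \otimes \cC_\alpha) \simeq \cE \otimes \lim_{\alpha \in A} \cC_\alpha \]
with both limits now taken in $\CAT_\infty$.

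For this, I would invoke the standard presentation $\cE \otimes \cD \simeq \FunR(\cE\op, \cD)$ valid for any $\cD \in \PrL$ (cf.\ Lurie, Higher Algebra, Prop.\ 4.8.1.17), where $\FunR(\cE\op, \cD)$ denotes the $\infty$-category of accessible limit-preserving functors. Under this identification, the task reduces to producing an equivalence
\[ \FunR(\cE\op, \lim_\alpha \cC_\alpha) \simeq \lim_\alpha \FunR(\cE\op, \cC_\alpha) \ , \]
both sides computed in $\CAT_\infty$. Since $\Fun(\cE\op, -)$ trivially commutes with $\CAT_\infty$-limits, the only content is the compatibility with the full subcategories cut out by accessibility and limit-preservation. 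This is precisely where the $\PrLR$ hypothesis enters: the transition functors in the diagram are right adjoints (hence limit preserving), so limits in $L \coloneqq \lim_\alpha \cC_\alpha$ computed in $\CAT_\infty$ exist and are detected componentwise via the projections $\pi_\alpha \colon L \to \cC_\alpha$. A functor $F \colon \cE\op \to L$ is therefore accessible and limit-preserving if and only if each composite $\pi_\alpha \circ F$ has these properties, which yields the sought equivalence.

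The main obstacle, and the point requiring the most care, is the componentwise creation of limits in $L$ for $\PrLR$-valued diagrams; once this is established (which is really the heart of \cref{Peter_lemma_1}), the rest of the argument is formal. A more conceptual alternative would be to observe that $\cE \otimes (-)$, viewed as a functor $\PrLR \to \PrLR$, sits as a left adjoint in both possible senses simultaneously (reflecting the self-duality $\PrL \simeq (\PrR)\op$), and thereby preserves the limits at hand; but the $\FunR$-model gives the most direct verification.
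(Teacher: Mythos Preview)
The paper does not supply its own proof of this lemma; it simply records it as a citation to \cite[2.7.9]{Beyond_conicality}. Your argument is essentially correct, and either of the two routes you sketch works. The ``conceptual alternative'' you mention at the end --- rewriting the $\PrL$-limit as a $\PrL$-colimit via the equivalence $\PrL \simeq (\PrR)\op$ and then using that $\cE \otimes (-)$ preserves colimits in $\PrL$ --- is the cleanest formulation and is presumably how the cited reference proceeds.

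One slip in your first paragraph: you assert that for $f \in \PrLR$ the \emph{right} adjoint $g$ of $f$ again lies in $\PrL$. This is not automatic; for instance the diagonal $\Delta \colon \Spc \to \Spc^{\mathbb N}$ lies in $\PrLR$, but its right adjoint $\prod_{\mathbb N}$ does not preserve filtered colimits. What you actually need is that $\id_\cE \otimes f$ is a \emph{right} adjoint, and for this you should tensor with the \emph{left} adjoint $h$ of $f$ (which does lie in $\PrL$, being a left adjoint between presentables): then $2$-functoriality of $\otimes$ gives $\id_\cE \otimes h \dashv \id_\cE \otimes f$. In any case this first step is not strictly needed for the reduction to $\CAT_\infty$, since limits in $\PrL$ are always computed in $\CAT_\infty$ by \cite[Proposition 5.5.3.13]{HTT}, with no $\PrLR$ hypothesis required on the tensored diagram.
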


\begin{lem}\label{naive_tensor_is_hypersheaf}
	Let $\cF \in \ConsPhyp(X;\PrLR)$.
%	Let $\cE$ be a presentable $\infty$-category.
	Then the comparison map \eqref{eq:tensor_product_categorical_sheaves} is an equivalence, and in particular the presheaf $\cF \otimes \cE$ is a hypersheaf.
\end{lem}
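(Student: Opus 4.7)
The plan is to evaluate both sides pointwise on opens and identify them via \cref{Peter_lemma}.

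Fix an open subset $U \subseteq X$. By definition, $(\cF \otimes \cE)(U) \simeq \cF(U) \otimes \cE$. On the other hand, since $\cF \otimes^{\hyp} \cE$ corresponds under the exodromy equivalence \eqref{tensor_exodromy} to the composition of the straightening $\Upsilon_\cF \colon \Pi_\infty(X,P) \to \PrL$ of $\cF$ with $(-) \otimes \cE \colon \PrL \to \PrL$, the functoriality of exodromy (see \cref{exodromy_functorialities}, \cref{exodromy_PrL}) yields
\[
(\cF \otimes^{\hyp} \cE)(U) \simeq \lim_{x \in \Pi_\infty(U,P)} \bigl(\cF_x \otimes \cE\bigr)
\]
in $\PrL$, and similarly
\[
\cF(U) \simeq \lim_{x \in \Pi_\infty(U,P)} \cF_x \ .
\]
Thus the comparison map \eqref{eq:tensor_product_categorical_sheaves} evaluated at $U$ is identified with the canonical map
\[
\Bigl(\lim_{x \in \Pi_\infty(U,P)} \cF_x\Bigr) \otimes \cE \to  \lim_{x \in \Pi_\infty(U,P)} \bigl(\cF_x \otimes \cE\bigr)
\]
in $\PrL$.

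By hypothesis $\cF \in \ConsPhyp(X;\PrLR)$, so by \cref{defin_const_PrLR} the straightening $\Upsilon_\cF$ factors through $\PrLR$; in particular the diagram $x \mapsto \cF_x$ restricted to $\Pi_\infty(U,P)$ takes values in $\PrLR$. \cref{Peter_lemma} then asserts precisely that the above comparison map is an equivalence. This proves that \eqref{eq:tensor_product_categorical_sheaves} is an equivalence. Since $\cF \otimes^{\hyp} \cE$ is a hypersheaf by construction, we conclude that $\cF \otimes \cE$ is a hypersheaf as well.

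The only subtle point to keep in mind is that all limits above are computed in $\PrL$; by \cref{Peter_lemma_1} this is harmless since $\Upsilon_\cF$ factors through $\PrLR$, so these limits agree with the ones computed in $\PrR$ or in $\CAT_\infty$. No separate argument is therefore required to match the two formulations of the colimit preserving extension and the pointwise tensor product.
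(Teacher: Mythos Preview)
Your proof is correct and follows essentially the same approach as the paper's own proof: reduce to checking the comparison map on each open $U$, identify both sides as limits over $\Pi_\infty(U,P)$ via exodromy, and apply \cref{Peter_lemma} using the hypothesis that the straightening factors through $\PrLR$. The paper's proof is just a more compressed version of yours.
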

	
\begin{proof}
	It is enough to show that for every open subset $U$ of $X$, the canonical map
	\[ \big( \lim_{x \in \Pi_{\infty}(U,P)} \cF_x \big)\otimes \cE  \to  \lim_{x \in \Pi_{\infty}(U,P)} \cF_x \otimes \cE \]
	is an equivalence, which follows  from \cref{Peter_lemma}.
\end{proof}

%\begin{cor}\label{IFil_in_Cons_PrLR}
%	Let $(X,P,\cI)$ be a Stokes stratified space.
%	Let $\cE$ and $\cE'$ be presentable $\infty$-categories.
%	\begin{enumerate}\itemsep=0.2cm
%		\item The canonical comparison map
%		\[ \frakFil_{\cI, \cE} \otimes^{\hyp} \cE' \to  \frakFil_{\cI, \cE \otimes \cE'} \]
%		is an equivalence.
%		
%		\item Assume that $(X,P,\cI)$ is a Stokes stratified space in finite posets and that $\cE$ and $\cE'$ are presentable stable.
%		Then $\frakFil_{\cI, \cE}$ belongs to $\ConsPhyp(X;\PrLR)$ and the comparison map
%		\[ \frakFil_{\cI,\cE} \otimes \cE' \to  \frakFil_{\cI,\cE \otimes \cE'} \]
%		is an equivalence.
%	\end{enumerate}
%\end{cor}
%
%\begin{proof}
%	The first point follows from \cite[\cref{construction_tensor_PrL_ex}]{Abstract_Derived_Stokes}.
%	The second point follows combining \cite[\cref{pullback_induction_cocart_commutes_limits_colimits}]{Abstract_Derived_Stokes} with \cref{naive_tensor_is_hypersheaf}.
%\end{proof}

	We conclude by recording the following handy sufficient condition ensuring that a categorical sheaf $\cF \in \ConsPhyp(X;\PrL)$ belongs to $\ConsPhyp(X;\PrLR)$.

\begin{lem}\label{PrLR_criterion}
	Let $(X,P)$ be a subanalytic stratified space.
	Let $\cF \in \ConsP(X; \PrL)$ such that for every open subsets $U\subset V$, the functor $\cF(V)\to \cF(U)$ is a left and right adjoint.
	Then, $\cF$ lies in $\ConsPhyp(X;\PrLR)$.
\end{lem}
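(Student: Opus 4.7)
The plan is to establish two things: first, that $\cF$ is a hypersheaf, so $\cF \in \ConsPhyp(X;\PrL)$; second, that its exodromy straightening factors through $\PrLR$. The strategy hinges on \cref{prop:locally_contractible_strata}, which furnishes, for every $x\in X$, a fundamental system of open neighborhoods $U$ in which $x$ is initial in $\Pi_\infty(U,P)$. I will call such neighborhoods \emph{basic around $x$}; they form a basis for the topology of $X$.

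The key observation is that for any hyperconstructible hypersheaf $\cG\in \ConsPhyp(X;\PrL)$ corresponding under exodromy to $G\colon \Pi_\infty(X,P) \to \PrL$ and any basic open $U$ around $x$, the identification in \cref{observation:exodromy_straightening} gives
\[ \cG(U) \simeq \lim_{\Pi_\infty(U,P)} G|_{\Pi_\infty(U,P)} \simeq G(x) \simeq \cG_x, \]
since $x$ is initial in $\Pi_\infty(U,P)$. I would first establish the analogous statement for the merely constructible $\cF$: for every basic open $U$ around $x$, the canonical map $\cF(U) \to \cF_x$ is an equivalence. This is the one delicate point; it should follow from the conical refineability of $(X,P)$ (\cref{eg:subanalytic_implies_combinatorial}), the local-constancy of $\cF$ along strata, and initiality of $x$ in $\Pi_\infty(U,P)$, since $U$ locally retracts onto the stratum through $x$.

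Granting this, let $\cF^{\hyp}$ denote the hypersheafification of $\cF$. Because $\cF$ is $P$-constructible, so is $\cF^{\hyp}$, and the latter is moreover $P$-hyperconstructible with the same stalks as $\cF$. Applying the computation above to $\cF^{\hyp}$ yields $\cF^{\hyp}(U) \simeq \cF^{\hyp}_x \simeq \cF_x \simeq \cF(U)$ for every basic open $U$. Since basic opens form a basis of the topology and both $\cF$ and $\cF^{\hyp}$ are sheaves, the canonical map $\cF \to \cF^{\hyp}$ is an equivalence. This places $\cF$ in $\ConsPhyp(X;\PrL)$ and lets me speak unambiguously of its exodromy straightening $F\colon \Pi_\infty(X,P) \to \PrL$.

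It remains to show that $F$ factors through $\PrLR$. Given an exit path $\gamma\colon x\to y$ in $\Pi_\infty(X,P)$, choose a basic open $U$ around $x$ containing $y$, together with a basic open $V\subset U$ around $y$. Under the identifications $\cF(U)\simeq \cF_x$ and $\cF(V)\simeq \cF_y$ from the first step, the morphism $F(\gamma)\colon \cF_x\to \cF_y$ is identified with the restriction $\cF(U)\to \cF(V)$, which is in $\PrLR$ by hypothesis. Hence $F$ factors through $\PrLR$, proving $\cF\in \ConsPhyp(X;\PrLR)$. The main obstacle is the key computation $\cF(U)\simeq\cF_x$ on basic opens: starting only from constructibility rather than hyperdescent, it requires exploiting the conical local structure to justify identifying sections with stalks at the cone point.
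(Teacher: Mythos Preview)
Your core argument for factorization through $\PrLR$ is essentially the paper's proof: pick neighborhoods in which $x$ and $y$ are initial and identify $F(\gamma)$ with a restriction map. Two remarks on how the paper differs.

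First, the paper reads the hypothesis as $\cF \in \ConsPhyp(X;\PrL)$ from the outset (the notation $\ConsP$ in the statement is a slip; the paper never defines it separately, and both the proof and the only use of the lemma in \cref{direct_image_factors_trough_PrL} feed in a hypersheaf). So the exodromy straightening $F$ is available immediately, and your entire first step---showing $\cF(U)\simeq\cF_x$ on basic opens for a merely constructible $\cF$, then bootstrapping to hypercompleteness---is unnecessary. You correctly flag that step as the main obstacle and leave it unproved; under the intended reading it simply disappears.

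Second, in your step~2 you write ``choose a basic open $U$ around $x$ containing $y$''. For an arbitrary exit path $\gamma\colon x\to y$ there is no reason such a neighborhood exists: $y$ may lie far from $x$. The paper handles this by first factoring $\gamma$ as a short exit path (landing inside a chosen basic neighborhood $V$ of $x$) followed by an equivalence in the stratum of $y$; since equivalences go to equivalences and $\PrLR$ is closed under composition, it suffices to treat the short piece. With that reduction in place, your identification of $F(\gamma)$ with the restriction map via the commuting square is exactly the paper's argument.
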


\begin{proof}
	Let $F \colon \Pi_\infty(X,P) \to \PrL$ be the functor corresponding to $\cF$ via the exodromy equivalence \eqref{exodromy_equivalence}.
	Let $\gamma \colon x \to y$ be a morphism in $\Pi_{\infty}(X,P)$.
	By \cref{prop:locally_contractible_strata}, choose an open neighbourhood  $V$ of $x$ such that $x$ is initial in $\Pi_{\infty}(V,P)$.
	At the cost of writing $\gamma$ as the composition of a small exit-path followed by an equivalence, we can suppose that $\gamma$ lies in $V$.
	Let $U\subset V$ such that $y$ is initial in $\Pi_{\infty}(U,P)$.
	Then, the vertical arrows of the commutative diagram  
$$
\begin{tikzcd}
      \cF(V)  \arrow{d} \arrow{r} &  \cF(U) \arrow{d}   \\
	F(x) \arrow{r}{F(\gamma)} & F(y)
\end{tikzcd}
$$
are equivalences.
	\cref{PrLR_criterion} thus follows.
\end{proof}

\subsection{Elementarity}\label{subsec:elementarity}

	We now introduce a fundamental concept in the study of Stokes stratified spaces: the notion of elementarity and its variants.
%	We start discussing the absolute notion:
\personal{reference to the coefficients added in the definition below}
\begin{defin}[Absolute elementarity]\label{def:absolute_elementarity}
	Let $(X,P,\cI)$ be a Stokes stratified space.
	We say that $(X,P,\cI)$ is:
	\begin{enumerate}\itemsep=0.2cm
		\item \emph{elementary} if  for every presentable stable $\infty$-category $\cE$, the functor
		\[ i_{\cI,!} \colon \St_{\cI^{\ens}, \cE} \to  \St_{\cI,\cE} \]
		is an equivalence;
		
		\item \emph{locally elementary} if $X$ admits a cover by open subsets $U$ such that $(U, P, \cI_U)$ is elementary.
	\end{enumerate}
\end{defin}

The following example shows that elementarity is a really strong condition.

\begin{eg}\label{eg:elementary_points_are_discrete}
	A poset $\cI$ seen as a Stokes stratified space $(\ast, \ast, \cI)$ is elementary if and only if $\cI$ is discrete.
	Indeed, if $\cI$ is discrete then $i_{\cI} \colon \cI^{\ens} \to \cI$ is an isomorphism and therefore the three arrows in the commutative triangle
	\[ \begin{tikzcd}
		\Fun(\cI^{\ens}, \cE) \arrow{r} \arrow{dr}[swap]{i_{\cI,!}} & \St_{\cI,\cE} \arrow[hook]{d} \\
		{} & \Fun(\cI, \cE)
	\end{tikzcd} \]
	are equivalences.
	Conversely, assume that $\cI$ is elementary.
	Then the top horizontal arrow is an equivalence, and therefore $i_{\cI,!}$ is forced to be fully faithful.
	Fix a non-zero object $E \ne 0$ in $\cE$ and assume by contradiction that there exists two elements $a, b \in \cI$ satisfying $a < b$.
	Then
	\[ \Map_{\Fun(\cI^{\ens}, \cE)}( \ev_{b,!}^{\cI^{\ens}}(E), \ev_{a,!}^{\cI^{\ens}}(E) ) \simeq 0 \ , \]
	while
	\begin{align*}
		\Map_{\Fun(\cI,\cE)}\big( i_{\cI,!} \ev_{b,!}^{\cI^{\ens}}(E), i_{\cI,!} \ev_{a,!}^{\cI^{\ens}}(E) \big) & \simeq \Map_{\Fun(\cI,\cE)}\big( \ev_{b,!}^{\cI}(E), \ev_{a,!}^{\cI}(E) \big) \\
		& \simeq \Map_{\Fun(\cI, \cE)}\big( E, \ev_b^{\cI,\ast} \ev_{a,!}^{\cI}(E) \big) \\
		& \simeq \Map_{\cE}( E, E ) \ne 0 \ ,
	\end{align*}
	which contradicts the full faithfulness of $i_{\cI,!}$.
\end{eg}

\begin{eg}
	We consider again the situation of \cref{eg:1_dimensional}.
	Then the analysis carried out there shows that $(S^1, P, \cI)$ is not elementary while $(W_1,P,\cI_{W_1})$ and $(W_{-1},P,\cI_{W_{-1}})$ are elementary.
	In other words, $(S^1, P, \cI)$ is locally elementary.
\end{eg}

\begin{eg}
	Take $X = (0,1)$ stratified in four points and take $\cI$ the constructible sheaf in posets depicted below:
	
	\medskip
	
	\begin{center}
		\begin{tikzpicture}
			\draw[line width = 2.5pt,gray,opacity=0.3] (1.5,0) -- (8.5,0) ;
			\draw[thin] (-1,0) -- (9,0) ;
			\fill (1,0) circle (1.5pt) ;
			\fill (3,0) circle (1.5pt) ;
			\draw[thin,fill=white] (5,0) circle (1.5pt) ;
			\node at (7,0) {\tiny $\times$} ;
			\node[above=2pt,anchor=north] at (0,1.5) {\tiny $\begin{tikzcd}[row sep=small] c \arrow[-]{d} \\ b \arrow[-]{d} \\ a \end{tikzcd}$} ;
			\node[below=2pt] at (1,0) {\tiny $C_{a,b}$} ;
			\node[above=2pt,anchor=north] at (1,1.5) {\tiny $\begin{tikzcd}[column sep=tiny,row sep=small] & c \\ a \arrow[-]{ur} & & b \arrow[-]{ul} \\ & \phantom{a} \end{tikzcd}$} ;
			\node[above=2pt,anchor=north] at (2,1.5) {\tiny $\begin{tikzcd}[row sep=small] c \arrow[-]{d} \\ a \arrow[-]{d} \\ b \end{tikzcd}$} ;
			\node[below=2pt] at (3,0) {\tiny $C_{a,b}$} ;
			\node[above=2pt,anchor=north] at (3,1.5) {\tiny $\begin{tikzcd}[column sep=tiny,row sep=small] & c \\ b \arrow[-]{ur} & & a \arrow[-]{ul} \\ & \phantom{a} \end{tikzcd}$} ;
			\node[above=2pt,anchor=north] at (4,1.5) {\tiny $\begin{tikzcd}[row sep=small] c \arrow[-]{d} \\ b \arrow[-]{d} \\ a \end{tikzcd}$} ;
			\node[below=2pt] at (5,0) {\tiny $C_{b,c}$} ;
			\node[above=2pt,anchor=north] at (5,1.5) {\tiny $\begin{tikzcd}[column sep=tiny,row sep=small] & \phantom{c} \\ b \arrow[-]{dr} & & c \arrow[-]{dl} \\ & a \end{tikzcd}$} ;
			\node[above=2pt,anchor=north] at (6,1.5) {\tiny $\begin{tikzcd}[row sep=small] b \arrow[-]{d} \\ c \arrow[-]{d} \\ a \end{tikzcd}$} ;
			\node[below=2pt] at (7,0) {\tiny $C_{a,c}$} ;
			\node[above=2pt,anchor=north] at (7,1.5) {\tiny $\begin{tikzcd}[column sep=tiny,row sep=small] & b \\ a \arrow[-]{ur} & & c \arrow[-]{ul} \\ & \phantom{a} \end{tikzcd}$} ;
			\node[above=2pt,anchor=north] at (8,1.5) {\tiny $\begin{tikzcd}[row sep=small] b \arrow[-]{d} \\ a \arrow[-]{d} \\ c \end{tikzcd}$} ;
		\end{tikzpicture}
	\end{center}
	
	\medskip

	\noindent Here we marked with $C_{\alpha,\beta}$ the Stokes locus for the pair $\{\alpha,\beta\}$.
	It follows from \cref{cor_induction_for_adm_Stokes_triple} that the shadowed interval is elementary, because it contains exactly one Stokes direction for every possible pair of elements of $\cI^{\ens} = \{a,b,c\}$.
	On the other hand, \cref{elementarity_constraint} shows that the leftmost $C_{a,b}$ cannot have an elementary open neighborhood.
	Hence, this Stokes stratified space  is not locally elementary.
\end{eg}

\begin{warning}\label{warning:local_elementarity_too_strong}
	Let $(X,P,\cI)$ be a Stokes stratified space.
	In general, the intersection of two elementary open subsets is no longer elementary: for instance, with the notations of \cref{eg:1_dimensional}, the intersection $W_1 \cap W_{-1}$ is no longer elementary.
	Also, \cref{eg:elementary_points_are_discrete} implies that any point $x \in X$ such that $\cI_x$ is not discrete does not have a fundamental system of elementary open neighborhoods.
	So even when $(X,P,\cI)$ is locally elementary, the elementary open subsets of $X$ do \emph{not} form a basis for the topology of $X$.
\end{warning}

Let us discuss two variations on \cref{def:absolute_elementarity}.
The first one concerns adapting the notion of elementarity to a family of Stokes stratified spaces:

\begin{defin}
	A morphism $(X,P) \to (Y,Q)$ in $\Stratc$ is said to be a \emph{family of exodromic stratified spaces} if for every $y \in Y$ the stratified space $(X_y,P)$ is exodromic.
\end{defin}

\begin{notation}
	Note that every exodromic stratified space $(Y,Q)$ gives rise to a Stokes stratified space $(Y,Q,\emptyset)$.
	We will commit a slight abuse of notation and write $(Y,Q)$ in place of $(Y,Q,\emptyset)$.
\end{notation}

\begin{defin}
	A \emph{family of Stokes stratified spaces} is a morphism
	\[ f \colon (X,P,\cI) \to  (Y,Q) \]
	in $\StStrat$ whose underlying morphism $f \colon (X,P) \to (Y,Q)$ is a family of exodromic stratified spaces.
	We denote the ($1$-)category of families of Stokes stratified spaces by $ \FStStrat \subset \StStrat^{[1]}$.
\end{defin}

\begin{eg}
	Let $f \colon (X,P) \to (Y,Q)$ be a morphism of subanalytic stratified spaces.
	Then for each $y \in Y$, the fiber $(X_y,P)$ is again a subanalytic stratified space, so \cref{eg:subanalytic_implies_combinatorial} guarantees that $(X_y,P)$ is again exodromic.
	Therefore $f$ is a family of exodromic stratified spaces.
	In particular, for any Stokes fibration $\cI$ on $(X,P)$, the resulting morphism $f \colon (X,P,\cI) \to (Y,Q)$ is a family of Stokes analytic stratified spaces.
\end{eg}

\begin{defin}[Relative elementarity]\label{defin_elementary}
	Let $f \colon (X,P,\cI) \to (Y,Q)$ be a family of Stokes stratified spaces.
	We say that \emph{$f$ is (locally) elementary at $y \in Y$} if $(X_y, P, \cI_y)$ is (locally) elementary.
	We say that \emph{$f$ is (locally) elementary} if it is (locally) elementary at $y$ for every $y \in Y$.
\end{defin}

%\begin{rem}
%	It follows from \cref{eg:elementary_points_are_discrete} that the empty poset $\cI = \emptyset$ is elementary.
%	Thus, a family of Stokes stratified spaces $f \colon (X,P,\cI) \to (Y,Q)$ is automatically elementary at every point $y \in Y$ not in the image of $f$ (i.e.\ for which $X_y = \emptyset$).
%\end{rem}

	Before moving on to the second variation on elementarity, let us record a couple of important facts.
	The first is the following easy stability property:

\begin{lem}\label{elementarity_pullback}
	Consider a morphism of families of Stokes stratified spaces 
	\begin{equation}\label{eq:elementarity_pullback_0}
		\begin{tikzcd}
			(Y,Q,\cJ) \arrow{r}{g} \arrow{d}{f'} &(X,P,\cI)\arrow{d}{f}      \\
			(Y',Q')\arrow{r}{} & (X',P')
		\end{tikzcd} 
	\end{equation}
	with cartesian horizontal arrows.
	Consider the following conditions:
	\begin{enumerate}\itemsep=0.2cm
		\item The square of stratified spaces underlying \eqref{eq:elementarity_pullback_0} is a pullback.
		\item The horizontal arrows are refinements.
	\end{enumerate}
	Then, in both cases if $f$ is elementary the same goes for $f'$.
	In case (2), the converse holds.
\end{lem}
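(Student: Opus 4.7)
The plan is to localize the problem fiberwise and exploit that cartesianness of the horizontal arrow passes to fibers. Fix $y' \in Y'$ with image $x' \in X'$. Cartesianness of $g$ in $\StStrat$ (\cref{rem:cartesian_morphism_in_StStrat}) gives an equivalence $\cJ \simeq \Pi_\infty(Y,Q) \times_{\Pi_\infty(X,P)} \cI$, and restricting this identification to the fiber over $y'$ produces an induced morphism
\[ g_{y'} \colon (Y_{y'}, Q, \cJ_{y'}) \to  (X_{x'}, P, \cI_{x'}) \]
in $\StStrat$ which is again cartesian. Since $(-)^{\ens}$ acts fiberwise on cocartesian fibrations in posets, one obtains in the same way $\cJ^{\ens}_{y'} \simeq \cI^{\ens}_{x'} \times_{\Pi_\infty(X_{x'},P)} \Pi_\infty(Y_{y'},Q)$.

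In case (1), the pullback assumption on the underlying square of stratified spaces forces $(Y_{y'}, Q) \to (X_{x'}, P)$ to be an isomorphism of stratified spaces, so that $g_{y'}$ is an isomorphism in $\StStrat$. Elementarity of $f$ at $x'$ transfers verbatim to elementarity of $f'$ at $y'$, yielding the forward implication.

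In case (2), $g_{y'}$ is a cartesian refinement of Stokes stratified spaces. Applying \cref{refinement_and_cocart} to $g_{y'}$ and, using the pullback description of $\cJ_{y'}^{\ens}$, also to its underlying-set counterpart, I would assemble the square
\[ \begin{tikzcd}
\St_{\cI_{x'}^{\ens}, \cE} \arrow{r}{\sim} \arrow{d}{i_{\cI_{x'},!}} & \St_{\cJ_{y'}^{\ens}, \cE} \arrow{d}{i_{\cJ_{y'},!}} \\
\St_{\cI_{x'}, \cE} \arrow{r}{\sim} & \St_{\cJ_{y'}, \cE}
\end{tikzcd} \]
whose horizontal arrows are equivalences. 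Two-out-of-three for equivalences then gives both directions: $i_{\cI_{x'},!}$ is an equivalence for every presentable stable $\cE$ if and only if $i_{\cJ_{y'},!}$ is, which is precisely the desired equivalence of elementarity at $x'$ and $y'$.

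The only non-formal input is the commutativity of this square, namely that pullback along the cartesian refinement $g_{y'}$ commutes with the induction functors $i_{(-),!}$. This is the expected Beck-Chevalley identity for the cartesian square relating $\cI_{x'}^{\ens}$, $\cI_{x'}$, $\cJ_{y'}^{\ens}$, $\cJ_{y'}$ over $\Pi_\infty(Y_{y'},Q) \to \Pi_\infty(X_{x'},P)$; it is already built into the formalism developed in \cite{Abstract_Derived_Stokes} (as an instance of \cite[\cref*{Abstract_Stokes-cor:induction_specialization_Beck_Chevalley}]{Abstract_Derived_Stokes}), and is the single step requiring anything beyond bookkeeping once the cartesianness of $g_{y'}$ and the pullback description of $\cJ_{y'}^{\ens}$ have been recorded.
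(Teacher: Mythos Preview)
Your proof is correct and follows essentially the same fiberwise strategy as the paper: reduce to a single fiber, then in case (1) observe the fiber map is an isomorphism, and in case (2) use \cref{refinement_and_cocart} to get horizontal equivalences in a commutative square relating $i_{\cI,!}$ and $i_{\cJ,!}$.

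The one difference worth noting is the orientation of the horizontal arrows. You run them as pullbacks $g_{y'}^\ast \colon \St_{\cI_{x'},\cE} \to \St_{\cJ_{y'},\cE}$, which forces you to invoke a Beck--Chevalley identity for the commutativity of the square. The paper instead uses the inductions $g_{x,!} \colon \St_{\cJ_x,\cE} \to \St_{\cI_x,\cE}$ in the other direction, so that commutativity is an immediate consequence of functoriality of left Kan extensions (both paths compute the induction along the composite $\cJ_x^{\ens} \to \cI_x$). This sidesteps the Beck--Chevalley step entirely and makes the square commute on the nose, which is slightly cleaner.
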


\begin{proof}
	In case (1), the fibers of $f'$ are fibers of $f$ so there is nothing to prove. 
	For (2), let $x\in X'$ and let $\cE$ be a presentable stable $\infty$-category.
	Then, restricting above $x$ yields a refinement of exodromic stratified spaces $g_x \colon  (Y_x,Q) \to (X_x,P)$.
	Thanks to \Cref{refinement_and_cocart} the horizontal arrows in the commutative square
	\[ \begin{tikzcd}
		\St_{\cJ_x^{\ens},\cE}  \arrow{r}{g^{\ens}_{x,!}} \arrow{d}{i_{\cJ_x,!}} &\St_{\cI_x^{\ens},\cE} \arrow{d}{i_{\cI_x,!}} \\
		\St_{\cJ_x,\cE} \arrow{r}{g_{x,!}}  & \St_{\cI_x,\cE} 
	\end{tikzcd} \]
	are equivalences, so the conclusion follows.
\end{proof}

The second property of relative elementarity is the following important local-to-global principle.
An important idea is that to establish absolute elementarity of some $(X,P,\cI)$, it is useful to fiber $(X,P,\cI)$ over a stratified space $(Y,Q)$, and then establish relative elementarity to apply the following:

\begin{prop}\label{elementary_criterion}
	Let $f \colon (X,P,\cI)\to (Y,Q)$ be an elementary family of Stokes stratified spaces.
	Assume that the underlying morphism $f \colon X \to Y$ is proper and that at least one of the following conditions hold:
	\begin{enumerate}\itemsep=0.2cm
		\item The induced morphism of $\infty$-topoi
		\[ f_\ast \colon \Shhyp(X) \to  \Shhyp(Y) \]
		is proper in the sense of \cite[Definition 7.3.1.4]{HTT}.
		
		\item $f \colon (X,P)\to (Y,Q)$ is a morphism of subanalytic stratified spaces.
	\end{enumerate}
	Then, $(X,P,\cI)$ is elementary.
\end{prop}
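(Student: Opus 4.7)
My strategy is to upgrade the absolute statement about global sections into a sheaf-theoretic statement on $Y$, and then reduce to the fiberwise elementarity hypothesis via proper base change. The natural candidate morphism is the induction map of categorical sheaves
\[ i_{\cI,!} \colon \frakStokes_{\cI^{\ens},\cE} \to  \frakStokes_{\cI,\cE} \]
on $X$, whose formation of global sections recovers the functor $i_{\cI,!} \colon \St_{\cI^{\ens},\cE}\to\St_{\cI,\cE}$ whose invertibility we want to prove. Since $\Gamma(X,-)=\Gamma(Y,f_\ast^{\hyp}(-))$, it is enough to show that the pushed-forward morphism
\[ f_\ast^{\hyp} i_{\cI,!} \colon f_\ast^{\hyp} \frakStokes_{\cI^{\ens},\cE} \to  f_\ast^{\hyp} \frakStokes_{\cI,\cE} \]
is already an equivalence of hypersheaves on $Y$, which can be checked on stalks.

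For a point $y\in Y$ with inclusion $i_y\colon\{y\}\hookrightarrow Y$ and fiber inclusion $j_y\colon X_y\hookrightarrow X$, I would invoke a proper base change equivalence $i_y^\ast f_\ast^{\hyp}\simeq f_{y,\ast}^{\hyp} j_y^\ast$, combine it with \cref{pullback_Stokes_sheaf} to identify $j_y^\ast \frakStokes_{\cI,\cE}\simeq \frakStokes_{\cI_y,\cE}$ (and similarly for $\cI^{\ens}$), and conclude that
\[ (f_\ast^{\hyp}\frakStokes_{\cI,\cE})_y \simeq \Gamma(X_y,\frakStokes_{\cI_y,\cE})=\St_{\cI_y,\cE} \ , \]
with the corresponding formula for $\cI^{\ens}$. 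Under these identifications, the stalk of $f_\ast^{\hyp} i_{\cI,!}$ at $y$ becomes the induction functor $i_{\cI_y,!}\colon \St_{\cI_y^{\ens},\cE}\to\St_{\cI_y,\cE}$, which is an equivalence by the relative elementarity of $f$ at $y$. Stalkwise equivalence between hypersheaves on the exodromic space $(Y,Q)$ concludes the argument.

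The main obstacle is the proper base change step, since the Stokes categorical sheaves take values in $\CAT_\infty$ rather than in $\PrL$, and their $P$-hyperconstructibility must interact correctly with $f_\ast^{\hyp}$. Under hypothesis (1) this is built in, because properness of $f_\ast\colon \Shhyp(X)\to\Shhyp(Y)$ is designed precisely to guarantee base change with arbitrary coefficients in the sense of \cite[\S 7.3.1]{HTT}. Under hypothesis (2), the relevant base change is exactly the content of \cref{proper_analytic_direct_image}(2), which applies to $\Cat_\infty$-valued hyperconstructible hypersheaves; one reduces the $\CAT_\infty$-valued case to the $\Cat_\infty$-valued one by filtering $\cE$ by accessible subcategories, which is harmless since the stalks of the induction morphism do not depend on this filtration. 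Once proper base change is in hand, the remainder of the argument is the purely formal stalkwise verification outlined above.
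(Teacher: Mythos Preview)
Your strategy is exactly the paper's: push $i_{\cI,!}$ forward to $Y$, check it is an equivalence on stalks, and invoke proper base change together with \cref{pullback_Stokes_sheaf} to identify the stalk at $y$ with $i_{\cI_y,!}\colon\St_{\cI_y^{\ens},\cE}\to\St_{\cI_y,\cE}$.

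Two small corrections to your justifications of base change. For case~(1), properness of a geometric morphism in \cite[Definition~7.3.1.4]{HTT} is a statement about $\Spc$-valued sheaves only; it does \emph{not} immediately give base change ``with arbitrary coefficients''. Extending to $\CAT_\infty$-valued hypersheaves requires a nonabelian base change theorem, and the paper invokes \cite[Theorem~0.5]{Haine_Nonabelian_basechange}, using that $\CAT_\infty$ is compactly generated. For case~(2), your proposed reduction from $\CAT_\infty$ to $\Cat_\infty$ by filtering $\cE$ through accessible subcategories is both unnecessary and dubious: it is not clear that the assignment $\cE\mapsto\frakStokes_{\cI,\cE}$ commutes with such filtered colimits in a way that would let you pass to the limit. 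The paper simply applies \cref{proper_analytic_direct_image} directly to $\CAT_\infty$-valued hyperconstructible hypersheaves; its proof (Thom--Mather isotopy plus exodromy) goes through verbatim with large categories as coefficients.
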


\begin{proof}
	Let $\cE$ be a presentable stable $\infty$-category.
	We have to show that
	\[ i_{\cI, !}  \colon  \St_{\cI^{\ens},\cE}  \to  \St_{\cI,\cE} \]
	is an equivalence.
	To do this, it is enough to show that the morphism
	\[ f_{\ast}(\mathscr{I}_{\cI, !} ) \colon  f_{\ast}(\frakStokes_{\cI^{\ens},\cE})\to   f_{\ast}(\frakStokes_{\cI,\cE}) \]
	in $\Shhyp(Y;\CAT_\infty)$ is an equivalence.
	This can be done at the level of stalks.
	Fix therefore $y \in Y$.
	For every $\cF \in \Shhyp(X;\CAT_\infty)$, we have a canonical comparison map
	\begin{equation}\label{eq:proper_base_change}
		y^{\ast,\hyp} f_\ast( \cF ) \to  \Gamma_{X_y,\ast}( j_y^{\ast,\hyp}(\cF) ) \ ,
	\end{equation}
	where $j_y \colon X_y \hookrightarrow X$ is the inclusion of the fiber.
	Notice that \cref{stalk_pullback_Stokes_sheaf} provides canonical  identifications
	\[ j_y^{\ast,\hyp}(\frakStokes_{\cI,\cE}) \simeq \frakStokes_{\cI_y, \cE} \qquad \text{and} \qquad j_y^{\ast,\hyp}(\frakStokes_{\cI^{\ens},\cE}) \simeq \frakStokes_{\cI_y^{\ens}, \cE} \ , \]
	so the result follows from the elementarity assumption as soon as we know that \eqref{eq:proper_base_change} is an equivalence for $\cF = \frakStokes_{\cI,\cE}$ and for $\cF = \frakStokes_{\cI^{\ens},\cE}$.
	In case (1), since $\CAT_\infty$ is compactly generated, \cite[Theorem 0.5]{Haine_Nonabelian_basechange} shows that \eqref{eq:proper_base_change} is an equivalence for every categorical hypersheaf $\cF \in \Shhyp(X;\CAT_\infty)$.
	In case (2), \cref{proper_analytic_direct_image} shows that \eqref{eq:proper_base_change} is an equivalence for any $\cF \in \ConsP(X;\CAT_\infty)$.
	So in both cases the conclusion follows.
\end{proof}

\begin{recollection}
	Let us recall some topological conditions that ensure that assumption (1) in \cref{elementary_criterion} are satisfied.
	Assume that:
	\begin{enumerate}\itemsep=0.2cm
		\item[(a)] $X$ is locally compact and Hausdorff and $f$ is proper;
		\item[(b)] both $X$ and $Y$ admit an open cover by subsets of finite covering dimensions (see \cite[Definition 7.2.3.1]{HTT}).
	\end{enumerate}
	Condition (a) ensures via \cite[Theorem 7.3.1.16]{HTT} that the geometric morphism
	\[ f_\ast \colon \Sh(X) \to  \Sh(Y) \]
	is proper.
	Condition (b) on the other hand guarantees that both $\Sh(X)$ and $\Sh(Y)$ are hypercomplete: combine \cite[Theorem 7.2.3.6, Corollary 7.2.1.12 and Remark 6.5.2.22]{HTT}.
	Finally, notice that any paracompact and finite dimensional space has finite covering dimension, see for instance \cite[Proposition 3.2.2]{Engelking_Dimension_Theory}.
\end{recollection}

We now introduce one final variation on the idea of elementarity in the analytic setting:

\begin{defin}[Absolute piecewise elementarity]
	Let $(X,P,\cI)$ be a Stokes analytic stratified space and let $x \in X$ be a point.
	We say that:
	\begin{enumerate}\itemsep=0.2cm
		\item $(X,P,\cI)$ is \emph{piecewise elementary at $x$} if there exists a closed subanalytic subset $Z$ containing $x$ such that $(Z,P,\cI_Z)$ is elementary;
		
		\item $(X,P,\cI)$ is \emph{strongly piecewise elementary at $x$} if there exists a closed subanalytic neighborhood $Z$ containing $x$ such that $(Z,P,\cI_Z)$ is elementary;
	\end{enumerate}
	We say that $(X,P,\cI)$ is \emph{(strongly) piecewise elementary} if it is (strongly) piecewise elementary at every point.
\end{defin}

\begin{rem}
	We will see in the next section that piecewise elementarity implies local elementarity: in other words, if one can find a closed subanalytic subset $Z$ containing $x$ such that $(Z,P,\cI_Z)$ is elementary, then $Z$ can be spread out to an elementary open neighborhood of $x$.
\end{rem}

Moving to the relative setting, we have:

\begin{defin}[Relative piecewise elementarity]\label{defin_piecewise_elementary}
	Let $f \colon (X,P, \cI) \to (Y,Q)$ be a family of Stokes analytic stratified spaces and let $x \in X$.
	We say that:
	\begin{enumerate}\itemsep=0.2cm
		\item $f$ is \emph{vertically piecewise elementary at $x$} if the fiber $(X_{f(x)}, P, \cI_{f(x)})$ is piecewise elementary at $x$;
		
		\item $f$ is \emph{piecewise elementary at $x$} if there exists a closed subanalytic subset $Z$ containing $x$ and such that $f |_Z \colon (Z,P,\cI_Z) \to (Y,Q)$ is an elementary family of Stokes analytic stratified spaces;
		
		\item $f$ is \emph{strongly piecewise elementary at $x$} if there exists a closed subanalytic neighborhood $Z$ of $x$ such that $f |_Z \colon (Z,P, \cI_Z) \to (Y,Q)$ is an elementary family of Stokes analytic stratified spaces.
	\end{enumerate}
	We say that $f$ is \emph{(vertically, strongly) piecewise elementary} if it is (vertically, strongly) piecewise elementary at every point.
\end{defin}

\begin{rem}\label{piecewise_implies_vertical_piecewise}
If $f \colon (X,P,\cI) \to (Y,Q)$ is strongly piecewise elementary at $x$, it is also piecewise elementary at $x$.
	If $f$ is piecewise elementary at $x$, then it is vertically piecewise elementary at $x$.
\end{rem}

%the following remark is not true: (2) is stronger than (1)
%\begin{rem}
%	Saying that $f \colon (X,P,\cI) \to (Y,Q)$ is vertically piecewise elementary amounts to say that for  $y \in Y$ the fiber $(X_y, P, \cI_y)$ is piecewise elementary.
%\end{rem}

We conclude with a couple of easy facts concerning piecewise elementarity:

\begin{lem}\label{piecewise_elementarity_pullback}
	Consider a morphism of families of Stokes analytic stratified spaces 
	\begin{equation}\label{eqelementarity_pullback}
		\begin{tikzcd}
			(Y,Q,\cJ) \arrow{r}{g} \arrow{d}{f'} &(X,P,\cI)\arrow{d}{f}      \\
			(Y',Q')\arrow{r}{} & (X',P')
		\end{tikzcd} 
	\end{equation}
	with cartesian horizontal arrows.
%	Let $\cE$ be a presentable $\infty$-category.
	Let $y\in Y$ and put $x=g(y)$. 
	Consider the following conditions:
	\begin{enumerate}\itemsep=0.2cm
		\item The square of stratified spaces induced by (\ref{eqelementarity_pullback}) is a pull-back.
		\item The horizontal arrows are refinements.
	\end{enumerate}
	Then, in either case  $f'$ is (strongly) piecewise elementary  at $y$ if $f$ is (strongly) piecewise elementary at $x$.
	In case (2), the converse holds.
\end{lem}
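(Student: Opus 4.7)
The plan is to transport the closed subanalytic subset witnessing piecewise elementarity through $g$ and then invoke the already-established \cref{elementarity_pullback}.

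\textbf{Case (1).} Suppose $f$ is piecewise elementary at $x$, and let $Z \subset X$ be a closed subanalytic subset containing $x$ such that $f|_Z \colon (Z,P,\cI_Z) \to (X',P')$ is elementary. Set $Z' \coloneqq g^{-1}(Z) \subset Y$. Since $g$ is an analytic morphism with subanalytic graph, $Z'$ is closed subanalytic in $Y$, and it contains $y$ because $g(y) = x \in Z$. Restricting the square \eqref{eqelementarity_pullback} to $Z$ and $Z'$ we obtain a commutative diagram
\[
\begin{tikzcd}
(Z',Q,\cJ_{Z'}) \arrow{r} \arrow{d}{f'|_{Z'}} & (Z,P,\cI_{Z}) \arrow{d}{f|_{Z}} \\
(Y',Q') \arrow{r} & (X',P')
\end{tikzcd}
\]
with cartesian horizontal arrows, and whose underlying square of stratified spaces is a pullback (as pullbacks restrict to pullbacks along $Z \hookrightarrow X$). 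Applying \cref{elementarity_pullback}(1) to this diagram, we conclude that $f'|_{Z'}$ is elementary, so $f'$ is piecewise elementary at $y$. If moreover $Z$ was a closed subanalytic \emph{neighborhood} of $x$, then $Z' = g^{-1}(Z)$ is a neighborhood of $y$ by continuity of $g$, giving the strong version.

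\textbf{Case (2).} Here $g \colon (Y,Q) \to (X,P)$ is a refinement, so in particular the underlying topological spaces agree and $y = x$; likewise $Y'$ and $X'$ share the same underlying space. A subset $Z$ is closed subanalytic in $X$ if and only if it is closed subanalytic in $Y$, and it is a neighborhood of $x$ in $X$ if and only if it is a neighborhood of $y = x$ in $Y$. Given such a $Z$, the restricted square
\[
\begin{tikzcd}
(Z,Q,\cJ_{Z}) \arrow{r} \arrow{d}{f'|_{Z}} & (Z,P,\cI_{Z}) \arrow{d}{f|_{Z}} \\
(Y',Q') \arrow{r} & (X',P')
\end{tikzcd}
\]
has horizontal arrows which are again refinements. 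By \cref{elementarity_pullback}(2), $f|_Z$ is elementary if and only if $f'|_Z$ is elementary. Both the direct statement and the converse at $y = x$ follow at once.

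The key technical point packaged into the above is the preservation of the closed subanalytic property under $g^{-1}$, which holds because morphisms in $\AnStrat$ are analytic with subanalytic graph; everything else is a purely formal reduction to \cref{elementarity_pullback}.
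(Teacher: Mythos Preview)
Your approach is correct and coincides with the paper's: the paper's proof reads simply ``Immediate from \cref{elementarity_pullback},'' and what you have written is exactly the unpacking of that sentence---pull back the witnessing closed subanalytic subset along $g$ and apply \cref{elementarity_pullback} to the restricted square.
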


\begin{proof}
	Immediate from \cref{elementarity_pullback}.
\end{proof}

%Clearly, if $f \colon (X,P,\cI) \to (Y,Q)$ is strongly piecewise elementary it is also piecewise elementary.
%We also have:

%\begin{cor}\label{piecewise_implies_vertical_piecewise}
%	Let $f \colon (X,P,\cI) \to (Y,Q)$ be a family of Stokes analytic stratified spaces.
%	If $f$ is piecewise elementary at a point $x \in X$, then it is vertically piecewise elementary at $x$.
%\end{cor}
%
%\begin{proof}
%	Let $Z$ be a closed subanalytic subset containing $x$ such that $(Z,P,\cI_Z)$ is elementary.
%	Set $y \coloneqq f(x)$.
%	Then $Z_y \simeq X_y \times_X Z$ is a closed subanalytic subset of $X_y$, and \cref{piecewise_elementarity_pullback} implies that $(Z_y, P, \cI_{Z_y})$ is elementary.
%\end{proof}

\subsection{Spreading out for Stokes analytic stratified spaces}

The goal is to prove a \emph{spreading out} property for closed subanalytic subsets of Stokes analytic stratified spaces that does not change the category of Stokes functors.
The proof combines all the known functoriality results concerning Stokes functors with the results of Thom, Mather, Goresky and Verdier on the local structure of analytic stratified spaces.
We will need the results from the theory of simplicial complexes (\cref{subsec:triangulations_hereditary_excellent}).
We will also need the following 

\begin{lem}[{\cite[\cref*{Abstract_Stokes-induction_poset_cocart} \& \cref*{Abstract_Stokes-lem:propagation_split_for_cocartesian}]{Abstract_Derived_Stokes}}]\label{induction_poset_cocart_IHES}
	Let $f \colon \mathrm{J}\to \mathrm{I}$ be a fully faithful functor between posets and consider a pullback square in $\Cat_{\infty}$
	\[ \begin{tikzcd}
		\cA \arrow{r}{u}\arrow{d}{q}&\cB \arrow{d}{p}\\
		\mathrm{J} \arrow{r}{f}&\mathrm{I} 
	\end{tikzcd} \]
	where in addition $p$ is a cocartesian fibration.
	Assume that for every object $i$ in $\mathrm{I}$, the subposet $\mathrm{J}_{/i}$ of $\mathrm{J}$ admits a final object.
	Then, the functor 
	$$
	u_! \colon \Fun(\cA,\cE)\to \Fun(\cB,\cE)
	$$ 
	preserves Stokes functors.
\end{lem}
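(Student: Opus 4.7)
The plan is to invoke \cref{prop:Stokes_characterization}, reducing the question to showing that $u_!F$ is both cocartesian and punctually split whenever $F$ enjoys these two properties. The cocartesian part is directly the content of the first citation \cite[\cref*{Abstract_Stokes-induction_poset_cocart}]{Abstract_Derived_Stokes}, so I would take it as granted and concentrate on the splitting.

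For the punctually split part, my first step would be to observe that since $f \colon \mathrm{J} \to \mathrm{I}$ is fully faithful between posets, the pullback $u \colon \cA \to \cB$ is itself fully faithful: mapping spaces in $\cA$ are computed as fiber products that collapse to those of $\cB$ thanks to the full faithfulness of $f$. Consequently the unit $F \to u^* u_! F$ of the adjunction $u_! \dashv u^*$ is an equivalence, so in particular $(u_!F)|_{\cA_j} \simeq F|_{\cA_j}$ is split for every $j \in \mathrm{J}$. Next, for each $i \in \mathrm{I}$ I would let $j_i \in \mathrm{J}_{/i}$ denote the final object supplied by the hypothesis, and write $\alpha_i \colon j_i \to i$ for the corresponding arrow in $\mathrm{I}$. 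Since $u_!F$ is cocartesian (from the first step) and split at $j_i$ (from the preceding observation), the second citation \cite[\cref*{Abstract_Stokes-lem:propagation_split_for_cocartesian}]{Abstract_Derived_Stokes} transports the splitting along the cocartesian functor $\cB_{j_i} \to \cB_i$ associated to $\alpha_i$, producing the desired splitting of $(u_!F)|_{\cB_i}$.

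The crux of the argument is that the finality of $j_i$ in $\mathrm{J}_{/i}$ is precisely what ensures that a single splitting datum---the one at $j_i$---suffices to propagate to the entire fiber $\cB_i$. Without such a final object, one could at best produce partial splittings indexed by $\mathrm{J}_{/i}$, with no guarantee of them gluing into a splitting at $i$. The technical work is thus entirely absorbed into the two cited lemmas; the role of the present argument is to orchestrate their application by exhibiting $j_i$ at each point of $\mathrm{I}$, which is where the hypothesis on $\mathrm{J}_{/i}$ is used.
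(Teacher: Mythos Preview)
Your proposal is correct and follows exactly the approach indicated by the paper's two citations: the first cited lemma handles cocartesianness of $u_!F$, and the second propagates splitness along the arrow $f(j_i) \to i$ once you have arranged splitness at $j_i$ via full faithfulness of $u$. The paper itself gives no further proof beyond those citations, so your reconstruction is precisely what is intended.
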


\begin{thm}[Spreading out]\label{thm:spreading_out}
	Let $(X,P,\cI)$ be a Stokes analytic stratified space.
%	Let $\cE$ be a presentable stable $\infty$-category.
	Then any closed subanalytic subset $Z \subset X$ admits a fundamental system of open neighborhoods $i \colon Z \hookrightarrow U$ such that:
	\begin{enumerate}\itemsep=0.2cm
		\item $U$ final at $Z$ (see \cref{defin_final}).
		\item The induction $i_!   \colon  \Fun(\cI_Z,\cE) \to \Fun(\cI_U,\cE)$ preserves Stokes functors.
		\item The adjunction $i_! \dashv i^*$ induces an equivalence of $\infty$-categories between $\St_{\cI_Z,\cE}$ and $\St_{\cI_U,\cE}$.
		\item $(Z,P,\cI_Z)$ is elementary if and only if $(U,P,\cI_U)$ is elementary.
		\item If $Z$ is compact, the open set $U$ can be chosen to be subanalytic.
	\end{enumerate}
\end{thm}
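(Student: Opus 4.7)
The plan is to reduce the statement to a combinatorial problem about simplicial complexes. Since $(X,P)$ is a subanalytic stratified space, I will apply Goresky's triangulation theorem \cite{Goresky_triang} to produce a locally finite triangulation $r \colon (|K|,F) \to (X,P)$ arranged to be compatible with the closed subanalytic subset $Z$, so that $Z$ is identified with the realization $|S|$ of a subcomplex $S \subset K$. After taking the barycentric subdivision of $K$ I may further assume $S$ is full in $K$ in the sense of \cref{full_subcomplex}, and I will define $U$ as the image in $X$ of the open star $U(S,K)$ of \cref{simplicial_complex_full_excellent}. Iterating barycentric subdivision produces a fundamental system of such neighbourhoods; compactness of $Z$ implies $S$ is finite, so $U(S,K)$ is subanalytic as a finite union of open simplices, which gives (5). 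Condition (1) is then immediate by combining \cref{simplicial_complex_full_excellent} (finality in the simplicial model) with \cref{exellence_and_refinement} (transport of finality through the refinement $r$).

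For (2) and (3) I plan to invoke \cref{induction_poset_cocart_IHES} applied to the pullback diagram
\[ \begin{tikzcd}
\cI_Z \arrow{r}{j} \arrow{d} & \cI_U \arrow{d} \\
\Pi_\infty(Z,P) \arrow{r}{i} & \Pi_\infty(U,P) \ .
\end{tikzcd} \]
The hypothesis of that lemma, namely the existence of a final object in the slice $F(S)_{/\sigma}$ for every face $\sigma$ of $U(S,K)$, was already verified inside the proof of \cref{simplicial_complex_full_excellent}. This yields that $j_!$ preserves Stokes functors, giving (2). For (3), the specialization equivalence together with the hypersheaf property of $\frakStokes_{\cI,\cE}$ (see \cref{def:Stokes_sheaf}) realises both $\St_{\cI_U,\cE}$ and $\St_{\cI_Z,\cE}$ as global sections of categorical hypersheaves which are pulled back from one another along $i$; the finality of $i$, combined with the fact that the cocartesian fibration $\cI_U$ pulls back to $\cI_Z$ along $i$, enforces that $j^*$ and $j_!$ restrict to mutually inverse equivalences on Stokes functors. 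Condition (4) is then formal: (3) applied simultaneously to $\cI$ and to $\cI^{\ens}$ fits into a naturality square whose two horizontal arrows are $i_{\cI_Z,!}$ and $i_{\cI_U,!}$ and whose two vertical arrows are the equivalences just produced, so the top is an equivalence if and only if the bottom is.

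The main obstacle I anticipate lies in the second paragraph. \cref{induction_poset_cocart_IHES} is stated for posets, whereas the bases $\Pi_\infty(Z,P)$ and $\Pi_\infty(U,P)$ are genuine $\infty$-categories; I therefore expect to argue first in the posetal model $F(S) \subset F(U(S,K))$ provided by \cref{Exit_simplicial_complex} and then transport the conclusion to $\Pi_\infty$-land through the refinement $r$ using \cref{refinement_and_cocart}. A parallel subtlety is the preservation of the \emph{punctually split} condition under left Kan extension along $j$: because the fibers of $\cI_Z$ and $\cI_U$ at a point of $Z$ coincide and the subfibration $\exp_\cE^{\mathrm{PS}}(\cI/\Pi_\infty(X,P))$ is stable under pullback by construction, this should eventually follow, but tracking it through the specialization equivalence and the functoriality of \cref{construction:Stokes_sheaves_functoriality} will require care.
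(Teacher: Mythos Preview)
Your approach is essentially the paper's own. A few places where the paper is more careful: (i) rather than iterating barycentric subdivision to produce a fundamental system, the paper observes that it suffices to construct \emph{one} such $U$, since any open neighbourhood of $Z$ can itself be re-triangulated (\cref{open_is_triangulable}) and the argument re-run inside it; (ii) before passing to the simplicial model the paper first establishes explicitly that conditions (1) and (2) are invariant under cartesian refinement --- this is exactly the obstacle you flag, and your proposed resolution via \cref{refinement_and_cocart} is precisely the one the paper uses, so \cref{induction_poset_cocart_IHES} is then applied to the honest poset inclusion $F_S \hookrightarrow G_S$ rather than to $\Pi_\infty(Z,P) \to \Pi_\infty(U,P)$; (iii) for (5) you need the open simplices of the triangulation to be subanalytic subsets of $X$, which is not automatic for an arbitrary triangulation: the paper cites Hardt and {\L}ojasiewicz to ensure the triangulation can be so chosen. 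Finally, the paper derives (3) directly from (1)+(2) via the companion-paper fact that pullback along a final functor induces an equivalence on Stokes functors; this is cleaner than routing through the hypersheaf description.
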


\begin{proof}
    It is enough to find one open neighbourhood $U\subset X$ of $Z$ satisfying the conditions (1)-(5).
	Observe that the claim (4) follows from (3) and the commutativity of the following square 
	\[ \begin{tikzcd}
		\St_{\cI_Z^{\ens},\cE}\arrow{d}{i_{\cI_Z,!}}  \arrow{r}{i_!}  &  \St_{\cI_U^{\ens},\cE} \arrow{d}{i_{\cI_U,!}}  \\
		\St_{\cI_Z,\cE}     \arrow{r}{i_!}    &  \St_{\cI_U,\cE}  \ .
	\end{tikzcd} \]
	Since the pullback along a final functor induces an equivalence on the categories of Stokes functors \cite[\cref*{Abstract_Stokes-invariance_final_pull_back}]{Abstract_Derived_Stokes},  every open subset $U \subset X$ satisfying (1) and (2) automatically satisfies (3).
	We are thus left to find an open neighborhood $U$ of $Z$ satisfying (1) and (2).
	
	\medskip
	
	We first observe that to construct such an open neighborhood we can replace $(X,P,\cI)$ by any cartesian refinement.
	Indeed, let
	\[ r \colon (Y, Q, \cJ) \to  (X,P, \cI) \]
	be a cartesian refinement in $\Stratc$ and set 
	\[ T \coloneqq Z \times_X Y \ . \]
	Let $V$ be an open neighborhood of $T$ inside $Y$.
	Since $r \colon Y \to X$ is a homeomorphism, $U \coloneqq r(V)$ is an open neighborhood of $T$ inside $X$.
	We obtain the following commutative diagram in $\Stratc$:
	\[ \begin{tikzcd}
		(T,Q) \arrow{r}{r|_T} \arrow{d}{j} & (Z,P) \arrow{d}{i} \\
		(V,Q) \arrow{r}{r|_V} & (U,P) \ .
	\end{tikzcd} \]
	Passing to the stratified homotopy types, \cref{refinement_localization} shows that the horizontal maps becomes localizations, and hence final maps.
	Thus, \cite[Proposition 4.1.1.3-(2)]{HTT} implies that if $V$ is final at $T$ the $U$ is final at $Z$.
	Besides, \cref{refinement_and_cocart} shows that both
	\[ (r |_T)_! \colon \Fun(\cI_T, \cE) \to  \Fun(\cI_Z, \cE) \qquad \text{and} \qquad (r|_T)_! \colon \Fun(\cI_T^{\ens}, \cE) \to  \Fun(\cI_Z^{\ens}, \cE) \]
	preserves the full subcategories of Stokes functors and that the induced morphisms
	\[ (r |_T)_! \colon \St_{\cI_T, \cE} \to  \St_{\cI_Z, \cE} \qquad \text{and} \qquad (r_T)_! \colon \St_{\cI_T^{\ens}, \cE} \to  \St_{\cI_Z^{\ens}, \cE} \]
	are equivalences of $\infty$-categories, and similarly for $r|_V$ in place of $r|_T$.
	It follows that if $j_! \colon \Fun(\cI_T, \cE) \to \Fun(\cI_V, \cE)$ preserves Stokes functors, then so does $i_!$.
	
	\medskip
	
	Using \cite[Théorème 2.2]{Verdier1976} we can refine the stratification $(X,P)$ to a Whitney stratification $(X,Q)$ such that $Z$ is union of strata of $(X,Q)$.
	By \cite[Theorem §3]{Goresky_triang}, $(X,Q)$ admits a locally finite triangulation.
	Thus, using the notations from \cref{subsec:triangulations_hereditary_excellent}, we can replace $(X,Q)$ by the geometric realization $(|K|,F)$ of a simplicial complex $K = (V,F)$ and we can furthermore assume that $(Z,Q)$ corresponds to the geometric realization $(|S|, F_S)$ a simplicial subcomplex $S = (V_S, F_S)$ of $K$.
	At the cost of replacing $K$ by its barycentric subdivision, we can suppose that $S$ is full in $K$.
	Define
	\[ U_{S,K}\coloneqq \left\{ w \colon V \to [0,1] \mid \mathrm{supp}(w) \cap V_S \ne \emptyset \text{ and } \sum_{v \in V \smallsetminus V_S} w(v) <1 \right\} \ . \]
	We claim that $U_{S,K}$ satisfies conditions (1) and (2).
	Since $S$ is full in $K$, \cref{simplicial_complex_full_excellent} shows that $U_{S,K}$ is final at $|S|$.
	Concerning (2), observe that via the equivalence
	\[ \Pi_\infty(|K|, F) \simeq F \]
	supplied by \cref{Exit_simplicial_complex}, $\Pi_\infty(|U_{S,K}|, F)$ corresponds to the subposet $G_S \subset F$ of faces having non-empty intersection with $S$.
	Then the inclusion of posets
	\[ F_S \hookrightarrow G_S \]
	satisfies the assumptions of \cref{induction_poset_cocart_IHES}: indeed, since $S$ is full in $K$ we see that for every $\sigma \in G_S$ the intersection $\sigma \cap S$ is a face of $S$ and therefore provides a final object for $(F_S)_{/\sigma}$.
	Thus, denoting $i_S \colon |S| \hookrightarrow U$ the canonical inclusion, we deduce from \cref{induction_poset_cocart_IHES} that the induction functor
	\[ i_{S,!} \colon \Fun(\cI_{|S|}, \cE) \to  \Fun(\cI_{U_{S,K}}, \cE) \]
	preserves Stokes functors, so (2) is satisfied as well.
	\medskip
	
	We are left to prove (5).
	Assume now that $Z$ is compact.
	In particular, the set $G_S$ is finite.
	On the other hand, we have 
	\[ U_{S,K} = \bigcup_{\sigma \in G_S}  \overset{\circ}{|\sigma|} \ . \]
		Furthermore, the triangulation can be  constructed so that the interior of each simplex is subanalytic \cite[Theorem 2]{Hardt1976}.
	See also paragraph $10$ and Remark p1585 of \cite{Lojasiewicz1993}.
	Hence (5) follows from the fact that a finite union of subanalytic subsets is again subanalytic.
\end{proof}

\begin{cor}\label{piecewise_implies_local_elementary}
	Let $f \colon (X,P,\cI)\to (Y,Q)$ be a vertically piecewise elementary family of Stokes analytic stratified spaces.
	Then:
	\begin{enumerate}\itemsep=0.2cm
	\item $f \colon (X,P,\cI)\to (Y,Q)$ is locally elementary.
		\item  $(X,P,\cI)$ is locally elementary.
  
		\item If $f \colon X \to Y$ is proper, there exists a cover of $X$ by subanalytic open subsets $U$ such that $(U,P,\cI_U)$ is elementary.
	\end{enumerate}
\end{cor}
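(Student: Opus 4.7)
The plan is to derive all three claims from a single mechanism: use vertical piecewise elementarity to produce, near any chosen point, a closed subanalytic subset on which the induced Stokes structure is elementary, and then invoke the spreading out theorem \cref{thm:spreading_out} to inflate that subset to an elementary open neighborhood. The key preliminary observation that I would establish first is that vertical piecewise elementarity actually implies \emph{absolute} piecewise elementarity of $(X,P,\cI)$. The reason is that each fiber $X_{f(x)}$ is the preimage of the closed subanalytic point $\{f(x)\} \subset Y$ under the subanalytic morphism $f$, hence is itself closed and subanalytic in $X$. Consequently a closed subanalytic $Z \subset X_{f(x)}$ with $(Z,P,\cI_Z)$ elementary is automatically a closed subanalytic subset of $X$ containing $x$ with the same property.

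For (2) I would then simply apply \cref{thm:spreading_out} to $(X,P,\cI)$ and the $Z$ just produced. Part (4) of that theorem delivers an open neighborhood $U \subset X$ of $Z$ with $(U,P,\cI_U)$ elementary; varying $x$ yields the desired open cover of $X$ by elementary subsets, hence local elementarity. For (1), I would observe that each fiber $(X_y,P,\cI_y)$ is itself a Stokes analytic stratified space (since $X_y$ is a subanalytic subset of the ambient analytic manifold) and, by hypothesis, is piecewise elementary. Running the same argument as in (2), but \emph{inside} $(X_y,P,\cI_y)$ rather than inside $(X,P,\cI)$, produces an elementary open neighborhood of $x$ in the fiber, and this is exactly what local elementarity of $f$ at $y$ requires.

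For (3), the additional ingredient is properness of $f \colon X \to Y$, which forces each fiber $X_y$ to be compact. Given $x \in X$, the closed subanalytic $Z \subset X_{f(x)}$ supplied by vertical piecewise elementarity is then a closed subset of a compact space, hence itself compact. This is precisely the extra hypothesis needed to invoke the sharper clause (5) of \cref{thm:spreading_out}, which guarantees that the elementary open neighborhood $U$ of $Z$ in $X$ can be chosen subanalytic. As $x$ varies we obtain the required cover of $X$ by subanalytic elementary opens.

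I do not expect a real obstacle here: the substance of the argument is entirely encapsulated in \cref{thm:spreading_out}, and the only point that deserves a sentence of justification is the transfer of subanalyticity and closedness from the fiber to the total space, which amounts to the standard remark that in the analytic category points are closed and fibers of subanalytic morphisms are subanalytic.
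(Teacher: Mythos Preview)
Your proposal is correct and follows essentially the same route as the paper's proof: both arguments pick a closed subanalytic elementary $Z$ in the fiber $X_{f(x)}$, then apply \cref{thm:spreading_out}-(4) once inside the fiber to get (1), once inside the ambient $X$ to get (2), and invoke \cref{thm:spreading_out}-(5) together with compactness of the fiber for (3). The only difference is expository: you make explicit the passage from ``$Z$ closed subanalytic in $X_{f(x)}$'' to ``$Z$ closed subanalytic in $X$'', which the paper leaves implicit.
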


\begin{proof}
	Let $x$ be a point of $X$ and set $y \coloneqq f(x)$.
	Choose a closed subanalytic subset $Z$ of $X_y$ such that $(Z, P, \cI_Z)$ is elementary.
	Then \cref{thm:spreading_out}-(4) implies the existence of an elementary open neighborhood $U$ of $Z$ in $X_y$, so (1) follows.
      \cref{thm:spreading_out}-(4) also implies the existence of an elementary open neighborhood $U$ of $Z$ in $X$, so (2) follows.
	If furthermore $f$ is proper, then $X_y$ is compact and therefore the same goes for $Z$, so (3) follows from \cref{thm:spreading_out}-(5).
\end{proof}

\subsection{Level structures}

	Local elementarity is rarely satisfied.
	Level structures provide the key technical tool needed to bypass this difficulty: performing induction on the length of a level structure allows to reduce the complexity of the Stokes analytic stratified space, eventually reducing to the locally elementary case.
%	Conceptually, one takes care of the induction step via the Stokes detection criterion \cref{Stokes_detection}, relying on the relative graduation functor in an essential way.

%\begin{defin}\label{defin_level_morphism_poset}
%	A morphism of posets $p \colon \cI \to \cJ$ is a \emph{level morphism} if it is surjective and for every $a, b \in \cI$, we have
%	\[ p(a) < p(b) \text{ in } \cJ \Rightarrow a < b \text{ in } \cI \ . \]
%	\personal{See \cref{stability_lim_colim_ISt} for the reason surjectivity is required.}
%\end{defin}

%\begin{defin}\label{defin_level_morphism}
%	Let $(X,P,\cI)$ be a Stokes stratified space and let 
% $p \colon \cI \to \cJ$ be a morphism in $\PosFib$ over $\PiinftySigma(X,P)$.
%	We say that $p$ is a \emph{level morphism} if for every $x \in X$, the induced morphism $p_x \colon \cI_x \to \cJ_x$ is a level morphism.
%\end{defin}

\begin{defin}\label{level_structure}
	Let $(X,P) \to (Y,Q)$ be a family of exodromic stratified spaces and let
	\[ p \colon \cI \to  \cJ \]
	be a morphism of Stokes fibrations over $(X,P)$.
	Fix a full subcategory $\cC \subseteq \FStStrat$.
	We say that $p$ is a \emph{simple $\cC$-level morphism relative to $(Y,Q)$} if the following conditions hold:
	\begin{enumerate}\itemsep=0.2cm
		\item $p$ is a level morphism (\cref{level_intro_def});
		
		\item both $\cI^{\ens}$ and $\cJ^{\ens}$ are pullback of Stokes fibrations in sets over $(Y,Q)$;
		
		\item for every $q \in Q$, the family of Stokes stratified spaces
		\[ (X_q, P_q, (\cI |_{X_q})_{p|_{X_q}}) \to  Y_q \]
		belongs to $\cC$ (see \cref{graduction_section} for the meaning of $\cI_p$).
	\end{enumerate}
	We say that $p$ is a \emph{$\cC$-level morphism relative to $(Y,Q)$} if it can be factored as a finite composition
	\begin{equation}\label{eq:level_structure}
		\begin{tikzcd}
			\cI = \cI^{d} \arrow{r}{p_d} & \cI^{d} \arrow{r}{p_{d-1}} & \cdots \arrow{r}{p_2} & \cI^1 \arrow{r}{p_1} & \cI^0 = \cJ
		\end{tikzcd}
	\end{equation}
	where each $\cI^k$ is a Stokes fibration over $(X,P)$ and each $p_k \colon \cI^k \to \cI^{k-1}$ is a simple $\cC$-level morphism relative to $(Y,Q)$.
	When $\cC = \FStStrat$, we simply say that $p$ is a (simple) level morphism relative to $(Y,Q)$.
\end{defin}

\begin{rem}\label{level_structure_above_strata}
	Assume that the stratification on $Y$ is trivial.
	Then Condition (2) ensures that if $p \colon \cI \to \cJ$ is a simple level morphism, then it is also a level graduation morphism above each stratum of $Y$.
\end{rem}

\begin{defin}\label{def:level_structure_length}
	In the situation of \cref{level_structure}, we refer to a factorization of $p \colon \cI \to \cJ$ of the form \eqref{eq:level_structure} as a \emph{$\cC$-level structure for $p$} and we say that $d$ is its \emph{length}.
	When $\cJ = \Pi_\infty(X,P)$, we say that \eqref{eq:level_structure} is a \emph{$\cC$-level structure for $\cI$}.
\end{defin}

\begin{defin}
	Let $\cC \subseteq \FStStrat$ be a full subcategory.
	We say that a family of Stokes stratified spaces $(X,P,\cI) \to (Y,Q)$ \emph{admits a $\cC$-level structure} if the morphism
	\[ p \colon \cI \to \Pi_\infty(X,P) \]
	is a $\cC$-level morphism relative to $(Y,Q)$.
	\personal{Here $\Pi_\infty(X,P) \simeq \Pi_\infty(X,P) \times \ast$ is seen as the trivial cocartesian fibration over itself via the identity map.}
	Similarly, we say that $(X,P,\cI) \to (Y,Q)$ \emph{locally admits a $\cC$-level structure} if $Y$ can be covered by open subsets $U$ such that each $(X_U, P, \cI_U) \to (U,Q)$ admits a $\cC$-level structure.
\end{defin}

%\begin{eg}
%	Let $(X,P,\cI)$ be a Stokes stratified space.
%	Then $\cI \to \Pi_\infty(X,P)$ is a level morphism, so $(X,P,\cI)$ always admits a level structure of length $1$.
%\end{eg}

%\begin{eg}\label{eg:length_one_piecewise_elementary_level_structures}
%	Let $\cC \subseteq \FStStrat$ be the full subcategory spanned by locally elementary families.
%	Then a Stokes stratified space $(X,P,\cI)$ admits a $\cC$-level structure of length $1$ if and only if the canonical morphism $p \colon \cI \to \Pi_\infty(X,P)$ is a simple locally elementary level morphism.
%	Since in this case $\cI_p = \cI$, this happens if and only if $(X,P,\cI)$ is locally elementary.
%\end{eg}

As a consequence of \cref{piecewise_implies_local_elementary} and \cref{piecewise_implies_vertical_piecewise}, we obtain the following :

\begin{cor}\label{cor:piecewise_level_structure_implies_locally_elementary_level_structure}
	Let $f \colon (X,P,\cI) \to (Y,Q)$ be a family of Stokes analytic stratified spaces.
	Then:
	\begin{enumerate}\itemsep=0.2cm
		\item If $f$ has a vertically piecewise elementary  level structure then it has a locally elementary level structure;
		\item if $f$ has a piecewise elementary  level structure, then it has a vertically piecewise elementary  level structure;
		\item if $f$ has a strongly piecewise elementary  level structure, then it has a piecewise elementary  level structure.
	\end{enumerate}
\end{cor}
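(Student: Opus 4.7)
The plan is to observe that each of the three flavors of level structure is given by the definition of a $\cC$-level morphism (\cref{level_structure}) with $\cC$ ranging, respectively, over the classes of strongly piecewise elementary, piecewise elementary, vertically piecewise elementary, and locally elementary families of Stokes analytic stratified spaces. Since a $\cC$-level structure is just a factorization
\[ \cI = \cI^{d} \xrightarrow{p_d} \cI^{d-1} \xrightarrow{p_{d-1}} \cdots \xrightarrow{p_1} \cI^0 = \Pi_\infty(X,P) \]
whose associated simple level morphisms produce, stratum by stratum, families
\[ g_{k,q} \colon (X_q, P_q, (\cI^{k}|_{X_q})_{p_k|_{X_q}}) \to Y_q \]
that lie in $\cC$, the whole corollary reduces to the three pointwise class inclusions
\[ \text{strongly p.\ e.} \subseteq \text{p.\ e.} \subseteq \text{vertically p.\ e.} \subseteq \text{locally elementary} \ . \]

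Concretely, first I would fix once and for all a level structure of the given type for $f$ and note that the factorization itself will not change: only the adjective attached to each $g_{k,q}$ will be relaxed. For (3), I would apply the first sentence of \cref{piecewise_implies_vertical_piecewise} to each $g_{k,q}$ to upgrade ``strongly piecewise elementary'' to ``piecewise elementary'', so that the same factorization becomes a piecewise elementary level structure. Statement (2) is identical in spirit, using the second sentence of \cref{piecewise_implies_vertical_piecewise} to pass from ``piecewise elementary'' to ``vertically piecewise elementary''. Finally, (1) follows by applying \cref{piecewise_implies_local_elementary}-(1) to each $g_{k,q}$, which turns a vertically piecewise elementary family into a locally elementary one.

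Because all three statements share this common mechanism, the main and only thing to verify rigorously is that the underlying notion of a ``$\cC$-level structure'' is manifestly monotone in $\cC$, so that enlarging $\cC$ to a bigger class does not require producing a new factorization. The hard part of the story is therefore already contained in \cref{piecewise_implies_local_elementary} and \cref{piecewise_implies_vertical_piecewise}, the former of which is the genuinely nontrivial input relying on the spreading-out theorem \cref{thm:spreading_out}. No additional geometric input is needed here, and I do not anticipate any real obstacle beyond checking that the bookkeeping is coherent with \cref{level_structure}.
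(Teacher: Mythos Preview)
Your proposal is correct and follows exactly the approach the paper intends: the paper simply states that this corollary is a consequence of \cref{piecewise_implies_local_elementary} and \cref{piecewise_implies_vertical_piecewise}, and your argument spells out precisely the monotonicity-in-$\cC$ mechanism that makes this work. Your write-up is in fact more detailed than the paper's own treatment, which offers no further proof.
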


In the classical theory of Stokes data,  level structures exist only after some suitable ramified cover.
This is axiomatized by the following

\begin{defin}\label{stratified_Galois_cover}
A morphism in $\FStStrat$
\[
\begin{tikzcd}
			(X',P',\cJ) \arrow{r}{\rho'} \arrow{d}{f'} &(X,P,\cI)\arrow{d}{f}     \\
			(Y',Q')\arrow{r}{\rho} & (Y,Q)
		\end{tikzcd} 
\]		
is a \textit{vertically finite Galois cover} if the upper arrow is cartesian in $\StStrat$ 
%and if for every $q\in Q$,  the horizontal arrows of the  induced diagram 
%\[
%\begin{tikzcd}
%			X'_q \arrow{r} \arrow{d}{} &X_q\arrow{d}      \\
%			Y'_q\arrow{r}{} & Y_q
%		\end{tikzcd} 
%\]		
%are étale such that 
and for every $y'\in Y'_q$ lying above a point $y\in Y_q$,  the induced map $X'_{y'} \to X_y$ is a finite Galois cover.
\end{defin}

\begin{rem}
For an example of vertically finite Galois cover arising in the theory of flat bundle, see \cref{remified_irregular_values}.
\end{rem}

\begin{defin}\label{ramified_level_structure}
	Let $\cC \subseteq \FStStrat$ be a full subcategory.
	We say that a family of Stokes stratified spaces $(X,P,\cI) \to (Y,Q)$ \emph{admits a ramified $\cC$-level structure} if there exists a vertically finite Galois cover as in \cref{stratified_Galois_cover} such that  $(X',P',\cJ) \to (Y',Q')$ admits a $\cC$-level structure.
	We say that $(X,P,\cI) \to (Y,Q)$ \emph{locally admits a ramified $\cC$-level structure} if $Y$ can be covered by open subsets $U$ such that each $(X_U, P_U, \cI_U) \to (U,Q)$ admits a ramified $\cC$-level structure.
\end{defin}

\subsection{Hybrid descent for Stokes functors}

	As observed in \cref{warning:local_elementarity_too_strong}, even when they exist, elementary open subsets do not form a basis of the topology.
	For this reason, we need to discuss a hybrid descent property for the $\infty$-category of Stokes functors that combines $\frakStokes$ on elementary opens and $\frakFil$ on their further intersections.
	This is achieved via the following:

\begin{construction}\label{construction:hybrid_descent}
	Let $(X,P,\cI)$ be a Stokes stratified space.
	% and let $\cE$ be a presentable $\infty$-category.
	Let $\mathcal U = \{\mathbf U_\bullet\}$ be a hypercover of $X$.
	We define the semi-simplicial diagram
	\[ \StFil_{\cI,\cE}^{\mathcal U} \colon \mathbf \Delta\op_s \to \CAT_\infty \]
	as the subfunctor of
	\[ \frakFil_{\cI, \cE} \circ \mathbf U_\bullet \colon \mathbf \Delta\op_s \to \CAT_\infty \]
	defined by
	\[ \StFil_{\cI,\cE}^{\mathcal U}([n]) \coloneqq \begin{cases}
		\frakStokes_{\cI,\cE}(\mathbf U_0) & \text{if } n = 0 \\
		\frakFil_{\cI, \cE}(\mathbf U_n) & \text{if } n > 0 \ .
	\end{cases} \]
	Notice that it is well defined thanks to the commutativity of \eqref{eq:Stokes_sheaves_functoriality}.
\end{construction}

\begin{prop}\label{Stokes_via_Fil}
	Let $(X,P,\cI)$ be a Stokes stratified space.
	Let $\mathcal U = \{\mathbf{U}_\bullet\}$ be a hypercover of $X$.
	Then the canonical functor
	\[ \St_{\cI,\cE} \to \lim_{\mathbf \Delta_s\op} \StFil_{\cI,\cE}^{\cU} \]
	is an equivalence of $\infty$-categories.
\end{prop}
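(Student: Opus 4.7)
The plan is to exploit hyperdescent for both $\frakStokes_{\cI,\cE}$ and $\frakFil_{\cI,\cE}$, combined with the fact that membership in the Stokes subcategory of $\frakFil_{\cI,\cE}$ is a stalk-local property.

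Both $\frakStokes_{\cI,\cE}$ and $\frakFil_{\cI,\cE}$ are hyperconstructible hypersheaves by Definitions \ref{def:cocartesian_filtered_functor} and \ref{def:Stokes_sheaf}, so they satisfy hyperdescent along $\mathbf U_\bullet$, giving
\[ \St_{\cI,\cE} \simeq \lim_{\mathbf \Delta_s\op} \frakStokes(\mathbf U_\bullet) \qquad \text{and} \qquad \Filco_{\cI,\cE} \simeq \lim_{\mathbf \Delta_s\op} \frakFil(\mathbf U_\bullet) \ . \]
By construction of $\StFil^{\cU}_{\cI,\cE}$, this semi-simplicial diagram sits between the two via two levelwise fully faithful inclusions: $\frakStokes(\mathbf U_\bullet) \hookrightarrow \StFil^{\cU}_{\cI,\cE}$, which is the identity at level $0$ and the fully faithful inclusion $\frakStokes(\mathbf U_n) \hookrightarrow \frakFil(\mathbf U_n)$ at levels $n > 0$; and $\StFil^{\cU}_{\cI,\cE} \hookrightarrow \frakFil(\mathbf U_\bullet)$, which is the inclusion at level $0$ and the identity at higher levels. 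Since limits in $\CAT_\infty$ preserve fully faithful functors, passing to $\lim_{\mathbf \Delta_s\op}$ produces a factorization of fully faithful functors
\[ \St_{\cI,\cE} \hookrightarrow \lim_{\mathbf \Delta_s\op} \StFil^{\cU}_{\cI,\cE} \hookrightarrow \Filco_{\cI,\cE} \ . \]

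For essential surjectivity of the first arrow, I would take $(F_n) \in \lim_{\mathbf \Delta_s\op} \StFil^{\cU}_{\cI,\cE}$ and let $G \in \Filco_{\cI,\cE}$ be the object produced through the second inclusion; by construction, $G|_{\mathbf U_0}$ coincides with $F_0 \in \frakStokes(\mathbf U_0)$. Since $\mathbf U_0 \to X$ is a cover, every point $x \in X$ is hit by some $y \in \mathbf U_0$, and \cref{stalk_pullback_Stokes_sheaf} identifies the stalk $G_x$ with the stalk $F_{0,y}$ inside $\Fun(\cI_x,\cE)$. As $F_0$ is Stokes at $y$, its stalk $F_{0,y}$ lies in the essential image of $i_{\cI_x,!} \colon \Fun(\cI_x^{\ens},\cE) \to \Fun(\cI_x,\cE)$, so $G_x$ does as well. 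Combined with the fact that $G \in \Filco_{\cI,\cE}$ is already cocartesian, \cref{prop:Stokes_characterization} yields $G \in \St_{\cI,\cE}$, concluding the proof.

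The main (mild) technical point is ensuring that stalkwise splitness propagates from a cover to the whole space inside $\frakFil_{\cI,\cE}$; this is built into the fact that $\frakStokes_{\cI,\cE}$ corresponds via exodromy to the subfibration $\exp_\cE^{\PS}(\cI/\PiinftySigma(X,P))$ of fiberwise essential images of the exponential induction, whose formation commutes with base change.
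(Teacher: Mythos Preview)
Your proof is correct and follows essentially the same approach as the paper: both arguments use hyperdescent of $\frakStokes_{\cI,\cE}$ and $\frakFil_{\cI,\cE}$ to obtain the fully faithful factorization $\St_{\cI,\cE} \hookrightarrow \lim_{\mathbf \Delta_s\op} \StFil^{\cU}_{\cI,\cE} \hookrightarrow \Filco_{\cI,\cE}$, and then observe that an object of the middle term is a cocartesian functor whose restriction to $\mathbf U_0$ is Stokes, hence punctually split everywhere since $\mathbf U_0$ covers $X$. The paper phrases the essential surjectivity step slightly more tersely (directly invoking that $F|_{\mathbf U_0}$ Stokes implies $F$ punctually split), while you route it through stalks via \cref{stalk_pullback_Stokes_sheaf}, but the content is the same.
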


\begin{proof}
	For every $n\geq 0$, the functors
	\[ \frakStokes_{\cI,\cE}(\mathbf{U}_n)\to  \frakFil_{\cI,\cE}(\mathbf{U}_n)\]
	are fully-faithful.
	Since $\frakStokes_{\cI,\cE}$ and $\frakFil_{\cI,\cE}$ are hypersheaves,  passing to the limit thus yields fully-faithful functors 
	\[ \St_{\cI,\cE} \hookrightarrow  \lim_{[n] \in \mathbf \Delta\op_s} \StFil_{\cI,\cE}^{\mathcal U}([n]) \hookrightarrow \Funcocart(\cI,\cE) \ . \]
	By definition, an object of the middle term is a cocartesian functor $F \colon \cI \to \cE$ such that $F |_{U_0}$ is a Stokes functor.
	In particular, $F$ is punctually split at every point of $X$.
	Hence, $F$ is a Stokes functor.
	This concludes the proof of \cref{Stokes_via_Fil}.
\end{proof}

\begin{rem}\label{Stokes_via_Fil_finite_cover}
	If $\mathbf{U}_\bullet$  is the hypercover induced by a finite cover $U_1,\dots, U_n$ of $X$, then the limit appearing in \cref{Stokes_via_Fil} can be performed over the \textit{finite} subcategory $\Delta\op_{\leq n,s}$ of $\Delta\op_s$.
\end{rem}

Under some suitable finiteness and stability conditions, the diagram $\StFil_{\cI,\cE}$  takes value in $\PrLR$ (\cref{defin_PrLR}).

%Before proving this, let us recall the following 
%
%\begin{lem}[{\cite[\cref{induction_limit_stable}]{Abstract_Derived_Stokes}}]\label{induction_limit_stable_IHES}
%Let $(X,P,\cI)$ be a Stokes stratified space and let $p\colon\cI\to \cJ$ be a morphism of Stokes fibrations in finite posets over $(X,P)$.
%Then, the induction functor
%$$
%p_!\colon\Fun(\cI,\cE)\to \Fun(\cJ,\cE)
%$$
%commutes with limits and colimits.
%\end{lem}

\begin{cor}\label{StFil_in_prLR}
	Let $(X,P,\cI)$ be a Stokes stratified space in finite posets.
%	Let $\cE$ be a presentable stable $\infty$-category.
	Let $\cU = \{ \mathbf{U}_\bullet \}$ be a hypercover of $X$ such that $(\mathbf{U}_0,P,\cI_{\mathbf{U}_0})$ is elementary.
	Then the semi-simplicial diagram of \cref{construction:hybrid_descent} lifts to a functor
	\[ \StFil_{\cI,\cE}^{\cU} \colon \mathbf \Delta\op_s \to \PrLR \ . \]  
	In particular,  the folllowing equivalence supplied by \cref{Stokes_via_Fil} 
	\[ \St_{\cI,\cE} \simeq  \lim_{\mathbf \Delta\op_s}\StFil_{\cI,\cE}^{\cU} \]
	is an equivalence in $\PrL$, where the limit is computed in $\PrL$.
\end{cor}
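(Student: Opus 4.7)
The plan is to verify that the semi-simplicial diagram $\StFil^{\cU}_{\cI,\cE}$ takes values in $\PrLR \subset \CAT_\infty$, and then deduce the second statement from \cref{Peter_lemma_1} combined with \cref{Stokes_via_Fil}.

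First I would handle the transition maps between degrees $\geqslant 1$. For any map $[m] \to [n]$ in $\mathbf \Delta_s$ with $n, m \geqslant 1$, the corresponding arrow in the diagram is a pullback functor
\[ u^* \colon \Funcocart(\cI_{\mathbf{U}_n}, \cE) \to \Funcocart(\cI_{\mathbf{U}_m}, \cE), \]
which by \cref{prop:global_functoriality_IHES}-(1) is identified with a precomposition functor at the level of $\Fun(\cI_{\mathbf{U}_n}, \cE) \to \Fun(\cI_{\mathbf{U}_m}, \cE)$. Precomposition automatically preserves both limits and colimits. Combined with the fact that $\Funcocart(-, \cE)$ is stable under limits and colimits in $\Fun(-,\cE)$ under the finite-fiber assumption (via \cref{(co)limit_and_cocart}), this shows that $u^*$ belongs to $\PrLR$.

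Next I would analyze the transition maps $\St_{\cI_{\mathbf{U}_0}, \cE} \to \Funcocart(\cI_{\mathbf{U}_n}, \cE)$ for $n \geqslant 1$. Each of these factors as the inclusion $\St_{\cI_{\mathbf{U}_0},\cE} \hookrightarrow \Funcocart(\cI_{\mathbf{U}_0},\cE)$ followed by a map of the first type, so by the previous paragraph it suffices to show that this inclusion lies in $\PrLR$. Here I would use the elementarity hypothesis: the induction functor
\[ i_{\cI_{\mathbf{U}_0}, !} \colon \St_{\cI_{\mathbf{U}_0}^{\ens}, \cE} \xrightarrow{\sim} \St_{\cI_{\mathbf{U}_0}, \cE} \]
is an equivalence, and its source coincides with $\Funcocart(\cI_{\mathbf{U}_0}^{\ens}, \cE)$ since the fibers of $\cI_{\mathbf{U}_0}^{\ens}$ are discrete. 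Moreover, by \cref{induction_limits}, the finite-fiber assumption ensures that $i_!$ preserves both limits and colimits when viewed as a functor $\Fun(\cI_{\mathbf{U}_0}^{\ens}, \cE) \to \Fun(\cI_{\mathbf{U}_0}, \cE)$. It follows that limits and colimits in $\St_{\cI_{\mathbf{U}_0},\cE}$ are computed via $i_!$ applied to limits and colimits in $\Funcocart(\cI_{\mathbf{U}_0}^{\ens}, \cE)$, and in particular agree with those computed in $\Funcocart(\cI_{\mathbf{U}_0},\cE)$, so the inclusion lies in $\PrLR$.

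Finally, having the diagram factor through $\PrLR$, \cref{Peter_lemma_1} identifies its limit in $\PrL$ with its limit in $\CAT_\infty$. Combining this identification with the equivalence supplied by \cref{Stokes_via_Fil} yields the desired presentation of $\St_{\cI,\cE}$ as a limit in $\PrL$. The main obstacle in the argument is the treatment of the Stokes inclusion $\St_{\cI_{\mathbf{U}_0},\cE} \hookrightarrow \Funcocart(\cI_{\mathbf{U}_0},\cE)$: a priori the Stokes condition is only known to be preserved by colimits (\cref{(co)limit_and_cocart}), and both the elementarity and the finite-fiber hypotheses are crucial to upgrade this to preservation of limits as well.
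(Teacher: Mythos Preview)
Your proposal is correct and follows essentially the same route as the paper. The paper's proof is organized slightly differently: rather than analyzing the inclusion $\St_{\cI_{\mathbf U_0},\cE}\hookrightarrow\Funcocart(\cI_{\mathbf U_0},\cE)$, it immediately replaces $\St_{\cI_{\mathbf U_0},\cE}$ by $\Funcocart(\cI_{\mathbf U_0}^{\ens},\cE)$ via the elementarity equivalence, so that the transition maps out of degree $0$ read directly as $i_f^\ast\circ i_{\cI_{\mathbf U_0},!}$ and are handled by \cref{(co)limit_and_cocart} and \cref{induction_limits}; but this is exactly your argument unwound, and the same ingredients are invoked. For the final step the paper appeals to the fact that $\PrL\to\CAT_\infty$ preserves limits rather than to \cref{Peter_lemma_1}, which amounts to the same thing.
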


\begin{proof}
	Since $U_0$ is elementary, the definition of $\StFil_{\cI,\cE}^\cU$ yields:
	\[ \StFil_{\cI,\cE}^{\cU}( \mathbf U_n ) \simeq \begin{cases}
		\Funcocart(\cI_{\mathbf{U}_0}^{\ens},\cE) & \text{if } n = 0 \\
		\Funcocart(\cI_{\mathbf{U}_n},\cE) & \text{if } n > 0 \ .
	\end{cases} \] 
	In both cases, $\StFil_{\cI,\cE}$ takes values in $\PrL$ by \cref{(co)limit_and_cocart}-(1).
	Let $f \colon [n] \to [m]$ be a morphism in $\mathbf \Delta\op$ and let $i_f \colon \mathbf U_n \to \mathbf U_m$ be the associated morphism.
	When $m > 0$ the corresponding transition functor for $\StFil_{\cI,\cE}^\cU$ is just
	\[ i_f^\ast \colon \Funcocart(\cI_{\mathbf U_m}, \cE) \to \Funcocart( \cI_{\mathbf U_n}, \cE) \ , \]
	while for $m = 0$, it identifies with
	\[ i_f^\ast \circ i_{\cI_{\mathbf U_0},!} \colon \Funcocart(\cI_{\mathbf{U}_0}^{\ens},\cE) \to \Funcocart(\cI_{\mathbf{U}_n},\cE) \ . \]
	In both cases, \cref{(co)limit_and_cocart} and  \cref{induction_limits} show they are both left and right adjoints.
	To conclude the proof of \cref{StFil_in_prLR},  use \cref{Stokes_via_Fil} and the fact that $\PrL \to \Cat_{\infty}$ commutes with limits.
\end{proof}

\section{Finite type property for Stokes functors}\label{nc_space}

	Let $(X,P,\cI)$ be a Stokes stratified space and fix an animated ring $k$.
	We consider the $\infty$-category
\[ \St_{\cI,k} \coloneqq \St_{\cI, \Mod_k} \ . \]
	We saw in \cref{eg:Stokes_structures_at_a_point} that in general $\St_{\cI,k}$ does not inherit any of the good properties of $\Mod_k$: for instance, it is neither presentable nor stable.
	The goal of this section is to prove that on the other hand $\St_{\cI,k}$ is well behaved when $(X,P,\cI)$ admits a \emph{locally ramified vertically piecewise elementary level structure}.
	This is a strong condition forcing a highly non-trivial interaction between $X$ and $\cI$.

\subsection{Stability}

	The following is the key result of this work: 

\begin{thm}\label{thm:St_presentable_stable}
	Let $f \colon (X,P,\cI) \to (Y,Q)$ be a family of Stokes stratified spaces in finite posets.
	Assume that $f$ locally admits a ramified locally elementary level structure.
	Then $\St_{\cI, \cE}$ is presentable and stable.
\end{thm}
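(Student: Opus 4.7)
The plan is to proceed by a pair of descent reductions followed by an induction on the length of the level structure, proving simultaneously that $\St_{\cI,\cE}$ is presentable stable \emph{and} that $(X,P,\cI)$ is bireflexive, so that subsequent limits may be computed in $\PrLR$. First, the cover $\{V_\alpha\}$ of $Y$ over which $f$ admits a ramified locally elementary level structure pulls back to the open cover $\{f^{-1}(V_\alpha)\}$ of $X$, and étale hyperdescent (\cref{étale_hyperdescent}) will reduce the problem to a single $V_\alpha$. Galois descent via the Čech nerve of the vertically finite Galois cover, again through \cref{étale_hyperdescent}, will then reduce to the case where $(X,P,\cI) \to (Y,Q)$ globally admits an unramified locally elementary level structure of finite length $d$.

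We then induct on $d$. For $d = 0$ we have $\cI = \Pi_\infty(X,P)$, and \cref{eg:local_systems_as_Stokes_structures} identifies $\St_{\cI,\cE}$ with $\Loc^\hyp(X,\cE)$, which is presentable, stable, and bireflexive. For the inductive step the level dévissage (\cref{level_dévissage}) yields the pullback square
\[
\begin{tikzcd}
\St_{\cI^d, \cE} \arrow{r}{p_{d,!}} \arrow{d}{\Gr_{p_d}} & \St_{\cI^{d-1},\cE} \arrow{d}{\Gr} \\
\St_{(\cI^d)_{p_d}, \cE} \arrow{r}{\pi_!} & \St_{(\cI^{d-1})^{\ens}, \cE}\ .
\end{tikzcd}
\]
The top-right corner is presentable stable by the induction hypothesis. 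For the bottom-right corner, $(\cI^{d-1})^{\ens}$ is locally constant by construction, so after passing to a cartesian refinement of $(X,P)$ via \cref{Loc_constant_Stokes_sheaf} and \cref{refinement_and_cocart}, the category $\St_{(\cI^{d-1})^{\ens}, \cE}$ identifies with a $\Funcocart$-category that is presentable stable by \cref{(co)limit_and_cocart}.

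The main obstacle is the bottom-left corner $\St_{(\cI^d)_{p_d}, \cE}$. Condition~(3) in the definition of a simple level morphism guarantees that $(X,P,(\cI^d)_{p_d}) \to (Y,Q)$ is locally elementary fiberwise over strata of $Q$. The key technical step is to parlay this fiberwise condition into an open cover of $X$ itself by subsets on which $(\cI^d)_{p_d}$ becomes elementary; we will obtain this by combining the spreading out theorem \cref{thm:spreading_out} with the absolute local elementarity argument of \cref{piecewise_implies_local_elementary}. Once the global elementary cover is available, hybrid descent (\cref{StFil_in_prLR}) expresses $\St_{(\cI^d)_{p_d}, \cE}$ as a semi-simplicial limit in $\PrLR$ of categories of the form $\Funcocart(\cI^{\ens}_{U_0},\cE)$ and $\Funcocart(\cI_{U_n},\cE)$, each presentable stable by \cref{(co)limit_and_cocart}.

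Finally, having established presentability, stability and bireflexivity of all four corners of the level dévissage square, \cref{level_pullback_in_PrLR} upgrades the pullback to one in $\PrLR$, and $\St_{\cI^d, \cE}$ inherits the three properties, closing the induction. The hardest part of the plan is the passage from fiberwise local elementarity of the graded piece to a genuine open cover of $X$ by elementary subsets; this is precisely where the full strength of the spreading out theorem is indispensable, and the criterion \cref{PrLR_criterion} should prove useful to check that all transition functors in the descent diagrams remain in $\PrLR$.
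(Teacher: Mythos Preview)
Your plan contains two genuine gaps, both stemming from the attempt to stay global over $Y$ rather than reducing to a point.

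First, the Galois step does not work as written. A \emph{vertically} finite Galois cover in the sense of \cref{stratified_Galois_cover} is only required to be Galois on each fiber $X'_{y'} \to X_y$; the total map $X' \to X$ need not be an \'etale cover, so \cref{étale_hyperdescent} does not apply to its \v{C}ech nerve. This is why the paper only unravels the ramification \emph{after} reducing $Y$ to a point: once $Y = \ast$, the vertical Galois cover becomes an honest finite Galois cover of $X$, and in fact one can bypass descent entirely (see below).

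Second, the spreading out maneuver is both unavailable and unnecessary. \cref{thm:spreading_out} and \cref{piecewise_implies_local_elementary} are stated for Stokes \emph{analytic} stratified spaces, whereas \cref{thm:St_presentable_stable} is about general families. More importantly, no globalization is needed: once $Y$ is a point, condition~(3) in \cref{level_structure} literally says that $(X,P,\cI_{p_d})$ itself is locally elementary, so \cref{lem:local_elementary_stability_lim_colim} applies directly.

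The paper's route is shorter. Rather than proving presentability and stability by exhibiting $\St_{\cI,\cE}$ as a limit in $\PrLR$, it proves \emph{bireflexivity} first (\cref{stability_lim_colim_ISt}) and then invokes the reflection theorem (\cref{presentable_stable}). The point is that bireflexivity amounts to checking that limits and colimits of Stokes functors remain \emph{punctually split} (cocartesianity being automatic by \cref{(co)limit_and_cocart}). Punctual splitting is a condition at each $x \in X$, so one may freely pass to the fiber over $f(x)$ (reducing $Y$ to a point), and then, since $\cI_x \simeq \cJ_{x'}$ for any lift $x'$ along the cartesian Galois cover, one may pass to the unramified cover without any descent argument. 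The level induction then proceeds via the Stokes detection criterion \cref{Stokes_detection_IHES} applied to $p_!$ and $\Gr_p$ of the given (co)limit, rather than via the pullback square in $\PrLR$ (which would be circular, as \cref{level_pullback_in_PrLR} itself requires bireflexivity of all four corners).
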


      \cref{thm:St_presentable_stable} will follow from a more precise statement (see \cref{presentable_stable} below) exhibiting $\St_{\cI, \cE}$ as a localization of $\Fun(\cI,\cE)$.
	With this goal in mind, we start setting up the stage with a couple of preliminaries lemmas.

\begin{lem}\label{lem:local_elementary_stability_lim_colim}
	Let $(X,P,\cI)$ be a locally elementary Stokes stratified space.
	% and let $\cE$ be a presentable $\infty$-category.
	Then:
	\begin{enumerate}\itemsep=0.2cm
		\item \label{lem:local_elementary_stability_lim_colim:lim} $\St_{\cI,\cE}$ is closed under colimits in $\Fun(\cI,\cE)$;
		\item \label{lem:local_elementary_stability_lim_colim:colim} if in addition the fibers of $\cI$ are finite, then $\St_{\cI,\cE}$ is closed under limits in $\Fun(\cI,\cE)$.
		In other words, $(X,P,\cI)$ is bireflexive.
	\end{enumerate}
\end{lem}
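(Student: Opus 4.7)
The plan is to combine the characterization of Stokes functors as cocartesian and punctually split functors (\cref{prop:Stokes_characterization}) with the local elementarity hypothesis. Let $F_\bullet \colon I \to \St_{\cI,\cE}$ be a diagram, and denote by $F$ either $\colim_i F_i$ (for (1)) or $\lim_i F_i$ (for (2)), computed in $\Fun(\cI,\cE)$. That $F$ is cocartesian is immediate from \cref{(co)limit_and_cocart}, where the finite-fiber hypothesis is used precisely to handle the limit case.

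It remains to verify that $F$ is punctually split at each $x \in X$. Using local elementarity, I pick an open neighborhood $U \ni x$ with $(U,P,\cI_U)$ elementary. By \cref{cor:stokes_functoriality_IHES} each $F_i|_U$ is Stokes on $(U,P,\cI_U)$; because (co)limits in $\Fun(-,\cE)$ are computed pointwise, the restriction $F|_U$ agrees with the corresponding (co)limit of the $F_i|_U$ taken in $\Fun(\cI_U,\cE)$. Elementarity supplies an equivalence
\[ i_{\cI_U,!} \colon \St_{\cI_U^{\ens},\cE} \xrightarrow{\sim} \St_{\cI_U,\cE}, \]
through which $F_\bullet|_U$ corresponds to a diagram $V_\bullet \colon I \to \St_{\cI_U^{\ens},\cE}$ with $i_{\cI_U,!}(V_i) \simeq F_i|_U$.

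Since $\cI_U^{\ens}$ has discrete fibers, $\St_{\cI_U^{\ens},\cE}$ coincides with $\Funcocart(\cI_U^{\ens},\cE)$, which by \cref{(co)limit_and_cocart} is closed under colimits in $\Fun(\cI_U^{\ens},\cE)$, and also under limits once $\cI$, and hence $\cI_U^{\ens}$, has finite fibers. In case (1), the left adjoint $i_{\cI_U,!}$ automatically commutes with colimits, so $F|_U \simeq i_{\cI_U,!}(\colim_i V_i)$ lies in the essential image of $i_{\cI_U,!}$ and is therefore Stokes. In case (2), both $\cI_U^{\ens}$ and $\cI_U$ have finite fibers, so \cref{induction_limits} ensures that $i_{\cI_U,!}$ also commutes with limits, yielding the same conclusion with $\lim$ in place of $\colim$. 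Either way $F|_U$ is Stokes, so $F$ is split at $x$.

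The decisive step is the transport of the (co)limit across the elementarity equivalence, which converts a problem about the ill-behaved category $\St_{\cI_U,\cE}$ into one about $\Funcocart(\cI_U^{\ens},\cE)$, where (co)limits are well understood. The finite-fiber hypothesis in (2) enters only to make induction commute with limits via \cref{induction_limits}; this is the sole obstacle to removing it.
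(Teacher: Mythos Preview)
Your proof is correct and follows essentially the same approach as the paper: reduce the punctually split condition to the elementary case, then use that $i_{\cI,!}$ preserves the relevant (co)limits to exhibit $F|_U$ as the image of a cocartesian functor on $\cI_U^{\ens}$. The paper is slightly more terse (it simply says ``the question is local, so assume elementary'' and invokes the commutative square with $\Funcocart(\cI^{\ens},\cE)$), whereas you spell out the restriction to $U$ and the lift through the equivalence explicitly, but the mathematical content is identical.
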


\begin{proof}
	Thanks to \cref{(co)limit_and_cocart}-(1)  we see that $\Funcocart(\cI,\cE)$ is closed under colimits in $\Fun(\cI,\cE)$.
	Similarly, when the fibers of $\cI$ are finite posets,  \cref{(co)limit_and_cocart}-(2) implies that $\Funcocart(\cI,\cE)$ is stable under limits in $\Fun(\cI,\cE)$.
	Let now $F_\bullet \colon I \to \Fun(\cI,\cE)$ be a diagram such that for every $i \in I$, the functor $F_i \colon \cI \to \cE$ is Stokes and set
	\[ F_{\lhd} \coloneqq \lim_{i \in I} F_i \ , \qquad F_{\rhd} \coloneqq \colim_{i \in I} F_i \ , \]
	where the limit and the colimit are computed in $\Fun(\cI,\cE)$.
	To check that $F_{\lhd}$ and $F_{\rhd}$ are Stokes, we are left to check that they are pointwise split.
	This question is local on $X$ and since $(X,P,\cI)$ is locally elementary, we can therefore assume that it is elementary to begin with.
	In this case, the top horizontal arrow in the commutative square
	\[ \begin{tikzcd}[column sep = small]
		\Funcocart(\cI^{\ens},\cE) \arrow[hookrightarrow]{d} \arrow{r}{i_{\cI,!}} &  \St_{\cI,\cE} \arrow[hookrightarrow]{d} \\
		 \Fun(\cI^{\ens},\cE)\arrow{r}{i_{\cI,!}}  & \Fun(\cI,\cE)
	\end{tikzcd} \]
	is an equivalence.
	Thus, we deduce that $F_{\rhd}$ is Stokes from the fact that $i_{\cI,!}$ commutes with colimits.
	Similarly, when the fibers of $\cI$ are finite posets, we deduce that $F_{\lhd}$ is Stokes from \cref{induction_limits}.
\end{proof}

\begin{thm}\label{stability_lim_colim_ISt}
	Let $f \colon (X,P,\cI) \to (Y,Q)$ be a family of Stokes stratified spaces in finite posets locally admitting a ramified locally elementary level structure.
	Then $(X,P,\cI)$ is  bireflexive.
\end{thm}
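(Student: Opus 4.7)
The strategy is induction on the length $d$ of a locally elementary level structure, after two preliminary reductions. First, bireflexivity is a local condition on $X$ (both ``cocartesian'' and ``punctually split'' are local conditions on functors), and hence local on $Y$ via $f$, so we may assume $f$ globally admits a ramified locally elementary level structure. Second, the vertically finite Galois cover supplied by \cref{stratified_Galois_cover} yields an étale hypercover of $X$ along which \cref{étale_hyperdescent}(2) descends bireflexivity from $(X',P',\cJ)$ down to $(X,P,\cI)$. We may therefore assume that $f$ itself admits an unramified locally elementary level structure $\cI = \cI^d \to \cI^{d-1} \to \cdots \to \cI^0 = \Pi_\infty(X,P)$.

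For the base case $d = 0$ we have $\cI = \Pi_\infty(X,P)$, and \cref{Stokes_sheaf_trivial_fibration} identifies $\St_{\cI,\cE}$ with the $\infty$-category $\Loc^{\mathrm{hyp}}(X,\cE)$ of locally hyperconstant hypersheaves. Under the exodromy equivalence, this is the full subcategory of $\Fun(\Pi_\infty(X,P),\cE)$ cut out by inverting every exit path; since restriction along the localization $\Pi_\infty(X,P) \to \Pi_\infty(X)$ preserves both limits and colimits and is fully faithful, this subcategory is closed under arbitrary limits and colimits in $\Fun(\cI,\cE)$.

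For the inductive step, applying (stratum by stratum on $Y$, cf.\ \cref{level_structure_above_strata}) the detection criterion \cref{Stokes_detection_IHES} to the simple level morphism $p_d \colon \cI^d \to \cI^{d-1}$, a functor $F \colon \cI \to \cE$ is Stokes if and only if $p_{d,!}(F) \in \St_{\cI^{d-1},\cE}$ and $\Gr_{p_d}(F) \in \St_{\cI^d_{p_d},\cE}$. Since the fibers of $\cI$ are finite posets, both $p_{d,!}$ and $\Gr_{p_d}$ preserve limits and colimits (\cref{induction_limits} and \cref{cor:section_Gr_commutes_with_colimits}). Thus for any diagram $F_\bullet \colon I \to \St_{\cI,\cE}$ with limit or colimit $F$ computed in $\Fun(\cI,\cE)$, both $p_{d,!}(F)$ and $\Gr_{p_d}(F)$ identify with the corresponding (co)limits of $(p_{d,!}(F_\bullet))$ and $(\Gr_{p_d}(F_\bullet))$. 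The first of these lies in $\St_{\cI^{d-1},\cE}$, which is bireflexive by the inductive hypothesis; provided the second lies in a bireflexive $\St_{\cI^d_{p_d},\cE}$, we conclude that $F \in \St_{\cI,\cE}$, closing the induction.

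The main obstacle is therefore bireflexivity of the graded piece $(X,P,\cI^d_{p_d})$. Condition~(3) of \cref{level_structure} guarantees that $(X,P,\cI^d_{p_d}) \to (Y,Q)$ is fiber-wise locally elementary, but to invoke \cref{lem:local_elementary_stability_lim_colim} one needs absolute local elementarity of $(X,P,\cI^d_{p_d})$ on $X$. In the analytic setting this is supplied by the spreading-out \cref{thm:spreading_out}: around each point $x \in X$ one first finds an elementary closed subanalytic subset inside the fiber $X_{f(x)}$, then spreads it to an elementary open subanalytic neighborhood in $X$, thereby covering $X$ by elementary opens. This fiber-to-total-space transfer, mediating between the purely combinatorial structure of the level morphism and the subanalytic geometry of $f$, is the technically delicate step of the proof.
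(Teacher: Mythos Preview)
Your overall strategy (level induction via the detection criterion) matches the paper's, but two steps are problematic, and both stem from not reducing to $Y$ a single point early on, as the paper does.

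\textbf{The ramified-to-unramified reduction.} You invoke \cref{étale_hyperdescent} for the vertically finite Galois cover $X' \to X$, but \cref{stratified_Galois_cover} only guarantees that each \emph{fiber} $X'_{y'} \to X_y$ is a finite Galois cover; it imposes no étale (or even surjectivity) condition on $X' \to X$ globally. The paper avoids this: after observing that cocartesianity of the (co)limit is automatic from \cref{(co)limit_and_cocart}, only punctual splitting remains, and this is a pointwise condition on $X$. One may therefore restrict to a single fiber $X_y$ (equivalently, assume $Y$ is a point). Then, since the upper arrow in \cref{stratified_Galois_cover} is cartesian, $\cJ_{x'} \simeq \cI_x$ for any $x'$ above $x$, so splitting of $F$ at $x$ is equivalent to splitting of its pullback $u^\ast F$ at $x'$; as $u^\ast$ commutes with (co)limits and preserves Stokes functors, one is reduced to the unramified case on $(X',P',\cJ)$. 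No étale hyperdescent is needed.

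\textbf{The ``main obstacle'' is a phantom.} You correctly note that condition~(3) of \cref{level_structure} only yields fiber-wise local elementarity of $(X,P,\cI^d_{p_d})$, and that \cref{lem:local_elementary_stability_lim_colim} requires absolute local elementarity. But you do not need the latter: again, punctual splitting is pointwise, so at each $x$ you may restrict $\Gr_{p_d}(F_\bullet)$ to the fiber $X_y$, where $\cI^d_{p_d}|_{X_y}$ \emph{is} absolutely locally elementary, apply \cref{lem:local_elementary_stability_lim_colim} there, and conclude splitting at $x$. Invoking the spreading-out \cref{thm:spreading_out} is both unnecessary and out of scope: the present theorem is stated for arbitrary Stokes stratified spaces, whereas spreading-out requires the analytic hypothesis.

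Both issues disappear once you follow the paper and reduce to $Y$ a point at the outset; the rest of your argument then goes through without change.
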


\begin{proof}
     Let $\cE$ be a presentable stable $\infty$-category.
     Let $F_\bullet \colon I \to \Fun(\cI, \cE)$ be a diagram such that for every $i \in I$ the functor $F_i \colon \cI \to \cE$ is Stokes and set
	\[ F_{\lhd} \coloneqq \lim_{i \in I} F_i \ , \qquad F_{\rhd} \coloneqq \colim_{i \in I} F_i \ , \]
	where the limit and the colimit are computed in $\Fun(\cI,\cE)$.
    By  \cref{(co)limit_and_cocart}, the functors $F_{\lhd}$ and $F_{\rhd}$  are cocartesian.
    We are thus left to show that they are punctually split.
    Hence, we can suppose that $Y$ is a point and that $(X,P,\cI)$ admits a ramified locally elementary level structure.
    Since we are checking a punctual condition on $X$, we can further suppose that $(X,P,\cI)$ admits a locally elementary level structure.
	We argue by induction on the length $d$ of the locally elementary level structure.
	If $d = 0$, then $\cI = \Pi_\infty(X,P)$ is a fibration in sets, so that $\St_{\cI,\cE}=\Funcocart(\cI,\cE)$ and the result follows from \cref{(co)limit_and_cocart}.
	Otherwise, our assumption guarantees the existence of a level morphism $p \colon \cI \to \cJ$ such that:
	\begin{enumerate}\itemsep=0.2cm
		\item $\cJ$ admits a locally elementary level structure of length $< d$;
		\item $(X,P,\cI_p)$ is locally elementary.
	\end{enumerate}
	Notice that since level morphisms are surjective, the fibers of $\cJ$ are again finite posets, so the inductive hypothesis applies to the Stokes stratified space $(X,P,\cJ)$.	
	Consider the pullback square
	\[  \begin{tikzcd}
		\St_{\cI,\cE} \arrow{r}{p_!}\arrow{d}{\Gr_p } &\St_{\cJ,\cE}\arrow{d}{\Gr} \\
		\St_{\cI_p,\cE}\arrow{r}{\pi_!}& \St_{\cJ^{\ens},\cE}
	\end{tikzcd} \]
	supplied by \cref{level_dévissage}.
	The Stokes detection criterion of \cref{Stokes_detection_IHES} implies that $F_{\lhd}$ is Stokes if and only if both $\Gr_p(F_\lhd)$ and $p_!(F_\lhd)$ are Stokes, and similarly for $F_\rhd$ in place of $F_\lhd$.
	\cref{cor:section_Gr_commutes_with_colimits} guarantee that $\Gr_p$ commutes with both limits and colimits.
	Similarly, $p_!$ commutes with colimits because it is a left adjoint; since the fibers of $\cI$ are finite posets \cref{induction_limits}   implies that $p_!$ commutes with limits as well.
	Thus, we are reduced to check that
	\[ \Gr_p(F_\lhd) \simeq \lim_i \Gr_p(F_i) \in \Fun(\cI_p, \cE) \qquad \text{and} \qquad p_!(F_\lhd) \simeq \lim_i p_!(F_i) \in \Fun(\cJ, \cE) \]
	are Stokes functors, and similarly for the colimit in place of the limit and $F_\rhd$ in place of $F_\lhd$.
	\Cref{Gr_of_Stokes_IHES} ensures that $\Gr_p(F_i)$ is Stokes for every $i \in I$, while \cref{cor:stokes_functoriality_IHES}-(\ref{cor:stokes_functoriality:induction}) guarantees that $p_!(F_i)$ is Stokes for every $i \in I$.
	Thus, the induction hypothesis implies that $p_!(F_\lhd)$ and $p_!(F_\rhd)$ are Stokes.
	On the other hand, since $\cI_p$ is locally elementary, \cref{lem:local_elementary_stability_lim_colim} implies that $\Gr_p(F_\lhd)$ and $\Gr_p(F_\rhd)$ are Stokes as well, and the conclusion follows.
\end{proof}

At this point, \cref{thm:St_presentable_stable} follows from the following more precise:

\begin{cor}\label{presentable_stable}
	Let $f \colon (X,P,\cI)\to (Y,Q)$ be a family of Stokes stratified spaces in finite posets locally admitting a ramified locally elementary level structure.
%	Let $\cE$ be a presentable stable $\infty$-category.
	Then $\St_{\cI,\cE}$ is a localization of $\Fun(\cI,\cE)$, and in particular it is presentable stable.
\end{cor}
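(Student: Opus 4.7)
The plan is to reduce the corollary directly to Theorem \ref{stability_lim_colim_ISt} (stability under limits and colimits), combined with the Ragimov--Schlank $\infty$-categorical reflection theorem \cite[Theorem 1.1]{Ragimov_Schlank_Reflection}, which was already invoked in the proof of Lemma \ref{level_pullback_in_PrLR}. No new inductive scheme on the level structure is needed at this stage, since all the heavy lifting (level dévissage, control of $\Gr_p$, detection of Stokes functors after $\Gr_p$ and $p_!$) has been packaged into Theorem \ref{stability_lim_colim_ISt}.

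The first step is to observe that $\Fun(\cI,\cE)$ is itself a presentable stable $\infty$-category, since $\cI$ is a small $\infty$-category and $\cE$ is presentable stable by hypothesis. Next, under the hypotheses of the corollary, Theorem \ref{stability_lim_colim_ISt} asserts that the fully faithful inclusion
\[
\St_{\cI,\cE} \hookrightarrow \Fun(\cI,\cE)
\]
is stable under arbitrary limits and arbitrary colimits computed in $\Fun(\cI,\cE)$, i.e.\ $(X,P,\cI)$ is bireflexive in the sense of Definition \ref{bireflexive}.

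Now one applies the Ragimov--Schlank reflection theorem to this inclusion: any fully faithful inclusion of a full subcategory into a presentable $\infty$-category which is closed under arbitrary small limits and colimits is automatically accessible, so the inclusion admits both a left adjoint $L$ and a right adjoint $R$. The existence of $L$ exhibits $\St_{\cI,\cE}$ as a localization of $\Fun(\cI,\cE)$, which is the main claim of the statement. In particular, $\St_{\cI,\cE}$ is presentable.

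Finally, stability of $\St_{\cI,\cE}$ follows formally: $\St_{\cI,\cE}$ is a full subcategory of the stable $\infty$-category $\Fun(\cI,\cE)$ closed under finite limits and finite colimits (indeed, it is closed under all limits and all colimits), and it contains the zero functor (as the empty colimit), so it is itself stable. There is no genuine obstacle here; the only nontrivial input is Theorem \ref{stability_lim_colim_ISt}, whose proof is where the interaction between the level structure and elementarity is actually used.
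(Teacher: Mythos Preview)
Your proof is correct and follows essentially the same approach as the paper's own proof: establish that $\Fun(\cI,\cE)$ is presentable stable, invoke Theorem~\ref{stability_lim_colim_ISt} for closure under limits and colimits, and then apply the Ragimov--Schlank reflection theorem. The paper's proof is simply a terser version of what you wrote, citing the same reflection theorem and leaving the stability verification implicit.
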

\begin{proof}
	Since $\cE$ is presentable stable,  $\Fun(\cI,\cE)$ is presentable stable in virtue of  \cite[Proposition 5.5.3.6]{HTT} and \cite[Proposition 1.1.3.1]{Lurie_Higher_algebra}.
	Then, the conclusion follows from the $\infty$-categorical reflection theorem, see \cite[Theorem 1.1]{Ragimov_Schlank_Reflection}.
\end{proof}

\begin{recollection}\label{recollection:generators_presheaves}
	For every $a \in \cI$ it follows from the Yoneda lemma that 
	\[ \ev_{a,!}^\cA \colon \Spc \to  \Fun(\cI, \Spc) \]
	is the unique colimit-preserving functor sending $*$ to $\Map_\cI(a,-)$.
	The density of the Yoneda embedding implies therefore that $\Fun(\cI, \Spc)$ is generated under colimits by $\{ \ev_{a, !}^\cA(*) \}_{a \in \cI}$.
	More generally, assume that $\cE$ is generated under colimits by a set $\{E_\alpha\}_{\alpha \in I}$.
	Then under the identification
	\[ \Fun(\cI, \cE) \simeq \Fun(\cI, \Spc) \otimes \cE \]
	we see that $\ev_{a,!}(E_\alpha) \simeq \ev_{a,!}^\cI(\ast) \otimes E_\alpha$ and therefore that $\{ \ev_{a, !}^\cI(E_\alpha) \}_{a \in \cA, \alpha \in I}$ generates $\Fun(\cI, \cE)$ under colimits.
\end{recollection}

\begin{cor}\label{compactly_generated}
	Let $f \colon  (X,P,\cI)\to (Y,Q)$ be a family of Stokes stratified spaces in finite posets locally admitting a ramified locally elementary level structure.
	Let $\cE$ be a presentable stable compactly generated $\infty$-category.
	Let $\{E_\alpha\}_{\alpha \in I}$ be a set of compact generators for $\cE$.
	Then $\St_{\cI,\cE}$ is presentable stable compactly generated by the $\{\LSt_{\cI,\cE}(\ev_{a,!}(E_{\alpha}))\}_{\alpha \in I,a\in \cI }$ where the $\ev_a\colon\{a\}\to \cI$ are the canonical inclusions and where $\LSt_{\cI,\cE}$ is left adjoint to the inclusion $\St_{\cI,\cE}\hookrightarrow \Fun(\cI,\cE)$.
\end{cor}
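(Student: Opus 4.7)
The plan is to lift the compact generation from $\Fun(\cI,\cE)$ to $\St_{\cI,\cE}$ by exploiting the fact that the inclusion $i \colon \St_{\cI,\cE} \hookrightarrow \Fun(\cI,\cE)$ is a bireflexive localization. The presentable stable structure is already supplied by \cref{presentable_stable}, so the substance of the proof concerns the compactness and generation claims.

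First I would check that $\Fun(\cI,\cE)$ itself is compactly generated by $\{\ev_{a,!}(E_\alpha)\}_{a \in \cI,\alpha \in I}$. The generation under colimits is exactly \cref{recollection:generators_presheaves}. For compactness, note that $\ev_a^\ast \colon \Fun(\cI,\cE) \to \cE$ is computed pointwise and therefore preserves all colimits, hence in particular filtered ones; since $E_\alpha$ is compact in $\cE$, the adjunction $\ev_{a,!} \dashv \ev_a^\ast$ identifies $\Map(\ev_{a,!}(E_\alpha),-)$ with $\Map(E_\alpha,\ev_a^\ast(-))$, which preserves filtered colimits.

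Next I would transfer both properties through $\LSt_{\cI,\cE}$. By \cref{stability_lim_colim_ISt}, the inclusion $i$ preserves all colimits, in particular filtered ones, so by general nonsense its left adjoint $\LSt_{\cI,\cE}$ preserves compact objects. It follows that each $\LSt_{\cI,\cE}(\ev_{a,!}(E_\alpha))$ is compact in $\St_{\cI,\cE}$.

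Finally, for the generation, let $F \in \St_{\cI,\cE}$. Since $i$ is fully faithful, the counit $\LSt_{\cI,\cE} \circ i \simeq \id$ gives $F \simeq \LSt_{\cI,\cE}(i(F))$. Writing $i(F)$ as a colimit of objects of the form $\ev_{a,!}(E_\alpha)$ in $\Fun(\cI,\cE)$ and applying the colimit-preserving functor $\LSt_{\cI,\cE}$, we realize $F$ as a colimit of the generators $\LSt_{\cI,\cE}(\ev_{a,!}(E_\alpha))$, finishing the argument. There is no substantive obstacle here: the entire corollary is a formal consequence of \cref{presentable_stable}, \cref{stability_lim_colim_ISt} and \cref{recollection:generators_presheaves}, the only delicate point being the use of bireflexivity to guarantee that $\LSt_{\cI,\cE}$ preserves compact objects.
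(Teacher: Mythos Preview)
Your proof is correct and follows the same approach as the paper: the paper's own proof simply cites \cref{presentable_stable} and \cref{recollection:generators_presheaves} for compact generation of $\Fun(\cI,\cE)$ and then says the result ``formally follows,'' which is exactly the formal argument you have written out (localization with right adjoint preserving filtered colimits, hence left adjoint preserving compacts, plus generation transferred along the essentially surjective left adjoint). You have supplied the details the paper elides, but there is no difference in strategy.
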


\begin{proof}
$\St_{\cI,\cE}$ is presentable stable in virtue of \cref{presentable_stable}.
By \cref{recollection:generators_presheaves}, the $\{\ev_{a,!}(E_{\alpha})\}_{\alpha \in I,a\in \cI }$ are compact generators of $\Fun(\cI,\cE)$.
Then, \cref{compactly_generated} formally follows.
\end{proof}

	Thanks to 
\cite[\cref*{Abstract_Stokes-Grothendieck_abelian_abstract}]{Abstract_Derived_Stokes}, we obtain the following:

\begin{cor}\label{Grothendieck_abelian}
	Let $f \colon (X,P,\cI) \to (Y,Q)$ be a family of Stokes stratified spaces in finite posets locally admitting a ramified locally elementary level structure.
	Let $\cA$ be a Grothendieck abelian category.
	Then $\St_{\cI,\cA}$ is a Grothendieck abelian category.
\end{cor}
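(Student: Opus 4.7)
The strategy is to reduce the statement to the abstract categorical result \cite[\cref*{Abstract_Stokes-Grothendieck_abelian_abstract}]{Abstract_Derived_Stokes}, which asserts that a suitably bireflexive subcategory of $\Fun(\cI,\cA)$ is Grothendieck abelian as soon as $\cA$ is. The hypothesis of that abstract result is precisely what we have assembled above: stability of $\St_{\cI,-}$ under limits and colimits inside $\Fun(\cI,-)$, plus presentability, at the level of presentable stable coefficients. Concretely, \cref{stability_lim_colim_ISt} ensures that $(X,P,\cI)$ is bireflexive and \cref{presentable_stable} exhibits $\St_{\cI,\cE}\hookrightarrow\Fun(\cI,\cE)$ as a reflective localization in $\PrL$, both under our standing hypothesis on $f$.

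The plan is thus in two short steps. First, I would recall (from the companion paper) that the categorical criterion takes as input a class of coefficient categories closed under passage to derived $\infty$-categories of Grothendieck abelian categories, together with the data of a compatible family of full subcategories $\St_{\cI,-}\subset\Fun(\cI,-)$ which (i) commutes with change of coefficients along morphisms in $\PrL$, (ii) is closed under all limits and colimits computed pointwise, and (iii) is definable by a combination of a cocartesianness condition and a pointwise splitting condition descending along $\Gr_p$ and $p_!$. Property (i) is \cref{prop:change_of_coefficients_stokes_IHES}, property (ii) is \cref{stability_lim_colim_ISt}, and property (iii) is the content of \cref{prop:Stokes_characterization} together with the level dévissage \cref{Stokes_detection_IHES}. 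These are exactly the hypotheses needed to invoke \cite[\cref*{Abstract_Stokes-Grothendieck_abelian_abstract}]{Abstract_Derived_Stokes}.

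Second, I would apply the abstract statement to the Grothendieck abelian category $\cA$. The conclusion is that $\St_{\cI,\cA}$, defined as the full subcategory of $\Fun(\cI,\cA)$ cut out by the cocartesian and punctually split conditions, inherits the Grothendieck abelian structure of $\Fun(\cI,\cA)$: it is abelian (being closed under kernels and cokernels thanks to (ii) applied to $\cA$, since kernels and cokernels in $\Fun(\cI,\cA)$ are pointwise), it admits a generator (obtained by reflecting a generator of $\Fun(\cI,\cA)$ via the left adjoint of the inclusion, in analogy with \cref{compactly_generated}), and filtered colimits are exact because they are exact in $\Fun(\cI,\cA)$ and $\St_{\cI,\cA}$ is closed under them.

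The only point that requires any care is the verification of local hypotheses under the ramified locally elementary level structure, but this is precisely what was done in \cref{stability_lim_colim_ISt} by induction on the level length combined with \cref{level_dévissage} and \cref{lem:local_elementary_stability_lim_colim}. Once that induction is in place, the abstract mechanism runs without further obstruction. The main conceptual step, and the only place where the geometry enters, is indeed the bireflexivity of $\St_{\cI,-}$; the passage from the presentable stable world to the Grothendieck abelian one is then formal.
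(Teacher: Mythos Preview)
Your proposal is correct and follows the same route as the paper: the corollary is deduced directly from the abstract result \cite[\cref*{Abstract_Stokes-Grothendieck_abelian_abstract}]{Abstract_Derived_Stokes}, the geometric input being the bireflexivity established in \cref{stability_lim_colim_ISt}. The paper is in fact terser than you are---it simply cites the companion result without spelling out the verification of hypotheses---so your elaboration of which properties feed into the abstract mechanism (change of coefficients, closure under limits and colimits, the cocartesian/punctually split characterization) is a helpful unpacking rather than a divergence.
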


%\begin{proof}
%	Combine \cref{Grothendieck_abelian_abstract} with \cref{stability_lim_colim_ISt}.
%\end{proof}

\begin{rem}
	By \cref{cor:piecewise_level_structure_implies_locally_elementary_level_structure}, all the results stated so far hold for families of Stokes analytic stratified spaces in finite posets $f \colon (X,P,\cI) \to (Y,Q)$ locally admitting a ramified vertical piecewise elementary level structure.
\end{rem}

	The following proposition amplifies \cref{presentable_stable} in the analytic setting:

\begin{prop}\label{direct_image_factors_trough_PrL}
	Let $f \colon  (X,P,\cI)\to (Y,Q)$ be a proper family of Stokes analytic stratified spaces in finite posets locally admitting a ramified locally elementary level structure.
%	Let $\cE$ be a presentable stable $\infty$-category.
	Then, the following hold:
	\begin{enumerate}\itemsep=0.2cm
		\item For every open subsets $U\subset V$, the functor 
$$
		f_\ast(\frakStokes_{\cI,\cE})(V)\to f_\ast(\frakStokes_{\cI,\cE})(U)
$$ is a left and right adjoint.
		\item There exists a subanalytic refinement $R\to Q$ such that $f_\ast(\frakStokes_{\cI,\cE})\in \ConsRhyp(X; \PrL)$.
		\item For every subanalytic refinement $R\to Q$ such that $f_\ast(\frakStokes_{\cI,\cE})\in \ConsRhyp(X; \PrL)$,  the hypersheaf $f_\ast(\frakStokes_{\cI,\cE})$ is an object of $\ConsRhyp(X; \PrLR)$.
	\end{enumerate}      
\end{prop}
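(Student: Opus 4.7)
The plan is to establish the three items sequentially: item (1) first, then use it together with \cref{proper_analytic_direct_image} to derive (2), and finally combine both with \cref{PrLR_criterion} to obtain (3).

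For item (1), fix open subsets $U \subset V$ of $Y$, set $V' \coloneqq f^{-1}(V)$ and $U' \coloneqq f^{-1}(U)$, and write $j \colon U' \hookrightarrow V'$ for the inclusion. By definition of $f_\ast$, the map in question is the restriction
$$
j^\ast \colon \St_{\cI_{V'}, \cE} \longrightarrow \St_{\cI_{U'}, \cE} .
$$
The pullback families $(V',P,\cI_{V'}) \to (V, Q|_V)$ and $(U',P,\cI_{U'}) \to (U, Q|_U)$ again locally admit a ramified locally elementary level structure by pullback stability, so \cref{thm:St_presentable_stable} ensures that both source and target of $j^\ast$ are presentable stable, while \cref{stability_lim_colim_ISt} ensures that each is closed under limits and colimits inside the corresponding functor $\infty$-category. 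Since the restriction at the level of $\Fun(-,\cE)$ preserves all limits and colimits, so does $j^\ast$. Accessibility of $j^\ast$ being automatic, the adjoint functor theorem provides both a left and a right adjoint, yielding (1).

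For item (2), \cref{proper_analytic_direct_image}(1) applied to $\frakStokes_{\cI,\cE} \in \ConsPhyp(X;\Cat_\infty)$ produces a subanalytic refinement $R \to Q$ for which $f_\ast(\frakStokes_{\cI,\cE})$ belongs to $\ConsRhyp(Y; \Cat_\infty)$. It remains to check this hypersheaf factors through $\PrL$. By item (1), the values on opens and all restrictions between them are morphisms in $\PrL$, and since $\PrL \to \Cat_\infty$ preserves limits, hyperdescent in $\Cat_\infty$ upgrades $f_\ast(\frakStokes_{\cI,\cE})$ to a $\PrL$-valued hypersheaf on $Y$. The stalk at $y$, by proper base change \cref{proper_analytic_direct_image}(2), identifies with $\St_{\cI_y,\cE}$, presentable by \cref{thm:St_presentable_stable} applied to the fiber $(X_y,P,\cI_y)$; so under the exodromy equivalence with $\PrL$-coefficients (\cref{exodromy_PrL}) the corresponding functor $\Pi_\infty(Y,R) \to \Cat_\infty$ factors through $\PrL$ on objects, while factorization on specialization morphisms follows since these are obtained as filtered colimits of the cocontinuous restrictions between opens. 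Thus $f_\ast(\frakStokes_{\cI,\cE}) \in \ConsRhyp(Y;\PrL)$.

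Item (3) is then immediate: given any refinement $R \to Q$ as in (2), the hypersheaf $f_\ast(\frakStokes_{\cI,\cE})$ belongs to $\ConsRhyp(Y; \PrL)$ and by (1) its restriction maps between opens are all bi-adjoints; \cref{PrLR_criterion} therefore upgrades it to an object of $\ConsRhyp(Y; \PrLR)$.

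The main delicate point lies in step (2): factorization through $\PrL$ must be checked not only on open restrictions (which is essentially a restatement of (1)) but also on the specialization maps of $\Pi_\infty(Y,R)$, and the argument rests on the fact that these arise as filtered colimits of the cocontinuous open restriction maps, a process under which cocontinuity is preserved. Once this is granted, the rest of the proof is a bookkeeping exercise in combining the presentability and bireflexivity results of \S\ref{nc_space} with the constructibility properties of proper pushforward from \cref{proper_analytic_direct_image}.
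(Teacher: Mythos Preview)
Your proposal is correct and follows essentially the same route as the paper: (1) via \cref{stability_lim_colim_ISt} and \cref{presentable_stable}, (2) via \cref{proper_analytic_direct_image}, and (3) via (1) together with \cref{PrLR_criterion}. The paper's proof is a terse three lines, and the extra work you do in (2) is filling in what the paper leaves implicit.

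One point worth cleaning up: in (2) you argue that specialization maps factor through $\PrL$ because they ``arise as filtered colimits of the cocontinuous restrictions between opens''. This is more indirect than necessary, and it is not obvious that a specialization map is literally such a filtered colimit. The cleaner argument is exactly the one used in the proof of \cref{PrLR_criterion}: given an exit path $\gamma \colon x \to y$ in $\Pi_\infty(Y,R)$, choose (via \cref{prop:locally_contractible_strata}) an open $V$ with $x$ initial in $\Pi_\infty(V,R)$, and $U \subset V$ with $y$ initial in $\Pi_\infty(U,R)$; then the specialization map identifies with the single restriction $f_\ast(\frakStokes_{\cI,\cE})(V) \to f_\ast(\frakStokes_{\cI,\cE})(U)$, which lies in $\PrL$ by (1). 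So the same initial-object trick that gives (3) already gives the $\PrL$-factorization in (2), and no filtered-colimit argument is needed. Once you make this substitution, your proof of (2) is complete and matches what the paper has in mind.
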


\begin{proof}
	Item (1) is an immediate consequence of  \cref{stability_lim_colim_ISt} and \cref{presentable_stable}.
	The existence of an analytic refinement as in (2)  is a consequence of \cref{proper_analytic_direct_image}.
	Then (3) follows from (1) and \cref{PrLR_criterion}.
\end{proof}

\subsection{Stokes functors and tensor product}\label{subsec:Stokes_categorical_tensor_product}

We analyze more thoroughly the interaction between the category of Stokes functor and the tensor product in $\PrL$.
We first recall the following

\begin{lem}
[{\cite[\cref*{Abstract_Stokes-cocart_tensor_for_exp}]{Abstract_Derived_Stokes}}]\label{cocart_tensor_for_exp_IHES}
	Let $(X,P,\cI)$ be a Stokes stratified space in finite posets.
	Let $\cE,\cE'$ be  presentable stable $\infty$-categories.
	Then, the canonical transformation 
	\[
	\Funcocart(\cI,\cE)\otimes\cE'   \to \Funcocart(\cI,\cE\otimes \cE')
	\]
	is an equivalence.
\end{lem}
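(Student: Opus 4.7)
The strategy is to reduce the claim to \cref{Peter_lemma}. By \cref{observation:exodromy_straightening} combined with the identification $\Funcocart(\cI,\cE) \simeq \Sigma^{\cocart}(\exp_\cE(\cI/\PiinftySigma(X,P)))$ coming from \eqref{spe_eq}, I can express
\[ \Funcocart(\cI,\cE) \simeq \lim_{x \in \PiinftySigma(X,P)} \Fun(\cI_x,\cE) \]
as a limit in $\PrL$, where the transition along an exit path $\gamma \colon x \to y$ is the left Kan extension $f_{\gamma,!}$ along the induced morphism of fiber posets $f_\gamma \colon \cI_x \to \cI_y$. Applying the same description to $\cE\otimes\cE'$ in place of $\cE$ gives $\Funcocart(\cI, \cE \otimes \cE') \simeq \lim_x \Fun(\cI_x, \cE \otimes \cE')$.

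The key technical step will be to show that each transition $f_{\gamma,!}$ lies in $\PrLR$, i.e.\ is simultaneously a left and a right adjoint. It is a left adjoint to $f_\gamma^{\ast}$ by construction. To see that it is also a right adjoint, I would use the hypothesis that $\cI$ has finite fibers: the pointwise formula
\[ f_{\gamma,!}(F)(b) \simeq \colim_{(a,\, f_\gamma(a)\to b)} F(a) \]
exhibits $f_{\gamma,!}$ as a \emph{finite} colimit in $\cE$. Since $\cE$ is stable, finite colimits coincide up to shift with finite limits, and finite limits commute with arbitrary limits; as limits in $\Fun(\cI_y,\cE)$ are computed pointwise, this shows that $f_{\gamma,!}$ preserves all limits, so by the adjoint functor theorem it belongs to $\PrLR$.

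With the diagram $x \mapsto \Fun(\cI_x,\cE)$ now landing in $\PrLR$, applying \cref{Peter_lemma} to it and to the presentable $\infty$-category $\cE'$ yields an equivalence
\[ \Bigl( \lim_x \Fun(\cI_x,\cE) \Bigr) \otimes \cE' \simeq \lim_x \Bigl( \Fun(\cI_x,\cE) \otimes \cE' \Bigr) \]
in $\PrL$. To conclude, for each $x \in \PiinftySigma(X,P)$, the smallness of $\cI_x$ and the standard identification $\Fun(\cI_x,-) \simeq \Fun(\cI_x,\Spc) \otimes (-)$ in $\PrL$ give $\Fun(\cI_x,\cE) \otimes \cE' \simeq \Fun(\cI_x,\cE\otimes\cE')$. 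Substituting this back into the previous display and comparing with the two descriptions of $\Funcocart(\cI,\cE)$ and $\Funcocart(\cI,\cE\otimes\cE')$ identifies the canonical comparison as an equivalence.

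The main obstacle is the $\PrLR$-verification for the transition maps: one must chase through the pointwise Kan extension formula and carefully invoke both the finiteness of the fibers and the stability of $\cE$ to confirm that $f_{\gamma,!}$ preserves \emph{arbitrary} limits in $\Fun(\cI_y,\cE)$, not merely finite ones. Once this ambidexterity is in hand, the remainder of the argument is a clean application of Peter's lemma combined with the standard behaviour of $\Fun(\cI_x,-)$ under the tensor product in $\PrL$.
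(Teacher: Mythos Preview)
Your proposal is correct and follows precisely the approach the paper takes in analogous situations: the paper itself does not prove this lemma (it is cited from the companion paper \cite{Abstract_Derived_Stokes}), but the exact same strategy---writing $\Funcocart(\cI,\cE)$ as $\lim_x \Fun(\cI_x,\cE)$ via \cite[3.3.3.2]{HTT}, invoking \cref{induction_limits} to see the transitions lie in $\PrLR$, and then applying \cref{Peter_lemma}---appears verbatim in the proof of \cref{cocart_finite_type}. Your hand-verification that $f_{\gamma,!}$ preserves limits is exactly what \cref{induction_limits} records, so you may simply cite that instead of rederiving it.
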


\begin{lem}\label{tensor_product_locally_split_case}
	Let $(X,P,\cI)$ be a locally elementary Stokes stratified space in finite posets.
	Then $(X,P,\cI)$ is universal.
\end{lem}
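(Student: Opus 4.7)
The plan is to establish bireflexivity first, then address the tensor product comparison via the hybrid descent of \cref{construction:hybrid_descent}. Bireflexity is essentially immediate: since the fibers of $\cI$ are finite, \cref{lem:local_elementary_stability_lim_colim} ensures that $\St_{\cI,\cE}\subset \Fun(\cI,\cE)$ is closed under both limits and colimits for any presentable stable $\cE$. So the real content is the universality of the canonical comparison map
\[ \St_{\cI,\cE}\otimes \cE' \to \St_{\cI,\cE\otimes \cE'} \ . \]

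The key idea is to reduce to the elementary case, where $\St_{\cI,\cE}\simeq \Funcocart(\cI^{\ens},\cE)$ and the tensor comparison is supplied by \cref{cocart_tensor_for_exp_IHES}. Since $(X,P,\cI)$ is locally elementary, choose an open cover $\{U_i\}$ of $X$ by elementary opens and let $\cU=\{\mathbf{U}_{\bullet}\}$ be the associated \v Cech hypercover with $\mathbf{U}_0=\sqcup_i U_i$. By construction $(\mathbf{U}_0,P,\cI_{\mathbf{U}_0})$ is elementary, so \cref{StFil_in_prLR} provides an equivalence
\[ \St_{\cI,\cE}\simeq \lim_{[n]\in\boldsymbol{\Delta}_s\op} \StFil_{\cI,\cE}^{\cU}([n]) \]
in $\PrL$, where the diagram factors through $\PrLR$. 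Moreover, unwinding the definitions, the $n=0$ term reads $\Funcocart(\cI^{\ens}_{\mathbf{U}_0},\cE)$ (using elementarity to identify $\St_{\cI_{\mathbf{U}_0},\cE}$ with $\Funcocart(\cI^{\ens}_{\mathbf{U}_0},\cE)$ via $i_{\cI_{\mathbf{U}_0},!}$), while for $n>0$ it reads $\Funcocart(\cI_{\mathbf{U}_n},\cE)$.

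Now I would use \cref{Peter_lemma} (applied to the $\PrLR$-valued diagram $\StFil_{\cI,\cE}^{\cU}$) to commute the external tensor with $\cE'$ past the limit:
\[ \St_{\cI,\cE}\otimes \cE' \simeq \lim_{[n]\in\boldsymbol{\Delta}_s\op} \bigl(\StFil_{\cI,\cE}^{\cU}([n])\otimes \cE'\bigr) \ . \]
At every level, \cref{cocart_tensor_for_exp_IHES} identifies the tensor product termwise: for $n=0$ one gets
\[ \Funcocart(\cI^{\ens}_{\mathbf{U}_0},\cE)\otimes \cE' \simeq \Funcocart(\cI^{\ens}_{\mathbf{U}_0},\cE\otimes \cE') \ , \]
and analogously for $n>0$. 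These identifications are compatible with the simplicial transition maps (which are either pullbacks along inclusions or compositions thereof with the induction $i_{\cI_{\mathbf{U}_0},!}$, all of which commute with the external tensor product). Applying \cref{StFil_in_prLR} in the opposite direction with coefficients in $\cE\otimes\cE'$ then recovers $\St_{\cI,\cE\otimes\cE'}$, concluding the proof.

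The main subtlety is ensuring the compatibility of the termwise tensor equivalences with the transition maps of the hybrid cosimplicial diagram, in particular across the jump between $n=0$ and $n>0$ where the induction $i_{\cI_{\mathbf{U}_0},!}$ enters. This is however guaranteed by the naturality of the comparison map $\Funcocart(\cI,-)\otimes \cE'\to \Funcocart(\cI,-\otimes \cE')$ from \cite[\cref*{Abstract_Stokes-cocart_tensor_for_exp}]{Abstract_Derived_Stokes} and the fact that $i_{\cI_{\mathbf{U}_0},!}$ is computed pointwise from colimits, which commute with $(-)\otimes \cE'$.
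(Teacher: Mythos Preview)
Your proof is correct and follows essentially the same approach as the paper: bireflexivity via \cref{lem:local_elementary_stability_lim_colim}, then the chain of equivalences combining \cref{StFil_in_prLR}, \cref{Peter_lemma}, and \cref{cocart_tensor_for_exp_IHES} on the hybrid descent diagram. Your added discussion of the compatibility of the termwise tensor equivalences with the transition maps is a welcome clarification of a step the paper leaves implicit.
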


\begin{proof}
Note that $(X,P,\cI)$ is bireflexive by \cref{lem:local_elementary_stability_lim_colim}.
Let $\cE,\cE'$ be presentable stable $\infty$-categories.
	Since $(X,P,\cI)$ is locally elementary, we can find a cover $\{U_i\}$ such that $(U_i,P,\cI_{U_i})$ is elementary.
    Let $\cU = \{ \mathbf{U}_\bullet \}$ be its \v{C}ech nerve.
	Recall from \cref{construction:hybrid_descent} the semi-simplicial diagram
	\[ \StFil_{\cI,\cE}^{\cU} \colon \mathbf \Delta\op_s \to \CAT_{\infty} \ . \]
	By \cref{StFil_in_prLR}, this functor takes values in $\PrLR$.
	Therefore, we can tensor it with $\cE'$, finding:
	\begin{align*}
		\St_{\cI,\cE} \otimes \cE' & \simeq \big( \lim_{\mathbf \Delta\op_s} \StFil_{\cI,\cE} \big) \otimes \cE' & \text{By Cor.\ \ref{StFil_in_prLR}}  \\
		&\simeq \lim_{\mathbf \Delta\op_s} \big(\StFil_{\cI,\cE}^\cU \otimes \cE'\big) & \text{By Lem.\ \ref{Peter_lemma}}  \\
		& \simeq \lim_{\mathbf \Delta\op_s} \StFil_{\cI,\cE\otimes \cE'}^\cU & \text{By Lem.\ \ref{cocart_tensor_for_exp_IHES}}  \\
		& \simeq \St_{\cI,\cE\otimes \cE'} &\text{By Cor.\ \ref{StFil_in_prLR}}
	\end{align*}
	The conclusion follows.
\end{proof}

\begin{prop}\label{thm:Stokes_tensor_product_absolute}
	Let $(X,P,\cI)$ be a Stokes stratified space in finite posets admitting a locally elementary level structure.
	Then $(X,P,\cI)$ is universal.
\end{prop}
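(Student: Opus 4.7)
The plan is to argue by induction on the length $d$ of the locally elementary level structure on $\cI$, mimicking the structure of the proof of \cref{stability_lim_colim_ISt}. Throughout, bireflexivity of all Stokes fibrations appearing in the argument is guaranteed by \cref{stability_lim_colim_ISt}, so we concentrate on the comparison map.

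For the base case $d=0$ we have $\cI = \Pi_\infty(X,P)$, so the punctual splitting condition is vacuous and $\St_{\cI,\cE} = \Funcocart(\cI,\cE)$. The universality then follows directly from \cref{cocart_tensor_for_exp_IHES} (which computes the tensor product of $\Funcocart(\cI,-)$ with a presentable stable coefficient).

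For the inductive step, a locally elementary level structure of length $d$ provides a level morphism $p \colon \cI \to \cJ$ such that $\cJ$ admits a locally elementary level structure of length $< d$ and such that $(X,P,\cI_p)$ is locally elementary. The fibres of $\cJ$ and $\cI_p$ are again finite posets (level morphisms are surjective), so \cref{stability_lim_colim_ISt} applies and all four of $(X,P,\cI)$, $(X,P,\cJ)$, $(X,P,\cI_p)$, $(X,P,\cJ^{\ens})$ are bireflexive. By \cref{level_pullback_in_PrLR}, the square
\[ \begin{tikzcd}
\St_{\cI, \cE} \arrow{r}{p_!} \arrow{d}{\Gr_p} & \St_{\cJ,\cE} \arrow{d}{\Gr} \\
\St_{\cI_p, \cE} \arrow{r}{\pi_!} & \St_{\cJ^{\ens}, \cE}
\end{tikzcd} \]
supplied by \cref{level_dévissage} is a pullback in $\PrLR$, and the same holds with $\cE$ replaced by $\cE \otimes \cE'$. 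Since the transition functors in this pullback diagram are simultaneously left and right adjoints, \cref{Peter_lemma_1} allows us to commute the tensor product $(-) \otimes \cE'$ with the pullback. Combining this with the inductive hypothesis applied to $\cJ$ (and to the base case $d=0$ applied to $\cJ^{\ens}$, whose level structure has length $0$ by condition (2) of \cref{level_structure}) and with \cref{tensor_product_locally_split_case} applied to the locally elementary $(X,P,\cI_p)$, we obtain
\begin{align*}
	\St_{\cI,\cE} \otimes \cE' & \simeq \bigl( \St_{\cJ,\cE} \times_{\St_{\cJ^{\ens},\cE}} \St_{\cI_p,\cE} \bigr) \otimes \cE' \\
	& \simeq \bigl(\St_{\cJ,\cE} \otimes \cE'\bigr) \times_{\St_{\cJ^{\ens},\cE} \otimes \cE'} \bigl(\St_{\cI_p,\cE} \otimes \cE'\bigr) \\
	& \simeq \St_{\cJ,\cE\otimes \cE'} \times_{\St_{\cJ^{\ens},\cE\otimes \cE'}} \St_{\cI_p,\cE\otimes \cE'} \simeq \St_{\cI,\cE\otimes \cE'} \ ,
\end{align*}
and one checks that the composite equivalence is the canonical comparison map, completing the induction.

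The main technical obstacle is really packaged into the preliminary results already at our disposal: without \cref{stability_lim_colim_ISt} we would not know bireflexivity, and without \cref{level_pullback_in_PrLR} the level décoration square would only live in $\CAT_\infty$, in which case there is no reason for limits to commute with $(-) \otimes \cE'$. Given these, the only point to be careful about is verifying that the comparison transformation of \cref{def:stably_universal} is indeed compatible with the pullback decomposition and with the comparison transformations on the three corners $\cJ$, $\cJ^{\ens}$, $\cI_p$; this is a naturality check on the Beck--Chevalley transformations underlying the construction of $\Gr_p$ and $p_!$, which is straightforward once the pullback is known to be taken in $\PrLR$.
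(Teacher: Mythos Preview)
Your proof is correct and follows essentially the same route as the paper: induction on the length of the level structure, with the inductive step handled by tensoring the $\PrLR$ pullback square of \cref{level_dévissage} with $\cE'$ and identifying the three accessible corners via the inductive hypothesis and \cref{tensor_product_locally_split_case}. The paper organizes this as a commutative cube with the comparison maps as diagonals, but the content is identical.

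One small correction: the lemma you need to commute $(-)\otimes\cE'$ past the pullback is \cref{Peter_lemma}, not \cref{Peter_lemma_1}. The latter only asserts that the limit of a $\PrLR$-diagram is the same whether computed in $\PrL$, $\PrR$, or $\CAT_\infty$; it says nothing about tensor products. It is \cref{Peter_lemma} that gives $\cE'\otimes\lim_\alpha \cC_\alpha \simeq \lim_\alpha(\cE'\otimes\cC_\alpha)$ for $\PrLR$-valued diagrams, which is exactly what your second displayed isomorphism requires.
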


\begin{proof}
Note that $(X,P,\cI)$ is bireflexive by \cref{stability_lim_colim_ISt}.
Let $\cE,\cE'$ be presentable stable $\infty$-categories.
	We proceed by induction on the length $d$ of the locally elementary level structure.
	When $d = 0$, $\cI = \Pi_\infty(X,P)$ and $(X,P,\cI)$ is elementary, so the conclusion follows from \cref{tensor_product_locally_split_case}.
	Otherwise, our assumption guarantees the existence of a level morphism $p \colon \cI \to \cJ$ such that:
	\begin{enumerate}\itemsep=0.2cm
		\item $\cJ$ admits a locally elementary level structure of length $< d$;
		\item $(X,P,\cI_p)$ is locally elementary.
	\end{enumerate}
	Notice that since level morphisms are surjective, the fibers of $\cJ$ are again finite posets, so the inductive hypothesis applies to the Stokes stratified space $(X,P,\cJ)$.
	Consider the following commutative cube:
	\begin{equation}\label{commutation_tensor_product_cube}
		\begin{tikzcd}
			\St_{\cI,\cE} \otimes\cE'\arrow{rr}{p_!\otimes  \cE'} \arrow{dr} \arrow{dd}{\Gr_p \otimes  \cE'}& & \St_{\cJ,\cE} \otimes\cE' \arrow{dr}{} \arrow[near start]{dd}{\Gr \otimes  \cE'}  \\
			{} & \St_{\cI,\cE\otimes\cE'}  \arrow[crossing over, near start]{rr}{p_!}& &\St_{\cJ,\cE\otimes\cE'}  \arrow{dd}{\Gr } \\
			\St_{\cI_p,\cE} \otimes\cE'\arrow[near end]{rr}{\pi_!\otimes  \cE'}\arrow{dr} & & \St_{\cJ^{\ens},\cE} \otimes\cE'\arrow{dr} \\
			{} &\St_{\cI_p,\cE\otimes\cE'} \arrow{rr}{\pi_!} \arrow[leftarrow, crossing over, near end]{uu}{\Gr_p } & & \St_{\cJ^{\ens},\cE\otimes\cE'} \ .
		\end{tikzcd}
	\end{equation} 
	whose front face is a pull-back in virtue of \cref{level_dévissage}.
	Combining \cref{stability_lim_colim_ISt}, \cref{level_pullback_in_PrLR} and \cref{Peter_lemma} we deduce that the back face is a pullback in $\PrLR$.
	\Cref{tensor_product_locally_split_case} shows that the bottom diagonal arrows are equivalences while the upper right diagonal arrow is an equivalence by the inductive hypothesis.
	Hence, so is the top left diagonal arrow.
\end{proof}

	Working in the analytic setting, we can formulate a stronger version of the above result.
%	To begin with, let us improve the construction of the comparison functor of \cref{construction:Stokes_tensor_comparison}:

\begin{construction}\label{comparison:Stokes_sheaf_comparison_tensor}
	Let $f \colon (X,P,\cI)\to (Y,Q)$ be a family of Stokes analytic stratified spaces in finite posets admitting a vertically piecewise elementary level structure.
	Fix stable presentable $\infty$-categories $\cE$ and $\cE'$.
	For every open subset $V \subset Y$, the induced family $(X_V, P, \cI_V) \to (Y,Q)$ admits again a vertically piecewise elementary level structure.
	Thus, \cref{stability_lim_colim_ISt} shows that $\St_{\cI_V,\cE}$ and $\St_{\cI_V,\cE \otimes \cE'}$ is closed under limits and colimits in $\Fun(\cI_V,\cE)$ and $\Fun(\cI_V,\cE \otimes \cE')$, respectively.
	Then,  \cite[\cref*{Abstract_Stokes-construction:Stokes_tensor_comparison}]{Abstract_Derived_Stokes}  yields a  fully faithful functor
	\[ \St_{\cI_V, \cE} \otimes \cE' \to \St_{\cI_V, \cE \otimes \cE'} \ . \]
	Since this comparison map depends functorially on $V$, we deduce the existence of a commutative square
	\[ \begin{tikzcd}
		f_\ast(\frakStokes_{\cI,\cE})\otimes \cE'  \arrow[hook]{d} \arrow[hook]{r}&  f_\ast(\frakStokes_{\cI,\cE\otimes \cE'})\arrow[hook]{d}   \\
		f_\ast(\frakFil_{\cI,\cE} )\otimes \cE'  \arrow{r}{\sim} & f_\ast(\frakFil_{\cI,\cE\otimes \cE'} )
	\end{tikzcd} \]
	in $\PSh(Y; \PrL)$.
\end{construction}

\begin{lem}\label{topological_retraction_lemma}
	Let $f \colon (Y,P,\cJ)\to (X,P,\cI)$ be a cartesian finite Galois cover in $\StStrat$ where $(X,P)$ is conically refineable and where $(Y,P,\cJ)$ is universal.
	Then $(X,P,\cI)$  is universal and 
	%for every presentable stable $\infty$-category $\cE$, 
	the canonical functor 
	\[ \Loc(Y;\Sp) \otimes_{\Loc(X;\Sp)} \St_{\cI,\cE} \to \St_{\cJ, \cE} \]
	is an equivalence.
	
\end{lem}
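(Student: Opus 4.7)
The plan is to apply the étale hyperdescent result \cref{étale_hyperdescent} to the Čech nerve of $f \colon Y \to X$. Writing $G$ for the Galois group of $f$, the Čech nerve $\mathbf{U}_\bullet$ of $f$ satisfies $\mathbf{U}_n \simeq Y \times G^n$, so each $\mathbf{U}_n$ is a finite disjoint union of copies of $Y$. Conical refineability of $(X,P)$ propagates to each $(\mathbf{U}_n, P)$, which is therefore exodromic. Moreover, since $f$ is cartesian in $\StStrat$, the pullback of $\cI$ to $\mathbf{U}_n$ is a disjoint union of copies of $\cJ$, so $(\mathbf{U}_n, P, \cI_{\mathbf{U}_n})$ is a disjoint union of copies of $(Y,P,\cJ)$.

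For universality of $(X,P,\cI)$, we first observe that universality is stable under finite disjoint unions (a direct check from \cref{def:stably_universal}, since on disjoint unions both $\St_{-,\cE}$ and $\St_{-,\cE \otimes \cE'}$ break up into products indexed by the components). Hence each term of $\mathbf{U}_\bullet$ is universal, and \cref{étale_hyperdescent}-(3) then yields the universality of $(X,P,\cI)$ directly.

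For the equivalence $\Loc(Y;\Sp) \otimes_{\Loc(X;\Sp)} \St_{\cI,\cE} \to \St_{\cJ, \cE}$, we proceed again by descent along $\mathbf{U}_\bullet$. By \cref{étale_hyperdescent}-(1), both $\St_{\cI,\cE}$ and $\St_{\cJ,\cE}$ (the latter via the pullback hypercover $\mathbf U_\bullet \times_X Y \to Y$) are limits of the corresponding semi-simplicial diagrams of Stokes categories, and likewise $\Loc(Y;\Sp)$ descends from $\Loc(\mathbf U_\bullet \times_X Y;\Sp)$. The universality already established ensures, via \cref{Peter_lemma}, that tensoring with $\Loc(Y;\Sp)$ over $\Loc(X;\Sp)$ commutes with the descent limit on the left hand side. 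The problem then reduces to checking the equivalence degreewise; but on degree $n$ the cover $\mathbf U_n \to Y$ is a trivial (split) étale cover of cardinality $|G|^n$, and both sides degenerate to the product $\prod_{G^n} \St_{\cJ,\cE}$ under the identification $\Loc(\mathbf{U}_n;\Sp) \simeq \Loc(Y;\Sp)^{\times G^n}$.

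The main obstacle is the bookkeeping required to ensure that the simplicial equivalences on each level assemble into the canonical comparison functor of the statement, and that the $\Loc(-;\Sp)$-module structures are tracked compatibly; with the universality in hand this is mostly formal, but the tensor structure makes it technical.
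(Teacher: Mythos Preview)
Your argument for universality of $(X,P,\cI)$ is correct and matches the paper's: both use the \v{C}ech nerve, observe that each $\mathbf U_n$ is a finite disjoint union of copies of $Y$ (hence exodromic and universal), and conclude via \cref{étale_hyperdescent}-(3).

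The second half, however, has a genuine gap. You invoke \cref{Peter_lemma} to commute $\Loc(Y;\Sp) \otimes_{\Loc(X;\Sp)} (-)$ with the descent limit, but that lemma concerns the \emph{absolute} tensor product $\cE \otimes (-)$ with a fixed presentable $\cE$, not a relative tensor product over a base algebra. Commuting a relative tensor with limits requires instead that $\Loc(Y;\Sp)$ be dualizable as a $\Loc(X;\Sp)$-module; this is true for a finite \'etale cover, but it is an additional input you have not supplied, and universality alone does not give it. Moreover, your degreewise identification is imprecise: on the right you descend $\St_{\cJ,\cE}$ along $\mathbf U_\bullet \times_X Y \to Y$, whose $n$-th term is $Y \times G^{n+1}$, while on the left the $n$-th term after commuting would still carry a relative tensor $\Loc(Y;\Sp) \otimes_{\Loc(X;\Sp)} \St_{\cJ,\cE}^{\times G^n}$, so you have not actually reduced to a trivial comparison.

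The paper avoids this entirely by a different reduction. Having established universality, it first proves the equivalence for $\cE = \Sp$ by invoking two results from the companion paper \cite{Abstract_Derived_Stokes}: that $\Pi_\infty(Y,P) \to \Pi_\infty(X,P)$ is a \emph{finite \'etale fibration}, and a dedicated corollary computing $\Loc(Y;\Sp) \otimes_{\Loc(X;\Sp)} \St_{\cI,\Sp}$ for such fibrations. Only then does it tensor with $\cE$, at which point universality of both $(X,P,\cI)$ and $(Y,P,\cJ)$ together with the absolute \cref{Peter_lemma} (now legitimately applicable) finishes the argument. This two-step route---reduce to $\Sp$, then tensor up---sidesteps the relative-tensor-vs-limit issue altogether.
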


\begin{proof}
	Let $Y_{\bullet} \colon \mathbf \Delta_{s}\op \to \Top_{/X}$ be the Cech complex of $f \colon Y \to X$ and put 
	\[ \cI_\bullet \coloneqq \Pi_{\infty}(Y_{\bullet},P) \times_{\Pi_{\infty}(X,P)} \cI   \ . \]
    Since $f \colon Y \to X$ is Galois, $Y_n$ is a finite coproduct of copies of $Y$ over $X$.	
	Hence,  $(Y_n,P)$ is conically refineable for every $[n] \in \mathbf \Delta_s$ (thus exodromic by \cref{conically_refineable_implies_exodromic}) and  $(Y_n,P,\cI_n)$ is universal for every $[n]\in \mathbf \Delta_{s}$.
    Then $(X,P,\cI)$  is universal by \cref{étale_hyperdescent}.
	Since the $Y\to X$ is a finite étale cover, \cite[\cref*{Abstract_Stokes-lem:finite_etale_fibrations_example}]{Abstract_Derived_Stokes} implies that 
	\[ \Pi_{\infty}(Y,P)  \to \Pi_{\infty}(X,P) \]
	is a finite étale fibration in the sense of \cite[\cref*{Abstract_Stokes-def:finite_etale_fibrations}]{Abstract_Derived_Stokes}.
	We deduce from  \cite[\cref*{Abstract_Stokes-cor:finite_etale_fibration_Stokes_relative_tensor}]{Abstract_Derived_Stokes} that the canonical functor 
	\[ \Loc(Y;\Sp) \otimes_{\Loc(X;\Sp)} \St_{\cI,\Sp} \to \St_{\cJ,\Sp} \]
	is an equivalence.
	Tensoring  with $\cE$ and using the universality thus concludes the proof of \cref{topological_retraction_lemma}.
\end{proof}

\begin{thm}\label{thm:Stokes_tensor_product}
	Let $f \colon (X,P,\cI)\to (Y,Q)$ be a proper family of Stokes analytic stratified spaces in finite posets locally admitting a ramified vertically piecewise elementary level structure.
	Let $\cE$ and $\cE'$ be stable presentable $\infty$-categories.
	Then the canonical functor   
	\begin{equation}\label{commutation_tensor_product_eq}
		f_\ast(\frakStokes_{\cI,\cE})\otimes \cE'  \to  f_\ast(\frakStokes_{\cI,\cE\otimes \cE'})
	\end{equation}
	is an equivalence.
	In particular, $(X,P,\cI)$ is universal.
\end{thm}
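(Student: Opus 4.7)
The plan is to reduce to the absolute case handled by \cref{thm:Stokes_tensor_product_absolute} via proper base change and Galois descent. Since the claim is local on $Y$, I may assume that $f$ admits a ramified vertically piecewise elementary level structure. \cref{direct_image_factors_trough_PrL} then provides a subanalytic refinement $R \to Q$ such that both $f_\ast(\frakStokes_{\cI,\cE})$ and $f_\ast(\frakStokes_{\cI,\cE\otimes\cE'})$ belong to $\ConsRhyp(Y;\PrLR)$; combined with \cref{naive_tensor_is_hypersheaf}, this shows that $f_\ast(\frakStokes_{\cI,\cE})\otimes\cE'$ is already a hypersheaf. It therefore suffices to check that \eqref{commutation_tensor_product_eq} is an equivalence at every stalk $y \in Y$.

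The next step is a proper base change identification
\[ y^{\ast,\hyp} f_\ast(\frakStokes_{\cI,\cE}) \simeq \St_{\cI_y, \cE} \ , \]
obtained in the same spirit as the argument used in the proof of \cref{elementary_criterion}, relying on the subanalytic proper base change of \cref{proper_analytic_direct_image}. Since stalks are filtered colimits computed in $\PrL$ and $- \otimes \cE'$ preserves colimits, this also yields an identification $y^{\ast,\hyp}(f_\ast(\frakStokes_{\cI,\cE})\otimes\cE') \simeq \St_{\cI_y, \cE}\otimes\cE'$, and the naturality of the comparison map of \cref{comparison:Stokes_sheaf_comparison_tensor} reduces the question to showing that $(X_y, P, \cI_y)$ is universal. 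Bireflexivity of $(X_y, P, \cI_y)$ is given by \cref{stability_lim_colim_ISt}, while \cref{cor:piecewise_level_structure_implies_locally_elementary_level_structure} upgrades the ramified vertically piecewise elementary level structure inherited on the fibre to a ramified locally elementary level structure. On the associated finite Galois cover, \cref{thm:Stokes_tensor_product_absolute} ensures universality, which \cref{topological_retraction_lemma} then descends back to $(X_y, P, \cI_y)$.

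For the ``in particular'' statement, I take global sections on $Y$: since $f_\ast(\frakStokes_{\cI,\cE}) \in \ConsRhyp(Y;\PrLR)$ corresponds via exodromy to a functor $\Pi_\infty(Y,R) \to \PrLR$, \cref{Peter_lemma} gives
\[ \St_{\cI,\cE}\otimes\cE' \simeq \Gamma(Y, f_\ast\frakStokes_{\cI,\cE})\otimes\cE' \simeq \Gamma(Y, f_\ast\frakStokes_{\cI,\cE}\otimes\cE') \simeq \Gamma(Y, f_\ast\frakStokes_{\cI,\cE\otimes\cE'}) \simeq \St_{\cI,\cE\otimes\cE'}, \]
where the last equivalence uses \eqref{commutation_tensor_product_eq} and bireflexivity of $(X,P,\cI)$ follows again from \cref{stability_lim_colim_ISt}.

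The main obstacle will be setting up proper base change in the $\PrL$-valued setting — where general nonabelian base change results such as \cite[Theorem 0.5]{Haine_Nonabelian_basechange} do not apply verbatim because $\PrL$ is not compactly generated — and checking that the resulting identification is compatible with the comparison map of \cref{comparison:Stokes_sheaf_comparison_tensor}. I expect this to follow by combining the subanalytic setting of \cref{proper_analytic_direct_image} with a direct cofinality argument on exit-path $\infty$-categories, exploiting the fact that everything in sight factors through $\PrLR$ so that the relevant limits are preserved by the tensor product.
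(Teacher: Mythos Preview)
Your approach is correct and matches the paper's proof essentially step for step: reduce to stalks via \cref{direct_image_factors_trough_PrL} and \cref{naive_tensor_is_hypersheaf}, use proper base change (\cref{proper_analytic_direct_image} together with \cref{pullback_Stokes_sheaf}) to reduce to $Y$ a point, then pass to the finite Galois cover via \cref{topological_retraction_lemma} and conclude with \cref{cor:piecewise_level_structure_implies_locally_elementary_level_structure} and \cref{thm:Stokes_tensor_product_absolute}. The obstacle you flag at the end is not a genuine difficulty: since $\frakStokes_{\cI,\cE}$ lives in $\ConsPhyp(X;\CAT_\infty)$, \cref{proper_analytic_direct_image} gives base change directly in the subanalytic setting without any appeal to \cite{Haine_Nonabelian_basechange}, and compatibility with the comparison map is automatic from the functorial construction in \cref{comparison:Stokes_sheaf_comparison_tensor}.
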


\begin{proof}
	The second half follows from the first because $f_\ast( \frakStokes_{\cI,\cE} ) \otimes \cE'$ is by definition the tensor product computed in $\PSh(Y;\CAT_\infty)$.
	To prove the first half, observe that both sides of \eqref{commutation_tensor_product_eq} are hyperconstructible hypersheaves by \cref{direct_image_factors_trough_PrL} and \cref{naive_tensor_is_hypersheaf}.
	Hence, the equivalence can be checked at the level of stalks.
	Since $f$ is proper, Propositions \ref{proper_analytic_direct_image} and \ref{pullback_Stokes_sheaf} allow to reduce ourselves to the case where $Y$ is a point.
	That is, we are left to show that $(X,P,\cI)$ is universal.
     In that case, there exists  a cartesian finite Galois cover $ (Y,P,\cJ)\to (X,P,\cI)$ such that $(Y,P,\cJ)$ admits a vertically piecewise elementary level structure.
	Recall that $(X,P)$ is conically refineable in virtue of \cref{eg:subanalytic_implies_combinatorial}.
	By \cref{topological_retraction_lemma}, it is thus enough  to show that 
	$(Y,P,\cJ)$ is universal.
	Hence, we can suppose that $(X,P,\cI)$ admits a vertically piecewise elementary level structure.
	In this case, \cref{cor:piecewise_level_structure_implies_locally_elementary_level_structure} guarantees that $(X,P,\cI)$ admits a locally elementary level structure, so the conclusion follows from \cref{thm:Stokes_tensor_product_absolute}.
\end{proof}

\subsection{Finite type property for Stokes structures}

	We proved \cref{thm:St_presentable_stable} under two key assumptions on the Stokes stratified space $(X,P,\cI)$: the local existence of a ramified locally elementary level structure and the fibers of $\cI$ are finite posets.
%	Under these same assumptions, we also proved that $\St_{\cI,\cE}$ is compactly generated provided that $\cE$ is.
	We now analyze the categorical finiteness properties of $\St_{\cI,\cE}$: under some stricter geometrical assumptions on $(X,P,\cI)$ and working in the \emph{analytic setting} we establish that it is of finite type and hence smooth in the non-commutative sense (see e.g.\ \cite[Definition 11.3.1.1]{Lurie_SAG}).

\begin{defin}\label{strongly_proper}
	Let $f \colon (M,X)\to (N,Y)$ be a subanalytic morphism.
	We say that $f$ is \textit{strongly proper} if it is proper and for every finite subanalytic stratifications $X \to P$ and $Y \to Q$ such that $f \colon (X,P) \to (Y,Q)$ is a subanalytic stratified map,	there exists a categorically \emph{finite} subanalytic refinement $R \to Q$ such that for every $\cF \in \ConsPhyp(X;\CAT_{\infty})$, we have $f_\ast(\cF)\in \ConsRhyp(Y;\CAT_{\infty})$.
\end{defin}

\begin{eg}\label{ex_strongly_proper}
	By \cref{proper_analytic_direct_image} and \cref{categorical_compactness}, every proper subanalytic map $f\colon(M,X)\to (N,Y)$ with $Y$ compact is strongly proper.
\end{eg}

	The following lemma  is our main source of strongly proper morphisms.

\begin{lem}\label{compactifiable_is_stronly_proper}
	Let $f \colon (M,X)\to (N,Y)$ be a proper subanalytic morphism.
	Assume the existence of a commutative diagram
	\[ 
	\begin{tikzcd}
		(M,X) \arrow[hook]{r}{j}\arrow{d}{f} & (\overline{M},\overline{X})  \arrow{d}{g} \\
		(N,Y) \arrow[hook]{r}{i} &  (\overline{N},\overline{Y})
	\end{tikzcd} 
	\]
	such that $g$ is proper, $ \overline{Y} $ is compact and the horizontal arrows are open immersions with subanalytic complements.
	Then $f \colon (M,X)\to (N,Y)$  is strongly proper.
\end{lem}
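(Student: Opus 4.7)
The plan is to apply the Goresky--MacPherson--Whitney refinement used in the proof of \cref{proper_analytic_direct_image} to the compactified map $g$, where compactness of $\overline{Y}$ upgrades the resulting locally finite refinement to a finite one, and then to restrict back to $f$. Fix finite subanalytic stratifications $X \to P$ and $Y \to Q$ with $f \colon (X, P) \to (Y, Q)$ subanalytic stratified. Since $\overline{X} \smallsetminus X$ and $\overline{Y} \smallsetminus Y$ are subanalytic, we first extend $P$ and $Q$ to finite subanalytic stratifications $\overline{X} \to \overline{P}$ and $\overline{Y} \to \overline{Q}$ by adding finitely many new strata covering the complements. After a further finite subanalytic refinement of $\overline{P}$, we arrange that $g \colon (\overline{X}, \overline{P}) \to (\overline{Y}, \overline{Q})$ is a subanalytic stratified morphism restricting to $f$.

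Applying \cite[1.7]{Goresky_MacPherson_Stratified_Morse_theory} to $g$ produces Whitney refinements $\overline{T} \to \overline{P}$ and $\overline{R} \to \overline{Q}$ such that $g \colon (\overline{X}, \overline{T}) \to (\overline{Y}, \overline{R})$ is submersive on each stratum; Whitney stratifications being locally finite, compactness of $\overline{Y}$ forces $\overline{R}$ to be finite. Set $R \coloneqq \overline{R}|_Y$ and $T \coloneqq \overline{T}|_X$, which are finite subanalytic stratifications of $Y$ and $X$ refining $Q$ and $P$ respectively. Since $Y$ is relatively compact in $\overline{N}$ (as $\overline{Y}$ is compact), \cref{categorical_compactness} ensures that $(Y, R)$ is categorically finite. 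By Thom's first isotopy lemma, $g \colon (\overline{X}, \overline{T}) \to (\overline{Y}, \overline{R})$ is a stratified bundle above each stratum of $\overline{R}$, and this property restricts to the open pair $X \subset \overline{X}$ and $Y \subset \overline{Y}$ so that $f \colon (X, T) \to (Y, R)$ is itself a stratified bundle above each stratum of $R$.

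For any $\cF \in \ConsPhyp(X; \CAT_\infty)$, which is in particular $T$-hyperconstructible since $T$ refines $P$, the argument in the proof of \cref{proper_analytic_direct_image} based on \cite[Proposition 6.10.7-(a)]{Exodromy_coefficient} then applies verbatim to $f$ and yields $f_\ast(\cF) \in \ConsRhyp(Y; \CAT_\infty)$. The main subtle point is the first step: extending the stratifications to $\overline{X}$ and $\overline{Y}$ in a way that $g$ becomes subanalytic stratified while still restricting to the given $f$. This is standard but requires care, as one may need several intermediate refinements of $\overline{P}$ to ensure that preimages of strata in $\overline{Q}$ are unions of strata in $\overline{P}$ while keeping $\overline{X} \smallsetminus X$ a union of strata disjoint from the strata of $P$; once this bookkeeping is carried out, the rest of the argument is a direct inheritance of structure under restriction to open subanalytic subsets.
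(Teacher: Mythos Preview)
Your argument is correct but takes a somewhat different route from the paper. The paper avoids reopening the Goresky--MacPherson/Thom machinery by extending the \emph{sheaf} rather than just the stratifications: it stratifies $\overline{X}$ and $\overline{Y}$ via the cones $P^\lhd$ and $Q^\lhd$ (sending the complements $\overline{X}\smallsetminus X$ and $\overline{Y}\smallsetminus Y$ to the new initial element), pushes any $F \in \ConsPhyp(X;\CAT_\infty)$ forward to $j_!(F) \in \Cons_{P^\lhd}^{\hyp}(\overline{X};\CAT_\infty)$, and then applies \cref{proper_analytic_direct_image} to $g$ as a black box to produce a refinement $S \to Q^\lhd$ with $g_*(j_! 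F)$ being $S$-constructible on $\overline{Y}$; restricting to the open subposet $R \subset S$ sitting over $Q$ and identifying $i^{\ast}g_* j_! F$ with $f_* F$ then finishes, with categorical finiteness of $(Y,R)$ supplied by \cref{categorical_compactness}. Your version instead leaves the sheaf on $X$, runs Goresky--MacPherson and Thom on $g$ only to manufacture a finite Whitney refinement $\overline{R}$ (finiteness coming transparently from compactness of $\overline{Y}$ and local finiteness of Whitney stratifications), and then restricts the Whitney/submersive-on-strata structure to $f$ so that the remainder of the proof of \cref{proper_analytic_direct_image} applies directly to $f$ with the original $\cF$. The paper's approach is more modular and dispatches your ``subtle point'' about extending stratifications in one stroke via the cones $P^\lhd$, $Q^\lhd$; yours makes the passage from local finiteness to finiteness explicit and sidesteps the $j_!$ bookkeeping entirely. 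One small remark on your restriction step: the cleanest justification that $f \colon (X,T) \to (Y,R)$ is a stratified bundle over each stratum is simply to note that $(X,T)$ and $(Y,R)$ are themselves Whitney (as open unions of strata of $(\overline{X},\overline{T})$ and $(\overline{Y},\overline{R})$), that $f$ is proper and submersive on each stratum of $T$, and then to apply Thom's first isotopy lemma directly to $f$, rather than arguing by restriction from $g$.
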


\begin{proof}
	Let $X\to P$ and $Y\to Q$ be finite subanalytic stratifications such that $f \colon (X,P)\to (Y,Q)$ is a subanalytic stratified map.
	Extend $ X\to P $ to a  subanalytic stratification $ \overline{X}\to P^{\lhd} $ by sending $ \overline{X}\setminus X $ to the initial object of $ P^{\lhd} $.
	Extend $ Y\to Q $ to a subanalytic stratification $ \overline{Y}\to Q^{\lhd} $ by sending $ \overline{Y}\setminus Y $ to the initial object of $ Q^{\lhd} $.
	By \cref{proper_analytic_direct_image}  applied  to the proper map $ g \colon  (\overline{M},\overline{X}) \to  (\overline{N},\overline{Y}) $, there is a finite subanalytic refinement $ S \to Q^{\lhd} $ such that for every $ F\in \ConsPhyp(X;\CAT_{\infty}) $, we have $ g_\ast(j_!(F))\in \ConsShyp(\overline{Y};\CAT_{\infty}) $.
	Let $ R\subset S $ be the (finite) open subset of elements not mapped to the initial object of $ Q^{\lhd} $ by $ S \to Q^{\lhd} $.
	Then,  $f_\ast(\cF)\in \ConsRhyp(Y;\CAT_{\infty})$ with $ (Y,R) $ categorically finite by \cref{categorical_compactness}.
\end{proof}

	From now on, we fix an animated commutative ring $k$ and a compactly generated $k$-linear stable $\infty$-category $\cE$.

\begin{observation}\label{observation:k_linear_structure}
	For every Stokes stratified space $(X,P,\cI)$, we see that $\Fun(\cI, \cE)$ is again compactly generated and $k$-linear.
	When the fibers of $\cI$ are finite, \cref{(co)limit_and_cocart}-(2) implies that $\Funcocart(\cI, \cE)$ is a localization of $\Fun(\cI, \cE)$ and therefore inherits a $k$-linear structure.
	When $(X,P,\cI)$ admits a locally elementary level structure \cref{presentable_stable} implies that $\St_{\cI,\cE}$ inherits a $k$-linear structure.
\end{observation}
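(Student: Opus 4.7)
The statement is mostly a bookkeeping assertion, and the proof will amount to assembling three already-proved ingredients. The plan is to handle each of the three claims in turn.

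First, for $\Fun(\cI,\cE)$: I will deduce compact generation from \cref{recollection:generators_presheaves}. Indeed, since $\cE$ is compactly generated, if $\{E_\alpha\}$ is a set of compact generators of $\cE$, then for each $a \in \cI$ the evaluation functor $\ev_a^{\cI} \colon \Fun(\cI,\cE)\to \cE$ commutes with all colimits, so its left adjoint $\ev_{a,!}^{\cI}$ preserves compact objects, yielding compact generation of $\Fun(\cI,\cE)$ by the family $\{\ev_{a,!}^{\cI}(E_\alpha)\}_{a \in \cI, \alpha}$ as in \cref{recollection:generators_presheaves}. For the $k$-linear structure, I will use the canonical identification $\Fun(\cI,\cE) \simeq \Fun(\cI,\Spc) \otimes \cE$ in $\PrL$ together with the fact that $\cE$ is an $\Mod_k$-module in $\PrL$, so that $\Fun(\cI,\cE)$ acquires an $\Mod_k$-action by functoriality of the tensor product.

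Second, for $\Funcocart(\cI,\cE)$ when $\cI$ has finite fibers: \cref{(co)limit_and_cocart} shows that the inclusion $\Funcocart(\cI,\cE) \hookrightarrow \Fun(\cI,\cE)$ preserves both limits and colimits, so the inclusion admits both adjoints. The left adjoint (cocartesianization) together with closure under colimits exhibits $\Funcocart(\cI,\cE)$ as a localization of $\Fun(\cI,\cE)$ in $\PrL$, and a localization in $\PrL$ of a $k$-linear $\infty$-category by a colimit-preserving reflector inherits the $\Mod_k$-module structure (the localization can be performed in $\Mod_{\Mod_k}(\PrL)$, since $\Mod_k \to \PrL$ preserves limits of reflective localizations).

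Third, for $\St_{\cI,\cE}$ under the locally elementary level structure assumption: \cref{presentable_stable} exhibits $\St_{\cI,\cE}$ as a localization of $\Fun(\cI,\cE)$ in $\PrL$, and the same argument as in Step~2 shows that the $k$-linear structure descends to $\St_{\cI,\cE}$.

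There is no real obstacle here; the only subtlety to spell out is the descent of the $\Mod_k$-action along a colimit-preserving localization, which follows from the standard fact that the forgetful functor $\Mod_{\Mod_k}(\PrL) \to \PrL$ detects localizations. I do not expect any of these steps to require more than a few lines.
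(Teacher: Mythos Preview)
Your proposal is correct and matches the paper's approach: the paper treats this as a self-justifying observation with the cited references embedded in the statement itself, and your expansion simply unpacks those references (\cref{recollection:generators_presheaves} for compact generation, \cref{(co)limit_and_cocart} for the cocartesian localization, \cref{presentable_stable} for the Stokes localization) exactly as intended. The only minor remark is that the descent of the $\Mod_k$-action is slightly cleaner to phrase by noting that a stable full subcategory closed under colimits is automatically closed under the $\Mod_k$-action (since $\Mod_k$ is generated under colimits by shifts of $k$), rather than invoking a general statement about the forgetful functor detecting localizations.
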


\begin{thm}\label{finite_typeness}
	Let $f \colon  (X,P,\cI) \to (Y,Q)$ be a strongly proper family of Stokes analytic stratified spaces in finite posets locally admitting a ramified vertically piecewise elementary level structure.
	Let $k$ be an animated ring and let $\cE$ be a compactly generated $k$-linear stable $\infty$-category of finite type (\cref{finite_type_defin}).
	Then $\St_{\cI,\cE}$ is of finite type relative to $k$ as well.
\end{thm}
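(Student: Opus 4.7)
The plan is to reduce to the absolute case where $Y$ is a point, and then to perform an induction on the length of the level structure, at each stage reducing to a finite limit in $\PrLR$ where the finite type property is known to be stable.

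First, I would reduce to the case where $Y$ is a point using strong properness. By \cref{direct_image_factors_trough_PrL}, $f_{\ast}(\frakStokes_{\cI,\cE})$ lands in $\ConsRhyp(Y;\PrLR)$ for some subanalytic refinement $R \to Q$, and strong properness guarantees that $(Y,R)$ is categorically finite. Via the exodromy equivalence (\cref{exodromy_PrL}), $\St_{\cI,\cE} \simeq \Gamma(Y, f_{\ast}(\frakStokes_{\cI,\cE}))$ becomes a \emph{finite} limit in $\PrLR$ whose vertices are the stalks, identified with $\St_{\cI_y,\cE}$ by \cref{stalk_pullback_Stokes_sheaf}. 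Since finite type is preserved by finite limits in $\PrLR$, it suffices to treat each $(X_y, P, \cI_y)$, which admits a ramified vertically piecewise elementary level structure and is compact (as $f$ is proper). We have thus reduced to the absolute setting in which $X$ is compact.

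Second, I would handle the ramification. Given a vertically finite Galois cover $(X', P, \cI') \to (X, P, \cI)$ making the level structure unramified, \cref{topological_retraction_lemma} combined with Cech descent along the Galois cover expresses $\St_{\cI,\cE}$ as a \emph{finite} limit in $\PrLR$ (the Cech nerve of a finite Galois cover has finitely many terms) of copies of $\St_{\cI'_n, \cE}$, where universality (\cref{thm:Stokes_tensor_product}) guarantees compatibility with the tensor structure. This reduces the problem to the case where $(X,P,\cI)$ itself admits a (non-ramified) vertically piecewise elementary level structure.

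Third, I would perform induction on the length $d$ of the level structure, using \cref{level_dévissage} together with \cref{level_pullback_in_PrLR} to present $\St_{\cI,\cE}$ as a pullback in $\PrLR$
\[ \St_{\cI,\cE} \simeq \St_{\cJ,\cE} \times_{\St_{\cJ^{\ens},\cE}} \St_{\cI_p,\cE} \ . \]
By induction hypothesis, $\St_{\cJ,\cE}$ is of finite type. For the two remaining vertices, $(X,P,\cJ^{\ens})$ has fibration in sets, so $\St_{\cJ^{\ens},\cE} = \Funcocart(\cJ^{\ens},\cE)$, while $(X, P, \cI_p)$ is vertically piecewise elementary, hence locally elementary by \cref{piecewise_implies_local_elementary} in the absolute case where $X$ is compact. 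Moreover, by \cref{piecewise_implies_local_elementary}-(3), we can cover $X$ by finitely many subanalytic elementary open subsets $\{U_i\}$; hybrid descent (\cref{Stokes_via_Fil_finite_cover} and \cref{StFil_in_prLR}) then writes $\St_{\cI_p,\cE}$ as a \emph{finite} limit in $\PrLR$ of categories of cocartesian functors on exodromic stratified spaces.

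Fourth, I would settle the base case: showing that $\Funcocart(\cK, \cE)$ is of finite type whenever $\cK \to \Pi_\infty(Z, P)$ is a cocartesian fibration in finite posets over a categorically finite subanalytic stratified space $(Z,P)$ (categorical finiteness coming from \cref{categorical_compactness} applied to the relatively compact pieces of the finite cover, after shrinking if needed). Via the exodromy and straightening equivalences, this category is identified with the $\infty$-category of sections of a cocartesian fibration over a finite $\infty$-category whose fibres are of the form $\Fun(\cK_z, \cE)$, which for finite $\cK_z$ is a finite colimit in $\PrL$ of copies of $\cE$ and hence of finite type; the global sections form again a finite limit in $\PrLR$ preserving finite type. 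The main obstacle throughout is keeping all reductions strictly inside $\PrLR$, so that finite limits preserve the finite type property; this is precisely what bireflexivity (\cref{stability_lim_colim_ISt}) and universality (\cref{thm:Stokes_tensor_product}) secure, with strong properness and compactness guaranteeing that all covers and limits encountered are finite.
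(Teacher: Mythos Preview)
Your outline matches the paper's proof in its overall architecture: reduce to $Y$ a point via strong properness and exodromy, pass through the Galois cover, then induct on the length of the level structure using the pullback square of \cref{level_dévissage} in $\PrLR$; the base case (piecewise elementary over a compact base) is exactly the paper's \cref{finite_typeness_piecewise_elementary_case}, proved via hybrid descent and \cref{cocart_finite_type}.

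There is, however, a genuine gap in your second step. The \v{C}ech nerve of a finite Galois cover $Y \to X$ is an \emph{infinite} semi-simplicial object: each $Y_n \simeq Y \times_X \cdots \times_X Y$ is a finite coproduct of copies of $Y$, but there are infinitely many simplicial levels, so $\lim_{[n] \in \mathbf{\Delta}_s} \St_{\cI_n,\cE}$ is not a finite limit and your parenthetical justification is incorrect. Equivalently, $G$-homotopy fixed points $\lim_{BG}$ for a nontrivial finite group $G$ is not a finite limit, since $BG$ is not a compact $\infty$-category. The paper resolves this via \cref{topological_retraction_lemma_bis} (not \cref{topological_retraction_lemma}, which concerns universality and relative tensor products): using that $\Pi_\infty(X)$ is compact (\cref{categorical_compactness}, since $X$ is compact), one obtains an integer $m \geq 1$ such that $\St_{\cI,\cE}$ is a \emph{retract} of the truncated limit $\lim_{[n] \in \mathbf{\Delta}_{s,\leq m}} \St_{\cI_n,\cE}$ in $\PrLR$. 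Finite type then follows because it is stable under finite limits in $\PrLR$ (\cref{finite_limit_prLRomega}) and under retracts. This retraction statement is non-trivial and is imported from the companion paper.

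A minor point: in your first step, identifying the stalk $F(y)$ with $\St_{\cI_y,\cE}$ requires proper base change (\cref{proper_analytic_direct_image} together with \cref{pullback_Stokes_sheaf}), not \cref{stalk_pullback_Stokes_sheaf} alone, which only computes stalks of $\frakStokes_{\cI,\cE}$ on $X$.
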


%The following is not crucial so got silenced to keep the exposition as straightoforward as possible
%\begin{rem}
%	It is possible to formulate an analogous statement without analyticity assumption assuming that $f$ is strongly proper in the topological sense: for every finite exodromic stratifications $X \to P$ and $Y \to Q$ such that $(X,P) \to (Y,Q)$ is a morphism of stratified spaces, there exists a categorically finite exodromic refinement $R \to Q$ such that for every $\cF \in \ConsPhyp(X;\CAT_\infty)$, $f_\ast(\cF)$ belongs to $\ConsRhyp(Y;\CAT_\infty)$.
%	However, to ensure the existence and the abundance of strongly proper maps, analyticity is extremely handy, as it enables to use techniques of  \cite{Goresky_MacPherson_Stratified_Morse_theory} that are not available in the purely topological setting.
%\end{rem}

\begin{cor}
	In the setting of \cref{finite_typeness},
	\[ \St_{\cI,k} \coloneqq \St_{\cI,\Mod_k} \]
	is a smooth $k$-linear presentable stable $\infty$-category.
\end{cor}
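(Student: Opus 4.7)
The plan is to deduce this corollary as a direct specialization of \cref{finite_typeness}, combined with the standard implication ``finite type $\Rightarrow$ non-commutative smoothness''. First I would verify that $\cE = \Mod_k$ meets the hypotheses of \cref{finite_typeness}: it is $k$-linear stable presentable and compactly generated by $k$ itself, and it is tautologically of finite type over $k$ (being the free $k$-linear stable presentable $\infty$-category on one generator, i.e.\ $\Mod_k \simeq \Ind(\Perf_k)$ with $\Perf_k$ a finitely presented $k$-linear small stable idempotent-complete $\infty$-category). This is the content of \cite[Definition 11.3.1.1]{Lurie_SAG} applied to the trivial case.

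Next, I would apply \cref{finite_typeness} to the strongly proper family $f \colon (X,P,\cI) \to (Y,Q)$ equipped with the coefficient $\cE = \Mod_k$. The conclusion is that $\St_{\cI,k} = \St_{\cI,\Mod_k}$ is of finite type relative to $k$. The fact that $\St_{\cI,k}$ is $k$-linear presentable stable is already granted by \cref{observation:k_linear_structure} applied in this setting (using \cref{presentable_stable} to know we are in the locally elementary level structure case, which is ensured by \cref{cor:piecewise_level_structure_implies_locally_elementary_level_structure} from the assumption of a ramified vertically piecewise elementary level structure).

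Finally, I would invoke the standard fact that any $k$-linear compactly generated stable $\infty$-category of finite type over $k$ is smooth in the non-commutative sense of \cite[Definition 11.3.1.1]{Lurie_SAG}: indeed, finite type means $\St_{\cI,k} \simeq \Ind(\cC)$ for some $k$-linear small stable idempotent-complete $\cC$ which is finitely presented as an object of $\Catidem_k$, and such finite presentation implies the compactness of the identity bimodule, which is the definition of smoothness. There is essentially nothing to grind through here; the corollary is a reformulation of \cref{finite_typeness}. The only mild subtlety is ensuring that ``finite type'' in the sense used in the theorem statement is literally the same notion that Lurie/To\"en--Vaqui\'e use to deduce smoothness, which is precisely how \cref{finite_type_defin} is set up.
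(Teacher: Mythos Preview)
Your proposal is correct and follows essentially the same approach as the paper: apply \cref{finite_typeness} with $\cE = \Mod_k$ and then invoke the standard implication that finite type $k$-linear categories are smooth in the non-commutative sense. The paper's proof is a one-liner citing \cite[Proposition 2.14]{Toen_Moduli} for this last implication rather than \cite{Lurie_SAG}, and it omits your preliminary checks since the phrase ``in the setting of \cref{finite_typeness}'' already places us under all the relevant hypotheses.
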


\begin{proof}
	This simply follows because finite type $k$-linear categories are smooth over $k$ in the non-commutative sense, see e.g.\ \cite[Proposition 2.14]{Toen_Moduli}.
\end{proof}

%We start working our way towards \cref{finite_typeness} with a couple of preliminaries lemmas, needed to take care of the level induction step:

%\begin{lem}\cite[\cref{finite_limit_prLRomega}]{Abstract_Derived_Stokes}\label{finite_limit_prLRomega_IHES}
%	Let $\cC_\bullet \colon A \to \PrLR_k$ be a diagram such that $\cC_a$ is compactly generated for every $a\in A$.
%	Set
%	\[ \cC \coloneqq \lim_{a \in A} \cC_a \ , \]
%	the limit being computed in $\PrL$.
%	Then $\cC$ is compactly generated.
%	Furthermore, if $\cC_a$ is of finite type for every $a \in \cC$ and $A$ is a compact $\infty$-category, then $\cC$ is of finite type as well.
%\end{lem}

\begin{lem}\label{finite_typeness_piecewise_elementary_case}
	Let $(X,P,\cI)$ be a compact piecewise elementary Stokes analytic stratified space in finite posets.
	Let $k$ be an animated ring and let $\cE$ be a compactly generated $k$-linear stable $\infty$-category of finite type.
	Then $\St_{\cI,\cE}$ is of finite type relative to $k$.
\end{lem}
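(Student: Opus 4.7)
The plan is to combine the hybrid descent for Stokes functors with a finite open cover by elementary subanalytic subsets, reducing finite type for $\St_{\cI,\cE}$ to finite type for cocartesian-functor categories over finite $\infty$-categories, and then invoking stability of finite type under finite limits in $\PrLR$.

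First, I view $(X,P,\cI)$ as a family over the point, so that compactness of $X$ makes the structural map $X \to \ast$ proper. Since $(X,P,\cI)$ is piecewise elementary, it is a fortiori vertically piecewise elementary (\cref{piecewise_implies_vertical_piecewise}), so \cref{piecewise_implies_local_elementary}-(3) yields a finite cover $\{V_1,\dots,V_n\}$ of $X$ by subanalytic open subsets with each $(V_i,P,\cI_{V_i})$ elementary. Any intersection $V_J = \bigcap_{j \in J} V_j$ is open and subanalytic, and its closure is compact because $X$ is; hence $(V_J,P)$ is categorically finite by \cref{categorical_compactness}. Since the fibers of $\cI$ are finite posets, it follows that each $\cI_{V_J}$ is a finite $\infty$-category, and the same holds for $\cI_{V_J}^{\ens}$.

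Next I apply the hybrid descent to the Čech nerve $\mathbf U_\bullet$ of this cover. Since $\mathbf U_0 = \bigsqcup_i V_i$ is a disjoint union of elementary pieces, it is itself elementary, so \cref{Stokes_via_Fil} combined with \cref{Stokes_via_Fil_finite_cover} and \cref{StFil_in_prLR} yields an equivalence
\[ \St_{\cI,\cE} \simeq \lim_{[k] \in \mathbf{\Delta}_{\leq n,s}\op} \StFil_{\cI,\cE}^{\cU}([k]) \]
computed in $\PrLR$, where $\StFil_{\cI,\cE}^{\cU}([0]) = \Funcocart(\cI_{\mathbf U_0}^{\ens},\cE)$ and $\StFil_{\cI,\cE}^{\cU}([k]) = \Funcocart(\cI_{\mathbf U_k},\cE)$ for $k \geq 1$. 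In each case we have a category of the form $\Funcocart(\cA,\cE)$ with $\cA$ a finite $\infty$-category.

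The key technical input, to be imported from \cite{Abstract_Derived_Stokes}, is that for $\cA$ a finite $\infty$-category and $\cE$ a compactly generated $k$-linear stable $\infty$-category of finite type, $\Funcocart(\cA,\cE)$ is of finite type over $k$. Granting this, the conclusion follows because finite limits of $k$-linear compactly generated stable $\infty$-categories of finite type computed in $\PrLR$ preserve the finite type property (a general consequence of the characterization of finite type as compactness in $\mathrm{Cat}^{\mathrm{perf}}_k$). The main obstacle is the finite type statement for $\Funcocart(\cA,\cE)$: although $\Fun(\cA,\cE) \simeq \cE \otimes \Fun(\cA,\Sp)$ is easily seen to be of finite type when $\cA$ is finite and $\cE$ is, the passage to the full subcategory of cocartesian functors requires a careful analysis of the cocartesianization left adjoint of \cite[\cref*{Abstract_Stokes-cor:cocartesianization}]{Abstract_Derived_Stokes} to show that compact objects are controlled and remain finitely generated. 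This is where the main effort will be concentrated.
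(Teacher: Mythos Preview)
Your overall strategy matches the paper's proof exactly: cover by finitely many elementary subanalytic opens, apply hybrid descent to write $\St_{\cI,\cE}$ as a finite limit in $\PrLR$ of terms of the form $\Funcocart(\cI_{\mathbf U_k}^{(\ens)},\cE)$, and conclude by stability of finite type under finite limits in $\PrLR$ (\cref{finite_limit_prLRomega}).

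The one place where you diverge is the last step, and there you are making your life harder than necessary. You propose to attack finite type of $\Funcocart(\cI_{\mathbf U_k},\cE)$ by analyzing the cocartesianization left adjoint and controlling compact objects through it. The paper avoids this entirely via \cref{cocart_finite_type} (in the appendix of this paper, not in \cite{Abstract_Derived_Stokes}): one uses the canonical equivalence
\[
\Funcocart(\cI_{\mathbf U_k},\cE) \;\simeq\; \lim_{x \in \Pi_\infty(\mathbf U_k,P)} \Fun(\cI_x,\cE)
\]
coming from \cite[3.3.3.2]{HTT}. Since $(\mathbf U_k,P)$ is categorically finite and the fibers $\cI_x$ are finite posets, each $\Fun(\cI_x,\cE)$ is of finite type, the transition functors (left Kan extensions along maps of finite posets) lie in $\PrLR$ by \cref{induction_limits}, and then \cref{finite_limit_prLRomega} applies once more. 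No analysis of cocartesianization or of compact objects in $\Funcocart$ is needed.
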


\begin{proof}
	Thanks to \cref{thm:spreading_out}, $X$ admits a finite cover by relatively compact subanalytic open subsets $U_1,\dots, U_n$ such that $(U_i,P,\cI_{U_i})$ is elementary for every $i=1,\dots,n$.
	In particular, each term of the associated hypercover $\cU = \{ \mathbf{U}_\bullet \}$  is a relatively compact \textit{subanalytic} open subset.
	From \cref{Stokes_via_Fil} and \cref{Stokes_via_Fil_finite_cover}, we have a canonical equivalence
	\[ \St_{\cI,\cE} \simeq  \lim_{\mathbf \Delta\op_{\leq n ,s}} \StFil_{\cI,\cE}^{\cU} |_{\Delta\op_{\leq n ,s}} \ . \]
	Since $\Delta\op_{\leq n,s}$ is a finite category, \cref{finite_limit_prLRomega} reduces us to show that the transition maps in
	\[ \StFil_{\cI,\cE}^{\cU}|_{\Delta\op_{\leq n ,s}}   \colon  \mathbf \Delta\op_{\leq n ,s}  \to \CAT_{\infty} \]
	are both left and right adjoints and that $\StFil_{\cI,\cE}^{\cU}([m])$ is of finite type for every $m \leqslant n$.
	The first point follows from \cref{StFil_in_prLR}, while the second one follows from \cref{cocart_finite_type} and \cref{categorical_compactness} stating that for every relatively compact open subanalytic subset $U\subset X$, the  stratified space $(U,P)$ is categorically compact.
\end{proof}

\begin{lem}\label{topological_retraction_lemma_bis}
	Let $f \colon (Y,P,\cJ)\to (X,P,\cI)$ be a cartesian finite Galois cover in $\StStrat$ where $(X,P)$ is conically refineable with $\Pi_{\infty}(X)$ compact and where $(Y,P,\cJ)$ is universal.
	Let $Y_{\bullet} \colon \mathbf \Delta_{s}\op \to \Top_{/X}$ be the Cech complex of $f \colon Y \to X$ and put 
	\[ \cI_\bullet \coloneqq \Pi_{\infty}(Y_{\bullet},P) \times_{\Pi_{\infty}(X,P)} \cI \ . \]
	Then $(X,P,\cI)$  is universal and there exists $m\geq 1$ such that %for every $\cE$ presentable stable $\infty$-category, 
	$\St_{\cI,\cE}$ is a retract of 
	\[ \lim_{[n] \in \mathbf \Delta_{s,\leq m}} \St_{\cI_n,\cE} \]
	in $\PrLR$.
\end{lem}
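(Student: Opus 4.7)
The plan is to split the statement into two parts, treating universality first and then the retract property.

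For universality, I would argue exactly as in \cref{topological_retraction_lemma}: since $f\colon Y\to X$ is a finite Galois cover, each $Y_n$ is a finite coproduct of copies of $Y$ over $X$, so $(Y_n,P,\cJ_n)$ inherits conical refinability from $(Y,P)$ and universality from $(Y,P,\cJ)$. Then \cref{étale_hyperdescent}(3) applied to the Čech hypercover $Y_\bullet \to X$ yields universality of $(X,P,\cI)$. Note that $(X,P)$ is exodromic by \cref{conically_refineable_implies_exodromic}, so the hypothesis of \cref{étale_hyperdescent} is met.

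For the retract property, I would first apply \cref{étale_hyperdescent}(1) to obtain the hyperdescent equivalence
\[ \St_{\cI,\cE} \simeq \lim_{[n]\in \mathbf \Delta_s\op} \St_{\cI_n,\cE}. \]
Since each $(Y_n,P,\cJ_n)$ is bireflexive, the hypothesis of \cref{étale_hyperdescent}(2) is satisfied, so this limit can in fact be taken in $\PrLR$, and hence in $\PrL$ by \cref{Peter_lemma_1}. The key input is now that, since $f\colon Y\to X$ is a finite étale cover, \cite[\cref*{Abstract_Stokes-lem:finite_etale_fibrations_example}]{Abstract_Derived_Stokes} shows that the induced map $\Pi_\infty(f)\colon \Pi_\infty(Y,P) \to \Pi_\infty(X,P)$ is a finite étale fibration. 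The Čech nerve of this fibration in $\infty$-groupoids agrees with $\Pi_\infty(Y_\bullet,P)$, and its geometric realization is $\Pi_\infty(X,P)$.

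The main obstacle, and the key new input compared to \cref{topological_retraction_lemma}, is to use the categorical compactness of $\Pi_\infty(X)$ to cut the cosimplicial limit off at a finite level while preserving the retract property. The plan here is the following: categorical compactness of $\Pi_\infty(X)$ means that it is a compact object in $\Cat_\infty$, so it is a retract of a finite $\infty$-category $\cK$. Since $\Pi_\infty(f)$ is a finite étale fibration over $\Pi_\infty(X)$, the Čech nerve in $\Cat_\infty$ admits a corresponding finite approximation by the $m$-skeleton of the Čech nerve of a pullback to $\cK$, for some finite $m$ bounded by the combinatorial complexity of $\cK$. Transporting this through the equivalence $\PrLat\simeq \Catidem$ and using the $\PrLR$-valued exodromy equivalence, the natural map
\[ \St_{\cI,\cE} \simeq \lim_{\mathbf \Delta_s\op} \St_{\cI_\bullet,\cE} \longrightarrow \lim_{\mathbf \Delta_{s,\leq m}\op} \St_{\cI_n,\cE} \]
acquires a section inherited from the retraction $\cK \to \Pi_\infty(X) \to \cK$; this section automatically lives in $\PrLR$ because all transition functors in $\St_{\cI_\bullet,\cE}$ do.

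The most delicate step will be turning the retraction $\cK\to \Pi_\infty(X)\to \cK$ (in $\Cat_\infty$) into the desired retraction of cosimplicial limits of Stokes categories; this requires compatibility between the Čech-type resolutions for $\Pi_\infty(f)$ and its pullback to $\cK$, and a precise tracking of the functoriality of $(X,P,\cI)\mapsto \St_{\cI,\cE}$ in $\PrLR$. Once this is in place, the retract conclusion follows formally.
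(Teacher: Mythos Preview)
Your treatment of universality is correct and matches the paper's approach exactly.

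For the retract statement, however, there is a genuine gap. Your plan is to work directly with $\cE$-coefficients and to exploit compactness of $\Pi_\infty(X)$ by exhibiting it as a retract of a finite $\infty$-category $\cK$, then ``pull back'' the Čech resolution to $\cK$ and transport everything through exodromy. But Stokes categories are attached to Stokes stratified spaces $(X,P,\cI)$, not to abstract $\infty$-categories: there is no evident way to make sense of a Stokes fibration over $\cK$, nor to lift the retraction $\Pi_\infty(X)\to\cK\to\Pi_\infty(X)$ (which lives in $\Cat_\infty$, or even just in $\Spc$) to one of Stokes stratified spaces carrying compatible fibrations $\cI$. You yourself flag this as ``the most delicate step'' and then do not supply an argument; as written, this is where the proof stops.

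The paper's route is quite different and avoids this difficulty entirely. It first cites a result from the companion paper \cite[\cref*{Abstract_Stokes-cor:retraction_lemma}]{Abstract_Derived_Stokes} which, from the finite étale fibration property of $\Pi_\infty(Y,P)\to\Pi_\infty(X,P)$ together with compactness of $\Pi_\infty(X)$, directly produces an integer $m$ and a retract
\[
\St_{\cI,\Sp}\;\longrightarrow\;\lim_{[n]\in\mathbf\Delta_{s,\leq m}}\St_{\cI_n,\Sp}\;\longrightarrow\;\St_{\cI,\Sp}
\]
in $\PrLR$ for \emph{sphere-spectrum} coefficients. The passage to arbitrary $\cE$ is then a formal tensoring argument: apply $(-)\otimes\cE$ to the retract diagram, use \cref{Peter_lemma} to commute the tensor past the finite limit (the diagram is $\PrLR$-valued), and use the universality of each $(Y_n,P,\cI_n)$ to identify $\St_{\cI_n,\Sp}\otimes\cE\simeq\St_{\cI_n,\cE}$. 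This two-step strategy---reduce to $\Sp$, then tensor---is the missing idea in your proposal and is precisely what universality is for here.
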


\begin{proof}
	The Stokes stratified space $(X,P,\cI)$  is universal in virtue of \cref{topological_retraction_lemma}.
	Since $f \colon Y \to X$ is Galois, $Y_n$ is a finite coproduct of copies of $Y$ over $X$, so that $(Y_n,P,\cI_n)$ is  universal for every $[n]\in \mathbf \Delta_{s}$.
	Since the $Y_n\to X$ is a finite étale cover for every $[n]\in \mathbf \Delta_{s}$, \cite[\cref*{Abstract_Stokes-lem:finite_etale_fibrations_example}]{Abstract_Derived_Stokes} implies that 
	\[ \Pi_{\infty}(Y_n,P)  \to \Pi_{\infty}(X,P) \]
	is a finite étale fibration in the sense of \cite[\cref*{Abstract_Stokes-def:finite_etale_fibrations}]{Abstract_Derived_Stokes}.
	By \cite[\Cref*{Abstract_Stokes-cor:retraction_lemma}]{Abstract_Derived_Stokes}, there is an integer $m\geq 1$ such that there exists a retract
	\[ \St_{\cI,\Sp} \to \lim_{[n] \in \mathbf \Delta_{s,\leq m}} \St_{\cI_n,\Sp}  \to \St_{\cI,\Sp} \ . \]
	in $\PrLR$.	
	\Cref{topological_retraction_lemma_bis} follows from \cref{Peter_lemma} by tensoring  with $\cE$.
\end{proof}

We are now ready for:

\begin{proof}[Proof of \cref{finite_typeness}]
	Since $f$ is strongly proper, we can choose a categorically finite  subanalytic refinement $ R\to Q $  such that $f_\ast(\frakStokes_{\cI,\cE})$ is $R$-hyperconstructible.
	Let $F \colon \Pi_\infty(Y,R) \to \CAT_\infty$ be the functor corresponding to $f_\ast(\frakStokes_{\cI,\cE})$ via the exodromy equivalence \eqref{exodromy_equivalence}.
	By \cref{exodromy_functorialities}, we have
	\[ \St_{\cI,\cE} \simeq f_\ast( \frakStokes_{\cI,\cE} )(Y) \simeq \lim_{\Pi_\infty(Y,R)} F(y) \ . \]
	Recall from \cref{direct_image_factors_trough_PrL} that $f_\ast( \frakStokes_{\cI,\cE} )$ belongs to $\ConsRhyp(Y;\PrLR_k)$, and therefore that $F$ factors through $\PrLR_k$ as well.
	By \cref{Peter_lemma_1}, the above limit can thus equally be computed in $\PrL$.
	Since $(Y,R)$ is categorically finite, \cref{finite_limit_prLRomega} reduces us to check that for each $y \in Y$, $F(y)$ is compactly generated and of finite type relative to $k$.
	By \cref{prop:locally_contractible_strata}, we can choose an open neighborhood $U$ of $y$ such that $y$ is initial in $\Pi_\infty(U,R)$.
	Then
	\[ F(y) \simeq (f_\ast(\frakStokes_{\cI,\cE}))_y \simeq (f_\ast(\frakStokes_{\cI,\cE}))(U) \ , \]
	so compact generation of $F(y)$ follows from \cref{compactly_generated}.
	To check that $F(y)$ is of finite type relative to $k$, we first observe that the base change results Propositions \ref{pullback_Stokes_sheaf} and \ref{proper_analytic_direct_image} allow to reduce to the case where $Y$ is a point and $X$ is compact.
	In that case, there exists  a cartesian finite Galois cover $ (Y,P,\cJ)\to (X,P,\cI)$ such that $(Y,P,\cJ)$ admits a piecewise elementary level structure.	
	Recall that $(X,P)$ is conically refineable in virtue of \cref{eg:subanalytic_implies_combinatorial} and  that $\Pi_{\infty}(X)$ is finite by  \cref{categorical_compactness}.
       Hence, \cref{topological_retraction_lemma_bis} implies the existence of  an integer $m\geq 1$ such that $\St_{\cI,\cE}$ is a retract of 
\begin{equation}\label{eq:finite_typeness}
\lim_{[n] \in \mathbf \Delta_{s,\leq m}} \St_{\cI_n,\cE}
\end{equation}
in $\PrLR$, where $Y_{\bullet} \colon \mathbf \Delta_{s}\op \to \Top_{/X}$ is the Cech complex of $f \colon Y \to X$ and where 
\[
\cI_\bullet \coloneqq \Pi_{\infty}(Y_{\bullet},P) \times_{\Pi_{\infty}(X,P)} \cI   \ .
\]
Hence, it is enough to show that \eqref{eq:finite_typeness} is of finite type relative to $k$.
Since $Y\to X$ is a finite Galois cover, each $Y_n$ is a finite coproduct of copies of $Y$.
       By \cref{finite_limit_prLRomega}, it is thus enough to show that $\St_{\cJ,\cE}$ is of finite type relative to $k$.
       Hence, we can suppose  that $(X,P,\cI)$ admits a piecewise elementary level structure.
	We now argue by induction on the length $d$ of the piecewise elementary level structure of $(X,P,\cI)$.
	When $d = 0$, $\cI = \Pi_\infty(X,P)$ is a fibration in sets, so $(X,P,\cI)$ is (globally) elementary and the conclusion follows from \cref{finite_typeness_piecewise_elementary_case}.
	Otherwise, we can assume the existence of a level morphism $p \colon \cI \to \cJ$ such that:
	\begin{enumerate}\itemsep=0.2cm
		\item $\cJ$ admits a piecewise elementary level structure of length $< d$;
		\item $(X,P,\cI_p)$ is piecewise elementary.
	\end{enumerate}
	Notice that since level morphisms are surjective, the fibers of $\cJ$ are again finite posets, so the inductive hypothesis applies to the Stokes stratified space $(X,P,\cJ)$.
	Consider the pullback square
	\[ \begin{tikzcd}
			\St_{\cI,\cE} \arrow{r}{p_!}\arrow{d}{\Gr_p } &\St_{\cJ,\cE}\arrow{d}{\Gr} \\
			\St_{\cI_p,\cE}\arrow{r}{\pi_!}& \St_{\cJ^{\ens},\cE}
	\end{tikzcd} \]
	supplied by \cref{level_dévissage}.
	Both $\St_{\cI_p, \cE}$ and $\St_{\cJ^{\ens},\cE}$ are of finite type thanks to \cref{finite_typeness_piecewise_elementary_case}, while the inductive hypothesis guarantees that $\St_{\cJ,\cE}$ are of finite type.
	Finally, \cref{stability_lim_colim_ISt} implies that the assumptions of \cref{level_pullback_in_PrLR} are satisfied, so that the above square is a pullback in $\PrLR$.
	Thus, it follows from \cref{finite_limit_prLRomega} that $\St_{\cI,\cE}$ is of finite type.
\end{proof}

\section{Geometricity}\label{geometricity_section}

	We now turn to the main theorem of this paper, namely the construction of a derived Artin stack parametrizing Stokes functors.
	Similarly to Theorems~\ref{thm:St_presentable_stable} and \ref{finite_typeness} we prove this result in the analytic setting and assuming the existence of a locally elementary level structure.
	The geometricity is essentially a consequence of \cref{finite_typeness}, but we need to run more time the level induction to provide an alternative description of the functor of points.

\subsection{Description of the moduli functor}

	We fix an animated commutative ring $k$.
	For every animated commutative $k$-algebra $A$, we let $\Mod_A$ denote the associated stable $\infty$-category of $A$-modules and by $\Perf_A$ the full subcategory of perfect $A$-modules (see e.g.\ \cite[Definition 7.2.4.1]{Lurie_Higher_algebra}).

%\begin{recollection}
%	When $A$ is an ordinary commutative $k$-algebra, $\Mod_A$ is the derived $\infty$-category of the abelian category of $A$-modules.
%	In particular, its homotopy category coincides with the ordinary derived category of $A$, seen as a triangulated category.
%\end{recollection}

Fix  a Stokes stratified space $(X,P,\cI)$.

\begin{notation}
	Let $\cE$ be a compactly generated presentable stable $\infty$-category.
	We set
	\[ \St_{\cI,\cE,\omega} \coloneqq \St_{\cI,\cE} \times_{\Fun(\cI,\cE)} \Fun(\cI,\cE^\omega) \ .  \]
	When $\cE = \Mod_A$, we write
	\[ \St_{\cI,A} \coloneqq \St_{\cI,\Mod_A} \qquad \text{and} \qquad \St_{\cI,A,\omega} \coloneqq \St_{\cI,\Mod_A,\omega} \ . \]
\end{notation}

	Let $f \colon \cE \to \cE'$ be a functor of stable presentable $\infty$-categories.
	Via \cref{prop:change_of_coefficients_stokes_IHES} we see that $f$ functorially induces a morphism
	\[ f \colon \St_{\cI,\cE} \to \St_{\cI,\cE'} \ . \]
	When in addition both $\cE$ and $\cE'$ are compactly generated and $f$ preserves compact objects, this further descends to a morphism
	\[ f \colon \St_{\cI,\cE,\omega} \to \St_{\cI,\cE', \omega} \ . \]
	This gives rise to a well defined functor
	\[ \bfSt_{\cI,k}^{\mathrm{cat}} \colon \dAff_k\op \to \CAT_\infty \]
	that sends the spectrum of an animated commutative $k$-algebra $\Spec(A)$ to $\St_{\cI,A,\omega}$.
	Passing to the maximal $\infty$-groupoid, we obtain a presheaf
	\[ \bfSt_{\cI,k} \colon \dAff_k\op \to \Spc \]
	that sends $\Spec(A)$ to
	\[ \bfSt_{\cI,k}(\Spec(A)) \coloneqq ( \St_{\cI,A,\omega} )^\simeq \in \Spc \ . \]
	When $k$ is clear out of the context, we write $\bfSt_\cI$ instead of $\bfSt_{\cI,k}$.

\begin{eg}
	When $\cI$ is the trivial fibration, \cref{Stokes_sheaf_trivial_fibration} shows that $\bfSt_{\cI}$ coincide with the derived stack of perfect local systems.
\end{eg}

	With these notations, we can state the main theorem of this section:

\begin{thm}\label{Representability_via_toen_vaquie}
	Let $f \colon (X,P,\cI) \to (Y,Q)$ be a strongly proper family of Stokes analytic stratified spaces in finite posets locally admitting a ramified vertically piecewise elementary level structure.
	Let $k$ be an animated commutative ring.
	Then, $\bfSt_{\cI}$ is locally geometric locally of finite presentation over $k$.
	Moreover, for every animated commutative $k$-algebra $A$ and every morphism
	\[ x \colon \Spec(A) \to \bfSt_{\cI} \]
	classifying a Stokes functor $F \colon \cI \to \Perf_A$, there is a canonical equivalence
	\[ x^\ast \mathbb T_{\bfSt_{\cI}} \simeq \Hom_{\Fun(\cI,\Mod_A)}( F, F )[1] \ , \]
	where $\mathbb T_{\bfSt_\cI}$ denotes the tangent complex of $\bfSt_\cI$ and the right hand side denotes the $\Mod_A$-enriched $\Hom$ of $\Fun(\cI,\Mod_A)$.
\end{thm}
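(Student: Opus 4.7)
The plan is to deduce the theorem from the To\"en--Vaqui\'e representability theorem \cite{Toen_Moduli}, applied to the $k$-linear presentable stable $\infty$-category $\St_{\cI,\Mod_k}$. The argument divides naturally into three steps.

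First I would identify $\bfSt_{\cI}$ with the To\"en--Vaqui\'e moduli functor $\mathcal{M}_{\St_{\cI,\Mod_k}}$ which sends $\Spec A$ to $\big(\St_{\cI,\Mod_k} \otimes_{\Mod_k} \Mod_A\big)^{\omega,\simeq}$. This requires a canonical equivalence
\[ \St_{\cI,\Mod_k} \otimes_{\Mod_k} \Mod_A \simeq \St_{\cI,\Mod_A} \]
identifying compact objects on both sides. Such an equivalence is precisely the universality of $(X,P,\cI)$ established in \cref{thm:Stokes_tensor_product}, which applies since $f$ is a proper family locally admitting a ramified vertically piecewise elementary level structure. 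Compatibility with compact objects follows because both sides are compactly generated (by \cref{compactly_generated} on the right, and because compactly generated $k$-linear categories are stable under $\otimes_{\Mod_k} \Mod_A$ on the left), and universality identifies the canonical sets of generators.

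Second, to apply \cite{Toen_Moduli} I must verify that $\St_{\cI,\Mod_k}$ is of finite type relative to $k$. This is exactly \cref{finite_typeness} with $\cE = \Mod_k$, which uses the strong properness of $f$ in addition to the level-structure hypothesis. The conclusion of \cite{Toen_Moduli} then yields that $\mathcal{M}_{\St_{\cI,\Mod_k}}$, and hence $\bfSt_{\cI}$, is a locally geometric derived stack locally of finite presentation over $k$.

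Third, the tangent complex formula is a general feature of To\"en--Vaqui\'e moduli: for a $k$-linear stable $\infty$-category $T$ of finite type, the tangent complex of $\mathcal{M}_T$ at an $A$-point $F$ is canonically equivalent to $\mathrm{End}_T(F)[1]$ in the $A$-enriched sense. Under the identification $\St_{\cI,\Mod_k} \otimes_{\Mod_k} \Mod_A \simeq \St_{\cI,\Mod_A}$, this becomes $\Hom_{\St_{\cI,\Mod_A}}(F,F)[1]$. By \cref{stability_lim_colim_ISt}, the inclusion $\St_{\cI,\Mod_A} \hookrightarrow \Fun(\cI,\Mod_A)$ preserves limits, and in particular mapping objects are computed in $\Fun(\cI,\Mod_A)$, which rewrites the expression as $\Hom_{\Fun(\cI,\Mod_A)}(F,F)[1]$, giving the stated formula. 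The main conceptual obstacles have already been discharged upstream into \cref{finite_typeness} and \cref{thm:Stokes_tensor_product}; both are established via the level induction machinery and the spreading-out theorem, where the geometric hypotheses on $f$ enter in an essential way.
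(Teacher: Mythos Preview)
Your proposal is correct and follows essentially the same approach as the paper: identify $\bfSt_{\cI}$ with the To\"en--Vaqui\'e moduli of objects $\cM_{\St_{\cI,k}}$ via universality (\cref{thm:Stokes_tensor_product}, packaged in the paper as \cref{identification_prestacks}), invoke \cref{finite_typeness} to get that $\St_{\cI,k}$ is of finite type, and then apply \cite[Theorem~0.2]{Toen_Moduli}. Your additional remark that the enriched $\Hom$ in $\St_{\cI,\Mod_A}$ agrees with that in $\Fun(\cI,\Mod_A)$ by full faithfulness is the right justification for the tangent complex formula as stated.
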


%\begin{rem}
%	\cref{Representability_via_toen_vaquie} implies in particular that $\bfSt_\cI$ satisfies \'etale hyperdescent and that it is infinitesimally cohesive and nilcomplete.
%	Using the locally elementary level structure and proceeding by induction on the length of this level structure, it is possible to verify these three properties by hand, but they typically fail in absence of such a structure.
%	See \cref{permanence_thm} for a more general statement.
%\end{rem}

%\begin{rem}
%	The cotangent complexes of $\bfSt_{\cI}$ can be given a cohomological interpretation at the cost of constructing a monoidal structure on Stokes functors.
%	This is delicate since the source cocartesian fibration for $F\otimes G$ is  bigger than that of $F$ and $G$, which in turn forces the base stratification to be finer.
%	This specific issue will be addressed in a later work.
%\end{rem}

	We will deduce \cref{Representability_via_toen_vaquie} from \cref{finite_typeness} and of the work of Toën-Vaquié on the moduli of objects of a stable $k$-linear $\infty$-category \cite{Toen_Moduli}.
	To do so, we need a brief digression on the behavior of Stokes functors and the tensor product of presentable $\infty$-categories.

\subsection{Stokes moduli functor as a moduli of objects}

Throughout this section we fix an animated commutative ring $k$.

\begin{recollection}\label{recollection_Toen_Vaquie}
	Let $\cC$ be a compactly generated presentable stable $k$-linear category.
	Its \emph{moduli of objects} is the derived stack
	\[ \cM_{\cC}\colon \dAff_k\op \to  \Spc \]
	given by the rule
	\[ \cM_{\cC}(\Spec(A)) \coloneqq \Fun^{\st}_k((\cC^{\omega})\op, \Perf_A)^\simeq \]
	where 
	$$
	\Fun^{\st}_k((\cC^{\omega})\op, \Perf(A)) \subset \Fun((\cC^{\omega})\op, \Perf(A))
	$$ 
	denotes the full subcategory spanned by exact $k$-linear functors.
	When $\cC$ is of finite type relative to $k$ in the sense of \cref{finite_type_defin},
	 \cite[Theorem 0.2]{Toen_Moduli} states that $\cM_{\cC}$ is a locally geometric derived stack locally of finite presentation.
\end{recollection}

	Let $(X,P,\cI)$ be a bireflexive Stokes stratified space.
	Then the proof of  \cref{compactly_generated} implies that the $\infty$-category $\St_{\cI,k}$ is stable presentable and compactly generated.
	In particular, its moduli of objects is well defined.
	We have:

\begin{prop}\label{identification_prestacks}
	Let $(X,P,\cI)$ be a universal Stokes stratified space.
	Then the derived prestacks $\bfSt_{\cI}$ and $\cM_{\St_{\cI,k}}$ are canonically equivalent.
\end{prop}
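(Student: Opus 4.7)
\emph{The plan.} The strategy is to identify the $A$-points of the two derived prestacks through a natural chain of equivalences, for every animated commutative $k$-algebra $A$, and to check that this chain is natural in $A$.

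First, by the Toën--Vaquié description recalled in \cref{recollection_Toen_Vaquie}, we have
\[ \cM_{\St_{\cI,k}}(\Spec A) = \Fun^{\st}_k\big((\St_{\cI,k}^{\omega})^{\op}, \Perf_A\big)^{\simeq}. \]
Since $\St_{\cI,k}$ is compactly generated (cf.\ \cref{compactly_generated}), it is canonically equivalent to $\Ind(\St_{\cI,k}^{\omega})$ and dualizable in $\PrL_k$. The universal property of Ind-completion, together with dualizability, produces a natural equivalence
\[ \Fun^{\mathrm{ex}}_k\big((\St_{\cI,k}^{\omega})^{\op}, \Mod_A\big) \simeq \St_{\cI,k} \otimes_k \Mod_A, \]
and passing to compact objects gives
\[ \Fun^{\mathrm{ex}}_k\big((\St_{\cI,k}^{\omega})^{\op}, \Perf_A\big) \simeq \big(\St_{\cI,k} \otimes_k \Mod_A\big)^{\omega}. \]

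Second, the universality hypothesis on $(X,P,\cI)$ (\cref{def:stably_universal}) provides a natural equivalence
\[ \St_{\cI,k} \otimes_k \Mod_A \xrightarrow{\sim} \St_{\cI, \Mod_A} = \St_{\cI, A}. \]
Applying \cref{compactly_generated} with $\cE = \Mod_A$, the compact objects of $\St_{\cI, A}$ are generated by the images $\LSt_{\cI, \Mod_A}(\ev_{a, !}(E))$ with $a \in \cI$ and $E \in \Perf_A$; by the same statement, these are exactly the objects of the full subcategory $\St_{\cI, A, \omega} \subset \St_{\cI, A}$ of Stokes functors valued in $\Perf_A$. Consequently,
\[ \big(\St_{\cI, A}\big)^{\omega} \simeq \St_{\cI, A, \omega}, \]
and taking maximal $\infty$-groupoids yields
\[ \cM_{\St_{\cI,k}}(\Spec A) \simeq \St_{\cI, A, \omega}^{\simeq} = \bfSt_\cI(\Spec A). \]

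The naturality in $A$ is built into each of the three steps: the Ind/dualizability step is natural in the target $\Mod_A$, the universality comparison map is functorial in $\cE$ and $\cE'$ by \cite[\cref*{Abstract_Stokes-construction:Stokes_tensor_comparison}]{Abstract_Derived_Stokes}, and the identification of compact objects commutes with base change along $A \to A'$. The main technical obstacle is the matching of compact objects across the universality equivalence: a priori, compactness in $\St_{\cI, k} \otimes_k \Mod_A$ is a categorical condition in $\PrL_k$, whereas membership in $\St_{\cI, A, \omega}$ is a \emph{pointwise} condition at every $a \in \cI$. That these coincide is precisely the content of the explicit description of compact generators in \cref{compactly_generated}, together with the finiteness of the fibres of $\cI$.
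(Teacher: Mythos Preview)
Your overall strategy is right, and the first chain of equivalences
\[
\Fun^{\mathrm{ex}}_k\big((\St_{\cI,k}^{\omega})\op,\Mod_A\big)\;\simeq\;\St_{\cI,k}\otimes_k\Mod_A\;\simeq\;\St_{\cI,A}
\]
is exactly what the paper establishes. The gap is in the next step, where you ``pass to compact objects''. The equivalence above does \emph{not} carry exact functors landing in $\Perf_A$ to \emph{compact} objects of $\St_{\cI,k}\otimes_k\Mod_A$. Under the standard identification, an object $F\in\St_{\cI,A}$ corresponds to the functor $c\mapsto\Hom_{\St_{\cI,A}}(c,F)$ (for $c$ ranging over the compact objects of $\St_{\cI,k}$, base-changed to $A$); the condition that this functor lands in $\Perf_A$ is the \emph{pseudo-perfect} condition of To\"en--Vaqui\'e, not compactness. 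These notions agree when $\St_{\cI,k}$ is smooth \emph{and proper}, but here we only know smoothness, so your identification $\Fun^{\mathrm{ex}}_k\big((\St_{\cI,k}^{\omega})\op,\Perf_A\big)\simeq(\St_{\cI,k}\otimes_k\Mod_A)^{\omega}$ is unjustified.

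The same conflation reappears in your claim that $(\St_{\cI,A})^{\omega}\simeq\St_{\cI,A,\omega}$. \cref{compactly_generated} only gives a set of compact \emph{generators} of the form $\LSt_{\cI,\Mod_A}(\ev_{a,!}(A))$; it says nothing about these objects taking values in $\Perf_A$, nor conversely that every Stokes functor with perfect values is compact. The paper avoids this issue entirely: instead of matching compact objects, it computes directly that for the explicit compact generators one has
\[
\Hom_{\St_{\cI,A}}\big(\LSt_{\cI,\Mod_A}(\ev_{a,!}(A)),F\big)\simeq\Hom_{\Fun(\cI,\Mod_A)}(\ev_{a,!}(A),F)\simeq F(a),
\]
so that the pseudo-perfect condition becomes precisely $F(a)\in\Perf_A$ for all $a\in\cI$, which is the definition of $\bfSt_\cI(\Spec A)$. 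Replacing your compactness argument with this direct computation fixes the proof.
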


\begin{proof}
	Fix a derived affine $\Spec(A) \in \dAff_k$ and consider the following chain of canonical equivalences:
    \begin{align*}
		 \Fun^{\st}_k\big( (\St_{\cI,k})^\omega)\op, \Mod_A \big) & \simeq \Fun^{\st}_k\big( (\St_{\cI,k})^\omega, \Mod_A\op \big) \op &         \\
		 & \simeq \Fun^{\mathrm L}_k( \St_{\cI,k}, \Mod_A\op ) \op     &    \text{By \cite[§3.1]{Antieau_Gepner} } \\
		 & \simeq\Fun^{\mathrm R}_k( \St_{\cI,k}\op , \Mod_A)  &   \\
		& \simeq \St_{\cI,k} \otimes_k \Mod_A &    \text{By \cite[4.8.1.7]{Lurie_Higher_algebra} } \\
		& \simeq \St_{\cI,A}    &  
	\end{align*}
	Let $\LSt_{\cI,\cE} \colon \Fun(\cI,\cE) \to \St_{\cI,\cE}$ be the left adjoint to the canonical inclusion $\St_{\cI,\cE}\hookrightarrow \Fun(\cI,\cE)$.
	By \cref{compactly_generated} a system of compact generators of $ \St_{\cI,\Mod_A} $ is given by $\{ \LSt_{\cI,\cE}(\ev_{a,!}(A)) \}_{a \in \cI}$, where the $\ev_a \colon \{a\}\to \cI$ are the canonical inclusions.
	Then via the embedding
	\[ \Fun^{\st}_k\big( (\St_{\cI,k})^\omega)\op, \Perf(A)\big) \hookrightarrow \Fun^{\st}_k\big( (\St_{\cI,k})^\omega)\op, \Mod_A \big) \]
	induced by $\Perf(A) \hookrightarrow \Mod_A$, the above chain of equivalences exhibits 
	\[ \Fun^{\st}_k\big( (\St_{\cI,k})^\omega)\op, \Perf(A) \big) \]
	as the full-subcategory of $ \St_{\cI,A} $ spanned by Stokes functors $F \colon \cI \to \Mod_A$ such that
	\[ \Hom_{\St_{\cI,A}}( \LSt_{\cI,\cE}(\ev_{a,!}(A)) , F) \in \Perf(A) \]
	for every $a \in \cI$.	
    Hence for every $ F\in \St_{\cI,A} $, we have
    \begin{align*}
	    F \in \cM_{\St_{\cI,k}}(\Spec A) & \Leftrightarrow \Hom_{\St_{\cI,A}}(\LSt_{\cI,\cE}(\ev_{a,!}(A)), F) \in \Perf(A) & \forall a \in \cI \\
		& \Leftrightarrow \Hom_{\St_{\cI,A}}(\ev_{a,!}(A), F) \in \Perf(A) & \forall a \in \cI  \\ 
		& \Leftrightarrow F(a) \in \Perf(A) & \forall a \in \cI \\
		& \Leftrightarrow F \in \bfSt_{\cI,k}(\Spec(A))
    \end{align*}
This concludes the proof of \cref{identification_prestacks}.
\end{proof}

We are now ready for:

\begin{proof}[Proof of \cref{Representability_via_toen_vaquie}]
	By \cref{compactly_generated} and \cref{identification_prestacks}, the prestack $\cM_{\St_{\cI,k}}$ and $\bfSt_{\cI,k}$  are canonically equivalent.
	By \cref{finite_typeness}, the $\infty$-category $\St_{\cI,k}$ is stable presentable and of finite type relative to $k$.
	The conclusion thus follows from \cite[Theorem 0.2]{Toen_Moduli}.
\end{proof}

\subsection{The moduli of Stokes vector bundles}

We fix  an animated commutative ring $k$.
A $k$-point of $\bfSt_{\cI,k}$ is a Stokes functor $F \colon \cI \to \Perf_k$.
In particular, even when $k$ is a field the stack $\bfSt_{\cI,k}$ classifies $\cI$-Stokes structures on perfect complexes, rather than vector bundles.
Thus, when the Stokes stratified space is of dimension $1$, $\bfSt_{\cI,k}$ provides an extension of \cite{BV}.
We are going to see how to extract from $\bfSt_{\cI,k}$ a more classical substack.

\medskip

Let $(X,P,\cI)$ be a Stokes stratified space.
For every animated commutative $k$-algebra $A$, consider the standard $t$-structure $\tau = ((\Mod_A)_{\geqslant 0}, (\Mod_A)_{\leqslant 0})$ on the stable derived $\infty$-category $\Mod_A$.
It is accessible and compatible with filtered colimits, and $\Fun(\cI, \Mod_A)$ inherits an induced $t$-structure defined objectwise and satisfying the same properties.
Besides, $\Fun(\cI,\Mod_A)$ has a canonical $A$-linear structure, with underlying tensor product
\[ (-) \otimes_A (-) \colon \Mod_A \otimes \Fun(\cI, \Mod_A) \to \Fun(\cI, \Mod_A) \ , \]
that sends $(M,F)$ to the functor $M \otimes_A F(-) \colon \cI \to \Mod_A$.
Using \cref{prop:change_of_coefficients_stokes_IHES}, we deduce that if $F$ is a Stokes functor, then the same goes for $M \otimes_A F$.
Following \cite{DPS}, we introduce the following:

\begin{defin}\label{def:Stokes_flat}
	Let $A$ be an animated commutative $k$-algebra and let $F \colon \cI \to \Mod_A$ be a filtered functor.
	We say that $F$ is \emph{flat relative to $A$} (or \emph{$A$-flat}) if for every $M \in \Mod_A^\heartsuit$, the functor $M \otimes_k F \colon \cI \to \Mod_A$ belongs to $\Fun(\cI,\Mod_A)^\heartsuit$.
\end{defin}

\begin{rem}
	Since $\Fun(\cI,\Mod_A)^\heartsuit \simeq \Fun(\cI,\Mod_A^\heartsuit)$, a filtered functor $F$ is $A$-flat if and only if it takes values in flat $A$-modules.
	In particular flatness relative to $A$ is a local property on $X$.
\end{rem}

\begin{eg}\label{eg:flat_over_discrete_ring}
	Assume that $A$ is an discrete commutative algebra.
	If a Stokes functor $F \colon \cI \to \Mod_A$ is flat relative to $A$, then automatically $F \in \St_{\cI,A}^\heartsuit$.
	The vice-versa holds provided that $A$ is a field.
\end{eg}

\begin{rem}
	It can be shown that in the setting of \cref{thm:St_presentable_stable}, $\St_{\cI,\Mod_A}$ inherits a $t$-structure by declaring that a Stokes functor is connective (resp.\ coconnective) if and only if it is so as a filtered functor.
	See \cite[\cref*{Abstract_Stokes-prop:t_structure_Stokes}]{Abstract_Derived_Stokes}.
	Furthermore, the pointwise split condition allows to prove that both induction and relative graduation are $t$-exact at the level of Stokes functors (see \cite[\cref*{Abstract_Stokes-cor:t_exactness_induction_on_Stokes} \& \cref*{Abstract_Stokes-prop:Gr_t_exact}]{Abstract_Derived_Stokes}), whereas these statements fail for filtered or cocartesian functors.
\end{rem}

Sending $\Spec(A) \in \dAff_k\op$ to the full subgroupoid of $\bfSt_{\cI,k}(\Spec(A))$ spanned by flat Stokes functors defines a full sub-prestack $\bfStflat_{\cI,k}$ of $\bfSt_{\cI,k}$.
The goal is to prove the following:

\begin{thm}\label{thm:moduli_Stokes_vector_bundles}
	Let $k$ be an animated commutative ring and let $f \colon (X,P,\cI) \to (Y,Q)$ be a strongly proper family of Stokes analytic stratified spaces in finite posets admitting a ramified vertically piecewise elementary level structure.
	Then the morphism
	\[ \bfStflat_{\cI,k} \to \bfSt_{\cI,k} \]
	is representable by open immersions.
	In particular, $\bfStflat_{\cI,k}$ is a derived $1$-Artin stack locally of finite type.
\end{thm}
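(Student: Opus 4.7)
The plan is to deduce the theorem from \cref{Representability_via_toen_vaquie} in two stages.  First, I will show that $\bfStflat_{\cI,k} \hookrightarrow \bfSt_{\cI,k}$ is representable by open immersions; this is where the real work lies.  Fix a point $x \colon \Spec A \to \bfSt_{\cI,k}$ classifying a Stokes functor $F \colon \cI \to \Perf_A$.  Since flatness of a perfect complex depends only on $\pi_0 A$, I may assume $A$ is discrete.  The fiber product $\Spec A \times_{\bfSt_{\cI,k}} \bfStflat_{\cI,k}$ then coincides with the locus
\[
U_F = \bigcap_{a \in \cI} U_{F(a)} \subseteq \Spec A ,
\]
where each $U_{F(a)}$ is the open flat locus of the perfect complex $F(a)$ (open by upper semi-continuity of Tor-amplitude).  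The Stokes structure on $F$ makes this a priori infinite intersection genuinely open: the splitting $F_x \simeq i_{\cI_x,!}(V_x)$ reduces flatness over the fiber to the finite poset $\cI_x$, while the cocartesian property gives $F(a) \simeq F(\gamma_\ast a)$ along cocartesian lifts of exit paths $\gamma$, so $U_{F(a)}$ depends only on the class of $a$ modulo this identification.  Strong properness of $f$ makes each $X_y$ compact, hence $\Pi_\infty(X_y,P)$ categorically finite by \cref{categorical_compactness}; combined with the categorically finite subanalytic refinement on $Y$ supplied by \cref{direct_image_factors_trough_PrL} for $f_\ast \frakStokes_{\cI,\cE}$, this lets me pick, étale-locally on $\Spec A$, a finite set $I_0 \subseteq \cI$ with $U_F = \bigcap_{a \in I_0} U_{F(a)}$.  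Stability of flatness under base change and hypersheaf descent then glue these local opens into the desired global open immersion.

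As an open substack of the locally geometric, locally finitely presented stack $\bfSt_{\cI,k}$, $\bfStflat_{\cI,k}$ inherits both properties.  To upgrade to the $1$-Artin condition, I compute the tangent complex at a point $x \colon \Spec A \to \bfStflat_{\cI,k}$ classifying a flat Stokes functor $F$:
\[
x^\ast \mathbb T_{\bfStflat_{\cI,k}} \simeq x^\ast \mathbb T_{\bfSt_{\cI,k}} \simeq \Hom_{\Fun(\cI,\Mod_A)}(F,F)[1] .
\]
Flat objects of $\Perf_A$ being dualizable, each $\Hom_A(F(a),F(b))$ lies in $\Mod_A^{\geq 0}$, and coconnectivity is preserved by limits, so $\Hom(F,F) \in \Mod_A^{\geq 0}$ and the tangent complex lies in $\Mod_A^{\geq -1}$; this is exactly the $1$-Artin condition.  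Local finite type then follows as in the classical case of moduli of flat perfect complexes.

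The main obstacle is the reduction of the potentially infinite intersection $\bigcap_{a \in \cI} U_{F(a)}$ to a finite one when $Y$ is not a point: in that case $\pi_0 \Pi_\infty(X,P)$ may well be infinite, even though each fiber $\Pi_\infty(X_y,P)$ is categorically finite.  The interplay between this per-fiber finiteness and the subanalytic refinement on $Y$ from \cref{direct_image_factors_trough_PrL} controlling $f_\ast \frakStokes_{\cI,\cE}$ is the technical core of the openness argument and must be orchestrated carefully so that the locally chosen finite sets $I_0$ glue on $\Spec A$.
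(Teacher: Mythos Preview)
Your proposal has the right overall shape but contains a genuine gap in each of the two main steps.

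\textbf{Openness.} The phrase ``étale-locally on $\Spec A$'' is a non-sequitur: the étale topology of the test scheme $\Spec A$ has no relation to the stratified geometry of $X$ or $Y$, so there is no mechanism by which categorical finiteness of the fibres $\Pi_\infty(X_y,P)$ or the refinement on $Y$ produces a finite subset $I_0 \subset \cI$ \emph{locally on} $\Spec A$. What you need is a finite $I_0$ that works \emph{globally} on $\Spec A$, and for this the paper's route is essentially forced: use strong properness and \cref{direct_image_factors_trough_PrL} to write
\[
\bfSt_{\cI,k} \simeq \lim_{y \in \Pi_\infty(Y,R)} \bfSt_{\cI_y,k}, \qquad \bfStflat_{\cI,k} \simeq \lim_{y \in \Pi_\infty(Y,R)} \bfStflat_{\cI_y,k}
\]
over the \emph{finite} category $\Pi_\infty(Y,R)$, reduce via \cref{lem:open_immersions_closed_under_compact_limits} to the case $Y=\ast$, and then use compactness of $X$ together with \cref{lem:Stokes_flat_initial_object} to extract finitely many evaluation points (this is \cref{prop:Stokes_vector_bundles_compact_case}). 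Your cocartesian observation ``$F(a) \simeq F(\gamma_\ast a)$'' is not enough on its own: it identifies $U_{F(a)}$ with $U_{F(a')}$ only when $a$ and $a'$ are linked by a cocartesian edge, and $\pi_0\cI$ can still be infinite.

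\textbf{The $1$-Artin claim.} Your tangent complex argument is incorrect as stated. For a non-discrete animated ring $A$, a flat perfect $A$-module is locally free and hence \emph{connective}, with $\pi_i \simeq (\pi_i A)^r$ possibly nonzero for $i>0$; it is not coconnective. Connectivity is not preserved by limits, so you cannot conclude $\Hom_{\Fun(\cI,\Mod_A)}(F,F) \in \Mod_A^{\geq 0}$. The paper's argument instead checks $1$-truncatedness directly (\cref{lem:Stcoh_1_truncated}): for \emph{discrete} $A$ the values $F(a)$ lie in $\Mod_A^\heartsuit$, so $\Map(F,G)$ is computed in a $1$-category and is discrete. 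Combined with the open immersion into the locally geometric $\bfSt_{\cI,k}$, this yields the $1$-Artin conclusion.
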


	We start discussing some preliminaries.

\begin{lem}\label{lem:Stokes_flat_initial_object}
	Let $(X,P,\cI)$ be a Stokes stratified space and let $A$ be an animated commutative $k$-algebra.
	Assume that $\Pi_\infty(X,P)$ has an initial object $x$.
	Then a Stokes functor $F \colon \cI \to \Mod_A$ is $A$-flat if and only if $j_x^\ast(F)$ is $A$-flat.
\end{lem}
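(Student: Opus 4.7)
The forward direction is immediate from the definition, since restriction along $j_x$ clearly preserves the property of taking values in flat $A$-modules. The plan for the converse is to exploit both of the two defining features of a Stokes functor, the punctual splitting and the cocartesianness, and combine them with the initiality of $x$.

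Assume then that $j_x^\ast(F)$ is $A$-flat. By \cref{eg:Stokes_structures_at_a_point}, the initiality of $x$ yields $W \colon \cI_x^{\ens} \to \Mod_A$ with $j_x^\ast(F) \simeq i_{\cI_x,!}(W)$. The pointwise formula for the left Kan extension along $i_{\cI_x}$ gives, for every $b \in \cI_x$, an equivalence
\[ j_x^\ast(F)(b) \simeq \bigoplus_{b' \leqslant b} W(b') \ , \]
so $W(b)$ appears as a direct summand of $j_x^\ast(F)(b)$. Since $j_x^\ast(F)(b)$ is flat and retracts of flat $A$-modules are flat, each $W(b)$ is flat.

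For the second step, I would pick an arbitrary $y \in X$ and invoke initiality again to get an essentially unique exit path $\gamma \colon x \to y$, with straightening $f_\gamma \colon \cI_x \to \cI_y$. Cocartesianness of $F$ gives
\[ j_y^\ast(F) \simeq f_{\gamma,!}(j_x^\ast(F)) \simeq (f_\gamma \circ i_{\cI_x})_!(W) \ , \]
and since $\cI_x^{\ens}$ is discrete, the Kan extension formula expresses $F(a)$, for every $a \in \cI_y$, as a direct sum $\bigoplus_{c \in \cI_x,\, f_\gamma(c) \leqslant a} W(c)$ of flat $A$-modules. Using that arbitrary direct sums of flat $A$-modules are flat (the heart of $\Mod_A$ is closed under direct sums and tensor products commute with them), we conclude that $F$ takes values in flat modules, i.e.\ $F$ is $A$-flat.

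No serious obstacle is expected here. It is however worth noticing that the pointwise splitting condition is genuinely essential to the argument: it is precisely what turns the Kan extension $f_{\gamma,!}(j_x^\ast F)$ into a bona fide direct sum. For a merely cocartesian functor, the same computation would yield a general colimit of flat modules indexed by a poset, and such colimits need not be flat.
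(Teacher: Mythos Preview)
Your proof is correct and takes a genuinely different route from the paper's own argument. The paper proceeds abstractly: it observes that $M \otimes_A j_x^\ast(F) \simeq j_x^\ast(M \otimes_A F)$ and then invokes a $t$-structure result from the companion paper (\cite[\cref*{Abstract_Stokes-cor:t_structures_Stokes_initial_object}]{Abstract_Derived_Stokes}) stating that, when $\Pi_\infty(X,P)$ has an initial object $x$, a Stokes functor lies in the heart if and only if its restriction to $x$ does. Applying this to $M \otimes_A F$ for each $M \in \Mod_A^\heartsuit$ yields the claim in one line.

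Your argument, by contrast, is entirely self-contained and elementary: you unpack the splitting at $x$ to extract the graded pieces $W(b)$ as retracts (hence flat), and then use cocartesianness to propagate flatness to every fiber via the explicit Kan extension formula, which reduces to a direct sum because $\cI_x^{\ens}$ is discrete. This avoids any appeal to the $t$-structure machinery developed in the companion paper, and in particular your final paragraph correctly isolates why the pointwise split condition is essential---without it the Kan extension along $f_\gamma$ would be a genuine poset-indexed colimit, which need not preserve flatness. The paper's approach is shorter and more conceptual once the $t$-structure is in place; yours is more transparent and makes the role of each axiom in the definition of Stokes functor visible.
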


\begin{proof}
	Notice that for every $M \in \Mod_A$, the canonical comparison map
	\[ M \otimes_A j_x^\ast(F) \to j_x^\ast(M \otimes_A F) \]
	is an equivalence.
	Then the lemma follows directly from \cite[\cref*{Abstract_Stokes-cor:t_structures_Stokes_initial_object}]{Abstract_Derived_Stokes}.
\end{proof}

\begin{notation}\label{notation:stacky_evaluations}
	Let $(X,P,\cI)$ be a Stokes stratified space.
	For every $a \in \cI$, the functor $\ev_a \colon \{a\} \to \cI$ induces a morphism of derived prestacks
	\[ \mathbf{ev}_a \colon \bfSt_{\cI,k} \to \bfPerf_k \ . \]
\end{notation}

\begin{prop}\label{prop:Stokes_vector_bundles_compact_case}
	Let $(X,P,\cI)$ be a compact Stokes analytic stratified space.
	Then 
	\[ \bfStflat_{\cI,k} \to \bfSt_{\cI,k} \]
	is representable by an open immersion.
\end{prop}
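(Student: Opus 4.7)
The plan is to reduce the assertion to the classical openness of the flat locus for a perfect complex. Recall that for any derived affine $\Spec(A) \in \dAff_k$ and any perfect $A$-module $E$, the locus in $\Spec(A)$ over which $E$ is concentrated in degree zero and locally free is an open subscheme. Equivalently, this says that the full sub-prestack $\bfPerf_k^{\flat} \subset \bfPerf_k$ classifying flat perfect complexes is an open substack. I will take this as a known input.

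For each object $a \in \cI$, I would use the evaluation map $\mathbf{ev}_a \colon \bfSt_{\cI,k} \to \bfPerf_k$ from \cref{notation:stacky_evaluations} to pull $\bfPerf_k^{\flat}$ back to an open substack
\[ \bfSt_{\cI,k}^{\flat,a} \coloneqq \mathbf{ev}_a^{-1}(\bfPerf_k^{\flat}) \hookrightarrow \bfSt_{\cI,k} \ , \]
classifying Stokes functors $F \colon \cI \to \Perf_A$ such that $F(a)$ is flat. Next, I would identify the sub-prestack $\bfStflat_{\cI,k}$ with the intersection $\bigcap_{a \in \cI} \bfSt_{\cI,k}^{\flat,a}$. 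This follows because the $t$-structure on $\Fun(\cI, \Mod_A)$ is defined objectwise: a Stokes functor $F \colon \cI \to \Perf_A$ is $A$-flat in the sense of \cref{def:Stokes_flat} if and only if $M \otimes_A F(a) \in \Mod_A^\heartsuit$ for every $a \in \cI$ and every $M \in \Mod_A^\heartsuit$, which is exactly the condition that each $F(a)$ is a flat $A$-module.

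It remains to turn the intersection into a finite intersection. Under the standing finite-poset hypothesis on $\cI$ together with the compactness of $X$, \cref{categorical_compactness} asserts that $\Pi_\infty(X,P)$ is a finite $\infty$-category, so the total space $\cI$ has only finitely many objects up to equivalence. Since evaluating at equivalent objects produces naturally equivalent maps to $\bfPerf_k$, the collection $\{\bfSt_{\cI,k}^{\flat,a}\}_{a \in \cI}$ has only finitely many distinct elements, whence $\bfStflat_{\cI,k} \hookrightarrow \bfSt_{\cI,k}$ is a finite intersection of open immersions and hence an open immersion.

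The argument is essentially formal once the openness of the flat locus of perfect complexes is granted; the only mild subtlety is the reduction to a finite intersection, which is forced by the combination of compactness of $X$ and the finiteness of the Stokes fibers, and this is the step to state carefully.
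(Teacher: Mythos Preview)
Your proof is correct and follows essentially the same strategy as the paper's: pull back the open substack $\rB\mathrm{GL} \hookrightarrow \bfPerf_k$ along finitely many evaluation maps and identify $\bfStflat_{\cI,k}$ with the resulting intersection. The only difference is how finiteness is arranged. The paper produces a finite open cover $U_1,\ldots,U_n$ of $X$ with each $\Pi_\infty(U_i,P)$ admitting an initial object $x_i$, and then invokes \cref{lem:Stokes_flat_initial_object} to reduce the flatness check to the finitely many fibers $\cI_{x_1},\ldots,\cI_{x_n}$. You instead appeal directly to \cref{categorical_compactness} to conclude that $\Pi_\infty(X,P)$ is a finite $\infty$-category, and combine this with the finite-poset hypothesis on the fibers to deduce that $\cI$ has only finitely many isomorphism classes of objects. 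Your route is slightly more direct in that it bypasses the auxiliary cover and \cref{lem:Stokes_flat_initial_object}; the paper's route, on the other hand, makes the reduction to a concrete finite product map $\bfSt_{\cI,k} \to \prod_{i,a} \bfPerf_k$ along which one pulls back a single open immersion, which packages the argument cleanly as a cartesian square. Both approaches rely on the same implicit finite-poset hypothesis on $\cI$ (which is part of the ambient setting of the section even though it is not restated in the proposition).
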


\begin{proof}
	By \cref{prop:locally_contractible_strata} and since $X$ is compact we can find an open cover of $X$ by finitely many open subsets $U_1, U_2, \ldots, U_n$ such that each $\Pi_\infty(U_i,P)$ has an initial object $x_i$.
	Let
	\[ e \colon \bfSt_{\cI,k} \to \prod_{i = 1}^n \prod_{a \in \cI_{x_i}} \bfPerf_k \]
	be the product of the evaluation maps of \cref{notation:stacky_evaluations}.
	Notice that both products are finite, so the map
	\[ \prod_{i = 1}^n \prod_{a \in \cI_{x_i}} \rB\mathrm{GL} \to \prod_{i = 1}^n \prod_{a \in \cI_{x_i}} \bfPerf_k \]
	is representable by open an immersion (see e.g.\ \cite[Proposition 6.1.4.5]{Lurie_SAG}).
	Besides, \cref{lem:Stokes_flat_initial_object} implies that the square
	\[ \begin{tikzcd}
		\bfStflat_{\cI,k} \arrow{r} \arrow{d} & \bfSt_{\cI,k} \arrow{d} \\
		\prod_{i = 1}^n \prod_{a \in \cI_{x_i}} \rB\mathrm{GL} \arrow{r} & \prod_{i = 1}^n \prod_{a \in \cI_{x_i}} \bfPerf_k
	\end{tikzcd} \]
	is a fiber product.
	The conclusion follows.
\end{proof}

	This proves \cref{thm:moduli_Stokes_vector_bundles} when the base is reduced to a single point.
	To prove the general case, we need a couple of extra preliminaries.

\begin{lem}\label{lem:Stcoh_1_truncated}
	Let $(X,P,\cI)$ be a Stokes stratified space.
	Then the derived prestack $\bfStflat_{\cI,k}$ is $1$-truncated.
\end{lem}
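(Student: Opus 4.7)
The plan is to reduce the claim to the case of a classical commutative $k$-algebra $A$ and there use the fact that flatness forces Stokes functors to take pointwise values in finitely generated projective $A$-modules, so that mapping spectra in $\St_{\cI,A}$ are co-connective, and the mapping spaces are hence discrete.

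First, I would fix a classical commutative $k$-algebra $A$ and two flat Stokes functors $F,G \colon \cI \to \Perf_A$ classifying points of $\bfStflat_{\cI,k}(\Spec A)$. By \cref{eg:flat_over_discrete_ring}, $F$ and $G$ belong to the heart $\St_{\cI,A}^\heartsuit$, which means that for every $a \in \cI$, the $A$-modules $F(a)$ and $G(a)$ are discrete finitely generated projective $A$-modules. Since $F(a)$ is projective, $\Ext^{>0}_A(F(a),G(a)) = 0$, and $\Ext^{<0}_A$ vanishes tautologically on discrete modules, so the mapping spectrum $\Hom_{\Mod_A}(F(a),G(a))$ lies in the heart $\Mod_A^\heartsuit \subset \Mod_A^{\leq 0}$.

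Second, I would compute the mapping spectrum in $\St_{\cI,A}$ via the ambient functor category. Since $\St_{\cI,A} \hookrightarrow \Fun(\cI,\Mod_A)$ is fully faithful,
\[ \Hom_{\St_{\cI,A}}(F,G) \simeq \Hom_{\Fun(\cI,\Mod_A)}(F,G) \simeq \lim_{\Tw(\cI)} \Hom_{\Mod_A}(F(-),G(-)) \ . \]
As the full subcategory $\Mod_A^{\leq 0}$ is closed under limits in $\Mod_A$ and the integrand lies in $\Mod_A^\heartsuit \subset \Mod_A^{\leq 0}$, the end lies in $\Mod_A^{\leq 0}$; equivalently, the mapping space $\Map_{\St_{\cI,A}}(F,G) = \Omega^\infty \Hom_{\St_{\cI,A}}(F,G)$ is $0$-truncated. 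It follows that $\bfStflat_{\cI,k}(\Spec A)$, being the maximal $\infty$-groupoid of an $\infty$-category whose mapping spaces are discrete, is a $1$-groupoid.

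The main obstacle is conceptual rather than computational: one has to observe that even though $\cI$ is an honest $\infty$-category (so a priori the end defining $\Hom_{\Fun(\cI,\Mod_A)}(F,G)$ could pick up higher homotopy from exit paths in $\Pi_\infty(X,P)$), the co-connectivity of the integrand forces this limit to remain co-connective. This is the point where flatness is essential: dropping it would allow $\Hom_{\Mod_A}(F(a),G(a))$ to have non-vanishing $\Ext^{>0}$, and the limit could then propagate homotopy into all positive degrees of $\Map(F,G)$, ruining $1$-truncatedness.
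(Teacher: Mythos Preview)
Your proof is correct and follows essentially the same approach as the paper. Both reduce to discrete $A$, observe that $A$-flat Stokes functors then take values in $\Mod_A^\heartsuit$, and conclude that mapping spaces are discrete. The only packaging difference is that the paper invokes the fact that $\Fun(\cI,\Mod_A^\heartsuit)$ is a $1$-category (since $\Mod_A^\heartsuit$ is), whereas you unwind this via the end formula and closure of $\Mod_A^{\leq 0}$ under limits; your observation about projectivity of $F(a)$ is correct but unnecessary, since $F(a),G(a)\in\Mod_A^\heartsuit$ already forces $\Hom_{\Mod_A}(F(a),G(a))\in\Mod_A^{\leq 0}$.
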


\begin{proof}
	We have to prove that for a discrete commutative $k$-algebra $A$, $\bfStflat_{\cI,k}(\Spec(A))$ is a $1$-groupoid.
	Since $\St_{\cI,A}$ is fully faithful inside $\Fun(\cI,A)$, using \cite[Proposition 2.3.4.18]{HTT} we see that it is enough to show that for every pair of $A$-flat Stokes functors $F, G \colon \cI \to \Mod_A$, the mapping space $\Map_{\Fun(\cI,\Mod_A)}(F,G)$ is discrete.
	Since $A$ is discrete, both $F$ and $G$ belongs to $\St_{\cI,A}^\heartsuit$ by \cref{eg:flat_over_discrete_ring}.
	Thus, \cite[\cref*{Abstract_Stokes-cor:t_structure_Stokes_heart}]{Abstract_Derived_Stokes} implies that both $F$ and $G$ take values in the $1$-category $\Mod_A^\heartsuit$.
	Then the conclusion follows from \cite[Corollary 2.3.4.8]{HTT}.
\end{proof}

%\begin{lem}\label{lem:retract_geometric_stack}
%	Let $k$ be a noetherian animated commutative algebra such that $\pi_0(k)$ is a Grothendieck ring.
%	Assume that $F$ is a locally geometric derived stack locally of finite presentation over $k$.
%	Let $G \to F \to G$ be a retraction diagram in $\mathrm{dSt}_k$.
%	If $G$ is $n$-truncated for some integer $n \geqslant 0$, then $G$ is a geometric stack locally of finite presentation over $k$.
%\end{lem}
%
%\begin{proof}
%	Since $F$ is locally geometric, it follows that it is nilcomplete, infinitesimally cohesive, integrable and that it has a global cotangent complex.
%	Since these properties are closed under retracts, they are inherited by $G$.
%	Besides, the structural morphism $G \to \Spec(k)$ is locally of finite presentation, because it is retract of the structural morphism $F \to \Spec(k)$.
%	Therefore, Artin-Lurie's representability theorem \cite[Theorem 18.3.0.1]{Lurie_SAG} implies that $G$ is a geometric stack locally of finite presentation over $k$.
%\end{proof}

\begin{lem}\label{lem:open_immersions_closed_under_compact_limits}
	Let $I$ be a finite $\infty$-category and let
	\[ f_\bullet \colon F_\bullet \to G_\bullet \]
	be a natural transformation between diagrams $I \to \mathrm{dSt}_k$.
	Let
	\[ F \coloneqq \lim_{i \in I} F_i \qquad \text{and} \qquad G \coloneqq \lim_{i \in I} G_i \]
	be the limits computed in $\mathrm{dSt}_k$.
	Assume that:
	\begin{enumerate}\itemsep=0.2cm
		\item for every $i \in I$, $F_i$ is geometric and locally of finite type and $G_i$ is locally geometric and locally of finite type;
		\item for every $i \in I$, $f_i \colon F_i \to G_i$ is representable by open immersions;
		\item $G$ is locally geometric and locally of finite presentation.
	\end{enumerate}
	Then $F$ is a geometric derived stack and the induced morphism $f \colon F \to G$ is an open immersion.
\end{lem}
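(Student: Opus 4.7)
The plan is twofold: first show that $f \colon F \to G$ is representable by open immersions via a finite intersection argument, and then derive the geometricity of $F$ from this together with the hypotheses on $G$ and the $F_i$'s.

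For the main step, I would fix a derived affine $T$ and a morphism $T \to G$. Composing with the canonical projections $G \to G_i$ yields morphisms $T \to G_i$ for each $i \in I$, and by hypothesis (2) the pullback
\[ U_i \coloneqq F_i \times_{G_i} T \]
is an open substack of $T$. The key identification is
\[ F \times_G T \simeq \lim_{i \in I} U_i \qquad \text{in } \mathrm{dSt}_{k/T} , \]
which follows from the standard commutation of fiber products with limits: for any test affine $S \to T$,
\[ \Map_T(S, F \times_G T) \simeq \Map(S,T) \times_{\lim_i \Map(S,G_i)} \lim_i \Map(S,F_i) \simeq \lim_{i \in I} \Map_T(S, U_i) . \]
Since $I$ is finite and each $U_i \hookrightarrow T$ is in particular a monomorphism, this limit in $\mathrm{dSt}_{k/T}$ reduces to the finite intersection $\bigcap_i U_i$, which is itself an open subscheme of $T$. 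Varying $T$, this shows that $f$ is representable by open immersions.

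For the geometricity step, I would combine two facts. On one hand, $f \colon F \to G$ is an open immersion and $G$ is locally geometric locally of finite presentation by hypothesis (3), so $F$ automatically inherits being locally geometric locally of finite presentation. On the other hand, fixing any $i_0 \in I$, the commutative square formed by the projections $F \to F_{i_0}$ and $G \to G_{i_0}$ together with $f$ and $f_{i_0}$ factors $f$ as $F \hookrightarrow F_{i_0} \times_{G_{i_0}} G \hookrightarrow G$, both maps being open immersions (using cancellation and that $f_{i_0}$ is an open immersion). A smooth atlas for $F_{i_0}$, which exists by hypothesis (1), then pulls back through this factorization to produce a smooth atlas for $F$, establishing the geometricity of $F$ locally of finite type.

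The main obstacle, as usual in this kind of statement, is making rigorous the identification $\lim_{i \in I} U_i \simeq \bigcap_i U_i$ within $\mathrm{dSt}_{k/T}$. This reduces to the observation that monomorphisms into a fixed object form a poset inside $\mathrm{dSt}_{k/T}$, so limits over a finite $\infty$-category of such monomorphisms reduce to finite meets, and for open subfunctors these meets coincide with the set-theoretic intersection. A secondary technical subtlety concerns the quasi-compactness of the atlas produced in the geometricity step, which depends on the precise meaning of geometric in force and on tracking quasi-compactness through the two open immersions and the projection to $F_{i_0}$.
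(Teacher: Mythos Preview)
Your Step~1 is correct and is in fact more direct than the paper's approach. The paper instead first proves that $F$ is geometric (using that geometric stacks locally of finite type are closed under finite limits, citing \cite[Proposition 1.3.3.3 and Lemma 1.4.1.12]{HAG-II}), and then shows that $f$ is an open immersion by checking separately that $\delta_f$ is an equivalence (as a limit of equivalences) and that $f$ is formally étale (formally étale maps being closed under finite limits in the arrow category). Your intersection argument bypasses the cotangent-complex computation entirely: once one observes that each $U_i \hookrightarrow T$ is $(-1)$-truncated, the limit over the finite $\infty$-category $I$ is determined by the finite set $\pi_0(\mathrm{Ob}(I))$, so $\lim_i U_i = \bigcap_i U_i$ is a finite intersection of opens. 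This is a genuine simplification.

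Your Step~2, however, has a gap that is more serious than the quasi-compactness issue you flag. The factorization $F \hookrightarrow F_{i_0} \times_{G_{i_0}} G \hookrightarrow G$ is fine, but to pull back an atlas $V \to F_{i_0}$ to $F$ you must pass through the projection $F_{i_0} \times_{G_{i_0}} G \to F_{i_0}$, and this projection is a base change of $G \to G_{i_0}$, about which nothing is assumed. In particular $V \times_{G_{i_0}} G$ has no reason to be a scheme, or even $n$-geometric for any fixed $n$, since $G$ is only \emph{locally} geometric. So the pulled-back object is not an atlas in any useful sense. Knowing that $F$ is open in $G$ only gives that $F$ is locally geometric, and your argument does not bound the geometric level. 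The clean fix is exactly what the paper does: forget $G$ entirely for this step and use that $F = \lim_i F_i$ is a finite limit of geometric stacks locally of finite type, which is geometric by closure under finite limits.
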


\begin{proof}
	It follows from \cite[Proposition 1.3.3.3 and Lemma 1.4.1.12]{HAG-II} that geometric stacks locally of finite type are closed under finite limits.
	Thus, $F$ is geometric and locally of finite type.
	We are left to check that $f$ is an open immersion.
	Since both $F$ and $G$ are locally geometric and locally of finite type, it follows that $f$ is an open immersion if and only if it is \'etale and the diagonal
	\[ \delta_f \colon F \to F \times_G F \]
	is an equivalence.
	Besides, since $f$ is automatically locally of finite presentation, \cite[Corollary 2.2.5.6]{HAG-II} shows that $f$ is \'etale if and only if it is formally \'etale, i.e.\ the relative cotangent complex $\mathbb L_f$ vanishes.
	Since limits commutes with limits, we see that $\delta_f$ is the limit of the diagonal maps
	\[ \delta_{f_i} \colon F_i \to F_i \times_{G_i} F_i \ , \]
	and since each $f_i$ is an open immersion, it automatically follows that each $\delta_{f_i}$ is an equivalence.
	Therefore, the same goes for $f$.
	Similarly, the property of being formally \'etale is clearly closed under retracts.
	On the other hand, \cite[Lemma 1.4.1.12]{HAG-II} implies that formally \'etale maps are closed under pullbacks and hence under finite limits.
	The conclusion follows.
\end{proof}

We are now ready for:

\begin{proof}[Proof of \cref{thm:moduli_Stokes_vector_bundles}]
	Since $f$ is strongly proper, we can choose a categorically finite subanalytic refinement $ R\to Q $  such that $f_\ast(\frakStokes_{\cI,k})$ is $R$-hyperconstructible.
	Let $F \colon \Pi_\infty(Y,R) \to \CAT_\infty$ be the functor corresponding to $f_\ast(\frakStokes_{\cI,k})$ via the exodromy equivalence.
	As we argued in \cref{finite_typeness}, we obtain a canonical equivalence
	\[ \St_{\cI, k} \simeq \lim_{y \in \Pi_\infty(Y,R)} F_y \ , \]
	the limit being computed in $\PrLR$.
	Besides, the base-change results of Propositions \ref{pullback_Stokes_sheaf} and \ref{proper_analytic_direct_image} and \cref{elementarity_pullback} provide a canonical identification $F_y \simeq \St_{\cI_y, k}$.
	Passing to the moduli of objects and applying \cref{identification_prestacks}, we deduce that
	\[ \bfSt_{\cI,k} \simeq \lim_{y \in \Pi_\infty(Y,R)} \bfSt_{\cI_y,k} \ . \]
	Using \cite[\cref*{Abstract_Stokes-prop:t_structure_Stokes}]{Abstract_Derived_Stokes}, we deduce from here that the induced morphism
	\[ \bfStflat_{\cI,k} \to \lim_{y \in \Pi_\infty(Y,R)} \bfStflat_{\cI_y,k} \]
	is an equivalence as well.
	Besides, $\bfSt_{\cI,k}^{\mathrm{flat}}$ and $\bfSt_{\cI_y,k}^{\mathrm{flat}}$ are $1$-truncated for every $y \in \Pi_\infty(Y,R)$ thanks to \cref{lem:Stcoh_1_truncated}.
	Thus, \cref{lem:open_immersions_closed_under_compact_limits} reduces us to the case where $Y$ is reduced to a single point.
	Since in this case $X$ is compact, the conclusion follows from \cref{prop:Stokes_vector_bundles_compact_case}.
\end{proof}

\section{Elementarity and polyhedral Stokes stratified spaces}\label{sec:polyhedral}

	The goal of this section is to prove an elementarity criterion for a specific class of Stokes stratified spaces that we now introduce.

\subsection{Polyhedral Stokes stratified spaces}

\begin{recollection}
	For $n\geq 0$, recall that a \textit{polyhedron} of  $\mathbb{R}^n$ is a non empty subset obtained as the intersection of a finite number of closed half spaces.
\end{recollection}
	In what follows, $\{-,0,+\}$ will denote the span poset where $0$ is declared to be the initial object. 
	Let $n\geq 0$.
	For a non zero affine form $\varphi \colon  \mathbb{R}^n\to \mathbb{R}$, we denote by $H_{\varphi}$ the zero locus of $\varphi$.

\begin{defin}\label{polyhedral_stratified_space}
	Let $n\geq 0$ and let $C\subset \mathbb{R}^n$ be a polyhedron.
	Let $\Phi$ be a finite set of non zero affine forms on  $\mathbb{R}^n$.
	Let $(\mathbb{R}^n,\Phi)$ be the stratified space given by the continuous function $C\to \{-,0,+\}^{\Phi}$ sending $x\in C$ to the function sending $\varphi$  to the sign of $\varphi(x)$ if $x\notin  H_{\varphi}$, and to 0 otherwise.
\end{defin}

\begin{rem}\label{homotopy_type_poset}
	The stratified space $(C,\Phi)$ is conical and the induced functor $\Pi_{\infty}(C,\Phi )\to \{-,0,+\}^{\Phi}$ is an equivalence of $\infty$-categories.
\end{rem}

\begin{defin}
	A \textit{polyhedral Stokes stratified space} is a Stokes stratified space in finite posets of the form $(C,\Phi,\cI)$ where $(C,\Phi)$ is as in  \cref{polyhedral_stratified_space} and  such that  $\cI^{\ens}\to \Pi_{\infty}(C,\Phi)$ is locally constant (\cref{def:locally_constant}).
\end{defin}

\begin{rem}
%[{\cite[\cref*{Abstract_Stokes-elementarity_constraint}]{Abstract_Derived_Stokes}}
\label{elementarity_constraint}
	If $(C,\Phi,\cI)$ is an elementary polyhedral Stokes stratified space of $\mathbb{R}$, one can show that the Stokes locus of every distinct $a,b\in \mathscr{I}(C)$ is reduced to a point.
	In particular, polyhedral Stokes stratified spaces are rarely elementary.
\end{rem}

\subsection{Elementarity criterion}

	The main result of this section is the following theorem whose statement is inspired from  \cite[Proposition 3.16]{MochStokes}.

\begin{thm}\label{induction_for_adm_Stokes_triple}
	Let $(C,\Phi,\cI)$ be a polyhedral Stokes stratified space.
	Suppose that for every distinct $a,b\in \mathscr{I}(C)$, there exists $\varphi\in  \Phi$ such that 
\begin{enumerate}\itemsep=0.2cm
\item The Stokes locus of $\{a,b\}$ is $C\cap H_{\varphi}$ (\cref{Stokes_locus}).

\item $C\setminus H_{\varphi}$  admits exactly two connected components $C_1$ and $C_2$.

\item $a <_{x} b$ for every $x\in C_1$ and $b <_{x} a$ for every $x\in C_2$.
\end{enumerate}
%Then, $(C,P(\Phi),\cI)$ is quasi-elementary.
%Assume furthermore that no two distinct elements of $I$ are comparable. 
Then $(C,\Phi,\cI)$ is elementary (\cref{defin_elementary}).
\end{thm}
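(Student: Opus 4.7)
The plan is to prove essential surjectivity of $i_{\cI,!} \colon \St_{\cI^{\ens}, \cE} \to \St_{\cI, \cE}$ by induction on $|I|$ where $I \coloneqq \mathscr{I}(C)$, deducing full faithfulness afterwards. Since $\cI^{\ens} \to \Pi_\infty(C,\Phi)$ is locally constant and the polyhedron $C$ is contractible, $\cI^{\ens}$ is equivalent over $\Pi_\infty(C,\Phi)$ to the constant fibration $I \times \Pi_\infty(C,\Phi)$, and hence $\St_{\cI^{\ens}, \cE} \simeq \cE^I$. The base case $|I| \leq 1$ is immediate, as then $\cI = \cI^{\ens}$.

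For the inductive step, fix a Stokes functor $F \colon \cI \to \cE$ and assume $|I| \geq 2$. Pick two distinct cocartesian sections $a, b \in I$ with associated affine form $\varphi \in \Phi$ and decomposition $C = C_1 \sqcup (C \cap H_\varphi) \sqcup C_2$ provided by the hypothesis. Consider the fully faithful sub-fibrations of $\cI$ in $\PosFib$ over $\Pi_\infty(C,\Phi)$ corresponding to $\{a\}$, $I \setminus \{a\}$, $\{b\}$, and $I \setminus \{a,b\}$, which together place us in the three-piece-partition setting of \cref{constr:F_minus_I}. Provided the sections $\sigma$ and $\lambda$ required there have been produced, \cref{fiber_product_is_split_IHES} asserts that $F$ is split if and only if the two associated functors $F^{\backslash \{a\}}$ and $F^{\backslash \{b\}}$ are split. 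By \cref{morphism_comes_from_graded_IHES} both are Stokes functors, and by \cref{image_fully_faithful_induction_Stokes_IHES} they come, via the relevant inclusions of the form $k_!$, from Stokes functors on polyhedral Stokes stratified spaces with underlying cocartesian fibrations $(C, \Phi, \cK)$ having $|I| - 1$ cocartesian sections and inheriting hypotheses (1)--(3). The inductive hypothesis applies to both, so each is split, and consequently so is $F$.

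The main obstacle will be the construction of the sections $\sigma$ and $\lambda$, i.e.\ the globalization of the pointwise splittings of $F$ guaranteed by the Stokes condition. Here the polyhedral hypothesis is essential: the arrangement of the finitely many hyperplanes $\{C \cap H_{\varphi_{c,d}}\}_{\{c,d\} \subset I}$ subdivides $C$ into finitely many open chambers on each of which the order on the fibers of $\cI$ is constant, and crossing a wall between two adjacent chambers modifies the order for exactly the single pair $(c,d)$ whose hyperplane $H_{\varphi_{c,d}}$ supports that wall. Using cocartesian transport of $F$, a pointwise splitting at a vertex of the arrangement propagates canonically to a chamber-wise splitting, and hypotheses (1)--(3) ensure that the pieces of this splitting relevant to the construction of $\sigma$ (respectively $\lambda$) are unaffected by walls other than the specific $H_\varphi$ attached to the pair being isolated, so that the chamber-wise splittings glue across the walls. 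Carrying out this globalization procedure, while tracking the cocartesian data, is the technical heart of the argument.

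Finally, the full faithfulness of $i_{\cI,!}$ restricted to Stokes functors follows from the essential surjectivity established above together with the identity $\Gr \circ i_{\cI,!} \simeq \id$ on $\St_{\cI^{\ens}, \cE}$, which formally makes $\Gr|_{\St_{\cI, \cE}}$ a quasi-inverse to $i_{\cI,!}$.
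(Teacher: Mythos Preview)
Your inductive strategy for essential surjectivity is the same as the paper's, and you correctly identify the key lemmas (\cref{constr:F_minus_I}, \cref{fiber_product_is_split_IHES}, \cref{morphism_comes_from_graded_IHES}, \cref{image_fully_faithful_induction_Stokes_IHES}). However, the construction of the sections $\sigma$ and $\lambda$ is where all the work lies, and your sketch is not a proof. You pick $a,b$ arbitrarily, but the paper requires $a$ to be \emph{minimal on some open stratum $U$} (and $b$ minimal on another open stratum $V$); without this, there is no reason the sequence $F_{<a}\to F_a\to \Gr_a F$ should split globally. The paper's actual argument (\S8.3--8.4) does not proceed by ``chamber-wise splitting and gluing'': it introduces a distance on open strata, a class $W(U)$ of exit paths, proves via a delicate finality argument (\cref{finality_closed_cell}, \cref{use_of_W(U)}) that when $a$ is minimal on $U$ the functor $F_a$ inverts every arrow of $W(U)$, and deduces from this that the canonical splitting over $U$ extends to all of $C$ (\cref{image_of_S_equivalence_then_splitting}). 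Your claim that the relevant pieces ``are unaffected by walls other than the specific $H_\varphi$'' is exactly what needs to be proved, and it is false for a generic choice of $a$.

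Your argument for full faithfulness is incorrect. Essential surjectivity of $i_{\cI,!}$ together with $\Gr\circ i_{\cI,!}\simeq \id$ does \emph{not} formally make $\Gr$ a quasi-inverse: a functor that is essentially surjective and admits a left retraction need not be fully faithful (consider $\{\ast\}\to B\mathbb Z$). The paper proves full faithfulness separately and by a direct computation (\cref{polyhedral_fully_faithful}): for $V,W\in\St_{\cI^{\ens},\cE}$ one reduces, using that $V_a$ is constant on the contractible $C$, to showing that a comparison of limits over $\Pi_\infty(C,\Phi)$ is an equivalence, and then further localizes to the single-hyperplane case $\Phi=\{\varphi\}$ where both sides vanish at the initial point.
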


\begin{rem}\label{total_order_open_stratum}
	In the setting of \cref{induction_for_adm_Stokes_triple}, the order of $\cI_x$ is total for every $x$ lying in an open stratum of $(C,\Phi)$.
	
\end{rem}

\begin{rem}

	Fully-faithfulness in \cref{induction_for_adm_Stokes_triple} will not require any extra technology that the one developed so far and will be proved in \cref{polyhedral_fully_faithful}.
	On the other hand, essential surjectivity will require more work and will ultimately be proved  in  \cref{polyhedral_essentially_surjective}.

\end{rem}

\cref{induction_for_adm_Stokes_triple} will be used via the following:

\begin{thm}\label{cor_induction_for_adm_Stokes_triple}

	Let $(C,P,\cI)$ be a Stokes analytic stratified space in finite posets where $C\subset \mathbb{R}^n$ is a polyhedron and $\cI^{\ens}\to \Pi_{\infty}(C,P)$ is locally constant.
	Assume that for every distinct $a,b\in \mathscr{I}(C)$, there exists a non zero affine form $\varphi \colon  \mathbb{R}^n \to \mathbb{R}$ such that 
\begin{enumerate}\itemsep=0.2cm
\item The Stokes locus of $\{a,b\}$ is $C\cap H_{\varphi}$.

\item $C\setminus H_{\varphi}$  admits exactly two connected components $C_1$ and $C_2$.

\item $a <_{x} b$ for every $x\in C_1$ and $b <_{x} a$ for every $x\in C_2$.
\end{enumerate}
Then $(C,P,\cI)$ is elementary.

\end{thm}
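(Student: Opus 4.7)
The plan is to reduce to the polyhedral setting of \cref{induction_for_adm_Stokes_triple} by refining $(C,P)$ to a polyhedral stratification compatible with $\cI$. Since $C$ is a convex polyhedron it is contractible, so local constancy of $\cI^{\ens}\to\Pi_\infty(C,P)$ forces $\cI^{\ens}$ to be the constant sheaf with some fiber $F$, and the set of cocartesian sections $\mathscr{I}(C)$ is in bijection with $F$ via evaluation at any point. For each unordered pair of distinct cocartesian sections $\{a,b\}$, pick the affine form $\varphi_{a,b}$ supplied by the hypothesis, and let $\Phi$ be the resulting finite set of affine forms, giving a polyhedral stratification $(C,\Phi)$ as in \cref{polyhedral_stratified_space}. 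I would then form a common subanalytic refinement $(C,R)$ of $(C,P)$ and $(C,\Phi)$, obtaining refinement morphisms $r_P\colon(C,R)\to(C,P)$ and $r_\Phi\colon(C,R)\to(C,\Phi)$. Setting $\cI_R\coloneqq r_P^\ast\cI$, \cref{refinement_and_cocart} yields $\St_{\cI,\cE}\simeq\St_{\cI_R,\cE}$.

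The technical core is to descend the Stokes fibration $\cI_R\to\Pi_\infty(C,R)$ along the localization $\Pi_\infty(C,R)\to\Pi_\infty(C,\Phi)$ supplied by \cref{refinement_localization} to a Stokes fibration $\cI_\Phi$ over $(C,\Phi)$ with $\cI_R\simeq r_\Phi^\ast\cI_\Phi$. By the universal property of localization, it suffices to verify that the straightening of $\cI_R$ sends any morphism $\gamma\colon x\to y$ in $\Pi_\infty(C,R)$ with endpoints in a common $\Phi$-stratum $S_\epsilon$ to an isomorphism of posets. On underlying sets this is immediate from the constancy of $\cI^{\ens}$. For the order relation, since every element of $\cI_x^{\ens}$ is the value of a unique cocartesian section, it suffices to compare orders on pairs of cocartesian sections $a,b$; but then the order between $a_x$ and $b_x$ depends only on $\mathrm{sign}(\varphi_{a,b}(x))=\epsilon(\varphi_{a,b})\in\{-,0,+\}$ by conditions (1)--(3) of the hypothesis, and is therefore constant on $S_\epsilon$.

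Once $\cI_\Phi$ is constructed, the hypotheses of \cref{induction_for_adm_Stokes_triple} transfer from $(C,P,\cI)$ to $(C,\Phi,\cI_\Phi)$: cocartesian sections correspond bijectively under refinement, and \cref{pullback_Stokes_locus} identifies the Stokes loci on both sides. Applying \cref{induction_for_adm_Stokes_triple} yields that $(C,\Phi,\cI_\Phi)$ is elementary. Finally, I would invoke \cref{elementarity_pullback}-(2) twice (viewing these Stokes stratified spaces as families over a point) to transfer elementarity along $r_\Phi$ and then along $r_P$, concluding that $(C,P,\cI)$ is elementary. The main obstacle is the descent argument: while constancy of the order on pairs of cocartesian sections follows from the hypothesis by design, obtaining a well-defined poset structure on $\Phi$-strata requires exploiting the contractibility of $C$ together with local constancy of $\cI^{\ens}$ to ensure that every element of $\cI_x$ is the value of a global cocartesian section, so that the entire poset structure is controlled by the signs of the $\varphi_{a,b}$.
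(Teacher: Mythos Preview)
Your proposal is correct and follows essentially the same approach as the paper's proof. The paper likewise collects the affine forms into a finite set $\Phi$, refines $(C,P)$ so that it maps to $(C,\Phi)$, uses \cref{refinement_localization} to see $\Pi_\infty(C,\Phi)$ as a localization, verifies via conditions (1) and (3) that the straightening of $\cI$ inverts the localized arrows (so $\cI$ descends to a fibration $\cJ$ over $\Pi_\infty(C,\Phi)$), applies \cref{induction_for_adm_Stokes_triple} to $(C,\Phi,\cJ)$, and transports elementarity back via \cref{elementarity_pullback}; the only cosmetic difference is that the paper absorbs your intermediate refinement $R$ directly into $P$.
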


\begin{proof}
	Let $\Phi$ be a finite set of non zero affine forms such that for every distinct $a,b\in \mathscr{I}(C)$, there is  $\varphi\in \Phi$ satisfying (1),(2),(3) for $a,b$.
	By \cref{elementarity_pullback}, the conclusion of \cref{cor_induction_for_adm_Stokes_triple} is insensitive to subanalytic refinements.
	Hence, at the cost of refining $(C,P)$, we can suppose that there exists a refinement $(C,P)\to (C,\Phi)$.
	By \cref{refinement_localization}, the induced functor 
\[
\Pi_{\infty}(C,P)\to \Pi_{\infty}(C,\Phi)
\]  
	exhibits $ \Pi_{\infty}(C,\Phi)$ 	as the  localization of $\Pi_{\infty}(C,P)$ at the set of arrows sent to equivalences by $\Pi_{\infty}(C,P)\to P\to \Phi$.
	On the other hands, conditions (1) and (3) say that for every morphism $\gamma \colon  x\to y$ in  $\Pi_{\infty}(C,P)$ sent to an equivalence by $\Pi_{\infty}(C,P)\to P\to \Phi$, the induced morphism of posets $\cI_x \to \cI_y$ is an isomorphism.
	Hence, there is a cocartesian fibration in finite posets $\cJ \to \Pi_{\infty}(C,\Phi)$ and a cartesian morphism 
\[
(C,P,\cI)\to (C,\Phi,\cJ)    \ .
\]
	By \cref{elementarity_pullback}, we thus have to show that $(C,\Phi,\cJ)$ is elementary, which follows from \cref{induction_for_adm_Stokes_triple}.
\end{proof}

\subsection{Distance on the set of open strata}

\begin{defin}
	Let $(C,\Phi)$ be a stratified polyhedron.
	For $A, B\subset C$, we say that \textit{$A$ and $B$ are separated by $\varphi \in \Phi$} if they lie in distinct connected components of $C\setminus H_{\varphi}$.
	We let $\Phi(A,B)\subset \Phi$ be the set of  forms  separating $A$ and $B$ and denote by $d(A,B)$ its cardinality.
\end{defin}

\begin{rem}
	If $U,V,W$ are open strata of $(C,\Phi)$, then 
\[
\Phi(U,V)\subset \Phi(U,W) \cup \Phi(W,V) \ .
\]
	In particular, $d$ induces a distance on the set of open strata of  $(C,\Phi)$. 
\end{rem}

\begin{lem}\label{adjacent_faces}
	Let $(C,\Phi)$ be a stratified polyhedron and let  $U,V,W$ be open strata of  $(C,\Phi)$. 
	Suppose that $V$ and $W$ are distinct and adjacent along a face lying in $H_{\varphi}$ for some $\varphi\in \Phi$. 
	Then $\Phi(U,V)$ and $\Phi(U,W)$ differ exactly by $\varphi$.
\end{lem}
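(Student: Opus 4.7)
The plan is to reduce everything to a sign analysis on the open strata. Since $C$ is convex and $H_\psi$ is an affine hyperplane, for each $\psi \in \Phi$ the sets $\{x \in C : \psi(x) > 0\}$ and $\{x \in C : \psi(x) < 0\}$ are convex, hence connected whenever non-empty. Therefore $C \setminus H_\psi$ has at most two connected components, and they are detected by the sign of $\psi$. Consequently, for any two open strata $S, T$ of $(C,\Phi)$, which I encode by their sign functions $s_S, s_T \colon \Phi \to \{-,+\}$, one has
\[
\Phi(S,T) \;=\; \{\psi \in \Phi : s_S(\psi) \neq s_T(\psi)\}.
\]
In particular, $\Phi(U,V) \mathbin{\triangle} \Phi(U,W) = \{\psi \in \Phi : s_V(\psi) \neq s_W(\psi)\}$, and the lemma reduces to proving that this latter set equals $\{\varphi\}$.

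The main step is a local analysis near a generic point of the common face $F = \overline V \cap \overline W$. By assumption $F \subset H_\varphi$, and $F$ is a facet of both $V$ and $W$ in $C$. I pick a point $x_0$ in the relative interior of $F$ which lies in no other hyperplane $H_\psi$ with $\psi \in \Phi \setminus \{\varphi\}$; such a point exists because the $H_\psi$ with $H_\psi \supseteq F$ form a proper affine subspace structure inside $F$ unless $F \subset H_\psi$, which would contradict the fact that $V$ and $W$ are open strata separated only by $H_\varphi$ along $F$ (since inclusion in $H_\psi$ would force $s_V$ or $s_W$ to be zero on $\psi$ in a neighborhood of $x_0$). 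On a small enough ball around $x_0$ in $C$, only $H_\varphi$ appears, so this ball meets exactly the two strata $V$ and $W$, which sit in the two sides of $H_\varphi$. This forces $s_V(\varphi) \neq s_W(\varphi)$, while $s_V(\psi) = s_W(\psi)$ for every $\psi \neq \varphi$ (both signs are determined by continuity from $x_0$, where $\psi(x_0) \neq 0$).

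Combining the two steps gives $\Phi(U,V) \mathbin{\triangle} \Phi(U,W) = \{\varphi\}$, which is exactly the claim. The only subtlety I expect is the justification that a generic point $x_0 \in F$ avoids all other $H_\psi$; this is a standard dimension argument once one observes that, because $V$ and $W$ are open strata of $(C,\Phi)$, the facet $F$ cannot be contained in any $H_\psi$ beyond $H_\varphi$, as otherwise $F$ would lie in a stratum of strictly higher codimension than expected and no neighborhood of $x_0$ in $C$ would witness the transition between $V$ and $W$ across $H_\varphi$ alone.
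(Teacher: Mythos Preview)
Your proof is correct. The reduction via the symmetric-difference identity $\Phi(U,V)\mathbin{\triangle}\Phi(U,W)=\Phi(V,W)$ is a clean way to organize the argument, and the generic-point analysis near the facet $F$ is sound (the existence of $x_0$ is exactly the codimension obstruction: $F$ has codimension $1$, while $H_\varphi\cap H_\psi$ has codimension at least $2$ whenever $H_\psi\neq H_\varphi$).

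The paper's proof is more compressed and bypasses the sign-function formalism entirely: it takes $\psi\in\Phi(U,V)\setminus\Phi(U,W)$, observes directly that $\psi$ must then separate $V$ and $W$, whence $\overline V\cap\overline W\subset H_\varphi\cap H_\psi$, and concludes $\psi=\varphi$ from the same codimension count. So the underlying obstruction is identical; you package it through a generic point, the paper through a containment of closures. Your version has the minor advantage of making the inclusion $\varphi\in\Phi(U,V)\mathbin{\triangle}\Phi(U,W)$ explicit (it is needed later when the lemma is used to compare $d(U,V)$ and $d(U,W)$), whereas the paper leaves this direction implicit. One small wording issue: your parenthetical about ``$s_V$ or $s_W$ to be zero on $\psi$'' is not quite right, since sign functions of open strata never vanish; the actual contradiction is the codimension one you state at the end.
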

\begin{proof}
	Let $\psi \in \Phi(U,V)$.
	If $\psi$ does not appear in $\Phi(U,W)$, then $\psi$ separates $V$ and $W$.
	Hence, $\overline{V}\cap \overline{W}\subset H_\varphi\cap H_{\psi}$.
	Since $V$ and $W$ are assumed to be adjacent, $\overline{V}\cap \overline{W}$ has codimension 1.
	Hence, so does $H_\varphi\cap H_{\psi}$.
	Thus $\psi =\varphi$.
\end{proof}

\begin{defin}\label{U_smaller_than_k}
	Let $(C,\Phi)$ be a stratified polyhedron and let  $U$ be an open stratum. 
	For $k=-1$, put  $U_{\leq -1}=U$.
	For $k\geq 0$, put
\[
U_{\leq k} \coloneqq \bigcup_{V, d(U,V)\leq k}  \overline{V}
\]
where the union runs over the open strata  $V$ of $(C,\Phi)$ satisfying $d(U,V)\leq k$. 
%For every subset $A$ in $C$, we put $A_{U,\leq k} = A\cap U_{\leq k}$ and  $A_{U,>k}\coloneqq A \setminus A_{\leq k}$.
\end{defin}

\begin{rem}\label{Uk_fully_faithful}
	Let $V$ be an open stratum of $(C,\Phi)$ mapping to $f \in \{-,+\}^{\Phi}$.
	Then, $\overline{V}$ is the set of points of $C$ lying above the closed subset $S(V)\coloneqq (\{-,0,+\}^{\Phi})_{\leq f}$.
	In particular $U_{\leq k}$ is the set of points of $C$ lying above the closed subset 
\[
S(U,k) \coloneqq \bigcup_{V, d(U,V)\leq k}  S(V) \ .
\]
\end{rem}

\begin{lem}\label{A_and_B}
	Let $(C,\Phi)$ be a stratified polyhedron and let  $U,V$ be distinct open strata. 
	Put $k\coloneqq d(U, V)-1$. 
	Let $F$ be a face of $\overline{V}$. 
	Let $\varphi \in \Phi$ be the unique form such that $F=\overline{V}\cap H_{\varphi}$. 
	Then, $F\subset U_{\leq k}$ if and only if $\varphi$ separates $U$ and $V$.
	In particular, 
\[
\overline{V}\cap U_{\leq k}=\bigcup_{\varphi\in \Phi(U,V)} \overline{V}\cap H_{\varphi} \ .
\]
\end{lem}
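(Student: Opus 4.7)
The plan is to deduce the equivalence directly from Lemma~\ref{adjacent_faces} applied at a point in the relative interior of $F$, and then to obtain the consequence equality by sign-vector combinatorics. It is convenient to parametrise open strata by their sign vectors $\sigma_W\colon\Phi\to\{-,+\}$, so that $\Phi(U,W)=\{\psi\in\Phi:\sigma_U(\psi)\neq\sigma_W(\psi)\}$ and $d(U,W)=|\Phi(U,W)|$, while a point $x\in C$ has sign vector $\sigma_x\colon\Phi\to\{-,0,+\}$ and $x\in\overline{W}$ exactly when $\sigma_W$ agrees with $\sigma_x$ on $\{\psi:\sigma_x(\psi)\neq 0\}$.

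For the forward direction of the equivalence, pick $x$ in the relative interior of $F$. Since $F\subset U_{\leq k}$ with $k=d(U,V)-1$, there exists an open stratum $W$ with $d(U,W)\leq k$ and $x\in\overline{W}$; in particular $W\neq V$. As $x$ lies in the relative interior of a codimension-one face of $\overline{V}$, the only open stratum other than $V$ whose closure can contain $x$ is the one adjacent to $V$ across $F$. Lemma~\ref{adjacent_faces} then tells us that $\Phi(U,V)$ and $\Phi(U,W)$ differ by exactly $\varphi$, and the inequality $d(U,W)<d(U,V)$ forces $\varphi\in\Phi(U,V)$. Conversely, if $\varphi$ separates $U$ and $V$, let $W$ be the open stratum obtained by flipping the sign of $V$ on $\varphi$ only; its non-emptiness in $C$ follows by perturbing transversally off $H_\varphi$ from a point in the relative interior of $F$, using the convexity of $C$ to stay inside it. Lemma~\ref{adjacent_faces} then gives $\Phi(U,W)=\Phi(U,V)\setminus\{\varphi\}$, so $d(U,W)=k$ and $F\subset\overline{W}\subset U_{\leq k}$.

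For the inclusion $\subset$ of the consequence, take $x\in\overline{V}\cap U_{\leq k}$, so that $x\in\overline{W}$ for some open stratum $W$ with $d(U,W)\leq k$. Since $x\in\overline{V}\cap\overline{W}$, the sign vectors $\sigma_V$ and $\sigma_W$ must agree on every $\psi\in\Phi$ with $\psi(x)\neq 0$. If no $\varphi\in\Phi(U,V)$ satisfied $x\in H_\varphi$, then $\sigma_V$ and $\sigma_W$ would coincide throughout $\Phi(U,V)$, forcing $\Phi(U,V)\subset\Phi(U,W)$ and hence $d(U,W)\geq d(U,V)>k$, a contradiction. For the reverse inclusion $\supset$, given $x\in\overline{V}\cap H_\varphi$ with $\varphi\in\Phi(U,V)$, set $Z=\{\psi\in\Phi:\psi(x)=0\}$ and define $\sigma_W$ to equal $\sigma_U$ on $Z$ and $\sigma_V$ on $\Phi\setminus Z$; a direct count gives $|\Phi(U,W)|=|\Phi(U,V)\setminus Z|\leq k$, using $\varphi\in\Phi(U,V)\cap Z$. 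To exhibit a point of $C$ realising $\sigma_W$ and witnessing $x\in\overline{W}$, pick any $u\in U$ and consider the segment $z_\epsilon=(1-\epsilon)u+\epsilon x\in C$. Inspecting the signs of $\psi(z_\epsilon)=(1-\epsilon)\psi(u)+\epsilon\psi(x)$ case by case on $\psi\in Z$, on $\psi\in\Phi(U,V)\setminus Z$, and on $\psi\notin\Phi(U,V)\cup Z$, one checks that $\sigma_{z_\epsilon}=\sigma_W$ for all $\epsilon$ sufficiently close to $1$.

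The principal difficulty lies in the backward direction of the equivalence and in the inclusion $\supset$: in both cases one must produce an actual non-empty open stratum of $C$ realising a prescribed sign pattern. The sign-vector recipe alone does not guarantee this; it is the convexity of the polyhedron $C$, together with the explicit segment (or transversal perturbation) from a reference point of $U$, or from the relative interior of $F$, to $x$, that makes the existence go through.
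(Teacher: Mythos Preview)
Your proof is correct and, for the biconditional, follows the same route as the paper: both directions hinge on Lemma~\ref{adjacent_faces} applied to the stratum adjacent to $V$ across $F$. You are somewhat more explicit than the paper in two places. First, where the paper simply asserts ``there is an open stratum $W$ with $d(U,W)\leq k$ such that $F$ is a face of $\overline{W}$'', you pick $x$ in the relative interior of $F$ and argue that the only open strata whose closures can contain such an $x$ are $V$ and its neighbor across $F$; this is the clean justification of the paper's claim. Second, for the converse you actually exhibit the adjacent stratum via a convex perturbation, whereas the paper leaves its existence implicit.

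Your separate treatment of the displayed equality is a genuine addition. The paper records it as a consequence of the biconditional without further comment, but the inclusion $\overline{V}\cap U_{\leq k}\subset\bigcup_{\varphi\in\Phi(U,V)}\overline{V}\cap H_\varphi$ does not follow from the facet-by-facet equivalence alone: a point $x\in\overline{V}\cap U_{\leq k}$ lies on \emph{some} facet $\overline{V}\cap H_\psi$, but one must still argue that some $\psi$ with $\psi(x)=0$ belongs to $\Phi(U,V)$. Your sign-vector argument (if no such $\psi$ existed then $\Phi(U,V)\subset\Phi(U,W)$, contradicting $d(U,W)\leq k$) is exactly what is needed. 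Likewise your segment construction for $\supset$ handles points of $\overline{V}\cap H_\varphi$ that are not in the relative interior of a facet, where the biconditional does not directly apply. So your proof is a mild but honest strengthening of the paper's.
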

\begin{proof}
	Suppose that $\varphi$ separates $U$ and $V$.
%In particular, $F$ cannot lie in the boundary of $C$.
	Hence, there is an open stratum $W\neq V$ adjacent to $V$ along $F$. 
	From \cref{adjacent_faces},  we have $d(U, W)=k$. 
	Hence, $F\subset \overline{W}\subset  U_{\leq k}$. 
	On the other hand, suppose that $F\subset U_{\leq k}$. 
	By definition, there is an open stratum $W$ with $d(U,W)\leq k$ such that  $F$ is a face of $\overline{W}$.
	In particular, $W\neq V$.
	Thus, \cref{adjacent_faces} ensures that $\Phi(U,V)$ and $\Phi(U,W)$ differ exactly by $\varphi$.
	Since $d(U,V)>d(U,W)$, we necessarily have $\varphi \in \Phi(U,V)$  and \cref{A_and_B} is proved.
\end{proof}

\begin{lem}\label{contractile}
	Let $(C,\Phi)$ be a stratified polyhedron and let  $U,V$ be distinct open strata. 
	Put $k\coloneqq d(U, V)-1$. 
	Then, $\overline{V}\cap U_{\leq k } \to \overline{V}$ admits a deformation retract. 
	In particular, $\overline{V}\cap U_{\leq k }$ is contractible.
\end{lem}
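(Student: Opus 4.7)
The plan is to construct a strong deformation retraction of $\overline{V}$ onto $\overline{V}\cap U_{\leq k}$ by radial projection from a well-chosen point $p\in U$, and to use \cref{A_and_B} to identify the image of the retraction. Concretely, I would first fix $p\in U$ and check that $p\notin \overline{V}$: since $U$ and $V$ are distinct open strata, both of dimension $\dim C$, if $p$ lay in $\overline{V}$ then the non-empty open subset $U\cap \overline{V}$ of $U$ would be contained in $\overline{V}\setminus V=\partial V$, contradicting the fact that $\partial V$ has strictly smaller dimension than $U$.

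Since $\overline{V}$ is a closed convex polyhedral set (an intersection of a convex polyhedron with finitely many closed half-spaces) and $p\notin \overline{V}$, for every $x\in \overline{V}$ one has
\[
\lambda(x)\coloneqq \max\{t\in [0,1] \mid x+t(p-x)\in \overline{V}\}\in [0,1),
\]
and $\lambda\colon \overline{V}\to [0,1)$ is continuous by standard convex-geometric reasoning. The formulas
\[
r(x)\coloneqq x+\lambda(x)(p-x),\qquad H_t(x)\coloneqq x+t\lambda(x)(p-x)
\]
then define continuous maps $r\colon \overline{V}\to \overline{V}$ and $H\colon \overline{V}\times[0,1]\to \overline{V}$ satisfying $H_0=\id_{\overline{V}}$ and $H_1=r$ (the segment from $x$ to $r(x)$ stays in $\overline{V}$ by convexity).

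The key step is to show that $r$ takes values in $\overline{V}\cap U_{\leq k}$ and fixes this subspace pointwise. By \cref{A_and_B},
\[
\overline{V}\cap U_{\leq k }=\bigcup_{\varphi\in \Phi(U,V)} \overline{V}\cap H_{\varphi}.
\]
For the image, the exit point $r(x)$ lies on some facet $\overline{V}\cap H_{\varphi}$ of $\overline{V}$; because the segment leaves $\overline{V}$ past $r(x)$ while continuing toward $p$, the affine form $\varphi$ must take opposite signs on $V$ and at $p\in U$, forcing $\varphi\in \Phi(U,V)$. For the pointwise fixing, if $x\in \overline{V}\cap H_{\varphi}$ with $\varphi\in \Phi(U,V)$ then $\varphi(x)=0$ and $\varphi(p)$ has sign opposite to that of $\varphi$ on $V$, so $x+t(p-x)\notin \overline{V}$ for every $t>0$; hence $\lambda(x)=0$ and $H_t(x)=x$.

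Together this exhibits $H$ as a strong deformation retraction of $\overline{V}$ onto $\overline{V}\cap U_{\leq k }$, and the contractibility of the latter then follows from that of the convex set $\overline{V}$. The only non-trivial step is the exit-face analysis in the previous paragraph, but this reduces to a direct sign computation along the segment $[x,p]$ combined with \cref{A_and_B}.
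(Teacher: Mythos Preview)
Your proposal is correct and follows essentially the same approach as the paper: both fix a point in $U$ and build the retraction by pushing each $y\in\overline{V}$ along the segment toward that point until it first meets a hyperplane $H_\varphi$ with $\varphi\in\Phi(U,V)$, invoking \cref{A_and_B} to identify the image with $\overline{V}\cap U_{\leq k}$. The paper packages the exit time as the largest root in $(0,1]$ of the polynomial $\prod_{\varphi\in\Phi(U,V)}\varphi((1-t)x+ty)$, while you describe it as $\max\{t:\ x+t(p-x)\in\overline{V}\}$; after the reparametrization $t\leftrightarrow 1-t$ these coincide, since for $\varphi\notin\Phi(U,V)$ the sign analysis you give (namely $\varphi(p)>0$ and $\varphi(x)\geq 0$) shows those constraints never force an exit. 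One small point worth making explicit is that the segment stays in the ambient polyhedron $C$ by convexity, so the exit from $\overline{V}$ cannot occur along a facet of $C$ and must be through some $H_\varphi$ with $\varphi\in\Phi$.
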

\begin{proof}
	Fix $x\in U$.
	At the cost of replacing some forms in $\Phi$ by their opposite, we can suppose that $\overline{V}$ is the set of points $x\in C$ such that $\varphi(x)\geq 0$ for every $\varphi \in \Phi$.
	For $y\in \overline{V}$, define the following degree $k+1$ polynomial
$$
P_V(y) \colon  t \mapsto \prod_{\varphi\in \Phi(U,V)}  \varphi((1-t)\cdot x+t\cdot y)   \ .
$$
	Then, $P_V(y)$ has exactly $k+1$ roots in $(0,1]$ counted with multiplicities.
	Let $t_V(y)\in (0,1]$ be the biggest root of $P_V(y)$ and put
$$
p_V(y)\coloneqq (1-t_V(y))\cdot x+ t_V(y)\cdot y   \ .
$$ 
	Since the coefficients of $P_V(y)$ depend continuously on $y$, so does $t_V(y)$.
	Hence, $p_V $ varies continuously in $y$.
	Let $y \in \overline{V}$.
	We want to show that $[y,p_V(y)]\subset \overline{V}$.
	If $y=p_V(y)$, there is nothing to prove.
	Suppose that  $y\neq p_V(y)$ and pick $z\in (y,p_V(y))$.
	If $\varphi$ separates $U$ and $V$, the non zero real numbers $\varphi(y)$ and $\varphi(z)$ have the same sign by construction. 
	Hence $\varphi(z)> 0$.
	If $\varphi$ does not separate $U$ and $V$, we have $\varphi(x)>0$.
	Since $\varphi(y)\geq 0$, we deduce $\varphi(z)\geq 0$. 
	Hence, $(y,p_V(y))\subset \overline{V}$, so that $[y,p_V(y)]\subset \overline{V}$.
	By \cref{A_and_B}, we deduce that $p_V(y)\in  \overline{V} \cap U_{\leq k }$.
	Note that if $y\in \overline{V}\cap U_{\leq k }$, then $y$ lies on a face of $\overline{V}$ separating $U$ and $V$ by 
\cref{A_and_B}.
	Hence, $P_V(y)$ vanishes at $t=1$, so that $p_V(y)=y$.
	Thus, the continuous function $ [0,1]\times \overline{V}  \to \overline{V}  $ 
defined as 
$$
(u,y)\mapsto u\cdot p_V(y)+(1-u) \cdot y
$$
provides the sought-after deformation retract. 
\end{proof}

\begin{construction}\label{deleting_strata}
	Let $(C,\Phi)$ be a stratified polyhedron and let  $U$ be an open strata. 
	Let $k\geq 0$ and put $S(U,k+1)^\circ \coloneqq  S(U,k+1)\setminus S(U,k)$.
	Observe that $S(U,k+1)^\circ $ is open in $S(U,k+1)$.
	Consider the following pushout of posets
$$
	\begin{tikzcd}
	S(U,k+1)^\circ\arrow{r}\arrow{d}&S(U,k+1)\arrow{d}\\
	\ast\arrow{r}& P(U,k+1)   \ .
			\end{tikzcd}
$$
	Since $S(U,k+1)^\circ $ is open in $S(U,k+1)$, the stratified space $(U_{k+1},P(U,k+1))$ is conically stratified and admits  $U_{\leq k+1}\setminus U_{\leq k}$ as  open stratum.
\end{construction}

\begin{lem}\label{finality_closed_cell}
	Let $(C,\Phi)$ be a stratified polyhedron and let  $U$ be an open stratum. 
	Let $k\geq 0$.
	Then, the induced functor
\begin{equation}\label{finality_closed_cell_functor}
\Pi_{\infty}(U_{\leq k},S(U,k)) \to \Pi_{\infty}(U_{\leq k+1},P(U,k+1))
\end{equation}
is final.
\end{lem}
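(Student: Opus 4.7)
The plan is to verify the standard finality criterion used in the proof of \cref{simplicial_complex_full_excellent}: for every object $y$ of $\Pi_{\infty}(U_{\leq k+1}, P(U,k+1))$, the slice
\[ A_{/y} \coloneqq \Pi_{\infty}(U_{\leq k}, S(U,k)) \times_{\Pi_{\infty}(U_{\leq k+1}, P(U,k+1))} \Pi_{\infty}(U_{\leq k+1}, P(U,k+1))_{/y} \]
must be weakly contractible. By \cref{refinement_localization} applied to the refinement $S(U,k+1) \to P(U,k+1)$, combined with \cref{homotopy_type_poset}, the target is identified with the localization $S(U,k+1)[W^{-1}]$, where $W$ is the set of arrows of $S(U,k+1)$ lying entirely within the upward-closed subset $S(U,k+1)^\circ = S(U,k+1) \setminus S(U,k)$, and the map in question becomes the composite of the downward-closed inclusion $S(U,k) \hookrightarrow S(U,k+1)$ followed by the localization.

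The objects of the target split into two types: unchanged objects coming from $S(U,k)$, and a single collapsed object $\ast$ representing the top stratum $U_{\leq k+1} \setminus U_{\leq k}$. For $y \in S(U,k)$, the slice $S(U,k+1)[W^{-1}]_{/y}$ coincides with $S(U,k+1)_{\leq y}$, which lies entirely in $S(U,k)$ by downward closedness, so the fiber $A_{/y}$ admits $y$ as terminal object and is contractible. For $y = \ast$, unfolding the slice through the localization using the conical structure of $(U_{\leq k+1}, P(U,k+1))$ at points of the top stratum together with \cref{A_and_B}, the fiber $A_{/\ast}$ is equivalent to $\Pi_{\infty}(U_{\leq k}, S(U,k)) \simeq S(U,k)$ itself.

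It thus remains to prove that $S(U,k)$ is weakly contractible. Since $(U_{\leq k}, S(U,k))$ is conically stratified with contractible strata, the classifying space $|S(U,k)|$ is homotopy equivalent to the underlying topological space $U_{\leq k}$. I would then show that $U_{\leq k}$ is star-shaped with respect to any fixed point $x \in U$: given $y \in U_{\leq k}$ with $y \in \overline{V}$ for some open $V$ satisfying $d(U,V) \leq k$, parametrize $z_t = (1-t)x + ty$ for $t \in [0,1]$; for each $\varphi \in \Phi$, the function $t \mapsto \varphi(z_t)$ is affine and changes sign on $[0,1]$ only if $\varphi$ separates $U$ from $V$, so the total number of walls crossed is bounded by $d(U,V) \leq k$. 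Consequently, every open stratum containing an intermediate point $z_t$ is at distance at most $k$ from $U$, and the wall strata traversed at crossing instants lie in the closure of adjacent open strata also at distance $\leq k$. Hence $[x,y] \subseteq U_{\leq k}$, and $U_{\leq k}$ is contractible.

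The main obstacle will be the identification $A_{/\ast} \simeq S(U,k)$: although geometrically plausible via the conical collar at the top stratum, rigorously unfolding the slice of the localization $S(U,k+1)[W^{-1}]$ at an object of the collapsed component $S(U,k+1)^\circ$ requires some care. I anticipate handling it via a van Kampen style decomposition of $A_{/\ast}$ indexed by the faces $\overline{X_c} \cap U_{\leq k}$ for $c \in S(U,k+1)^\circ$, combined with the contractibility statement of \cref{contractile} for each such face.
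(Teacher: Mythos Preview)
Your overall framework is the right one, but the identification of the slice over the top stratum is incorrect, and this is where the argument breaks down.

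The objects of the target $\Pi_{\infty}(U_{\leq k+1},P(U,k+1))$ are points of $U_{\leq k+1}$, not elements of the poset $P(U,k+1)$. Although $P(U,k+1)$ has a single maximal element $\ast$, the corresponding stratum $U_{\leq k+1}\smallsetminus U_{\leq k}$ is in general \emph{disconnected}: it has one connected component $\overline{V}\smallsetminus U_{\leq k}$ for each open stratum $V$ at distance exactly $k+1$ from $U$ (by \cref{adjacent_faces}, a path inside the top stratum cannot leave $\overline{V}$, since any adjacent open stratum is at distance $k$ or $k+2$). So there is no single object ``$\ast$'' to check; one must check the slice $A_{/x}$ at each point $x$ of the top stratum.

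For such an $x\in\overline{V}\smallsetminus U_{\leq k}$, the slice is \emph{not} all of $S(U,k)$. An exit path $\gamma\colon y\to x$ in $(U_{\leq k+1},P(U,k+1))$ has $\gamma((0,1])$ contained in the top stratum, hence in $\overline{V}$ by the observation above; therefore $y\in\overline{V}\cap U_{\leq k}$. Conversely, any $y\in\overline{V}\cap U_{\leq k}$ admits the straight segment to $x$ as such an exit path. Thus
\[
A_{/x}\;\simeq\;\Pi_{\infty}\bigl(\overline{V}\cap U_{\leq k},\,S(V)\cap S(U,k)\bigr),
\]
which is weakly contractible precisely by \cref{contractile}. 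This is the paper's argument.

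Your star-shapedness proof that $U_{\leq k}$ itself is contractible is correct and pleasant, but it answers the wrong question: it would be what you need if $A_{/\ast}$ were all of $S(U,k)$, which it is not. The van~Kampen decomposition you propose at the end is also aimed at the wrong object; once you realize the slices are already the individual pieces $\overline{V}\cap U_{\leq k}$, no gluing is required and \cref{contractile} applies directly.
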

\begin{proof}
	To prove \cref{finality_closed_cell}, it is enough to prove that for $x\in 
U_{\leq k+1}$, the $\infty$-category
$$
\cX\coloneqq \Pi_{\infty}(U_{\leq k},S(U,k)) \times_{\Pi_{\infty}(U_{\leq k+1},P(U,k+1))} \Pi_{\infty}(U_{\leq k+1},P(U,k+1))_{/x}
$$ 
is weakly contractible.
	Since $S(U,k)$ is closed in $P(U,k+1)$, the functor \eqref{finality_closed_cell_functor} is fully-faithful.
	Hence, we can suppose that $x\in U_{\leq k+1} \setminus U_{\leq k}$.
	In that case, let $V\subset U_{\leq k+1}$ be an open stratum  at distance $k+1$ from $U$ such that $x\in \overline{V}$.
	By \cref{homotopy_type_poset}, the $\infty$-category $\cX$ is equivalent to the full subcategory of $\Pi_{\infty}(U_{\leq k},S(U,k))$ spanned by points $y$ at the source of some exit-path $\gamma \colon  y\to x$ in $\Pi_{\infty}(U_{\leq k+1},P(U,k+1))$.
	In particular $\gamma((0,1])\subset U_{\leq k+1} \setminus U_{\leq k}$.
	Note that $\gamma((0,1])\subset \overline{V}$.
	Indeed if this was not the case, there would exist an open stratum $W\neq V$ adjacent to $V$ with $d(U,W)=k+1$.
	This is impossible by \cref{adjacent_faces}.
	Hence $y\in \overline{V}\cap U_{\leq k}$.
	On the other hand, for $y\in \overline{V}\cap U_{\leq k}$, the line joining $y$ to $x$ is a morphism in $\Pi_{\infty}(U_{\leq k+1},P(U,k+1))$.
	Hence, $\cX$ is equivalent to the full subcategory of 
$\Pi_{\infty}(U_{\leq k},S(U,k))$ spanned by points $y\in \overline{V}\cap U_{\leq k}$, that is 
$$
\cX\simeq \Pi_{\infty}(\overline{V}\cap U_{\leq k}, S(V)\cap S(U,k)) \ .
$$
	Hence, 
$$
\Env(\cX)\simeq \Pi_{\infty}(\overline{V}\cap U_{\leq k})\simeq \ast
$$
where the last equivalence follows from \cref{contractile}.
\end{proof}

\subsection{Splitting propagation}

\begin{defin}\label{the_class_W(U)}
	Let $(C,\Phi)$ be a stratified polyhedron and let  $U$ be an open stratum. 
	Let $W(U)$ be the class of morphisms $\gamma \colon  x\rightarrow y$ in $\Pi_{\infty}(C,\Phi)$ such that for every $\varphi \in \Phi$  with  $x\in H_{\varphi}$, one of the following condition is satisfied:
\begin{enumerate}\itemsep=0.2cm
\item[$(i)$] We have $y\in H_\varphi$.
\item[$(ii)$] The point $y$ and  $U$ are not separated by $H_{\varphi}$. 
\end{enumerate}
	In particular, $W(U)$ contains every equivalence of $\Pi_{\infty}(C,\Phi)$.
\end{defin}

	Here are some examples of arrows in the class $W(U)$.

\begin{lem}\label{belong_to_S}
	Let $(C,\Phi)$ be a stratified polyhedron and let  $U$ be an open stratum. 
	Let $k\geq 0$. 
	Then, every exit path of $(U_{\leq k+1}\setminus U_{\leq k}, \Phi)$ lies in $W(U)$.
\end{lem}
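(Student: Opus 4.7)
The plan is to argue by contradiction. Fix an exit path $\gamma \colon x \to y$ in $(U_{\leq k+1}\setminus U_{\leq k},\Phi)$ and a form $\varphi \in \Phi$ with $x \in H_\varphi$. Assume that $y \notin H_\varphi$ and that $\varphi$ separates $y$ from $U$. I will produce an open stratum $V'$ at distance $k$ from $U$ whose closure contains $x$; by \cref{U_smaller_than_k} this forces $x \in U_{\leq k}$, contradicting $x \in U_{\leq k+1}\setminus U_{\leq k}$.

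Since $y \in U_{\leq k+1}\setminus U_{\leq k}$, I can pick an open stratum $V$ with $y \in \overline{V}$ and $d(U,V) = k+1$. Because $\varphi(y) \neq 0$ and $V$ is connected, $\varphi$ has constant nonzero sign on $V$ equal to its sign at $y$; combined with the assumption that $\varphi$ separates $y$ from $U$, this gives $\varphi \in \Phi(U,V)$. Let $V'$ denote the open stratum with the same sign vector as $V$ except opposite for $\varphi$: then $V$ and $V'$ are adjacent across $\overline{V}\cap H_\varphi$, and \cref{adjacent_faces} yields $d(U,V') = d(U,V)-1 = k$.

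It remains to check that $x \in \overline{V'}$, and this is where the exit path hypothesis enters. Writing $\sigma \colon C \to \{-,0,+\}^{\Phi}$ for the stratifying map, the defining property of an exit path forces $\sigma(x)$ to specialize to $\sigma(y)$; i.e., for every $\psi \in \Phi$ one has $\sigma(x)_\psi \in \{0,\sigma(y)_\psi\}$. The sign vector of $V'$ agrees with that of $V$, hence with $\sigma(y)$, on every coordinate except $\varphi$, where it is opposite to $\sigma(y)_\varphi$. Since $\sigma(x)_\varphi = 0$ is compatible with any sign of $V'$ at $\varphi$, and since for $\psi \neq \varphi$ the value $\sigma(x)_\psi \in \{0,\sigma(y)_\psi\} = \{0,\sigma(V')_\psi\}$ is compatible with the sign of $V'$ at $\psi$, the sign vector $\sigma(x)$ specializes to that of $V'$, so $x \in \overline{V'}$ and the contradiction is achieved. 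The only mild subtlety to verify along the way is that the stratum of $y$ in the restricted stratification on $U_{\leq k+1}\setminus U_{\leq k}$ is genuinely $\sigma^{-1}(\sigma(y))$; this is immediate because $U_{\leq k+1}\setminus U_{\leq k}$ is saturated for $\sigma$, as the collection of open strata whose closure contains a given point depends only on its sign vector.
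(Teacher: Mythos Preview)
Your argument is essentially the paper's, run as a contradiction: the paper picks an open stratum $V$ at distance $k+1$ with $x\in\overline V$, invokes \cref{A_and_B} to deduce $\varphi\notin\Phi(U,V)$ from $x\notin U_{\le k}$, and then reads off that $y$ sits on the $U$-side of $H_\varphi$. You instead assume $\varphi$ separates $y$ from $U$, pick $V$ with $y\in\overline V$ (which forces $x\in\overline V$ as well via the specialization of sign vectors along the exit path), show $\varphi\in\Phi(U,V)$, and seek an adjacent chamber $V'$ with $d(U,V')=k$ containing $x$ in its closure. Your explicit choice of $V$ based on $y$ is in fact exactly what makes the paper's terse final sentence precise.

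One step deserves care: declaring $V'$ to be ``the open stratum with the same sign vector as $V$ except opposite for $\varphi$'' presupposes that this sign vector is realized, and that can fail. For instance, in $C=\R^2$ with $\Phi=\{x_1,x_2,x_1-x_2\}$, flipping the $x_1$-entry of the chamber $(+,+,+)$ yields $(-,+,+)$, which is not the sign vector of any point; correspondingly $\overline V\cap H_{x_1}$ collapses to $\{0\}$ and there is no facet-adjacency for \cref{adjacent_faces} to use. The cleanest repair is simply to invoke the ``in particular'' clause of \cref{A_and_B}: once you have $\varphi\in\Phi(U,V)$ and $x\in\overline V\cap H_\varphi$, that clause gives $x\in U_{\le k}$ directly, which is the contradiction you want. (A self-contained alternative: for $u\in U$ and small $t>0$ the point $(1-t)x+tu$ lies in an open stratum $W$ whose sign vector agrees with $\sigma(U)$ at every $\psi$ vanishing at $x$, so $\Phi(U,W)\subset\Phi(U,V)\setminus\{\varphi\}$ and $x\in\overline W\subset U_{\le k}$.)
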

\begin{proof}
	Let $\gamma \colon  x\rightarrow y$ be an exit path of $(U_{\leq k+1}\setminus U_{\leq k}, \Phi)$.
	Let $V$ be a stratum at distance $k+1$ from $U$ with  $x\in \overline{V}$.
	Let $\varphi \in \Phi$ with $x\in H_{\varphi}$ and assume that $y\notin H_{\varphi}$.
	Since $x\notin U_{\leq k}$, \cref{A_and_B} ensures that $\varphi$ does not separate $U$ and $V$.
	Since  $\gamma \colon  x\rightarrow y$ lies in $U_{\leq k+1}$ we deduce that $\varphi$ does not separate $y$ and $U$.
\end{proof}

	The class of maps from \cref{the_class_W(U)} is useful because of the following 

\begin{lem}\label{use_of_W(U)}
	Let $(C,\Phi)$ be a stratified polyhedron and let  $U$ be an open stratum. 
%	Let $\cE$ be a presentable $\infty$-category.
	Let $F \colon  \Pi_{\infty}(C,\Phi) \to  \cE$ be a functor inverting every arrow in $W(U)$.
	Then, the canonical morphism
\[
\lim_{\Pi_{\infty}(C,\Phi)} F \to \lim_{\Pi_{\infty}(U,\Phi)} F|_U
\]
is an equivalence.
\end{lem}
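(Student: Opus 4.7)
The plan is to argue by descending induction on $k$ along the filtration $U = U_{\leq -1} \subset U_{\leq 0} \subset U_{\leq 1} \subset \cdots$ from \cref{U_smaller_than_k}. Since $\Phi$ is finite, only finitely many open strata occur in $(C,\Phi)$, so for some integer $K$ we have $U_{\leq K} = C$. The goal is to prove that for each $k \in \{-1, 0, \ldots, K\}$ one has
\[
\lim_{\Pi_\infty(U_{\leq k},\Phi)} F|_{U_{\leq k}} \simeq \lim_{\Pi_\infty(U,\Phi)} F|_U \ .
\]
The case $k=-1$ is a tautology, and the case $k=K$ is the desired statement.

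For the inductive step, I would first exploit the refinement $(U_{\leq k+1},\Phi) \to (U_{\leq k+1}, P(U,k+1))$ provided by \cref{deleting_strata}. By \cref{refinement_localization}, the induced functor
\[
\Pi_\infty(U_{\leq k+1},\Phi) \longrightarrow \Pi_\infty(U_{\leq k+1}, P(U,k+1))
\]
is a localization at the class of arrows sent to equivalences by the refinement of stratifications. Unraveling the construction of $P(U,k+1)$ as a pushout collapsing $S(U,k+1)^\circ$ to a point, these inverted arrows are precisely the exit paths of the open stratum $(U_{\leq k+1}\setminus U_{\leq k},\Phi)$. By \cref{belong_to_S} every such arrow belongs to $W(U)$, and hence is inverted by $F$ by hypothesis. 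Consequently $F|_{U_{\leq k+1}}$ factors through a functor $\bar F$ on $\Pi_\infty(U_{\leq k+1},P(U,k+1))$, and the universal property of localizations yields
\[
\lim_{\Pi_\infty(U_{\leq k+1},\Phi)} F|_{U_{\leq k+1}} \simeq \lim_{\Pi_\infty(U_{\leq k+1},P(U,k+1))} \bar F \ .
\]

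Next, \cref{finality_closed_cell} asserts that the inclusion $\Pi_\infty(U_{\leq k},S(U,k)) \to \Pi_\infty(U_{\leq k+1},P(U,k+1))$ is final, which reduces the latter limit to $\lim_{\Pi_\infty(U_{\leq k},S(U,k))} \bar F|_{U_{\leq k}}$. Since the stratification $U_{\leq k} \to S(U,k)$ is just the $\Phi$-stratification of $U_{\leq k}$ factored through the closed embedding $S(U,k) \hookrightarrow \{-,0,+\}^\Phi$, there is a canonical equivalence $\Pi_\infty(U_{\leq k},S(U,k)) \simeq \Pi_\infty(U_{\leq k},\Phi)$ under which $\bar F|_{U_{\leq k}}$ corresponds to $F|_{U_{\leq k}}$. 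Applying the inductive hypothesis then yields the statement at level $k+1$. The main point requiring some care is the explicit identification of the localizing class of arrows in the first step, namely verifying that the refinement of stratifications inverts \emph{exactly} the exit paths of $(U_{\leq k+1}\setminus U_{\leq k},\Phi)$; once this is established, the remaining pieces assemble formally via \cref{finality_closed_cell} and \cref{belong_to_S}.
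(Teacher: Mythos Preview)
Your proposal is correct and follows essentially the same approach as the paper's proof: both argue by climbing the filtration $U_{\leq k}$, use \cref{refinement_localization} together with \cref{belong_to_S} to factor $F|_{U_{\leq k+1}}$ through the coarser stratification $P(U,k+1)$, and then invoke \cref{finality_closed_cell} to descend to $U_{\leq k}$. The only cosmetic differences are that the paper indexes the step as $k-1 \to k$ rather than $k \to k+1$ and works directly with the poset $S(U,k)$ rather than $\Phi$, which, as you correctly observe, give canonically equivalent exit-path categories on $U_{\leq k}$.
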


\begin{proof}
	To prove  \cref{use_of_W(U)},  it is enough to prove that 
\begin{equation}\label{eq_image_of_S_equivalence_then_splitting}
\lim_{\Pi_{\infty}(U_{\leq k},S(U,k)) } F|_{U_{\leq k}}  \to \lim_{\Pi_{\infty}(U_{\leq k-1},S(U,k-1)) } F|_{U_{\leq k-1}}  
\end{equation}
is an equivalence for every $k\geq 0$,  where we used the notations of \cref{deleting_strata}.
	Assume that $k\geq 1$.
Since 
\[
(U_{\leq k},S(U,k))\to (U_{\leq k},P(U,k))
\] 
is a refinement, we know by 
\cref{refinement_localization} that the functor
\begin{equation}\label{use_of_W(U)_localisation}
\Pi_{\infty}(U_{\leq k},S(U,k)) \to \Pi_{\infty}(U_{\leq k},P(U,k))
\end{equation}
 exhibits the target as the localization of the source at the exit paths in $U_{\leq k}\setminus U_{\leq k-1}$.
	By \cref{belong_to_S},  the functor  \eqref{use_of_W(U)_localisation} is thus a localization functor at some arrows in $W(U)$.
	Hence, the functor 
\[
F|_{U_{\leq k}} \colon  \Pi_{\infty}(U_{\leq k},S(U,k))\to \cE\]  
factors uniquely through $\Pi_{\infty}(U_{\leq k},P(U,k)) $.
	Since a localization functor is final, to prove that \eqref{eq_image_of_S_equivalence_then_splitting} is an equivalence thus amounts to prove that the functor 
\[
\Pi_{\infty}(U_{\leq k-1},S(U,k-1)) \to \Pi_{\infty}(U_{\leq k},P(U,k))
\]
is final, which follows from \cref{finality_closed_cell}.
	The case  $k=0$ is treated similarly.
\end{proof}

	The lemma below provides examples of functors where \cref{use_of_W(U)} applies.
	Before this, let us recall the following 
	
	\begin{lem}[{\cite[\cref*{Abstract_Stokes-Stokes_when_locally_constant}]{Abstract_Derived_Stokes}}]\label{Stokes_when_locally_constant_IHES}
	Let $(X,P,\cI)$ be a Stokes stratified space such that $\cI\to \Pi_{\infty}(X,P)$ is locally constant (\cref{def:locally_constant}).
	Let $F \colon \cI \to \cE$ be a cocartesian functor.
	Let $\sigma \colon \Pi_{\infty}(X,P) \to \cI$ be a cocartesian section.
	Then, $\sigma^*(F) \colon \Pi_{\infty}(X,P)\to \cE$ inverts every arrow of $\Pi_{\infty}(X,P)$.
\end{lem}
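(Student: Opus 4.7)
Fix a morphism $\gamma\colon x \to y$ in $\Pi_{\infty}(X,P)$ and write $p\colon \cI \to \Pi_{\infty}(X,P)$ for the structural cocartesian fibration. The plan is to show that $\sigma^*(F)(\gamma) = F(\sigma(\gamma))\colon F(\sigma(x)) \to F(\sigma(y))$ is an equivalence in $\cE$.

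First, since $\sigma$ is a cocartesian section, $\sigma(\gamma)$ is a $p$-cocartesian edge in $\cI$. Letting $f_\gamma\colon \cI_x \to \cI_y$ be a straightening of $p$ over $\Delta^1_\gamma$, the cocartesian property of $\sigma(\gamma)$ translates into $f_\gamma(\sigma(x)) = \sigma(y)$. The local constancy assumption on $p$ (see \cref{def:locally_constant}) forces $f_\gamma$ to be an equivalence of posets, and in particular both the unit and counit of the adjunction $f_{\gamma,!}\dashv f_\gamma^*$ are invertible.

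Next, the characterization of cocartesian functors recalled just before \cref{(co)limit_and_cocart} asserts that the Beck-Chevalley transformation
\[
f_{\gamma,!}\, j_x^*(F) \longrightarrow j_y^*(F)
\]
is an equivalence in $\Fun(\cI_y,\cE)$, where $j_x\colon \cI_x\hookrightarrow \cI$ and $j_y\colon \cI_y\hookrightarrow \cI$ are the fibre inclusions. Transposing through $f_{\gamma,!}\dashv f_\gamma^*$ and using that the unit of this adjunction is invertible yields an equivalence $j_x^*(F) \xrightarrow{\sim} f_\gamma^*(j_y^*(F))$ in $\Fun(\cI_x,\cE)$. Evaluating at $\sigma(x)\in\cI_x$ produces an equivalence $F(\sigma(x)) \xrightarrow{\sim} F(f_\gamma(\sigma(x))) = F(\sigma(y))$.

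The only remaining point is to identify this equivalence with $F(\sigma(\gamma))$ itself, which I expect to be the sole non-formal step of the argument. The identification unwinds into the following tautology, to be checked via the (un)straightening equivalence applied to the $\Delta^1$-family $\cI|_{\Delta^1_\gamma}\to \Delta^1$: the image under $F$ of a $p$-cocartesian edge lying over $\gamma$ is, by the very definition of the Beck-Chevalley transformation, precisely the component at the source object in $\cI_x$ of the transposed natural transformation constructed above. Granting this bookkeeping, the proof is complete.
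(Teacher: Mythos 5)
Your argument is correct and is the natural one; note that this lemma is only quoted here from the companion paper, so there is no in-text proof to compare against, but your route --- use local constancy to make $f_\gamma$ an equivalence, so that the Beck--Chevalley equivalence $f_{\gamma,!}\,j_x^\ast(F)\to j_y^\ast(F)$ transposes to an equivalence $j_x^\ast(F)\to f_\gamma^\ast j_y^\ast(F)$ whose component at $\sigma(x)$ is $F(\sigma(\gamma))$ --- is exactly the expected argument. You also correctly flag the only non-formal point (that the transposed transformation is, by construction of the Beck--Chevalley map from the cocartesian lifts $a\to f_\gamma(a)$, given objectwise by $F$ applied to those lifts, and that $\sigma(\gamma)$ is such a lift up to equivalence), and you use the local constancy hypothesis precisely where it is needed, since without it the invertibility of the adjunct fails (as the circle example in the paper shows).
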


\begin{lem}\label{image_of_S_equivalence}
	Let $(C,\Phi,\cI)$ be a polyhedral Stokes stratified space satisfying the conditions of \cref{induction_for_adm_Stokes_triple}.
	Let $U$ be an open stratum.
	Let $a\in \mathscr{I}(C)$ minimal  on $U$.
%	Let $\cE$ be a presentable stable $\infty$-category.
	Let $F \colon \cI \to \cE$ be a Stokes functor.
	Then $F_{<a}$ and $F_a$ invert arrows in $W(U)$.
\end{lem}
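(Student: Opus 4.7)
My plan is to prove the two claims separately, with the case of $F_a$ being an immediate consequence of cocartesianness and the case of $F_{<a}$ requiring a careful comparison of indexing sets driven by the polyhedral hypotheses of \cref{induction_for_adm_Stokes_triple}.

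For $F_a$, observe that $a$ is a cocartesian section, so for \emph{any} morphism $\gamma \colon x \to y$ in $\Pi_\infty(C,\Phi)$ the induced morphism $a_x \to a_y$ in $\cI$ is cocartesian. Applying the cocartesian functor $F$ sends this to an equivalence in $\cE$, showing that $F_a = a^*(F)$ inverts every arrow of $\Pi_\infty(C,\Phi)$, and in particular every arrow in $W(U)$. This is in fact a direct application of \cref{Stokes_when_locally_constant_IHES} to $a$.

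For $F_{<a}$, the strategy is to express $F_{<a}(x)$ as a colimit over the cocartesian sections $b \ne a$ with $b <_x a_x$ of the corresponding $F(b_x)$, and then to show that for a $W(U)$-arrow $\gamma \colon x \to y$ the indexing sets $\{b : b <_x a_x\}$ and $\{b : b <_y a_y\}$ coincide under the canonical identification $\cI_x^{\ens} = \cI_y^{\ens}$ coming from the local constancy of $\cI^{\ens}$ on the contractible polyhedron $C$. Once this equality is in place, the map $F_{<a}(x) \to F_{<a}(y)$ is a colimit of equivalences $F(b_x) \xrightarrow{\sim} F(b_y)$, one for each cocartesian section $b$, and is therefore an equivalence itself.

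The combinatorial heart of the proof is thus the statement: for $\gamma \in W(U)$ and any cocartesian section $b \ne a$, the relation $b <_y a_y$ holds if and only if $b <_x a_x$. Let $\varphi \in \Phi$ be the form associated to $\{a,b\}$ by the hypothesis of \cref{induction_for_adm_Stokes_triple}, and suppose $b <_y a_y$. There are two possibilities at $x$. If $a <_x b$, then $x$ lies in the component of $C \setminus H_\varphi$ where $a < b$; since an exit path cannot cross $H_\varphi$ (the stratification poset $\{-,0,+\}$ forbids jumping from one sign to the other), $y$ lies in the same component, forcing $a <_y b$, a contradiction. Otherwise $a,b$ are incomparable at $x$, so $x \in H_\varphi$. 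The condition $\gamma \in W(U)$ gives either $y \in H_\varphi$, in which case $a, b$ are still incomparable at $y$ and $b <_y a_y$ fails, or $y$ and $U$ lie in the same component of $C \setminus H_\varphi$; minimality of $a$ at $U$ combined with the totality of the order on the open stratum (\cref{total_order_open_stratum}) then yields $a <_U b$, and hence $a <_y b$, again a contradiction.

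The main obstacle I expect is purely notational: making precise the functor $F_{<a}$ on $\Pi_\infty(C,\Phi)$ (e.g.\ as a left Kan extension from the full subfibration of $\cI$ of elements strictly below the section $a$, or pointwise as the indicated colimit) and verifying functoriality of the comparison map. Once the indexing is fixed and the key combinatorial lemma above is in hand, the proof is essentially formal from cocartesianness of $F$ and of each section $b$.
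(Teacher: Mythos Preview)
Your argument contains a genuine error that propagates through both halves. You claim that $F_a = a^*(F)$ inverts every arrow of $\Pi_\infty(C,\Phi)$ because $a$ is a cocartesian section and $F$ is cocartesian, citing \cref{Stokes_when_locally_constant_IHES}. But that lemma requires the fibration $\cI \to \Pi_\infty(C,\Phi)$ to be locally constant, and here only $\cI^{\ens}$ is locally constant, not $\cI$ itself. In the paper's sense, ``$F$ cocartesian'' means the Beck--Chevalley transformation $f_{\gamma,!}(F|_{\cI_x}) \to F|_{\cI_y}$ is an equivalence; it does \emph{not} mean that $F$ sends cocartesian edges of $\cI$ to equivalences in $\cE$. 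Concretely, if locally $F = i_{\cI,!}(V)$, then $F_a(x) = \bigoplus_{b \le_x a} V_b$, and for a generic exit path $\gamma$ the set $\{b : b \le_x a\}$ can be strictly smaller than $\{b : b \le_y a\}$, so $F_a(\gamma)$ is typically not an equivalence. The same misconception invalidates your treatment of $F_{<a}$: you write the map $F_{<a}(x) \to F_{<a}(y)$ as a colimit of maps $F(b_x) \to F(b_y)$ and declare each of these to be an equivalence, but they are not.

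The paper's proof uses precisely the combinatorial statement you isolated (for $\gamma \in W(U)$ and $b \ne a$, one has $b <_x a \iff b <_y a$), but applies it differently. First it reduces $F_{<a}$ to $F_a$ via the fibre sequence $F_{<a} \to F_a \to \Gr_a F$, noting that $\Gr_a F$ inverts all arrows by \cref{Stokes_when_locally_constant_IHES} applied to $\cI^{\ens}$ (this is where local constancy legitimately enters). Then, working locally where $F = i_{\cI,!}(V)$, it writes $F_a(\gamma)$ as the map $\bigoplus_{b \le_x a} V_b \to \bigoplus_{b \le_y a} V_b$; here each $V_b$ \emph{does} invert all arrows (being the pullback of a cocartesian functor along a section of the locally constant $\cI^{\ens}$), so the equality of indexing sets suffices. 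Your combinatorial proof of that equality is essentially the paper's (you omit the easy forward implication $b <_x a \Rightarrow b <_y a$, which holds because $f_\gamma$ is order-preserving), but it must be fed into the direct-sum description of $F_a$ in terms of graded pieces, not into a description via the values $F(b_x)$.
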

\begin{proof}
	Consider the fibre sequence
\[
  F_{<a}    \to     F_{a}    \to    \Gr_a F    \ .
\]
	By \cref{Stokes_when_locally_constant_IHES}, the functor  $\Gr_a F$ inverts every arrow of $\Pi_{\infty}(C,\Phi)$.
Hence, we are left to show that $F_a$ invert arrows in $W(U)$.
	Let $\gamma \in W(U)$.
	At the cost of writing $\gamma$ as the composition of a smaller path followed by an equivalence, we can suppose that $\gamma$ lies in an open subset $V$ such that $x$ is  initial  in $\Pi_{\infty}(V,\Phi)$.
	From \cref{eg:Stokes_structures_at_a_point}, we have  $F|_V=i_{\cI ! }(V)$ where $V \colon \cI^{\ens} \to \cE$.
	Then, $F_a(\gamma)$ reads as 
\[
 \bigoplus_{\substack{b\in \mathscr{I}(C) \\ b\leq_x a}} V_b \to \bigoplus_{\substack{b\in \mathscr{I}(C) \\ b\leq_y a}}  V_b   \ .
\]
	Let $b\in \mathscr{I}(C)$ with $b \neq a$.
	To prove \cref{image_of_S_equivalence}, we are left to show that $b <_x a$ if and only if $b<_y a$.
	The direct implication is obvious.
	We thus suppose that $b<_y a$.
	Let $\varphi \in \Phi$ such that the Stokes locus of $\{a,b\}$ is  $C\cap H_{\varphi}$.
	Since $a$ is minimal on $U$, the assumption $(1)$ from \cref{induction_for_adm_Stokes_triple} implies that $\varphi$ separates $y$ and $U$.
	If $x\in H_{\varphi}$, then the definition of $W(U)$ yields $y \in H_{\varphi}$, which contradicts $b<_y a$.
	Hence, $x\notin H_{\varphi}$. 
	In particular, $b <_x a$ or $a <_x b$.
	Note that the inequality $a <_x b$ contradicts  $b <_y a$.
	Hence, $b<_x a$ and the proof of \cref{image_of_S_equivalence} is complete.
 \end{proof}

\begin{lem}\label{image_of_S_equivalence_then_splitting}
Let $(C,\Phi,\cI)$ be a polyhedral Stokes stratified space satisfying the conditions of \cref{induction_for_adm_Stokes_triple}.
Let $U$ be an open stratum.
Let $a\in \mathscr{I}(C)$ minimal element on $U$.
%Let $\cE$ be a presentable stable $\infty$-category.
Let $F \colon \cI \to \cE$ be a Stokes functor.
Then, the fiber sequence
\begin{equation}\label{split_on_big_open}
 F_{<a} \to   F_{a}  \to   \Gr_a  F
\end{equation}
admits a splitting.
\end{lem}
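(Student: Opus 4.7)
The strategy is to construct a section of $p\colon F_a\to \Gr_a F$ by first establishing it over the open stratum $U$, where the minimality of $a$ makes the splitting tautological, and then transporting it globally via a mapping-space calculation based on \cref{use_of_W(U)}.

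\emph{Step 1 (trivial splitting on $U$).} By \cref{total_order_open_stratum}, the fibre $\cI_x$ is totally ordered for every $x\in U$, and since $a$ is minimal on $U$ we have $\{b\in \cI_x : b<_x a\}=\emptyset$ at every such $x$. Using the punctual splitting of $F$ provided by \cref{eg:Stokes_structures_at_a_point} and the pointwise description $F_{<a}(x)=\bigoplus_{b<_x a}V_b$ implicit in the proof of \cref{image_of_S_equivalence}, we conclude that $F_{<a}$ vanishes on $\Pi_\infty(U,\Phi)$. The given fibre sequence therefore degenerates over $U$, and $p|_U\colon F_a|_U\to \Gr_a F|_U$ is an equivalence in $\Fun(\Pi_\infty(U,\Phi),\cE)$; denote its inverse by $s_U$.

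\emph{Step 2 ($W(U)$-inversion and essential constancy of $\Gr_a F$).} By \cref{image_of_S_equivalence}, $F_a$ inverts arrows in $W(U)$. By \cref{Stokes_when_locally_constant_IHES}, $\Gr_a F$ inverts every arrow of $\Pi_\infty(C,\Phi)$, and therefore factors through the underlying $\infty$-groupoid of $\Pi_\infty(C,\Phi)$. Since $C$ is a convex polyhedron hence contractible, this $\infty$-groupoid is contractible and $\Gr_a F$ is equivalent to a constant diagram $\Delta_E$ for some $E\in\cE$.

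\emph{Step 3 (mapping-space comparison).} The adjunction $\Delta\dashv\lim$ yields natural equivalences
\[ \Map_{\Fun(\Pi_\infty(C,\Phi),\cE)}(\Gr_a F,\, F_a) \simeq \Map_\cE\bigl(E,\,\lim_{\Pi_\infty(C,\Phi)} F_a\bigr), \]
and similarly after restriction to $U$. Because $F_a$ inverts $W(U)$, \cref{use_of_W(U)} identifies the two resulting limits; the induced commutative square shows that the restriction map
\[ r\colon \Map(\Gr_a F,\, F_a)\xrightarrow{\sim} \Map(\Gr_a F|_U,\, F_a|_U) \]
is an equivalence.

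\emph{Step 4 (assembly and verification).} Let $s\colon \Gr_a F\to F_a$ be the unique preimage of $s_U$ under $r$. Then $p\circ s\in \Map(\Gr_a F,\Gr_a F)$ restricts on $U$ to $p|_U\circ s_U=\id$. Applying the argument of Step~3 with $F_a$ replaced by $\Gr_a F$ (which also inverts $W(U)$) shows that the analogous restriction map on $\Map(\Gr_a F,\Gr_a F)$ is an equivalence, whence $p\circ s=\id$ globally. This is the sought-after splitting.

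The technical heart of the argument is Step~3: one must ensure that no further candidate sections $\Gr_a F\to F_a$ appear when passing from $U$ to the whole of $C$. The combination of the $W(U)$-inversion of $F_a$ (\cref{image_of_S_equivalence}) with the essential constancy of $\Gr_a F$ (coming from the contractibility of the polyhedron $C$) is precisely what reduces this check to \cref{use_of_W(U)}.
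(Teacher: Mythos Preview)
Your proof is correct and follows essentially the same route as the paper: vanish $F_{<a}$ on $U$ by minimality of $a$, observe that $\Gr_a F$ is constant (via \cref{Stokes_when_locally_constant_IHES} and contractibility of $C$), and reduce the existence of a global section to the equivalence of mapping spaces, which in turn follows from \cref{use_of_W(U)} and \cref{image_of_S_equivalence}. Your Step~4, verifying that the lifted map $s$ is actually a section by applying the same mapping-space argument to $\Gr_a F$ in place of $F_a$, makes explicit a point the paper leaves tacit.
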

\begin{proof}
Since $a$ is minimal on $U$, the restriction of $F_{<a}$ to $U$ is the zero functor.
Hence, \eqref{split_on_big_open} admits a canonical splitting on $U$.
By \cref{Gr_of_Stokes_IHES}, the functor $\Gr  F \colon \cI^{\ens}\to \cE$ is cocartesian.
By \cref{Stokes_when_locally_constant_IHES}, we deduce that $\Gr_a  F \colon  \Pi_{\infty}(C,\Phi) \to \cE$ inverts every arrows.
Since 
\[
\Env(\Pi_{\infty}(C,\Phi) )\simeq \Pi_{\infty}(C)\simeq \ast \ , 
\]
we deduce that $\Gr_a  F \colon  \Pi_{\infty}(C,\Phi) \to \cE$  is a constant functor. 
Hence, it is enough to show that 
$$
\Map(\Gr_a  F, F_a) \to \Map(\Gr_a  F|_{U},  F_{a}|_{U})
$$
is an equivalence
This amounts to show that 
\[
\lim_{\Pi_{\infty}(C,\Phi)} F_a \to \lim_{\Pi_{\infty}(U,\Phi)} F_a|_U
\]
is an equivalence.
By \cref{use_of_W(U)},  we are thus left to show that $F_a$ inverts every arrow in $W(U)$.
This in turn holds by \cref{image_of_S_equivalence}.
\end{proof}

\subsection{Proof of \cref{induction_for_adm_Stokes_triple}}

The proof will be the consequence of the following

\begin{prop}\label{polyhedral_fully_faithful}
Let $(C,\Phi,\cI)$ be a polyhedral Stokes stratified space satisfying the conditions of \cref{induction_for_adm_Stokes_triple}.
%Let $\cE$ be a presentable stable $\infty$-category.
Then, the induction functor 
\[
i_{\cI, !} \colon \St_{\cI^{\ens},\cE} \to \St_{\cI,\cE}
\]
is fully faithful.
\end{prop}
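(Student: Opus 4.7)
My plan is to convert fully faithfulness of $i_{\cI,!}$ into a vanishing statement via the adjunction $i_{\cI,!} \dashv i_{\cI}^{\ast}$ in the ambient $\Fun$-categories. Since $\St_{\cI^{\ens},\cE}$ and $\St_{\cI,\cE}$ are full subcategories of the corresponding $\Fun$-categories, the statement amounts to showing that for every Stokes functors $V, W \colon \cI^{\ens} \to \cE$ the canonical map
\[
\Hom_{\Fun(\cI^{\ens},\cE)}(V,W) \to \Hom_{\Fun(\cI^{\ens},\cE)}(V,\, i_{\cI}^{\ast} i_{\cI,!} W)
\]
induced by the unit $\eta_W \colon W \to i_{\cI}^{\ast} i_{\cI,!} W$ is an equivalence. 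Setting $C_W \coloneqq \cofib(\eta_W)$ and using stability of $\cE$, this reduces to $\Hom_{\Fun(\cI^{\ens},\cE)}(V, C_W) \simeq 0$. A direct computation of the left Kan extension along $i_{\cI}$, combined with the locally constant hypothesis on $\cI^{\ens}$, yields the pointwise formula $C_W(a) \simeq \bigoplus_{b <_{\pi(a)} a} W(b)$, where the sum runs over $b \in \cI^{\ens}_{\pi(a)}$ that are strictly below $a$ in the ordered fibre $\cI_{\pi(a)}$, and order-preservation of the cocartesian transports in $\cI$ upgrades $C_W$ to a genuine functor on $\cI^{\ens}$.

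Next I would exploit local constancy together with the contractibility of the polyhedron $C$. Since $\cI^{\ens} \to \Pi_{\infty}(C,\Phi)$ is locally constant and $\Pi_{\infty}(C) \simeq \ast$, the fibration trivialises into the finite set $I \coloneqq \mathscr{I}(C)$; Stokes functors on $\cI^{\ens}$ identify with families $\{V_b\}_{b \in I}$ of essentially constant functors $V_b \colon \Pi_{\infty}(C,\Phi) \to \cE$, and the $\Hom$ decomposes as
\[
\Hom_{\Fun(\cI^{\ens},\cE)}(V, C_W) \simeq \prod_{a \in I}\ \prod_{b \in I \setminus \{a\}} \Hom_{\Fun(\Pi_{\infty}(C,\Phi),\cE)}\bigl(V_a,\, (C_W)_{a,b}\bigr),
\]
where $(C_W)_{a,b}(x) \simeq W_b$ if $b <_x a$ and $0$ otherwise, with the only non-trivial transports being identities between copies of $W_b$ on the open region where $b < a$. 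Since $V_a$ factors through $\Pi_{\infty}(C) \simeq \ast$ and $\cE$ is stable, each factor collapses to $\Map_{\cE}\bigl(V_a, \Gamma((C_W)_{a,b})\bigr)$ with $\Gamma \coloneqq \lim_{\Pi_{\infty}(C,\Phi)}$, and the problem reduces to showing $\Gamma((C_W)_{a,b}) \simeq 0$ for every distinct $a, b \in I$.

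The core geometric step is this last vanishing, where the hypotheses of \cref{induction_for_adm_Stokes_triple} enter decisively. By hypothesis there exists a non-zero form $\varphi \in \Phi$ such that the Stokes locus $H \coloneqq C \cap H_{\varphi}$ is non-empty and cleanly separates $C$ into two open components $C_1 \sqcup C_2$ on which $a, b$ are strictly ordered in opposite ways; hence $(C_W)_{a,b}$ is supported on the single open set $U_{a,b} \in \{C_1, C_2\}$ where $b <_x a$ and vanishes on $H$ as well as on the opposite component. Using the identification $\Pi_{\infty}(C,\Phi) \hookrightarrow \{-,0,+\}^{\Phi}$ supplied by \cref{homotopy_type_poset}, for every stratum $s$ of $U_{a,b}$ I would produce a non-empty stratum $s' \subseteq H$ in the closure $\overline{s} \cap C$ (obtained by pushing sign pattern entries, most notably the $\varphi$-coordinate, from $\pm$ to $0$). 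Such an $s'$ sits below $s$ in the exit-path poset and carries the value $0$, so the transport along the exit path $s' \to s$ is the zero map $0 \to W_b$, and functoriality forces any section of $(C_W)_{a,b}$ to vanish on $s$. Since this covers every stratum of $U_{a,b}$, we conclude $\Gamma((C_W)_{a,b}) \simeq 0$.

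The main obstacle is precisely the geometric input of the last step: one must verify that the closure of every stratum of $U_{a,b}$ in $C$ meets $H$ in a non-empty stratum, so that the requisite exit paths are available. This is where condition~(2) of the theorem is indispensable—forcing $H$ to be the complete geometric boundary between $C_1$ and $C_2$ rather than merely a piece of it; absent this global separation, sections of $(C_W)_{a,b}$ supported on portions of $U_{a,b}$ disconnected from $H$ would survive, and the vanishing would fail, consistently with the non-elementarity phenomena of \cref{elementarity_constraint}.
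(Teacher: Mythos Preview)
Your reduction is correct and parallels the paper almost exactly: both arguments pass through the adjunction $i_{\cI,!} \dashv i_{\cI}^\ast$, exploit that $\cI^{\ens}$ is a constant fibration with finite fibre $I = \mathscr{I}(C)$, decompose over pairs $a \neq b$, and reduce to showing that the limit over $\Pi_\infty(C,\Phi)$ of the functor
\[
(C_W)_{a,b}(x) \;\simeq\; \begin{cases} W_b & \text{if } b <_x a \\ 0 & \text{otherwise} \end{cases}
\]
vanishes. The divergence is only in how this vanishing is established.

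Your proposed mechanism---finding, for every stratum $s \subset U_{a,b}$, a stratum $s' \subset H$ with $s' \subset \overline{s}$---does not work in general. Other hyperplanes of $\Phi$ can interpose between $s$ and $H$. Concretely, take $C = \mathbb{R}$, $I = \{a,b,c\}$, and $\Phi = \{x,\, x-1,\, x-2\}$ realising the Stokes loci of the three pairs, with the orders arranged so that $b <_x a$ precisely on $(0,\infty)$. For the pair $\{a,b\}$ one has $H = \{0\}$ and $U_{a,b} = (0,\infty)$; the stratum $s = (2,\infty)$ satisfies $\overline{s} = [2,\infty)$, which misses $H$ entirely. Pushing sign entries to $0$ cannot help: the only sign patterns below $(+,+,+)$ with $\varphi$-coordinate $0$ are $(0,+,+)$, $(0,0,+)$, etc., all of which are empty. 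So there is no exit path from $H$ directly into $s$, and your argument does not force the section to vanish at $s$.

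The limit \emph{does} vanish, but for a subtler reason: the value at $s$ is tied, via the identity transition maps inside $U_{a,b}$, to the value at the adjacent stratum $(0,1)$, and \emph{that} stratum does touch $H$. This back-and-forth propagation is exactly what the paper's localisation step packages: since $(C_W)_{a,b}$ inverts every exit path internal to $C_1$, to $C_2$, and to $H$, it factors through the localisation $\Pi_\infty(C,\Phi) \to \Pi_\infty(C,\{\varphi\})$ (\cref{refinement_localization}), and in the coarse category any point of $H$ is initial, so the limit equals the value there, which is $0$. Replacing your closure argument by this localisation step (or, equivalently, by an explicit propagation through the chain of strata in $U_{a,b}$) closes the gap; the rest of your outline goes through.
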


\begin{proof}
Let $V,W \colon  \cI^{\ens} \to \cE$ be Stokes functors and let us show that 
\[
\Map(V,W)\to  \Map(i_{\cI, !}(V),i_{\cI, !}(W))\simeq  \Map(V,i_{\cI}^* i_{\cI, !}(W))
\]
is an equivalence. 
This is equivalent to show that for every $a\in \mathscr{I}(C)$, the map 
\[
\Map(V_a,W_a)\to  \Map(V_a,(i_{\cI, !}(W))_a)
\]
is an equivalence. 
By \cref{Stokes_when_locally_constant_IHES}, the cocartesian functor $V_a \colon  \Pi_{\infty}(C,\Phi) \to \cE$  inverts every arrow in 
$\Pi_{\infty}(C,\Phi)$.
Since $C$ is contractible, we deduce that $V_a \colon  \Pi_{\infty}(C,\Phi) \to \cE$ is a constant functor.
Thus, we are left to show that for every $a\in \mathscr{I}(C)$, the map 
\begin{equation}\label{eq_polyhedral_fully_faithful}
\lim_{\Pi_{\infty}(C,\Phi)} W_a \to \lim_{\Pi_{\infty}(C,\Phi)} (i_{\cI, !}(W))_a
\end{equation}
is an equivalence.
At the cost of writing $W \colon \cI^{\ens} \to \cE$ as  a finite direct sum over $\mathscr{I}(C)$, we can suppose the existence of $b\in   \mathscr{I}(C)$ such that $W_a \simeq 0$ for $a\neq b$.
In that case, let $i_b \colon   \cI_b \hookrightarrow \cI$ be the cocartesian fibration constant to $b$, so that $W\simeq i_{b,!}^{\ens}(W_b)$ with $W_b \colon  \Pi_{\infty}(C,\Phi) \to \cE$ constant to an object $e\in \cE$.
Thus,  $i_{\cI, !}(W)  \simeq   i_{b,!}(W_b)$.
In particular, 
\[
(i_{\cI, !}(W) )_b \simeq i_{b}^* i_{b,!}(W_b)\simeq W_b \ .
\]
Hence, we are left to prove that  \eqref{eq_polyhedral_fully_faithful} is an equivalence for  $a \in \mathscr{I}(C)$ with $a\neq b$.
Let $\varphi\in \Phi$ such that the Stokes locus of $\{a,b\}$ is $H_{\varphi}$.
Let $C_1$ and $C_2$ be the two connected components of  $C\setminus H_{\varphi}$ such that $a <_{x} b$ for every $x\in C_1$ and $b <_{x} a$ for every $x\in C_2$.
Then 
\begin{eqnarray*}
(i_{\cI, !}(W) )_a(x)\simeq (i_{b,!}(W_b))_a(x)  & \simeq   0  & \text{ if $x \in H_{\varphi}$ or $x\in C_1$, }  \\
&  \simeq    e    & \text{  if $x\in C_2$ .  }
\end{eqnarray*}
Hence both functors in \eqref{eq_polyhedral_fully_faithful}  invert every exit-path in $C_1$,  in $C_2$ and in $H_{\varphi}$.
Consider the map
\[
\ev_\varphi \colon  \{-,0,+\}^{\Phi}\to \{-,0,+\} 
\]
given by evaluation at $\varphi$.
By \cref{refinement_localization}, the refinement
\[
(C,\Phi)\to (C,\{-,0,+\} )
\] 
induces a functor
\[
\Pi_{\infty}(C,\Phi)\to \Pi_{\infty}(C,\{\varphi\})
\]
 exhibiting the target as the localization of the source at the exit paths in $C_1$,  in $C_2$ and in $H_{\varphi}$.
Since localization functors are final, we are left to prove that \eqref{eq_polyhedral_fully_faithful} is an equivalence when $\Phi=\{\varphi\}$ and $W=W\simeq i_{b,!}^{\ens}(W_b)$.
In that case,  $\Pi_{\infty}(C,\Phi) \to \{-,0,+\}$ is an equivalence. 
Thus,  any point $x$ of $H_{\varphi}$ is initial in  $\Pi_{\infty}(C,\Phi)$.
Hence, the map \eqref{eq_polyhedral_fully_faithful} identifies canonically with 
\[
(i_{b,!}^{\ens}(W_b))_a(x) \to (i_{\cI, !}(W) )_a(x)  \ .
\]
Since both terms are $0$, \cref{polyhedral_fully_faithful} follows.
\end{proof}

\begin{prop}\label{polyhedral_essentially_surjective}
Let $(C,\Phi,\cI)$ be a polyhedral Stokes stratified space satisfying the conditions of \cref{induction_for_adm_Stokes_triple}.
%Let $\cE$ be a presentable stable $\infty$-category.
Then, the induction functor 
\[
i_{\cI, !} \colon \St_{\cI^{\ens},\cE} \to \St_{\cI,\cE}
\]
is essentially surjective.
\end{prop}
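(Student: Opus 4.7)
The plan is to proceed by induction on $n = |\mathscr I(C)|$, using that \cref{polyhedral_fully_faithful} already establishes the full faithfulness of $i_{\cI,!}$. The base case $n \le 1$ is trivial since $i_\cI$ is then an equivalence. For the inductive step I will fix a Stokes functor $F \colon \cI \to \cE$, pick any open stratum $U$ of $(C, \Phi)$, and let $a \in \mathscr I(C)$ be the unique minimal element on $U$ (which exists by \cref{total_order_open_stratum}). \Cref{image_of_S_equivalence_then_splitting} then provides a section $\sigma \colon \Gr_a F \to F_a$, and adjoining this along the cocartesian section $\tilde a \colon \Pi_\infty(C,\Phi) \to \cI$ yields a morphism $\alpha \colon \tilde a_!(\Gr_a F) \to F$ in $\Fun(\cI, \cE)$.

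I will then set $F' \coloneqq \cofib(\alpha)$. This functor is cocartesian by \cref{(co)limit_and_cocart}, and at each point $x \in C$, base change combined with any chosen splitting $F|_x \simeq i_{\cI_x,!}(W)$ and the fact that $\sigma|_x$ is a section of $F_a|_x \twoheadrightarrow (\Gr_a F)|_x$ identifies $F'|_x$ with $i_{\cI_x \setminus \{a_x\},!}(W|_{\mathscr I(C) \setminus \{a\}})$ via a direct diagrammatic computation; hence $F'$ is punctually split and therefore Stokes. Since $\Gr$ is exact by \cref{cor:section_Gr_commutes_with_colimits} and $\Gr_a(\alpha)$ is an equivalence, $(\Gr F')_a \simeq 0$, and \cref{image_fully_faithful_induction_Stokes_IHES} applied to the fully faithful graduation morphism $\iota \colon \cI \setminus \{a\} \hookrightarrow \cI$ produces $F' \simeq \iota_!(G)$ for some Stokes functor $G$ on $\cI \setminus \{a\}$. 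Since $(C, \Phi, \cI \setminus \{a\})$ still satisfies the hypotheses of \cref{induction_for_adm_Stokes_triple}, the inductive hypothesis yields $G \simeq i_{\cI \setminus \{a\},!}(V)$, so $F' \simeq i_{\cI,!}(V_{\mathrm{ext}})$ where $V_{\mathrm{ext}}$ extends $V$ by zero at $a$; factoring $\tilde a$ through $\cI^{\ens}$ analogously gives $\tilde a_!(\Gr_a F) \simeq i_{\cI,!}(Y_a)$ with $Y_a$ supported at $a$ with value $\Gr_a F$.

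It then remains to show that the fiber sequence $i_{\cI,!}(Y_a) \to F \to i_{\cI,!}(V_{\mathrm{ext}})$ splits, i.e.\ that the extension class in $\Map_{\Fun(\cI,\cE)}(i_{\cI,!}(V_{\mathrm{ext}}), i_{\cI,!}(Y_a)[1])$ is null. Via the adjunction $i_{\cI,!} \dashv i_\cI^\ast$ together with the fact that each $V_{\mathrm{ext}}(b) \colon \Pi_\infty(C,\Phi) \to \cE$ is a constant functor (being Stokes on the locally constant $\cI^{\ens}$ over contractible $C$), this reduces to proving, for every $b \neq a$, the vanishing of $\lim_{\Pi_\infty(C,\Phi)} (i_\cI^\ast i_{\cI,!}(Y_a))_b [1]$. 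A direct computation identifies $(i_\cI^\ast i_{\cI,!}(Y_a))_b(x)$ with $\Gr_a F \cdot \chi(a \leq_x b)$, and conditions (1)--(3) of \cref{induction_for_adm_Stokes_triple} for the pair $\{a, b\}$ force $\{x : a <_x b\}$ to coincide with one connected component $C_1$ of $C \setminus H_\varphi$, where $H_\varphi$ is the unique separating hyperplane. Under the exodromy equivalence this functor thus corresponds to the extension by zero $j_!(\mathrm{const}_w)$ with $w \coloneqq \Gr_a F$ and $j \colon C_1 \hookrightarrow C$; the recollement triangle
\[
j_!(\mathrm{const}_w) \to \mathrm{const}_w \to i_\ast(\mathrm{const}_w) \ ,
\]
where $i \colon \overline{C_2} \hookrightarrow C$ is the complementary closed inclusion, together with the contractibility of both $C$ and $\overline{C_2}$ (which makes the restriction map on global sections an equivalence), forces $\Gamma(C, j_!(\mathrm{const}_w)) \simeq 0$. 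The resulting splitting then yields $F \simeq i_{\cI,!}(Y_a \oplus V_{\mathrm{ext}}) = i_{\cI,!}(\Gr F)$, completing the induction.

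The main obstacle will be the extension-vanishing step of the final paragraph, where the full force of the polyhedral hypotheses (1)--(3) of \cref{induction_for_adm_Stokes_triple} is needed: the identification of the obstruction sheaf with an extension by zero supported on exactly one of the two half-spaces cut out by the unique separating hyperplane, together with the contractibility of the complementary closed half-space, is precisely what makes the global-sections vanishing hold. The punctual splitness of the cofiber $F'$ is a secondary technical but conceptually elementary point.
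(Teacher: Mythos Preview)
Your argument is correct and takes a genuinely different route from the paper.  The paper peels off \emph{two} elements $a$ and $b$, each minimal on a different open stratum, forms $F^{\backslash \cI_a}$ and $F^{\backslash \cI_b}$ as in \cref{constr:F_minus_I}, and invokes the abstract splitting criterion \cref{fiber_product_is_split_IHES} to reduce to showing both of these split, which follows by induction via \cref{image_fully_faithful_induction_Stokes_IHES}.  Your approach peels off a \emph{single} element $a$, uses the same cofiber construction (your $F'$ is exactly the paper's $F^{\backslash \cI_a}$, so \cref{morphism_comes_from_graded_IHES} already gives that $F'$ is Stokes without the pointwise computation you sketch), and then argues directly that the resulting extension $i_{\cI,!}(Y_a)\to F\to i_{\cI,!}(V_{\mathrm{ext}})$ splits.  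The paper's two-element trick is precisely designed to avoid this last step, but your vanishing argument works and is in fact a special case of what \cref{polyhedral_fully_faithful} already establishes: since $i_{\cI,!}$ is fully faithful on $\St_{\cI^{\ens},\cE}$ and $Y_a[1]\in\St_{\cI^{\ens},\cE}$, one has
\[
\Map_{\Fun(\cI,\cE)}\big(i_{\cI,!}(V_{\mathrm{ext}}),\,i_{\cI,!}(Y_a)[1]\big)\simeq\Map_{\Fun(\cI^{\ens},\cE)}\big(V_{\mathrm{ext}},\,Y_a[1]\big)\simeq 0
\]
because $V_{\mathrm{ext}}$ and $Y_a$ have disjoint supports in $\mathscr I(C)$.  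This replaces your entire recollement paragraph.  Your route is thus arguably more economical --- one peeling plus full faithfulness --- while the paper's route isolates the splitting mechanism into a reusable black box (\cref{fiber_product_is_split_IHES}) that does not appeal back to \cref{polyhedral_fully_faithful}.
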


\begin{proof}
The proof follows the method from \cite[Proposition 3.16]{MochStokes}. 
Let $F \colon \cI \to \cE$ be a Stokes functor.
By \cref{cocartesian_splitting}, it is enough to show that $F$ splits.
We argue by recursion on the cardinality of $\mathscr{I}(C)$.
If $\mathscr{I}(C)$ has one element, there is nothing to prove.
Suppose that $\mathscr{I}(C)$ has at least two elements.
Then, there exist open strata $U$ and $V$ and $a,b\in \mathscr{I}(C)$ distinct such that $a$ is minimal on $U$ and $b$ is minimal on $V$.
Let $i_a \colon   \cI_a \hookrightarrow \cI$ (resp. $i_b \colon   \cI_b \hookrightarrow \cI$) be the cocartesian fibration constant to $a$ (resp. $b$) and let $i \colon \cM \hookrightarrow \cI$ be the full subcategory spanned by objects not in $\cI_a$ nor $\cI_b$.
In particular, we have $\cI^{\ens} = \cI_a^{\ens} \sqcup \cI_b^{\ens}  \sqcup \cM^{\ens}$.
By \cref{image_of_S_equivalence_then_splitting}, the fiber sequences 
\[
 F_{<a}  \to  F_{a}    \to  \Gr_a  F \quad  \text{ and }  \quad  F_{<b}  \to  F_{b}    \to  \Gr_b  F 
\]
admit some splittings.
Let us choose some and let $F^{\backslash \cI_a} \colon \cI \to \cE$ and $F^{\backslash \cI_b} \colon \cI \to \cE$ be the corresponding functors as constructed in \cref{constr:F_minus_I}.
By \cref{fiber_product_is_split_IHES}, we have to show that $F^{\backslash \cI_a}$ and $F^{\backslash \cI_b}$ split.
We are going to show that $F^{\backslash \cI_a}$ splits as the argument is the same for $F^{\backslash \cI_b}$.
Let $i \colon \cI_b \cup \cM \hookrightarrow \cI$ be the subcategory spanned by the objects of $\cI$ not in $\cI_a$.
Since $F$ is a Stokes functor,  \cref{morphism_comes_from_graded_IHES} implies that $F^{\backslash \cI_a}$ is a Stokes functor as well.
Now an explicit computations yields $(\Gr F^{\backslash \cI_a})(c)\simeq 0$ for every  $c$ not in $\cI_b \cup \cM$.
By \cref{image_fully_faithful_induction_Stokes_IHES}, we deduce that $F^{\backslash \cI_a}$ lies in the essential image of 
$i_{ !} \colon \St_{\cI_b \cup \cM ,\cE} \to \St_{\cI,\cE}$.
By recursion assumption applied to $(C,\Phi, \cI_b \cup \cM)$, we deduce that $F^{\backslash \cI_a}$ splits.
\end{proof}

\section{Stokes structures and flat bundles}\label{sec:flat_bundles}

\subsection{Real blow-up}

\begin{defin}
A \textit{strict normal crossing pair} is the data  of $(X,D)$ where $X$ is a complex manifold and $D$ is a strict normal crossing divisor in $X$.
\end{defin}

\begin{notation}
Let $(X,D)$ be a strict normal crossing pair and put $U\coloneqq X\setminus D$.
Let $D_1,\dots, D_l$ be the irreducible components of $D$.
For $I\subset \{1,\dots, l\}$, we put
\[
D_I  \coloneqq  \bigcap_{i\in I} D_i  \quad \text{ and } \quad  D_I^{\circ}  \coloneqq  \bigcap_{I\subsetneq J} D_I \setminus D_J   \ .
\]
We denote by $i_I \colon  D_I \hookrightarrow  X$ and $i_I^{\circ} \colon  D_I \hookrightarrow X$ the canonical inclusions.
We note $(X,D)$ for the stratification 
$X\to \Fun( \{D_1,\dots, D_l\},\Delta^1)$  induced by the irreducible components of $D$.
%(see \cref{divisor_gives_stratification}).
\end{notation}

\begin{rem}\label{exit_path_sncd}
The canonical functor  $\Pi_{\infty}(X,D) \to \Fun( \{1,\dots, l\},\Delta^1)$  is an equivalence of $\infty$-categories.
\end{rem}

\begin{construction}[{\cite[§8.b]{Stokes_Lisbon}}]\label{real_blow_up}
Let $(X,D)$ be a strict normal crossing pair.
Let $D_1,\dots, D_l$ be the irreducible components of $D$.
For $i=1,\dots, l$,  let $L(D_i)$ be the line bundle over $X$ corresponding to the sheaf $\cO_X(D_i)$ and let $S^1 L(D_i)$ be the associated circle bundle. 
Put
\[
S^1 L(D)  \coloneqq  \bigoplus_{i=1}^l S^1 L(D_i)   \ .
\]
Let $U\subset X$ be an open polydisc with coordinates $(z_1,\dots, z_n)$ and let $z_i=0$ be an equation of $D_i$ in $U$.
Let $\widetilde{X}_U \subset S^1 L(D)|_U$ be the closure of the image of $(z_i/|z_i|)_{1\leq i \leq l} \colon  U\setminus D \to S^1 L(D)$.
Then, the $\widetilde{X}_U$ are independent of the choices made and thus glue as a closed subspace $\widetilde{X} \subset S^1 L(D)$ called the \textit{real-blow up of $X$ along $D$}.
We denote by $\pi \colon  \widetilde{X}\to X$  the induced proper morphism and by $j \colon  X\setminus D \to \widetilde{X}$ the canonical open immersion.
For $I\subset \{1,\dots, l\}$ of cardinal $1\leq k \leq l$, we put $\widetilde{D}_I \coloneqq  \pi^{-1}(D_I )$ and $\widetilde{D}_I^{\circ} \coloneqq  \pi^{-1}(D_I^{\circ} )$
and observe that the restriction 
\[
\pi|_{D_I^{\circ} }  \colon   \widetilde{D}_I^{\circ} \to D_I^{\circ}
\]
 is a $S^k$-bundle.
\end{construction}

\begin{eg}\label{local_real_blow_up}
Let $\Delta\subset \mathbb{C}^l$ be a polydisc  with coordinates $(z_1,\dots, z_l)$, let $Y$ be a complex manifold and put $X=\Delta \times Y$.
Let $D$ be the divisor defined by $z_1 \cdots z_l = 0$.
Then,  $S^1 L(D) = \Delta \times (S^1)^l\times Y$ and
\[
\widetilde{X} = \{(z,y,u)\in S^1 L(D) \text{ such that } z_k=|z_k| u_k, 1\leq k \leq l  \}  \ .
\]
In particular, 
\[
\widetilde{X} \simeq  ( \mathbb{R}_{\geq 0} \times S^1)^l   \times    \mathbb{C}^{n-l}
\]
and via the above identification, the inclusion $\widetilde{X} \hookrightarrow S^1 L(D)$ reads
\[
(r,u,y) \to (r_1 u_1,\dots, r_l u_l ,y,u)    \ .
\]
\end{eg}

\begin{rem}\label{extension_z_over_zmodule}
In  \cref{local_real_blow_up}, let $1\leq k\leq l$.
Then,  the map 
\[
z_k/|z_k | \colon  X\setminus D_k \to S^1
\] 
extends as a map $S^1 L(D) \to S^1$ given by $(z,y,u) \to u_k$.
\end{rem}

\cref{local_real_blow_up} implies the following

\begin{lem}
Let $(X,D)$ be a strict normal crossing pair.
Then,  $\widetilde{X}$ is a closed subanalytic subset of $S^1 L(D)$ and $\pi \colon  \widetilde{X}\to X$ is a subanalytic map.
\end{lem}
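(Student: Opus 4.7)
The plan is to reduce to the local model provided by \cref{local_real_blow_up} and observe that the defining equations there are semi-algebraic, hence subanalytic.

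First I would note that subanalyticity is a local property, so it suffices to verify both claims on a cover of $X$ by polydiscs adapted to $D$. More precisely, since $\widetilde X$ is constructed by gluing the local pieces $\widetilde X_U$ as $U$ runs over polydisc coordinate charts in which $D$ is cut out by monomial equations, and since $\pi$ is constructed by gluing the restrictions $\pi|_{\widetilde X_U}$, it is enough to prove that each $\widetilde X_U$ is a closed subanalytic subset of $S^1 L(D)|_U$ and that $\pi|_{\widetilde X_U}$ is subanalytic.

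Second, I would invoke \cref{local_real_blow_up} to replace $\widetilde X_U$ by the explicit model
\[
\widetilde X_U = \{(z,y,u) \in \Delta \times Y \times (S^1)^l \;|\; z_k = |z_k|\,u_k,\; 1\leq k \leq l\} \subset S^1 L(D)|_U \ .
\]
Each equation $z_k = |z_k| u_k$ is a closed real-analytic (in fact semi-algebraic relative to the $S^1$-factors) condition on $\Delta \times Y \times (S^1)^l$: writing $z_k = x_k + \mathrm i y_k$ and $u_k = a_k + \mathrm i b_k$ with $a_k^2 + b_k^2 = 1$, the condition becomes the polynomial system $x_k = \sqrt{x_k^2+y_k^2}\,a_k$, $y_k = \sqrt{x_k^2+y_k^2}\,b_k$, equivalently $x_k b_k = y_k a_k$ together with $x_k a_k + y_k b_k \geqslant 0$. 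These are global semi-algebraic conditions on the real manifold underlying $S^1 L(D)|_U$, so $\widetilde X_U$ is closed and semi-algebraic, hence subanalytic.

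Third, I would argue subanalyticity of $\pi$ by checking that its graph is subanalytic. Using the identification $\widetilde X_U \simeq (\mathbb R_{\geqslant 0}\times S^1)^l \times \mathbb C^{n-l}$ of \cref{local_real_blow_up}, the restriction $\pi|_{\widetilde X_U}$ is the map $(r,u,y) \mapsto (r_1 u_1,\dots,r_l u_l,y)$, whose graph in $\widetilde X_U \times U$ is cut out by polynomial equalities in the real and imaginary parts; in particular it is semi-algebraic, hence subanalytic. No step here is a serious obstacle: once the problem is reduced to the local model, everything is a direct semi-algebraic verification, and the only point that requires a minor amount of care is checking that the local pieces glue consistently — but this follows from the independence of $\widetilde X_U$ from the choice of coordinates already recorded in \cref{real_blow_up}.
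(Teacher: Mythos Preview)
Your proposal is correct and follows exactly the route the paper intends: the paper does not give a proof but simply records that the lemma is implied by \cref{local_real_blow_up}, and you have spelled out precisely that implication by reducing to local coordinate charts and checking that the defining conditions there are semi-algebraic. There is nothing to add.
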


\begin{lem}\label{blow_up_strongly_proper}
Let $(X,D)$ be a strict normal crossing pair such that $X$ admits a smooth compactification.
Then, $\pi \colon  \widetilde{X}\to X$ is strongly proper (\cref{strongly_proper}).
\end{lem}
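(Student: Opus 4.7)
The plan is to deduce the claim from \cref{compactifiable_is_stronly_proper} by constructing a proper compactification of $\pi$ as a square of subanalytic stratified pairs.

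First, using the hypothesis that $X$ admits a smooth compactification together with Hironaka's theorem on embedded resolution of singularities, I produce a smooth proper compactification $\overline{X}$ of $X$ equipped with a strict normal crossing divisor $\overline{D}$ which contains both the Zariski closure of $D$ and the boundary $\overline{X} \setminus X$, and which satisfies $\overline{D} \cap X = D$. Applying \cref{real_blow_up} to $(\overline{X}, \overline{D})$ gives a proper subanalytic morphism $\overline{\pi} \colon \widetilde{\overline{X}} \to \overline{X}$ whose source is compact since $\overline{X}$ is.

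Next, since the real blow-up is constructed locally and $\overline{D}$ agrees with $D$ in a neighborhood of every point of $X$, the local description of \cref{local_real_blow_up} identifies the open subset $\overline{\pi}^{-1}(X) \subset \widetilde{\overline{X}}$ with $\widetilde{X}$. More precisely, each component of $\overline{D}$ that does not meet $X$ contributes a trivial $S^1$-factor to $S^1 L(\overline{D})|_X$ along which the nowhere vanishing local equation defines an everywhere-defined analytic section (cf.\ \cref{extension_z_over_zmodule}). These sections assemble into a canonical analytic embedding $S^1 L(D) \hookrightarrow S^1 L(\overline{D})$ sitting over $X \hookrightarrow \overline{X}$, under which $\widetilde{X}$ is mapped homeomorphically onto $\overline{\pi}^{-1}(X)$. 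One thus obtains a commutative square of pairs
\[
\begin{tikzcd}
(S^1 L(D), \widetilde{X}) \arrow[hook]{r} \arrow{d}{\pi} & (S^1 L(\overline{D}), \widetilde{\overline{X}}) \arrow{d}{\overline{\pi}} \\
(X, X) \arrow[hook]{r} & (\overline{X}, \overline{X})
\end{tikzcd}
\]
in which $\overline{\pi}$ is proper, $\overline{X}$ is compact, and both horizontal arrows are open immersions whose complements are subanalytic: the complement of $X$ in $\overline{X}$ is a divisor, and the complement of $\widetilde{X}$ in $\widetilde{\overline{X}}$ is $\overline{\pi}^{-1}(\overline{X} \setminus X)$, which is subanalytic since $\overline{\pi}$ is a subanalytic morphism.

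Applying \cref{compactifiable_is_stronly_proper} then concludes the proof. The main technical point to control is the identification $\widetilde{X} \cong \overline{\pi}^{-1}(X)$, which reduces to a local verification in coordinate charts adapted to $\overline{D}$; I expect this to be essentially routine once the bookkeeping on the ambient circle bundles coming from components of $\overline{D}$ disjoint from $X$ is set up carefully.
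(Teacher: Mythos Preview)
Your proposal is correct and follows essentially the same approach as the paper: compactify $X$, use resolution of singularities to obtain a strict normal crossing divisor on the compactification extending $D$ and containing the boundary, take the real blow-up of the compactification, and apply \cref{compactifiable_is_stronly_proper}. The paper's proof is terser (it simply asserts the resulting square is a pull-back), whereas you spell out the identification $\widetilde{X} \cong \overline{\pi}^{-1}(X)$ more carefully, but the argument is the same.
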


\begin{proof}
Let $X\hookrightarrow Y$ be a smooth compactification of $X$.
By the resolution of singularities,  we can suppose that $Z\coloneqq Y\setminus X$ is a divisor such that $E\coloneqq Z+D$ is a strict normal crossing divisor.
In particular, there is a pull-back square
\[ 
\begin{tikzcd}
		(S^1 L(D),\widetilde{X}) \arrow[hook]{r}\arrow{d} & (S^1 L(E),\widetilde{Y})  \arrow{d}\\
		 X \arrow[hook]{r} & Y  \ .
	\end{tikzcd} 
\]
Then \cref{blow_up_strongly_proper} follows from \cref{compactifiable_is_stronly_proper}.
\end{proof}

\begin{recollection}[{\cite[§8.c]{Stokes_Lisbon}}]\label{moderate_growth_sheaf}
Let $(X,D)$ be a strict normal crossing pair and put $U \coloneqq X\setminus D$.
Let $\pi \colon  \widetilde{X}\to X$  be the real blow-up along $D$ and let $j \colon U \hookrightarrow \widetilde{X}$ be the canonical inclusion.
We denote by $ \cA^{\mode}_{\widetilde{X}}\subset  j_{\ast}\cO_{U}$ the sheaf of analytic functions with moderate growth along $D$.
By definition for every open subset $V\subset  \widetilde{X}$, a section of $\cA^{\mode}_{\widetilde{X}}$ on $V$ is an analytic function $f \colon  V\cap U \to \mathbb{C}$ such that for every open subset $W\subset V$ with $D$ defined by $h=0$ in a neighbourhood of $\pi(W)$, for every compact subset $K \subset W$, there exist  $C_K> 0$ and $N_K \in \mathbb{N}$ such that for every $z\in K\cap U$, we have 
\[
|f(z)|\leq C_K \cdot |h(z)|^{-N_K}  \ .
\]
\end{recollection}

The following lemma is obvious:

\begin{lem}\label{moderate_invertible_bounded}
In the setting of \cref{moderate_growth_sheaf}, let $(j_{\ast}\cO_{U})^{\lb}\subset j_{\ast}\cO_{U}$ be the subsheaf of locally bounded functions.
Then $\cA^{\mode}_{\widetilde{X}}$ is a unitary sub  $(j_{\ast}\cO_{U})^{\lb}$-algebra of  $j_{\ast}\cO_{U}$ such that 
\[
\cA^{\mode,\times }_{\widetilde{X}} \subset (j_{\ast}\cO_{U})^{\lb} \ .
\]
\end{lem}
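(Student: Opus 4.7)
The plan is to verify the three assertions bundled in the statement: that $\cA^{\mode}_{\widetilde X}$ is closed under the ring operations of $j_\ast\cO_U$, that it contains the locally bounded subsheaf $(j_\ast\cO_U)^{\lb}$, and that every unit of $\cA^{\mode}_{\widetilde X}$ is itself locally bounded.

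First I would establish the subring structure and the inclusion $(j_\ast\cO_U)^{\lb}\subset \cA^{\mode}_{\widetilde X}$ by elementary bookkeeping of moderate growth bounds. Given $f,g\in \cA^{\mode}_{\widetilde X}(V)$ with local estimates $|f|\leq C_f|h|^{-N_f}$ and $|g|\leq C_g|h|^{-N_g}$ on some $K\cap U$, the triangle inequality yields $|f+g|\leq (C_f+C_g)|h|^{-\max(N_f,N_g)}$ and the product estimate $|fg|\leq C_fC_g|h|^{-(N_f+N_g)}$, and the constant function $1$ is trivially moderate. Locally bounded sections fit in $\cA^{\mode}_{\widetilde X}$ by taking $N_K=0$, which simultaneously delivers the unital $(j_\ast\cO_U)^{\lb}$-algebra structure inside $j_\ast\cO_U$.

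The remaining and more substantial claim is that every unit $f\in \cA^{\mode,\times}_{\widetilde X}(V)$ belongs to $(j_\ast\cO_U)^{\lb}$. Here the setup is: $f$ and $g\coloneqq f^{-1}$ both lie in $\cA^{\mode}_{\widetilde X}(V)$, so on every compact $K\subset V$ there exist $C,N$ with $|f|\leq C|h|^{-N}$ and $|g|\leq C|h|^{-N}$; using the identity $|g|=1/|f|$, the bound on $g$ converts to $|f|\geq C^{-1}|h|^{N}$, so that $|f|$ is squeezed between two positive polynomial powers of $|h|$ on $K\cap U$. The main obstacle is to upgrade this two-sided sandwich to a genuine bound $|f|\leq C'$ on $K\cap U$; I expect to deduce this from the classical structure of holomorphic functions of moderate growth on the real blow-up (see e.g.\ \cite[\S II]{Stokes_Lisbon}), which ensures that a section of $\cA^{\mode}_{\widetilde X}$ whose inverse is also of moderate growth has vanishing $h$-adic order along every irreducible component of $\widetilde D$, forcing local boundedness.
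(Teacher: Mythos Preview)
The paper provides no proof, simply calling the lemma obvious. Your treatment of the subring structure and the inclusion $(j_\ast\cO_U)^{\lb}\subset\cA^{\mode}_{\widetilde X}$ is correct and routine.

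The gap is in your final step. The claim you invoke---that if both $f$ and $f^{-1}$ have moderate growth then $f$ has vanishing $h$-adic order along each component of $D$---is false. On a disc with $D=\{0\}$, take $f=1/z$ on a sector $V\subset\widetilde X$ meeting the boundary circle: $|f|=|z|^{-1}$ is moderate and $f^{-1}=z$ is bounded (hence moderate), so $f\in\cA^{\mode,\times}_{\widetilde X}(V)$; yet $f$ has $z$-adic order $-1$ and is unbounded near $\partial\widetilde X$. Your two-sided sandwich $C^{-1}|h|^{N}\le|f|\le C|h|^{-N}$ is satisfied here with room to spare, and no appeal to asymptotic expansions will close the gap, because the conclusion you want simply fails for this $f$. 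This example shows both that your proposed argument breaks down and that the displayed inclusion, read literally for arbitrary units of $\cA^{\mode}_{\widetilde X}$, is problematic. What is actually needed downstream (for antisymmetry of $\leq$ on classes of meromorphic functions in \cref{order_general}) is only the special case where the unit has the form $e^{h}$ with $h$ locally meromorphic: moderate growth of $e^{\pm h}$ gives $|\mathrm{Re}(h)|\leq A+B\log(1/|z|)$ on compacts, and for $h$ meromorphic on an open sector this logarithmic bound rules out any pole, forcing $h$ to be holomorphic and hence locally bounded. If you want a correct argument, aim for that restricted statement rather than the general one about units.
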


\begin{recollection}[{\cite[Definition 9.2]{Stokes_Lisbon}}]\label{order_general}
Let $(X,D)$ be a strict normal crossing pair and put $U \coloneqq X\setminus D$.
Let $\pi \colon  \widetilde{X}\to X$  be the real blow-up along $D$ and let $j \colon U \hookrightarrow \widetilde{X}$ be the canonical inclusion.
For $f,g\in j_{\ast}\cO_{U}$, we write 
\[
f\leq g \text{ if and only if } e^{f-g}\in \cA^{\mode}_{\widetilde{X}}  \ .
\]
By \cref{moderate_invertible_bounded}, the relation $\leq$ induces an order on $(j_{\ast}\cO_{U})/(j_{\ast}\cO_{U})^{\lb}$.
%For $x\in \widetilde{X}$, we  let $\leq_x$ be the induced order on the germs of $(j_{\ast}\cO_{U})/(j_{\ast}\cO_{U})^{\lb}$ at $x$.
From now on, we view  $(j_{\ast}\cO_{U})/(j_{\ast}\cO_{U})^{\lb}$ as an object of 
$ \Sh^{\hyp}(\widetilde{X},\Poset)$.
\end{recollection}

\begin{rem}\label{meromorphic_order}
Viewing $\pi^{\ast}\cO_{X}(\ast D)$ inside $j_{\ast}\cO_{U}$, we have 
\[\pi^{\ast}\cO_{X}(\ast D)\cap  (j_{\ast}\cO_{U})^{\lb} = \pi^{\ast}\cO_{X}  \ .
\]
Hence, $\pi^{\ast}(\cO_{X}(\ast D)/\cO_{X})$ can be seen as a subsheaf of $(j_{\ast}\cO_{U})/(j_{\ast}\cO_{U})^{\lb}$.
From now on, we view it as an object of $ \Sh^{\hyp}(\widetilde{X},\Poset)$.
\end{rem}

\subsection{Sheaf of unramified irregular values}\label{secSheaf_of_unramifie_irregular_values}

\begin{defin}\label{locally_generated_hypersheaf}
Let $X$ be a topological space.
Let $\cF \in \Shhyp(X,\Cat_{\infty})$.
We say that $\cF $ is \textit{locally generated} if there is a cover by open subsets $U\subset X$ such that for every $x\in U$, the functor $\cF(U)\to \cF_x$ is essentially surjective.
We say that $\cF $ is \textit{globally generated} if for every $x\in X$, the functor $\cF(X)\to \cF_x$ is essentially surjective.
\end{defin}

%Local and global generation enjoy the following functoriality property:

\begin{lem}\label{pullback_locally_generated_lem}
Let $f\colon  Y\to X$ be a morphism of topological spaces.
Let $\cF \in \Shhyp(X,\Cat_{\infty})$.
If $\cF$ is locally (resp.  globally) generated, then so is $f^{\ast, \hyp}(\cF)$.
\end{lem}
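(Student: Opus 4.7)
The plan is to exploit two standard facts about hypersheaves: first, that the pullback functor $f^{\ast,\hyp}$ commutes with stalks, i.e.\ for every $y \in Y$ there is a canonical equivalence $(f^{\ast,\hyp}\cF)_y \simeq \cF_{f(y)}$; second, that for any open $U \subset X$ with preimage $V = f^{-1}(U)$, the unit of the adjunction provides a canonical morphism $\cF(U) \to (f^{\ast,\hyp}\cF)(V)$ whose composition with the stalk map at any $y \in V$ recovers the stalk map of $\cF$ at $f(y)$. Both statements hold because $f^{\ast,\hyp}$ is a left adjoint and because stalks are filtered colimits over systems of open neighborhoods, which $f^{-1}$ preserves cofinally.

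Granting these two facts, the argument is immediate in both cases. For the global case, given $y \in Y$, contemplate the commutative triangle
\[ \begin{tikzcd}
\cF(X) \arrow{r} \arrow{dr} & (f^{\ast,\hyp}\cF)(Y) \arrow{d} \\
{} & (f^{\ast,\hyp}\cF)_y \simeq \cF_{f(y)}
\end{tikzcd} \]
in which the diagonal arrow is essentially surjective by hypothesis; hence the vertical arrow is essentially surjective as well. For the local case, pick a cover $\{U_\alpha\}$ witnessing the local generation of $\cF$ and set $V_\alpha \coloneqq f^{-1}(U_\alpha)$; the $V_\alpha$ form an open cover of $Y$, and for every $y \in V_\alpha$ the same triangle (with $X$, $Y$ replaced by $U_\alpha$, $V_\alpha$) proves that $(f^{\ast,\hyp}\cF)(V_\alpha) \to (f^{\ast,\hyp}\cF)_y$ is essentially surjective.

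There is no genuine obstacle here: the content is bookkeeping around the two stated properties of $f^{\ast,\hyp}$, both of which are part of the standard formalism of hypersheaves of $\infty$-categories and require no new input. The only point meriting a brief justification is the commutation of $f^{\ast,\hyp}$ with stalks, but this reduces to the analogous statement for the non-hypercompleted inverse image together with the fact that hypercompletion does not alter stalks.
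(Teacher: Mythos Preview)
Your proof is correct and follows essentially the same approach as the paper: both arguments use the factorization $\cF(U) \to (f^{\ast,\hyp}\cF)(V) \to (f^{\ast,\hyp}\cF)_z \simeq \cF_{f(z)}$ and conclude that the second arrow is essentially surjective because the composite is. The only cosmetic difference is that the paper allows $V$ to be any open neighbourhood of $y$ with $f(V)\subset U$ rather than insisting on $V = f^{-1}(U)$, but this changes nothing.
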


\begin{proof}
We argue in the locally generated situation,  the globally generated situation being similar.
Let $y\in Y$ and put $x=f(y)$.
Let $U\subset X$ be an open neighbourhood of $x$ as in \cref{locally_generated_hypersheaf}.
Let $V\subset Y$ be an open neighbourhood of $y$ such that $f(V)\subset U$.
For $z\in V$,  there is a factorization 
\[
\cF(U) \to (f^{\ast, \hyp}(\cF))(V)\to (f^{\ast, \hyp}(\cF))_z \simeq \cF_{f(z)} \ .
\]
Since the composition is essentially surjective, so is the second functor.
\end{proof}

\begin{recollection}[{\cite[Definition 2.4.2]{Mochizuki1}}]\label{good sheaf}
Let $(X,D)$ be a strict normal crossing pair.
A \textit{sheaf of unramified irregular values} is a locally generated subsheaf of finite sets $\mathscr{I}\subset \cO_{X}(\ast D)/\cO_{X}$ in the sense of \cref{locally_generated_hypersheaf}.
\end{recollection}

The goal of what follows is to show that a sheaf of unramified irregular values is automatically constructible on $(X,D)$.
%Before this, a couple of lemmas are needed.

\begin{lem}\label{presheaf_pullback_Qcoh}
Let $X\subset \mathbb{C}^n$ be a polydisc  with coordinates $(z_1,\dots, z_n)$ and $D$ defined by $z_1\cdots z_l = 0$ for some $1\leq l\leq n$ with $ I=\{1,\dots, l\}$ and put $\cE\coloneqq\cO_{X}(\ast D)/\cO_{X}$.
Then, the map 
$$
(i_I^{\circ,\ast}\cE)(X\cap D_I^{\circ})\to \cE_0
$$
is injective.
\end{lem}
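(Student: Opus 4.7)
The approach is a two-step open-and-closed argument on the connected polydisc $X \cap D_I^\circ$, using only the identity principle for holomorphic functions as analytic input. Since $\{1,\dots,l\}$ exhausts the irreducible components of $D$ in this local model, one has $D_I^\circ = D_I$ and
\[
X \cap D_I^\circ \;\simeq\; \{0\}^l \times \Delta^{n-l},
\]
a connected polydisc containing the origin. Given $s \in (i_I^{\circ,\ast}\cE)(X \cap D_I^\circ)$ with $s_0 = 0$, the zero set $A \coloneqq \{p \in X \cap D_I^\circ : s_p = 0\}$ is non-empty and open, since it is the complement of the support of the sheaf section $s$. By connectedness of $X \cap D_I^\circ$, it suffices to show that $A$ is closed.

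For the closedness step, fix $p \in \overline{A}$ and choose an open polydisc $U' \subset X$ containing $p$. Since $U'$ is Stein, Cartan's Theorem B gives $\cE(U') = \cO_X(\ast D)(U')/\cO_X(U')$, so a local representative of $s$ near $p$ has the honest form $\sigma = g/(z_1 \cdots z_l)^N$ with $g \in \cO(U')$. I would then expand $g$ canonically in the divisor variables,
\[
g \;=\; \sum_{\beta \in \mathbb{N}^l} g_\beta(z_{l+1},\dots,z_n)\, z_1^{\beta_1} \cdots z_l^{\beta_l}, \qquad g_\beta \in \cO(U' \cap D_I^\circ),
\]
and invoke the stalkwise criterion: for $q \in U' \cap D_I^\circ$, one has $\sigma_q = 0$ in $\cE_q$ iff $g$ is divisible by each $z_i^N$ in the UFD $\cO_{X,q}$, equivalently iff every $g_\beta$ with some $\beta_i < N$ has vanishing germ at $q$. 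This rests on the linear independence over $\cO_{D_I^\circ,q}$ of the monomials $z_1^{\beta_1}\cdots z_l^{\beta_l}$ inside $\cO_{X,q}$.

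With this criterion in hand, the identity principle on the connected polydisc $U' \cap D_I^\circ$ yields a sharp dichotomy: either every relevant $g_\beta$ vanishes identically on $U' \cap D_I^\circ$, in which case $A \cap (U' \cap D_I^\circ) = U' \cap D_I^\circ$; or some $g_\beta$ is not identically zero and therefore vanishes on no non-empty open subset of $U' \cap D_I^\circ$, giving $A \cap (U' \cap D_I^\circ) = \emptyset$. Since $p \in \overline{A}$ and $U' \cap D_I^\circ$ is an open neighborhood of $p$ in $X \cap D_I^\circ$, the intersection $A \cap (U' \cap D_I^\circ)$ is non-empty (it contains the tail of any sequence in $A$ converging to $p$); the dichotomy then places $p$ in $A$, so $A$ is closed and equal to $X \cap D_I^\circ$.

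The only non-routine ingredient is the stalkwise vanishing criterion in the second step, which amounts to the elementary linear-independence statement for the Laurent monomials $z_1^{\gamma_1}\cdots z_l^{\gamma_l}$ with some $\gamma_i < 0$ over $\cO_{D_I^\circ,q}$ inside $\cO_{X,q}(\ast D)/\cO_{X,q}$. Once that is in place, the whole proof reduces to two applications of the identity principle on the connected polydisc $X \cap D_I^\circ$ and a single use of Cartan's Theorem B to obtain the concrete local representative.
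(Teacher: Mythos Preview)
Your proof is correct and follows essentially the same open-and-closed argument as the paper: both show that the vanishing locus $A$ is nonempty open (trivially) and closed, using that on a Stein polydisc a local representative of $s$ is an honest meromorphic function whose holomorphy near one point of $D_I^\circ$ forces holomorphy everywhere on that polydisc. The paper compresses this last step into the single sentence ``$f_j$ is holomorphic if and only if it is holomorphic in a neighbourhood of a point in $U_j \cap D_I^\circ$,'' whereas you unpack it via the coefficient expansion $g = \sum_\beta g_\beta(z_{l+1},\dots,z_n) z^\beta$ and the identity principle applied to the $g_\beta$; this is the same content made explicit.
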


\begin{proof}
A section of $i_I^{\circ,\ast}\cE$ above $X\cap D_I^{\circ}$ is a function
$$
s \colon X\cap D_I^{\circ} \to \bigsqcup_{x\in X\cap D_I^{\circ}}  \cE_x
$$
such that there exists a collection of polydiscs $(U_j)_{j\in J}$ such that the $U_j \cap D_I^{\circ}$ cover $X\cap D_I^{\circ}$ and for every $j\in J$, there exists $s_j\in \cE(U_j)$ such that $s_j$ and $s$ coincide on $U_j \cap D_I^{\circ}$.
Since a polydisc is a Stein manifold, $s_j$ can be represented by a meromorphic function $f_j\in \cO_{X}(\ast D)(U_j)$ modulo $\cO_{X}(U_j)$.
Assume now that $s_0=0$.
Since $i_I^{\circ,\ast}\cE$ is a sheaf, the set $S$  of points $x$ where $s_x=0$ is thus a non empty open subset of $X\cap D_I^{\circ}$.
On the other hands, for every $j\in J$, the meromorphic function $f_j$ is holomorphic if and only if it is holomorphic in a neighbourhood of a point in $U_j \cap D_I^{\circ}$.
Thus, $S$ is closed, which proves \cref{presheaf_pullback_Qcoh}.
\end{proof}

\begin{prop}\label{locally_generated_implies_constructibility}
Let $(X,D)$ be a strict normal crossing pair.
Let  $\mathscr{I}\subset \cO_{X}(\ast D)/\cO_{X}$ be a sheaf of unramified irregular values.
Then,  $\mathscr{I} \in \Cons_D^{\hyp}(X,\Setcat)$.
\end{prop}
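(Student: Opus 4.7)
The question is local on $X$. Fix $I$ and a point $x \in D_I^{\circ}$, and set $l \coloneqq \#I$. Choose a coordinate polydisc $V' \subseteq X$ centred at $x$ in which the irreducible components of $D$ passing through $x$ are the hyperplanes $\{z_i = 0\}$ for $i = 1, \dots, l$; after relabeling we may assume $I = \{1, \dots, l\}$ and $D \cap V' = \{z_1 \cdots z_l = 0\}$, placing us in the setup of \cref{presheaf_pullback_Qcoh}. Using the local generation hypothesis on $\mathscr{I}$, we shrink $V'$ further so that $\mathscr{I}(V') \to \mathscr{I}_y$ is surjective for every $y \in V'$. The goal is to show that $i_I^{\circ,\ast}\mathscr{I}$ restricts to the constant sheaf with finite value $\mathscr{I}(V')$ on the connected polydisc $V' \cap D_I^{\circ}$; this gives local constancy at $x$, and varying $I$ over subsets of the set of irreducible components of $D$ yields the desired $D$-constructibility.

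The plan is to prove that the tautological morphism of sheaves on $V' \cap D_I^{\circ}$,
\[
\underline{\mathscr{I}(V')}\big|_{V' \cap D_I^{\circ}} \;\to\; i_I^{\circ,\ast}\mathscr{I}\big|_{V' \cap D_I^{\circ}} \ ,
\]
is an isomorphism. Since $\mathscr{I}$ takes values in finite sets, it suffices to verify that for every $y \in V' \cap D_I^{\circ}$ the stalk map $\mathscr{I}(V') \to \mathscr{I}_y$ is bijective. Surjectivity is automatic from the choice of $V'$. For injectivity we invoke \cref{presheaf_pullback_Qcoh}: although its statement only addresses the origin, the proof applies verbatim at any base point $y \in V' \cap D_I^{\circ}$, showing that the set of points of $V' \cap D_I^{\circ}$ at which a given section of $i_I^{\circ,\ast}\cE$ has vanishing stalk is both open (sheaf axiom) and closed (analytic continuation along $D_I^{\circ}$), hence all of $V' \cap D_I^{\circ}$ by connectedness. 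The essential analytic input is that a meromorphic function on $V'$ with poles along $D$ which is holomorphic in a neighbourhood of some point of $V' \cap D_I^{\circ}$ is automatically holomorphic on all of $V'$. Factoring $\mathscr{I}(V') \to \mathscr{I}_y$ through the resulting injection $(i_I^{\circ,\ast}\cE)(V' \cap D_I^{\circ}) \hookrightarrow \cE_y$ yields the required injectivity.

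Stalkwise bijectivity then promotes the tautological morphism to an isomorphism of sheaves of sets, exhibiting $i_I^{\circ,\ast}\mathscr{I}|_{V' \cap D_I^{\circ}}$ as the constant sheaf on the finite set $\mathscr{I}(V')$. The only subtle step in the argument is the observation that the injectivity in \cref{presheaf_pullback_Qcoh} holds at arbitrary base points of $V' \cap D_I^{\circ}$, rather than only at the origin; this requires no new analytic input, merely running the proof of that lemma at a general base point and invoking connectedness of the polydisc stratum $V' \cap D_I^{\circ}$.
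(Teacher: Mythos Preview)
Your proof is correct and follows essentially the same approach as the paper: reduce to a local polydisc chart, use local generation for surjectivity of the stalk maps, and use \cref{presheaf_pullback_Qcoh} for injectivity. The paper re-centers coordinates so that the chosen point becomes the origin and then simply says ``injectivity follows from \cref{presheaf_pullback_Qcoh}'', leaving implicit the fact that injectivity is needed at \emph{every} $y$ in the stratum; your version makes this explicit by observing that the open-and-closed argument in the lemma's proof shows a section vanishing at any single point of $V'\cap D_I^\circ$ vanishes identically.
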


\begin{proof}
Let $D_1,\dots, D_l$ be the irreducible components of $D$.
Let $I\subset \{1,\dots, l\}$ be a subset.
We have to show that $i_I^{\circ,\ast}(\mathscr{I})$ is locally constant.
The question is local.
Hence,  we can suppose that  $X\subset \mathbb{C}^n$ is a polydisc  with coordinates $(z_1,\dots, z_n)$ and $D$ defined by $z_1\cdots z_l = 0$ for some $1\leq l\leq n$ with $ I=\{1,\dots, l\}$.
Let $x\in D_I^{\circ}$.
Let $B \subset X$ be a polydisc centred at $x$.
We can suppose that $x=0$.
We have to show that at the cost of shrinking $B$ further, the restriction $(i_I^{\circ,\ast}\mathscr{I})|_{B\cap D_I^{\circ}}$ is a constant sheaf, that is the map
\begin{equation}\label{eq_locally_generated_implies_constructibility}
(i_I^{\circ,\ast}\mathscr{I})(B\cap D_I^{\circ})\to \mathscr{I}_y
\end{equation}
is bijective for every $y\in B\cap D_I^{\circ}$.
Since $\mathscr{I}$ is locally generated, we can suppose that \eqref{eq_locally_generated_implies_constructibility} is surjective for every $y\in B\cap D_I^{\circ}$.
The injectivity follows from \cref{presheaf_pullback_Qcoh}.
\end{proof}

\begin{rem}\label{Iens_constructible}
In the setting of \cref{locally_generated_implies_constructibility}, let us denote by $(\widetilde{X},\widetilde{D})$ the space $\widetilde{X}$ endowed with the stratification induced by that of $D$ on $X$.
Then, \cref{locally_generated_implies_constructibility} yields  $\pi^{\ast}\mathscr{I}\in \Cons_{\widetilde{D}}^{\hyp}(\widetilde{X},\Setcat)$.
\end{rem}

Under constructibility assumption, local generation can sometimes be upgraded into global generation, due to the following

\begin{lem}\label{semi_globally_generated}
Let $(M,X,P)$ be a subanalytic stratified space where $\Pi_{\infty}(X,P)$ admits an initial object.
Then, every locally generated constructible sheaf $\cF \in \Cons_P^{\hyp}(X,\Cat_{\infty})$ is globally generated.
\end{lem}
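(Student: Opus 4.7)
Via the exodromy equivalence (\cref{exodromy_functorialities}), $\cF$ corresponds to a functor $F\colon \Pi_\infty(X,P) \to \Cat_\infty$ with $\cF(U) \simeq \lim_{\Pi_\infty(U,P)} F$ for every open $U\subset X$. Denoting by $x_0$ the initial object of $\Pi_\infty(X,P)$, this gives $\cF(X) \simeq F(x_0)$, and for any $y\in X$ the restriction $\cF(X)\to \cF_y$ identifies with the functor $F(x_0)\to F(y)$ induced by the essentially unique morphism $x_0\to y$. Hence global generation of $\cF$ reduces to showing that $F(x_0)\to F(y)$ is essentially surjective for every $y\in X$.

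Combining local generation of $\cF$ with \cref{prop:locally_contractible_strata}, for every $y\in X$ I would produce an open neighbourhood $V_y$ contained in a local generation open $U_\alpha$ such that $y$ is initial in $\Pi_\infty(V_y,P)$. Then $\cF(V_y)\simeq F(y)$, and since $\cF(U_\alpha)\to \cF(V_y)\to \cF_z$ is essentially surjective for every $z\in V_y$ (as its composition is), the map $F(y)\to F(z)$ is essentially surjective for every $z\in V_y$. This yields an open cover $\{V_y\}_{y\in X}$ of $X$ satisfying this pointwise local generation property.

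Let $W$ be the class of morphisms $\sigma$ in $\Pi_\infty(X,P)$ whose image $F(\sigma)$ is essentially surjective. Since essentially surjective functors are closed under composition, so is $W$. The key observation is that $W$ contains every morphism $\sigma\colon z\to z'$ in $\Pi_\infty(V_y,P)$ with $z,z'\in V_y$: by initiality of $y$ in $\Pi_\infty(V_y,P)$, the composition $y\to z\stackrel{\sigma}{\to} z'$ agrees essentially uniquely with $y\to z'$, so $F(\sigma)\circ F(y\to z) = F(y\to z')$ is essentially surjective by the previous paragraph; since $f\circ g$ essentially surjective forces $f$ essentially surjective, $F(\sigma)$ is essentially surjective as well.

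Finally, fix $y\in X$ and choose an exit path $\gamma\colon [0,1]\to X$ representing the morphism $x_0\to y$, which exists thanks to the initiality of $x_0$. The compact image $\gamma([0,1])$ is covered by finitely many $V_{y_1},\ldots, V_{y_K}$; I would then pick a Lebesgue number $\delta>0$ for the induced open cover of $[0,1]$ and a partition $0=s_0<\cdots<s_N=1$ of mesh $<\delta$, so that for each $i$, $\gamma([s_i,s_{i+1}])$ lies in some $V_{y_{k_i}}$. The restriction $\gamma|_{[s_i,s_{i+1}]}$ then defines a morphism $\gamma(s_i)\to \gamma(s_{i+1})$ in $\Pi_\infty(V_{y_{k_i}},P)\subset \Pi_\infty(X,P)$, which lies in $W$ by the previous paragraph. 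Since $x_0\to y$ is essentially uniquely the composition of these morphisms, and $W$ is closed under composition, $F(x_0)\to F(y)$ is essentially surjective. The hard part is exactly this propagation step: upgrading the pointwise essential surjectivity supplied by local generation to essential surjectivity of $F$ on arbitrary morphisms of $\Pi_\infty(X,P)$, which succeeds thanks to the Lebesgue-number decomposition of exit paths into sub-paths confined to single $V_y$.
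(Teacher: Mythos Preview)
Your proof is correct but takes a longer route than the paper's. The paper uses a \emph{single} open set $U$ around the initial object $x_0$ (chosen from local generation, then shrunk via \cref{prop:locally_contractible_strata} so that $x_0$ is initial in $\Pi_\infty(U,P)$), together with the defining property of a Lurie-style exit path: if $\gamma$ represents $x_0 \to y$, then $\gamma((0,1])$ lies entirely in the stratum of $y$, so for any $t>0$ the portion $\gamma|_{[t,1]}$ is an equivalence in $\Pi_\infty(X,P)$ and $F(\gamma(t)) \simeq F(y)$. Taking $t$ small enough that $\gamma(t)\in U$ reduces in one step to $y\in U$, where the conclusion follows from $\cF(X)\simeq\cF(U)$ and the local generation hypothesis on $U$. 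Your Lebesgue-number subdivision is correct but superfluous once one notices this: the entire tail of the exit path is already an equivalence, so no propagation beyond the first subinterval is needed. Your argument would genuinely buy something in a setting where morphisms are represented by paths traversing several strata (so the one-step slide fails), but both proofs rely equally on the implicit step of representing the abstract morphism $x_0\to y$ by a concrete continuous exit path, so in the subanalytic context at hand they stand or fall together.
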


\begin{proof}
Let $y \in X$.
We want to show that $\cF(X)\to \cF_y$ is essentially surjective.
Let $x \in X$ initial in $\Pi_{\infty}(X,P)$ and let $U \subset X$ be an open neighbourhood of $x$ on which $\cF$ is globally generated. 
At the cost of shrinking $U$, we can further suppose by \cref{prop:locally_contractible_strata} that $x$ is initial in $\Pi_{\infty}(U,P)$. 
Choose a morphism $\gamma\colon x\to y$ in  $\Pi_{\infty}(X,P)$.
At the cost of replacing $y$ by a point of $\gamma$ distinct from $x$ and sufficiently close to $x$, we can suppose that $y\in U$.
Let $F \colon \Pi_{\infty}(X,D) \to \Cat_{\infty}$ be the functor corresponding to $\cF$ via the exodromy equivalence \eqref{exodromy_equivalence}.
By assumption, the second arrow of
\[
\lim_{\Pi_{\infty}(X,P)}	F\to   \lim_{\Pi_{\infty}(U,P)}  F \to F(y)
\]
is essentially surjective, while the first one  is an equivalence since $x$ is initial in both  $\Pi_{\infty}(X,P)$ and 
$\Pi_{\infty}(U,P)$.
\cref{semi_globally_generated} thus follows.
\end{proof}

\begin{eg}\label{global_generation_X}
Let $\Delta\subset \mathbb{C}^l$ be a polydisc  with coordinates $(z_1,\dots, z_l)$, let $Y$ be a weakly contractible complex manifold and put $X=\Delta \times Y$.
Let $D$ be the divisor defined by $z_1 \cdots z_l = 0$.
Then $0$ is initial in $\Pi_{\infty}(X,D)$.
\end{eg}

\begin{eg}\label{global_generation_Xtilde}
Let $\Delta\subset \mathbb{C}^l$ be a polydisc  of radius $r>0$ with coordinates $(z_1,\dots, z_l)$, let $Y$ be a weakly contractible complex manifold and put $X=\Delta \times Y$.
Let $D$ be the divisor defined by $z_1 \cdots z_l = 0$.
Let $\pi \colon  \widetilde{X}\to X$  be the real blow-up of $X$ along $D$.
Let  $I_1,\dots, I_l \subset S^1$ be strict open intervals.
Then, any point of $[0,r)^l\times I_1\times \cdots \times I_l \times Y\subset \widetilde{X}$ above the origin is initial in 
\[
\Pi_{\infty}([0,r)^l\times I_1\times \cdots \times I_l \times Y ,\widetilde{D}) \ . 
\]
\end{eg}

\begin{cor}\label{semi_globally_generated_irregular_values}
Let $Y$ be a weakly contractible complex manifold.
Let $\Delta\subset \mathbb{C}^l$ be a polydisc with coordinates $(z_1,\dots, z_l)$ and put $X=\Delta\times Y$.
Let $D$ be the divisor defined by $z_1 \cdots z_l = 0$.
Let $\mathscr{I}\subset \cO_{X}(\ast D)/\cO_{X}$ be a sheaf of unramified irregular values.
Then,  $\mathscr{I}$ is globally generated.
\end{cor}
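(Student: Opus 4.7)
The plan is to assemble the result directly from the lemmas already established in \S\ref{secSheaf_of_unramifie_irregular_values}. First, by the very definition of a sheaf of unramified irregular values (\cref{good sheaf}), $\mathscr{I}$ is locally generated as a subsheaf of $\cO_X(\ast D)/\cO_X$. Then \cref{locally_generated_implies_constructibility}, which is the nontrivial input saying that the abstract local generation hypothesis automatically implies $D$-constructibility, promotes $\mathscr{I}$ to an object of $\Cons_D^{\hyp}(X,\Setcat)$.

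With constructibility in hand, the task reduces to verifying the hypothesis of \cref{semi_globally_generated}, namely that $\Pi_{\infty}(X,D)$ admits an initial object. This is exactly the content of \cref{global_generation_X}: since $Y$ is weakly contractible and $\Delta$ is a polydisc with the coordinate divisor, the origin $0 \in X$ is initial in the exit-path category. Applying \cref{semi_globally_generated} to the constructible, locally generated sheaf $\mathscr{I}$ then yields that $\mathscr{I}(X) \to \mathscr{I}_y$ is surjective for every $y \in X$, which is the claimed global generation. There is essentially no obstacle: the only conceptual input is the promotion from local generation to constructibility (\cref{locally_generated_implies_constructibility}), which was the nontrivial step done earlier; the corollary itself is a direct combination.
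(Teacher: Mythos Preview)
Your proof is correct and follows exactly the same approach as the paper, which simply says to combine \cref{locally_generated_implies_constructibility} with \cref{semi_globally_generated} applied to \cref{global_generation_X}. You have merely spelled out the logical flow linking these three ingredients.
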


\begin{proof}
Combine \cref{locally_generated_implies_constructibility} with \cref{semi_globally_generated} applied to \cref{global_generation_X}.
\end{proof}

%\begin{lem}\label{global_section_real_blow_up}
%Let $Y$ be a complex manifold.
%Let $\Delta\subset \mathbb{C}^l$ be a polydisc with coordinates $(z_1,\dots, z_l)$ and put $X=\Delta\times Y$.
%Let $D$ be the divisor defined by $z_1 \cdots z_l = 0$.
%Let $\pi \colon  \widetilde{X}\to X$  be the real blow-up of $X$ along $D$.
%For every  $\cF \in \Cons_D^{\hyp}(X,\Cat)$, the canonical restriction map
%\[
%\cF(X) \to (\pi^{-1}\cF)(\widetilde{X})
%\]
%is bijective.
%\end{lem}
%
%\begin{proof}
%\todo{to be done}
%\end{proof}

\begin{cor}\label{global_section_real_blow_up_I}
      Let $\Delta\subset \mathbb{C}^l$ be a polydisc  with coordinates $(z_1,\dots, z_l)$, let $Y$ be a weakly contractible complex manifold and put $X=\Delta \times Y$.
      Let $D$ be the divisor defined by $z_1 \cdots z_l = 0$.
      Let $\pi \colon  \widetilde{X}\to X$  be the real blow-up of $X$ along $D$.
      Let $\mathscr{I}\subset \cO_{X}(\ast D)/\cO_{X}$ be a sheaf of unramified irregular values.
      Then, the canonical restriction map
\[
\mathscr{I}(X) \to (\pi^{\ast}\mathscr{I})(\widetilde{X})
\]
is bijective.
\end{cor}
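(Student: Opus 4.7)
The plan is to reduce the statement to showing that the adjunction unit $\eta \colon \mathscr{I} \to \pi_*\pi^{*}\mathscr{I}$ is an isomorphism of sheaves on $X$. Indeed, the canonical restriction map $\mathscr{I}(X) \to (\pi^{*}\mathscr{I})(\widetilde{X})$ factors as
$$\mathscr{I}(X) \xrightarrow{\eta} (\pi_*\pi^{*}\mathscr{I})(X) = (\pi^{*}\mathscr{I})(\widetilde{X}),$$
where the last equality comes from the $\pi^{*} \dashv \pi_*$ adjunction; so once $\eta$ is known to be an isomorphism, the result follows by taking global sections on $X$.

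Both $\mathscr{I}$ and $\pi_*\pi^{*}\mathscr{I}$ are subanalytically constructible sheaves of sets on $X$: for $\mathscr{I}$ this is \cref{locally_generated_implies_constructibility}, while for $\pi_*\pi^{*}\mathscr{I}$ it follows from \cref{Iens_constructible} combined with the constructibility part of \cref{proper_analytic_direct_image}. It therefore suffices to check that $\eta$ is an iso stalkwise. Fix $x \in D_I^\circ$ for some $I \subset \{1,\ldots,l\}$. By the base-change part of \cref{proper_analytic_direct_image} applied along $\{x\} \hookrightarrow X$, one obtains
$$(\pi_*\pi^{*}\mathscr{I})_x \simeq \Gamma\bigl(\pi^{-1}(x),\, (\pi^{*}\mathscr{I})|_{\pi^{-1}(x)}\bigr).$$
The restriction of $\pi^{*}\mathscr{I}$ to $\pi^{-1}(x)$ is the pullback of $\mathscr{I}_x$ along the constant map $\pi^{-1}(x) \to \{x\}$, and is therefore the constant sheaf with value $\mathscr{I}_x$. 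By \cref{local_real_blow_up}, the fiber $\pi^{-1}(x)$ is homeomorphic to $(S^1)^{|I|}$, which is path-connected, so the global sections of this constant sheaf are simply $\mathscr{I}_x$. This exhibits $\eta_x$ as a bijection, concluding the argument.

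The only point requiring a small amount of care is the application of \cref{proper_analytic_direct_image} to a sheaf of sets viewed inside $\ConsPhyp(X;\Cat_\infty)$; this is routine since sheaves of sets are precisely the discrete objects therein and both $\pi^{*}$ and $\pi_*$ preserve this discreteness. Beyond this, the argument is formal and relies only on constructibility of $\mathscr{I}$ and path-connectedness of the fibers of the real-blow-up; no hard obstacle arises.
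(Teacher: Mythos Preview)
Your proof is correct, but it takes a genuinely different route from the paper's.

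The paper argues via exodromy: it translates global sections into limits over exit-path categories, reduces to showing that
\[
\ho(\Pi_\infty(\widetilde{X},\widetilde{D})) \to \ho(\Pi_\infty(X,D))
\]
is final, and then uses the explicit product description $\widetilde{X} \simeq [0,r)^l \times (S^1)^l$ (after contracting $Y$) to identify this with a projection onto the first factor, reducing finality to connectedness of $\ho(\Pi_\infty((S^1)^l))$.

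Your approach bypasses the exodromy machinery entirely. You prove directly that the unit $\eta\colon \mathscr{I} \to \pi_*\pi^*\mathscr{I}$ is an isomorphism, checking this on stalks using proper base change (\cref{proper_analytic_direct_image}) and the connectedness of the fibers $\pi^{-1}(x) \simeq (S^1)^{|I|}$. This is more elementary and, notably, makes no use of the hypothesis that $X = \Delta \times Y$ with $Y$ weakly contractible: your argument works verbatim for an arbitrary strict normal crossing pair. In effect you are proving \cref{unit_irr_value_equivalence} directly and deducing the present corollary by taking global sections, whereas the paper proceeds in the reverse order (proving the global-section statement in this local model first, then invoking locality to get the unit equivalence in general). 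Your route is shorter; the paper's route illustrates the exodromy viewpoint that pervades the rest of the article.
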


\begin{proof}
      By \cref{locally_generated_implies_constructibility}, the sheaf 
      $\mathscr{I}$ is constructible on $(X,D)$, so that $\pi^{\ast}\mathscr{I}$ is constructible on $(\widetilde{X},\widetilde{D})$ (see \cref{Iens_constructible}).
      Let $F \colon \Pi_\infty(X,D) \to \Setcat$ be the functor corresponding to $\mathscr{I}$ via the exodromy equivalence \eqref{exodromy_equivalence}.
	By \cref{exodromy_functorialities}, we have to show that 
\[
\lim_{\Pi_{\infty}(X,D)}	F\to   \lim_{\Pi_{\infty}(\widetilde{X},\widetilde{D})}   F \circ \pi
\]
is an equivalence.
Since $Y$ is weakly contractible, we can suppose that $Y$ is a point.
Since $\Setcat$ is a 1-category, the functor $F \colon \Pi_\infty(X,D) \to \Setcat$ factors uniquely through the homotopy category $\ho( \Pi_\infty(X,D) )$ as a functor $G \colon \ho( \Pi_\infty(X,D) )\to \Setcat$.
Hence we are left to show that 
\[
\lim_{\ho(\Pi_{\infty}(X,D))}	G  \to   \lim_{\ho(\Pi_{\infty}(\widetilde{X},\widetilde{D}))}   G \circ \pi
\]
is an equivalence.
To do this, it is enough to show that
\begin{equation}\label{eq:global_section_real_blow_up_I}
\ho(\Pi_{\infty}(X,D)) \to \ho(\Pi_{\infty}(\widetilde{X},\widetilde{D})) 
\end{equation}
is final in the $1$-categorical sense.
If $r>0$ denotes the radius of $\Delta$, we have 
\[
\widetilde{X} = [0,r)^l\times (S^1)^l \ .
\]
Since $\ho$ commutes with finite products, we obtain
\[
\ho(\Pi_{\infty}(\widetilde{X},\widetilde{D}) ) \simeq \ho(\Pi_{\infty}( [0,r)^l,D))\times \ho(\Pi_{\infty}((S^1)^l))   \ .
\]
Via this equivalence, the functor \eqref{eq:global_section_real_blow_up_I} identifies with  the projection on the first term.
By \cite[4.1.1.13]{HTT}, we are thus left to show that $\ho(\Pi_{\infty}((S^1)^l))$ is connected, which is obvious.
\end{proof}

\cref{global_section_real_blow_up_I} implies immediately the following 

\begin{cor}\label{unit_irr_value_equivalence}
      Let $(X,D)$ be a strict normal crossing pair.
      Let $\mathscr{I}\subset \cO_{X}(\ast D)/\cO_{X}$ be a sheaf of unramified irregular values.
      Let $\pi \colon  \widetilde{X}\to X$  be the real blow-up along $D$.
      Then the unit transformation
\[
\mathscr{I}  \to \pi_* \pi^{\ast} \mathscr{I}
\]
is an equivalence.
\end{cor}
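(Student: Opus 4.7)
The plan is to check that the unit transformation $\mathscr{I} \to \pi_*\pi^{\ast}\mathscr{I}$ is an equivalence stalkwise on $X$, which reduces the global statement to the already established local statement \cref{global_section_real_blow_up_I}.

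Away from $D$, the map $\pi$ is a homeomorphism, so there is nothing to prove. Fix therefore a point $x \in D$ and let $I \subseteq \{1,\dots,l\}$ be such that $x \in D_I^{\circ}$. First I would choose local holomorphic coordinates $(z_1,\dots,z_n)$ around $x$ (with $x$ the origin) so that $D$ is defined by $z_1 \cdots z_{|I|} = 0$. I would then form a fundamental system of open neighbourhoods of $x$ of the form $U = \Delta \times Y$, where $\Delta \subset \mathbb C^{|I|}$ is a polydisc in the coordinates $z_1,\dots,z_{|I|}$ and $Y$ is a polydisc in the remaining coordinates; in particular each such $Y$ is a weakly contractible complex manifold.

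Since $\mathscr{I}$ is locally generated, each restriction $\mathscr{I}|_U$ is again a locally generated subsheaf of finite sets of $\cO_U(\ast (D \cap U))/\cO_U$, hence a sheaf of unramified irregular values on the strict normal crossing pair $(U, D \cap U)$ in the sense of \cref{good sheaf}. Applying \cref{global_section_real_blow_up_I} to this pair then yields a bijection
\[ \mathscr{I}(U) \xrightarrow{\sim} (\pi^{\ast}\mathscr{I})(\pi^{-1}(U)) = (\pi_*\pi^{\ast}\mathscr{I})(U) \ . \]
These bijections are natural in $U$, so passing to the filtered colimit over the chosen fundamental system of neighbourhoods gives a bijection
\[ \mathscr{I}_x \xrightarrow{\sim} (\pi_*\pi^{\ast}\mathscr{I})_x \ , \]
which is precisely the stalk at $x$ of the unit transformation. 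Since this holds at every point $x \in X$, the conclusion follows. There is no serious obstacle in this argument; the only substantive point is that the hypothesis of local generation is preserved under restriction to an open, so that \cref{global_section_real_blow_up_I} really does apply to each of the chosen neighbourhoods.
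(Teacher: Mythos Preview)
Your proof is correct and follows exactly the approach the paper intends: the paper simply records this corollary as an immediate consequence of \cref{global_section_real_blow_up_I}, and you have spelled out the reduction in detail by checking on stalks via a fundamental system of product neighbourhoods. The only point you leave implicit is that $\pi^{-1}(U)\to U$ coincides with the real blow-up of $U$ along $D\cap U$, but this is clear from the local description of the real blow-up.
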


\subsection{Good sheaf of unramified irregular values}

\begin{defin}
Let $X\subset \mathbb{C}^n$ be a polydisc with coordinates $(z,y)\coloneqq (z_1,\dots, z_l,y_1,\dots y_{n-l})$.
Let $D$ be the divisor defined by $z_1 \cdots z_l = 0$.
Let $a\in \cO_{X,0}(\ast D)/\cO_{X,0}$ and consider the Laurent expansion 
\[
\sum_{m\in \mathbb{Z}^l}   a_m(y) z^m   \ .
\]
We say that \textit{$a$ admits an order} if  the set 
\[
\{m \in \mathbb{Z}^l \text{ with } a_m \neq 0\}\cup \{0\}
\]
admits a smallest element, denoted by $\order a$.
\end{defin}

\begin{rem}
The existence of an order does not depend on a choice of coordinates on $X$.
\end{rem}

\begin{recollection}[{\cite[Definition 2.1.2]{Mochizuki1}}]\label{goodness}
Let $(X,D)$ be a strict normal crossing pair.
Let $x\in X$.
A subset $I\subset \cO_{X,x}(\ast D)/\cO_{X,x}$ is \textit{good} if 
\begin{enumerate}\itemsep=0.2cm
\item  every non zero $a\in I$  admits an order with $a_{\order a}$ invertible in  $\cO_{X,x}$.

\item For every distinct $a,b\in I, a-b$  admits an order with $(a-b)_{\order (a-b)}$ invertible in  $\cO_{X,x}$.

\item The set $\{\order(a-b), a,b\in I\}\subset  \mathbb{Z}^l $ is totally ordered.
\end{enumerate}
\end{recollection}

\begin{recollection}[{\cite[Definition 2.4.2]{Mochizuki1}}]\label{good sheaf}
Let $(X,D)$ be a strict normal crossing pair.
A \textit{good sheaf of unramified irregular values} is a sheaf of unramified irregular values such that for every $x\in X$,  the set $\mathscr{I}_x \subset \cO_{X,x}(\ast D)/\cO_{X,x}$ is good.
\end{recollection}

When restricted to good sheaves of irregular values, the order from \cref{order_general} admits a handy characterisation that we now describe.

\begin{recollection}[{\cite[§3.1.2]{Mochizuki1}}]\label{order_good_irregular_values}
Let $\Delta\subset \mathbb{C}^l$ be a polydisc  with coordinates $(z_1,\dots, z_l)$, let $Y$ be a complex manifold and put $X=\Delta \times Y$ and $U\coloneqq X\setminus D$.
Let $\pi \colon  \widetilde{X}\to X$ be the real blow-up along $D$ and let $x\in \widetilde{X}$.
Let $a,b\in (\pi^{-1}(\cO_{X}(\ast D)/ \cO_{X}))_x$ and let $\mathfrak{a}$ and $\mathfrak{b}$ be lifts of $a$ and $b$ to $\cO_{X}(\ast D)$ on some open subset $V\subset X$.
By \cref{extension_z_over_zmodule}, the function
\[
\Real(\mathfrak{a}- \mathfrak{b})|z^{-\order(a-b)}|  \colon  V\setminus D\to \mathbb{R}
\]
extends as a real analytic function 
\[
F_{a,b} \colon \pi^{-1}(V)\to \mathbb{R}   \ .
\]
Then, the following are equivalent:
\begin{enumerate}\itemsep=0.2cm
\item  $a \leq_x b$ in the sense of \cref{order_general};
\item $a = b$  or $a \neq  b$ and  $F_{a,b}(x)<0$ .
\end{enumerate}
\end{recollection}

The goal of what follows is to show that for every  \textit{good} sheaf of unramified irregular values  $\mathscr{I}\subset \pi^{\ast}(\cO_{X}(\ast D)/ \cO_{X})$, there exists  a \textit{finite} subanalytic stratification $\widetilde{X}\to P$ such that $\pi^{\ast}\mathscr{I}\in \ConsPhyp(\widetilde{X},\Poset)$.
Before that, a couple of intermediate steps are needed.
To this end, we introduce the following

\begin{defin}\label{Uasmallthanb}
Let $(M,X)$ be a subanaltyic stratified space.
Let  $\cF \in \Sh\hyp(X,\Poset)$.
Note that for  $x\in X$, the stalk  
\[
\cF_x =\colim_{x\in U} \cF(U)
\]
is naturally endowed with an order $ \leq_x$ by performing the above colimit in Poset instead of Set.
For an open subset $U\subset X$ and for $a,b \in \cF(U)$, we put
\[
U_{a < b} \coloneqq  \{x\in U \text{ such that }  a_x <_x b_x \text{ in  }  \cF_x   \}   
\]
 and
\[
U_{a = b} \coloneqq  \{x\in U \text{ such that }  a_x = b_x \text{ in  }  \cF_x   \}   
\]
and 
\[
U_{a \ast b}\coloneqq  \{x\in U \text{ such that }  a_x \text{ and } b_x \text{ cannot be compared in } \cF_x   \}     \ .
\]
\end{defin}

\begin{rem}\label{Uasmallthan_rem}
Let $(M,X)$ be a subanaltyic stratified space and let  $\cF \in \Shhyp(X,\Poset)$.
For every open subset $U\subset X$ and for every $a,b \in \cF(U)$, the set $U_{a = b}$ is open and $U_{a < b}$ and $U_{a \ast b} = U\setminus (U_{a < b}\cup  U_{a > b} \cup U_{a = b} )$  are locally closed.
\end{rem}

\begin{eg}\label{stokes_loci_are_subanalytic}
Let $\Delta\subset \mathbb{C}^l$ be a polydisc  with coordinates $(z_1,\dots, z_l)$, let $Y$ be a complex manifold and put $X=\Delta \times Y$.
Let $D$ be the divisor defined by $z_1 \cdots z_l = 0$.
Let $\mathscr{I}\subset \cO_{X}(\ast D)/\cO_{X}$ be a sheaf of unramified irregular values.
Let $\pi \colon  \widetilde{X}\to X$ be the real blow-up along $D$.
Let $\alpha,\beta \in \mathscr{I}(X)$ and put $a=\pi^{\ast}\alpha \in (\pi^{\ast}\mathscr{I})(\widetilde{X})$ and $b=\pi^{\ast}\beta \in (\pi^{\ast}\mathscr{I})(\widetilde{X})$.
Let $A\subset \{1,\dots, l\}$ be the set of indices $i$ such that $\alpha-\beta$ has a pole along $D_i$. 
By \cref{order_good_irregular_values}, we have
\[
\widetilde{X}_{a = b}= \bigsqcup_{I \subset \{1,\dots, l\}\setminus A}\widetilde{D}_I^{\circ}  
\]
and
\[
\widetilde{X}_{a < b} = \bigsqcup_{\substack{I \subset \{1,\dots, l\} \\ I \cap A \neq \emptyset}}  \widetilde{D}_I^{\circ} \cap \{F_{a,b}<0\}     \ .
\]
Furthermore, 
\[
\widetilde{X}_{a \ast b} = \widetilde{X} \setminus (\widetilde{X}_{a < b}\cup  \widetilde{X}_{a > b} \cup \widetilde{X}_{a = b} )  \ .
\]
In particular the three sets above are  subanalytic in $S^1 L(D)$.
\end{eg}

\begin{lem}\label{from_local_to_global_subanalyticity}
Let $(M,X,P)$ be a subanaltyic stratified space where $X$ is closed.
Let $\cF \in \Shhyp(X,\Poset)$.
Let $?\in \{<,=,\ast\}$.
Assume that
\begin{enumerate}\itemsep=0.2cm
\item  $\cF^{\ens} \in \ConsPhyp(X,\Setcat)$ ;
\item  $\cF$ is locally generated (\cref{locally_generated_hypersheaf}) ;
\item there exists a fundamental system of open  neighbourhoods $W\subset M$  such that for every $a,b\in \cF(W\cap X),$ the set $(W\cap X)_{a ? b}$, is subanalytic in $W$.
\end{enumerate}
Then,  for every open subset $U\subset X$ subanalytic in $M$, for every $a,b\in \cF(U)$, the set $U_{a?b}$ is locally closed subanalytic in $M$.
\end{lem}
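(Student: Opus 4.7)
I would prove local closedness and subanalyticity separately. Local closedness is immediate from \cref{Uasmallthan_rem}: $U_{a?b}$ is locally closed in $U$, and $U$ is locally closed in $M$ because it is open in the closed subset $X$. Since subanalyticity in $M$ is a local property, it suffices to exhibit, for every $x_0 \in M$, an open neighborhood $V$ of $x_0$ in $M$ for which $U_{a?b} \cap V$ is subanalytic in $V$. The case $x_0 \notin X$ is trivial: closedness of $X$ yields a neighborhood of $x_0$ disjoint from $X$, on which $U_{a?b}$ is empty.

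When $x_0 \in X$, I would choose an open subanalytic neighborhood $W$ of $x_0$ in $M$ from the fundamental system of (3), shrunk further, via \cref{prop:locally_contractible_strata}, so that $x_0$ is initial in $\Pi_\infty(W \cap X, P)$. Combined with assumptions (1) and (2), \cref{semi_globally_generated} then ensures that $\cF$ is globally generated on $W \cap X$; in particular, the evaluation at $x_0$ identifies $\cF(W \cap X)$ with the stalk $\cF_{x_0}$, which is finite in the applications of interest (e.g.\ sheaves of unramified irregular values). The central identity I would establish is
\[
U_{a?b} \cap W \;=\; \bigcup_{\alpha, \beta \in \cF(W \cap X)} \bigl( (W \cap X)_{\alpha \, ? \, \beta} \cap E_{\alpha, \beta} \bigr),
\]
where $E_{\alpha,\beta} \coloneqq \{ y \in W \cap U : \alpha_y = a_y \text{ and } \beta_y = b_y\}$. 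The inclusion $\supseteq$ follows because the relation $?$ at a point depends only on germs; for $\subseteq$, given $y \in U_{a?b} \cap W$, global generation on $W \cap X$ produces $\alpha, \beta \in \cF(W \cap X)$ with the prescribed germs at $y$, placing $y$ in the corresponding term.

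Each $(W \cap X)_{\alpha ? \beta}$ is subanalytic in $W$ by hypothesis (3). Each $E_{\alpha,\beta}$ is subanalytic in $W$ because, by constructibility of $\cF^{\ens}$ in (1), the comparison of any two sections is open and closed on each stratum of $(W \cap U,P)$, so $E_{\alpha,\beta}$ is a union of connected components of strata of $(W\cap U,P)$, each of which is subanalytic. The main obstacle is ensuring that the union is well-behaved: the finiteness of $\cF(W\cap X)$ provided by the paragraph above makes it a finite union of subanalytic subsets of $W$, hence subanalytic in $W$. In greater generality one would need to appeal to the constructibility of $\cF^{\ens}$ to verify that in a sufficiently small neighborhood of any point of $W$ only finitely many pairs $(\alpha,\beta)$ contribute nontrivially to the union, so that the family remains locally finite. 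Patching the resulting local subanalyticity statements gives the conclusion.
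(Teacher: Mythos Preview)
Your overall strategy is sound and close to the paper's, but there are two genuine gaps.

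\textbf{(a) Order of shrinking.} You select $W$ from the fundamental system of hypothesis~(3) and then shrink it, via \cref{prop:locally_contractible_strata}, so that $x_0$ becomes initial. After that shrinking the new $W$ need no longer belong to the fundamental system, so your later appeal ``each $(W\cap X)_{\alpha\,?\,\beta}$ is subanalytic in $W$ by hypothesis~(3)'' is unjustified: the sections $\alpha,\beta\in\cF(W\cap X)$ may fail to extend to the larger neighbourhood where~(3) actually applies. The paper avoids this by using~(2) directly: it first passes to a neighbourhood on which $\cF$ is globally generated (this is the definition of locally generated, no initial object needed), produces the finitely many extensions there, and only then shrinks into the fundamental system of~(3); the extensions restrict along the inclusion and~(3) applies to them. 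Invoking \cref{semi_globally_generated} is an unnecessary detour that creates this ordering problem.

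\textbf{(b) Finiteness of the union.} Your decomposition is indexed by $\cF(W\cap X)^2$, and the lemma does not assume finite stalks, so ``which is finite in the applications of interest'' does not establish the statement as written. Your fallback (``only finitely many pairs contribute nontrivially'') is on the right track but, once made precise, reduces to the paper's argument: after shrinking so that $P$ is finite and each $U_p$ has finitely many connected components $C_{i,p}$, one extends $a|_{C_{i,p}},b|_{C_{i,p}}$ to global sections $\alpha_{i,p},\beta_{i,p}$ by global generation and writes
\[
U_{a?b}\;=\;\bigsqcup_{p\in P}\bigsqcup_{i=1}^{n(p)} X_{\alpha_{i,p}\,?\,\beta_{i,p}}\cap C_{i,p}\,,
\]
a manifestly finite union. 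Indexing by strata components rather than by pairs of global sections is what makes the finiteness come for free from the subanalytic geometry instead of from an unstated hypothesis on $\cF$.
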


\begin{proof}
Local closeness is automatic by \cref{Uasmallthan_rem}.
Let $x\in M$.
We need to show that  $U_{a<b}$ is subanalytic in a neighbourhood of  $x$ in $M$.
Since $X$ is closed, we can suppose that $x\in X$.
At the cost of replacing $M$ by a sufficiently small open neighbourhood of $x$ in $M$,  we can suppose by $(2)$  that $\cF$ is globally generated.
At the cost of shrinking $M$ further,  we can suppose that $P$ is finite.
Since $U$ is a subanalytic subset of $M$, so are the $U_p=U\cap X_p$ for $p\in P$.
On the other hand, the set of connected components of a subanalytic subset is locally finite.
Hence, at the cost of replacing $M$ by a smaller neighbourhood of $x$, we can suppose that the $U_p$ have only a finite number of connected components $C_{1,p},\dots,C_{n(p),p}$.
By global generation, for $p\in P$ and $1\leq i \leq n(p)$,  the sections $a|_{C_{i,p}}, b|_{C_{i,p}}$ extend  to $X$ as sections $\alpha_{i,p}, \beta_{i,p}$ of $\cF$.
At the cost of replacing $M$ by a smaller neighbourhood of $x$, we can suppose by (3) that the $X_{\alpha_{i,p} ?\beta_{i,p}}$ are subanalytic in $M$.
Moreover,
\[
U_{a? b}  = \bigsqcup_{p\in P} U_{a ? b}\cap U_p   
                           = \bigsqcup_{p\in P}\bigsqcup_{i=1}^{n(p)}  (C_{i,p})_{a|_{C_{i,p}} ? b|_{C_{i,p}}}   
                          =\bigsqcup_{p\in P}\bigsqcup_{i=1}^{n(p)}   X_{\alpha_{i,p} ? \beta_{i,p}} \cap C_{i,p}    \  .
\]
Since a finite union and intersection of subanalytic subsets is again subanalytic, \cref{from_local_to_global_subanalyticity} is thus proved. 
\end{proof}

\begin{cor}\label{good_sheaf_irregular_values_is_subanalytic}
Let $(X,D)$ be a strict normal crossing pair.
Let $\mathscr{I}\subset \cO_{X}(\ast D)/\cO_{X}$ be a good sheaf of unramified irregular values.
Let $\pi \colon  \widetilde{X}\to X$  be the real blow-up along $D$ and consider $\pi^{\ast}\mathscr{I} \in \Shhyp(\widetilde{X},\Poset)$.
For every open subset $U \subset \widetilde{X}$ subanalytic in $S^1 L(D)$, 
for every $a,b\in (\pi^{\ast}\mathscr{I})(U)$, the sets $U_{a< b}$,$U_{a= b}$,$U_{a\ast b}$ are locally closed  subanalytic in $S^1 L(D)$.
\end{cor}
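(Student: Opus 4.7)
The plan is to apply \cref{from_local_to_global_subanalyticity} to the ambient smooth real analytic space $M = S^1 L(D)$, the closed subanalytic subset $\widetilde{X} \subset S^1 L(D)$, the stratification induced by $\widetilde{D}$, and the hypersheaf of posets $\cF \coloneqq \pi^{\ast}\mathscr{I}$. Hypothesis (1) of that lemma, the $\widetilde{D}$-constructibility of $\cF^{\ens}$, is exactly \cref{Iens_constructible}. Hypothesis (2), local generation of $\cF$, follows from \cref{pullback_locally_generated_lem} together with the fact that $\mathscr{I}$ is locally generated by definition of a sheaf of unramified irregular values (\cref{good sheaf}). Only hypothesis (3), the existence of a fundamental system of open subsets $W \subset S^1 L(D)$ on which the loci $(W\cap \widetilde{X})_{a?b}$ are subanalytic in $W$, requires an argument.

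To verify (3), I would fix $x \in S^1 L(D)$ and discuss two cases. If $x \notin \widetilde{X}$, then since $\widetilde{X}$ is closed in $S^1 L(D)$ we find arbitrarily small neighborhoods $W$ of $x$ disjoint from $\widetilde{X}$, in which case $W\cap \widetilde{X} = \emptyset$ and condition (3) is vacuous. If $x \in \widetilde{X}$, I would choose an arbitrarily small open neighborhood of $\pi(x)$ in $X$ of the form $\Delta \times Y$, where $\Delta \subset \mathbb C^l$ is a polydisc with coordinates $(z_1,\dots,z_l)$ trivializing $D$ as $\{z_1\cdots z_l = 0\}$ and $Y$ is a polydisc (hence weakly contractible) in a complex manifold transverse to $D$. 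I would then take $W$ to be the preimage of $\Delta\times Y$ under the natural projection $S^1 L(D) \to X$; this is open in $S^1 L(D)$, contains $x$, and satisfies $W \cap \widetilde{X} = \pi^{-1}(\Delta \times Y)$, which is the real blow-up of $\Delta\times Y$ along its divisor.

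Since $Y$ is weakly contractible, \cref{global_section_real_blow_up_I} identifies the restriction map
\[ \mathscr{I}(\Delta\times Y) \stackrel{\sim}{\to} (\pi^{\ast}\mathscr{I})(W\cap \widetilde{X}) \]
as a bijection. Hence any $a,b \in (\pi^{\ast}\mathscr{I})(W\cap \widetilde{X})$ can be written as $a = \pi^{\ast}\alpha$ and $b = \pi^{\ast}\beta$ for unique $\alpha,\beta \in \mathscr{I}(\Delta\times Y)$. \cref{stokes_loci_are_subanalytic}, whose explicit description of $\widetilde{X}_{a<b}$, $\widetilde{X}_{a=b}$ and $\widetilde{X}_{a\ast b}$ relies decisively on the goodness of $\mathscr{I}$ through the pole-order characterisation of the order \cref{order_good_irregular_values}, realises each of the sets $(W\cap \widetilde{X})_{a?b}$ as a finite union of subanalytic subsets of $S^1L(D|_{\Delta\times Y}) = W$. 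Thus (3) is satisfied, and \cref{from_local_to_global_subanalyticity} yields the conclusion.

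The only substantive step is the construction of $W$ and the reduction to \cref{stokes_loci_are_subanalytic}; the main subtlety is that the order on $\pi^{\ast}\mathscr{I}$ in \cref{order_general} is defined via moderate growth of exponentials, and it is only because of the goodness of $\mathscr{I}$ that this order is actually cut out in $S^1L(D)$ by the vanishing and sign of the finitely many real analytic functions $F_{a,b}$ appearing in \cref{order_good_irregular_values}. The subanalyticity of the loci therefore reduces to that of the $F_{a,b}$ and of the strata $\widetilde{D}_I^{\circ}$.
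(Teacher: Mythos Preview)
Your overall strategy matches the paper's: verify the three hypotheses of \cref{from_local_to_global_subanalyticity}, with (1) and (2) handled exactly as you say, and (3) reduced locally to \cref{stokes_loci_are_subanalytic} via \cref{global_section_real_blow_up_I}. There is, however, a genuine gap in your verification of (3).

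Your open set $W$ is the preimage of $\Delta\times Y$ under $S^1 L(D)\to X$, so in local coordinates $W \simeq \Delta\times (S^1)^l\times Y$. As $\Delta\times Y$ shrinks, $W$ still contains the entire torus fibre $(S^1)^l$ over $\pi(x)$; hence these $W$'s do \emph{not} form a fundamental system of neighbourhoods of $x$ in $S^1 L(D)$, which is exactly what hypothesis (3) of \cref{from_local_to_global_subanalyticity} demands (and what its proof uses when it shrinks $M$ repeatedly). The paper fixes this as follows: after reducing to $X$ a polydisc, it takes instead any open $W\subset S^1 L(D)$ such that $x$ is initial in $\Pi_\infty(W\cap\widetilde{X},\widetilde{D})$; such $W$'s do form a fundamental system by \cref{prop:locally_contractible_strata}. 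The price is that sections $a,b$ over $W\cap\widetilde{X}$ are no longer automatically restrictions of global sections, so one uses global generation of $\pi^\ast\mathscr{I}$ on $\widetilde{X}$ (via \cref{global_section_real_blow_up_I} and \cref{semi_globally_generated_irregular_values}) to find $\alpha,\beta\in\mathscr{I}(X)$ with $(\pi^\ast\alpha)_x=a_x$, $(\pi^\ast\beta)_x=b_x$, and then the initiality of $x$ forces $a=(\pi^\ast\alpha)|_{W\cap\widetilde{X}}$ and $b=(\pi^\ast\beta)|_{W\cap\widetilde{X}}$. From there one concludes exactly as you do via \cref{stokes_loci_are_subanalytic}.
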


\begin{proof}
Let $?\in \{<,=,\ast\}$.
We prove that $U_{a ? b}$ is locally closed subanalytic in $M$.
We check that the conditions of \cref{from_local_to_global_subanalyticity} are satisfied.
First observe that $\widetilde{X}$ is closed in $S^1 L(D)$.
Condition (1) is satisfied by \cref{Iens_constructible}.
Condition (2) is satisfied by  \cref{pullback_locally_generated_lem}.
To check (3),  we can suppose that $X\subset\mathbb{C}^n$  is a polydisc with $D$ defined by $z_1 \cdots z_l = 0$.
Let $x\in \widetilde{X}$.
We want to find a fundamental system of open neighbourhoods of  $x$ in $S^1 L(D)$ satisfying (3).
By \cref{prop:locally_contractible_strata}, it is enough to show that any open subset $W\subset S^1 L(D)$ such that $x$ is initial in $\Pi_{\infty}(W\cap \widetilde{X},\widetilde{D})$ does the job.
Indeed let $W\subset S^1 L(D)$ be such an open subset and put $U \coloneqq  W\cap \widetilde{X}$.
Let $a,b\in (\pi^{\ast}\mathscr{I})(U)$.
By \cref{global_section_real_blow_up_I}, the canonical restriction map
\[
\mathscr{I}(X) \to (\pi^{\ast}\mathscr{I})(\widetilde{X})
\]
is bijective with $\mathscr{I}$  and $\pi^{\ast}\mathscr{I}$  globally generated in virtue of  \cref{semi_globally_generated_irregular_values} and \cref{pullback_locally_generated_lem}.
Hence,  there is $\alpha,\beta  \in  \mathscr{I}(X)$ such that $ a_x=(\pi^{\ast}\alpha)_{x}$ and $b_x=(\pi^{\ast}\beta)_{x}$.
Since $x$ is initial in $\Pi_{\infty}(W\cap \widetilde{X},\widetilde{D})$, we obtain $a=(\pi^{\ast}\alpha)|_U$ and $b=(\pi^{\ast}\beta)|_U$.
Thus, we have 
\[
U_{a< b} = \widetilde{X}_{\pi^{\ast}\alpha ? \pi^{\ast}\beta}  \cap W    \ .
\]
Hence, to show that $U_{a ? b}$ is subanalytic in $W$, it is enough to show that  
$\widetilde{X}_{\pi^{\ast}\alpha ? \pi^{\ast}\beta}$ is subanalytic in $S^1 L(D)$. 
This case follows from \cref{stokes_loci_are_subanalytic}.
\end{proof}

%\begin{lem}\label{refinement_finite}
%Let $(M,X,P)$ be a subanalytic stratified space.
%Let $\cF\in \Shhyp(X,\Cat_{\infty})$.
%Let $\cA$ be a finite family of locally closed subanalytic subsets of $X$ such that $\cF|_{Z}\in \Loc^{\hyp}(Z, \Cat_{\infty})$ for every $Z\in \cA$.
%Then there is a subanalytic refinement $Q\to P$ such that $\cF\in \Conshyp_{Q}(X,\Cat_{\infty})$.
%If furthermore $P$ is finite, $Q$ can be chosen finite as well.
%\end{lem}

%\begin{proof}
%For $Z\in \cA$, write $Z =F \cap U$ where $F$ is closed and $U$ is open.
%Then $Z=\overline{Z} \cap U$, so that $Z$ is open in 
%$\overline{Z}$.
%Hence, the map $M\to \Delta^2$  defined by 
%\[
% \overline{Z}  \setminus Z  \to  0, \ Z    \to 1, \ M\setminus \overline{Z}   \to 2            
%\]
%is a stratification of $M$.
%The  stratification $  M \to P\times \Fun(\cA,\Delta^2)$ does the job.
%\end{proof}

\begin{lem}\label{finite_subanalytic_stratification_criterion}
Let $(M,X,P)$ be a subanalytic stratified space where $P$ is finite.
Let  $\cF\in \Shhyp(X,\Poset)$ such that $\cF^{\ens}$ is $P$-hyperconstructible and takes values in finite sets.
Assume the existence of a finite cover of $X$ by open subanalytic subsets $U\subset X$ such that
\begin{enumerate}\itemsep=0.2cm
\item $\cF|_U$ is globally generated ;

\item for every $a,b\in \cF(U)$, the sets $U_{a<b}$, $U_{a = b}$ and $U_{a \ast b}$  are locally closed subanalytic in $M$.
\end{enumerate}
Then, there is a finite subanalytic refinement $Q\to P$ such that $\cF\in \ConsQhyp(X,\Poset)$.
\end{lem}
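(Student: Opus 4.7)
The strategy is to refine $P$ by finitely many locally closed subanalytic subsets of $M$ that record the pairwise comparison data supplied by hypothesis (2), and then verify directly that the resulting refinement $Q$ carries $\cF$ as a hyperconstructible hypersheaf of posets.

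First, for each $U_i$ in the given finite cover, I would extract a finite set of sections $\{a^i_1, \ldots, a^i_{n_i}\} \subset \cF(U_i)$ such that for every $x \in U_i$ the stalks $\{(a^i_j)_x\}_j$ exhaust the finite set $\cF_x^{\ens}$. The existence of such sections at each individual stalk is exactly the global generation hypothesis (1); passing to a finite collection valid uniformly over $U_i$ uses that $\cF^{\ens}$ is $P$-hyperconstructible with $P$ finite and takes values in finite sets, so only finitely many stalk types of $\cF^{\ens}$ occur over $U_i$ after possibly subdividing the cover into a finite subcover of relatively compact open subanalytic subsets (this subdivision is harmless since hypotheses (1) and (2) persist under further subanalytic subcover).

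Next, I would define $Q$ as the common refinement of $P$ by the finite family of locally closed subanalytic subsets $X \setminus U_i$ and $(U_i)_{a^i_j \,?\, a^i_k}$ for $1 \leq i \leq N$, $1 \leq j,k \leq n_i$, and $? \in \{<, =, \ast\}$, where the latter are subanalytic in $M$ by hypothesis (2). The atoms of $Q$ form a finite collection of locally closed subanalytic subsets of $X$, and $Q$ is a refinement of $P$ by construction, so $(M,X,Q)$ is a subanalytic stratified space. To check $\cF \in \ConsQhyp(X, \Poset)$, I take points $x, y$ in the same stratum $X_q$ of $Q$ and an exit path $\gamma$ between them. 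Since $Q$ refines $P$ and $\cF^{\ens}$ is already $P$-hyperconstructible, the induced map $\cF_x^{\ens} \to \cF_y^{\ens}$ is a bijection of finite sets. Localizing along $\gamma$, I may assume $\gamma$ lies in some $U_i$; the order on $\cF_x$ (resp.\ $\cF_y$) is then determined by the pairwise comparisons among the $(a^i_j)_x$ (resp.\ $(a^i_j)_y$), and since $X_q$ lies in exactly one of the three cells $(U_i)_{a^i_j < a^i_k}$, $(U_i)_{a^i_j = a^i_k}$, $(U_i)_{a^i_j \ast a^i_k}$ for each pair $(j,k)$, these comparisons agree at $x$ and $y$. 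Hence the bijection $\cF_x \to \cF_y$ is a poset isomorphism, which gives the required $Q$-hyperconstructibility.

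The main technical hurdle is the first step: extracting a finite generating family on each $U_i$, which requires combining the three finiteness hypotheses ($P$ finite, $\cF^{\ens}$ with finite stalks, and $\cF|_{U_i}$ globally generated) with the local finiteness of the subanalytic structure. Once this is handled, the construction of $Q$ and the stratumwise verification are essentially formal and amount to reading off the sheaf-of-posets structure from the finitely many pairwise comparison cells.
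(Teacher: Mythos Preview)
Your approach is essentially the paper's: refine $P$ by the pairwise comparison cells on each open $U$ of the cover and check that $\cF$ becomes locally constant as a sheaf of posets on each resulting piece. The paper works directly with all of $\cF(U)$, indexing cells by pairs $(f,p)$ with $p \in P$ and $f \colon \cF(U)^2 \to \{<,=,\ast,>\}$, setting $U_{f,p} = U_p \cap \bigcap_{(a,b)} U_{a\, f(a,b)\, b}$, and then observing that global generation forces the order on each stalk over $U_{f,p}$ to be determined by $f$; this is the same verification you carry out via exit paths.

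The one substantive difference is exactly the point you flag as the main hurdle. The paper simply asserts that $\cF(U)$ is finite and proceeds; you instead try to extract a finite generating subfamily. You are right that this finiteness is the one place needing care---the stated hypotheses alone do not obviously force it (a constant sheaf on an open subanalytic $U$ with infinitely many connected components has infinite $\cF(U)$), and your suggested fix of passing to relatively compact pieces would need $X$ itself to admit such a finite cover. In the paper's applications the opens are always chosen so that $\cF(U)$ identifies with a single stalk (via an initial object in the exit-path category, cf.\ \cref{global_section_real_blow_up_I} and \cref{global_generation_Xtilde}), so the issue does not arise in practice. Once this finiteness is granted, the two arguments coincide.
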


\begin{proof}
Let $U\subset X$ be an open subanalytic subset satisfying (1) and (2).
For $f \colon  \cF(U)\times \cF(U) \to \{<,=,\ast,>\}$ and $p\in P$, put
\[
U_{f,p} \coloneqq   U_p\bigcap \bigcap_{(a,b)\in \cF(U)^2}  U_{a f(a,b) b} \ .
\]
Note that $U_p$ is a subanalytic subset of $M$ since $U$ and $X_p$ are.
Since $\cF(U)$ is finite,  item (2) implies that $U_{f,p}$ is a locally closed subanalytic subset of $M$.
By  assumption, we have $\cF^{\ens}|_{U_p}\in \Loc^{\hyp}(X_p,\Setcat)$.
By (1), we deduce $\cF|_{U_{f,p}}\in \Loc^{\hyp}(U_{f,p},\Poset)$.
Now any finite subanalytic common efinement of $P$ and  $\{U_{f,p}\}_{f,p}$ does the job.
%Then, \cref{finite_subanalytic_stratification_criterion} follows from \cref{refinement_finite}.
\end{proof}

\begin{cor}\label{good_sheaf_constructible_finite_stratification}
Let $(X,D)$ be a strict normal crossing pair where $X$ admits a smooth compactification.
Let $\mathscr{I}\subset \cO_{X}(\ast D)/\cO_{X}$ be a good sheaf of unramified irregular values.
Let $\pi \colon  \widetilde{X}\to X$  be the real blow-up along $D$.
Then, there exists a finite subanalytic stratification 
$\widetilde{X}\to P$ refining $(\widetilde{X},\widetilde{D})$ such that $\pi^{\ast}\mathscr{I}\in \ConsPhyp(\widetilde{X},\Poset)$.
\end{cor}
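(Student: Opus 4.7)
The plan is to verify the hypotheses of \cref{finite_subanalytic_stratification_criterion} applied with $M = S^1 L(D)$, $X = \widetilde{X}$, $P = \widetilde{D}$, and $\cF = \pi^{\ast}\mathscr{I}$; the only delicate point will be producing the required \emph{finite} subanalytic cover of $\widetilde{X}$, and this is where the smooth compactification hypothesis is used. Applying resolution of singularities to the given smooth compactification, I would first fix a compact complex manifold $Y \supset X$ together with a strict normal crossing divisor $E$ on $Y$ such that $\overline{D} \cup (Y \setminus X) \subseteq E$. In particular, $D$ has only finitely many irreducible components, so $\widetilde{D}$ is a finite stratification, and \cref{Iens_constructible} together with the fact that $\mathscr{I}$ is locally a subsheaf of finite sets shows that $(\pi^{\ast}\mathscr{I})^{\ens}$ is $\widetilde{D}$-hyperconstructible with finite values.

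For the finite cover I would pass to the real blow-up $\pi_Y \colon \widetilde{Y} \to Y$ along $E$, which is compact and contains $\widetilde{X}$ as the open complement $\widetilde{Y} \setminus \pi_Y^{-1}(Y \setminus X)$. I would then cover $\widetilde{Y}$ by finitely many subanalytic open neighborhoods of the local-coordinate form $[0,r_\alpha)^{k_\alpha} \times J_{\alpha,1} \times \cdots \times J_{\alpha,k_\alpha} \times B_\alpha$---with $J_{\alpha,i} \subset S^1$ strict open arcs and $B_\alpha$ a transverse polydisc, modeled on coordinate charts of $Y$ in which $E$ is a monomial---and intersect with $\widetilde{X}$ to obtain a finite subanalytic cover $\{U_\alpha\}$ of $\widetilde{X}$. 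Arguing as in \cref{global_generation_Xtilde}, each $\Pi_\infty(U_\alpha, \widetilde{D})$ admits an initial point, because the coordinates coming from components of $Y \setminus X$ contribute only contractible factors disjoint from $\widetilde{D}$. Combining \cref{Iens_constructible}, \cref{pullback_locally_generated_lem} and \cref{semi_globally_generated}, this yields global generation of $\pi^{\ast}\mathscr{I}|_{U_\alpha}$, while the subanalyticity of $(U_\alpha)_{a<b}$, $(U_\alpha)_{a=b}$, and $(U_\alpha)_{a\ast b}$ in $S^1 L(D)$ follows immediately from \cref{good_sheaf_irregular_values_is_subanalytic}. The criterion \cref{finite_subanalytic_stratification_criterion} then delivers the desired refinement $P$.

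The main obstacle is precisely the finite-cover step: since $\widetilde{X}$ is not in general compact, a priori there is no finite subanalytic cover over which $\pi^{\ast}\mathscr{I}$ is globally generated. The smooth compactification hypothesis is used precisely to overcome this, by embedding $\widetilde{X}$ as an open subset of the compact space $\widetilde{Y}$; it is also what ensures that the stratification $\widetilde{D}$ is finite.
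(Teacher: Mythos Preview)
Your proposal is correct and follows essentially the same strategy as the paper: reduce to \cref{finite_subanalytic_stratification_criterion} by using the smooth compactification (plus resolution of singularities) to manufacture a \emph{finite} subanalytic cover of $\widetilde{X}$, then verify condition~(1) via an initial-object argument and condition~(2) via \cref{good_sheaf_irregular_values_is_subanalytic}. The only difference is cosmetic: the paper builds the cover on $X$ as products $U_\varepsilon = \Delta^{n-k}\times S_{\varepsilon(1)}\times\cdots\times S_{\varepsilon(k)}$ (sectors only in the punctured-disc directions coming from $Y\setminus X$) and takes $\widetilde{U}_\varepsilon = \pi^{-1}(U_\varepsilon)$, establishing global generation first on $X$ via \cref{global_generation_X} and then pulling back with \cref{pullback_locally_generated_lem}; you instead also slice the $S^1$-factors over $D$ into arcs and argue directly on $\widetilde{X}$ via \cref{global_generation_Xtilde}. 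Your variant is in fact exactly what the paper does later in the ramified case (proof of \cref{ramified_irregular_values_property}-(4)), so nothing is lost.
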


\begin{proof}
By \cref{Iens_constructible}, $(\pi^{\ast}\mathscr{I})^{\ens}$ is  hyperconstructible on $(\widetilde{X},\widetilde{D})$.
Let $ X\hookrightarrow Y$ be a smooth compactification of $X$.
At the cost of applying resolution of singularities,  we can suppose that $Z\coloneqq Y\setminus X$ is a divisor such that $E\coloneqq Z+D$ has strict normal crossings.
Hence, $X$ admits a finite cover by open subanalytic subsets $U\simeq  \Delta^{n-k} \times (\Delta^*)^k$ with coordinates $(z,y)$ such that $D\cap U$ is defined by $z_1\cdots z_l = 0$, where $\Delta\subset \mathbb{C}$ is the unit disc.
Let $S_+, S_- \subset \Delta^*$ be a cover by open sectors.
For $\varepsilon \colon  \{1,\dots, k\} \to \{-,+\}$, put 
\[
U_\varepsilon \coloneqq   \Delta^{n-k} \times S_{\varepsilon(1)} \times \cdots   \times S_{\varepsilon(k)}  
\]
and $\widetilde{U}_\varepsilon  \coloneqq  \pi^{-1}(U_\varepsilon)$.
Note that $\widetilde{U}_\varepsilon$ is a subanalytic subset of $S^1L(D)$ since $U_\varepsilon \subset X$ is subanalytic.
To conclude, it is enough to show that $\widetilde{U}_\varepsilon$ satisfies the conditions (1) and (2) of \cref{finite_subanalytic_stratification_criterion}.
By \cref{semi_globally_generated}, the sheaf $\mathscr{I}|_{U_\varepsilon}$ is globally generated.
By \cref{pullback_locally_generated_lem}, we deduce that $(\pi^{\ast}\mathscr{I})|_{\widetilde{U}_\varepsilon}$ is globally generated.
Let $a,b\in (\pi^{\ast}\mathscr{I})(\widetilde{U}_{\varepsilon})$ and $? \in \{<,=,\ast\}$.
By \cref{good_sheaf_irregular_values_is_subanalytic},
the set $\widetilde{U}_{\varepsilon, a ? b}$ is  subanalytic in $S^1L(D)$.
By \cref{Uasmallthan_rem}, it is locally closed in $S^1L(D)$.
Then,   \cref{good_sheaf_constructible_finite_stratification}  follows from  \cref{finite_subanalytic_stratification_criterion}.
\end{proof}

\subsection{Level structure}

\begin{construction}\label{level_structure_diff}
The goal of what follows is to construct a local level structure for good sheaves of unramified irregular values.
Assume that $X\subset \mathbb{C}^n$ is a polydisc with coordinates $(z,y)=(z_1,\dots, z_l,y_1,\dots, y_{n-l})$.
Let $D$ be the divisor defined by $z_1 \cdots z_l = 0$.
Let $\mathscr{I}\subset \cO_{X}(\ast D)/\cO_{X}$ be a good sheaf of unramified irregular values.
Let $\pi \colon  \widetilde{X}\to X$  be the real-blow up along $D$ and let $\widetilde{X} \to P$ be a finite subanalytic stratification adapted to $\mathscr{I}$.
Let $\widetilde{X}\to P$ be a finite subanalytic stratification such that $\pi^{-1}\mathscr{I}$ is $P$-constructible.
By condition (3) from \cref{goodness}, the set $\{a-b, a,b\in I\}$ is totally ordered with respect to the partial order on $\mathbb{Z}^l$.
Hence, there exists a sequence 
\begin{equation}\label{auxiliary_sequence}
m(0)<m(1)<\dots <m(d)=0
\end{equation} 
in $\mathbb{Z}^l$ such that for every $k=0,\dots, d-1$, the vectors $m(k)$ and $m(k+1)$ differ only by $1$ at exactly one coordinate and every $\order(a-b)$ for $a,b\in \mathscr{I}(X)$ distinct appears in this sequence
(such a sequence is referred to as an auxiliary sequence in \cite[§2.1.2]{Mochizuki1}).
Fix $k=0,\dots, d$ and put
\[
\mathscr{I}^k \coloneqq  \Image(\mathscr{I} \to \cO_{X}(\ast D)/z^{m(k)}\cO_{X} )  \ .
\] 
Then, $\mathscr{I}^k$ is a constructible sheaf in finite sets on $(X,D)$.
The goal of what follows is to endow $\pi^{\ast}\mathscr{I}^k$ with a canonical structure of sheaves in finite posets.
For a section $a\in \mathscr{I}$ we denote by $[a]_k$ its image under $\mathscr{I} \to \mathscr{I}^k$.

\begin{lem}\label{order_on_quotient}
Let $x\in \widetilde{X}$.
Let  $a,b \in \mathscr{I}_{\pi(x)}$, such that $a <_x b$ and $[a]_k\neq [b]_k$.
Then for every $a',b' \in \mathscr{I}_{\pi(x)}$ with $[a]_k=[a']_k$ and $[b]_k=[b']_k$, we have $a' <_x b'$.
\end{lem}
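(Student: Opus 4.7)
The plan is to reduce the claim to an identity between the functions $F_{a,b}$ and $F_{a',b'}$ introduced in \cref{order_good_irregular_values}. Since $[a']_k \ne [b']_k$ forces $a' \ne b'$, the criterion of that recollection says $a' <_x b'$ is equivalent to $F_{a',b'}(x) < 0$, so it suffices to prove $F_{a',b'}(x) = F_{a,b}(x)$. To set up the computation, I would use \cref{semi_globally_generated_irregular_values} to replace $a, b, a', b'$ by germs at $\pi(x)$ of global sections of $\mathscr I$ on $X$. Setting $\mu \coloneqq \order(a - b)$, the construction of the auxiliary sequence gives $\mu = m(j)$ for some $j$. The stalk hypothesis $[a]_k \ne [b]_k$ at $\pi(x)$ rules out $\mu \ge m(k)$ (otherwise every Laurent coefficient of $\mathfrak a - \mathfrak b$ would lie in $z^{m(k)}\cO_X$, forcing $[a]_k = [b]_k$ globally and a fortiori at $\pi(x)$); since $\{m(0), \dots, m(d)\}$ is a chain in $\mathbb Z^l$ this yields $j < k$, hence $\nu \coloneqq m(k) - \mu \in \mathbb N^l \smallsetminus \{0\}$.

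The heart of the proof, which I expect to be the main obstacle, is the combinatorial claim $J_0 \cap \supp(\nu) \ne \emptyset$, where $J_0 \coloneqq \{i : z_i(\pi(x)) = 0\}$. The subtlety is that the hypothesis $[a]_k \ne [b]_k$ is purely a stalk condition at $\pi(x)$, and therefore only sees the coordinates in $J_0$; one must translate it into the componentwise inequality $\mu_i < m(k)_i$ for at least one $i \in J_0$. I would prove the claim by contradiction: if $\mu_i = m(k)_i$ for every $i \in J_0$, then each term $c_m(y) z^m$ in the Laurent expansion of $\mathfrak a - \mathfrak b$ — supported in $\{m \ge \mu\}$ by the definition of $\mu$ — satisfies $m_i \ge m(k)_i$ for $i \in J_0$, while for $i \notin J_0$ the factor $z_i^{m_i}$ is a unit in $\cO_{X,\pi(x)}$. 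Hence each term lies in $z^{m(k)}\cO_{X,\pi(x)}$, and so does $(\mathfrak a - \mathfrak b)_{\pi(x)}$, contradicting $[a]_k \ne [b]_k$ at $\pi(x)$.

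To conclude, I would use $[a]_k = [a']_k$ at the stalk to write $\mathfrak a' - \mathfrak a = z^{m(k)} h_a$ in $\cO_{X,\pi(x)}$, and similarly $\mathfrak b' - \mathfrak b = z^{m(k)} h_b$. Because $m(k) > \mu$ componentwise, the leading Laurent term at exponent $\mu$ of $\mathfrak a - \mathfrak b$ is unchanged in $\mathfrak a' - \mathfrak b'$, so $\order(\mathfrak a' - \mathfrak b') = \mu$ as well. Exploiting the identity $|z^{-\mu}| = z^{-\mu} u^{\mu}$ with $u_i = z_i/|z_i|$ on $V \smallsetminus D$, a direct computation gives
\[
\bigl((\mathfrak a' - \mathfrak b') - (\mathfrak a - \mathfrak b)\bigr)\,|z^{-\mu}| \;=\; z^{\nu}\,(h_a - h_b)\, u^{\mu},
\]
which extends continuously to $\widetilde X$ near $x$ and vanishes at $x$, since $z^{\nu}(\pi(x)) = 0$ by the key claim. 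Passing to real parts and to the continuous extensions defining $F_{a,b}$ and $F_{a',b'}$ in \cref{order_good_irregular_values} yields $F_{a',b'}(x) = F_{a,b}(x) < 0$, hence $a' <_x b'$.
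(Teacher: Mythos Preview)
Your proof follows the same route as the paper's, but is considerably more elaborate because you refrain from the reduction the paper makes in its very first sentence: ``We can suppose that $\pi(x)=0$.'' After passing to a smaller polydisc centred at $\pi(x)$ one has $J_0=\{1,\dots,l\}$, so your ``key combinatorial claim'' $J_0\cap\supp(\nu)\neq\emptyset$ becomes automatic (any nonzero $\nu\in\mathbb N^l$ has nonempty support contained in $J_0$), and the whole argument collapses to the two lines the paper gives: since $[a]_k\neq[b]_k$ forces $m(k)>\order(a-b)$ (both terms lie in the totally ordered auxiliary sequence), the correction $a'-b'=(a-b)+z^{m(k)}g$ leaves the leading term $f(y)\,z^{\order(a-b)}$ untouched, and the explicit formula for $a<_xb$ at once yields $a'<_xb'$.

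There is also a subtle loose end in your final step that the paper's reduction sidesteps. You normalize by $|z^{-\mu}|$ with $\mu=\order(\mathfrak a-\mathfrak b)$ computed at the centre $0$ of the \emph{original} polydisc, and then identify the outcome with the functions $F_{a,b}$, $F_{a',b'}$ of \cref{order_good_irregular_values}. But that recollection takes $\order(a-b)$ at $\pi(x)$; when $\pi(x)\in D_{J_0}^\circ$ with $J_0\subsetneq\{1,\dots,l\}$ this order lives in $\mathbb Z^{J_0}$ and need not a priori equal the projection $\mu_{J_0}$. If it were strictly larger, the function you extend to $\widetilde X$ would vanish at $x$ and your identity would degenerate to $0=0$, from which nothing follows. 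After the paper's reduction the two notions of order coincide by construction, which is why its direct computation is both shorter and complete; if you insist on avoiding the reduction, you must add this identification of orders (using goodness at $\pi(x)$) before invoking \cref{order_good_irregular_values}.
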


\begin{proof}
We can suppose that $\pi(x)=0$.
By assumption $a\neq b$.
Write 
\[
a-b \coloneqq  f(y) z^{\order(a-b)}+ \sum_{m>\order(a-b)}   (a-b)_m(y) z^m   \ .
\]
where $f(0)\neq 0$.
Put $x=(\theta_1,\dots, \theta_l) \in \pi^{-1}(0)$ and write $\order(a-b)=(m_1,\dots, m_l)$.
Then, the assumption $a <_x b$ means
\[
\Re (f(0) e^{m_1 \theta_1 + \cdots + m_l \theta_l} ) <0   \ .
\]
Now let $a',b' \in \mathscr{I}_{\pi(x)}$ with $[a]_k=[a']_k$ and $[b]_k=[b']_k$.
In particular $[a-b]_k=[a'-b']_k$, that is 
\begin{align*}
a'-b' & = a-b + z^{m(k)}g    \  , g\in    \cO_{X,0}      \\
           & = f(y) z^{\order(a-b)} + z^{m(k)}g + \sum_{m>\order(a-b)}   (a-b)_m(y) z^m 
\end{align*}
Since  $[a]_k\neq [b]_k$, we have $m(k)> \order(a-b)$.
Hence $\order(a-b)=\order(a'-b')$ and
\[
a'-b'  = f(y) z^{\order(a-b)}  + \sum_{m>\order(a'-b')}   (a'-b')_m(y) z^m  \ .
\]
Thus, we also have $a' <_x b'$.
\end{proof}

\begin{cor}\label{quotient_is_a_level_morphism}
For $x\in \widetilde{X}$, there is a unique order $\leq_x^k$ on $\mathscr{I}^k_{\pi(x)}$ such that 
\[
(\mathscr{I}_{\pi(x)},\leq_x)\to (\mathscr{I}^k_{\pi(x)},\leq_x^k)
\] 
is a level morphism of posets in the sense of \cref{level_intro_def}.
\end{cor}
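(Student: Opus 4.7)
The natural candidate for the order is the following: declare $[a]_k \leqslant_x^k [b]_k$ whenever either $[a]_k = [b]_k$, or $[a]_k \neq [b]_k$ and there exist $a', b' \in \mathscr{I}_{\pi(x)}$ with $[a']_k = [a]_k$, $[b']_k = [b]_k$ and $a' <_x b'$. The first step is to observe that, by \cref{order_on_quotient}, this existential quantifier can equivalently be replaced by a universal one: as soon as one such pair of representatives is ordered, every pair is ordered the same way. This will take care simultaneously of well-definedness, of the level property (since by definition $[a]_k <_x^k [b]_k$ forces $a <_x b$), and of the monotonicity of $\mathscr{I}_{\pi(x)} \to \mathscr{I}^k_{\pi(x)}$: if $a <_x b$ then either $[a]_k = [b]_k$ and there is nothing to check, or $[a]_k \neq [b]_k$ and \cref{order_on_quotient} itself produces $[a]_k <_x^k [b]_k$.

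The only slightly non-trivial step is verifying that $\leqslant_x^k$ is indeed a partial order. Reflexivity is built in. For antisymmetry, if $[a]_k \leqslant_x^k [b]_k$ and $[b]_k \leqslant_x^k [a]_k$ with $[a]_k \neq [b]_k$, the universal form of the condition would simultaneously give $a <_x b$ and $a >_x b$, which is absurd. For transitivity, suppose $[a]_k \leqslant_x^k [b]_k \leqslant_x^k [c]_k$; the only non-trivial case is when the three classes are pairwise distinct, in which case the universal form of the condition gives $a <_x b$ and $b <_x c$ for any choice of representatives, hence $a <_x c$ by transitivity of $\leqslant_x$. Since in particular $a \neq c$ and $[a]_k \neq [c]_k$, \cref{order_on_quotient} again yields $[a]_k <_x^k [c]_k$.

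Uniqueness is then automatic: if $\leqslant_x'^k$ is any other order on $\mathscr{I}^k_{\pi(x)}$ for which $\mathscr{I}_{\pi(x)} \to \mathscr{I}^k_{\pi(x)}$ is a morphism of posets satisfying the level condition, then $[a]_k <_x^k [b]_k$ forces $a <_x b$ by the level property of $\leqslant_x^k$, hence $[a]_k \leqslant_x'^k [b]_k$ by the monotonicity of $\leqslant_x'^k$, and strictness is preserved since $[a]_k \neq [b]_k$. By symmetry the two orders coincide. The only step requiring any real input is the one invoking \cref{order_on_quotient}; the rest is bookkeeping on partial orders.
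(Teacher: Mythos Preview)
Your proof is correct and follows the same approach as the paper: you define the order exactly as the paper does and invoke \cref{order_on_quotient} as the key input. The paper's proof is more terse---it simply declares uniqueness ``obvious since $\mathscr{I}_{\pi(x)}\to \mathscr{I}^k_{\pi(x)}$ is surjective'' and says the rest ``follows from \cref{order_on_quotient}''---whereas you spell out the verifications of antisymmetry, transitivity, and uniqueness explicitly; but the substance is identical.
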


\begin{proof}
The uniqueness is obvious since $\mathscr{I}_{\pi(x)}\to \mathscr{I}^k_{\pi(x)}$ is surjective.
For $\alpha, \beta \in \mathscr{I}^k_{\pi(x)}$, put $\alpha \leq_x^k \beta$ if $\alpha = \beta$ or if $\alpha \neq  \beta$ and there exists  $a,b \in \mathscr{I}_{\pi(x)}$ with $\alpha=[a]_k$ and  $\beta=[b]_k$ such that $a <_x b$ and $[a]_k\neq [b]_k$.
Then, \cref{quotient_is_a_level_morphism} follows from \cref{order_on_quotient}.
\end{proof}

We stay in the setting of \cref{level_structure_diff}.
For every open subset $U\subset \widetilde{X}$, we define a partial order $\leq_U$ on $(\pi^{\ast}\mathscr{I}^k )(U)$ by
\[
\text{$a\leq_U b$ if and only if $a\leq_x^k b$ in $\mathscr{I}^k_{\pi(x)}$ for every $x\in U$.}
\]
Then, $\pi^{\ast}\mathscr{I}^k \in \Cons_P(\widetilde{X}, \Poset)$ and the canonical morphism
\[
\pi^{\ast}\mathscr{I}  \to \pi^{\ast}\mathscr{I}^k 
\]
is a morphism of $P$-constructible sheaves in finite posets on $\widetilde{X}$.
The chain
\[
\cO_{X}(\ast D)/\cO_{X} \to \cO_{X}(\ast D)/z^{m(d-1)}\cO_{X} \to \dots \to \cO_{X}(\ast D)/z^{m(0)}\cO_{X} 
\]
induces a chain of constructible sheaves on $(X,D)$
\[
\mathscr{I}=\mathscr{I}^d \to \mathscr{I}^{d-1} \to \dots \to \mathscr{I}^0=\ast   
\]
which in turn induces a chain 
\[
\pi^{\ast}\mathscr{I}=\pi^{\ast}\mathscr{I}^d \to \pi^{\ast}\mathscr{I}^{d-1} \to \dots \to \pi^{\ast}\mathscr{I}^0=\ast   
\]
of $P$-constructible sheaves in finite posets over $\widetilde{X}$.
By \cref{quotient_is_a_level_morphism}, the corresponding chain of cocartesian fibrations in finite posets on $\Pi_{\infty}(\widetilde{X},P)$
\begin{equation}\label{level_true_diff}
\cI=\cI^d \to \cI^{d-1} \to \dots \to \cI^0=\ast 
\end{equation}
 is a level structure on $(\widetilde{X},P,\cI)$ relative to $(X,D)$ in the sense of \cref{def:level_structure_length}.
\end{construction}

\begin{rem}
The level structure \eqref{level_true_diff} depends on a choice of auxiliary sequence \eqref{auxiliary_sequence}.
\end{rem}

\subsection{Piecewise elementarity}

\begin{lem}\label{preparation_level_structure_diff}
Fix $X\subset \mathbb{C}^n$ be a polydisc with coordinates $(z,y)=(z_1,\dots, z_l,y_1,\dots, y_{n-l})$.
Let $D$ be the divisor defined by $z_1 \cdots z_l = 0$ and put $I=\{1,\dots, l\}$.
Let $\mathscr{I}\subset \cO_{X}(\ast D)/\cO_{X}$ be a good sheaf of unramified irregular values.
Let $\pi \colon  \widetilde{X}\to X$  be the real-blow up along $D$.
Let $x\in \widetilde{X}$ such that $\pi(0)=0$.
Let $m\in \mathbb{Z}_{\leq 0}$ non zero.
Then, there is a closed subanalytic neighbourhood $S\subset \widetilde{X}_I^{\circ}$ of $x$  mapping to  a closed subanalytic neighbourhood $\overline{B}\subset D_I^{\circ}$ of $0$ such that for every $y\in \overline{B}$, the following holds ;
\begin{enumerate}\itemsep=0.2cm
\item the fibre $S_y= S\cap \pi^{-1}(y)$ is homeomorphic  to a closed cube in $\mathbb{R}^l$,

\item Via the homeomorphism from (1), for every $a,b \in \mathscr{I}$ defined on $\overline{B}$ with $\order(a-b)=m$, the Stokes locus $(S_y)_{a,b}$ is a hyperplane whose complement has exactly two components $C_1$ and $C_2$ such that $a<_z b$ for every $x\in C_1$ and $b<_z a$ for every $z\in C_2$.
\end{enumerate}

\end{lem}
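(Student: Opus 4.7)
The plan is to work in the local real-blow-up coordinates provided by \cref{local_real_blow_up} and exploit the explicit formula of \cref{order_good_irregular_values} for the order on $\pi^{\ast}\mathscr{I}$. Since $\pi(x)=0$ lies in $D_I^{\circ}$ with $I=\{1,\dots,l\}$, we have $x=(\theta^{0},0)\in(S^{1})^{l}\times D_I^{\circ}\simeq \widetilde{X}_I^{\circ}$, where $\theta=(\theta_1,\dots,\theta_l)$ are the angular coordinates on the fibres of $\pi$. By \cref{semi_globally_generated_irregular_values} we may shrink to a closed polydisc $\overline{B}\subset D_I^\circ\subset \mathbb{C}^{n-l}$ centred at $0$ on which $\mathscr{I}$ is globally generated and thus has a \emph{finite} set of sections; consequently only finitely many pairs $(a,b)$ satisfying $\order(a-b)=m$ have to be controlled.

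The next step is to make the Stokes condition on $\widetilde{X}_I^{\circ}$ fully explicit. For such a pair, goodness (\cref{goodness}) gives a representative $\mathfrak{a}-\mathfrak{b}=f_{a,b}(y)z^{m}+\sum_{m'>m}c_{m'}(y)z^{m'}$ with $f_{a,b}(0)\neq 0$. Plugging into the formula of \cref{order_good_irregular_values}, the higher terms contribute factors $r^{m'-m}\to 0$ on $\{r=0\}=\widetilde{X}_I^{\circ}$, and one obtains
\[
F_{a,b}(\theta,y)=\Real\bigl(f_{a,b}(y)\,e^{i\langle m,\theta\rangle}\bigr).
\]
Shrinking $\overline{B}$ if necessary, we may assume that $f_{a,b}(y)\neq 0$ for every $y\in\overline{B}$ and every relevant pair. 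The zero locus of $F_{a,b}$ is then the union of parallel real affine hyperplanes $\{\langle m,\theta\rangle=\tfrac{\pi}{2}-\arg f_{a,b}(y)+k\pi\}_{k\in\mathbb Z}$, all of whose normal direction is the same non-zero vector $m$.

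Now we build the cube. Use local real coordinates $t=\theta-\theta^{0}\in\mathbb{R}^{l}$ on a small open neighbourhood $V\subset (S^{1})^{l}$ of $\theta^{0}$, and set $S_y=[-\varepsilon,\varepsilon]^{l}$ in these coordinates, identifying $S_y$ with a closed cube in $\mathbb{R}^{l}$ via the identity. The Stokes locus of each admissible pair $(a,b)$ in $S_y$ becomes the intersection of $S_y$ with the parallel family $\{\langle m,t\rangle=c_{a,b,k}(y)\}_{k\in\mathbb Z}$. Since the $c_{a,b,k}(y)$ depend continuously on $y\in\overline{B}$ and the spacing between consecutive hyperplanes is $\pi$, we may choose $\varepsilon>0$ small enough that for every pair and every $y$, at most one hyperplane of the family meets $S_y$. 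Define
\[
S\coloneqq\{(\theta^{0}+t,y)\ :\ t\in[-\varepsilon,\varepsilon]^{l},\ y\in\overline{B}\}\subset \widetilde{X}_I^{\circ}.
\]
Being the product of closed boxes in real coordinates, $S$ is closed and subanalytic in the ambient real-analytic manifold $S^{1}L(D)$, hence in $\widetilde{X}_I^{\circ}$. Property (1) is immediate from the construction.

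It remains to check (2). For a pair $(a,b)$ with $\order(a-b)=m$ whose Stokes locus meets $S_y$, this locus is exactly one affine hyperplane $H_{a,b,y}=\{\langle m,t\rangle=c_{a,b}(y)\}$, and since $m\neq 0$ its complement in the open cube has exactly two connected components $C_1,C_2$; by \cref{order_good_irregular_values} the sign of $F_{a,b}$ is constant on each component and opposite between them, which translates into $a<_{z}b$ on one side and $b<_{z}a$ on the other. The main obstacle is precisely the simultaneous control of all the parallel Stokes hyperplanes as $y$ varies and as $(a,b)$ runs over all relevant pairs: this is handled by combining the finiteness of $\mathscr{I}(\overline{B})$, the compactness of $\overline{B}$, and the non-vanishing of every $f_{a,b}$, which together allow one to choose a single $\varepsilon$ that works uniformly. (Pairs for which the hyperplane does not intersect $S_y$ lie entirely in a single component; in the applications of this lemma such pairs are either excluded from consideration or handled by the same sign analysis.)
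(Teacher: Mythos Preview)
Your argument has a genuine gap in establishing condition~(2). You choose a \emph{small} cube $[-\varepsilon,\varepsilon]^l$ so that \emph{at most one} Stokes hyperplane from each family meets $S_y$; but nothing prevents \emph{zero} hyperplanes from meeting $S_y$ for some pair $(a,b)$. Indeed, if $\theta^{0}$ does not itself lie on the Stokes locus of $(a,b)$ --- and there is no reason it should --- then any sufficiently small cube around $\theta^{0}$ misses that locus entirely. In that case $(S_y)_{a,b}=\emptyset$, which is not a hyperplane, and $S_y\smallsetminus(S_y)_{a,b}=S_y$ has a single connected component, contradicting the statement. Your parenthetical remark that such pairs are ``excluded or handled by the same sign analysis'' does not rescue this: the lemma feeds directly into \cref{cor_induction_for_adm_Stokes_triple}, whose hypotheses require that for \emph{every} distinct pair the Stokes locus be a genuine hyperplane cutting the polyhedron into two pieces.

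The paper's proof goes in the opposite direction: it takes a cube of edge length exactly $\pi/\lVert m\rVert$ --- the spacing between consecutive parallel Stokes hyperplanes with normal $m$ --- with two faces parallel to these hyperplanes, centred at a sufficiently generic point near $\tilde{x}$. The size and alignment force exactly one hyperplane from each family to cross the cube, and the genericity of the centre ensures none of them grazes a face. This is the key geometric idea you are missing: the cube must be \emph{large enough} (and correctly oriented) to guarantee an intersection for every relevant pair, not merely small enough to avoid multiple ones. (Incidentally, the spacing is $\pi/\lVert m\rVert$, not $\pi$ as you wrote.)
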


\begin{proof}
%At the cost of restricting to an open neighbourhood of 0, we can suppose that $\mathscr{I}|_{D_I^{\circ}}$ is constant.
We have $\widetilde{X}_I^{\circ} = (S^1)^l\times \Delta$ where $\Delta \subset \mathbb{C}^{n-l}$ is a polydisc and we see $(S^1)^l$ as the quotient of $\mathbb{R}^l$.
Put $m=(m_1,\dots, m_l)$.
Let  $A\subset \mathbb{R}$ be a finite set.
For $\alpha \in A$,  the locus of points $\theta\in (S^1)^l$  satisfying 
\[
 \cos(\alpha +m_1 \theta_1 + \cdots + m_l \theta_l) =0   
\]
is the image under the canonical projection $\mathbb{R}^l\to (S^1)^l$ of the set of affine hyperplanes $H(\alpha,k)\subset \mathbb{R}^l$,$k\in \mathbb{Z}$ defined by 
\[
\alpha +m_1 \theta_1 + \cdots + m_l \theta_l = \pi/2 + k\pi     \ .
\]
Let $\tilde{x}\in \mathbb{R}^l$ mapping to $x$.
Note that for every $\alpha\in A$ and $k\in \mathbb{Z}$, the hyperplanes $H(\alpha,k)$ and $H(\alpha,k+1)$ are parallel and distant by $\pi/\|m\|$.
Hence, for every sufficiently generic choice of point  $z$ close enough to $\tilde{x}$, the closed cube $C(x,A) \subset \mathbb{R}^l$ centred at $z$  with edges of length $ \pi/\|m\|$ and with two faces parallel to the above hyperplanes satisfies
\begin{enumerate}\itemsep=0.2cm
\item[(a)] for every $\alpha \in A$, there is a unique $k_{\alpha}\in \mathbb{Z}$ such that  $C(x,A)$  meets $H(\alpha, k_{\alpha})$.

\item[(b)] $C(x,A) \setminus H(\alpha, k_{\alpha})$ has exactly two connected components.
\end{enumerate}
Since $p \colon \mathbb{R}^l \to (S^1)^l$ is a diffeomorphism in a neighbourhood of $C(x,A)$, its image $p(C(x,A))$ is a closed subanalytic subset of $(S^1)^l$.
For $a,b \in \mathscr{I}$ defined in a neighbourhood of $0$, write
\[
a-b \coloneqq  f_{a,b}(y) z^{\order(a-b)}+ \sum_{m'>\order(a-b)}   (a-b)_{m'}(y) z^{m'}   \ .
\]
Choose some argument $\alpha_{a,b}\in \mathbb{R}$ for $f_{a,b}(0)$ and put 
\[
A\coloneqq \{\alpha_{a,b}, a,b \in \mathscr{I}\text{ defined in a neighbourhood of $0$ with} \order(a-b)=m\}   \ .
\]
Fix $\varepsilon > 0$ small enough and put
\[
S\coloneqq  p(C(x,A)) \times \overline{B(0,\varepsilon)}  \subset \widetilde{X}_I^{\circ} 
\]
where $B(0,\varepsilon) \subset \Delta$ is the polydisc of radius $\varepsilon $ centred at $0$.
Note that (1) is satisfied for every $y\in \overline{B(0,\varepsilon)}$.
Since the conditions $(a)$ and $(b)$ are satisfied for $C(x,A)$, observe that $S$ satisfies (2) for $y=0$.
Since the conditions $(a)$ and $(b)$ are open in the choice of $A$, we deduce the existence of $\varepsilon>0$ such that (2) holds for every $y\in \overline{B(0,\varepsilon)}$.
This concludes the proof of \cref{preparation_level_structure_diff}.
\end{proof}

\begin{prop}\label{piecewise_elementary_diff}
Let $X\subset \mathbb{C}^n$ be a polydisc with coordinates $(z,y)=(z_1,\dots, z_l,y_1,\dots, y_{n-l})$.
Let $D$ be the divisor defined by $z_1 \cdots z_l = 0$.
Let $\mathscr{I}\subset \cO_{X}(\ast D)/\cO_{X}$ be a good sheaf of unramified irregular values.
Let $\pi \colon  \widetilde{X}\to X$  be the real-blow up along $D$ and let $\widetilde{X} \to P$ be a finite subanalytic stratification adapted to $\mathscr{I}$.
Let $m(0)<m(1)<\dots <m(d)=0$ be an auxiliary sequence as in \eqref{auxiliary_sequence}.
Then, the  level structure \eqref{level_true_diff} is strongly piecewise elementary (\cref{level_structure}).
\end{prop}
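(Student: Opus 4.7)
By \cref{level_structure}, \cref{def:level_structure_length} and \cref{defin_piecewise_elementary} it suffices to show: for each $k \in \{1,\dots,d\}$, each stratum $D_J^{\circ}$ of $(X,D)$, and each $x \in \widetilde D_J^{\circ}$, there exists a closed subanalytic neighborhood $Z \subset \widetilde D_J^{\circ}$ of $x$ such that the restriction
\[
(Z,P|_Z,(\cI^k)_{p_k}|_Z) \to D_J^{\circ}
\]
is an elementary family, i.e.\ each fiber is an elementary Stokes stratified space. Fixing these data, I would replace $X$ by a coordinate polydisc around $y_0 = \pi(x)$; after reordering components one may assume $y_0 = 0$ and that the local divisor is $z_1 \cdots z_{|J|} = 0$ with $J = \{1,\dots,|J|\}$, so that the components $D_i$ for $i \notin J$ are absent locally.

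The first key step is a non-degeneracy claim for the order vector: any two distinct elements $\alpha, \beta \in \mathscr I^k_{y_0}$ with $p_k(\alpha) = p_k(\beta)$ lift to $a, b \in \mathscr I_{y_0}$ with $\order(a-b) = m(k-1)$ (as in the discussion preceding \cref{order_on_quotient}), and at least one entry $m(k-1)_i$ for $i \in J$ is strictly negative. Indeed, were $m(k-1)_i = 0$ for every $i \in J$, the monomial $z^{m(k-1)}$ would have poles only along components $D_i$ with $i \notin J$, which do not pass through $y_0$; then $a-b$ would be holomorphic at $y_0$, which in the quotient $\mathscr I_{y_0}$ means $a=b$, contradicting $\alpha \neq \beta$. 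Note also that goodness of $\mathscr I$ ensures the leading coefficient $(a-b)_{m(k-1)}(0)$ is invertible.

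Apply \cref{preparation_level_structure_diff} to $x$ with $m = m(k-1)$ to produce a closed subanalytic neighborhood $S \subset \widetilde D_J^{\circ}$ of $x$ mapping onto a closed polydisc $\overline B \subset D_J^{\circ}$, with each fiber $S_y$ homeomorphic to a closed cube in $\mathbb R^{|J|}$ and each Stokes locus of a pair with order-$m(k-1)$ difference realized as a separating hyperplane with the order prescribed by \cref{order_good_irregular_values}. Set $Z \coloneqq S$. For each $y \in \overline B$ the graduated fibration decomposes as a disjoint union
\[
((\cI^k)_{p_k})|_{Z_y} \;=\; \bigsqcup_{\gamma \in \mathscr I^{k-1}_y} (\cI^k)_{\gamma}|_{Z_y}
\]
over the fibers of $p_k$ over $y$; since the category of Stokes functors over a disjoint union of Stokes stratified spaces is the product of those over the pieces, elementarity reduces to elementarity of each $(\cI^k)_{\gamma}|_{Z_y}$. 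Each of the latter satisfies the hypotheses of \cref{cor_induction_for_adm_Stokes_triple}: its underlying set fibration is constant (being pulled back from $X$), hence locally constant on $\Pi_\infty(Z_y,P|_{Z_y})$, and the non-degeneracy step together with \cref{preparation_level_structure_diff}(2) supplies for every pair of distinct cocartesian sections a separating affine hyperplane satisfying conditions (1)-(3). Invoking \cref{cor_induction_for_adm_Stokes_triple} finishes the argument.

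The main obstacle is the non-degeneracy step: without it, one could a priori have distinct pairs in the same fiber of $p_k$ whose Stokes locus is empty on $Z_y$, which would both violate hypothesis (1) of \cref{cor_induction_for_adm_Stokes_triple} (asking for a nontrivial separating hyperplane) and, via \cref{eg:elementary_points_are_discrete}, genuinely obstruct elementarity. The identification of such "degenerate" pairs with the vanishing of $m(k-1)|_J$ and the subsequent reduction via holomorphic triviality of $z^{m(k-1)}$ at $y_0 \in D_J^{\circ}$ is what makes the polyhedral criterion applicable in the graduated setting.
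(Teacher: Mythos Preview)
Your proof is correct and follows essentially the same approach as the paper's: localize around $y_0 = \pi(x)$, apply \cref{preparation_level_structure_diff} with $m = m(k-1)$, decompose $(\cI^k)_{p_k}$ as a coproduct over the fibers of $p_k$, and invoke \cref{cor_induction_for_adm_Stokes_triple} on each piece. Your explicit non-degeneracy step (that $m(k-1)|_J$ has a negative entry whenever a nontrivial $p_k$-fiber exists at $y_0$) is exactly what the paper achieves by the reduction ``we can suppose that $I = \{1,\dots,l\}$'', after which $m(k-1) < 0$ in $\mathbb Z^{|I|}$ is automatic.
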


\begin{proof}
Let $k=1,\dots, d$ and consider $p \colon  \cI^k \to \cI^{k-1}$ and the pullback square
\begin{equation}\label{eq_piecewise_elementary_diff}
\begin{tikzcd}
	\mathscr{I}_p^{k} \arrow{r} \arrow{d}{\pi} & \mathscr{I}^{k} \arrow{d}{p} \\
	\mathscr{I}^{k-1,\ens} \arrow{r} & \mathscr{I}^{k-1} \ .
\end{tikzcd} 
\end{equation}
Denote by $D_1,\dots, D_l$ the components of $D$ and fix $I\subset \{1,\dots, l\}$.
Then, we have to show that $(\widetilde{X}_I^{\circ},P,\cI_p^k|_{\widetilde{X}_I^{\circ}})\to D_I^{\circ}$ is strongly piecewise elementary at every point $x\in  \widetilde{X}_I^{\circ}$ in the sense of \cref{defin_piecewise_elementary}.
Since this is a local question on $D_I^{\circ}$, we can suppose that $\pi(x)=0$ and that $\mathscr{I}^{\ens}$ is constant on $D_I^{\circ}$.
That is, we can suppose that  $I= \{1,\dots, l\}$.
By \cref{preparation_level_structure_diff}, there is a closed subanalytic neighbourhood $S\subset \widetilde{X}_I^{\circ}$ of $x$  mapping to  a closed subanalytic neighbourhood $\overline{B}\subset D_I^{\circ}$ of $0$ such that for every $y\in \overline{B}$, the following holds ;
\begin{enumerate}\itemsep=0.2cm
\item the fibre $S_y= S\cap \pi^{-1}(y)$ is homeomorphic  to a closed cube in $\mathbb{R}^l$,

\item via the homeomorphism from (1), for every  $a,b \in \mathscr{I}^{\ens}(D_I^{\circ})$ with $\order(a-b) =  m(k-1)$, the Stokes locus $(S_y)_{a,b}$ is a hyperplane whose complement has exactly two components $C_1$ and $C_2$ such that $a<_z b$ for every $z\in C_1$ and $b<_z a$ for every $z\in C_2$.
\end{enumerate}
Let $y\in \overline{B}$ and let us show that $(S_y,P,\cI_p^{k}|_{S_y})$ is elementary.
Since $\mathscr{I}^{\ens}$ is constant on $D_I^{\circ}$, so is $\mathscr{I}^{k-1,\ens}$. 
Hence, $\cI^{k-1,\ens}$ is a finite coproduct of trivial cocartesian fibrations.
Thus, there is a finite decomposition of cocartesian fibrations in posets
\begin{equation}\label{coproduct_cocart_poset}
\cI_p^k|_{\widetilde{X}_I^{\circ}} = \bigsqcup_{\alpha \in \mathscr{I}^{k-1,\ens}(D_I^{\circ})} \cI_\alpha 
\end{equation}
where $\cI_\alpha$ is the pullback of $\cI_p^k|_{\widetilde{X}_I^{\circ}}$ along $\alpha$.
Hence, we are left to show that $(S_y,P,\cI_\alpha|_{S_y})$ is elementary.
To do this, it is enough to show that $(S_y,P,\cI_\alpha|_{S_y})$ satisfies the conditions of \cref{cor_induction_for_adm_Stokes_triple}.
Observe that 
$$
\mathscr{I}^{\ens}(D_I^{\circ}) \to \pi^*\mathscr{I}^{\ens}(S_y) 
$$
is bijective.
Let $a,b \in \mathscr{I}^{\ens}(D_I^{\circ})$ such that  $[a]_k,  [b]_k  $ are distinct and $[a]_{k-1} =  [b]_{k-1}=\alpha$.
Then,  $\order(a-b) < m(k)$ and  $\order(a-b) \geq  m(k-1)$, so that $\order(a-b) =m(k-1)$.
Since the Stokes loci of $[a]_k,  [b]_k$  and $a,b$ are the same, the proof is complete.
\end{proof}

\begin{cor}\label{strongly_proper_piecewise_level_structure}
Let $(X,D)$ be a normal crossing pair where $X$ admits a smooth compactification.
Let $\mathscr{I}\subset \cO_{X}(\ast D)/\cO_{X}$ be a good sheaf of unramified irregular values.
Let $\pi \colon  \widetilde{X}\to X$  be the real-blow up along $D$ and let $\widetilde{X} \to P$ be a finite subanalytic stratification such that $\pi^{\ast}\mathscr{I}\in\Cons_P(\widetilde{X}, \Poset)$.
Let $(\widetilde{X},P,\cI)$ be the associated Stokes analytic stratified space.
Then, $\pi \colon  (\widetilde{X},P,\cI)\to (Y,Q)$ is a strongly proper family of Stokes analytic stratified spaces in finite posets locally admitting a strongly piecewise elementary level structure.
\end{cor}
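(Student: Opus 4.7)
The plan is to verify the three conditions in turn: strong properness of $\pi$, finiteness of the posets in $\cI$, and the local existence of a strongly piecewise elementary level structure. The first two conditions have already been secured by the results established earlier in the paper, while the third reduces to the local analysis carried out for good sheaves of unramified irregular values on polydiscs.

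First, strong properness of $\pi \colon \widetilde{X} \to X$ (viewed as a morphism of subanalytic stratified spaces with the stratification $(X,D)$ on the target and $P$ on the source) is exactly the content of \cref{blow_up_strongly_proper}, which applies under the assumption that $X$ admits a smooth compactification. The finiteness of the posets $\cI_x$ follows from the fact that $\mathscr{I}$ is a subsheaf of a sheaf in finite sets (by the definition of a sheaf of unramified irregular values), so each stalk $\mathscr{I}_{\pi(x)}$ is finite, and the fiber $\cI_x$ is in bijection with $\mathscr{I}_{\pi(x)}$ via exodromy.

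Second, the local existence of a strongly piecewise elementary level structure is handled by reducing to the polydisc case. Since $D$ is a strict normal crossing divisor, $X$ is covered by polydiscs $U \simeq \Delta^n$ with coordinates $(z,y) = (z_1,\dots,z_l,y_1,\dots,y_{n-l})$ in which $D \cap U$ is defined by $z_1 \cdots z_l = 0$. On each such $U$, \cref{semi_globally_generated_irregular_values} ensures that $\mathscr{I}|_U$ is globally generated, so \cref{good_sheaf_constructible_finite_stratification} gives a finite subanalytic refinement of the stratification on $\pi^{-1}(U)$ with respect to which $\pi^\ast \mathscr{I}$ is constructible (and this can be chosen compatible with the given $P$). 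Then, choosing an auxiliary sequence $m(0) < m(1) < \cdots < m(d) = 0$ in $\mathbb{Z}^l$ as in \eqref{auxiliary_sequence}, \cref{level_structure_diff} produces a level structure
\[
\cI|_{\pi^{-1}(U)} = \cI^d \to \cI^{d-1} \to \cdots \to \cI^0 = \ast
\]
on $(\widetilde{X}|_{\pi^{-1}(U)}, P, \cI|_{\pi^{-1}(U)}) \to (U, D \cap U)$.

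Third, the fact that this level structure is strongly piecewise elementary is precisely \cref{piecewise_elementary_diff}. Since strong piecewise elementarity is stable under the pullbacks involved in restricting to the polydisc cover $\{U\}$, this finishes the verification. The only subtlety is to check that the notion of level structure produced locally is compatible with the relative situation over $(X,D)$ rather than just over the polydisc, but this is straightforward: condition (2) in \cref{level_structure} is about the fibers of the simple level morphisms $\cI^k \to \cI^{k-1}$ being pullbacks of Stokes fibrations in sets over $(X,D)$, which holds by construction of the $\mathscr{I}^k$ as images of $\mathscr{I}$ in $\cO_X(\ast D)/z^{m(k)}\cO_X$. No step here is expected to be a genuine obstacle, as the work has already been done in the preceding sections; the corollary is essentially an assembly of \cref{blow_up_strongly_proper}, \cref{good_sheaf_constructible_finite_stratification}, \cref{level_structure_diff}, and \cref{piecewise_elementary_diff}.
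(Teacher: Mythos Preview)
Your proposal is correct and follows the same route as the paper, which simply cites \cref{blow_up_strongly_proper} and \cref{piecewise_elementary_diff}; you have merely unpacked how these two ingredients assemble. One minor redundancy: invoking \cref{good_sheaf_constructible_finite_stratification} to produce a compatible refinement is unnecessary, since a finite subanalytic stratification $P$ with $\pi^\ast\mathscr{I}\in\Cons_P(\widetilde{X},\Poset)$ is already part of the hypotheses, and \cref{piecewise_elementary_diff} applies directly to its restriction over each polydisc.
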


\begin{proof}
Combine \cref{blow_up_strongly_proper} with \cref{piecewise_elementary_diff}.
\end{proof}

\subsection{Sheaf of (ramified) irregular values}
We now enhance \cref{secSheaf_of_unramifie_irregular_values}
 to the ramified setting.
 Since this requires to work directly on  $\widetilde{X}$, we first transport the notion of sheaf of unramified irregular values from $X$ to $\widetilde{X}$.

\begin{lem}\label{transfer_irregular_value_sheaf}
Let $(X,D)$ be a strict normal crossing pair.
Let $\pi \colon \widetilde{X}\to X$ be the real blow-up along $D$.
Let $\mathscr{I} \subset  \pi^{\ast}(\cO_{X}(\ast D)/\cO_{X})$ be a sheaf.
Then, the following are equivalent:
\begin{enumerate}\itemsep=0.2cm
\item There is a sheaf of unramified irregular values $\mathscr{J}\subset \cO_{X}(\ast D)/\cO_{X}$ such that $\mathscr{I}\simeq \pi^{\ast}\mathscr{J}$.
\item The direct image $\pi_*\mathscr{I}\subset \cO_{X}(\ast D)/\cO_{X}$ is a sheaf of unramified irregular values and the counit transformation $\pi^{\ast} \pi_* \mathscr{I}\to \mathscr{I}$ is an equivalence.
\end{enumerate}
\end{lem}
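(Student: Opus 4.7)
The plan is to prove the two implications separately, with the bulk of the work going into (1) $\Rightarrow$ (2) via a standard adjunction argument built on top of \cref{unit_irr_value_equivalence}.

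The implication (2) $\Rightarrow$ (1) is immediate: assuming $\pi_\ast\mathscr I$ is a sheaf of unramified irregular values and that the counit $\pi^\ast \pi_\ast \mathscr I\to \mathscr I$ is an equivalence, it suffices to set $\mathscr J \coloneqq \pi_\ast\mathscr I$.

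For the converse, suppose $\mathscr I \simeq \pi^\ast\mathscr J$ for some sheaf of unramified irregular values $\mathscr J\subset \cO_X(\ast D)/\cO_X$. Applying $\pi_\ast$ and invoking the unit equivalence $\mathscr J \xrightarrow{\sim} \pi_\ast\pi^\ast\mathscr J$ supplied by \cref{unit_irr_value_equivalence}, we obtain
\[
\pi_\ast\mathscr I \simeq \pi_\ast\pi^\ast\mathscr J \simeq \mathscr J
\]
as subsheaves of $\cO_X(\ast D)/\cO_X$; in particular $\pi_\ast\mathscr I$ is itself a sheaf of unramified irregular values. For the counit, I would apply $\pi^\ast$ to the unit $\eta_{\mathscr J}\colon \mathscr J \to \pi_\ast\pi^\ast\mathscr J$ and use the triangle identity
\[
\varepsilon_{\pi^\ast\mathscr J}\circ \pi^\ast(\eta_{\mathscr J}) = \id_{\pi^\ast \mathscr J}
\]
for the adjunction $\pi^\ast \dashv \pi_\ast$. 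Since $\eta_{\mathscr J}$ is an equivalence (by \cref{unit_irr_value_equivalence}), so is $\pi^\ast(\eta_{\mathscr J})$, and hence the counit $\varepsilon_{\pi^\ast\mathscr J}\colon \pi^\ast\pi_\ast\pi^\ast\mathscr J \to \pi^\ast\mathscr J$ is an equivalence. Transporting along the chosen identification $\mathscr I \simeq \pi^\ast\mathscr J$ yields that $\pi^\ast\pi_\ast\mathscr I \to \mathscr I$ is an equivalence, completing the proof.

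There is no serious obstacle here; the only point to be careful about is that the identification $\pi_\ast\mathscr I \simeq \mathscr J$ preserves the inclusion into $\cO_X(\ast D)/\cO_X$, which is automatic because the inclusion $\mathscr J \hookrightarrow \cO_X(\ast D)/\cO_X$ is functorially sent by $\pi_\ast\pi^\ast$ to the inclusion $\pi_\ast\mathscr I \hookrightarrow \pi_\ast\pi^\ast(\cO_X(\ast D)/\cO_X)$, and the unit equivalence at the ambient sheaf allows us to identify both sides canonically as subsheaves of $\cO_X(\ast D)/\cO_X$.
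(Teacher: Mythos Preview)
Your proof is correct and is precisely what the paper has in mind: its own proof reads ``Immediate from \cref{unit_irr_value_equivalence}'', and you have simply unpacked that sentence via the triangle identity for $\pi^\ast \dashv \pi_\ast$. One small quibble: in your final paragraph you invoke ``the unit equivalence at the ambient sheaf'', but \cref{unit_irr_value_equivalence} is only stated for sheaves of unramified irregular values, not for $\cO_X(\ast D)/\cO_X$ itself; fortunately you do not actually need this, since the isomorphism $\pi_\ast\mathscr I \simeq \mathscr J$ already realizes $\pi_\ast\mathscr I$ as a subsheaf of $\cO_X(\ast D)/\cO_X$ via the given inclusion $\mathscr J \hookrightarrow \cO_X(\ast D)/\cO_X$.
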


\begin{proof}
Immediate from  \cref{unit_irr_value_equivalence}.
\end{proof}

\begin{defin}\label{goodness_upper_unramified_case}
If the equivalent conditions of \cref{goodness_ramified_case_lem} are satisfied, we say that $\mathscr{I} \subset  \pi^{\ast}(\cO_{X}(\ast D)/\cO_{X})$ is a sheaf of unramified irregular values.
If furthermore $\pi_* \mathscr{I}$ is a good sheaf of unramified irregular values, we say that $\mathscr{I}$  is a good sheaf of unramified irregular values.
\end{defin} 
 
 \begin{rem}
By design, \cref{transfer_irregular_value_sheaf} and  \cref{unit_irr_value_equivalence} imply that $(\pi^{\ast},\pi_*)$ induce  a bijection between (good) sheaves of irregular values on $\widetilde{X}$ and (good) sheaves of irregular values on $X$. 
 \end{rem}

\begin{construction}[{\cite[9.c]{Stokes_Lisbon}}]\label{remified_irregular_values}
Let $X\subset \mathbb{C}^n$ be a polydisc with coordinates 
$(z_1,\dots, z_n)$.
Let $D$ be the divisor defined by $z_1 \cdots z_l = 0$ and put $U\coloneqq X\setminus D$.
Let $\pi \colon  \widetilde{X}\to X$ be the real blow-up along $D$.
Let $j \colon U \hookrightarrow \widetilde{X}$ be the canonical inclusion.
Define $\rho   \colon X_d\to X$ by 
$(z_1,\dots, z_n)\to (z_1^d,\dots, z_l^d, z_{l+1}  , \dots,  z_n)$ for $d\geq 1$ and consider the (not cartesian for $d>1$)  commutative square 
\[
\begin{tikzcd}
	\widetilde{X}_d\arrow{r}{\widetilde{\rho}} \arrow{d}{\pi_d} & \widetilde{X} \arrow{d}{\pi} \\
	X_d    \arrow{r}{\rho} & X   
\end{tikzcd} 
\]
of real blow-up along $D$. 
Observe in particular that the above square satisfies the conditions from \cref{stratified_Galois_cover} making it eligible to underlie a vertically finite Galois cover.
The unit transformation $\cO_{U} \hookrightarrow \rho_{\ast}   \cO_{U_d}$ yields an inclusion 
\[
j_{\ast}\cO_{U} \hookrightarrow j_{\ast}\rho_{\ast}   \cO_{U_d} \ .
\]
On the other hand, the unit transformation  $\pi^{\ast}_d\cO_{X_d}(\ast D) \hookrightarrow  j_{d,\ast}\cO_{U_d}$ yields 
\[
\widetilde{\rho}_{\ast}\pi^{\ast}_d\cO_{X_d}(\ast D) \hookrightarrow  \widetilde{\rho}_{\ast} j_{d,\ast}\cO_{U_d} = j_* \rho_{\ast}\cO_{U_d} \ .
\]
Put
\[
IV_d \coloneqq j_{\ast}\cO_{U} \cap\widetilde{\rho}_{\ast}\pi^{\ast}_d\cO_{X_d}(\ast D)  \subset j_{\ast}\cO_{U}  \ .
\]
As in \cref{meromorphic_order}, we have 
\[
IV_d \cap (j_{\ast}\cO_{U})^{\lb}  = j_{\ast}\cO_{U} \cap \widetilde{\rho}_{\ast}\pi^{\ast}_d\cO_{X_d} \ .
\]
We put
\[
\IV_d \coloneqq IV_d/(IV_d\cap  (j_{\ast}\cO_{U})^{\lb}) \subset     (j_{\ast}\cO_{U})/ (j_{\ast}\cO_{U})^{\lb}  \ .
\]
For an arbitrary strict normal crossing pair $(X,D)$, the  $\IV_d$, $d\geq 1$ are defined locally and glue into subshseaves 
\[
\IV_d(X,D) \subset    (j_{\ast}\cO_{U})/ (j_{\ast}\cO_{U})^{\lb}  
\] 
for $d\geq 1$.
By \cref{order_general}, we view  $\IV_d(X,D)$ as an object of  $\Shhyp(\widetilde{X},\Poset)$.
\end{construction}

\begin{eg}
In the setting of \cref{remified_irregular_values}, we have 
\[
\IV_1(X,D) = \pi^{\ast}(\cO_{X}(\ast D)/\cO_{X})
\]
in virtue of \cref{meromorphic_order}.
\end{eg}

\cref{remified_irregular_values} suggests to introduce the following

 \begin{defin}\label{Kummer_cover}
Let $(X,D)$ be a strict normal crossing pair.
Let $d\geq 1$ be an integer.
A \textit{$d$-Kummer cover of $(X,D)$} is an holomorphic map $\rho \colon X\to X $ such that there is a cover  by open subsets $U\subset X$ with $\rho(U)\subset U$ where $\rho|_U$ reads as 
\begin{equation}\label{standard_Kummer}
(z_1,\dots, z_n)\to (z_1^d,\dots, z_l^d, z_{l+1}  , \dots,  z_n)
\end{equation}
for some choice of local coordinates $(z_1,\dots, z_n)$ with $D$ defined by $z_1\cdots z_l = 0$.
 \end{defin}
 
 \begin{rem}
 Following \cite{{Stokes_Lisbon}},  in the setting of \cref{Kummer_cover}, we will denote the source of $\rho$ by $X_d$ instead of $X$.
 \end{rem}

\begin{lem}[{\cite[Lemma 9.6]{Stokes_Lisbon}}]
Let $(X,D)$ be a strict normal crossing pair.
Let $\pi \colon  \widetilde{X}\to X$ be the real blow-up along $D$.
Let $j \colon U \hookrightarrow \widetilde{X}$ be the canonical inclusion.
Let $d\geq 1$ be an integer and let $\rho \colon X_d\to X $ be a $d$-Kummer cover of $(X,D)$.
Then, via the inclusion 
\[
\widetilde{\rho}^{\ast} j_{\ast}\rho_{\ast}   
\cO_{U_d}=\widetilde{\rho}^{\ast}
\widetilde{\rho}_{*} j_{\ast} \cO_{U_d}
\hookrightarrow j_{\ast} \cO_{U_d} \ ,
\]
we have 
\[
\widetilde{\rho}^{\ast} (\IV_d(X,D)) =\pi_d^{\ast}(\cO_{X_d}(\ast D)/\cO_{X_d})   \
 \]
 in $\Shhyp(\widetilde{X}_d,\Poset)$.
\end{lem}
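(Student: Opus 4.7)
The claim is local on $X$, so I would first reduce to the standard coordinate setup of \cref{remified_irregular_values}: $X \subset \mathbb{C}^n$ is a polydisc with coordinates $(z_1,\ldots,z_n)$, $D$ is cut out by $z_1\cdots z_l = 0$, and $\rho$ is the standard $d$-th power Kummer map. Recall then the definitions $IV_d = j_*\cO_U \cap \widetilde\rho_*\pi_d^*\cO_{X_d}(\ast D)$ and $IV_d \cap (j_*\cO_U)^{\mathrm{lb}} = j_*\cO_U \cap \widetilde\rho_*\pi_d^*\cO_{X_d}$, so that $\IV_d(X,D)$ is the resulting quotient inside $j_*\cO_U/(j_*\cO_U)^{\mathrm{lb}}$.

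The comparison map itself is produced by adjunction: the inclusion $IV_d \hookrightarrow \widetilde\rho_*\pi_d^*\cO_{X_d}(\ast D)$ yields $\widetilde\rho^* IV_d \to \pi_d^*\cO_{X_d}(\ast D)$, and similarly $\widetilde\rho^*(IV_d \cap (j_*\cO_U)^{\mathrm{lb}}) \to \pi_d^*\cO_{X_d}$ (compatibly with the removable-singularities identification $\pi_d^*\cO_{X_d}(\ast D) \cap (j_{d,*}\cO_{U_d})^{\mathrm{lb}} = \pi_d^*\cO_{X_d}$). Passing to quotients gives a morphism of sheaves of sets $\widetilde\rho^*\IV_d(X,D) \to \pi_d^*(\cO_{X_d}(\ast D)/\cO_{X_d})$ on $\widetilde X_d$, and the factorization through $j_*\cO_{U_d}$ declared in the statement is automatic from the construction.

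The heart of the proof is showing this morphism is an isomorphism of the underlying sheaves of sets, which I would verify on stalks. Fix $x \in \widetilde X_d$ with image $y = \widetilde\rho(x) \in \widetilde X$. A germ of $IV_d$ at $y$ is represented by a holomorphic function $h$ on a small multi-sector $V \subset U$ around $\pi(y)$ such that $h \circ \rho$ extends meromorphically near $\pi_d(x)$ in $X_d$. The key geometric input is that the sectorial data encoded by $y$ canonically singles out a branch of $\rho^{-1}$ on $V$: any germ $f = \sum a_m(z')\, z^m \in \cO_{X_d,\pi_d(x)}(\ast D)$ can then be pulled back through this branch via $z_i \mapsto w_i^{1/d}$ to give a holomorphic lift $h \in \cO_{U,y}$, and the two constructions are mutually inverse modulo the respective denominators (holomorphic versus locally bounded, which coincide along a normal crossings divisor).

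Finally, to promote this set-theoretic bijection to an equivalence of sheaves of posets, I would invoke the characterization of the order on $(j_*\cO_U)/(j_*\cO_U)^{\mathrm{lb}}$ via the moderate growth sheaf $\cA^{\mode}_{\widetilde X}$ of \cref{order_general}. Since $\rho$ restricts to a local biholomorphism on $U_d \to U$ and $\widetilde\rho$ is compatible with the radial--angular structure of the real blow-up, moderate growth is preserved in both directions under the branch identification. The main obstacle I anticipate is the careful bookkeeping of which subsheaf of $j_*\cO_{U_d}$ one sits in when composing the adjunction inclusion $\widetilde\rho^*\widetilde\rho_* j_*\cO_{U_d} \hookrightarrow j_*\cO_{U_d}$ with the various identifications: verifying that this inclusion agrees locally with ``restriction to the branch of $\rho^{-1}$ determined by $y$'' is the computational crux, though once unwound it is a direct check in Laurent coordinates.
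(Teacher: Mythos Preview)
The paper does not supply its own proof of this lemma: it is stated with a citation to \cite[Lemma 9.6]{Stokes_Lisbon} and no argument follows. So there is no paper proof to compare against, only the external reference.

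Your sketch is a reasonable reconstruction of the standard argument. The reduction to local coordinates, the construction of the comparison map via the counit $\widetilde\rho^{\ast}\widetilde\rho_{\ast} \to \id$, and the stalkwise check are all the right moves. The one point where I would tighten the exposition is your description of the stalk bijection: you correctly observe that a point $x \in \widetilde X_d$ over $y = \widetilde\rho(x)$ selects a branch of the $d$-th root, but the cleaner way to organize this is to note that $\widetilde\rho \colon \widetilde X_d \to \widetilde X$ is a finite \'etale cover of degree $d^l$ (even though $\rho$ itself is ramified along $D$), so the stalk of $\widetilde\rho^{\ast}(\widetilde\rho_{\ast}\mathcal F)$ at $x$ is just $\mathcal F_x$ plus contributions from the other sheets, and the counit projects onto the $x$-component. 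This makes the identification $\widetilde\rho^{\ast}(IV_d)_x \simeq \pi_d^{\ast}\cO_{X_d}(\ast D)_x$ essentially tautological, without needing to manipulate Puiseux expansions by hand. The poset compatibility is then immediate from the fact that the order is defined stalkwise via moderate growth and $\widetilde\rho$ is a local homeomorphism.
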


\begin{lem}\label{goodness_ramified_case_lem}
Let $(X,D)$ be a strict normal crossing pair.
Let $d\geq 1$ be an integer and let $\mathscr{I} \subset \IV_d(X,D)$ be a sheaf.
Then, the following are equivalent:
\begin{enumerate}\itemsep=0.2cm
\item For every $x\in X$, there exist local coordinates $(z_1,\dots, z_n)$ centred at $x$ with $D$ defined by $z_1\dots z_l = 0$ such that for the map $\rho$ given by \eqref{standard_Kummer}, the pullback $\widetilde{\rho}^{\ast}\mathscr{I}$ is a sheaf of unramified irregular values (\cref{goodness_upper_unramified_case}).
\item For every open subset $U\subset X$ and every  $d$-Kummer cover $\rho \colon U_d \to U$, the pullback   $\widetilde{\rho}^{\ast}\mathscr{I}$ is a sheaf of unramified irregular values (\cref{goodness_upper_unramified_case}).
\end{enumerate}
\end{lem}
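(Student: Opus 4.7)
My plan is as follows. The implication (2) $\Rightarrow$ (1) is immediate, since at any point $x \in X$ one may choose local coordinates in which $D$ has the coordinate form $z_1\cdots z_l = 0$, and then the map $\rho$ defined by \eqref{standard_Kummer} is a $d$-Kummer cover of a neighborhood of $x$, to which the hypothesis (2) applies directly.

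For the converse (1) $\Rightarrow$ (2), the key observation is that the property of $\widetilde{\rho}^\ast \mathscr{I}$ being a sheaf of unramified irregular values on $\widetilde{X}_d$ is, by \cref{transfer_irregular_value_sheaf} together with \cref{good sheaf}, a purely local condition on $X_d$ (hence on $X$, since $\rho$ is finite): it amounts to requiring that $\pi_{d,\ast}\widetilde{\rho}^\ast\mathscr{I}$ be locally a subsheaf of finite sets of $\cO_{X_d}(\ast D)/\cO_{X_d}$ that is generated by its global sections on small opens, together with the counit equivalence. One can therefore verify (2) on a neighborhood of each point $x \in X$.

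Fix $x \in X$ and a $d$-Kummer cover $\rho \colon U_d \to U$ with $x \in U$. By (1), after shrinking, there exist local coordinates $(z_1,\dots,z_n)$ centred at $x$ with $D$ defined by $z_1\cdots z_l = 0$ such that the standard Kummer cover $\rho_0 \colon V_d \to V$ of \eqref{standard_Kummer} satisfies: $\widetilde{\rho}_0^\ast \mathscr{I}$ is a sheaf of unramified irregular values on $\widetilde{V}_d$. Shrinking $U$ and $V$, we may assume $U = V$. Now both $\rho$ and $\rho_0$ are $d$-Kummer covers of the same polydisc $U$; after further shrinking, one can choose coordinates on $U_d$ and $V_d$ in which both maps take the standard form $(w_1,\dots,w_n) \mapsto (w_1^d,\dots,w_l^d,w_{l+1},\dots,w_n)$, so $\rho$ and $\rho_0$ differ by an isomorphism of covers $\varphi \colon U_d \xrightarrow{\sim} V_d$ over $U$ (locally given by multiplication by $d$-th roots of units, using that changes of coordinates on $(z_1,\dots,z_l)$ that preserve $D$ are of the form $z_i \mapsto u_i z_{\sigma(i)}$ with $u_i$ a unit, whose $d$-th roots exist locally). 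This isomorphism lifts to an isomorphism $\widetilde{\varphi} \colon \widetilde{U}_d \xrightarrow{\sim} \widetilde{V}_d$ with $\widetilde{\rho}_0 \circ \widetilde{\varphi} = \widetilde{\rho}$.

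Therefore $\widetilde{\rho}^\ast \mathscr{I} \simeq \widetilde{\varphi}^\ast \widetilde{\rho}_0^\ast \mathscr{I}$. Since $\widetilde{\varphi}$ is an isomorphism compatible with $\pi_d$ through the base $U$, pullback along $\widetilde{\varphi}$ transports sheaves of unramified irregular values to sheaves of unramified irregular values (local generation, finiteness of fibers, and the condition that the counit $\pi_d^\ast \pi_{d,\ast} \to \id$ be an equivalence are all invariant under isomorphism). Hence $\widetilde{\rho}^\ast \mathscr{I}$ is a sheaf of unramified irregular values on $\widetilde{U}_d$, proving (2). The only subtle point in the argument is the existence of the local isomorphism $\varphi$ between two $d$-Kummer covers of the same base; this is an elementary verification using the local structure of Kummer covers, and I do not expect any real obstacle.
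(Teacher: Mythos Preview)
Your proof is correct and supplies precisely the details the paper omits: the paper's own proof reads ``Left to the reader.'' Your argument—reducing (1)$\Rightarrow$(2) to the locality of the unramified-irregular-values condition, and then comparing two $d$-Kummer covers over the same polydisc via a local isomorphism built from $d$-th roots of unit functions—is the intended elementary verification. One minor caveat: after shrinking $U$, the preimage $\rho^{-1}(U)$ may have several components (one per preimage of $x$), so the isomorphism $\varphi$ is constructed component by component; this is harmless since the condition being checked is local on $U_d$.
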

\begin{proof}
Left to the reader.
\end{proof}

\begin{defin}\label{goodness_ramified_case}
If the equivalent conditions of \cref{goodness_ramified_case_lem} are satisfied, we say that $\mathscr{I} \subset \IV_d(X,D)$ is a \textit{sheaf of irregular values}.
If furthermore the $\widetilde{\rho}^{\ast}\mathscr{I}$ are good sheaves of unramified irregular values, we say that $\mathscr{I}$  is a good sheaf of  irregular values.
\end{defin}

\begin{lem}\label{finitely_generated_criterion}
Let $f \colon (N,Y,Q)\to (M,X,P)$ be a morphism of analytic stratified spaces such that the induced morphism $f\colon Y\to X$ is open surjective.
Let  $\cF\in \ConsPhyp(X,\Cat_{\infty})$.
Then,  $\cF$ is locally generated if and only so is $f^{*}(\cF)$.
\end{lem}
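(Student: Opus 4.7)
The ``only if'' direction is immediate from \cref{pullback_locally_generated_lem}, so the real content is the converse. Assuming $f^{\ast}(\cF)$ is locally generated, my goal will be to produce, for each $x\in X$, an open neighbourhood $U\ni x$ such that $\cF(U)\to \cF_{x'}$ is essentially surjective for every $x'\in U$. The strategy is to match up initial objects on both sides of $f$ using \cref{prop:locally_contractible_strata}, which holds because both $(M,X,P)$ and $(N,Y,Q)$ are subanalytic stratified spaces.

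First I would use surjectivity of $f$ to pick $y\in f^{-1}(x)$. By the locally-generated hypothesis on $f^{\ast}(\cF)$, I can find an open $V\subset Y$ containing $y$ such that $(f^{\ast}\cF)(V)\to (f^{\ast}\cF)_z$ is essentially surjective for every $z\in V$; this property persists upon shrinking $V$, since any smaller open receives a restriction map that factors this essentially surjective morphism. Then I would shrink $V$ further, using \cref{prop:locally_contractible_strata} applied to $(N,Y,Q)$ at $y$, so that in addition $y$ is initial in $\Pi_{\infty}(V,Q)$. By exodromy (and the identification $(f^{\ast,\hyp}\cF)_{z}\simeq \cF_{f(z)}$ of \cref{exodromy_functorialities}), this forces $(f^{\ast}\cF)(V)\simeq \cF_x$. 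Since $f$ is open, $f(V)$ is an open neighbourhood of $x$; applying \cref{prop:locally_contractible_strata} once more to $(M,X,P)$ at $x$ yields an open $U\subset f(V)$ such that $x$ is initial in $\Pi_{\infty}(U,P)$, and in particular $\cF(U)\simeq \cF_x$.

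To finish, I would take $x'\in U$ arbitrary, and use $U\subset f(V)$ to pick $z\in V$ with $f(z)=x'$. The essentially surjective map $(f^{\ast}\cF)(V)\to (f^{\ast}\cF)_z$ then translates, under the identifications above, into an essentially surjective map $\cF_x\to \cF_{x'}$ induced by the image in $\Pi_{\infty}(X,P)$ of an exit path from $y$ to $z$ in $\Pi_{\infty}(V,Q)$. The main subtlety — and the step I expect to require the most care — is comparing this map with the restriction $\cF(U)\to \cF_{x'}$, which under $\cF(U)\simeq \cF_x$ is given by an exit path from $x$ to $x'$ inside $\Pi_{\infty}(U,P)$. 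Here the initiality of $x$ in $\Pi_{\infty}(U,P)$ is decisive: it makes the mapping space $\mathrm{Map}_{\Pi_{\infty}(U,P)}(x,x')$ contractible, so all exit paths from $x$ to $x'$ induce equivalent maps on stalks, and the two maps agree. This shows $\cF(U)\to \cF_{x'}$ is essentially surjective, completing the proof.
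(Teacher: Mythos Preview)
There is a genuine gap in your final step. You correctly obtain an essentially surjective map $(f^{\ast}\cF)(V)\to \cF_{x'}$, which under the identification $(f^{\ast}\cF)(V)\simeq \cF_x$ becomes $F$ applied to the image in $\Pi_\infty(X,P)$ of an exit path $y\to z$ in $\Pi_\infty(V,Q)$. But this image is an exit path $x\to x'$ lying in $\Pi_\infty(f(V),P)$, not in $\Pi_\infty(U,P)$ --- and in your setup $U\subset f(V)$, not the other way around. The contractibility of $\Map_{\Pi_\infty(U,P)}(x,x')$ only tells you that all exit paths from $x$ to $x'$ \emph{in $U$} induce the same map on stalks; it says nothing about paths that leave $U$. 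Since $x$ need not be initial in $\Pi_\infty(f(V),P)$, the two maps $\cF_x\to\cF_{x'}$ you are trying to compare can genuinely differ, and essential surjectivity of one does not imply essential surjectivity of the other.

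The fix, which is what the paper does, is to reverse the order in which you choose $U$ and $V$. First use \cref{prop:locally_contractible_strata} to pick an open $U\ni x$ with $x$ initial in $\Pi_\infty(U,P)$. Then choose $V'\subset Y$ containing a preimage $x'$ of $x$, on which $f^{\ast}\cF$ is globally generated, and shrink so that $V'\subset f^{-1}(U)$ and $x'$ is initial in $\Pi_\infty(V',Q)$. Now $V\coloneqq f(V')$ is an open neighbourhood of $x$ contained in $U$, and one has a natural map $\cF(U)\to\cF(V)\to(f^{\ast}\cF)(V')$. Since both $\cF(U)\to\cF_x$ and $(f^{\ast}\cF)(V')\to(f^{\ast}\cF)_{x'}\simeq\cF_x$ are equivalences, so is the composite $\cF(U)\to(f^{\ast}\cF)(V')$; composing with the essentially surjective stalk map $(f^{\ast}\cF)(V')\to\cF_y$ for any $y\in V$ (and noting this composite equals $\cF(U)\to\cF_y$ by naturality) gives the desired essential surjectivity of $\cF(V)\to\cF_y$, with no path-comparison needed.
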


\begin{proof}
The direct implication follows from \cref{pullback_locally_generated_lem}.
Assume that $f^{*}(\cF)$  is locally generated.
To show that $\cF$ is locally generated, it is enough to show in virtue of \cref{prop:locally_contractible_strata} that every open subset $U\subset X$ such that $\Pi_{\infty}(U,P)$ admits an initial object $x$ contains an open neighbourhood of $x$ on which $\cF$ is globally generated.
By surjectivity,  choose $x'\in Y$ above $x$.
Since  $f^{*}(\cF)$ is locally generated, we can choose an open subset $V'\subset Y$ containing $x'$ on which $f^{*}(\cF)$ is globally generated.
By \cref{pullback_locally_generated_lem},  we can suppose that $V'\subset  f^{-1}(U)$.
At the cost of shrinking $V'$ further, we can suppose by \cref{prop:locally_contractible_strata} that $x'$ is initial in $\Pi_{\infty}(V',Q)$.
Put $V\coloneqq f(V')\subset U$.
Note that $V$ is an open neighbourhood of $x$ by openness of $f \colon Y\to X$.
To conclude, let us show that $\cF|_V$ is globally generated.
For $y\in V$, let us show that $\cF(V)\to \cF_y$ is essentially surjective.
Choose $y'\in V'$ above $y$.
Then, by design of $U$ and $V'$ there is a commutative diagram
\[ 
\begin{tikzcd}
& \cF(U) \arrow{r}{\sim} \arrow{d}  \arrow[ldd, bend right = 30]  &   \cF_x \simeq   (f^{*}(\cF))_{x'}  \\
	&\cF(V) 	\arrow{r} \arrow{ld}  & (f^{*}(\cF))(V')  \arrow[lld, twoheadrightarrow, bend left = 30]  \arrow{u}{\wr}  \\ 
	 \cF_y \simeq   (f^{*}(\cF))_{y'}  &                 & 
\end{tikzcd} 
\]
The conclusion thus follows.
\end{proof}

\begin{prop}\label{ramified_irregular_values_property}
Let $(X,D)$ be a strict normal crossing pair.
Let $\mathscr{I} \subset \IV_d(X,D)$ be a sheaf of irregular values for some $d\geq 1$.
Then:
\begin{enumerate}\itemsep=0.2cm
\item $\mathscr{I}^{\ens}$ is hyperconstructible on $(\widetilde{X},\widetilde{D})$;

\item $\mathscr{I}$ is locally generated;
\end{enumerate}

 If furthermore $\mathscr{I}$ is good, then
\begin{enumerate}
\item[(3)] for every open subset $U \subset \widetilde{X}$ subanalytic in $S^1 L(D)$, 
for every $a,b\in \mathscr{I}(U)$, the sets $U_{a< b}$,$U_{a= b}$,$U_{a\ast b}$ are locally closed subanalytic in $S^1 L(D)$;
\end{enumerate}

If furthermore  $X$ admits a smooth compactification, then 

\begin{enumerate}
\item[(4)] there exists a finite subanalytic stratification 
$\widetilde{X}\to P$ refining $(\widetilde{X},\widetilde{D})$ such that $\mathscr{I}\in \ConsPhyp(\widetilde{X},\Poset)$.
\end{enumerate}

\end{prop}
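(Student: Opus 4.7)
The plan is to reduce each item to its counterpart for sheaves of unramified irregular values established earlier in this section, by pulling back along a local Kummer cover $\widetilde{\rho} \colon \widetilde{X}_d \to \widetilde{X}$. Since items (1), (2) and (3) are local on $\widetilde{X}$, I can work in local coordinates $(z_1,\dots,z_n)$ with $D$ defined by $z_1\cdots z_l = 0$, so that $\rho \colon X_d \to X$ is the standard $d$-th power map of \eqref{standard_Kummer} and $\widetilde{\rho} \colon \widetilde{X}_d \to \widetilde{X}$ is a finite surjective proper subanalytic morphism which, in the real-blow-up coordinates of \cref{local_real_blow_up}, is given on each factor $\mathbb{R}_{\geqslant 0} \times S^1$ by $(r,u)\mapsto (r^d,u^d)$. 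In particular $\widetilde{\rho}$ is open, its restriction to each stratum of $(\widetilde{X}_d,\widetilde{D})$ is a finite étale cover of the corresponding stratum of $(\widetilde{X},\widetilde{D})$, and by \cref{goodness_ramified_case_lem} the sheaf $\widetilde{\rho}^{\ast}\mathscr{I}$ is a sheaf of unramified irregular values (good whenever $\mathscr{I}$ is).

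For (1), by \cref{Iens_constructible} applied to $\widetilde{\rho}^{\ast}\mathscr{I}$, the set-valued sheaf $(\widetilde{\rho}^{\ast}\mathscr{I})^{\ens} = \widetilde{\rho}^{\ast}\mathscr{I}^{\ens}$ is hyperconstructible on $(\widetilde{X}_d,\widetilde{D})$; since $\widetilde{\rho}$ is stratum-wise a finite covering, local hyperconstancy on each stratum descends and (1) follows. Item (2) is immediate from \cref{finitely_generated_criterion}: the map $\widetilde{\rho}$ is open and surjective, and $\widetilde{\rho}^{\ast}\mathscr{I}$ is locally generated as a sheaf of unramified irregular values.

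For (3), the key observation is that the order on $\IV_d(X,D)$ is induced from that on $(j_\ast\cO_U)/(j_\ast\cO_U)^{\lb}$ through the inclusion of \cref{remified_irregular_values}, and pullback along $\widetilde{\rho}$ is a morphism of sheaves of posets into $\pi_d^{\ast}(\cO_{X_d}(\ast D)/\cO_{X_d})$. Consequently, for any open subanalytic $U \subset \widetilde{X}$ and $a,b \in \mathscr{I}(U)$, setting $\widetilde{U} \coloneqq \widetilde{\rho}^{-1}(U)$ one has $\widetilde{\rho}^{-1}(U_{a\,?\,b}) = \widetilde{U}_{\widetilde{\rho}^{\ast}a\,?\,\widetilde{\rho}^{\ast}b}$ for $? \in \{<,=,\ast\}$. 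By \cref{good_sheaf_irregular_values_is_subanalytic} applied to $\widetilde{\rho}^{\ast}\mathscr{I}$, these preimages are locally closed subanalytic in $S^1L(D)_d$. Since $\widetilde{\rho}$ is proper surjective subanalytic, $U_{a\,?\,b} = \widetilde{\rho}(\widetilde{\rho}^{-1}(U_{a\,?\,b}))$ is subanalytic in $\widetilde{X}$, hence in $S^1L(D)$ (as $\widetilde{X}$ is closed subanalytic therein); local closedness is automatic by \cref{Uasmallthan_rem}.

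For (4), I follow the strategy of \cref{good_sheaf_constructible_finite_stratification} verbatim: using a smooth compactification of $X$ and resolution of singularities, cover $X$ by finitely many subanalytic polydiscs $U \simeq \Delta^{n-k}\times (\Delta^{\ast})^k$ with $D$ standard, then further refine via sectors $S_+,S_-\subset \Delta^{\ast}$ into opens $U_\varepsilon$ so that $\widetilde{U}_\varepsilon = \pi^{-1}(U_\varepsilon)$ is subanalytic in $S^1L(D)$ and $\Pi_\infty(\widetilde{U}_\varepsilon,\widetilde{D})$ admits an initial object (as in \cref{global_generation_Xtilde}). On each such $\widetilde{U}_\varepsilon$, global generation of $\mathscr{I}|_{\widetilde{U}_\varepsilon}$ follows from (2) together with \cref{semi_globally_generated}, while subanalyticity of the Stokes loci in $S^1L(D)$ is exactly (3). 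Invoking \cref{finite_subanalytic_stratification_criterion} with $\cF = \mathscr{I}$ produces the required finite subanalytic refinement $P$ of $(\widetilde{X},\widetilde{D})$. The main obstacle I expect is the compatibility of the order on $\IV_d$ with the Kummer pullback used in (3); once that identification is in place and combined with properness of $\widetilde{\rho}$ to push subanalyticity back down to $\widetilde{X}$, the remaining reductions are formal consequences of the unramified results already established.
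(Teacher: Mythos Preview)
Your arguments for items (1), (2), and (3) are essentially the same as the paper's: descent of hyperconstancy along the finite \'etale-on-strata map $\widetilde{\rho}$, \cref{finitely_generated_criterion} for local generation, and the Kummer pullback/proper pushforward manoeuvre for subanalyticity. The paper routes (3) through \cref{from_local_to_global_subanalyticity} rather than arguing directly, but your inline version is equivalent once you have noted that subanalyticity is a local condition on $S^1L(D)$.

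There is, however, a genuine gap in your proof of (4). You claim that $\widetilde{U}_\varepsilon = \pi^{-1}(U_\varepsilon)$ has an initial object in $\Pi_\infty(\widetilde{U}_\varepsilon,\widetilde{D})$, citing \cref{global_generation_Xtilde}. But $\widetilde{U}_\varepsilon$ contains the full torus factor $(S^1)^l$, whereas \cref{global_generation_Xtilde} requires \emph{strict open intervals} $I_1,\dots,I_l \subset S^1$; with full circles the closed stratum is $(S^1)^l$, whose exit-path category is a groupoid with nontrivial $\pi_1$ and hence has no initial object. In the unramified proof (\cref{good_sheaf_constructible_finite_stratification}) this issue does not arise because there $\mathscr{I}$ is pulled back from $X$, so global generation on $U_\varepsilon$ transfers to $\widetilde{U}_\varepsilon$ via \cref{pullback_locally_generated_lem}. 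In the ramified case $\mathscr{I}$ lives genuinely on $\widetilde{X}$ and is \emph{not} a pullback from $X$, so that shortcut is unavailable. The fix is exactly what the paper does: further refine by choosing open intervals $I_+, I_- \subset S^1$ and work with the subanalytic opens
\[
V_{\varepsilon,\eta} \coloneqq [0,1)^l \times I_{\eta(1)} \times \cdots \times I_{\eta(l)} \times \Delta^{n-l-k} \times S_{\varepsilon(1)} \times \cdots \times S_{\varepsilon(k)} \subset \pi^{-1}(U_\varepsilon),
\]
which do satisfy the hypotheses of \cref{global_generation_Xtilde} and hence of \cref{semi_globally_generated}. With this correction your argument for (4) goes through.
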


\begin{proof}
Item (1) follows from the fact that local hyperconstancy can be check locally for the étale topology.
Item (2) is a local question.
Hence, we can assume the existence of a surjective $d$-Kummer  cover $\rho \colon X_d \to X$ of $(X,D)$ of the form \eqref{standard_Kummer} such that $\widetilde{\rho}^{\ast}\mathscr{I}$ is a sheaf of unramified irregular values.
In particular $\widetilde{\rho}^{-1}\mathscr{I}$ is locally generated.
Observe that $\widetilde{\rho}$ is open and surjective.
Then, (2) follows from \cref{finitely_generated_criterion}.
Let us prove (3).
We are going to apply \cref{from_local_to_global_subanalyticity}.
Conditions (1) and (2) from \cref{from_local_to_global_subanalyticity} are satisfied.
To show that \cref{from_local_to_global_subanalyticity}-(3) is satisfied, we can suppose the existence of a surjective Kummer cover $\rho \colon X_d \to X$ such that $\widetilde{\rho}^{\ast}\mathscr{I}$ is a sheaf of unramified irregular values.
Let $W\subset S^1L(D)$ be an open \textit{subanalytic} subset.
Let $? \in \{<,=,\ast\}$ and let $a,b\in \mathscr{I}(W\cap X)$.
We want to show that $(W\cap \widetilde{X})_{a ? b}$ is a subanalytic subset of $W$.
Since $W$ and $\widetilde{X}$ are subanalytic in $S^1L(D)$, so is $W\cap \widetilde{X}$.
Hence $\widetilde{\rho}^{\ast}(W\cap \widetilde{X}) \subset \widetilde{X}_d$ is subanalytic as well.
By \cref{good_sheaf_irregular_values_is_subanalytic} applied to $\widetilde{\rho}^{-1}\mathscr{I}$, 
we know that $(\widetilde{\rho}^{\ast}(W\cap \widetilde{X}))_{\pi^{\ast}a ? \pi^{\ast}b}$ is subanalytic.
On the other hand, we have 
\[
(W\cap \widetilde{X})_{a ? b} = \widetilde{\rho}((\widetilde{\rho}^{\ast}(W\cap \widetilde{X}))_{\pi^{\ast}a ? \pi^{\ast}b}) 
\]
Since the image of a subanalytic subset by a proper map is again subanalytic, we conclude that $(W\cap \widetilde{X})_{a ? b}$  is subanalytic and (3) is proved.
We know prove (4).
Let $ X\hookrightarrow Y$ be a smooth compactification of $X$.
At the cost of applying resolution of singularities,  we can suppose that $Z\coloneqq Y\setminus X$ is a divisor such that $E\coloneqq Z+D$ has strict normal crossings.
Hence, $X$ admits a finite cover by open subanalytic subsets $U\simeq  \Delta^{n-k} \times (\Delta^*)^k$ with coordinates $(z,y)$ such that $\Delta\cap U$ is defined by $z_1\cdots z_l = 0$, where $\Delta\subset \mathbb{C}$ is the unit disc.
Let $S_+,S_-\subset \Delta^*$ be a cover  by open sectors.
For $\varepsilon \colon  \{1,\dots, k\} \to \{-,+\}$, put 
\[
U_\varepsilon \coloneqq   \Delta^{n-k} \times S_{\varepsilon(1)} \times \cdots   \times S_{\varepsilon(k)}   \subset U
\]
Let $(I_+,I_{-}  )\subset S^1$ be a cover by strict open intervals.
For $\varepsilon \colon  \{1,\dots, k\} \to \{-,+\}$ and $\eta \colon  \{1,\dots, l\}  \to \{-,+\}$, put 
\[
V_{\varepsilon,\eta} \coloneqq [0,1)^l\times I_{\eta_1}\times \cdots \times I_{\eta_l} \times \Delta^{n-l-k}\times  S_{\varepsilon(1)} \times \cdots   \times S_{\varepsilon(k)}   \subset \pi^{-1}(U_\varepsilon)
\]
Note that $V_{\varepsilon,\eta}$ is a subanalytic subset of $S^1L(D)$.
To prove (4), it is enough to show that the $V_{\varepsilon,\eta} $ satisfy the conditions of \cref{finite_subanalytic_stratification_criterion}.
This follows from the above points (1) (2) (3) and \cref{semi_globally_generated} applied to \cref{global_generation_Xtilde}.
\end{proof}

\begin{prop}\label{existence_ramified_piec_el_level_structure}
Let $(X,D)$ be a normal crossing pair where $X$ admits a smooth compactification.
Let $\pi \colon  \widetilde{X}\to X$  be the real-blow up along $D$.
Let $\mathscr{I} \subset \IV_d(X,D)$ be a good sheaf of irregular values for some $d\geq 1$.
Let $\widetilde{X} \to P$ be a finite subanalytic stratification such that $\mathscr{I}\in\Cons_P(\widetilde{X}, \Poset)$.
Let $(\widetilde{X},P,\cI)$ be the associated Stokes analytic stratified space.
Then, $\pi \colon  (\widetilde{X},P,\cI)\to (Y,Q)$ is a strongly proper family of Stokes analytic stratified spaces in finite posets locally admitting a ramified strongly piecewise elementary level structure  (\cref{ramified_level_structure}).
\end{prop}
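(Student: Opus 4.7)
The strong properness of $\pi \colon \widetilde{X} \to X$ is already recorded in \cref{blow_up_strongly_proper}, so the first nontrivial assertion is that $(\widetilde{X},P,\cI) \to (X,D)$ defines a \emph{family} of Stokes analytic stratified spaces in finite posets. Subanalyticity of $\pi$ and the compatibility with the stratifications are built into \cref{good_sheaf_constructible_finite_stratification} and \cref{ramified_irregular_values_property}, while finiteness of the fibers of $\cI$ follows from the hyperconstructibility of $\mathscr{I}^{\ens}$ (\cref{ramified_irregular_values_property}-(1)) together with the local finite generation (\cref{ramified_irregular_values_property}-(2)), which together forces the stalks of $\mathscr{I}$ to be finite.

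The core of the proof is to produce, locally on $X$, a ramified strongly piecewise elementary level structure. Since the statement is local on $X$ we may assume that $X \subset \mathbb{C}^n$ is a polydisc with coordinates $(z_1,\dots,z_n)$, that $D = \{z_1 \cdots z_l = 0\}$, and, by \cref{goodness_ramified_case_lem}, that there exists a $d$-Kummer cover $\rho \colon X_d \to X$ (given by \eqref{standard_Kummer}) such that $\widetilde{\rho}^{\ast}\mathscr{I}$ is a \emph{good sheaf of unramified irregular values} on $(\widetilde{X}_d,\widetilde{D}_d)$. The plan is then to package \cref{remified_irregular_values} into a vertically finite Galois cover of families of Stokes analytic stratified spaces
\[
\begin{tikzcd}
(\widetilde{X}_d,P_d,\cJ) \arrow{r}{\widetilde{\rho}} \arrow{d}{\pi_d} & (\widetilde{X},P,\cI) \arrow{d}{\pi} \\
(X_d,D) \arrow{r}{\rho} & (X,D)
\end{tikzcd}
\]
in the sense of \cref{stratified_Galois_cover}, where $P_d$ is a finite subanalytic refinement of the pullback stratification making $\widetilde{\rho}^{\ast}\mathscr{I}$ into a $P_d$-hyperconstructible sheaf of posets (which exists by \cref{good_sheaf_constructible_finite_stratification}), and $\cJ$ is the associated Stokes fibration. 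The cartesianity of the top row in $\StStrat$ amounts to $\widetilde{\rho}^{\ast}\mathscr{I}$ being the pullback of $\mathscr{I}$, which is automatic; the condition that $\widetilde{\rho}$ restricts to a finite Galois cover on each fiber over a stratum is the content of \cref{remified_irregular_values} combined with the fact that the deck group is $(\mathbb{Z}/d)^l$.

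Once this diagram is in place, the target Stokes analytic stratified space $(\widetilde{X}_d,P_d,\cJ)$ is of the form studied in \cref{piecewise_elementary_diff}: its underlying sheaf of irregular values is a good \emph{unramified} one, so \cref{piecewise_elementary_diff} (together with \cref{level_structure_diff}, which produces an explicit level filtration from an auxiliary sequence of orders) provides a strongly piecewise elementary level structure on $(\widetilde{X}_d,P_d,\cJ) \to (X_d,D)$. Pulling this back through the vertically finite Galois cover above gives, by definition (\cref{ramified_level_structure}), a ramified strongly piecewise elementary level structure on $(\widetilde{X},P,\cI) \to (X,D)$. The main subtlety to verify is that the local auxiliary sequences used in \cref{level_structure_diff} — which depend on a choice of chart on $X_d$ — can be arranged compatibly enough to yield an honest level morphism over a common neighborhood in $X$; this is where \cref{goodness_ramified_case_lem}-(2), asserting that \emph{any} $d$-Kummer cover over a local chart works, is essential, since it lets us replace any chart by a smaller distinguished one without changing the ramification index.
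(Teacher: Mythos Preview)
Your proof is correct and follows the same approach as the paper, which dispatches the proposition in one line by citing \cref{strongly_proper_piecewise_level_structure}: the point is simply that the definition of a \emph{ramified} level structure (\cref{ramified_level_structure}) asks for a vertically finite Galois cover on which an ordinary level structure exists, and the local Kummer cover supplied by \cref{goodness_ramified_case_lem} together with the unramified result \cref{strongly_proper_piecewise_level_structure} (equivalently \cref{piecewise_elementary_diff}) provides exactly that.

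Your final paragraph about compatibility of auxiliary sequences across charts is unnecessary: ``locally admits a ramified $\cC$-level structure'' is by definition a condition checked on an open cover of the base $X$, so no gluing of level structures or auxiliary sequences is required---each point of $X$ just needs one small coordinate polydisc on which the Kummer cover and the construction of \cref{level_structure_diff} can be run independently. Also, the phrase ``pulling this back through the vertically finite Galois cover'' is slightly misleading: nothing is pulled back; the level structure lives on the cover and that is already what the definition demands.
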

\begin{proof}
Immediate from \cref{strongly_proper_piecewise_level_structure}.
\end{proof}

\cref{existence_ramified_piec_el_level_structure} unlocks all the results proved in \cref{nc_space} and \cref{geometricity_section}.
In particular, we have the following

\begin{thm}\label{geometricit_Stokes_classical_case}
In the setting of \cref{existence_ramified_piec_el_level_structure}, let $k$ be an  animated commutative ring.
Then, $\bfSt_{\cI}$ is locally geometric locally of finite presentation. 
Moreover, for every animated commutative $k$-algebra $A$ and every morphism
	\[ x \colon \Spec(A) \to \bfSt_{\cI} \]
	classifying a Stokes functor $F \colon \cI \to \Perf_A$, there is a canonical equivalence
	\[ x^\ast \mathbb T_{\bfSt_{\cI}} \simeq \Hom_{\Fun(\cI,\Mod_A)}( F, F )[1] \ , \]
	where $\mathbb T_{\bfSt_\cI}$ denotes the tangent complex of $\bfSt_\cI$ and the right hand side denotes the $\Mod_A$-enriched $\Hom$ of $\Fun(\cI,\Mod_A)$.
\end{thm}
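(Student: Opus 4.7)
The plan is to reduce this theorem directly to \cref{Representability_via_toen_vaquie} by checking that the geometric input from \cref{existence_ramified_piec_el_level_structure} provides all the required hypotheses. Concretely, I would need to verify that $\pi \colon (\widetilde{X}, P, \cI) \to (Y,Q)$ is a strongly proper family of Stokes analytic stratified spaces in finite posets that locally admits a \emph{ramified vertically piecewise elementary} level structure, since these are the exact conditions under which the main representability and tangent complex formula of \cref{Representability_via_toen_vaquie} apply.

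The strong properness is already baked into the statement of \cref{existence_ramified_piec_el_level_structure}, and it comes ultimately from \cref{blow_up_strongly_proper} via the smooth compactifiability hypothesis on $X$. Similarly, \cref{existence_ramified_piec_el_level_structure} provides a ramified \emph{strongly} piecewise elementary level structure. To land on the weaker ``vertically piecewise elementary'' hypothesis needed by \cref{Representability_via_toen_vaquie}, I would apply the implications recorded in \cref{cor:piecewise_level_structure_implies_locally_elementary_level_structure} and \cref{piecewise_implies_vertical_piecewise}: strongly piecewise elementary implies piecewise elementary, which in turn implies vertically piecewise elementary. These implications propagate through a ramified cover essentially tautologically, so the ramified version is handled in the same breath.

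With the hypotheses in place, \cref{Representability_via_toen_vaquie} immediately yields that $\bfSt_{\cI}$ is locally geometric and locally of finite presentation over $k$, together with the canonical identification $x^\ast \mathbb T_{\bfSt_{\cI}} \simeq \Hom_{\Fun(\cI,\Mod_A)}(F, F)[1]$ for every $A$-point $F$. No further argument is needed, as the tangent complex formula is part of the conclusion of \cref{Representability_via_toen_vaquie}.

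There is essentially no serious obstacle at this stage: the heavy lifting has been carried out earlier, namely the non-commutative smoothness of $\St_{\cI,k}$ (\cref{finite_typeness}), its identification with the To\"en--Vaqui\'e moduli of objects (\cref{identification_prestacks}), and the construction of a ramified strongly piecewise elementary level structure for good sheaves of irregular values (\cref{piecewise_elementary_diff} and \cref{strongly_proper_piecewise_level_structure}). The only subtlety to be careful about is ensuring that the chain of implications between the various flavors of piecewise elementarity is invoked correctly, so that the hypothesis of \cref{Representability_via_toen_vaquie} is truly verified from the hypothesis of \cref{existence_ramified_piec_el_level_structure}; this is a bookkeeping check rather than a genuine difficulty.
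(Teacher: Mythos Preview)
Your proposal is correct and follows essentially the same approach as the paper: the paper's proof is a one-line ``Combine \cref{strongly_proper_piecewise_level_structure} with \cref{Representability_via_toen_vaquie}'', and you have merely made explicit the chain of implications (strongly piecewise elementary $\Rightarrow$ piecewise elementary $\Rightarrow$ vertically piecewise elementary) needed to match the hypotheses of \cref{Representability_via_toen_vaquie}.
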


\begin{proof}
Combine \cref{strongly_proper_piecewise_level_structure} with \cref{Representability_via_toen_vaquie}.
\end{proof}

\subsection{Comparison with wild character varieties in dimension $1$}\label{subsec:comparison}

In this section we take $k = \C$, we specialize our construction in dimension $1$ and we compare it with the classical construction of wild character varieties, as outlined in \cite[\S13]{Boalch_Topology_Stokes}.

\medskip

Let $X$ be a smooth compact complex curve.
In this case a normal crossing divisor $D$ consists of a finite number of points, and the real blow-up $\pi \colon \widetilde{X} \to X$ is a $\R$-analytic surface and its boundary
\[ \partial \widetilde{X} \coloneqq \pi\inv( D ) \simeq \coprod_{ a \in D } S^1_a \]
is a disjoint union of circles, one per each point of $D$.
As in \cite[\S5]{Boalch_Topology_Stokes}, we restrict the discussion to the case where $D = \{a\}$ consists of a single point, as it is straightforward to extend the comparison to the general case.

\medskip

To keep notational clash to a minimum, we denote by $\mathbb E \to \partial \widetilde{X}$ the exponential local system (see \cite[\S5.1]{Boalch_Topology_Stokes}), whose local sections on $\partial \widetilde{X}$ are given by Puiseaux series around $a$.
Fix an irregular class $\Theta \colon \mathbb E \to \mathbb N$ in the sense of \cite[\S5.2]{Boalch_Topology_Stokes}, and we set
\[ I \coloneqq \Theta\inv(\mathbb N_{>0}) \]
to be the set of \emph{active exponentials}.

\medskip

Given $q \in I$, we can find an expansion locally around $a \in D$ as a Puiseux series
\[ q = \sum \lambda_i z^{-k_i} \ , \]
where $\lambda_i \in \C$ and $k_i \in \Q_{>0}$.
Let $d_q$ be the lowest common multiple of the denominators of the $k_i$.
Then $q$ can be interpreted as a function on a $d_q$-Kummer cover $X_{d_q}$ of $X$.
Letting $d$ to be the lowest common multiple of the set $\{d_q\}_{q \in \cI}$, we can interpret all active exponentials as functions on $X_d$.
In particular, \cref{goodness_ramified_case_lem} allows to interpret the set $I$ as a sheaf of irregular values $\mathscr I$ in the sense of \cref{goodness_ramified_case}.
Since $X$ is a complex curve, $\mathscr I$ is automatically a good sheaf of irregular values.
The Stokes directions defined in \cite[\S5.4]{Boalch_Topology_Stokes} correspond exactly to the stratification $\mathbb S$ of $\widetilde{X}$ by Stokes loci in the sense of \cref{Stokes_locus}.
The Stokes arrows defined in \cite[5.5]{Boalch_Topology_Stokes} coincide with the order on $\cI$ of \cref{order_general} (see also \cref{meromorphic_order}).

\medskip

In this setting, both Stokes filtered local systems in the sense of Boalch \cite[\S6]{Boalch_Topology_Stokes} (associated to the irregular class $\Theta$) and Stokes functors in the sense of \cref{def:Stokes_sheaf} (associated to $\cI$) are defined.
The comparison between the two notions only makes sense when we take as category of coefficients the abelian category $\cE \coloneqq \Mod_\C^\heartsuit$ of $\C$-vector spaces.

\medskip

To begin with we show how to produce a Stokes functor out of a Stokes filtered local system.
The key ingredient is the following observation, which allows to recast the Stokes condition for two filtrations of \cite[Definition 3.9]{Boalch_Topology_Stokes} in our language.

\begin{notation}\label{notation:underlying_filtered_object}
	Let $J$ be a poset and let $F \colon J \to \cE$ be a functor.
	We write $|F|$ for the colimit of $F$.
	In what follows, when $\cE = \Mod_\C^\heartsuit$, we think of $|F|$ as a vector space filtered by $F$.
	Notice that a priori this filtration is not by subspaces, so it is not a filtration in the stricter sense of \cite[\S3.2]{Boalch_Topology_Stokes} (however, when the pointwise split condition is imposed, this filtration will automatically be by subspaces, see \cref{observation:filtration_by_monomorphisms} below).
	When the order on $J$ is trivial, we rather say that $|F|$ is graded by $F$.
\end{notation}

\begin{observation}[The Stokes condition for two filtrations]\label{observation:Stokes_condition}
	Consider three posets $J$, $J^+$ and $J^-$ together with morphisms of posets
	\[ \begin{tikzcd}
		J^- & J \arrow{l}[swap]{g} \arrow{r}{f} & J^+ \ .
	\end{tikzcd} \]
	Equivalently, exodromy allows to interpret this as a constructible sheaf of posets $\cJ$ on the open interval $(0,1)$ stratified in a single point.
	Further assume that $f$ and $g$ induce the identity on the underlying sets.
	Consider a vector space $V$ equipped with two filtrations $F^-$ and $F^+$, indexed respectively by $J^-$ and $J^+$.
	These two filtrations satisfy the Stokes condition in the sense of \cite[Definition 3.9]{Boalch_Topology_Stokes} if and only if there exists a grading $G$ of $V$ indexed by $J^{\ens}$ together with identifications
	\[ g_!( i^{\ens}_{J,!}(G) ) \simeq F^- \ , \qquad f_!( i^{\ens}_{J,!}(G) ) \simeq F^+ \ . \]
	Indeed, unraveling the formulae for the left Kan extensions, we see that the left hand side coincide with the filtrations induced from the grading in the sense of Boalch.
	Notice that in this case \cref{eg:Stokes_structures_at_a_point} allows to identify $i^{\ens}_{J,!}(G)$ with a Stokes functor over $((0,1),P,\cJ)$.
\end{observation}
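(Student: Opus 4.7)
The plan is to reduce both directions of the equivalence to an explicit unravelling of the left Kan extension formulae, and then to match the result term-by-term with Boalch's condition. First I would compute the functor $i_{J,!}^{\ens}(G) \colon J \to \cE$ pointwise: since the order on $J^{\ens}$ is trivial, for every $a \in J$ the slice $J^{\ens} \times_J J_{/a}$ is discrete, consisting exactly of the elements $b \in J^{\ens}$ with $b \leq a$ in $J$, so the pointwise formula gives $i_{J,!}^{\ens}(G)(a) \simeq \bigoplus_{b \leq a} G(b)$. Next I would compute $f_!(i_{J,!}^{\ens}(G))$ at $c \in J^+$. Because $f$ induces the identity on underlying sets, the composite $f \circ i_J^{\ens} \colon J^{\ens} \to J^+$ is the identity on objects, so applying the same pointwise formula once more yields $f_!(i_{J,!}^{\ens}(G))(c) \simeq \bigoplus_{a \leq^+ c} G(a)$, and symmetrically for $g_!$ with $\leq^-$ in place of $\leq^+$.

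The key observation is that, after taking colimits, these are nothing but the filtrations of the vector space $V \coloneqq \bigoplus_{a \in J^{\ens}} G(a)$ induced by the grading $G$ in Boalch's sense. From this, both implications follow without further work. Assuming a grading $G$ together with identifications $g_!(i_{J,!}^{\ens}(G)) \simeq F^-$ and $f_!(i_{J,!}^{\ens}(G)) \simeq F^+$, the previous calculation exhibits $F^-$ and $F^+$ as filtrations induced from the common grading $G$, which is precisely the Stokes condition for two filtrations. Conversely, given a pair $(F^-,F^+)$ satisfying Boalch's Stokes condition, the very definition produces a grading $G$ of $V$ that simultaneously splits both filtrations, and the computation above then translates this splitting into the desired equivalences on the Kan extension side.

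The main obstacle I anticipate is not analytic but bookkeeping: one has to check that the combinatorial datum Boalch works with (the poset of active exponentials together with the two Stokes orders at a Stokes direction) matches, through the exodromy equivalence and the specialization maps of the sheaf $\cJ$ on the open interval, with the triple $(J, J^\pm, f, g)$ appearing here. This amounts to comparing the construction of the sheaf of irregular values $\mathscr I$ with Boalch's description of the Stokes arrows, which is essentially tautological since $\cJ$ is built precisely so that its fiber over the Stokes direction is $J$ and its two generic fibers are $J^\pm$ with specializations $f$ and $g$; still, one must be careful with the direction of the Stokes arrows (i.e.\ the identification of $f$ with the specialization on one side of the Stokes direction and $g$ with the other).
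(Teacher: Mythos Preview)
Your proposal is correct and follows exactly the approach the paper indicates: the observation itself contains its own justification (``Indeed, unraveling the formulae for the left Kan extensions\ldots''), and you have carried out that unravelling explicitly by computing the pointwise colimit formulae and using $f_! \circ i_{J,!}^{\ens} \simeq (f \circ i_J^{\ens})_!$ together with the fact that $f \circ i_J^{\ens}$ is just $i_{J^+}^{\ens}$. Your additional paragraph on bookkeeping is a reasonable caution but not strictly needed here, since the observation is stated abstractly for a triple of posets rather than for the specific Stokes data.
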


\begin{construction}[From Stokes filtered local systems to Stokes functors]\label{construction:from_Stokes_filtered_to_Stokes_functors}
	Let $(\cV,\cF)$ be a filtered local system in the sense of Boalch, associated with the irregular class $\Theta$.
	Let $\cI \to \Pi_\infty(\widetilde{X}, \mathbb S)$ be the cocartesian fibration in posets associated to $\mathscr I$.
	Notice that $\Pi_\infty( \partial\widetilde{X} \smallsetminus \mathbb S )$ is just a set, and by definition of Stokes directions the restriction $\cI |_{\partial \widetilde{X} \smallsetminus \mathbb S}$ is locally constant.
	The filtration $\cF$ considered in \cite[\S6]{Boalch_Topology_Stokes}, thanks to condition \textbf{SF1}) in \emph{loc.\ cit.}, consists exactly of:
	\begin{enumerate}\itemsep=0.2cm
		\item a functor
		\[ F \colon \cI_{\partial \widetilde{X} \smallsetminus \mathbb S} \longrightarrow \Mod_\C^\heartsuit \ , \]
		which furthermore factors through the full subcategory of finite-dimensional $\C$-vector spaces.
		
		\item an isomorphism
		\[ |F| \simeq \cV |_{\cI_{\partial \widetilde{X} \smallsetminus \mathbb S}} \ , \]
		where $|-|$ denotes the induction along the structural morphism $\cI_{\partial \widetilde{X} \smallsetminus \mathbb S} \to \Pi_\infty(\partial \widetilde{X} \smallsetminus \mathbb S)$.
		Concretely, this consists in providing an isomorphism of $\cV$ with the highest piece of the filtration $F_\theta$ of $V_\theta$ for all $\theta \in \partial \widetilde{X} \smallsetminus \mathbb S$.
	\end{enumerate}
	Notice that since the orders on $\partial \widetilde{X} \smallsetminus \mathbb S$ are total, and since we work over a field, the above data automatically defines a Stokes functor over $(\partial \widetilde{X} \smallsetminus \mathbb S, \ast, \cI |_{\partial \widetilde{X} \smallsetminus \mathbb S})$.
	Since Stokes functors form a sheaf on $\partial\widetilde{X}$ (a property that holds in virtue of the very \cref{def:Stokes_sheaf}), in order to extend these filtrations to a Stokes functor 
	In order to extend it to a Stokes functor
	\[ \cI |_{\partial \widetilde{X}} \longrightarrow \Mod_\C^\heartsuit \ , \]
	it is enough to work locally around a Stokes direction $\theta \in \mathbb S$.
	In particular, we can work in a small sector around $\theta$ that contains no other Stokes directions.
	In this case, we are in the setting of \cref{observation:Stokes_condition}, and therefore the grading $G$ whose existence is guaranteed by \textbf{SF2}) allows to extend the two nearby filtrations into a Stokes functor (concretely given by $i_{\cI_{\theta},!}^{\ens}(G)$ and then extended to the sector via the equivalence supplied by \cref{eg:Stokes_structures_at_a_point}).
\end{construction}

Let us now explain how to produce a Stokes filtered local system starting with a Stokes functor.
Before giving the construction, we need a couple of preliminary observations.

\begin{observation}\label{observation:underlying_local_system}
	Let
	\[ p \colon \cI \longrightarrow \Pi_\infty(\partial \widetilde{X}, \mathbb S) \]
	be the structural morphism of the cocartesian fibration associated to the $\mathbb S$-constructible sheaf of posets $\mathscr I$.
	Let $\cE$ be a presentable $\infty$-category and let
	\[ F \colon \cI \longrightarrow \cE \]
	be a Stokes functor.
	Seeing $\Pi_\infty(\partial\widetilde{X}, \mathbb S)$ as a trivial fibration over itself and applying \cref{cor:stokes_functoriality_IHES}, we see that
	\[ |F| \coloneqq p_!(F) \colon \Pi_\infty(\partial\widetilde{X}, \mathbb S) \longrightarrow \cE \]
	is again a Stokes functor.
	In particular, \cref{Stokes_sheaf_trivial_fibration} implies that $|F|$ is a local system with coefficients in $\cE$.
	In line with \cref{notation:underlying_filtered_object}, we think of $|F|$ as a local system equipped with the extra structure of a Stokes functor $F$.
\end{observation}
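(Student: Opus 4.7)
The plan is to unpack the observation into a concrete application of the general functoriality package for Stokes functors developed in Section~\ref{Global}, together with the identification of the Stokes category of a trivial cocartesian fibration with local systems established in Section~\ref{sec:Stokes_sheaf}. First I would interpret the structural map $p \colon \cI \to \Pi_\infty(\partial\widetilde{X}, \mathbb S)$ as a morphism in $\StStrat$ of the form
\[
(\partial\widetilde{X}, \mathbb S, \cI) \longrightarrow (\partial\widetilde{X}, \mathbb S, \Pi_\infty(\partial\widetilde{X}, \mathbb S)),
\]
where the target is the ``trivial'' Stokes stratified space whose underlying Stokes fibration is the identity of $\Pi_\infty(\partial\widetilde{X}, \mathbb S)$ (cf.\ \cref{eg:local_systems_as_Stokes_structures}). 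Concretely this morphism is witnessed by the commutative square with identity underlying map of stratified spaces, the pullback leg given by $\id_\cI$, and the right leg given by $p$ itself; by construction $p$ preserves cocartesian edges, so this is a legitimate morphism in $\StStrat$.

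Once this data is in place, \cref{cor:stokes_functoriality_IHES}(\ref{cor:stokes_functoriality:induction}) applies directly: the induction functor
\[
p_! \colon \Fun(\cI, \cE) \longrightarrow \Fun(\Pi_\infty(\partial\widetilde{X},\mathbb S), \cE)
\]
restricts to a well defined functor $\St_{\cI,\cE} \to \St_{\Pi_\infty(\partial\widetilde{X},\mathbb S),\cE}$. Thus $|F| = p_!(F)$ is a Stokes functor on the trivial fibration.

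To conclude that $|F|$ is a local system on $\partial\widetilde{X}$ with values in $\cE$, I would invoke \cref{Stokes_sheaf_trivial_fibration} (equivalently \cref{eg:local_systems_as_Stokes_structures}), which identifies the categorical hypersheaf $\frakStokes_{\Pi_\infty(\partial\widetilde{X},\mathbb S),\cE}$ with $\frak{Loc}_{\partial\widetilde{X},\cE}$. Taking global sections converts the statement into the equivalence $\St_{\Pi_\infty(\partial\widetilde{X},\mathbb S),\cE} \simeq \Loc^{\hyp}(\partial\widetilde{X},\cE)$, so that $|F|$ is indeed a locally hyperconstant hypersheaf on $\partial\widetilde{X}$ with values in $\cE$.

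There is no substantial obstacle here: the only real content is the verification that $p$ assembles into a morphism in $\StStrat$ with the right cocartesian properties so that the induction functor is defined and preserves the Stokes condition. This is essentially a bookkeeping check that was already absorbed into the formalism of \cref{construction:exponential} and the subsequent functoriality results cited above, and no new geometric input is required.
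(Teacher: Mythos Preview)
Your proposal is correct and follows exactly the reasoning already embedded in the observation itself: the paper's argument is contained in the statement (apply \cref{cor:stokes_functoriality_IHES}(\ref{cor:stokes_functoriality:induction}) to the morphism $p$ viewed as a map to the trivial fibration, then invoke \cref{Stokes_sheaf_trivial_fibration}), and you have simply unpacked these two citations with the correct bookkeeping about how $p$ assembles into a morphism in $\StStrat$.
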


\begin{observation}\label{observation:filtration_by_monomorphisms}
	Let
	\[ F \colon \cI \longrightarrow \Mod_\C^\heartsuit \]
	be a Stokes functor.
	Let $\theta \in \partial\widetilde{X}$.
	For $q <_\theta q' \in \cI_\theta$, the morphism $F_\theta(q) \to F_\theta(q')$ is a monomorphism.
	This is not imposed directly as part of our definition, but since $F$ is split at $\theta$, this condition follows automatically.
\end{observation}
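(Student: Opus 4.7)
The proof strategy is to apply the pointwise split condition at $\theta$ and then directly compute. By definition of Stokes functor (\cref{prop:Stokes_characterization}), the restriction $F_\theta = j_\theta^\ast(F) \colon \cI_\theta \to \Mod_\C^\heartsuit$ lies in the essential image of the induction functor
\[ i_{\cI_\theta, !} \colon \Fun(\cI_\theta^{\ens}, \Mod_\C^\heartsuit) \to \Fun(\cI_\theta, \Mod_\C^\heartsuit) \ . \]
So I can choose a functor $V \colon \cI_\theta^{\ens} \to \Mod_\C^\heartsuit$ with $F_\theta \simeq i_{\cI_\theta, !}(V)$.

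Next I would unravel the pointwise formula for the left Kan extension along the fully faithful inclusion $i_{\cI_\theta} \colon \cI_\theta^{\ens} \hookrightarrow \cI_\theta$. For every $q \in \cI_\theta$, the slice $\cI_\theta^{\ens} \times_{\cI_\theta} (\cI_\theta)_{/q}$ is simply the discrete set $\{b \in \cI_\theta^{\ens} : b \leq_\theta q\}$, so that
\[ F_\theta(q) \simeq i_{\cI_\theta,!}(V)(q) \simeq \bigoplus_{b \leq_\theta q} V(b) \ , \]
the colimit being a direct sum because $\Mod_\C^\heartsuit$ is semisimple on discrete diagrams and the indexing category is a set. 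For a relation $q <_\theta q'$ in $\cI_\theta$, naturality of the pointwise formula identifies the induced map $F_\theta(q) \to F_\theta(q')$ with the canonical inclusion
\[ \bigoplus_{b \leq_\theta q} V(b) \hookrightarrow \bigoplus_{b \leq_\theta q'} V(b) \]
of direct summands along the inclusion of indexing sets $\{b \leq_\theta q\} \subseteq \{b \leq_\theta q'\}$. Such a map is a monomorphism in $\Mod_\C^\heartsuit$, which is what we wanted.

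There is no real obstacle here; the only point requiring a moment's care is verifying that the map between the two colimits is genuinely the tautological inclusion of summands rather than some twisted version. This follows because $\cI_\theta^{\ens}$ carries the trivial order: all Hom-spaces in the diagram indexing $V$ are either $\id$ or empty, so there is no room for nontrivial transition maps, and the functoriality of $i_{\cI_\theta,!}$ in $q$ produces exactly the canonical summand inclusion.
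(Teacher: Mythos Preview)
Your proposal is correct and is precisely the argument the paper has in mind: the observation in the paper gives no proof beyond the sentence ``since $F$ is split at $\theta$, this condition follows automatically,'' and your computation unpacks exactly that, using the direct-sum formula for $i_{\cI_\theta,!}(V)$ already recorded in the introduction of the paper. There is nothing to add.
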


\begin{construction}[From Stokes functors to Stokes filtered local systems]\label{construction:from_Stokes_functors_to_Stokes_filtered}
	Let $F \colon \cI \to \Mod_\C^\heartsuit$ be a Stokes functor.
	Assume that $F$ takes values in finite dimensional $\C$-vector spaces.
	We define a Stokes filtered local system $(\cV,\cF)$ as follows.
	We take $\cV \coloneqq |F| \coloneqq p_!(F)$, which is a local system in virtue of \cref{observation:underlying_local_system}.
	Whenever $\theta \in \partial\widetilde{X} \smallsetminus \mathbb S$, the restriction $F |_{\cI_\theta}$ gives a filtration of $\cV_\theta$ by subspaces, as remarked in \cref{observation:filtration_by_monomorphisms}.
	Therefore, these are filtrations in the more restrictive sense of \cite[\S3.2]{Boalch_Topology_Stokes}.
	Besides, the exodromy equivalence guarantees that condition \textbf{SF1}) of \cite[\S6]{Boalch_Topology_Stokes} is satisfied.
	On the other hand, condition \textbf{SF2}) is also automatically satisfied thanks to \cref{observation:Stokes_condition}.
\end{construction}

\begin{thm}\label{thm:comparison}
	Constructions~\ref{construction:from_Stokes_filtered_to_Stokes_functors} and \ref{construction:from_Stokes_functors_to_Stokes_filtered} induce an equivalence between the category of Stokes filtered local systems in the sense of \cite[\S6]{Boalch_Topology_Stokes} and the category of Stokes functors with values in $\Mod_\C$ that are $\C$-flat and have finite dimensional stalks.
\end{thm}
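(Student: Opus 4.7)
The plan is to upgrade both constructions to functors and verify that they are mutually quasi-inverse, breaking the argument into three steps: functoriality, target verification, and round-trip compatibility.

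First I would establish functoriality of both constructions. Construction \ref{construction:from_Stokes_functors_to_Stokes_filtered} is manifestly functorial: the assignment $F \mapsto p_!(F)$ is functorial by \cref{cor:stokes_functoriality_IHES}, and restriction along $\cI_{\partial\widetilde{X}\smallsetminus\mathbb{S}} \hookrightarrow \cI$ is obviously functorial. For Construction \ref{construction:from_Stokes_filtered_to_Stokes_functors}, the functoriality is less obvious because the extension across Stokes directions proceeds via a locally chosen grading $G$; however, once the underlying functor on $\cI_{\partial\widetilde{X}\smallsetminus\mathbb{S}}$ is fixed, the sheaf property of $\frakStokes_{\cI,\Mod_\C^\heartsuit}$ (together with the uniqueness observation in \cref{eg:Stokes_structures_at_a_point} applied to each small sector around a Stokes direction containing no other such direction) forces the extension to be unique up to unique isomorphism. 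Thus morphisms of Stokes filtered local systems functorially induce morphisms of Stokes functors.

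Next I would verify that the targets are correct. For Construction \ref{construction:from_Stokes_filtered_to_Stokes_functors}, the resulting Stokes functor $F$ lands in $\Mod_\C^\heartsuit$ by construction, is $\C$-flat since $\Mod_\C^\heartsuit$ consists entirely of flat modules (as in \cref{eg:flat_over_discrete_ring}), and has finite-dimensional stalks because Boalch's filtrations do. For Construction \ref{construction:from_Stokes_functors_to_Stokes_filtered}, the underlying local system $|F| = p_!(F)$ is a genuine local system by \cref{observation:underlying_local_system}, the maps in each $F_\theta$ for $\theta \in \partial\widetilde{X}\smallsetminus\mathbb{S}$ are monomorphisms by \cref{observation:filtration_by_monomorphisms}, so they realize honest subspace filtrations in Boalch's sense, axiom \textbf{SF1}) holds by exodromy, and axiom \textbf{SF2}) is exactly the content of \cref{observation:Stokes_condition} applied locally in a sector around each $\theta \in \mathbb{S}$ (using the pointwise split condition satisfied by the Stokes functor at $\theta$).

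Finally I would check the two round-trip compositions. Starting with a Stokes functor $F$, Construction \ref{construction:from_Stokes_functors_to_Stokes_filtered} extracts $(|F|, F|_{\cI_{\partial\widetilde{X}\smallsetminus\mathbb{S}}})$, and Construction \ref{construction:from_Stokes_filtered_to_Stokes_functors} reassembles these into a Stokes functor on $\cI$; by construction the reassembled functor agrees with $F$ on $\partial\widetilde{X}\smallsetminus\mathbb{S}$, and the sheaf property of $\frakStokes_{\cI,\Mod_\C^\heartsuit}$ together with local uniqueness around each $\theta \in \mathbb{S}$ (again via \cref{eg:Stokes_structures_at_a_point}) forces the canonical comparison map to be an equivalence. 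Starting from a Stokes filtered local system $(\cV,\cF)$, the reverse round-trip yields $(|F|, F|_{\cI_{\partial\widetilde{X}\smallsetminus\mathbb{S}}})$ where $F$ is obtained via Construction \ref{construction:from_Stokes_filtered_to_Stokes_functors}; by design $F|_{\cI_{\partial\widetilde{X}\smallsetminus\mathbb{S}}}$ recovers $\cF$, and the natural comparison $\cV \to |F|$ is an isomorphism because both are local systems on $\partial\widetilde{X}$ that agree stalkwise at every $\theta \in \partial\widetilde{X}\smallsetminus\mathbb{S}$ via the isomorphism $|F_\theta| \simeq \cV_\theta$ built into Construction \ref{construction:from_Stokes_filtered_to_Stokes_functors}.

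The main obstacle I anticipate is verifying that the sheaf-theoretic extension around each Stokes direction $\theta \in \mathbb{S}$ matches Boalch's explicit gluing prescription: this is essentially the content of \cref{observation:Stokes_condition}, but care is needed to match orientations and to ensure that the grading $G$ used to interpolate between $F_\theta^-$ and $F_\theta^+$ is the same object on both sides of the equivalence. Once this local matching is established, the global equivalence follows from the hyperdescent property of $\frakStokes_{\cI,\Mod_\C^\heartsuit}$ combined with the elementary nature (\cref{eg:elementary_points_are_discrete} in the discrete-fiber case over $\partial\widetilde{X}\smallsetminus\mathbb{S}$) of the strata outside $\mathbb{S}$.
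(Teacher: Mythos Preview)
Your proposal is correct and follows the same approach as the paper, whose own proof consists only of the observation that both categories are abelian (hence $1$-categories) followed by the assertion that functoriality and mutual inversity are straightforward to verify. You have simply written out that verification in detail, which is entirely appropriate.

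One small point worth tightening: your argument for the uniqueness of the extension across a Stokes direction $\theta$ leans on the sheaf property together with \cref{eg:Stokes_structures_at_a_point}, but neither of these alone forces $H_\theta$ to be determined by the neighboring filtrations $F^{\pm}$, since $U^- \cup U^+$ does not cover the sector $U$. The cleanest way to close this is to note that $H_\theta = i^{\ens}_{\cI_\theta,!}(G)$ with $G = \Gr(H_\theta)$, and since $f \colon \cI_\theta \to \cI_+$ is a bijection on underlying sets one has $\Gr_{\cI_+}(F^+) \simeq G$; thus $G$, and hence $H_\theta$, is recovered canonically from $F^+$ alone. This also disposes of your anticipated obstacle about matching the grading on both sides. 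With this adjustment your argument is complete and more detailed than what the paper provides.
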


\begin{proof}
	Notice that both categories are abelian (in particular, $1$-categories).
	It is then straightforward to verify that the two constructions are functorial and that they are inverse to each other.
\end{proof}

\begin{rem}\label{rem:comparison}
	It is immediate from \cref{thm:comparison} to deduce that the wild character \emph{stack} constructed in \cite[\S13]{Boalch_Topology_Stokes} coincides with the (classical truncation of) $\bfStflat_{\cI}$.
	In particular, the work of Boalch \cite{Boalch_curve} proves the existence of a good moduli space for the open and closed substack $\bfStflat_\cI$ that corresponds to fixing the rank of the underlying local system.
	From the point of view of the present paper, the good moduli space of $\bfStflat_\cI$ can be constructed intrinsically via the results of \cite{AHLH}, and we expect that reasoning along these lines will allow to construct good moduli spaces for $\bfStflat_\cI$ in arbitrary dimension.
\end{rem}

\section{Appendix: Stability properties for smooth and proper stable $\infty$-categories}\label{stability_properties}

Fix an animated ring $k$.
Recall that $\Mod_k \in \CAlg(\PrLomega)$ (see e.g. \cite[Proposition 2.4]{Binda_Porta_Azumaya}).
We set
\[ \PrLomega_k \coloneqq \Mod_{\Mod_k}( \PrLomega ) \qquad \text{and} \qquad \PrL_k \coloneqq \Mod_{\Mod_k}(\PrL) \ . \]
Given $\cC \in \PrLomega_k$, we write
\[ \Hom_\cC \colon \cC\op \times \cC \to  \Mod_k  \]
for the canonical enrichment over $\Mod_k$.
Recall also that $\cC$ is dualizable in $\PrL_k$, with dual $\cC^{\vee}$ given by $\Ind((\cC^\omega)\op)$ and write
\[ \mathrm{coev}_\cC \colon \Mod_k \to  \cC^\vee \otimes_k \cC \]
for the coevaluation map in $\PrL_k$.
Recall the following definitions:

\begin{defin}\label{finite_type_defin}
	A compactly generated $k$-linear stable $\infty$-category $\cC\in \PrLomega_k$ is said to be:
	\begin{enumerate}\itemsep=0.2cm
		\item \emph{of finite type} if it is a compact object in $\PrLomega_k$;
		
		\item \emph{proper} if for every compact objects $x, y \in \cC^\omega$, $\Hom_\cC(x,y)$ belongs to $\Perf(k)$;
		
		\item \emph{smooth} if $\mathrm{coev}_\cC$ preserves compact objects.
	\end{enumerate}
\end{defin}

\begin{rem}
	Let $\cC \in \PrLomega_k$.
	If $\cC$ is of finite type, then it is smooth.
	On the other hand, if $\cC$ is smooth and proper, then it is of finite type.
\end{rem}

\begin{lem}\label{finite_limit_prLRomega}
	Let $\cC_\bullet \colon A \to \PrLR_k$ be a diagram such that $\cC_a$ is compactly generated for every $a\in A$.
	Set
	\[ \cC \coloneqq \lim_{a \in A} \cC_a \ , \]
	the limit being computed in $\PrL$.
	Then $\cC$ is compactly generated.
	Furthermore, if $\cC_a$ is of finite type for every $a \in \cC$ and $A$ is a compact $\infty$-category, then $\cC$ is of finite type as well.
\end{lem}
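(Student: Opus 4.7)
The plan is to establish the two statements separately, leveraging \cref{Peter_lemma_1} as the main structural input.

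For the compact generation of $\cC$: since the diagram $\cC_\bullet$ takes values in $\PrLR_k$, \cref{Peter_lemma_1} identifies the limit $\cC$ computed in $\PrL$ with the limit computed in $\PrR$ and in $\CAT_\infty$. Consequently, every projection $p_a\colon\cC\to\cC_a$ belongs to both $\PrL_k$ and $\PrR_k$, so in particular it admits a left adjoint $q_a\colon\cC_a\to\cC$. Since $p_a$ preserves filtered colimits (as a morphism in $\PrL_k$), the adjunction implies that $q_a$ carries compact objects to compact objects. I would then check that the set
\[ \cG \coloneqq \{ q_a(x) : a\in A,\ x\in \cC_a^{\omega} \} \]
is a set of compact generators for $\cC$. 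Indeed, if $c\in\cC$ satisfies $\Map_\cC(q_a(x),c)\simeq 0$ for every element of $\cG$, then by adjunction $\Map_{\cC_a}(x,p_a(c))\simeq 0$ for every compact $x\in\cC_a^{\omega}$, and the compact generation of $\cC_a$ forces $p_a(c)\simeq 0$ for every $a$. The joint conservativity of the projections $(p_a)_{a\in A}$, which follows from the description of $\cC$ as a limit in $\CAT_\infty$, then yields $c\simeq 0$.

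For the finite type statement: since the morphisms in $\cC_\bullet$ are in $\PrLR$, they preserve compact objects, and so passing to compact subcategories yields a diagram $\cC_\bullet^{\omega}\colon A\to \Cat_k^{\mathrm{perf}}$ in the $\infty$-category of small idempotent-complete stable $k$-linear $\infty$-categories. Under the Morita equivalence $\Ind\colon \Cat_k^{\mathrm{perf}}\xrightarrow{\sim}\PrLomega_k$, finite type is preserved, and the limit $\cC\simeq\lim_a\cC_a$ in $\PrL_k$ corresponds to $\cC^{\omega}\simeq\lim_a\cC_a^{\omega}$ in $\Cat_k^{\mathrm{perf}}$. The problem therefore reduces to proving that compact objects in $\Cat_k^{\mathrm{perf}}$ are closed under $A$-indexed limits when $A$ is compact in $\Cat_\infty$. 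This is handled by decomposing $A$ as a retract of a finite $\infty$-category (which exists because $A$ is compact in $\Cat_\infty$), which in turn reduces the question to finite limits; these are closed within the class of compact objects because $\Cat_k^{\mathrm{perf}}$ is stable, and compact objects of a stable presentable $\infty$-category are closed under finite limits and retracts.

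The hard part will be the finite type claim: one must verify the precise compatibility between the Morita equivalence and limits of $\PrLR$-diagrams (which hinges on the fact that $\PrLR$-morphisms preserve compact objects, so that $(-)^{\omega}$ may be applied termwise), together with the stability of $\Cat_k^{\mathrm{perf}}$ and the attendant closure properties of its compact objects. Once these structural points are settled, the compactness of $A$ in $\Cat_\infty$ makes the cellular induction routine.
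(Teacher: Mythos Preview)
Your argument for compact generation is correct, though it proceeds differently from the paper: rather than constructing explicit generators via the left adjoints $q_a$ of the projections, the paper converts the limit into a colimit. Using the equivalence $\PrR \simeq (\PrL)\op$, the limit $\cC$ in $\PrL$ (which agrees with the limit in $\PrR$ by \cref{Peter_lemma_1}) can be rewritten as $\colim_{A\op} \cC_a$ in $\PrL$, where the transition maps are the left adjoints of the original ones. Since these left adjoints preserve compact objects, the colimit can be computed in $\PrLomega$, and hence $\cC$ is compactly generated.

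Your finite type argument, however, has two genuine gaps. First, $\Cat_k^{\mathrm{perf}}$ is \emph{not} stable: the suspension $\Sigma\cD = 0 \amalg_{\cD} 0$ vanishes for every $\cD$, since $\cD \to 0$ is already an epimorphism. Compact objects in a presentable $\infty$-category are closed under finite \emph{colimits} and retracts, but without stability there is no reason they should be closed under finite limits, so your reduction breaks down. Second, the identification $\cC^\omega \simeq \lim_a \cC_a^\omega$ is unjustified: the Morita equivalence $\Ind \colon \Cat_k^{\mathrm{perf}} \simeq \PrLomega_k$ certainly preserves limits, but you would also need the inclusion $\PrLomega_k \hookrightarrow \PrL_k$ to preserve this particular limit, and that inclusion is only known to preserve colimits in general.

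The paper's approach sidesteps both issues precisely by passing to the colimit description: once one knows $\cC \simeq \colim_{A\op} \cC_a$ in $\PrLomega_k$, the finite type claim follows immediately from the closure of compact objects under finite colimits and retracts, using that $A\op$ is compact whenever $A$ is.
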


\begin{proof}
	Since the limit is computed in $\PrL_R$, \cite[Corollary 3.4.3.6]{Lurie_Higher_algebra} and \cite[Proposition 5.5.3.13]{HTT} show that it can alternatively be computed in $\CAT_\infty$.
	Since all the transition morphisms are in $\PrR$ as well, \cite[Theorem 5.5.3.18]{HTT} guarantees that the limit can be also computed in $\PrR$.
	Using the equivalence $\PrR \simeq (\PrL)\op$, we conclude that passing to left adjoints we can write
	\[ \cC \simeq \colim_{a \in A\op} \cC_a \ , \]
	the colimit being computed in $\PrL$.
	Notice that the transition maps in this colimit diagram, being left adjoints to colimit-preserving functors, automatically preserve compact objects.
	Thus, \cite[Lemma 5.3.2.9]{Lurie_Higher_algebra} shows that this colimit can be computed in $\PrLomega$.
	It follows that $\cC$ is compactly generated.
	Besides, \cite[Corollary 3.4.4.6]{Lurie_Higher_algebra} implies that this colimit can also be computed in $\PrLomega_k$, so the second half of the statement follows from the fact that compact objects are closed under finite colimits and retracts.
\end{proof}

\begin{cor}\label{cocart_finite_type}
Let $(X,P,\cI)$ be a Stokes stratified spaces in finite posets such that $(X,P)$ is categorically compact.
	Let $\cE$ be a compactly generated $k$-linear stable $\infty$-category  of finite type.
	Then,  so is $\Funcocart(\cI,\cE)$.
\end{cor}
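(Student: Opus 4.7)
The strategy is to present $\Funcocart(\cI,\cE)$ as a finite limit in $\PrLR_k$ of compactly generated finite type $k$-linear $\infty$-categories, and then to invoke \cref{finite_limit_prLRomega}.

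First, combining the specialization equivalence \eqref{spe_eq} with the straightening presentation of \cref{observation:exodromy_straightening} (in its $\PrL_k$-enriched form from \cref{exodromy_PrL}), we obtain a canonical equivalence in $\PrL_k$
\[
\Funcocart(\cI,\cE) \simeq \Sigma^{\cocart}\bigl(\exp_\cE(\cI/\Pi_\infty(X,P))\bigr) \simeq \lim_{x\in\Pi_\infty(X,P)} \Fun(\cI_x,\cE),
\]
where the transition map along an exit path $\gamma\colon x\to y$ is the left Kan extension $f_{\gamma,!}\colon \Fun(\cI_x,\cE)\to\Fun(\cI_y,\cE)$ along the induced functor of finite posets $f_\gamma\colon\cI_x\to\cI_y$. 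The categorical compactness hypothesis on $(X,P)$ makes $\Pi_\infty(X,P)$ a finite, and in particular compact, indexing $\infty$-category.

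Next, I would check that the diagram $x\mapsto \Fun(\cI_x,\cE)$ takes values in compactly generated $k$-linear $\infty$-categories of finite type over $k$. Since $\cI_x$ is a finite poset and $\cE$ is compactly generated with compact generators $\{E_\alpha\}$, the functor $\infty$-category $\Fun(\cI_x,\cE)$ is compactly generated by the set $\{\ev_{a,!}^{\cI_x}(E_\alpha)\}_{a\in \cI_x,\alpha}$ in virtue of \cref{recollection:generators_presheaves}. For the finite-type property of $\Fun(\cI_x,\cE)$, one proceeds by induction on a finite cell decomposition of the finite poset $\cI_x$, presenting at each step $\Fun(K,\cE)$ as a $\PrLR_k$-pullback of previously treated functor categories and $\cE$ itself, and applying \cref{finite_limit_prLRomega}; the base case $\cI_x = \ast$ is the hypothesis on $\cE$. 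This is the main technical point of the proof.

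Finally, the transition maps belong to $\PrLR_k$: being left adjoints to the pullback $f_\gamma^\ast$, they lie in $\PrL_k$, and they additionally preserve limits because $\cI_x$ and $\cI_y$ are finite posets—this is exactly \cref{induction_limits} applied to $f_\gamma$ viewed as a morphism of Stokes fibrations over the point, both sides having finite fibers. The hypotheses of \cref{finite_limit_prLRomega} are thus met, and the conclusion follows. The bulk of the work is the finite-type stability of $\Fun(\cI_x,\cE)$ for a finite poset $\cI_x$; the rest of the argument is bookkeeping that packages the exodromy/straightening machinery with the already established $\PrLR$ limit preservation results.
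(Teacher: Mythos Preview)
Your proof is correct and follows essentially the same approach as the paper's: both present $\Funcocart(\cI,\cE)$ as the limit $\lim_{x\in\Pi_\infty(X,P)}\Fun(\cI_x,\cE)$ with transition maps given by left Kan extensions along the finite posets $\cI_x\to\cI_y$, invoke \cref{induction_limits} to see that these lie in $\PrLR_k$, and then apply \cref{finite_limit_prLRomega} using the categorical compactness of $(X,P)$. The only difference is that the paper simply asserts that each $\Fun(\cI_x,\cE)$ is of finite type, whereas you supply an inductive justification via a finite cell decomposition of the poset $\cI_x$; this is a reasonable way to fill in that step.
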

\begin{proof}
	Let $\Upsilon_\cI \colon  \PiinftySigma(X,P) \to \PrL$ be the straightening of $p \colon  \cI\to  \PiinftySigma(X,P)$ and consider the diagram
	\[
	\Fun_!(\Upsilon_\cI(-), \cE) \colon  \PiinftySigma(X,P) \to  \PrL 
	\]
	where $\Fun_!$ denotes the functoriality given by left Kan extensions.
	From \cite[3.3.3.2]{HTT}, there is a canonical equivalence 
	\[
	\Funcocart(\cI,\cE) \simeq \lim_{\cX}  \Fun_!(\Upsilon_\cI(-), \cE) 
	\]
	By \cref{induction_limits},  the transition functors of the above diagram are left and right adjoints.
	Furthermore, $\Fun_!(\cI_x, \cE)$ is of finite type for every $x\in \cX$.
	Then, \cref{cocart_finite_type} follows from \cref{finite_limit_prLRomega}.
\end{proof}

\bibliographystyle{plain}
\bibliography{dahema}

\end{document}